\newtheorem{theorem}{Theorem}[chapter]
\newtheorem{lemma}[theorem]{Lemma}
\theoremstyle{definition}
\newtheorem{definition}[theorem]{Definition}
\newtheorem{example}[theorem]{Example}
\theoremstyle{remark}
\newtheorem{remark}[theorem]{Remark}
      \newtheorem{theorem}{Theorem}[subsection]
     \newtheorem*{theorem*}{Theorem}
      \newtheorem{proposition}[theorem]{Proposition}
      \newtheorem{lemma}[theorem]{Lemma}
      \newtheorem{corollary}[theorem]{Corollary}
      \theoremstyle{definition}
      \newtheorem{definition}{Definition}
      \theoremstyle{remark}
      \newtheorem{remark}[theorem]{Remark}
      \newtheorem{conjecture}{Conjecture}
      \newtheorem*{conjecture*}{Conjecture}
\newcommand{\ABV}{{\mbox{\raisebox{1pt}{\scalebox{0.5}{$\mathrm{ABV}$}}}}}
\newcommand{\FF}{{\mathbb{F}}}
\newcommand{\ZZ}{{\mathbb{Z}}}
\newcommand{\NN}{{\mathbb{N}}}
\newcommand{\CC}{{\mathbb{C}}}
\newcommand{\OK}{\mathcal{O}_K}
\newcommand{\OF}{\mathcal{O}_F}
\DeclareMathOperator{\GL}{GL}
\DeclareMathOperator{\SL}{SL}
\renewcommand{\sp}{\operatorname{sp}}
\DeclareMathOperator{\PGL}{PGL}
\DeclareMathOperator{\Sp}{Sp}
\DeclareMathOperator{\SO}{SO}
\renewcommand{\O}{\operatorname{O}}
\newcommand{\m}{{\mathfrak{m}}}
\newcommand{\Lgroup}[1]{{\hskip-2 pt \,^L\hskip-1pt{#1}}}
\newcommand{\dualgroup}[1]{{\widehat{#1}}}
\newcommand{\dual}[1]{{\check{#1}}}
\newcommand{\orbdual}{^*}
\newcommand{\g}{{\mathfrak{g}}}
\DeclareMathOperator{\Gal}{Gal}
\newcommand{\Frob}{{\operatorname{Fr}}}
\DeclareMathOperator{\Irrep}{Irrep}
\DeclareMathOperator{\id}{id}
\DeclareMathOperator{\trace}{trace}
\DeclareMathOperator{\rank}{rank}
\newcommand{\Spec}[1]{{\operatorname{Spec}(#1)}}
\DeclareMathOperator{\Ad}{Ad}
\DeclareMathOperator{\ad}{ad}
\newcommand{\abs}[1]{{\vert #1 \vert}}
\newcommand{\ceq}{{\, :=\, }}
\newcommand{\tq}{{\ \vert\ }}
\newcommand{\iso}{{\ \cong\ }}
\newcommand{\transpose}[1]{\,^t\hskip-1pt{#1}}
\renewcommand{\m}{{\mathfrak{m}}}
\newcommand{\cs}[1]{{\mathcal{#1}}}
\newcommand{\Deligne}{{\mathsf{D}^b_c}} %Deligne's category
\newcommand{\Perv}{\mathsf{Per}}
\newcommand{\Loc}{\mathsf{Loc}}
\newcommand{\Rep}{\mathsf{Rep}}
\newcommand{\K}{\mathsf{K}}
\newcommand{\Ev}{\operatorname{\mathsf{E}\hskip-1pt\mathsf{v}}}
\newcommand{\Evs}{\operatorname{\mathsf{E}\hskip-1pt\mathsf{v}\hskip-1pt\mathsf{s}}}
\newcommand{\pEv}{\,^p\hskip-2pt\operatorname{\mathsf{E}\hskip-1pt\mathsf{v}}}
\newcommand{\oEv}{\,^\circ\hskip-2pt\operatorname{\mathsf{E}\hskip-1pt\mathsf{v}}}
\newcommand{\NEv}{\operatorname{\mathsf{N}\hskip-1pt\mathsf{E}\hskip-1pt\mathsf{v}}}
\newcommand{\NEvs}{\operatorname{\mathsf{N}\hskip-1pt\mathsf{E}\hskip-1pt\mathsf{v}\hskip-1pt\mathsf{s}}}
\newcommand{\pNEv}{\,^p\hskip-2pt\operatorname{\mathsf{N}\hskip-1pt\mathsf{E}\hskip-1pt\mathsf{v}}}
\newcommand{\IC}{{\mathcal{I\hskip-1pt C}}}
\newcommand{\ChC}{{\mathsf{CC}}}
\newcommand{\RPhi}{{\mathsf{R}\hskip-0.5pt\Phi}}
\newcommand{\RPsi}{{\mathsf{R}\hskip-0.5pt\Psi}}
\newcommand{\pH}[1]{\,^p\hskip-2pt\operatorname{H}^{#1}}
\newcommand{\Ft}{\operatorname{\mathsf{F\hskip-1pt t}}}
\newcommand{\labitem}[2]{
\def\@itemlabel{\textbf{#1}}
\item
\def\@currentlabel{#1}\label{#2}}
\newcommand{\1}{{\mathbbm{1}}}
\newcommand{\Aut}{\text{Aut}}
\newcommand{\Int}{\text{Int}}
\newcommand{\Hom}{\text{Hom}}
\newcommand{\p}{\phi}
\newcommand{\s}{{s}}
\renewcommand{\k}{{k}}
\newcommand{\res}{{\operatorname{res}}}
\newcommand{\KPair}[2]{( \, #1\, \vert\, #2\, )}
\newcommand{\trait}{{S}}
\newcommand{\Lie}{\operatorname{Lie}}
\newcommand{\Inn}{\operatorname{Inn}}
\newcommand{\Irr}{\operatorname{Irrep}}
\renewcommand{\O}{\mathcal{O}}
\newcommand{\tran}{'}
\begin{document}

\frontmatter

\title[Arthur packets for $p$-adic groups by way of vanishing cycles]{Arthur packets for $p$-adic groups by way of microlocal vanishing cycles of perverse sheaves, with examples}

%    Remove any unused author tags.

\author[C. Cunningham]{Clifton Cunningham}
\address{University of Calgary}
\email{cunning@math.ucalgary.ca}
\thanks{C.C. acknowledges support from the National Science and Engineering Research Council (NSERC), Canada and from the Pacific Institute for the Mathematical Sciences (PIMS)}

\author[A. Fiori]{Andrew Fiori}
\address{University of Lethbridge}
\email{andrew.fiori@uleth.ca}
\thanks{A.F. acknowledges support from the Pacific Institute for the Mathematical Sciences (PIMS)}

\author[A. Moussaoui]{Ahmed Moussaoui}
\address{Universit\'e de Poitiers}
%\email{ahmedmoussaoui.math@gmail.com}
\email{ahmed.moussaoui@math.univ-poitiers.fr}
\thanks{A.M. thanks the Pacific Institute for the Mathematical Sciences (PIMS) and gratefully acknowledges support from the Fondation Math\'ematique Jacques Hadamard}

\author[J. Mracek]{James Mracek}
%\address{University of Toronto}
%\email{jmracek@math.toronto.edu}
\address{Amazon, San Francisco}
\email{james.mracek@adroll.com}
\thanks{J.M. acknowledges support from the National Science and Engineering Research Council (NSERC), Canada}

\author[B. Xu]{Bin Xu}
%\address{Max Planck Institute for Mathematics, Bonn}
%\email{binxu@mpim-bonn.mpg.de}
\address{Yau Mathematical Science Center, Tsinghua University}
\email{bxu@math.tsinghua.edu.cn}
\thanks{B.X. acknowledges support from the Pacific Institute for the Mathematical Sciences (PIMS)}

%    \date is required; it is the date received by the editor.
\date{2018, December 8}

\subjclass[2010]{11F70, 22E50, 35A27, 32S30}
%    Recognition of the 2010 edition of the Mathematics Subject
%    Classification requires a version of amsbook.cls from July 2009
%    or later.  If "2010" is not recognized, please upgrade.

\keywords{Arthur packets, p-adic groups, admissible representations, Langlands correspondence, perverse sheaves, vanishing cycles}

\begin{abstract}
In this article we propose a geometric description of Arthur packets for $p$-adic groups using vanishing cycles of perverse sheaves.
Our approach is inspired by the 1992 book by Adams, Barbasch and Vogan on the Langlands classification of admissible representations of real groups and follows the direction indicated by Vogan in his 1993 paper on the Langlands correspondence.
Using vanishing cycles, we introduce and study a functor from the category of equivariant perverse sheaves on the moduli space of certain Langlands parameters to local systems on the regular part of the conormal bundle for this variety.
In this article we establish the main properties of this functor and show that it plays the role of microlocalization in the work of Adams, Barbasch and Vogan.
We use this to define ABV-packets for pure rational forms of $p$-adic groups and propose a geometric description of the transfer coefficients that appear in Arthur's main local result in the endoscopic classification of representations.
This article includes conjectures modelled on Vogan's work, in particular the prediction that Arthur packets are ABV-packets for $p$-adic groups.
We gather evidence for these conjectures by verifying them in numerous examples. 
\end{abstract}
\iffalse
\begin{altabstract}
Dans cet article nous proposons une description g\'eometrique des paquets d'Arthur pour les groupes $p$-adiques en utilisant les cycles \'evanescents des faisceaux pervers. Notre approche s'inspire du livre d'Adams, Barbasch et Vogan sur la classification de Langlands des repr\'esentations admissibles des groupes r\'eels et suit la direction sugg\'er\'ee par Vogan dans son article sur la correspondance de Langlands. Nous d\'efinissons et \'etudions un foncteur, construit \'e\'e partir des cycles \'evanescents, de la cat\'ego\'erie des faisceaux pervers \'equivariant sur l'espace de modules de certains param\'etres de Langlands vers la cat\'egorie des syst\'emes locaux sur la partie r\'eguli\'ere du fibr\'e conormal de cette vari\'et\'e. En \'etablissant les principales propri\'et\'es de ce foncteur, nous montrons qu'il joue le r\'ele du foncteur de microlocalization dans les travaux d'Adams, Barbasch et Vogan. En utilisant cela, nous d\'efinissons les ABV-paquets des formes rationnelles pures des groupes $p$-adiques et nous donnons \'egalement une description g\'eometrique des coefficients de transfert qui apparaissent dans les travaux d'Arthur. Cet article contient des conjectures inspir\'ees des travaux de Vogan et en particulier la pr\'ediction que les paquets d'Arthur sont les ABV-paquets pour les groupes $p$-adiques.
\end{altabstract}
\fi

\maketitle

\tableofcontents

\mainmatter

\section{Introduction}\label{section:Introduction}

\subsection{Motivation}

Let $F$ be a local field of characteristic zero and $G$ be a connected reductive linear algebraic group over $F$. 
According to the local Langlands conjecture, the set $\Pi(G(F))$ of isomorphism classes of irreducible admissible representations of $G(F)$ can be naturally partitioned into finite subsets, called L-packets.
Moreover, the local Langlands conjecture predicts that if an L-packet contains one tempered representation, then all the representations in that L-packet are tempered, so tempered L-packets provide a partition of tempered irreducible admissible representations. 
Tempered L-packets enjoy some other very nice properties. 
For instance, every tempered L-packet determines a stable distribution on $G(F)$ by a non-trivial linear combination of the distribution characters of the representations in the packet. 
Tempered L-packets also have an endoscopy theory, which leads to a parametrization of the distribution characters of the representations in the packet. 

These properties fail for non-tempered L-packets. 
To remedy this, in 1989 Arthur introduced what are now know as Arthur packets, which enlarge the non-tempered L-packets in such a  way that these  last two properties do extend to the non-tempered case. 
Arthur's motivation was global, arising from the classification of automorphic representations, so the local meaning of Arthur packets was unclear when they first appeared.  

In 1992, shortly after Arthur packets were introduced, Adams, Barbasch and Vogan suggested a purely local description of Arthur packets for connected reductive real groups using microlocal analysis of certain stratified complex varieties built from Langlands parameters. 
Then, in 1993 Vogan used similar tools to make a prediction for a local description of Arthur packets for $p$-adic groups.
For now, let us refer to the packets of admissible representations they described as ABV-packets, in both the real and $p$-adic cases.
Since these constructions are purely local, and since the initial description of Arthur packets was global in nature, it was not easy to compare ABV-packets with Arthur packets.
The conjecture that Arthur packets are ABV-packets has remained open since the latter were introduced. 

When Arthur finished his monumental work on the classification of automorphic representations of symplectic and special orthogonal groups in 2013, the situation changed  dramatically.
Not only did he prove his own conjectures on Arthur packets given in \cite{Arthur:Conjectures}, but he also gave a local characterization of them, using twisted endoscopy.
This opened the door to comparing Arthur packets with ABV-packets and motivated us to compare Arthur's work with Vogan's constructions in the $p$-adic case.
This article is the first in a series making that comparison. 

%\section{Overview}\label{section:overview}

\subsection{Background}\label{ssec:background}

To begin, let us briefly review Arthur's main local result in the endoscopic classification of representations.
Let  $G$ be a quasi\-split connected reductive algebraic group over a $p$-adic field $F$.
An Arthur parameter for $G$ is a homomorphism, $\psi: L_{F} \times \SL(2, \mathbb{C}) \rightarrow \Lgroup{G}^{}$, where $\Lgroup{G}^{} = \dualgroup{G}\rtimes W_F$ is the Langlands group, satisfying a number of conditions; here and below, $\dualgroup{G}$ is the dual group for $G$, $L_F$ is the local Langlands group for $F$ and $W_F$ is the Weil group for $F$.
One important condition on the Arthur parameter $\psi$ is that the image of $\psi(W_{F})$ under the projection onto $\dualgroup{G}$ must have compact closure. 
When $G$ is symplectic or special orthogonal, Arthur assigns to any $\psi$ a multiset $\Pi_{\psi}(G(F))$ over $\Pi(G(F))$, known as the Arthur packet for $G$ associated with $\psi$ \cite[Theorem 1.5.1]{Arthur:Book}. 
It is a deep result of Moeglin that $\Pi_{\psi}(G(F))$ is actually a subset of $\Pi(G(F))$ \cite{Moeglin:Multiplicite}. 
In this case, endoscopy theory gives a canonical map
\begin{equation}\label{intro:Arthurqs}
\begin{aligned}
\Pi_\psi(G(F)) &\to \widehat{\mathcal{S}_\psi} \\
 \pi &\mapsto {\langle\ \cdot\ , \pi \rangle}_{\psi}
\end{aligned}
\end{equation}
to $\widehat{\mathcal{S}_\psi}$, the set of irreducible characters of 
%\begin{equation}\label{eqn:Sphi-intro}
$\mathcal{S}_\psi = Z_{\dualgroup{G}}(\psi)/Z_{\dualgroup{G}}(\psi)^0 Z(\dualgroup{G})^{\Gamma_F}$, where $Z_{\dualgroup{G}}(\psi)$ denotes the centralizer of the image of $\psi$ in $\Lgroup{G}$ under the action of the subgroup $\dualgroup{G}$, where $Z(\dualgroup{G})$ is the centre of $\dualgroup{G}$ and where $\Gamma_F$ is the absolute Galois group of $F$; see\cite[Theorem 2.2.1]{Arthur:Book}.
%\end{equation}
If the Arthur parameter $\psi : L_F \times \SL(2,\CC) \to \Lgroup{G}^{}$ is trivial on the $\SL(2, \mathbb{C})$ factor %then its restriction to $L_F$ is a tempered Langlands parameter, in which case 
then $\Pi_{\psi}(G(F))$ is a tempered L-packet and the map \eqref{intro:Arthurqs} is a bijection. 
In general, $\Pi_{\psi}(G(F))$ contains the L-packet $\Pi_{\phi_{\psi}}(G(F))$, where $\phi_\psi$ is the Langlands parameter given by 
$
\phi_\psi(u)  \ceq \psi(u,d_u)
$,
where for $u\in L_F$ we set $d_u=\operatorname{diag}(\abs{u}_F^{1/2}, \abs{u}_F^{-1/2})$\index{$d_u$} and where $\abs{\ }_F$ is the pullback of the norm map on $W_F$.
The map \eqref{intro:Arthurqs} determines a stable distribution on $G(F)$ by
\begin{equation}
\Theta_\psi^{G}\index{$\Theta_\psi^G$} \ceq \sum_{\pi\in \Pi_\psi(G(F))} {\langle z_\psi , \pi \rangle}_{\psi}\ \Theta_{\pi},
\end{equation}
where $z_{\psi}$ is the image of $\psi(1, -1)$ in $\mathcal{S}_\psi$ and where $\Theta_{\pi}$ is the Harish-Chandra distribution character of the admissible representation $\pi$.
% with $(1,-1) \in L_{F} \times \SL(2, \mathbb{C}) $ where $-1$ is the non-trivial central element of $\SL(2,\CC)$.

In order to sketch the main results of this article, let us briefly express Arthur's conjectural generalization of \eqref{intro:Arthurqs} for inner twists of $G$ using pure rational forms of $G$, as defined by Vogan; this is done more carefully in Section~\ref{section:Arthur}.

A \emph{pure rational form}\index{pure rational form}\index{$\delta$} %(also known as a \emph{pure inner form}) 
of $G$ is a cocycle $\delta\in Z^1(F,G)$.
An \emph{inner rational form}\index{inner rational form}\index{$\sigma$} of $G$ is a cocycle $\sigma\in Z^1(F,\Inn(G))$. 
Using the maps
\[
Z^1(F,G) \to %Z^1(F,G_{\ad}) = 
 Z^1(F,\Inn(G)) \to Z^1(F,\Aut(G)),
\]
every pure rational form of $G$ determines an inner rational form of $G$ and every inner rational form of $G$ determines a rational form of $G$.
Following \cite{Vogan:Langlands}, a representation of a pure rational form of $G$ is defined to be a pair $(\pi,\delta)$, where $\delta$ is a pure rational form of $G$ and $\pi$ is an admissible representations of $G_\delta(F)$.  
If ${\bar F}$ is a fixed algebraic closure of $F$, then the action of $G({\bar F})$ by conjugation defines an equivalence relation on such pairs, which is compatible with the equivalence relation on pure rational forms $Z^1(F,G)$ producing $H^1(F,G)$
Again following \cite{Vogan:Langlands}, we write $\Pi^\mathrm{pure}(G/F)$\index{$\Pi^\mathrm{pure}(G/F)$} for the equivalence classes of such pairs. Then, after choosing a representative for each class in $H^1(F,G)$, we may write 
\[
\Pi^\mathrm{pure}(G/F) = \bigsqcup_{[\delta]\in H^1(F,G)} \Pi(G_\delta(F), \delta),
\]
where
$
\Pi(G_\delta(F), \delta)\ceq \{ (\pi,\delta)\tq  \pi \in \Pi(G_\delta(F))\}
$.

An \emph{inner twist}\index{inner twist} of $G$ is a pair $(G_1,\varphi)$ where $G_1$ is a rational form of $G$ together with an isomorphism of algebraic groups $\varphi$ from $G_1\otimes_{F} {\bar F}$ to $G\otimes_{F} {\bar F}$ such that $\gamma \mapsto \varphi \circ \gamma(\varphi)^{-1}$ is a $1$-cocycle in $Z^1(\Gamma_{F},\text{Inn}(G))$ \cite[Section 9.1]{Arthur:Book}.
Every inner rational form $\sigma$ of $G$ determines an inner twist $(G_{\sigma}, \varphi_{\sigma})$ such that the action of $\gamma \in \Gamma_{F}$ on $G_{\sigma}(\bar{F})$ is given through the $\sigma$-twisted action on $G(\bar{F})$.  
We use the notation $(G_\delta,\varphi_\delta)$ for the inner twist of $G$ determined by the pure rational form $\delta$.
An Arthur parameter $\psi$ for $G$ is \emph{relevant}\index{relevant} to $G_\delta$ if any Levi subgroup of $\Lgroup{G}^{}$ that $\psi$ factors through is the dual group of a Levi subgroup of $G_\delta$. In \cite[Conjecture 9.4.2]{Arthur:Book}, Arthur assigns to any relevant $\psi$ a multiset $\Pi_{\psi}(G_\delta(F))$ over $\Pi(G_\delta(F))$, which is called the Arthur packet for $G_\delta$ associated to $\psi$. 
Moeglin's work shows that, since $G_\delta$ comes from a pure rational form, $\Pi_{\psi}(G_\delta(F))$ is again a subset of $\Pi(G_\delta(F))$.

To extend \eqref{intro:Arthurqs} to this case, Arthur replaces the group $\mathcal{S}_\psi$ with a group $\mathcal{S}_{\psi,\text{sc}}$\index{$\mathcal{S}_{\psi,\text{sc}}$} which is a central extension of $\mathcal{S}_{\psi}$ by $\widehat{Z}_{\psi, \text{sc}}$; see \eqref{eqn:extensionsc} and \cite[Section 9.2]{Arthur:Book}.
The group $\mathcal{S}_{\psi,\text{sc}}$ is generally non-abelian.
Now let ${\tilde \zeta}_{G_\delta}$\index{${\tilde \zeta}_{G_\delta}$} be a character of $\widehat{Z}_{\psi, \text{sc}}$ and let $\operatorname{Rep}(\mathcal{S}_{\psi,\text{sc}}, {\widetilde \zeta}_{G_\delta})$\index{$\operatorname{Rep}$, the set of isomorphism classes of representations} be the set of isomorphism classes of ${\tilde \zeta}_{G_\delta}$-equivariant representations of $\mathcal{S}_{\psi,\text{sc}}$.
%and ${\langle\ \cdot\ , \pi \rangle}_{\psi,\text{sc}}$ is the character of the associated representation of $\mathcal{S}_{\psi,\text{sc}}$. 
%
Endoscopy theory \cite[Conjecture 9.4.2]{Arthur:Book} gives a map
\begin{equation}\label{intro:Arthursc}
\begin{aligned}
\Pi_\psi(G_\delta(F)) &\to \operatorname{Rep}(\mathcal{S}_{\psi,\text{sc}}, {\widetilde \zeta}_{G_\delta}); %\\ \hookrightarrow \Pi(\mathcal{S}_{\psi,\text{sc}})\\
% \pi &\mapsto {\langle\ \cdot\ , \pi \rangle}_{\psi,\text{sc}}
\end{aligned}
\end{equation}
the character of the representation attached to an irreducible representation $\pi$ of the inner twist $(G_\delta,\varphi_\delta)$ is denoted by 
${\langle\ \cdot\ , \pi \rangle}_{\psi,\text{sc}}$\index{${\langle\ \cdot\ , \pi \rangle}_{\psi,\text{sc}}$}.
The map \eqref{intro:Arthursc} depends only on \eqref{intro:Arthurqs} and the pure rational form $\delta$.
%Because of our definition of $\Pi_{\psi}(G(F))$ here, one can not replace $\operatorname{Rep}(\mathcal{S}_{\psi,\text{sc}}, {\widetilde \zeta}_G)$ by the subset $\Pi(\mathcal{S}_{\psi,\text{sc}}, {\widetilde \zeta}_G)$ of ${\tilde \zeta}_G$-equivariant irreducible characters of $\mathcal{S}_{\psi,\text{sc}}$ as in Arthur's original formulation. 
\iffalse
When $\psi=\phi$ is a tempered Langlands parameter, Arthur stated this as a theorem \cite[Conjecture 9.4.2]{Arthur:Book}. \todo{Andrew: based on the citation... states as a conjecture?}
In particular, he claims \eqref{intro:Arthursc} gives a bijection
\begin{equation}\label{intro:Arthursc1}
\begin{aligned}
\Pi_{\phi}(G_\delta(F)) &\to \Pi(\mathcal{S}_{\phi,\text{sc}}, {\widetilde \zeta}_{G_\delta}) \\ %\hookrightarrow \Pi(\mathcal{S}_{\psi,\text{sc}})\\
 \pi &\mapsto {\langle\ \cdot\ , \pi \rangle}_{\phi,\text{sc}},
\end{aligned}
\end{equation}
where $\Pi(\mathcal{S}_{\phi,\text{sc}}, {\widetilde \zeta}_{G_\delta})$ is the set of ${\tilde \zeta}_{G_\delta}$-equivariant characters of $\mathcal{S}_{\psi,\text{sc}}$
\fi
For any Arthur parameter $\psi$ for $G$ and any pure rational form $\delta$ of $G$ we define
\[
\Pi_{\psi}(G_{\delta}(F), \delta) := \{(\pi, \delta) \tq \pi \in \Pi_{\psi}(G_{\delta}(F))\}
\]
where, if $\psi$ is not relevant to $G_\delta$, then $\Pi_{\psi}(G_{\delta}(F))$ and thus $\Pi_{\psi}(G_{\delta}(F), \delta)$ is empty.
Now we introduce
\begin{equation}
\Pi^\mathrm{pure}_{\psi}(G/F)\index{$\Pi^\mathrm{pure}_{\psi}(G/F)$} \ceq \{ (\pi,\delta) \in \Pi^\mathrm{pure}(G/F) \tq (\pi,\delta) \in \Pi_{\psi}(G_{\delta}(F), \delta)\}.
\end{equation}
After choosing a representative pure rational form $\delta$ for every class in $H^1(F,G)$, we have
\[
\Pi^\mathrm{pure}_{\psi}(G/F) = \bigsqcup_{[\delta]\in H^1(F,G)} \Pi_\psi(G_\delta(F), \delta).
\]

Now, set 
\[
A_\psi\index{$A_\psi$}\ceq \pi_0(Z_{\dualgroup{G}}(\psi)) = Z_{\dualgroup{G}}(\psi)/Z_{\dualgroup{G}}(\psi)^0
\]
and let $\chi_\delta\index{$\chi_\delta$}: \pi_0(Z(\dualgroup{G})^{\Gamma_F})\to \CC^\times$ be the character matching $[\delta]\in H^1(F,G)$ under the Kottwitz isomorphism 
$
H^1(F,G) \iso \Hom(\pi_0(Z(\dualgroup{G})^{\Gamma_F}),\CC^\times).
$
Let $\operatorname{Rep}(A_\psi,\chi_\delta)$\index{$\operatorname{Rep}(A_\psi,\chi_\delta)$} denote the set of equivalence classes of  representations of $A_\psi$ such that the pullback of the representations along 
\[
\pi_0(Z(\dualgroup{G})^{\Gamma_F}) \to \pi_0(Z_{\dualgroup{G}}(\psi)) 
\]
is $\chi_\delta$.
In Proposition~\ref{proposition:AS} we show that \eqref{intro:Arthursc} defines a canonical map
\begin{equation}\label{intro:Arthurpure}
\begin{aligned}
\Pi^\mathrm{pure}_{\psi}(G/F) &\to \operatorname{Rep}(A_{\psi})
\end{aligned}
\end{equation}
and we write ${\langle\ \cdot\ , (\pi, \delta) \rangle}_{\psi}$\index{${\langle\ \cdot\ , (\pi, \delta) \rangle}_{\psi}$} for the representation attached to $(\pi,\delta)\in \Pi^\mathrm{pure}_{\psi}(G/F)$.
built from canonical maps
\begin{equation}\label{intro:Arthurscp}
\begin{aligned}
\Pi_\psi(G_\delta(F), \delta) &\to \operatorname{Rep}(A_{\psi},\chi_\delta)\index{$ \operatorname{Rep}(A_{\psi},\chi_\delta)$}.%\\ 
% (\pi, \delta) &\mapsto {\langle\ \cdot\ , (\pi, \delta) \rangle}_{\psi}.
\end{aligned}
\end{equation}
These maps depend only on $\delta$ and \eqref{intro:Arthurqs}, as discussed in Section~\ref{ssec:AV}.
When $\delta = 1$, \eqref{intro:Arthurscp} recovers \eqref{intro:Arthurqs} and if $\psi$ is tempered then \eqref{intro:Arthurscp} gives a canonical bijection
\begin{equation}\label{intro:Arthurscp1}
\begin{aligned}
\Pi_{\phi_\psi}(G_{\delta}(F), \delta) &\to \Pi(A_{\psi}, \chi_{\delta}),%\\ 
% (\pi, \delta) & \mapsto {\langle\ \cdot\ , (\pi, \delta) \rangle}_{\phi},
\end{aligned}
\end{equation}
where $\Pi(A_{\psi}, \chi_{\delta})$\index{$\Pi(A_{\psi}, \chi_{\delta})$} denotes the set of  $\chi_{\delta}$-equivariant characters of $A_\psi$.

\subsection{Main results}\label{intro:Main results}

%We now describe the main results in this article. 

In this article we propose a geometric and categorical approach to calculating a generalization of \eqref{intro:Arthurpure}, and therefore of \eqref{intro:Arthurscp} and \eqref{intro:Arthurscp1} also, which applies to all quasi\-split connected reductive algebraic groups $G$ over $p$-adic fields, assuming the local Langlands correspondence for its pure rational forms, as articulated by Vogan in \cite{Vogan:Langlands}. 
The local Langlands correspondence is known for split symplectic and orthogonal groups by the work of Arthur and others. In \cite[Chapter 9]{Arthur:Book} Arthur sets the foundation for adapting his work to inner forms of these groups, which can be seen as a step toward the version proposed by Vogan in \cite{Vogan:Langlands}. Building on Arthur's work, Vogan's version of the local Langlands correspondence is known for unitary groups by work of \cite{Mok:Unitary} and \cite{KMSW:Unitary}; it is expected that similar arguments should yield the result for symplectic and orthogonal groups, but that has not been done yet.

Our approach is based on ideas developed for real groups in \cite{ABV} and on results  from \cite{Vogan:Langlands} for $p$-adic groups.
We conjecture that this geometric approach produces a map that coincides with \eqref{intro:Arthurscp} from Arthur, after specializing to the case of quasi\-split symplectic and special orthogonal $p$-adic groups.
The generalization of \eqref{intro:Arthurscp} that we propose leads quickly to what should be a generalization of Arthur packets.

We  now sketch our generalization of \eqref{intro:Arthurscp}.

Let  $F$  be a $p$-adic field and let $G$ be any quasi\-split connected reductive algebraic group over $F$.
% and we assume the previous discussions and conjectures all extend to this general case. 
%Every Arthur parameter $\psi$ for $G$ determines a Langlands parameter $\phi_\psi : L_F \to  \Lgroup{G}^{}$ by $\phi_\psi(w,x)  \ceq \psi(w,x,d_w)$ where  $(w,x)\in L_F$ and $d_w=\operatorname{diag}(\abs{w}^{1/2}, \abs{w}^{-1/2})$.
%The map $\psi \mapsto \phi_\psi$ induces an injection from the $\dualgroup{G}$-conjugacy classes of Arthur parameters to the $\dualgroup{G}$-conjugacy classes of Langlands parameters. But it is not surjective.
Every Langlands parameter $\phi$ for $G$ determines an \emph{infinitesimal parameter}\index{infinitesimal parameter} $\lambda_\phi : W_F \to \Lgroup{G}^{}$ by 
$
\lambda_\phi(w)  \ceq \phi(w,d_w).
$
%where $d_w=\operatorname{diag}(\abs{w}^{1/2}, \abs{w}^{-1/2})$.
The map $\phi \mapsto \lambda_\phi$ is not injective, but the preimage of any infinitesimal parameter falls into finitely many equivalence classes of Langlands parameters under  $\dualgroup{G}$-conjugation.

For any Arthur parameter $\psi$, set $\lambda_\psi \ceq \lambda_{\phi_\psi}$ and let $\Pi^\mathrm{pure}_{ \lambda_\psi}(G/F)$\index{$\Pi^\mathrm{pure}_{ \lambda_\psi}(G/F)$} be the set of $(\pi,\delta) \in \Pi^\mathrm{pure}(G/F)$ such that the Langlands parameter $\phi$, whose associated L-packet contains $\pi$, satisfies $\lambda_{\phi} = \lambda_\psi$.
The generalization of \eqref{intro:Arthurscp} that we define takes the form of a map
\begin{equation}\label{intro:generalization}
\begin{aligned}
\Pi^\mathrm{pure}_{ \lambda_\psi}(G/F) &\to \operatorname{Rep}(A_\psi).%\\
%(\pi,\delta) &\mapsto {\langle \, \cdot\, , (\pi,\delta)\rangle}_{\psi}^\text{mic},
\end{aligned}
\end{equation}
%where $\Rep(A_\psi)$ is the category of representations of $A_\psi$ and $\Rep(A_\psi)_{/\text{iso}}$ is the set of isomorphism classes of objects in this category.
%
The genesis of the map \eqref{intro:generalization} is the interesting part, as it represents a geometrisation and categorification of \eqref{intro:Arthurscp}.
% and also makes some predictions regarding \eqref{intro:Arthursc}.

In order to define \eqref{intro:generalization}, in Section~\ref{section:Voganvarieties} we review the definition of a variety $V_\lambda$\index{$V_\lambda$}, following \cite{Vogan:Langlands}, that parametrises the set $P_\lambda(\Lgroup{G})$\index{$P_\lambda(\Lgroup{G})$} of Langlands parameters $\phi$ for $G$ for which $\lambda_\phi = \lambda$, where $\lambda$ is a fixed infinitesimal parameter for $G$.
The variety $V_{\lambda}$ is equipped with an action of $Z_{\dualgroup{G}}(\lambda)$.
Then, again following \cite{Vogan:Langlands}, we consider the category $\Perv_{Z_{\dualgroup{G}}(\lambda)}(V_\lambda)$ of equivariant perverse sheaves on $V_\lambda$. 
Together with \eqref{intro:Arthurscp1}, the version of the Langlands correspondence that applies to $G$ and its pure rational forms determines a bijection between $\Pi^\mathrm{pure}_{ \lambda}(G/F)$ and isomorphism classes of simple objects in $\Perv_{Z_{\dualgroup{G}}(\lambda)}(V_\lambda)$:
\begin{equation}\label{intro:LV}
\begin{aligned}
\Pi^\mathrm{pure}_{ \lambda}(G/F) &\to \Perv_{Z_{\dualgroup{G}}(\lambda)}(V_\lambda)^\text{simple}_{/\text{iso}},\\
(\pi,\delta) &\mapsto \mathcal{P}(\pi,\delta).
\end{aligned}
\end{equation}
If $[\delta] =1$ we may write $\mathcal{P}(\pi)$ for $\mathcal{P}(\pi,\delta)$.

In Proposition~\ref{proposition:psisreg} we show that every Arthur parameter $\psi$ determines a particular element in the conormal bundle to $V_\lambda$
\[
(x_\psi,\xi_\psi)\index{$(x_\psi,\xi_\psi)$}\in T^*_{C_\psi}(V_{\lambda_\psi}),
\]
where $C_\psi\subseteq V_{\lambda_\psi}$ is the $Z_{\dualgroup{G}}(\lambda_\psi)$-orbit of $x_\psi\in V_\lambda$, such that the $Z_{\dualgroup{G}}(\lambda_\psi)$-orbit of $(x_\psi,\xi_\psi)$ is the unique open dense orbit in $T^*_{C_\psi}(V_{\lambda_\psi})$, denoted by $T^*_{C_\psi}(V_{\lambda_\psi})_\text{sreg}$\index{$T^*_{C}(V_{\lambda})_\text{sreg}$}.
Proposition~\ref{proposition:psisreg} is inspired by an analogous result in \cite{ABV} for real groups. 
Then we use $(x_\psi,\xi_\psi)$ to show that $A_\psi$ is the equivariant fundamental group of $T^*_{C_\psi}(V_{\lambda})_\text{sreg}$.
Thus, $(x_\psi,\xi_\psi)$  determines an equivalence of categories
\[
\Loc_{Z_{\dualgroup{G}}(\lambda)}(T^*_{C_\psi}(V_\lambda)_\text{sreg})
\to
\Rep(A_\psi),
\]
where $\Rep(A_\psi)$ denotes the category of representations of $A_\psi$\index{$\Rep$, the category of representations}\index{category of representations, $\Rep$}
This means that the spectral transfer factors ${\langle\ \cdot\ , \pi \rangle}_{\psi,\text{sc}}$ for $\psi$ appearing in \eqref{intro:Arthursc} can be interpreted as equivariant local systems on $T^*_{C_\psi}(V_{\lambda_\psi})_\text{sreg}$

In Section~\ref{ssec:Ev} we use the vanishing cycles functor to define an exact functor
\begin{equation}\label{intro:NEvspsi}
\NEvs_{\psi}\index{$\NEvs_\psi$}: \Perv_{Z_{\dualgroup{G}}(\lambda)}(V_{\lambda}) \to \Rep(A_\psi).
\end{equation}
In this article we establish some fundamental properties of this functor; see especially Theorem~\ref{theorem:NEvs} and Corollary~\ref{corollary:NEvspsi}. 
These results show that $\NEvs_\psi$ plays the role of the microlocalization functor as it appears in \cite{ABV} for real groups.
Vanishing cycles of perverse sheaves on $V_\lambda$ are fundamental tools for understanding the singularities on the boundaries of strata in $V_\lambda$ and their appearance here is quite natural.
Passing to isomorphism classes of objects, this functor defines a function
\[
 \Perv_{Z_{\dualgroup{G}}(\lambda)}(V_{\lambda})^\text{simple}_{/\text{iso}} \to \Rep(A_\psi)_{/\text{iso}}.
\]
When composed with \eqref{intro:LV} in the case $\lambda =\lambda_\psi$, this defines \eqref{intro:generalization}.

\subsection{Conjecture}\label{ssec:overviewconjectures}

We now explain the conjectured relation between \eqref{intro:Arthurpure} and \eqref{intro:generalization}.
With reference to \eqref{intro:NEvspsi}, consider the support of \eqref{intro:generalization}, which we call the \emph{ABV-packet} for $\psi$:
\begin{equation}\label{ABVintro}
\Pi^\ABV_{\psi}(G/F)\index{$\Pi^\ABV_{\psi}(G/F)$} \ceq \{ (\pi,\delta)  \in \Pi^\mathrm{pure}_{ \lambda_\psi}(G/F) \tq \NEvs_{\psi}\mathcal{P}(\pi,\delta) \ne 0 \}.
\end{equation}
%How can \eqref{intro:generalization} be seen as a generalization of \eqref{intro:Arthurscp}? 
We can break the ABV-packet $\Pi^\ABV_{\psi}(G/F)$ apart according to pure rational forms of $G$:
\[
\Pi^\ABV_{\psi}(G/F) = \bigsqcup_{[\delta]\in H^1(F,G)} \Pi^\ABV_{\psi}(G_\delta(F), \delta),
\]
where 
%\[
%\Pi^\ABV_{\psi}(G_\delta(F), \delta)\ceq \{ (\pi, \delta) \in \Pi(G_\delta(F), \delta) \tq (\pi,\delta) \in \Pi^\ABV_{\psi}(G/F) \},
%\]
%so
\[
\Pi^\ABV_{\psi}(G_\delta(F), \delta)\index{$\Pi^\ABV_{\psi}(G_\delta(F), \delta)$} \ceq \{ (\pi, \delta) \in \Pi(G_\delta(F), \delta) \tq \NEvs_\psi\mathcal{P}(\pi,\delta) \ne 0 \}.
\]
%where $(\pi,\delta) \mapsto \mathcal{P}(\pi,\delta)$ under the Langlands correspondence for pure inner twists of $G$.
%
Likewise one may define $\Pi^\mathrm{pure}_{\psi}(G/F)$ by assembling Arthur packets for inner twists of $G$; see Section~\ref{ssec:AV} for details.
We may now state a simplified version of the main conjecture of this article; see Conjecture~\ref{conjecture:1} in Section~\ref{ssec:Conjectures1} for a stronger form.
%\begin{conjecture*}
{\it Let $\psi$ be an Arthur parameter for a quasi\-split symplectic or special orthogonal $p$-adic group $G$.
Then 
\[
\Pi^\mathrm{pure}_{\psi}(G/F) = \Pi^\ABV_{\psi}(G/F).
\]
Moreover, for all pure rational forms $\delta$ of $G$ and for all $(\pi,\delta)\in \Pi^\mathrm{pure}_{ \lambda_\psi}(G/F)$,
\[
 {\langle s , (\pi, \delta) \rangle}_{\psi} = \trace_{a_s}\NEvs_{\psi}\mathcal{P}(\pi,\delta) ,
\]
for all $s\in Z_{\dualgroup{G}}(\psi)$, where $a_s$\index{$a_s$} is the image of $s$ under $Z_{\dualgroup{G}}(\psi)\to A_\psi$.
In particular, taking the case when $\delta$ is trivial, if $\pi\in \Pi_{\lambda_\psi}(G(F))$ then
\[
{\langle s, \pi \rangle}_{\psi}= \trace_{a_s} \NEvs_{\psi}\mathcal{P}(\pi) ,
\]
with $s\in Z_{\dualgroup{G}}(\psi)$ and $a_s\in A_\psi$ as above.
}
%\end{conjecture*}

The pithy version of this conjecture is {\it Arthur packets are ABV-packets for $p$-adic groups}, but that statement obscures the fact that Arthur packets are defined separately for each inner rational form (more precisely the corresponding inner twist), while ABV-packets treat all pure rational forms in one go.
More seriously, this pithy version of the conjecture obscures the fact that the conjecture proposes a completely geometric approach to calculating the characters ${\langle\ \cdot\ , \pi \rangle}_{\psi,\text{sc}}$ appearing in Arthur's endoscopic classification of representations.

%When $\delta =1$ we can compare the piece $\Pi^\ABV_{\psi}(G(F),1)$ of the $p$-adic ABV-packet $\Pi^\ABV_{\psi}(G/F)$ with $\Pi_\psi(G(F))$ as the latter appears in the main local result in \cite{Arthur:Book}. In the other cases, \cite[Conjecture 9.4.2]{Arthur:Book} suggests how to apprehend $\Pi_\psi(G_\delta^*(F), \delta)$, but as this conjecture is currently under development, it is harder to compare \todo{Bin: It should be the same if these conjectures taken for granted} $\Pi_\psi(G_\delta^*(F), \delta)$ with $\Pi^\ABV_{\psi}(G_\delta(F),\delta)$. Other ideas may sometimes be employed to find  $\Pi_\psi(G_\delta(F))$, such as the Jacquet-Langlands  Correspondence, but it must be recognized that at this time there is no comprehensive definition of $\Pi_\psi(G_\delta^*(F), \delta)$. 
%\todo{True?} Notwithstanding this difficulty, Conjecture~\ref{conjecture:1} can be tested in examples, as we show in Part~\ref{Part2}.

To simplify the discussion in this introduction we have only described ABV-packets for Arthur parameters; however, as we see in this article, it is possible to attach an ABV-packet to each Langlands parameter. Consequently, there are more ABV-packets than Arthur packets.  So, while the conjecture above asserts that every Arthur packet in an ABV-packet, it is certainly not true that every ABV-packet is an Arthur packet. If validated, the conjecture gives credence to the idea that ABV-packets may be thought of as generalized Arthur packets.

Although we do not prove the conjecture above in this article, we do have in mind a strategy for a proof using twisted spectral endoscopic transfer and its geometric counterpart for perverse sheaves on Vogan varieties; we use this strategy to prove Conjectures~\ref{conjecture:1} and \ref{conjecture:2} for unipotent representations of odd orthogonal groups in forthcoming work. %\cite{CFMX:SOodd}.
%In this article we adapt conjectures cast conjectures from \cite{ABV} and \cite{Vogan:Langlands} in a form amenable to calculations, using vanishing cycles, and then we provide evidence for these conjectures by verifying them in examples chosen to illustrate features of the main results in this article.
% including the case of 15 admissible representations that share an infinitesimal parameter with a particular depth-zero supercuspidal representation of anisotropic $p$-adic $\SO(7)$.

\subsection{Examples}
Our objective in Part~\ref{Part2} of this article is to show how to use vanishing cycles of perverse sheaves to calculate the local transfer coefficients $\langle s_\psi\, s, \pi\rangle_\psi$ that appear in Arthur's endoscopic classification \cite[Theorem 1.5.1]{Arthur:Book}.
We do this by independently calculating both sides of Conjecture~\ref{conjecture:1} in examples:
\begin{equation}\label{eqn:weakConjecture2-intro}
\langle s_\psi\, s, \pi\rangle_\psi =  (-1)^{\dim C_\psi - \dim C_\pi} \trace_s \NEvs_\psi\mathcal{P}(\pi),
\end{equation}
for every $s\in Z_{\dualgroup{G}}(\psi)$.
By making these calculations, we wish to demonstrate that the functor $\NEv$ provides a practical tool for calculating Arthur packets, the associated stable distributions and their transfer under endoscopy.
We also verify the Kazhdan-Lusztig conjecture for $p$-adic groups as it applies to our examples.

Specifically, in Part~\ref{Part2} we consider certain admissible representations of the $p$-adic groups: $\SL(2)$ and its inner form; $\PGL(4)$; split $\SO(3)$, $\SO(5)$, $\SO(7)$ and their pure rational forms.
%\subsection{Rationale}
There are a variety of reasons why we have chosen to present this specific set of examples.
%\begin{itemize}
%\item[(i)]
The groups $\SO(3)$, $\SO(5)$, and $\SO(7)$ are the first few groups in the family $\SO(2n+1)$, and this is the family we study in forthcoming work %\cite{CFMX:SOodd}
 for unipotent representations.
The group $\SO(7)$ is the first in this family to exhibit some of the more general phenomena that meaningfully illuminate the conjectures from Part~\ref{Part1}.
Moreover, since $\SO(3)\times\SO(3)$ is an elliptic endoscopic group for $\SO(5)$ and $\SO(5)\times\SO(3)$ is an elliptic endoscopic group for $\SO(7)$, we are also able to use these examples to show how to use geometric tools to compute Langlands-Shelstad transfer of invariant distributions for endoscopic groups. %  of $\SO(5)$ and $\SO(7)$. 
%Not only was this ultimately a useful feature for doing the geometric calculations, but presenting these examples side by side allows one to see certain relationships that hold more generally for endoscopic groups.
%\item [(iv)]
We also include two examples -- for $\SL(2)$ and $\PGL(4)$ -- that show how the problem of calculating Arthur packets and Arthur's transfer coefficients is reduced to unipotent representations. 
%\end{itemize}

\subsection{Relation to other work}

Using techniques different from those employed in this article (namely, microlocalization of regular holonomic $D$-modules, rather than vanishing cycles of perverse sheaves) one of the authors of this article has calculated many other examples of ABV-packets in his PhD thesis \cite{Mracek:thesis}. Specifically, if $\pi$ is a unipotent representation of $\PGL(n)$, $\SL(n)$, $\Sp(2n)$ or $\SO(2n+1)$, of any of its pure rational forms, and if the image of Frobenius of the infinitesimal parameter of $\pi$ is \emph{regular} semisimple in the dual group, then all ABV-packets containing $\pi$ have been calculated by finding the support of the microlocalization of the relevant $D$-modules.
This work overlaps with Sections~\ref{sec:SO(3)} and \ref{sec:SO(5)regular}, here.
However, we found it difficult to calculate the finer properties of the microlocalization of these $D$-modules required to determine the local transfer coefficients appearing in Arthur's work. 
This is one of the reasons we use vanishing cycles of perverse sheaves in this article. 
%\todo{James, are you okay with this comment?}

\subsection{Disclaimer}

To acknowledge the debt we owe to \cite{ABV} and \cite{Vogan:Langlands}, we refer to the packets appearing in this article as ABV-packets for $p$-adic groups, though it must be pointed out that they appear neither in \cite{ABV} nor in \cite{Vogan:Langlands}.
For real groups, the implicit definition of ABV-packets uses an exact functor $Q^\text{mic}_C: \Perv_{H_\lambda}(V_\lambda) \to \Loc_{H_\lambda}(T^*_{C}(V_\lambda)_\textrm{reg})$ introduced in \cite[Theorem 24.8]{ABV} whose properties are established using stratified Morse theory, which we have not used in this article; and for $p$-adic groups, \cite{Vogan:Langlands} uses the microlocal Euler characteristic $\chi_C^\text{mic} : \Perv_{H_\lambda}(V_\lambda) \to \ZZ$ derived from the microlocalization functor, which we also have not used in this article.
We have elected to use vanishing cycles, or more precisely the functor $\Ev$, in place of stratified Morse theory or microlocalization because we found $\Ev$ more amenable to the many calculations we performed in Part~\ref{Part2} and because we found some theoretical advantages to using vanishing cycles.

\subsection{Acknowledgements} 
It is a pleasure to thank Jim Arthur for suggesting this problem at the 2014 Future of Trace Formulas workshop and to thank the Banff International Research Station where that workshop took place.
We also offer our profound thanks to Jeffrey Adams, Dan Barbasch and David Vogan, as this article is an adaptation of their beautiful ideas.
This article was developed at the Voganish Seminar based at the University of Calgary, 2015--2017. We thank everyone who participated.
We are grateful to Pramod Achar, Jeff Adams, Anne-Marie Aubert, Patrick Brosnan, Aaron Christie, Paul Mezo, Dipendra Prasad and Kam-Fai Tam for helpful conversations.
We happily acknowledge the hospitality of the Mathematisches Forschungsinstitut Oberwolfach %where CC first encountered Langlands parameter varieties at a Research in Pairs program with Pramod Achar, Masoud Kamgarpour and Hadi Sal\-masian and 
where we presented an early version of the first part of this article at the 2017 Conference on Harmonic Analysis and the Trace Formula.
%Our thanks especially to Pramod Achar, who helped us calculate perverse extensions and taught us how to use the decomposition theorem for fun and profit and to Kam-Fai Tam for identifying the type of the depth-zero supercuspidal representation appearing in the $\SO(7)$ example in this article and for many other helpful comments.

\part{Arthur packets and microlocal vanishing cycles}\label{Part1}

\section{Overview}

Here are the main features of Part~\ref{Part1}, by section.
%\begin{enumerate}
%\item In Section~\ref{section:overview} we give a quick overview of the entire article ending with a simplified form of the main conjectures on a geometric characterization of Arthur packets using vanishing cycles; a stronger form is presented at the end of the article, in Section~\ref{section:conjectures}.

%\item 
In Section~\ref{section:Arthur} we review the main local result from \cite{Arthur:Book}, adapted to pure rational forms of quasi\-split connected reductive groups over $p$-adic fields; see especially Proposition~\ref{proposition:AS}.

%\item 
In Section~\ref{section:Voganvarieties} we describe Vogan's parameter variety for $p$-adic groups and review Vogan's perspective on the local Langlands conjecture for pure rational forms of quasi\-split connected reductive groups over $p$-adic fields, based on \cite{Vogan:Langlands}.

%\item 
Theorem~\ref{theorem:unramification} shows that the Vogan variety for an arbitrary infinitesimal parameter coincides with the Vogan variety for an unramified infinitesimal parameter. This theorem also shows that the category of equivariant perverse sheaves is related to the category of equivariant perverse sheaves on a graded Lie algebra, thereby putting tools from \cite{Lusztig:Study} at our disposal.

%\item 
Proposition~\ref{theorem:regpsi} shows that Arthur parameters determine conormal vectors to Vogan's parameter space and further that representations of the component group attached to the Arthur parameter correspond exactly to equivariant local systems on the orbit of that conormal vector, as in the case of real groups \cite{ABV}.

%\item 
In Section~\ref{section:Ev} we use vanishing cycles to define two exact functors -- denoted by $\Evs$ and $\NEvs$ --  from equivariant perverse sheaves on the Vogan variety to equivariant local systems on the strongly regular part of the conormal bundle associated to its stratification. Sections~\ref{ssec:Ev} through \ref{ssec:Evs} establish the main properties of $\Evs$, including Theorem~\ref{theorem:rank1} which determines the rank of these local systems.
Theorem~\ref{theorem:NEvs} shows that $\NEvs$ replaces microlocalization by showing that it enjoys properties parallel to $Q^\text{mic}$ from  \cite[Theorem 24.8]{ABV},

%\item 
In Section~\ref{section:conjectures} we express Vogan's conjectures from \cite{Vogan:Langlands} in terms of vanishing cycles; see Conjectures~\ref{conjecture:1} and \ref{conjecture:2}. 
%and consequently, a precise statement of the the geometric pure Arthur Correspondence .
One of the most interesting features of the vanishing cycles approach to Arthur packets is that it suggests two different parametrizations of Arthur packets, as determined by the two functors $\Evs$ and $\NEvs$.
Conjecture~\ref{conjecture:1} predicts that the one determined by the functor $\NEvs$ coincides with Arthur's work. 
%\end{enumerate}

\section{Arthur packets and pure rational forms}\label{section:Arthur}

The goal of this section is primarily to set some notation and recall the characters of $A_{\psi}$ and $A_{\psi,\text{sc}}$ appearing in Arthur's work as they pertain to pure rational forms.

\subsection{Local Langlands group}\label{ssec:WF}

Let $F$\index{$F$, a $p$-adic field} be a $p$-adic field; let $q=q_F$ be the cardinality of the residue field for $F$.
Let $\bar{F}$\index{$\bar{F}$} be an algebraic closure of $F$ and set $\Gamma_{F} \ceq \text{Gal}(\bar{F}/F)$\index{$\Gamma_{F}$}. 
There is an exact sequence
\[
\begin{tikzcd}
1 \arrow{r} & I_{F} \arrow{r} & \Gamma_{F} \arrow{r} & \Gal({\bar \FF}_{q}/\FF_{q}) \arrow{r} & 1,
\end{tikzcd}
\]
where $I_{F}$ is the inertia subgroup of $\Gamma_{F}$ and ${\bar \FF}_{q}$ is an algebraic closure of $\FF_{q}$. 
Since $\Gal({\bar \FF}_{q}/\FF_{q}) \cong \widehat{\mathbb{Z}}$, it contains a dense subgroup $W_{k_F} \iso \mathbb{Z}$, in which $1$ corresponds to the automorphism $x\mapsto x^{q_{F}}$ in ${\bar \FF}_{q}$. 
We fix a lift $\Frob$\index{$\Frob$} in $\Gamma_{F}$ of $x\mapsto x^{q_{F}}$ in $W_{k_F}$.
The Weil group $W_{F}$ of $F$ is the preimage of $W_{k_F}$ in $\Gamma_{F}$,
\[
\begin{tikzcd}
1 \arrow{r} & I_{F} \arrow{r} & W_{F} \arrow{r} & \arrow[bend right]{l} W_{k_F} \arrow{r} & 1,
\end{tikzcd}
\]
topologized so that the compact subgroup $I_{F}$ is open in $W_{F}$. 
Let 
\[
\abs{\ }_F: W_{F} \longrightarrow \mathbb{R}^{\times}
\] 
be the norm homomorphism, trivial on $I_{F}$ and sending $\Frob$ to $q_{F}$.
Then $\abs{\ }_F$ is continuous with respect to this topology for $W_F$. 

%Let $\k$ be a fixed algebraically closed  field in which $q_{F}$ is invertible.
The \emph{local Langlands group} $L_F$\index{$L_F$, the local Langlands group of $F$} of $F$ is the trivial extension of $W_F$ by $\SL(2,\CC)$:
\[
\begin{tikzcd}
1 \arrow{r} & \SL(2, \CC) \arrow{r} &  L_F \arrow{r} & \arrow[bend right]{l} W_{F} \arrow{r} & 1.
\end{tikzcd}
\]
%\[
%L_F \ceq W_{F} \times \SL(2, \CC).
%\]

\subsection{L-groups}\label{ssec:Lgroups}

%Let $F$ be a field of characteristic zero and 
Let $G$ be a connected reductive linear algebraic group over $F$.  
Let 
\[
\Psi_{0}(G) = (X^{*}, \Delta, X_{*}, \Delta^{\vee})
\]
be the based root datum of $G$. The dual based root datum is 
\[
\Psi^{\vee}_{0}(G) \ceq (X_{*}, \Delta^{\vee}, X^{*}, \Delta).
\] 
A \emph{dual group} of $G$ is a complex connected reductive algebraic group $\dualgroup{G}$\index{$\dualgroup{G}$, the dual group of $G$} together with a bijection 
\[
%\eta_{\dualgroup{G}}: 
\Psi^{\vee}_{0}(G) \to \Psi_{0}(\dualgroup{G}).
\]
The Galois group $\Gamma_{F}$ acts on $\Psi_{0}(G)$ and $\Psi_{0}^{\vee}(G)$; see \cite[Section 1.3]{Borel:Automorphic}. 
This action induces a homomorphism
\[
%\mu: 
\Gamma_{F} \longrightarrow \text{Aut}(\Psi_{0}(G)) \cong \text{Aut}(\Psi_{0}^{\vee}(G)).
\]
%Let $\dualgroup{G}$ be a dual group of $G$. 
Combining these gives a homomorphism
\[
\mu_{\dualgroup{G}}: \Gamma_{F} \longrightarrow \text{Aut}(\Psi_{0}(\dualgroup{G})).
\]
%\begin{definition}
An \emph{L-group data}\index{L-group data} for $G$ is a triple $(\dualgroup{G}, \rho, \text{Spl}_{\dualgroup{G}})$, where $\dualgroup{G}$ is a dual group of $G$, $\rho: \Gamma_{F} \longrightarrow \text{Aut}(\dualgroup{G})$ is a continuous homomorphism and $\text{Spl}_{\dualgroup{G}}:= (B, T, \{X_{\alpha}\})$ is a splitting of $\dualgroup{G}$ such that $\rho $ preserves $\text{Spl}_{\dualgroup{G}}$ and induces $\mu_{\dualgroup{G}}$ on $\Psi_{0}(\dualgroup{G})$ (see \cite[Sections 1, 2]{Borel:Automorphic} for details.)

The \emph{L-group}\index{$\Lgroup{G}$, L-group} of $G$ determined by the L-group data $(\dualgroup{G}, \rho, \text{Spl}_{\dualgroup{G}})$ is
\[
\Lgroup{G} \ceq \dualgroup{G} \rtimes W_{F},
\] 
where the action of $W_{F}$ on $\dualgroup{G}$ factors through $\rho$.
Since $\rho$ induces $\mu_{\dualgroup{G}}$ on $\Psi_{0}(\dualgroup{G})$ and since $\text{Aut}(\Psi_{0}(\dualgroup{G}))$ is finite, the action of $W_{F}$ on $\dualgroup{G}$ factors through a finite quotient of $W_F$.
We remark that the L-group, $\Lgroup{G}$, only depends on $\dualgroup{G}$ and $\rho$ and is unique up to conjugation by  elements in  $\dualgroup{G}$  fixed by $\Gamma_F$.
Henceforth we fix an L-group, $\Lgroup{G}$, of $G$
and make $\Lgroup{G}$ a topological group by giving $\dualgroup{G}$ the complex topology. %and $W_F$ the profinite topology.

\subsection{Semisimple, elliptic and hyperbolic elements in L-groups}\label{ssec:hyperbolic}

Recall that a semisimple element $x$ of a complex reductive group $H$ is called {\it hyperbolic} (resp. {\it elliptic}) if for every torus $D \subset H$ containing $x$ and every rational character $\chi: D \to \mathbb{G}_m(\CC)$ of $D$, $\chi(x)$ is a positive real number (resp. $\chi(x)$ has complex norm $1$).
An arbitrary semisimple element can be uniquely decomposed as a commuting product of hyperbolic and elliptic semisimple elements.  
%To see this, fix an embedding of $x$ into the group of diagonal matrices and apply a polar decomposition to the eigenvalues. 
%We call such a decomposition a polar decomposition of $x$.  
 An element commutes with $x$ if and only if it commutes with its hyperbolic and elliptic parts separately. 
 %\textcolor{red}{citation?}
 The same is true in L-groups, as we now explain.

Recall that an element $g \in \Lgroup{G}$ is semisimple if $\Int(g)$ is a semisimple automorphism of $\dualgroup{G}$ \cite[Section 8.2(i)]{Borel:Automorphic}.
\begin{lemma}\label{lemma:N}
Then $g = f \rtimes w\in \Lgroup{G}$ is semisimple if and only if $f'\in \dualgroup{G}$ is semisimple where $(f \rtimes w)^N = f' \rtimes w^N$ and $w^{N}$ acts trivially on $\dualgroup{G}$. 
\end{lemma}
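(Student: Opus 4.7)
The plan is to reduce the semisimplicity of $g = f \rtimes w$ to that of an element of $\dualgroup{G}$ by taking an appropriate power and exploiting the finiteness of the Galois action on $\dualgroup{G}$. I would rely on two standard facts about the algebraic group $\Aut(\dualgroup{G})$ in characteristic zero: (i) an element of an algebraic group in characteristic zero is semisimple if and only if some (equivalently, every) positive power is semisimple, and (ii) for $h \in \dualgroup{G}$, the inner automorphism $\Int(h)$ is semisimple in $\Aut(\dualgroup{G})$ if and only if $h$ itself is semisimple in $\dualgroup{G}$.

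First I would note that because the action of $W_F$ on $\dualgroup{G}$ factors through a finite quotient (Section~\ref{ssec:Lgroups}), an integer $N$ with $w^N$ acting trivially on $\dualgroup{G}$ exists, which gives meaning to the decomposition $(f \rtimes w)^N = f' \rtimes w^N$ of the statement. Since $w^N$ acts trivially, conjugation by $f' \rtimes w^N$ on $\dualgroup{G}$ coincides with conjugation by $f'$, so
\[
\Int(g)^N \;=\; \Int(g^N) \;=\; \Int(f' \rtimes w^N) \;=\; \Int(f') \quad \text{in } \Aut(\dualgroup{G}).
\]
By definition $g$ is semisimple iff $\Int(g)$ is semisimple, and by (i) this is equivalent to $\Int(g)^N = \Int(f')$ being semisimple, which by (ii) is equivalent to $f'$ being semisimple in $\dualgroup{G}$.

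The main (and mild) obstacle is to justify (i) and (ii). For (i), the Jordan decomposition $\Int(g) = \sigma_s \sigma_u$ in $\Aut(\dualgroup{G})$ yields $\Int(g)^N = \sigma_s^N \sigma_u^N$, and in characteristic zero $\sigma_u^N = 1$ forces $\sigma_u = 1$. For (ii), the Jordan decomposition $f' = f'_s f'_u$ in $\dualgroup{G}$ gives $\Int(f') = \Int(f'_s)\,\Int(f'_u)$, a Jordan decomposition in $\Aut(\dualgroup{G})$; so $\Int(f')$ semisimple forces $\Int(f'_u) = 1$, i.e.~$f'_u \in Z(\dualgroup{G})$, and then $f'_u = 1$ since $Z(\dualgroup{G})$ is diagonalizable over $\CC$ and therefore contains no nontrivial unipotent elements in characteristic zero. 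Once (i) and (ii) are in hand the argument is a direct manipulation of the semidirect product.
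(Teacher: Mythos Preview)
Your proof is correct and follows essentially the same approach as the paper: both reduce to the identity $\Int(g)^N = \Int(f')$ in $\Aut(\dualgroup{G})$ (since $w^N$ acts trivially) together with the fact that an automorphism is semisimple iff some positive power is. Your treatment is slightly more explicit in justifying facts (i) and (ii), whereas the paper phrases the key step as factoring $\Int(g^N) = \Int(f')\Int(w^N)$ with $\Int(w^N)$ semisimple, but since $\Int(w^N)$ is in fact the identity this amounts to the same argument.
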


\begin{proof}
Suppose $g = f \rtimes w$ is a semisimple element in $\Lgroup{G} $, then equivalently $\Int(g) = \Int(f) \circ \Int(w)$ is a semisimple automorphism of $\dualgroup{G}$. 
Then $\Int(g)$ is semisimple if and only if $\Int(g)^{n}$ is semisimple for some  integer $n$.
Since the action of $W_F$ on $\dualgroup{G}$ factors through a finite quotient of $W_F$ (see Section~\ref{ssec:Lgroups}), there exists a positive integer $N$ so that $w^N$ acts trivially on $\dualgroup{G}$. 
Then $g^N$ is semisimple. 
Since $\Int(f')$ commutes with $\Int(w^N)$ and $\Int(w^N)$ is semisimple, then $\Int(f')$ is also semisimple, {\it i.e.}, $f'$ is a semisimple element in $\dualgroup{G}$. 
Therefore, we see $g$ is semisimple if and only if $f'$ is semisimple. 
\end{proof}

The hyperbolic and elliptic parts of a semisimple $g =f \rtimes \Frob \in \Lgroup{G}$ are defined as follows.
Let $N$ be as above, so $(f \rtimes w)^N = f'\rtimes w^N$ and $w^{N}$ acts trivially on $\dualgroup{G}$.
Then $f'\in \dualgroup{G}$ is semisimple.
Let $s'\in \dualgroup{G}$ be the hyperbolic part of $f'$ and let  $t'\in \dualgroup{G}$ be the  elliptic part of  $f'$.
Let $s$ be the unique hyperbolic element of $\dualgroup{G}$ such that $s^N = s'$. It is clear that $s$ is independent of $N$.
Set $t =  s^{-1} f$. 
%Then  $(t\rtimes \Frob )^N  =  t'\rtimes \Frob $.
Then $\Ad(s)\in \Aut(\dualgroup{\g})$ is the hyperbolic part of the semisimple automorphism $\Ad(f\rtimes \Frob)\in \Aut(\dualgroup{\g})$ and $\Ad(t\rtimes \Frob)\in \Aut(\dualgroup{\g})$ is the elliptic part of the semisimple automorphism $\Ad(f\rtimes \Frob)\in \Aut(\dualgroup{\g})$.
Moreover, $\,^\Frob s = t^{-1}  s t$, so
\[
%f\rtimes \Frob  = 
(s\rtimes  1_{W_F})  (t\rtimes w ) =  (t\rtimes w )(s\rtimes  1_{W_F}).
\]
We call $s\rtimes 1_{W_F}$ the {\it hyperbolic part of $f\rtimes w$} and $t\rtimes w$ the {\it elliptic part of $f\rtimes w$}.\index{hyperbolic part}\index{elliptic part}

%Write $\lambda(\Frob ) = f_\lambda \rtimes \Frob $\index{$f_\lambda$}; let $s_\lambda\rtimes 1$ be the hyperbolic part of $\lambda(\Frob )$ and let $t_\lambda\rtimes \Frob $ be the elliptic part of $\lambda(\Frob )$.\index{$s_\lambda$, hyperbolic part of $f_\lambda$}\index{$t_\lambda$, elliptic part of $f_\lambda$}

\subsection{Langlands parameters}\label{ssec:Lparameters}

If $\phi : L_F \to \Lgroup{G}$ is a group homomorphism that commutes with the projections $L_F\to W_F$ and $\Lgroup{G}\to W_F$, then we may define $\phi^\circ : L_F\to \dualgroup{G}$ by $\phi(w,x) = \phi^\circ(w,x) \rtimes w$.\index{$\phi^\circ$}
We have the following map of split short exact sequences:
\[
\begin{tikzcd}
1 \arrow{r} & \arrow{d} \SL(2,\CC) \arrow{r} & \arrow{d}{\phi} \arrow{dl}[swap]{\phi^\circ}  L_F \arrow{r} & \arrow[bend right]{l} \arrow[equal]{d} W_F \arrow{r} & 1\\
1 \arrow{r} & \dualgroup{G} \arrow{r} & \Lgroup{G} \arrow{r} & \arrow[bend right]{l}  W_F \arrow{r} & 1.
\end{tikzcd}
\]
A \emph{Langlands parameter}\index{Langlands parameter, $\phi: L_F \to \Lgroup{G}$}\index{$\phi: L_F\to\Lgroup{G}$, Langlands parameter} for $G$ is a homomorphism 
$
\phi : L_F \to \Lgroup{G}
$ 
such that
\begin{enumerate}[widest=(P.iv),,leftmargin=*]
\labitem{(P.i)}{Lparameter-1} $\phi$ is continuous;
\labitem{(P.ii)}{Lparameter-2} $\phi$ commutes with the projections $ L_F\to W_F$ and $\Lgroup{G}\to W_F$; 
\labitem{(P.iii)}{Lparameter-3} $\phi^\circ\vert_{\SL(2,\CC)}: \SL(2,\CC) \to \dualgroup{G}$ is induced from a morphism of algebraic groups; 
\labitem{(P.iv)}{Lparameter-4} the image of $\phi\vert_{W_F}$ consists of semisimple elements in $\Lgroup{G}$.
\end{enumerate}
%See \cite[Section 8.2(i)]{Borel:Automorphic} and 
%See Section~\ref{ssec:hyperbolic} for the meaning of semisimple elements in L-groups.
It should be noted that Langlands parameters, as defined above, are not exactly the same as admissible homomorphisms, as defined in \cite[Section 8.2]{Borel:Automorphic}, because we do not impose condition \cite[Section 8.2(ii)]{Borel:Automorphic}. This is deliberate and is, in fact, quite important to the theory presented in this paper.

Let $P(\Lgroup{G})$ be the set of Langlands parameters for $G$\index{$P(\Lgroup{G})$, set of Langlands parameters}\index{Langlands parameters}.
Langlands parameters are said to be equivalent if they are conjugate under $\dualgroup{G}$. The set of equivalence classes of Langlands parameters of $G$ is denoted by $\Phi(G/F)$;\index{$\Phi(G/F)$} it is independent of the choice of L-group $\Lgroup{G}$ made above.

%For any $\phi\in P(\Lgroup{G})$, let $s_\phi\rtimes 1$ (resp. $t_\phi\rtimes \Frob$) be the hyperbolic part (resp. elliptic part) of $\phi(\Frob)$, as defined in Section~\ref{ssec:hyperbolic}.
%

For $\phi \in P(\Lgroup{G})$, we refer to
\[
A_\phi \ceq \pi_0(Z_{\dualgroup{G}}(\phi)) = Z_{\dualgroup{G}}(\phi)/Z_{\dualgroup{G}}(\phi)^0
\]
as the \emph{component group for $\phi$}\index{component group for $\phi$, $A_\phi$}\index{$A_\phi$, component group for $\phi$}.

\subsection{Arthur parameters}\label{ssec:psi}

If $\psi : L_{F} \times \SL(2, \mathbb{C}) \longrightarrow \Lgroup{G}$
is a group homomorphism that commutes with the projections $L_F\times \SL(2,\CC) \to L_F \to W_F$ and $\Lgroup{G}\to W_F$, then we define $\psi^\circ : L_F\times \SL(2,\CC) \to \dualgroup{G}$ by $\psi(u,y) = \psi^\circ(u,y) \rtimes w(u)$, where $w(u)$ is the image of $u$ under $L_F\to W_F$.\index{$\psi^\circ$}
%
\iffalse
\[
\begin{tikzcd}
{} & & & 1 \arrow{d} & \\
{} & & & \SL(2) \arrow{d} & \\
1 \arrow{r} & \SL(2) \arrow{r} \arrow{d} & \arrow{dl}[swap]{\psi^\circ} L_F \times \SL(2) \arrow{r} \arrow{d}{\psi} & \arrow[bend right]{l} L_F \arrow{r} \arrow{d} & 1\\
1 \arrow{r} & \dualgroup{G} \arrow{r} & \Lgroup{G} \arrow{r} &\arrow[bend right]{l} \arrow[bend right]{u}  W_F \arrow{r} \arrow{d} & 1\\
{} & & & 1 & \\
\end{tikzcd}
\]
\fi
%
An {\it Arthur parameter} for $G$ is a homomorphism   
$
\psi : L_{F} \times \SL(2, \mathbb{C}) \longrightarrow \Lgroup{G} 
$
such that
\begin{enumerate}[widest=(Q.iii),,leftmargin=*]
%\labitem{(0)}{Aparameter-0} $\psi$ commutes with the projections $L_F\times \SL(2,\CC) \to L_F \to W_F$ and $\Lgroup{G}\to W_F$; 
%\item the image of the composition of $W_F \to L_F \to L_F\times \SL(2,\CC)$ with $\psi : L_F \times \SL(2) \to \Lgroup{G}$ consists of semisimple elements in $\Lgroup{G}$;
\labitem{(Q.i)}{Aparameter-1} $\psi\vert_{L_{F}}$ is a Langlands parameter for $G$;
\labitem{(Q.ii)}{Aparameter-2} $\psi^\circ\vert_{\SL(2,\CC)} : \SL(2,\CC) \to \dualgroup{G}$ is induced from a morphism of algebraic groups;
\labitem{(Q.iii)}{Aparameter-3} the image $\psi^\circ\vert_{W_F} : W_F \to \dualgroup{G}$ is bounded (its closure is compact).% in the complex topology for $\dualgroup{G}$.
\end{enumerate}
Following \cite[Definition 4.2]{Vogan:Langlands}, the set of Arthur parameters for $G$ will be denoted by $Q(\Lgroup{G})$.\index{$Q(\Lgroup{G})$, set of Arthur parameters}\index{Arthur parameters for $G$}
The set of $\dualgroup{G}$-conjugacy classes of Arthur parameters will be denoted by $\Psi(G/F)$.\index{$\Psi(G/F)$}

For $\psi \in Q(\Lgroup{G})$, we refer to
\[
A_{\psi} \ceq \pi_0(Z_{\dualgroup{G}}(\psi)) = Z_{\dualgroup{G}}(\psi)/Z_{\dualgroup{G}}(\psi)^{0}
\]
as the \emph{component group for $\psi$}\index{component group for $\psi$, $A_\psi$}\index{$A_\psi$, component group for $\psi$}.

\begin{lemma}\label{lemma: hyperbolic decomposition}
For any Arthur parameter $\psi \in Q(\Lgroup{G})$, the hyperbolic part of $\psi(\Frob,d_{\Frob})$ is $\psi(1, d_{\Frob})$ and is the elliptic part of $\psi(\Frob,d_{\Frob})$ is $\psi(\Frob,1)$.\end{lemma}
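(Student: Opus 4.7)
The plan is to exploit the commuting product decomposition in $L_F \times \SL(2,\CC)$ and then identify each factor as hyperbolic or elliptic separately, invoking the uniqueness of the hyperbolic-elliptic decomposition to conclude.

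First, I would observe that $(\Frob, d_{\Frob}) = (\Frob,1)\cdot(1,d_{\Frob})$ with the two factors commuting (they lie in distinct factors of the direct product). Applying the homomorphism $\psi$ yields
\[
\psi(\Frob, d_{\Frob}) = \psi(\Frob,1)\cdot \psi(1,d_{\Frob}),
\]
with the two right-hand factors commuting in $\Lgroup{G}$. Note that $\psi(1,d_{\Frob}) = \psi^\circ(1,d_{\Frob})\rtimes 1_{W_F}$ sits inside $\dualgroup{G}\subset \Lgroup{G}$.

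Next I would argue that $\psi(1,d_{\Frob})$ is hyperbolic in $\Lgroup{G}$. By \ref{Aparameter-2}, $\psi^\circ\vert_{\SL(2,\CC)}$ is a morphism of algebraic groups, and $d_{\Frob}=\diag(q^{1/2},q^{-1/2})$ is hyperbolic in $\SL(2,\CC)$. For any rational representation $\rho$ of $\dualgroup{G}$, the composition $\rho\circ\psi^\circ\vert_{\SL(2,\CC)}$ is a rational representation of $\SL(2,\CC)$ and hence sends $d_{\Frob}$ to an operator with positive real eigenvalues; applying the characterization of hyperbolic semisimple elements in terms of eigenvalues in rational representations shows $\psi^\circ(1,d_{\Frob})$ is hyperbolic in $\dualgroup{G}$, hence $\psi(1,d_{\Frob})$ is hyperbolic in $\Lgroup{G}$.

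Then I would show $\psi(\Frob,1)$ is elliptic, using the setup of Section~\ref{ssec:hyperbolic}. Choose $N$ with $\Frob^N$ acting trivially on $\dualgroup{G}$; then
\[
\psi(\Frob,1)^N = \psi(\Frob^N,1) = \psi^\circ(\Frob^N,1)\rtimes \Frob^N,
\]
so in the notation of Lemma~\ref{lemma:N} we have $f' = \psi^\circ(\Frob^N,1)$. Condition \ref{Lparameter-4} (applied to $\psi\vert_{L_F}$) together with Lemma~\ref{lemma:N} gives that $f'$ is semisimple in $\dualgroup{G}$, and condition \ref{Aparameter-3} forces $f'$ to lie in a compact subgroup of $\dualgroup{G}$. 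A semisimple element of a complex reductive group contained in a compact subgroup is elliptic (every rational character restricted to the closure of $\langle f'\rangle$ is bounded, hence of norm $1$ on any torus containing $f'$). Therefore $f'$ is elliptic; its hyperbolic part $s'$ is trivial, so the unique hyperbolic $s\in\dualgroup{G}$ with $s^N=s'=1$ is also $s=1$. By the recipe of Section~\ref{ssec:hyperbolic}, the hyperbolic part of $\psi(\Frob,1)$ is trivial and its elliptic part is $\psi(\Frob,1)$ itself.

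Finally, having written $\psi(\Frob,d_{\Frob})$ as a commuting product of the hyperbolic element $\psi(1,d_{\Frob})\in\dualgroup{G}$ and the elliptic element $\psi(\Frob,1)$, the uniqueness of the hyperbolic-elliptic decomposition of a semisimple element of $\Lgroup{G}$ yields the claim. I expect the main obstacle to be the careful verification that ``bounded plus semisimple implies elliptic'' transfers correctly from $\dualgroup{G}$ back up to $\Lgroup{G}$ through the $N$-th power trick used to define ellipticity in the L-group; this amounts to checking that the elliptic-hyperbolic recipe of Section~\ref{ssec:hyperbolic} behaves well with respect to commuting products, which is the one place where a brief but careful bookkeeping is needed.
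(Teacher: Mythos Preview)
Your proposal is correct and follows essentially the same approach as the paper: decompose $\psi(\Frob,d_{\Frob})$ as the commuting product $\psi(\Frob,1)\,\psi(1,d_{\Frob})$, verify that $\psi(1,d_{\Frob})$ is hyperbolic (via \ref{Aparameter-2}) and $\psi(\Frob,1)$ is elliptic (via \ref{Aparameter-3}), and conclude by uniqueness of the hyperbolic--elliptic decomposition. The paper's proof is simply a terser version of yours, citing Section~\ref{ssec:hyperbolic} and \ref{Aparameter-3} without spelling out the $N$-th power bookkeeping you (rightly) flag as the only point requiring care.
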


\begin{proof}
Observe that $\psi(\Frob,d_{\Frob})  = \psi(\Frob,1) \psi(1, d_{\Frob}) = \psi(1, d_{\Frob})\psi(\Frob,1)$.
%Define $s_\psi\in \dualgroup{G}$ by $\psi( 1, d_{\Frob}) = s_{\psi} \rtimes 1_{W_F}$.
Using Section~\ref{ssec:hyperbolic} observe that  $\psi(1, d_{\Frob})$ is hyperbolic and, using \ref{Aparameter-3}, that $\psi(\Frob,1)$ is elliptic.
Since these commute and the product is the semisimple, it follows that $\psi(\Frob,1)$ is the elliptic part of $\psi(\Frob,d_{\Frob})$ and $\psi( 1, d_{\Frob})$ is the hyperbolic part of $\psi(\Frob,d_{\Frob})$.
%\begin{align}% hyperbolic decomposition eq
%\label{eq: hyperbolic decomposition}
%\phi_{\psi}(\text{Fr},1)_{e} = \psi(\text{Fr},1,1) \text{ and } \phi_{\psi}(\text{Fr})_{h} = t_{\psi}.
%\end{align}
\end{proof}

\subsection{Langlands parameters of Arthur type}\label{ssec:phipsi}

\iffalse %
Recall that $d : W_F \to \SL(2,\CC)$ is defined by
\begin{equation}\label{def:dw}\index{$d_w$}
d_w:= \begin{pmatrix} \abs{w}_F^{1/2} & 0 \\ 0 & \abs{w}_F^{-1/2} \end{pmatrix}.
\end{equation}
Note that $w \mapsto (w,d_w)$ is a section of the projection $L_F \to W_F$.
\fi %

For $\psi : L_F \times \SL(2, \mathbb{C}) \to \Lgroup{G}$, define $\phi_\psi : L_F \to \Lgroup{G}$ by 
\[
\phi_\psi(u) \ceq \psi(u,d_u);
%\qquad\text{(resp.} \qquad
%\phi\trans_\psi(w,x) = \phi(w,x,d^{-1}_w))
\]
see Section~\ref{ssec:background} for the definition of $d_u$.
This defines a map
\begin{equation}\label{eq:Psi-Phi}
\begin{array}{rcl}
Q(\Lgroup{G}) &\to& P(\Lgroup{G}) \\
\psi &\mapsto& \phi_\psi. 
\end{array}
\end{equation}
%\todo{The sympol LgroupG seems to consistently have extra space in front of the L}
We will refer to $\phi_\psi$ as the \emph{Langlands parameter associated with $\psi$}.\index{$\phi_\psi$, the Langlands parameter associated with $\psi$}
The function $\psi\mapsto \phi_\psi$ is neither injective nor surjective.
Langlands parameters in the image of the map $Q(\Lgroup{G}) \to P(\Lgroup{G})$  are called  \emph{Langlands parameters of Arthur type}\index{Langlands parameters of Arthur type}.

\begin{lemma}
The function 
\[
\Psi(G/F) \to \Phi(G/F),
\]
induced from $Q(\Lgroup{G}) \to P(\Lgroup{G})$, is injective.
\end{lemma}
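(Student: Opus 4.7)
The plan is to reduce to $\phi_{\psi_1} = \phi_{\psi_2}$ by $\dualgroup{G}$-conjugation and then reconstruct each $\psi_i$ from its associated Langlands parameter, up to a residual conjugation coming from the Arthur $\SL(2, \CC)$-factor. After replacing $\psi_2$ by a suitable $\dualgroup{G}$-conjugate, assume $\phi_{\psi_1} = \phi_{\psi_2}$. For $u \in L_F$ with $\abs{u}_F = 1$, one has $d_u = 1$, so $\phi_{\psi_i}(u) = \psi_i(u, 1)$ and hence $\psi_1(u, 1) = \psi_2(u, 1)$. For $u = \Frob$, the factorisation
\[
\phi_{\psi_i}(\Frob) = \psi_i(\Frob, d_{\Frob}) = \psi_i(\Frob, 1)\cdot \psi_i(1, d_{\Frob})
\]
is a product of commuting factors (because the two domain factors $L_F$ and $\SL(2, \CC)$ commute), and by Lemma \ref{lemma: hyperbolic decomposition} these are the elliptic and hyperbolic parts respectively. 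Uniqueness of the hyperbolic/elliptic decomposition therefore forces
\[
\psi_1(\Frob, 1) = \psi_2(\Frob, 1) \qquad\text{and}\qquad \psi_1(1, d_{\Frob}) = \psi_2(1, d_{\Frob}).
\]
Since $W_F$ is topologically generated by $I_F$ and $\Frob$, the restrictions of $\psi_1$ and $\psi_2$ to $L_F \times \{1\}$ agree; denote this common Langlands parameter by $\phi$.

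It remains to match the Arthur $\SL(2, \CC)$-factors $\tau_i := \psi_i|_{\{1\}\times\SL(2,\CC)}\colon \SL(2,\CC) \to \dualgroup{G}$, which are algebraic by condition \ref{Aparameter-2}. Both $\tau_i$ factor through $Z_{\dualgroup{G}}(\phi)$, which is a complex reductive group because $\phi|_{W_F}$ has bounded image (inherited from condition \ref{Aparameter-3}). The previous step yields $\tau_1(d_{\Frob}) = \tau_2(d_{\Frob})$; since $\{d_{\Frob}^n : n\in\ZZ\}$ is Zariski-dense in the diagonal torus of $\SL(2, \CC)$, the $\mathfrak{sl}_2$-triples $(d\tau_i(E), h, d\tau_i(F))$ in $\Lie Z_{\dualgroup{G}}(\phi)$ share a common neutral element $h := d\tau_i(H)$. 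Invoking the standard result that $\mathfrak{sl}_2$-triples in a reductive Lie algebra with prescribed neutral element $h$ form a single orbit under $Z(h)^0$, we obtain $g \in Z_{\dualgroup{G}}(\phi)$ conjugating $\tau_1$ to $\tau_2$. Since $g$ centralizes $\phi$ and conjugates $\tau_1$ to $\tau_2$, and since $\psi_i(u, y) = \psi_i(u, 1)\cdot\tau_i(y)$, we conclude $g\psi_1 g^{-1} = \psi_2$.

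The principal technical input is the uniqueness of $\mathfrak{sl}_2$-triples with given neutral element, applied inside the reductive centralizer $Z_{\dualgroup{G}}(\phi)$; everything else is bookkeeping built on Lemma \ref{lemma: hyperbolic decomposition}. I do not anticipate significant obstacles beyond this well-known structural result.
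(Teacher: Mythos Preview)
Your proof is correct and follows essentially the same route as the paper's: both use Lemma~\ref{lemma: hyperbolic decomposition} to separate the hyperbolic and elliptic parts of $\phi_{\psi_i}(\Frob)$, deduce that $\psi_1|_{L_F} = \psi_2|_{L_F}$, and then invoke Kostant's uniqueness of $\mathfrak{sl}_2$-triples inside the reductive centralizer $Z_{\dualgroup{G}}(\psi_1|_{L_F})$ to conjugate the Arthur $\SL(2,\CC)$-factors. The only organizational difference is that you first treat $u\in L_F$ with $|u|_F=1$ directly (where $d_u=1$) and then handle $\Frob$, whereas the paper extracts $\psi_1(1,d_u)=\psi_2(1,d_u)$ first and cancels it from $\phi_{\psi_1}(u)=\phi_{\psi_2}(u)$; your explicit Zariski-density step pinning down the common neutral element $h$ is exactly what underlies the paper's citation of \cite[Corollary~4.2]{Kostant:1959}.
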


\begin{proof}
Suppose $\psi_{1}$ and $\psi_{2}$ are Arthur parameters of $G$.
Then the image of $\psi_{1}|_{\SL(2, \CC)}$ is contained in $Z_{\dualgroup{G}}(\psi_{1}|_{L_{F}})^{0}$ and the image of $\psi_{2}|_{\SL(2, \CC)}$ is contained in $Z_{\dualgroup{G}}(\psi_{2}|_{L_{F}})^{0}$.
Now suppose $\phi_{\psi_{1}} = \phi_{\psi_{2}}$. 
Then $\phi_{\psi_{1}}(\text{Fr}) = \phi_{\psi_{2}}(\text{Fr})$. 
It follows that the elliptic and hyperbolic parts of $\psi_{1}(\Frob,d_{\Frob})$ and $\psi_{2}(\Frob,d_{\Frob})$ are equal. 
By Lemma~\ref{lemma: hyperbolic decomposition}, $\psi_1(1,d_{\Frob}) = \psi_2(1,d_{\Frob})$. 
It follows that $\psi_1(1,d_u) = \psi_2(1,d_u)$ for every $u\in L_F$.
Since $\phi_{\psi_{1}}(u) = \phi_{\psi_{2}}(u)$ for every $u\in L_F$,
and since this means $\psi_1(u,1)\psi_1(1,d_u) = \psi_2(u,1)\psi_2(1,d_u)$,
it now follows that $\psi_1(u,1) = \psi_2(u,1)$ for every $u\in L_F$, so $\psi_{1}|_{L_{F}} = \psi_{2}|_{L_{F}}$.
Thus, the image of $\psi_{2}|_{\SL(2, \CC)}$ is contained in $Z_{\dualgroup{G}}(\psi_{1}|_{L_{F}})^{0}$.
Using \cite[Corollary 4.2]{Kostant:1959}, it follows that $\psi_{1}|_{\SL(2, \CC)}$ and $\psi_{2}|_{\SL(2, \CC)}$ are conjugate under $Z_{\dualgroup{G}}(\psi_{1}|_{L_{F}})^{0}$.
Since $\psi_1$ and $\psi_2$ agree on $L_F$, it follows that $\psi_{1}$ and $\psi_{2}$ are also conjugate under $Z_{\dualgroup{G}}(\psi_{1}|_{L_{F}})^{0}$.
Therefore, $\psi_1$ and $\psi_2$ are $\dualgroup{G}$-conjugate, as claimed.
\end{proof}

\subsection{Pure rational forms}\label{ssec:pure}

We suppose now that the connected reductive algebraic group $G$ over $F$ is quasi\-split.
Recall the definitions of inner rational forms, inner twists, and pure rational forms from Section~\ref{ssec:background}.
%Let $G$ be a quasi\-split connected reductive algebraic group over $F$. 

Every inner rational form\index{inner rational form} of $G$ determines an inner twist\index{inner twist} of $G$ as follows.
Let $\sigma\in Z^1(F,G_{\ad})$ be an inner rational form of $G$.
Let $\Gamma_{F}$ act on $G(\bar{F})$ by the twisted Galois action on $G(\bar{F})$, {\it i.e.}, by $\gamma : g \mapsto \Ad(\sigma(\gamma))(\gamma \cdot g)$ for $g \in G(\bar{F})$, where $\gamma \cdot g$ refers to the action of $\Gamma_F$ on $G({\bar F})$ defining $G$ over $F$.
This determines a form rational $G_\sigma$ of $G$ equipped with an isomorphism $\varphi_\sigma : G_\sigma \otimes_{F} {\bar F} \to G\otimes_{F} {\bar F}$ such that $\gamma \mapsto \varphi \circ \gamma(\varphi)^{-1}$ is a $1$-cocycle in $Z^1(\Gamma_{F},G_{\ad})$, so $(G_\sigma,\varphi_\sigma$ is an inner twist of $G$.
We will represent the inner twist by $G_{\sigma}$, and identify $G_{\sigma}(F)$ as a subgroup of $G(\bar{F})$ through $\varphi_{\sigma}$. 

Two inner rational forms\index{inner rational form} $\sigma_{1}, \sigma_{2}$ of $G$ are equivalent if they give the same cohomology class in $H^1(F,G_{\ad})$, or equivalently $G_{\sigma_{1}}(F)$ and $G_{\sigma_{2}}(F)$ are conjugate under $G(\bar{F})$. 
It follows from \cite[Proposition 6.4]{Kottwitz:Stable-cuspidal} that there is a canonical isomorphism 
\[
H^1(F,G_{\ad}) \cong \Hom(Z(\dualgroup{G}_\text{sc})^{\Gamma_{F}},\CC^\times)
\]
where $\dualgroup{G}_\text{sc}$ is the simply connected cover of the derived group of $\dualgroup{G}$.
% and $\Pi(Z(\dualgroup{G}_\text{sc})^{\Gamma_{F}})$ is the group of irreducible characters of $Z(\dualgroup{G}_\text{sc})^{\Gamma_{F}}$. \todo{What does irreducible mean here? Characters of irreps? But then it doesn't form a group. Linear representations? But then irreducible doesn't mean anything. I think you meant unitary.} 
The character of $Z(\dualgroup{G}_\text{sc})^{\Gamma_{F}}$ determined by $[\sigma]\in H^1(F,G_{\ad})$ will be denoted $\zeta_{\sigma}$.\index{$\zeta_\sigma$}. 

Two pure rational forms\index{pure rational form} of $G$ are equivalent if they give the same cohomology class in $H^1(F,G_{\ad})$. 
By \cite[Proposition 6.4]{Kottwitz:Stable-cuspidal}, there is  a canonical isomorphism 
\[
H^1(F,G) \cong \Hom(\pi_{0}(Z(\dualgroup{G})^{\Gamma_{F}}),\CC^\times).
\]
The character of $\pi_{0}(Z(\dualgroup{G})^{\Gamma_{F}})$ corresponding to the equivalence class of $\delta\in Z^1(F,G)$ will be denoted by $\chi_{\delta}$.\index{$\chi_\delta$}

A pure rational form $\delta$ of $G$ determines an inner rational form by the canonical map
\begin{equation}
Z^1(F,G) \to Z^1(F,G_{\ad}),
\end{equation}
where $G_{\ad}$ denotes the adjoint group for $G$.
We will denote the inner twist determined by $\delta\in Z^1(F,G)$ by $G_{\delta}$.\index{$G_\delta$}
By \cite[Proposition 6.4]{Kottwitz:Stable-cuspidal} again, the homomorphism $G \to G_{\ad}$ induces a commuting diagram:
\[
\begin{tikzcd}
H^1(F,G) \arrow{rr} \arrow{d}{\iso} && H^1(F,G_{\ad}) \arrow{d}{\iso}\\
\Hom(\pi_{0}(Z(\dualgroup{G})^{\Gamma_F}),\CC^\times) \arrow{rr} && \Hom(Z(\dualgroup{G}_\text{sc})^{\Gamma_F},\CC^\times).
\end{tikzcd}
\]
We write $\zeta_{\delta}$ for the image of $\chi_{\delta}$ under this bottom arrow, then we will also denote it by $\zeta_{\delta}$.

\subsection{Langlands packets for pure rational forms}\label{ssec:LV}

%\subsection{pure Langlands Correspondence}\label{ssec:pure Langlands}

%Following \cite[Definition 2.14]{Vogan:Langlands}, \emph{representation of a pure rational form}\index{representation of a pure inner twist, $(\pi,\delta)$}
An isomorphism class of representations of a pure rational form\index{isomorphism class of representations of a pure rational form} of $G$ is a pair $(\pi,\delta)$, where $\pi$ is an isomorphism class of admissible representations of $G_\delta(F)$. 
Then $G({\bar F})$-conjugation defines an equivalence relation on such pairs, which is compatible with the equivalence relation on pure rational forms $Z^1(F,G)$. 
%We denote the equivalence class of $(\pi, \delta)$ by $(\pi,\delta)$.
Following \cite{Vogan:Langlands}, write $\Pi_\mathrm{pure}(G/F)$ for the set of these equivalence classes. %Every $(\pi,\delta) \in \Pi_\mathrm{pure}(G/F)$ determines the equivalence class of an admissible representation $\pi$ of the form $G_\delta$ of $G$. 
The local Langlands correspondence
%\emph{enhanced local Langlands correspondence}
 for pure rational forms of $G$ can be stated as in the following conjecture.
%\begin{conjecture}\label{enLLCpure}
{\it There is a natural bijection between $\Pi^\mathrm{pure}(G/F)$ and $\dualgroup{G}$-conjugacy classes of pairs $(\phi, \rho)$ with $\phi \in P(\Lgroup{G}^{})$ and $\rho \in \operatorname{Irrep}({A}_{\phi})$.}
%\end{conjecture}
%It is common call the pair $(\phi, \rho)$ in this conjecture a complete Langlands parameter.\index{complete Langlands parameter}
 For $\phi \in P(\Lgroup{G}^{})$, we define the corresponding \emph{pure Langlands packet} \index{$\Pi^\mathrm{pure}_{\phi}(G/F)$, pure Langlands packet}\index{pure Langlands packet, $\Pi^\mathrm{pure}_{\phi}(G/F)$}
\[
\Pi^\mathrm{pure}_{ \phi}(G/F)
\] 
to be consisting of $(\pi, \delta)$ in $\Pi^\mathrm{pure}(G/F)$ such that they are associated with $\dualgroup{G}$-conjugacy classes of $(\phi, \rho)$ for any $\rho \in \operatorname{Irrep}({A}_{\phi})$ under the local Langlands correspondence for pure rational forms.
This is also known as the Langlands-Vogan packet.\index{Langlands-Vogan packet}

\subsection{Arthur packets for quasi\-split symplectic or special orthogonal groups}\label{ssec:Apackets}

From now until the end of Section~\ref{section:Arthur}, we assume $G$ is a quasi\-split symplectic or special orthogonal group over $F$. In \cite[Theorem 1.5.1]{Arthur:Book}, Arthur assigns to $\psi \in Q(\Lgroup{G}^{})$ a multiset $\Pi_{\psi}(G(F))$ over $\Pi(G(F))$, which is usually referred to as the Arthur packet for $G$\index{Arthur packet for $G$} associated with $\psi$. 
It is a deep result of Moeglin \cite{Moeglin:Multiplicite} that $\Pi_{\psi}(G(F))$ is actually a subset of $\Pi(G(F))$.
The Arthur packet $\Pi_{\psi}(G(F))$ contains the L-packet $\Pi_{\phi_\psi}(G(F))$; we refer to the set theoretic difference $\Pi_{\psi}(G(F))\setminus \Pi_{\phi_\psi}(G(F))$ as the \emph{corona}\index{coronal} of $\Pi_{\phi_\psi}(G(F))$.

Arthur \cite[Theorem 2.2.1]{Arthur:Book} also associates $\Pi_{\psi}(G(F))$ with a canonical map
\begin{equation}\label{Arthurqs}
\begin{aligned}
\Pi_\psi(G(F)) &\to \widehat{\mathcal{S}_\psi} \\
 \pi &\mapsto {\langle\ \cdot\ , \pi \rangle}_{\psi}
\end{aligned}
\end{equation}
where\index{$\mathcal{S}_\psi$}
\begin{equation}\label{eqn:Sphi-intro}
\mathcal{S}_\psi = Z_{\dualgroup{G}}(\psi)/Z_{\dualgroup{G}}(\psi)^0 Z(\dualgroup{G})^{\Gamma_F},
\end{equation}
and where $\widehat{\mathcal{S}_\psi}$\index{$\widehat{\mathcal{S}_\psi}$} denotes the set of irreducible characters of $\mathcal{S}_\psi$. 
We use \eqref{Arthurqs} to define a stable virtual representation\index{$\eta^{G}_\psi$} of $G(F)$ by 
%\todo{$(-1)^{d(\psi)}$?}
\begin{equation}\label{eqn:etaG*}
\eta^{G}_\psi \ceq 
%(-1)^{d(\psi)} 
\sum_{\pi\in \Pi_\psi(G(F))} {\langle s_\psi , \pi \rangle}_{\psi} \ \pi,
\end{equation}
where $s_\psi\in \mathcal{S}_\psi$\index{$s_\psi$} is the image of $\psi(1,-1)$ under the mapping $Z_{\dualgroup{G}}(\psi) \to \mathcal{S}_\psi$ and where $(1,-1)\in L_F$.
% with $-1$ denoting the non-trivial central element in $\SL(2,\CC)$.
% where $d(\psi) = \dim \dualgroup{G} - \dim Z_{\dualgroup{G}}(\psi)$  
%
Every semisimple $s\in Z_{\dualgroup{G}}(\psi)$ determines an element $x$ of $\mathcal{S}_\psi$ and thus a new virtual representation
\begin{equation}\label{eqn:etaG*s}
\eta^{G}_{\psi,s} \ceq \sum_{\pi\in \Pi_\psi(G(F))} {\langle s_\psi x, \pi \rangle}_{\psi} \ \pi.
\end{equation}
Turning to the stable distributions on $G(F)$, we set\index{$\Theta^{G}_\psi$}
\begin{equation}\label{eqn:ThetaG*}
\Theta^{G}_\psi \ceq 
%(-1)^{d(\psi)} 
\sum_{\pi\in \Pi_\psi(G(F))} {\langle s_\psi , \pi \rangle}_{\psi} \ \Theta_{\pi},
\end{equation}
where $\Theta_{\pi}$\index{$\Theta^{G}_\pi$} is the Harish-Chandra distribution character of the admissible representation $\pi$.
Likewise, for semisimple $s\in Z_{\dualgroup{G}}(\psi)$ define\index{$\Theta^{G}_{\psi,s}$}
\begin{equation}\label{eqn:ThetaG*s}
\Theta^{G}_{\psi,s} \ceq \sum_{\pi\in \Pi_\psi(G(F))} {\langle s_\psi x, \pi \rangle}_{\psi} \ \Theta_{\pi},
\end{equation}
where, as above, $x\in \mathcal{S}_\psi$ is the image of $s$ under $Z_{\dualgroup{G}}(\psi)\to \mathcal{S}_\psi$.

A pair $(\psi,s)$, with $s\in Z_{\dualgroup{G}}(\psi)$, determines an endoscopic datum $(G', \Lgroup{G}',s,\xi)$ for $G$ and an Arthur parameter $\psi'$ for $G'$ so that $\psi = \xi \circ \psi'$. 
In fact, $G'$ is a product group whose factors consist of symplectic, special orthogonal and general linear groups. 
So one can extend the above discussions about $G$ to $G'$, as done in \cite{Arthur:Book}.
Arthur's main local result shows that, for locally constant compactly supported function $f$ on $G(F)$, we have
\begin{equation}\label{eqn:etaLS}
\Theta^{G}_{\psi,s}(f) = \Theta^{G'}_{\psi'}(f'),
\end{equation}
where $f'$ is the Langlands-Shelstad transfer of $f$ from $G(F)$ to $G'(F)$.
It is in this sense that the maps \eqref{Arthurqs} are compatible with spectral endoscopic transfer to $G(F)$.

On the other hand, there is an involution $\theta$ of $G_N\ceq \GL(N)$
%\todo{Andrew: I prefer $\GL_N$} 
over $F$ such that $(G, \Lgroup{G}^{}, s, \xi_{N})$ is a twisted endoscopic datum for $G_N$
%$G_N^+ \ceq G_N\rtimes{\langle \theta \rangle}$
 in the sense of \cite[Section 2.1]{Kottwitz:Foundations}, for suitable semisimple $s$ in the component of $\widehat{\theta}$ in $\dualgroup{G}_N^+ := \dualgroup{G}_N \rtimes \langle \widehat{\theta} \rangle$, where $\widehat{\theta}$ is the dual involution.
Arthur's main local result also shows that, for locally constant compactly supported functions $f^\theta$ on the component of $\theta$ in 
\begin{equation}\label{eqn:etaKS}
\Theta^{G}_{\psi}(f) = \Theta^{G_N^+}_{\psi_{N},s}(f^\theta),
\end{equation}
where $f$ is the Langlands-Kottwitz-Shelstad transfer of $f^\theta$ to $G(F)$ and where $\Theta^{G_N^+}_{\psi_{N},s}$ is the twisted  character of a particular extension of the Speh representation of $G_N(F)$ associated with Arthur parameter $\psi_{N} := \xi_{N} \circ \psi$ to the disconnected group $G_N^+(F)$. 
It is in this sense that the maps \eqref{Arthurqs} are compatible with twisted spectral endoscopic transfer from $G(F)$.

Arthur shows that the map \eqref{Arthurqs} is uniquely determined by: the stability of $\Theta^{G}_\psi$; property \eqref{eqn:etaLS} for all endoscopic data $G'$; and property \eqref{eqn:etaKS} for twisted endoscopy of $\text{GL}(N)$.
In particular, the endoscopic character identities that are used to pin down ${\langle\ \cdot\ , \pi \rangle}_{\psi}$ involve values at {\it all} elements of $\mathcal{S}_\psi$.

When $\psi$ is trivial on the second $\SL(2, \mathbb{C})$, the Langlands parameter $\psi_\phi$ for $\psi$ is a tempered Langlands parameter. 
In this case, Arthur shows \eqref{Arthurqs} is a bijection. By the Langlands classification of $\Pi(G(F))$, which is in terms of tempered representations, this bijection extends to all Langlands parameters of $G$. Moreover, it follows from Arthur's results that there is a bijection between $\Pi(G(F))$ and $\dualgroup{G}$-conjugacy classes of pairs $(\phi, \epsilon)$ for $\phi \in P(\Lgroup{G}^{})$ and $\epsilon \in \widehat{\mathcal{S}_\phi}$. 
%This gives the enhanced local Langlands correspondence for $G$.

%As $\psi$ ranges over all Arthur parameters, the functions \eqref{eqn:Arthur} establish a correspondence between elements of $\Pi(G(F))$ and pairs $(\psi,\epsilon)$ where $\psi$ is an Arthur parameter for $G(F)$ and $\epsilon$ is a non-zero irreducible representation of $\mathcal{S}_\psi$; we call this the {\it Arthur correspondence} for $G(F)$.  As part of his main local result we see that the Arthur correspondence specialises to the Langlands correspondence when both are restricted to tempered representations of $G(F)$.

\subsection{Arthur packets for inner rational forms}\label{ssec:Apacketinner}

A conjectural description of Arthur packets for inner twists\index{inner twists} of $G$ is presented in \cite[Chapter 9]{Arthur:Book}, though the  story is far from complete. 
Let $\sigma$ be an inner rational forms of $G$. 
%Then $\Lgroup{G}^{}_{\sigma} = \Lgroup{G}^{}$ and hence $Q(\Lgroup{G}^{}_{\sigma}) = Q(\Lgroup{G}^{})$. 
An Arthur parameter $\psi$ of $G_{\sigma}$ is said to be \emph{relevant}\index{relevant} if any Levi subgroup of $\Lgroup{G}^{}_{\sigma}$ that $\psi$ factors through is the L-group of a Levi subgroup of $G_{\sigma}$. 
We denote the subset of relevant Arthur parameter by $Q_{\text{rel}}(G_{\sigma})$.\index{$Q_{\text{rel}}(G_{\sigma})$}
In \cite[Conjecture 9.4.2]{Arthur:Book}, Arthur assigns to $\psi \in Q_{\text{rel}}(G_{\sigma})$ a multiset $\Pi_{\psi}(G_{\sigma}(F))$ over $\Pi(G_{\sigma}(F))$, which is called the Arthur packet for $G_{\sigma}$\index{Arthur packet for $G_\sigma$} associated with $\psi$. 
This time Moeglin's results \cite{Moeglin:Multiplicite} only show $\Pi_{\psi}(G_{\sigma}(F))$ is a subset of $\Pi(G_{\sigma}(F))$ in the case when $\sigma$ comes from a pure rational form; see also \cite[Conjecture 9.4.2, Remark 2]{Arthur:Book}.
For the purpose of comparison with the geometric construction of Arthur packets, in this article we define $\Pi_{\psi}(G_{\sigma}(F))$\index{$\Pi_{\psi}(G_{\sigma}(F))$} simply as the image of this multiset in $\Pi(G_{\sigma}(F))$. 

To extend \eqref{Arthurqs} to this case, one must replace the group $\mathcal{S}_\psi$ with a larger, finite, generally non-abelian group $\mathcal{S}_{\psi,\text{sc}}$\index{$\mathcal{S}_{\psi,\text{sc}}$}, which is a central extension  
\begin{equation}\label{eqn:extensionsc}
\begin{tikzcd}
1 \arrow{r} & \widehat{Z}_{\psi, \text{sc}} \arrow{r} & \mathcal{S}_{\psi,\text{sc}} \arrow{r} & \mathcal{S}_{\psi} \arrow{r} & 1
\end{tikzcd}
\end{equation}
of $\mathcal{S}_{\psi}$.
To describe the groups in this exact sequence, we introduce some notation. 
Set 
\[
S_{\psi} \ceq Z_{\dualgroup{G}}(\psi) 
\quad \text{ and } \quad 
\bar{S}_{\psi} \ceq Z_{\dualgroup{G}}(\psi) / Z(\dualgroup{G})^{\Gamma_{F}}.
\] 
So $\bar{S}_{\psi}$ is the image of $S_{\psi}$ in $\dualgroup{G}_{\ad}$, whose preimage in $\dualgroup{G}$ is $S_{\psi} Z(\dualgroup{G})$. 
Let $S_{\psi, \text{sc}}$ be the preimage of $\bar{S}_{\psi}$ under the projection $\dualgroup{G}_\text{sc} \rightarrow \dualgroup{G}_{\ad}$, which is the same as the preimage of $S_{\psi} Z(\dualgroup{G})$ in $\dualgroup{G}_\text{sc}$. 
Let $S^{\sharp}_{\psi, \text{sc}}$ be the preimage of $S_{\psi}$ in $\dualgroup{G}_\text{sc}$ and $\widehat{Z}^{\sharp}_\text{sc}$ be the preimage of $Z(\dualgroup{G})^{\Gamma_{F}}$ in $\dualgroup{G}_\text{sc}$. 
%\todo{Let us write $Z(\dualgroup{G})$ (resp. $Z(\dualgroup{G}_\text{sc})$) for $\widehat{Z}$ (resp. $\widehat{Z}_\text{sc}$). }
It is clear that $Z(\dualgroup{G}_\text{sc})^{\Gamma_{F}} \hookrightarrow \widehat{Z}^{\sharp}_\text{sc}$. 
Then we have the following commutative diagram, which is exact on each row:
\[
\begin{tikzcd}
1 \arrow{r} & Z(\dualgroup{G})^{\Gamma_{F}} \arrow{r} & S_{\psi} \arrow{r} & \bar{S}_{\psi} \arrow{r} & 1 \\
1 \arrow{r} & \widehat{Z}^{\sharp}_\text{sc} \arrow{r} \arrow[two heads]{u} \arrow[hook]{d} & S^{\sharp}_{\psi, \text{sc}} \arrow{r} \arrow[two heads]{u} \arrow[hook]{d} & \bar{S}_{\psi} \arrow{r} \arrow[equal]{u} \arrow[equal]{d} & 1 \\
1 \arrow{r} & Z(\dualgroup{G}_\text{sc}) \arrow{r} & S_{\psi, \text{sc}} \arrow{r} & \bar{S}_{\psi} \arrow{r} & 1 .
\end{tikzcd}
\]
Note $S_{\psi, \text{sc}} = S^{\sharp}_{\psi, \text{sc}} Z(\dualgroup{G}_\text{sc})$, and hence $S^{0}_{\psi, \text{sc}} = (S^{\sharp}_{\psi, \text{sc}})^{0}$. 
After passing to the component groups, we have the following commutative diagram, which is again exact on each row:
\[
\begin{tikzcd}
1 \arrow{r} & \widehat{Z}^{\Gamma_{F}}_{\psi} \arrow{r} & A_{\psi}  \arrow{r} & \mathcal{S}_{\psi} \arrow{r} \arrow[equal]{d} & 1 \\
1 \arrow{r} & \widehat{Z}^{\sharp}_{\psi, \text{sc}} \arrow{r} \arrow[two heads]{u} \arrow[hook]{d} & \mathcal{S}^{\sharp}_{\psi, \text{sc}}  \arrow{r} \arrow[two heads]{u} \arrow[hook]{d} & \mathcal{S}_{\psi} \arrow{r} \arrow[equal]{d} & 1 \\
1 \arrow{r} & \widehat{Z}_{\psi, \text{sc}} \arrow{r} & \mathcal{S}_{\psi, \text{sc}}  \arrow{r} & \mathcal{S}_{\psi} \arrow{r} & 1 .
\end{tikzcd}
\]
Here $A_{\psi}$, $\mathcal{S}_{\psi}$, $\mathcal{S}^{\sharp}_{\psi, \text{sc}}$, $\mathcal{S}_{\psi, \text{sc}}$ are the corresponding component groups 
\index{$A_{\psi}$}\index{$\mathcal{S}_{\psi}$}\index{$\mathcal{S}^{\sharp}_{\psi, \text{sc}}$}\index{$\mathcal{S}_{\psi, \text{sc}}$}
 and
\begin{align*}
\widehat{Z}^{\Gamma_{F}}_{\psi} & \ceq Z(\dualgroup{G})^{\Gamma_{F}} / Z(\dualgroup{G})^{\Gamma_{F}} \cap S^{0}_{\psi} \\
\widehat{Z}^{\sharp}_{\psi, \text{sc}} & \ceq \widehat{Z}^{\sharp}_\text{sc} / \widehat{Z}^{\sharp}_\text{sc} \cap S^{0}_{\psi, \text{sc}} \\
\widehat{Z}_{\psi, \text{sc}} & \ceq Z(\dualgroup{G}_\text{sc}) /  Z(\dualgroup{G}_\text{sc}) \cap S^{0}_{\psi, \text{sc}}.
\end{align*}
\index{$\widehat{Z}^{\Gamma_{F}}_{\psi}$}\index{$\widehat{Z}^{\sharp}_{\psi, \text{sc}}$}\index{$\widehat{Z}_{\psi, \text{sc}}$}
The bottom row completes the definition of \eqref{eqn:extensionsc}.

Now, with reference to Section~\ref{ssec:pure}, let $\zeta_{\sigma}$ be the character of $Z(\dualgroup{G}_\text{sc})^{\Gamma_{F}}$ corresponding to the equivalence class of $\sigma$. 
By  \cite[Lemma 2.1]{Arthur:Transfer},
%\begin{lemma}% relevance condition LEMMA
%\label{lemma: relevance condition}
an Arthur parameter $\psi$ of $G_{\sigma}$ is relevant\index{relevant} if and only if the restriction of $\zeta_{\sigma}$ to $Z(\dualgroup{G}_\text{sc})^{\Gamma_{F}} \cap S^{0}_{\psi, \text{sc}}$ is trivial.
%\end{lemma}
%\begin{proof}
%See \cite[Lemma 2.1]{Arthur:Transfer}.
%\end{proof}

\begin{lemma}% intersection with identity component LEMMA
\label{lemma: intersection with identity component}
$Z(\dualgroup{G}_\text{sc})^{\Gamma_{F}} \cap S^{0}_{\psi, \text{sc}} = Z(\dualgroup{G}_\text{sc}) \cap S^{0}_{\psi, \text{sc}}$. 
\end{lemma}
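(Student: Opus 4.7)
The inclusion $\subseteq$ is immediate from $Z(\dualgroup{G}_\text{sc})^{\Gamma_F} \subseteq Z(\dualgroup{G}_\text{sc})$, so the real content is the reverse inclusion $\supseteq$. The plan is to fix $z \in Z(\dualgroup{G}_\text{sc}) \cap S^0_{\psi,\text{sc}}$ and show that $\gamma(z) = z$ for every $\gamma \in \Gamma_F$; by continuity and density it suffices to treat $\gamma$ in the image of $W_F$.

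The key device is to introduce, for each $w \in W_F$, a ``twisted'' action $\tilde\sigma_w$ on $\dualgroup{G}_\text{sc}$ obtained by lifting the conjugation action of $\phi_\psi(w) = \psi^\circ(w, d_w) \rtimes w \in \Lgroup{G}$ from $\dualgroup{G}$ to $\dualgroup{G}_\text{sc}$. By construction $\tilde\sigma_w$ is the composition of the canonical $w$-action on $\dualgroup{G}_\text{sc}$ with an inner automorphism (the lift of $\Int(\psi^\circ(w, d_w))$), so the two actions agree on $Z(\dualgroup{G}_\text{sc})$. Next, I would verify that $\tilde\sigma_w$ preserves $S^0_{\psi,\text{sc}}$: because $\phi_\psi(w)$ itself lies in the image of $\psi$, conjugation by $\phi_\psi(w)$ normalizes this image in $\Lgroup{G}$ and hence preserves the centralizer $S_\psi = Z_{\dualgroup{G}}(\psi)$; passing to the image $\bar S_\psi \subseteq \dualgroup{G}_\text{ad}$ and then to the preimage in $\dualgroup{G}_\text{sc}$ shows that $\tilde\sigma_w$ preserves $S_{\psi,\text{sc}}$ and, in particular, its identity component.

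Combining the two observations, the canonical $W_F$-action preserves the finite abelian subgroup $Z(\dualgroup{G}_\text{sc}) \cap S^0_{\psi,\text{sc}}$ as a set. The remaining step, and what I expect to be the main obstacle, is to upgrade set-wise invariance to pointwise fixation. For this I would argue that the restriction $\tilde\sigma_w|_{S^0_{\psi,\text{sc}}}$ is an inner automorphism of the connected reductive group $S^0_{\psi,\text{sc}}$: the compatible projection to $\bar S_\psi^0 \subseteq \dualgroup{G}_\text{ad}$ realizes the induced action there by conjugation with $\bar\psi^\circ(w,d_w)$, which normalizes $\bar S_\psi^0$, so the induced automorphism of $\bar S_\psi^0$ is inner; the outer automorphism group of a connected reductive group is discrete and can be controlled along the central extension $1 \to Z(\dualgroup{G}_\text{sc}) \cap S^0_{\psi,\text{sc}} \to S^0_{\psi,\text{sc}} \to \bar S_\psi^0 \to 1$ to deduce that $\tilde\sigma_w|_{S^0_{\psi,\text{sc}}}$ is inner. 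Since inner automorphisms of a connected group act trivially on its center, $\tilde\sigma_w$ fixes $Z(\dualgroup{G}_\text{sc}) \cap S^0_{\psi,\text{sc}}$ pointwise, and via the identification of $\tilde\sigma_w$ with the canonical action on the center this yields $\gamma(z) = z$, completing the plan.
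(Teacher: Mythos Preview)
Your overall strategy matches the paper's: introduce a twisted $W_F$-action on $\dualgroup{G}_\text{sc}$ that (i) agrees with the canonical Galois action on $Z(\dualgroup{G}_\text{sc})$ and (ii) fixes $S^0_{\psi,\text{sc}}$ pointwise; then $Z(\dualgroup{G}_\text{sc}) \cap S^0_{\psi,\text{sc}}$ is Galois-fixed. The paper twists by the full Arthur parameter $\psi$ rather than by $\phi_\psi$, but since the image of $\phi_\psi$ lies in the image of $\psi$, either choice works.

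The gap is in your step 5. First, your description of the induced action on $\bar S_\psi^0$ is not quite right: conjugation by $\phi_\psi(w) = \psi^\circ(w,d_w)\rtimes w$ induces $\Int(\bar\psi^\circ(w,d_w))\circ\rho_{\text{ad}}(w)$ on $\dualgroup{G}_\text{ad}$, not $\Int(\bar\psi^\circ(w,d_w))$ alone. Second, and more seriously, ``normalizes $\bar S_\psi^0$, so the induced automorphism is inner'' is a non sequitur; normalizing a subgroup yields an automorphism of it, but not in general an inner one, and the subsequent appeal to outer automorphism groups along the central extension is too vague to carry the weight. The correct argument is both simpler and stronger. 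Since $S_\psi$ \emph{centralizes} (not merely is normalized by) the image of $\psi$, and hence of $\phi_\psi$, conjugation by $\phi_\psi(w)$ already fixes $S_\psi$ pointwise inside $\dualgroup{G}$. Thus for $g$ in the preimage $S^{\sharp}_{\psi,\text{sc}}$ of $S_\psi$, the element $\tilde\sigma_w(g)g^{-1}$ lies in $\ker(\dualgroup{G}_\text{sc}\to\dualgroup{G})$, a finite central subgroup. The map $g\mapsto\tilde\sigma_w(g)g^{-1}$ is a homomorphism into this finite group, hence trivial on the identity component $(S^{\sharp}_{\psi,\text{sc}})^0 = S^0_{\psi,\text{sc}}$. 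So $\tilde\sigma_w$ is actually the identity on $S^0_{\psi,\text{sc}}$, not merely inner, and the proof concludes at once. This is exactly the content of the paper's assertion that $(S^{\sharp}_{\psi,\text{sc}})^0$ lies in the fixed-point group $H^0_\psi(L_F\times\SL(2,\CC),\dualgroup{G}_\text{sc})$.
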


\begin{proof}
It suffices to show $Z(\dualgroup{G}_\text{sc}) \cap S^{0}_{\psi, \text{sc}} \subseteq Z(\dualgroup{G}_\text{sc})^{\Gamma_{F}}$. 
Let $L_{F} \times \text{SL}(2, \mathbb{C})$ act on $\dualgroup{G}_\text{sc}$ by conjugation of the preimage of $\psi(L_{F} \times \text{SL}(2, \mathbb{C}))$ in $\Lgroup{G}^{}_\text{sc}$. Then we can define the group cohomology $H^{0}_{\psi}(L_{F} \times \text{SL}(2, \mathbb{C}), \dualgroup{G}_\text{sc})$, which is the group of fixed points in $\dualgroup{G}_\text{sc}$ under the action of $L_{F} \times \text{SL}(2, \mathbb{C})$. It is clear that $H^{0}_{\psi}(L_{F} \times \text{SL}(2, \mathbb{C}), \dualgroup{G}_\text{sc}) \subseteq S^{\sharp}_{\psi, \text{sc}}$. In fact, it is also not hard to show that 
\[
(H^{0}_{\psi}(L_{F} \times \text{SL}(2, \mathbb{C}), \dualgroup{G}_\text{sc}))^0 = (S^{\sharp}_{\psi, \text{sc}})^0.
\]
As a result, we have
\[
Z(\dualgroup{G}_\text{sc}) \cap S^{0}_{\psi, \text{sc}} \subseteq Z(\dualgroup{G}_\text{sc}) \cap(H^{0}_{\psi}(L_{F} \times \text{SL}(2, \mathbb{C}), \dualgroup{G}_\text{sc}))^0.
\]
\iffalse%
Since
\[
Z(\dualgroup{G}_\text{sc}) \cap(H^{0}_{\psi}(L_{F} \times \text{SL}(2, \mathbb{C}), \dualgroup{G}_\text{sc}))^0
 \subseteq Z(\dualgroup{G}_\text{sc}) \cap H^{0}_{\psi}(L_{F} \times \text{SL}(2, \mathbb{C}), \dualgroup{G}_\text{sc}),
\]
\fi%
Since
\[
Z(\dualgroup{G}_\text{sc}) \cap H^{0}_{\psi}(L_{F} \times \text{SL}(2, \mathbb{C}), \dualgroup{G}_\text{sc}) = Z(\dualgroup{G})^{\Gamma_{F}}_\text{sc},
\]
this finishes the proof.
\end{proof}

Let $\tilde{\zeta}_{\sigma}$\index{$\tilde{\zeta}_{\sigma}$} be an extension of $\zeta_{\sigma}$ to $Z(\dualgroup{G}_\text{sc})$.
If $\psi$ is relevant, it follows from Lemma~\ref{lemma: intersection with identity component} that $\tilde{\zeta}_{\sigma}$ descends to a character of $\widehat{Z}_{\psi, \text{sc}}$. 
Let $\operatorname{Rep}(\mathcal{S}_{\psi,\text{sc}}, {\tilde \zeta}_{\sigma})$\index{$\operatorname{Rep}(\mathcal{S}_{\psi,\text{sc}}, {\tilde \zeta}_{\sigma})$} be the set of isomorphism classes of ${\tilde \zeta}_{\sigma}$-equivariant representations of $\mathcal{S}_{\psi,\text{sc}}$. 
In \cite[Conjecture 9.4.2]{Arthur:Book}, Arthur conjectures a map
\begin{equation}\label{Arthursc}
\begin{aligned}
\Pi_\psi(G_{\sigma}(F)) &\to \operatorname{Rep}(\mathcal{S}_{\psi,\text{sc}}, {\tilde \zeta}_{\sigma}) \\ %\hookrightarrow \Pi(\mathcal{S}_{\psi,\text{sc}})\\
% \pi &\mapsto {\langle\ \cdot\ , \pi \rangle}_{\psi,\text{sc}}
\end{aligned}
\end{equation}
and writes ${\langle\ \cdot\ , \pi \rangle}_{\psi,\text{sc}}$\index{${\langle\ \cdot\ , \pi \rangle}_{\psi,\text{sc}}$} for the character of the associated representation of $\mathcal{S}_{\psi,\text{sc}}$. 
Because of our definition of $\Pi_{\psi}(G_{\sigma}(F))$ at the beginning of this section, one can not replace $\operatorname{Rep}(\mathcal{S}_{\psi,\text{sc}}, {\tilde \zeta}_{\sigma})$ by the subset $\Pi(\mathcal{S}_{\psi,\text{sc}}, {\tilde \zeta}_{\sigma})$\index{$\Pi(\mathcal{S}_{\psi,\text{sc}}, {\tilde \zeta}_{\sigma})$} of ${\tilde \zeta}_{\sigma}$-equivariant irreducible characters of $\mathcal{S}_{\psi,\text{sc}}$ as in Arthur's original formulation. 
The map \eqref{Arthursc} is far from being canonical for it depends on \eqref{Arthurqs} and various other choices implicitly. 

When the Langlands parameter $\phi_\psi$ is a tempered Langlands parameter, Arthur states all these results as a theorem \cite[Theorem 9.4.1]{Arthur:Book}. 
In particular, he claims that in this case \eqref{Arthursc} gives a bijection
\begin{equation}\label{Arthursc1}
\begin{aligned}
\Pi_\phi(G_{\sigma}(F)) &\to \Pi(\mathcal{S}_{\phi,\text{sc}}, {\tilde \zeta}_{\sigma}) . %\\\hookrightarrow \Pi(\mathcal{S}_{\psi,\text{sc}})\\
% \pi &\mapsto {\langle\ \cdot\ , \pi \rangle}_{\phi,\text{sc}.}
\end{aligned}
\end{equation}
By the Langlands classification of $\Pi(G_{\sigma}(F))$, which is in terms of tempered representations, this bijection extends to all relevant Langlands parameters of $G_{\sigma}$. Moreover, it follows from \cite[Theorem 9.4.1]{Arthur:Book} that there is a bijection between $\Pi(G_{\sigma}(F))$ and $\dualgroup{G}$-conjugacy classes of pairs $(\phi, \epsilon)$ for $\phi \in P_{\text{rel}}(\Lgroup{G}_{\sigma})$ and $\epsilon \in \Pi(\mathcal{S}_{\phi,\text{sc}}, {\tilde \zeta}_{\sigma})$. 
%This is the enhanced Langlands correspondence for $G_{\sigma}$.

\subsection{Pure Arthur packets}\label{ssec:AV}

Let $\delta$ be a pure rational form of $G$ and $\psi$ be an Arthur parameter of $G_{\delta}$.
 With reference to Section~\ref{ssec:pure}, let $\chi_\delta$\index{$\chi_\delta$} be the character of $\pi_0(Z(\dualgroup{G})^{\Gamma_F})$ corresponding to the equivalence class of $\delta$.
We will also denote its pull-back to $Z(\dualgroup{G})^{\Gamma_F}$ by $\chi_{\delta}$. 
Let $\zeta_{\delta}$\index{$\zeta_\delta$} be the character of $Z(\dualgroup{G}_\text{sc})^{\Gamma_F}$, which is also the pull-back of $\chi_\delta$ along 
\[
Z(\dualgroup{G}_\text{sc})^{\Gamma_F}  \to  \pi_0(Z(\dualgroup{G})^{\Gamma_F}). 
\]
%
%\begin{lemma}
Then $\chi_{\delta}$ is trivial on $Z(\dualgroup{G})^{\Gamma_{F}} \cap S^{0}_{\psi}$ if and only if $\zeta_{\delta}$ is trivial on $Z(\dualgroup{G}_\text{sc})^{\Gamma_{F}} \cap S^{0}_{\psi, \text{sc}}$.
%\end{lemma}
%\begin{proof}
To see this one just needs to notice that $S^{0}_{\psi}$ is the product of $(Z(\dualgroup{G})^{\Gamma_{F}})^{0}$ with the image of $S^{0}_{\psi, \text{sc}}$ in $S_{\psi}$.
%\end{proof}
As a direct consequence, we now see that
%\begin{corollary}
an Arthur parameter $\psi$ of $G_{\delta}$ is relevant if and only if $\chi_{\delta}$ is trivial on $Z(\dualgroup{G})^{\Gamma_{F}} \cap S^{0}_{\psi}$.
%\end{corollary}

Let us assume $\psi$ is relevant. Then $\chi_{\delta}$ descends to a character of $\widehat{Z}_{\psi}^{\Gamma_{F}}$. Let 
$
\operatorname{Rep}(A_\psi,\chi_\delta)
$ 
be the set equivalence classes of $\chi_{\delta}$-equivariant representations of $A_\psi$. %\todo{What is the equivariance property here, exactly?} 
Let $\tilde{\zeta}_{\delta}$ be a character of $Z(\dualgroup{G}_\text{sc})$ extending $\zeta_{\delta}$, so that its restriction to $\widehat{Z}^{\sharp}_\text{sc}$ is the pull-back of $\chi_{\delta}$. Since $\psi$ is relevant, $\tilde{\zeta}_{\delta}$ descends to a character of $\widehat{Z}_{\psi, \text{sc}}$. 
\iffalse%
Let 
$\operatorname{Rep}(\mathcal{S}_{\psi, \text{sc}}, \tilde{\zeta}_\delta)$\index{$\operatorname{Rep}(\mathcal{S}_{\psi, \text{sc}}, \tilde{\zeta}_\delta)$ } 
be the set of equivalence classes of $\tilde{\zeta}_{\delta}$-equivariant representations of $\mathcal{S}_{\psi, \text{sc}}$. 
\fi%

\begin{proposition}\label{proposition:AS}
%Let $\delta$ be a pure rational form of $G$.
Let $\chi$ be a character of $\pi_0(Z(\dualgroup{G})^{\Gamma_F})$. 
Let $\tilde{\zeta}$ be a character of $Z(\dualgroup{G}_\text{sc})$
Suppose the pull-back of $\chi$ along $\widehat{Z}^{\sharp}_\text{sc} \to Z(\dualgroup{G})^{\Gamma_F} \to \pi_0(Z(\dualgroup{G})^{\Gamma_F})$ coincides with the restriction of $\tilde{\zeta}$ to $\widehat{Z}^{\sharp}_\text{sc} \hookrightarrow Z(\dualgroup{G}_\text{sc})$. 
Then there is a canonical bijection
\begin{equation}\label{eqn:AS}
\operatorname{Rep}(A_{\psi}, \chi)
\to
\operatorname{Rep}(\mathcal{S}_{\psi,\text{sc}}, \tilde{\zeta}).
\end{equation}
\end{proposition}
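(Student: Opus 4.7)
The strategy is to use the middle row of the second commutative diagram, namely the extension
\[
1 \to \widehat{Z}^{\sharp}_{\psi,\text{sc}} \to \mathcal{S}^{\sharp}_{\psi,\text{sc}} \to \mathcal{S}_{\psi} \to 1,
\]
as an intermediary between $A_\psi$ and $\mathcal{S}_{\psi,\text{sc}}$, and to factor the desired bijection through $\operatorname{Rep}(\mathcal{S}^{\sharp}_{\psi,\text{sc}},\chi^{\sharp})$, where $\chi^{\sharp}$ denotes the character of $\widehat{Z}^{\sharp}_{\psi,\text{sc}}$ obtained as the pullback of $\chi$ along $\widehat{Z}^{\sharp}_{\psi,\text{sc}} \to \widehat{Z}^{\Gamma_F}_\psi$. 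The hypothesis relating $\chi$ and $\tilde{\zeta}$ is precisely that this $\chi^{\sharp}$ agrees with $\tilde{\zeta}|_{\widehat{Z}^{\sharp}_{\psi,\text{sc}}}$, so both restriction maps produce the same character.

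First I would treat the bijection $\operatorname{Rep}(A_\psi,\chi) \simto \operatorname{Rep}(\mathcal{S}^{\sharp}_{\psi,\text{sc}},\chi^{\sharp})$. From the second diagram, $\mathcal{S}^{\sharp}_{\psi,\text{sc}} \twoheadrightarrow A_\psi$ is surjective with kernel equal to $\ker(\widehat{Z}^{\sharp}_{\psi,\text{sc}} \twoheadrightarrow \widehat{Z}^{\Gamma_F}_\psi)$, and both groups have the same quotient $\mathcal{S}_\psi$. Pullback along this surjection identifies representations of $A_\psi$ with those representations of $\mathcal{S}^{\sharp}_{\psi,\text{sc}}$ on which the kernel acts trivially. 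For $\chi^{\sharp}$-equivariant representations, the kernel automatically acts trivially (since $\chi^{\sharp}$ is by construction inflated from $\widehat{Z}^{\Gamma_F}_\psi$), and conversely $\chi$-equivariance is preserved under pullback, so the claimed bijection follows.

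Next I would construct the bijection $\operatorname{Rep}(\mathcal{S}^{\sharp}_{\psi,\text{sc}},\chi^{\sharp}) \simto \operatorname{Rep}(\mathcal{S}_{\psi,\text{sc}},\tilde{\zeta})$. Using $S_{\psi,\text{sc}} = S^{\sharp}_{\psi,\text{sc}} \cdot Z(\dualgroup{G}_\text{sc})$ and $S^0_{\psi,\text{sc}} = (S^{\sharp}_{\psi,\text{sc}})^0$, one gets on component groups
\[
\mathcal{S}_{\psi,\text{sc}} = \mathcal{S}^{\sharp}_{\psi,\text{sc}} \cdot \widehat{Z}_{\psi,\text{sc}}, \qquad \mathcal{S}^{\sharp}_{\psi,\text{sc}} \cap \widehat{Z}_{\psi,\text{sc}} = \widehat{Z}^{\sharp}_{\psi,\text{sc}},
\]
with $\widehat{Z}_{\psi,\text{sc}}$ central. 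Given a $\chi^{\sharp}$-equivariant representation $\rho$ of $\mathcal{S}^{\sharp}_{\psi,\text{sc}}$, define an extension $\tilde{\rho}$ to $\mathcal{S}_{\psi,\text{sc}}$ by $\tilde{\rho}(s\cdot z) \ceq \tilde{\zeta}(z)\,\rho(s)$ for $s \in \mathcal{S}^{\sharp}_{\psi,\text{sc}}$ and $z \in \widehat{Z}_{\psi,\text{sc}}$. Well-definedness on the intersection $\widehat{Z}^{\sharp}_{\psi,\text{sc}}$ reduces exactly to the identity $\tilde{\zeta}|_{\widehat{Z}^{\sharp}_{\psi,\text{sc}}} = \chi^{\sharp}$, which is the hypothesis. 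The inverse map is simply restriction from $\mathcal{S}_{\psi,\text{sc}}$ to $\mathcal{S}^{\sharp}_{\psi,\text{sc}}$; the two constructions are visibly inverse to one another and preserve isomorphism classes.

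Composing the two bijections produces \eqref{eqn:AS}. The main obstacle is precisely the well-definedness of the extension step: one must verify that the two structures $\chi$ and $\tilde{\zeta}$ are glued consistently along the overlap $\widehat{Z}^{\sharp}_{\psi,\text{sc}}$, and tracing through the commutative diagrams shows that the compatibility hypothesis in the proposition is exactly the cocycle condition required. Naturality (hence canonicity) is inherited at each step from the inclusion and quotient maps in the two diagrams of Section~\ref{ssec:Apacketinner}.
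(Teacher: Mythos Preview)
Your proof is correct and follows exactly the same approach as the paper: factor through $\operatorname{Rep}(\mathcal{S}^{\sharp}_{\psi,\text{sc}},\chi^{\sharp})$ using the kernel identification $\ker(\mathcal{S}^{\sharp}_{\psi,\text{sc}} \to A_\psi) = \ker(\widehat{Z}^{\sharp}_{\psi,\text{sc}} \to \widehat{Z}^{\Gamma_F}_\psi)$ for the first bijection, and the product decomposition $\mathcal{S}_{\psi,\text{sc}} = \widehat{Z}_{\psi,\text{sc}}\cdot\mathcal{S}^{\sharp}_{\psi,\text{sc}}$ with intersection $\widehat{Z}^{\sharp}_{\psi,\text{sc}}$ for the second. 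You have in fact supplied more detail than the paper does, giving the explicit extension formula $\tilde{\rho}(s\cdot z) = \tilde{\zeta}(z)\rho(s)$ and verifying that the hypothesis is precisely the gluing condition needed.
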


\begin{proof}
Since 
\[
\text{Ker}( \mathcal{S}^{\sharp}_{\psi, \text{sc}} \to A_{\psi} ) 
= 
\text{Ker}( \widehat{Z}^{\sharp}_{\psi, \text{sc}} \to \widehat{Z}^{\Gamma_{F}}_{\psi}),
\]
there is a canonical bijection
\[
\operatorname{Rep}(A_{\psi}, \chi)
\to
\operatorname{Rep}(\mathcal{S}^{\sharp}_{\psi,\text{sc}}, \zeta^{\sharp}),
\]
where $\zeta^{\sharp}$ is the pull-back of $\chi_{\delta}$ to $\widehat{Z}^{\sharp}_{\psi, \text{sc}}$.
Since 
\[
\mathcal{S}_{\psi, \text{sc}} = \widehat{Z}_{\psi, \text{sc}}\  \mathcal{S}^{\sharp}_{\psi, \text{sc}} 
\qquad\text{ and }\qquad
\widehat{Z}_{\psi, \text{sc}} \cap \mathcal{S}^{\sharp}_{\psi, \text{sc}} = \widehat{Z}^{\sharp}_{\psi, \text{sc}},
\] 
there is also a canonical bijection
\[
\operatorname{Rep}(\mathcal{S}_{\psi,\text{sc}}, \tilde{\zeta})
\to
\operatorname{Rep}(\mathcal{S}^{\sharp}_{\psi,\text{sc}}, \zeta^{\sharp}).
\]
Combining the two isomorphisms above, we obtain the canonical bijection promised above.
\end{proof}

Let us take $\delta$ among various other choices to be made in defining \eqref{Arthursc}. 
To emphasize this choice, we will define\index{$\Pi_{\psi}(G_{\delta}(F), \delta)$} %\todo{I'd like to rephrase this.}
\[
\Pi_{\psi}(G_{\delta}(F), \delta) := \{(\pi, \delta) \tq \pi \in \Pi_{\psi}(G_{\delta}(F))\}.
\]
\index{$\Pi_{\psi}(G_{\delta}(F), \delta)$}
Then by composing \eqref{Arthursc} with \eqref{eqn:AS} modulo isomorphisms, we can have a canonical map
\begin{equation}\label{Arthurscp}
\begin{aligned}
\Pi_\psi(G_{\delta}(F), \delta) &\to  \operatorname{Rep}(A_{\psi}, \chi_\delta); \\ %=\Rep(A_{\psi}, \chi_\delta)_{/\text{iso}} \\ 
\end{aligned}
\end{equation}
the character of the isomorphism class of the representation of $A_{\psi}$ determined by $(\pi,\delta)\in \Pi_\psi(G_{\delta}(F), \delta)$ using \eqref{Arthurscp} will be denoted by\index{${\langle\ \cdot\ , (\pi, \delta) \rangle}_{\psi}$} 
\[
{\langle\ \cdot\ , (\pi, \delta) \rangle}_{\psi}.
\]
This character is independent of the choices made above.
%\todo{Expand on this?}
In particular, it becomes \eqref{Arthurqs} when $\delta = 1$. 

\iffalse%
For equivalent pure rational forms $\delta_{1}$ and $\delta_{2}$ of $G$, it follows from the construction of \eqref{Arthursc} that the following diagram commutes.
\[
\begin{tikzcd}
\Pi_{\psi}(G_{\delta_{1}}(F), \delta_{1}) \arrow{r}{\cong} \arrow{d} & \Pi_{\psi}(G_{\delta_{2}}(F), \delta_{2}) \arrow{d}  \\
\operatorname{Rep}(A_{\psi}, \chi_{\delta_{1}}) \arrow{r}{\cong}  & \operatorname{Rep}(A_{\psi}, \chi_{\delta_{2}}).
\end{tikzcd}
\]
As a result, \eqref{Arthurscp} is also well-defined for the equivalence class of $(\pi,\delta)$. 
\fi%

Let $\psi$ be an Arthur parameter of $G$. 
For pure rational form $\delta$ such that $\psi$ is not relevant, we will define $\Pi_{\psi}(G_{\delta}(F), \delta)$ to be empty. 
Then we can define the \emph{pure Arthur packet}\index{pure Arthur packet, $\Pi^\mathrm{pure}_{\psi}(G/F)$}\index{$\Pi^\mathrm{pure}_{\psi}(G/F)$, pure Arthur packet}\index{pure Arthur packet}  associated with $\psi$ to be
\begin{equation}\label{eqn:disjoint}
\Pi^\mathrm{pure}_{\psi}(G/F) = \bigsqcup_{[\delta] \in H^{1}(F, G)} \Pi_{\psi}(G_{\delta}(F), \delta)
\end{equation}
as a subset of $\Pi^\mathrm{pure}(G/F)$. It is equipped with a canonical map
\begin{equation}\label{Arthurp}
\begin{aligned}
\Pi^\mathrm{pure}_{\psi}(G/F) & \to  \operatorname{Rep}(A_{\psi}). % \\
%(\pi,\delta)  & \mapsto \langle \cdot, (\pi,\delta)\rangle_{\psi}
\end{aligned}
\end{equation}
When $\phi_\psi$ is a tempered Langlands parameter, this induces a bijection 
\begin{align*}
\Pi^\mathrm{pure}_{ \phi_\psi}(G/F) & \to  \Pi(A_{\phi_\psi}) .% \\
%(\pi,\delta)  & \mapsto \langle \cdot, (\pi,\delta)\rangle_{\phi.}
\end{align*}
This bijection also extends to all Langlands parameters $\phi$ of $G$, according to the discussion in the end of Section~\ref{ssec:Apacketinner}. 
Combined with the local Langlands correspondence for each pure rational form of $G$, we can conclude the local Langlands correspondence for pure rational forms of $G$ appearing in Section~\ref{ssec:LV}.

\subsection{Virtual representations of pure rational forms}\label{ssec:etapsi}

%In Section~\ref{ssec:AV} we defined the pure Arthur packet $\Pi_\psi(G/F)$, after supposing that the Arthur packets $\Pi_\psi(G_\delta(F))$ have been defined, for every pure inner twist $\delta$ of $G$. In this section we suppose that \cite[Conjecture 9.4.2]{Arthur:Book} is true.
Let $\K\Pi_\mathrm{pure}(G/F)$\index{$\K\Pi_\mathrm{pure}(G/F)$} be the free abelian group generated by the set $\Pi_\mathrm{pure}(G/F)$.
% and set $K_{\overline{\ZZ}} \Pi(G/F) \ceq K \Pi(G/F) \otimes_{\ZZ} {\overline{\ZZ}}$.
Define $\eta_\psi  \in \K\Pi_\mathrm{pure}(G/F)$\index{$\eta_\psi$} by
\begin{equation}\label{eqn:etapsi}
\eta_\psi \ceq \sum_{(\pi,\delta)\in \Pi^\mathrm{pure}_{\psi}(G/F)} e(\delta) \ {\langle a_\psi  , (\pi, \delta)\rangle}_{\psi}\  [(\pi,\delta)],
\end{equation}
where $e(\delta) = e(G_\delta)$ is the Kottwitz sign \cite{Kottwitz:Sign} of the group $G_\delta$, and \index{$a_{\psi}$} $a_{\psi}$ is the image of $\psi(1,-1)$ in $A_{\psi}$.
Using \eqref{eqn:disjoint} we may decompose $\eta_\psi$ into a sum of contributions for the pure rational forms of $G$:
\[
\eta_\psi = \sum_{[\delta]\in H^1(F,G)} e(\delta) \ \eta_\psi^\delta
\]
where, for each pure rational form $\delta$ of $G$, \index{$\eta^{\delta}_{\psi}$}
\[
\eta^{\delta}_{\psi} \ceq \sum_{(\pi, \delta) \in \Pi_\psi(G_\delta(F),\delta)} {\langle a_\psi  , (\pi, \delta)\rangle}_{\psi}\  [(\pi, \delta)].
\]

For semisimple $s\in Z_{\dualgroup{G}}(\psi)$, we define $\eta_{\psi,s}\in \K\Pi_\mathrm{pure}(G/F)$\index{$\eta_{\psi,s}$} by
\begin{equation}\label{eqn:etapsis}
\eta_{\psi,s} = \sum_{(\pi,\delta)\in \Pi^\mathrm{pure}_{\psi}(G/F)} e(\delta) \ {\langle  a_\psi a_{s} , (\pi, \delta) \rangle}_{\psi} \ [(\pi,\delta)],
\end{equation}
where $a_{s}$\index{$a_s$} is the image of $s$ under $Z_{\dualgroup{G}}(\psi) \to A_{\psi}$.
As above, we can break this into summands indexed by pure rational forms of $G$ by writing 
\[
\eta_{\psi,s} = \sum_{[\delta]\in H^1(F,G)} e(\delta) \ \eta_{\psi,s}^\delta
\]
where, for each pure rational form $\delta$ of $G$,\index{$\eta_{\psi,s}^\delta$} 
\[
\eta_{\psi,s}^{\delta} \ceq \sum_{(\pi,\delta) \in \Pi_\psi(G_\delta(F),\delta)} {\langle   a_\psi a_{s} , (\pi, \delta) \rangle}_{\psi}\  [(\pi,\delta)].
\]
Then $\eta_{\psi,1}^{\delta} = \eta^{\delta}_{\psi}$ and $\eta_{\psi,1} = \eta_{\psi}$.
We note that, with reference to  \eqref{eqn:etaG*} and \eqref{eqn:etaG*s}, 
\[
\eta_{\psi}^{1} = %(-1)^{d(\psi)} 
\eta_{\psi}^{G}
\qquad\text{and}\qquad
\eta_{\psi,s}^1 = %(-1)^{d(\psi)} 
\eta_{\psi,s}^{G}.   
\]  
%\todo{Bin: I think there should be no signs.}

Turning from virtual representations to distributions, we see that each $\eta^{\delta}_{\psi}$  and $\eta_{\psi,s}^{\delta}$ determines an invariant distribution on $G_\delta(F)$  by\index{$\Theta_{\psi,s}^{\delta}$}
\[
\Theta_{\psi,s}^{\delta} \ceq \sum_{(\pi,\delta) \in \Pi_\psi(G_\delta(F),\delta)} {\langle  a_\psi a_{s}  , (\pi, \delta)\rangle}_{\psi}\  \Theta_{\pi}.
\]
This extends \eqref{eqn:ThetaG*} and \eqref{eqn:ThetaG*s}  from $G(F)$ to $G_\delta(F)$ arising from pure rational forms $\delta$ of $G$:
\[
\Theta_{\psi}^{1} = %(-1)^{d(\psi)} 
\Theta_{\psi}^{G}
\qquad\text{and}\qquad
\Theta_{\psi,s}^1 = %(-1)^{d(\psi)} 
\Theta_{\psi,s}^{G}.
\]

\section{Equivariant perverse sheaves on parameter varieties}\label{section:Voganvarieties}

In this section we drop the quasi\-split hypothesis and let $G$ be an arbitrary connected reductive algebraic group over a $p$-adic field $F$.

\subsection{Infinitesimal parameters}\label{ssec:infinitesimal}

An \emph{infinitesimal parameter}\index{infinitesimal parameter, $\lambda$}\index{$\lambda$} for $G$ is a homomorphism 
$
\lambda : W_F \to \Lgroup{G}
$
such that
\begin{enumerate}[widest=(R.iii),,leftmargin=*]
\labitem{(R.i)}{Infparameter-1} $\lambda$ is continuous;
\labitem{(R.ii)}{Infparameter-2} $\lambda$ is a section of $\Lgroup{G}\to W_F$; 
\labitem{(R.iii)}{Infparameter-3} the image of $\lambda$ consists of semisimple elements in $\Lgroup{G}$.
\end{enumerate}
Let $R(\Lgroup{G})$ \index{$R(\Lgroup{G})$} be the set of infinitesimal parameters for $G$.
%We will use the notation $\lambda^\circ : W_F \to \dualgroup{G}$ for the function defined by $\lambda(w) = \lambda^\circ(w)\rtimes w$.
%
The set of $\dualgroup{G}$-conjugacy classes of infinitesimal  parameters is denoted by $\Lambda(G/F)$.\index{$\Lambda(G/F)$}

We say that $\lambda\in R(\Lgroup{G})$ is \emph{unramified}\index{unramified infinitesimal parameter} if it is trivial on $I_F$ and $\lambda\in R(\Lgroup{G})$ is \emph{hyperbolic}\index{unramified hyperbolic infinitesimal parameter} if it is unramified and the hyperbolic part of $\lambda(\Frob) = s_\lambda \rtimes \Frob$, in the sense of Section~\ref{ssec:hyperbolic}, is $s_\lambda \rtimes 1$.\index{$s_\lambda$}

The \emph{component group for $\lambda$}\index{component group for $\lambda$, $A_\lambda$}\index{$A_\lambda$, component group for $\lambda$} is
\begin{equation}\label{eqn:Alambda}
A_\lambda \ceq \pi_0(Z_{\dualgroup{G}}(\lambda)) = Z_{\dualgroup{G}}(\lambda)/Z_{\dualgroup{G}}(\lambda)^0.
\end{equation}

For any Langlands parameter $\phi : L_F \to \Lgroup{G}$, define the \emph{infinitesimal parameter of $\phi$}\index{infinitesimal parameter}\index{$\lambda_\phi$, infinitesimal parameter of $\phi$}  by
\[
\begin{array}{rcl}
\lambda_\phi : W_F &\to& \Lgroup{G}\\
w &\mapsto& \phi(w,d_w),
\end{array}
\]
where $d: W_F \to \SL(2,\CC)$ is the restriction of $d: L_F \to \SL(2,\CC)$, as defined in Section~\ref{ssec:background}, to $W_F$.
This defines
\begin{equation}\label{eq:Phi-Lambda}
\begin{array}{rcl}
P(\Lgroup{G}) &\to& R(\Lgroup{G})\\
\phi &\mapsto& \lambda_\phi.
\end{array}
\end{equation}
The function $\phi\mapsto \lambda_\phi$ is surjective but not, in general, injective.
For any fixed $\lambda\in R(\Lgroup{G})$, set\index{$P_\lambda(\Lgroup{G})$}
\[
P_\lambda(\Lgroup{G})
\ceq \{ \phi\in P(\Lgroup{G}) \tq \lambda_\phi = \lambda \}.
\]
We write $\Phi_\lambda(G/F)$\index{$\Phi_\lambda(G/F)$} for the set of $Z_{\dualgroup{G}}(\lambda)$-conjugacy classes of Langlands parameters with infinitesimal parameter $\lambda$.

With reference to Section~\ref{ssec:LV}, for any quasi\-split $G$ over $F$, we set
\[
\Pi^\mathrm{pure}_{ \lambda}(G/F) \ceq \bigcup_{\phi\in P_\lambda(\Lgroup{G})} \Pi^\mathrm{pure}_{ \phi}(G/F),
\]
with the union taken in $\Pi^\mathrm{pure}(G/F)$. \index{$\Pi^\mathrm{pure}_{ \lambda}(G/F)$}
Then, after choosing a representative for each class in $\Phi_\lambda(\Lgroup{G})$, we have
\[
\Pi^\mathrm{pure}_{ \lambda}(G/F) = \bigsqcup_{[\phi]\in \Phi_\lambda(\Lgroup{G})} \Pi^\mathrm{pure}_{ \phi}(G/F).
\]
Now the local Langlands correspondence for pure rational forms of $G$ from Section~\ref{ssec:LV} provides a bijection
\begin{equation}\label{eqn:LVClambda}
\Pi^\mathrm{pure}_{ \lambda}(G/F) \leftrightarrow \{ (\phi, \rho) \tq \phi \in P_\lambda(\Lgroup{G}), \rho\in \Irrep(A_\phi)\}_{/\sim},
\end{equation}
where the equivalence on pairs $(\phi, \rho)$ is defined by $Z_{\dualgroup{G}}(\lambda)$-conjugation.

\subsection{Vogan varieties}\label{ssec:VX}

Fix $\lambda \in R(\Lgroup{G})$.
Define \index{$H_\lambda$}
\begin{equation}\label{def:H}
H_\lambda \ceq Z_{\dualgroup{G}}(\lambda) 
\ceq \{ g\in \dualgroup{G} \tq (g\rtimes 1)\lambda(w)(g\rtimes 1)^{-1} = \lambda(w),\ \forall w\in W_F\}
\end{equation}
and \index{$K_\lambda$}
\begin{equation}\label{def:K}
K_\lambda\ceq Z_{\dualgroup{G}}(\lambda(I_F)) 
\ceq \{ g\in \dualgroup{G} \tq (g\rtimes 1)\lambda(w)(g\rtimes 1)^{-1} = \lambda(w),\ \forall w\in I_F\}.
\end{equation}
The centralizer $K_\lambda$ of $\lambda(I_F)$ in $\dualgroup{G}$ consists of fixed points in $\dualgroup{G}$ under a finite group of semisimple automorphisms of $\dualgroup{G}$, so $K_\lambda$ is a reductive algebraic group. Since $H_\lambda$ can be viewed as the group of fixed points in $K_{\lambda}$ under the semisimple automorphism $\Ad(\lambda(\text{Fr}))$, then $K_\lambda$ is also a reductive algebraic group. 
However, neither $H_\lambda$ nor  $K_\lambda$ is connected, in general.

Following \cite[(4.4)(e)]{Vogan:Langlands}, the \emph{Vogan variety} for $\lambda$\index{Vogan variety} is \index{$V_\lambda=V_\lambda(\Lgroup{G})$, Vogan variety for $\lambda$}
\begin{equation}\label{def:V}
V_\lambda \ceq V_\lambda(\Lgroup{G}) \ceq \{ x\in \Lie K_\lambda \tq \Ad(\lambda(\Frob )) x = q_F x \},
\end{equation}
equipped with the action of $H_\lambda$ on $V_\lambda$ by conjugation in $\Lie K_\lambda$.

\begin{lemma}\label{lemma:nilp}
$V_\lambda$ is a conical subvariety in the nilpotent cone of $\Lie K_\lambda$.
\end{lemma}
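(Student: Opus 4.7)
The plan is to verify the three assertions separately: that $V_\lambda$ is a closed subvariety of $\Lie K_\lambda$, that it is conical, and that every one of its points is nilpotent.

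First I would observe that $V_\lambda$ is cut out of the vector space $\Lie K_\lambda$ by the linear equation $\Ad(\lambda(\Frob))x = q_F x$, so it is automatically closed. It is conical because scalar multiplication commutes with $\Ad(\lambda(\Frob))$: if $x \in V_\lambda$ and $t \in \mathbb{G}_m$, then $\Ad(\lambda(\Frob))(tx) = t \cdot q_F x = q_F(tx)$ and $tx \in \Lie K_\lambda$.

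The substance of the argument is showing every $x \in V_\lambda$ is nilpotent. My approach is to write $x = x_s + x_n$ as its Jordan decomposition in $\Lie K_\lambda$. Since $\Frob$ normalizes $I_F$, conjugation by $\lambda(\Frob)$ preserves $\lambda(I_F)$ and hence $K_\lambda$, so $\Ad(\lambda(\Frob))$ is an algebraic automorphism of $K_\lambda$; it therefore respects Jordan decomposition, giving $\Ad(\lambda(\Frob))x_s = q_F x_s$. I would then show $x_s = 0$ in two substeps. (a) The element $x_s$ is central: for any automorphism $\tau$ of $\Lie K_\lambda$ and any $y$, the operators $\ad(\tau y)$ and $\ad(y)$ are conjugate, so have identical eigenvalues. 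Applying this with $\tau = \Ad(\lambda(\Frob))^n$ gives that the eigenvalues of $\ad(x_s) = q_F^{-n}\ad(q_F^n x_s)$ are invariant under multiplication by $q_F^n$ for every $n$; since $q_F > 1$, the finite multiset of eigenvalues of $\ad(x_s)$ must consist of zeros. Hence $\ad(x_s) = 0$ and $x_s \in \Lie Z(K_\lambda)^0$.

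(b) The harder step, which I expect to be the main obstacle, is to rule out nonzero central $x_s$ with $\Ad(\lambda(\Frob))x_s = q_F x_s$. For this I would argue that $\Ad(\lambda(\Frob))$ acts on the torus $Z(K_\lambda)^0$ through an automorphism of finite order, so its eigenvalues on $\Lie Z(K_\lambda)^0$ are roots of unity, which excludes $q_F$. To see the finite-order statement, recall from Section~\ref{ssec:Lgroups} that the action of $W_F$ on $\dualgroup{G}$ factors through a finite quotient; writing $\lambda(\Frob) = s \rtimes \Frob$, this makes $\Ad(\lambda(\Frob))^N = \Int(s')$ an inner automorphism of $\dualgroup{G}$ for some $N$ and some $s' \in \dualgroup{G}$. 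Since $\Ad(\lambda(\Frob))$ preserves $Z(K_\lambda)^0$, the element $s'$ normalizes $Z(K_\lambda)^0$; the quotient $N_{\dualgroup{G}}(Z(K_\lambda)^0)/Z_{\dualgroup{G}}(Z(K_\lambda)^0)$ is finite (a subquotient of a Weyl group), so a further power of $\Int(s')$ centralizes $Z(K_\lambda)^0$. Thus $\Ad(\lambda(\Frob))$ restricts to a finite-order automorphism of $Z(K_\lambda)^0$, its eigenvalues on $\Lie Z(K_\lambda)^0$ are roots of unity, and $q_F$ (a prime power strictly greater than $1$) is not among them. Therefore $x_s = 0$ and $x = x_n$ is nilpotent, completing the proof.
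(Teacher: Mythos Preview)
Your proof is correct, but the paper takes a more direct route that you may find instructive. Rather than invoking Jordan decomposition, the paper observes that the eigenspace decomposition $\mathfrak{k}_\lambda = \bigoplus_\nu \mathfrak{k}_\lambda(\nu)$ for $\Ad(\lambda(\Frob))$ satisfies $[\mathfrak{k}_\lambda(\nu_1), \mathfrak{k}_\lambda(\nu_2)] \subseteq \mathfrak{k}_\lambda(\nu_1 \nu_2)$, so any $x \in V_\lambda = \mathfrak{k}_\lambda(q_F)$ is immediately ad-nilpotent; this is the same eigenvalue-shifting principle you use in step~(a), but applied to $x$ itself rather than to its semisimple part. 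For the central obstruction, the paper works in the ambient $\hat{\mathfrak{g}}$: since inner automorphisms act trivially on $\hat{\mathfrak{z}} = Z(\hat{\mathfrak{g}})$, the action of $\lambda(W_F)$ on $\hat{\mathfrak{z}}$ comes entirely from the homomorphism $\rho:\Gamma_F \to \Aut(\dualgroup{G})$ and hence factors through a finite quotient, so its eigenvalues are roots of unity and $q_F$ is excluded outright. Your step~(b), which treats the generally larger center $Z(\mathfrak{k}_\lambda)$, needs the extra normalizer/centralizer argument to reach the same finite-order conclusion; it works, but the ambient approach bypasses it. One small notational point: step~(a) places $x_s$ in $Z(\mathfrak{k}_\lambda) = \Lie Z(K_\lambda^0)$, which need not coincide with $\Lie Z(K_\lambda)^0$ when $K_\lambda$ is disconnected --- but your argument in~(b) applies verbatim to either torus, so this does not affect correctness.
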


\begin{proof}
Set $\mathfrak{k}_\lambda = \Lie K_\lambda$. \index{$\mathfrak{k}_\lambda$}
%Recall the notation $\lambda(\Frob ) = f_\lambda \rtimes \Frob $.
Decompose $\mathfrak{k}_\lambda$ according to the eigenvalues of $\Ad(\lambda(\text{Fr}))$:
\begin{equation}
\mathfrak{k}_\lambda = \bigoplus_{\nu \in \mathbb{C}^{*}} \mathfrak{k}_\lambda(\nu).
\end{equation}
Then, using the Lie bracket in $\mathfrak{k}_\lambda$, we have
\begin{equation}% multiply eigenvalue
\label{eq: multiply eigenvalue}
[\ , \ ] :  \mathfrak{k}_\lambda(\nu_{1})\times  \mathfrak{k}_\lambda(\nu_{2}) \to \mathfrak{k}_\lambda(\nu_{1}\nu_{2}).
\end{equation}
It follows that all elements in $V_\lambda$ are ad-nilpotent in $\hat{\mathfrak{g}}$. 
So it is enough to show that $V_\lambda$ does not intersect the centre $\hat{\mathfrak{z}}$ of $\hat{\mathfrak{g}}$. 
Since the adjoint action of $\lambda(W_{F})$ on $\hat{\mathfrak{z}}$ factors through a finite quotient of $\Gamma_{F}$, the $\Ad(\lambda(\text{Fr}))$-eigenvalues on $\hat{\mathfrak{z}}$ are all roots of unity. 
In particular, they can not be $q_{F}$, so $V_\lambda$ does not intersect $\hat{\mathfrak{z}}$.
This shows that all elements in $V_\lambda$ are nilpotent in $\hat{\mathfrak{g}}$. 
It is clear from \eqref{def:V} that $V_\lambda(\Lgroup{G})$ is closed under scalar multiplication by $\CC^\times$ in $\mathfrak{k}_\lambda^\text{nilp}$.
\end{proof}

With reference to decomposition of $\mathfrak{k}_\lambda = \Lie K_\lambda$ in the proof of Lemma~\ref{lemma:nilp}, observe that
\[
\mathfrak{k}_\lambda(q_{F}) = V_\lambda
\qquad\text{and}\qquad
 \mathfrak{k}_\lambda(1) = \operatorname{Lie} H_\lambda.
\]

\begin{proposition}% fibration over Vogan parameter space 	PROPOSITION
\label{proposition:parameter space}
For each infinitesimal parameter $\lambda\in R(\Lgroup{G})$, the $H_\lambda$-equivariant function \index{$x_\phi$}
\[
\begin{aligned}
P_\lambda(\Lgroup{G}) &\longrightarrow V_\lambda(\Lgroup{G}),\\
%\qquad
\phi &\mapsto x_\phi\ceq \text{d} \varphi\begin{pmatrix} 0 & 1 \\ 0 & 0 \end{pmatrix},
\end{aligned}
\]
is surjective, where $\varphi \ceq \phi^\circ\vert_{\SL(2,\CC)}: \SL(2,\CC) \to \dualgroup{G}$.
The fibre of $P_\lambda(\Lgroup{G}) \to V_\lambda(\Lgroup{G})$ over any $x \in V_\lambda(\Lgroup{G})$ is a principal homogeneous space for the unipotent radical of $Z_{H_\lambda}(x)$.
The induced map between the sets of $H_\lambda$-orbits
\[
\begin{aligned}
\Phi_{\lambda}(\Lgroup{G}) &\longrightarrow  V_\lambda(\Lgroup{G})/H_\lambda,\\
[\phi] &\mapsto C_\phi 
\end{aligned}
\]
is a bijection.
\end{proposition}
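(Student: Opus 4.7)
The plan is to interpret the map $\phi\mapsto x_\phi$ as an instance of the graded Jacobson-Morozov correspondence for the Lie algebra $\mathfrak{k}_\lambda$ equipped with the grading by $\Ad(\lambda(\Frob))$-eigenvalues introduced in the proof of Lemma~\ref{lemma:nilp}. To set up the bookkeeping I would first write $\phi$ in normal form: since $L_F = W_F \times \SL(2,\CC)$ and $\phi(w,d_w) = \lambda(w)$, every $\phi\in P_\lambda(\Lgroup{G})$ is uniquely of the shape
\[
\phi(w,y) = \lambda^\circ(w)\,\varphi(d_w)^{-1}\,\varphi(y)\rtimes w,
\]
where $\varphi=\phi^\circ|_{\SL(2,\CC)}$. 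A direct computation in the semidirect product shows that such a formula defines a group homomorphism if and only if the two automorphisms $\Ad(\lambda(w))$ and $\Ad(\varphi(d_w)\rtimes 1)$ of $\dualgroup{G}$ agree on $\varphi(\SL(2,\CC))$ for every $w\in W_F$. Unwinding this for $w\in I_F$ (where $d_w=1$) gives $\varphi(\SL(2,\CC))\subseteq K_\lambda$, and unwinding for $w=\Frob$ gives that the $\mathfrak{sl}_2$-triple $(e,h,f)=(x_\phi,d\varphi(\operatorname{diag}(1,-1)),d\varphi(\left(\begin{smallmatrix}0&0\\1&0\end{smallmatrix}\right)))$ is homogeneous for the $\Ad(\lambda(\Frob))$-grading, namely $e\in \mathfrak{k}_\lambda(q_F)=V_\lambda$, $h\in\mathfrak{k}_\lambda(1)=\Lie H_\lambda$ and $f\in\mathfrak{k}_\lambda(q_F^{-1})$. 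In particular $x_\phi\in V_\lambda$, so the map is well defined.

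Next, for surjectivity I would fix $x\in V_\lambda$ and invoke the graded Jacobson-Morozov theorem (as in \cite{Lusztig:Study}, applied to the $\ZZ$-grading of $\mathfrak{k}_\lambda$ coming from the absolute values $q_F^{n/2}$ of $\Ad(\lambda(\Frob))$-eigenvalues) to produce a homogeneous $\mathfrak{sl}_2$-triple $(x,h,f)$ with $h\in\Lie H_\lambda$ and $f\in\mathfrak{k}_\lambda(q_F^{-1})$. Integrating this triple gives a morphism of algebraic groups $\varphi:\SL(2,\CC)\to\dualgroup{G}$, and setting $\phi(w,y)=\lambda^\circ(w)\varphi(d_w)^{-1}\varphi(y)\rtimes w$ yields a preimage of $x$: the required compatibility $\Ad(\lambda(\Frob))|_{\varphi(\SL(2,\CC))}=\Ad(\varphi(d_\Frob))$ holds by construction on the triple and extends to all of $\varphi(\SL(2,\CC))$ because the triple generates it; the compatibility over $I_F$ is automatic because $\varphi(\SL(2,\CC))\subseteq K_\lambda$ and $d_w=1$ on inertia. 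Continuity, semisimplicity on $W_F$, and algebraicity on $\SL(2,\CC)$ are inherited from $\lambda$ and $\varphi$, so $\phi\in P_\lambda(\Lgroup{G})$.

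For the fibre description, the formula above shows $\phi$ is determined by $\varphi$, and $\varphi$ in turn is determined by the triple $(x,h,f)$. Thus the fibre over $x$ is identified with the set of homogeneous $\mathfrak{sl}_2$-triples extending $x$. The graded version of Kostant's theorem (again in the form recorded in \cite{Lusztig:Study}) states exactly that this set is a principal homogeneous space for the unipotent radical of the centralizer of $x$ inside the degree-zero piece, which is $Z_{H_\lambda}(x)$. This proves the fibre claim.

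Finally, the induced map $\Phi_\lambda(\Lgroup{G})\to V_\lambda(\Lgroup{G})/H_\lambda$ is surjective because the map of sets is. For injectivity, suppose $\phi_1,\phi_2\in P_\lambda(\Lgroup{G})$ have $x_{\phi_2}=\Ad(g)x_{\phi_1}$ for some $g\in H_\lambda$. Then $g\cdot\phi_1$ and $\phi_2$ lie in the same fibre of $\phi\mapsto x_\phi$, so by the fibre statement they are conjugate under the unipotent radical of $Z_{H_\lambda}(x_{\phi_2})\subseteq H_\lambda$; hence $[\phi_1]=[\phi_2]$ in $\Phi_\lambda(\Lgroup{G})$. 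The main obstacle, and the only nontrivial input, is the graded Jacobson-Morozov/Kostant package for $(\mathfrak{k}_\lambda,\Ad(\lambda(\Frob)))$, which however is precisely the content of the results of Lusztig already invoked in Theorem~\ref{theorem:unramification}; everything else is a formal verification in the semidirect product $\dualgroup{G}\rtimes W_F$.
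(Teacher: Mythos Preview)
Your approach coincides with the paper's: both parametrize $\phi$ by a homogeneous $\mathfrak{sl}_2$-triple for the $\Ad(\lambda(\Frob))$-grading on $\mathfrak{k}_\lambda$ and then invoke graded Jacobson--Morozov/Kostant (the paper cites \cite[Lemma~2.1]{Gross:Arithmetic} for this rather than \cite{Lusztig:Study}, and phrases the fibre statement as $Z_{H_\lambda}(x)=Z_{H_\lambda}(\phi)\cdot U$ rather than directly as a torsor). One slip to fix: the normal form should be $\phi(w,y)=\varphi(y)\,\varphi(d_w)^{-1}\,\lambda(w)$ as in the paper, since your ordering $\lambda^\circ(w)\,\varphi(d_w)^{-1}\,\varphi(y)$ drops the Galois twist $^{w}(\,\cdot\,)$ on the $\varphi$-factor and so is not literally a homomorphism.
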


\begin{proof}%[Proof of Proposition~\ref{proposition:parameter space}]
%We show that $P_\lambda(\Lgroup{G}) \longrightarrow V_\lambda(\Lgroup{G})$ surjective.
Fix $x \in V_\lambda= \mathfrak{k}_\lambda(q_{F})$.
By Lemma~\ref{lemma:nilp}, $x$ is nilpotent.
There exists an $\mathfrak{sl}_{2}$-triple $(x, y, z)$ in ${\mathfrak{k}_\lambda}$ such that 
\begin{equation}\label{sl2triple}
x \in V_\lambda= \mathfrak{k}_\lambda(q_{F})
\qquad\text{and}\qquad
y \in {\mathfrak{k}_\lambda}(q_{F}^{-1})
\qquad\text{and}\qquad
z \in \mathfrak{h}_\lambda = {\mathfrak{k}_\lambda}(1) ;
\end{equation}
see, for example, \cite[Lemma 2.1]{Gross:Arithmetic}.
Let $\varphi: \SL(2, \mathbb{C}) \rightarrow K_\lambda$ be the homomorphism defined by 
\[
\text{d}\varphi \begin{pmatrix} 0 & 1 \\ 0 & 0 \end{pmatrix} = x,  
\qquad
\text{d}\varphi \begin{pmatrix} 0 & 0 \\ 1 & 0 \end{pmatrix} = y,
\qquad
\text{d}\varphi \begin{pmatrix} 1 & 0 \\ 0 & -1 \end{pmatrix} = z,
\] 
and define $\phi: W_{F} \times \SL(2, \mathbb{C}) \rightarrow \Lgroup{G}$ by
\[
\phi(w, g) = \varphi(g)\ \varphi (d_w^{-1})\ \lambda(w).
%	\qquad w \in W_{F}, \, g \in \SL(2, \mathbb{C});
\]
%see \eqref{def:dw} for the definition of $d_w$.
Then $\phi \in P_\lambda(\Lgroup{G})$
and 
\[
\text{d} (\phi^\circ\vert_{SL(2, \mathbb{C})}) \begin{pmatrix} 0 & 1 \\ 0 & 0 \end{pmatrix} 
= \text{d}\varphi \begin{pmatrix} 0 & 1 \\ 0 & 0 \end{pmatrix} = x.
\]
This shows the map $P_\lambda(\Lgroup{G}) \to V_\lambda(\Lgroup{G})$ is surjective.

Now, suppose that $\phi_{1}$ is also mapped to $x$ under the map $P_\lambda(\Lgroup{G}) \to V_\lambda(\Lgroup{G})$ and set $\varphi_{1} \ceq \phi^\circ_{1}\vert_{\SL(2, \mathbb{C})}$. 
Then $\varphi_1$ determines an $\mathfrak{sl}_{2}$-triple $(x, y_1, z_1)$ in ${\mathfrak{k}_\lambda}$ such that 
\[
z_1 \in \mathfrak{h}_\lambda = {\mathfrak{k}_\lambda}(1) 
\qquad\text{and}\qquad
y_1 \in {\mathfrak{k}_\lambda}(q_{F}^{-1}).
\]
The two $\mathfrak{sl}_{2}$-triples $(x, y, z)$ and $(x, y_1, z_1)$ are conjugate by an element of $Z_{H_\lambda}(x)$; see, for example, the second part of \cite[Lemma 2.1]{Gross:Arithmetic}.
Thus, $\varphi$ and $\varphi_{1}$ are conjugate under $Z_{H_\lambda}(x)$. 
We can also write $\phi_{1}$ as
\[
\phi_{1}(w, g) = \varphi_{1}(g) \varphi_{1}(d_w^{-1}) \lambda(w). 
%	\qquad w \in W_{F}, \, g \in \SL(2, \mathbb{C}).
\]
It is then clear that $\phi$ and $\phi_{1}$ are also conjugate under $Z_{H_\lambda}(x)$. 
This shows that the map $P_\lambda(\Lgroup{G}) \to V_\lambda(\Lgroup{G})$ induces a bijection between $H_\lambda$-orbits and also that the fibre above any $x\in V_\lambda$ is in bijection with $Z_{H_\lambda}(x)/Z_{H_\lambda}(\phi)$ for $\phi\mapsto x$ and that $Z_{H_\lambda}(x) = Z_{H_\lambda}(\phi) U$ where $U$ is the unipotent radical of $Z_{H_\lambda}(x)$.
%\todo{I don't see this last part.}
\end{proof}

We remark that Proposition~\ref{proposition:parameter space} is analogous to \cite[Proposition 6.17]{ABV} for real groups.
However, Proposition~\ref{proposition:parameter space} might appear to contradict with \cite[Corollary 4.6]{Vogan:Langlands}. 
The apparent discrepancy is explained by the two different incarnations of the Weil-Deligne group: we use $L_F = W_F \times \SL(2,\CC)$ while \cite{Vogan:Langlands} uses $W'_F = W_F \rtimes \mathbb{G}_{\text{add}}(\CC)$ and we use pullback along $W_F \to L_F$ given by $w \mapsto (w,d_w)$ to define the infinitesimal parameter of a Langlands parameter while \cite{Vogan:Langlands} uses restriction of a parameter $W'_F\to \Lgroup{G}$ to $W_F$ to define its infinitesimal parameter.
We find $L_F$ preferable to $W'_F$ here because it stresses the analogy to the real groups case.
However, there is a cost.
In the optic of \cite{Vogan:Langlands}, $V_\lambda$ is exactly a moduli space for Langlands parameters $\phi: W'_F \to \Lgroup{G}$ with $\phi\vert_{W_F} = \lambda$, while in this article the map $P_\lambda(\Lgroup{G}) \to V_\lambda(\Lgroup{G})$ from Langlands parameters $\phi : L_F\to \Lgroup{G}$ with $\lambda_\phi = \lambda$ to $V_\lambda$  is not a bijection, as we saw in Proposition~\ref{proposition:parameter space}.
%We will refer to $V_\lambda$ as \emph{parameter space}\index{parameter space, $V_\lambda$} of Langlands parameters with infinitesimal parameter $\lambda$,

%\iffalse

\subsection{Parameter  varieties}

Recall from Section~\ref{ssec:infinitesimal} that elements of $\Lambda(G/F)$ are $\dualgroup{G}$-conjugacy class of elements of $R(\Lgroup{G})$.
We will use the notation $[\lambda]\in \Lambda(G/F)$\index{$[\lambda]$} for the class of $\lambda\in R(\Lgroup{G})$; then $[\lambda]$ is an infinitesimal character\index{infinitesimal character} in the language of \cite{Vogan:Langlands}.
Consider the variety
\begin{equation}\label{def:X}\index{$X_\lambda$}
X_\lambda \ceq X_{\lambda}(\Lgroup{G}) \ceq \dualgroup{G}\times_{H_\lambda} V_\lambda(\Lgroup{G}).
\end{equation}
Then $[\lambda] = [\lambda']$ implies $X_{\lambda}(\Lgroup{G})\iso X_{\lambda'}(\Lgroup{G})$.
Set
\[
P_{[\lambda]}(\Lgroup{G})\index{$P_{[\lambda]}(\Lgroup{G})$}
\ceq \{ \phi\in P(\Lgroup{G}) \tq \lambda_\phi = \Ad(g)\lambda, \ \exists g\in \dualgroup{G} \}.
\]
It follows immediately from Proposition~\ref{proposition:parameter space} that the function
\begin{equation}\label{eqn:lambdaparameter}
P_{[\lambda]}(\Lgroup{G}) \to X_\lambda(\Lgroup{G}),
\end{equation}
induced from $P_{\lambda}(\Lgroup{G}) \to V_\lambda(\Lgroup{G})$ is $\dualgroup{G}$-equivariant, surjective and the fibre over any $x\in X_\lambda(\Lgroup{G})$ is a principal homogeneous space for the unipotent radical of $Z_{\dualgroup{G}}(x)$.

Let 
\[
\Hom_{W_F}(W_F,\Lgroup{G})
\]
be the set of homomorphisms that satisfy conditions \ref{Infparameter-1} and \ref{Infparameter-2} of \ref{ssec:infinitesimal}.
Observe that 
\[
R(\Lgroup{G}) = \{ \lambda\in \Hom_{W_F}(W_F,\Lgroup{G}) \tq \lambda(\Frob ) \in \Lgroup{G}_\text{ss} \}
\]
where $\Lgroup{G}_\text{ss}\subseteq \Lgroup{G}$ denotes the set of semisimple elements in $\Lgroup{G}$.
Now let 
\[
\Hom_{W_F}(I_F,\Lgroup{G})
\]
be the set of continuous homomorphisms that commute with the natural maps $I_F \to W_F$ and $\Lgroup{G}\to W_F$.
As explained in \cite[Section 10]{Prasad:Relative}, the set $\Hom_{W_F}(W_F,\Lgroup{G})$ naturally carries the structure of a locally finite-type variety over $\CC$ and its components are indexed by $\dualgroup{G}$-conjugacy classes of those $\phi_0\in \Hom_{W_F}(I_F,\Lgroup{G})$ that lie in the image of $\Hom_{W_F}(W_F,\Lgroup{G})\to \Hom_{W_F}(I_F,\Lgroup{G})$ given by restriction.
We remark that $\dualgroup{G}$-orbits in $\Hom_{W_F}(W_F,\Lgroup{G})$ are closed subvarieties.

Now consider the locally finite-type variety
\[
X(\Lgroup{G}) \index{$X(\Lgroup{G})$}
%&\ceq& \{ (\lambda,x)\in \Hom_{W_F}(W_F,\Lgroup{G})\times \Lie \dualgroup{G} \tq x \in \Lie Z_{\dualgroup{G}}(\lambda\vert_{I_F}),\ \Ad(\lambda(\Frob ))x = q_F x\}\\
\ceq \{ (\lambda,x)\in \Hom_{W_F}(W_F,\Lgroup{G})\times \Lie \dualgroup{G} \tq x\in V_\lambda(\Lgroup{G}) \}.
\]
This locally finite-type variety comes equipped with morphisms
\[
\begin{array}{rc c c l}
X(\Lgroup{G}) &\to& \Hom_{W_F}(W_F,\Lgroup{G}) &\to& \Hom_{W_F}(I_F,\Lgroup{G})\\
(\lambda,x) &\mapsto& \lambda & \mapsto & \lambda\vert_{I_F}.
\end{array}
\]
The components of $X(\Lgroup{G})$ are again indexed by the $\dualgroup{G}$-conjugacy classes of those $\phi_0\in \Hom_{W_F}(I_F,\Lgroup{G})$ that lie in the image of $\Hom_{W_F}(W_F,\Lgroup{G})\to \Hom_{W_F}(I_F,\Lgroup{G})$.
The fibre of $X(\Lgroup{G}) \to \Hom_{W_F}(W_F,\Lgroup{G})$ above $\lambda \in R(\Lgroup{G})\subseteq X(\Lgroup{G})$ is precisely the affine variety $X_\lambda(\Lgroup{G})$ defined in \eqref{def:X}.

Now, with reference to the definition of $\lambda_\phi$ from \eqref{eq:Phi-Lambda} and the definition of $x_\phi$ in Proposition~\ref{proposition:parameter space}, consider the map
\begin{equation}\label{eqn:P-->X}
\begin{array}{rcl}
P(\Lgroup{G}) &\to& X(\Lgroup{G})\\
\phi &\mapsto& (\lambda_\phi, x_\phi).
\end{array}
\end{equation}
Though the map \eqref{eqn:P-->X} is neither injective nor surjective, in general, and though $X(\Lgroup{G})$ is not of finite type over $\CC$, in general, we refer to $X(\Lgroup{G})$ as the {\it parameter variety}\index{parameter variety}\index{$X(\Lgroup{G})$} for $G$.

It follows from Proposition~\ref{proposition:parameter space} that the image of \eqref{eqn:P-->X} is 
\[
\{ (\lambda,x)\in X(\Lgroup{G}) \tq \lambda\in R(\Lgroup{G})\}
\]
 and the fibre of $P(\Lgroup{G}) \to X(\Lgroup{G})$ above any $(\lambda,x)$ in its image is a principal homogeneous space for the unipotent radical of $Z_{\dualgroup{G}}(x)$, and moreover that $P(\Lgroup{G}) \to X(\Lgroup{G})$ induces a bijection
\[
\begin{aligned}
\Phi(\Lgroup{G}) &\longrightarrow  X(\Lgroup{G})/\dualgroup{G},\\
[\phi] &\mapsto S_\phi .
\end{aligned}
\]
We note that $X(\Lgroup{G})$ is stratified into $\dualgroup{G}$-orbit varieties, locally closed in $X(\Lgroup{G})$; this stratification is not finite, in general, but it is closure-finite.
For each $\dualgroup{G}$-orbit $S\subseteq X(\Lgroup{G})$, there is some $\lambda \in \Hom_{W_F}(W_F,\Lgroup{G})$ such that $S\subseteq X_\lambda(\Lgroup{G})$.
Then ${\bar S}$, the closure of $S$ in $X(\Lgroup{G})$, is also contained in $X_\lambda(\Lgroup{G})$.
It is essentially for this reason that this article is concerned with the affine varieties $X_\lambda(\Lgroup{G})$, for $[\lambda]\in \Lambda(G/F)$, rather than the full parameter variety $X(\Lgroup{G})$.

\subsection{Equivariant perverse sheaves}\label{ssec:equivariant}

The definitive reference for perverse sheaves is \cite{BBD}, and we will use notation from that paper here.
Since equivariant perverse sheaves do not appear in \cite{BBD}, we now briefly describe that category and some properties that will be important to us.
Our treatment is consistent with \cite[Section 5]{Bernstein:Equivariant}.

%\begin{definition}
Let $m: H \times V \to V$ be a group action in the category of algebraic varieties. 
So, in particular, $H$ is an algebraic group, but need not be connected.
Consider the morphisms
\[
\begin{tikzcd}
H \times H \times V 
\arrow[shift left=2]{rr}{m_1, m_2, m_3} 
\arrow{rr}{}
\arrow[shift right=2]{rr}{} 
&& H \times V 
\arrow[shift left=1]{rr}{m}
\arrow[shift right=1]{rr}[swap]{m_0}
 && \arrow[bend right]{ll}[swap]{s} V 
\end{tikzcd}
\]
where $m_0 : H\times V\to V$ is projection, $s : V \to H\times V$ is defined by $s(x) = (1,x)$ and
$m_1, m_2, m_3 : H\times H\times V\to H\times V$ are defined by
\begin{align*}
m_1(h_1,h_2,x) &= (h_1h_2,x) \\
m_2(h_1,h_2,x) &= (h_1,m(h_2,x)) \\
m_3(h_1,h_2,x) &= (h_2,x).
\end{align*} 
These are all smooth morphisms. 
%Set $d= \dim H$.

An object in $\Perv_{H}(V)$ is a pair $(\mathcal{A},\alpha)$ where $\mathcal{A}\in \Perv(V)$ and 
\begin{equation}\label{E0}
%\alpha : \pH{d} m^*A \to \pH{d} (m_0)^*A
\alpha : m^*[\dim H] \mathcal{A} \to m_0^*[\dim H] \mathcal{A}
\end{equation}
 is an isomorphism in $\Perv(H\times V)$ such that 
\begin{equation}\label{E1}
%\pH{-d} s^*(\alpha) = \id_{A}
s^*(\alpha) = \id_{\mathcal{A}}[\dim H]
\end{equation}
and such that the following diagram in $\Perv(H\times H\times V)$, which makes implicit use of \cite[1.3.17]{BBD} commutes:
\begin{equation}\label{E2}
\begin{tikzcd}
%\arrow{d}{m\circ m_1 = m\circ m_2}  \pH{2d} m_2^*  m^* A \arrow{rrr}{\pH{d} m_2^*(\alpha)} &&& \pH{2d} m_2^* m_0^* A \arrow{d}[swap]{m_0\circ m_2 = m\circ m_3}  \\
%\pH{2d} m_1^* m^* A \arrow{d}{\pH{d} m_1^*(\alpha)} &&&   \arrow{d}[swap]{\pH{d} m_3^*(\alpha)} \pH{2d} m_3^* m^* A \\
% \pH{2d} m_1^* m_0^* A  &&& \arrow{lll}[swap]{m_0\circ m_3 = m_0\circ m_1} \pH{2d} m_3^* m_0^* A . 
\arrow{d}{m\circ m_1 = m\circ m_2} m_2^* [\dim H] m^* [\dim H] \mathcal{A} \arrow{rrr}{m_2^*[\dim H](\alpha)} &&& m_2^* [\dim H] m_0^*[\dim H] \mathcal{A} \arrow{d}[swap]{m_0\circ m_2 = m\circ m_3}  \\
m_1^*[\dim H] m^*[\dim H] \mathcal{A} \arrow{d}{m_1^*[\dim H](\alpha)} &&&   \arrow{d}[swap]{m_3^*[\dim H](\alpha)} m_3^*[\dim H] m^*[\dim H] \mathcal{A} \\
m_1^*[\dim H] m_0^*[\dim H] \mathcal{A}  &&& \arrow{lll}[swap]{m_0\circ m_3 = m_0\circ m_1} m_3^*[\dim H] m_0^*[\dim H] \mathcal{A} . 
\end{tikzcd}
\end{equation}
%
%\begin{remark}
We remark that $\pH{\dim H} m^* = m^*[\dim H]$ on $\Perv(V)$ and $\pH{\dim H} m_i^* = m_i^*[\dim H]$ on $\Perv(H\times V)$ for $i=1,2, 3$; see \cite[4.2.4]{BBD}. 
This does \emph{not} require connected $H$.
%\end{remark}

A morphisms of $H$-equivariant perverse sheaves  $(\mathcal{A},\alpha)\to (\mathcal{B},\beta)$ is a morphism of perverse sheaves $\phi: A \to B$ for which the diagram
\begin{equation}\label{E3}
\begin{tikzcd}
\arrow{d}[swap]{\alpha} m^*[\dim H] A \arrow{rrr}{m^*[\dim H] (\phi)} &&&  m^*[\dim H] B \arrow{d}{\beta} \\
  m_0^*[\dim H] A \arrow{rrr}{m_0^*[\dim H](\phi)} &&& m_0^*[\dim H]B
\end{tikzcd}
\end{equation} 
commutes.
This defines $\Perv_{H}(V)$, the category of $H$-equivariant perverse sheaves on $V$.
\index{$\Perv_{H}(V)$}
%\end{definition}

The category $\Perv_{H}(V)$ comes equipped with the forgetful functor
\[
\Perv_{H}(V) \to \Perv(V)
\]
trivial on morphisms and given on objects by $(\mathcal{A},\alpha) \to \mathcal{A}$. 
This is a special case of a more general construction called equivariant pullback.
Let $m: H\times V\to V$ and $m': H'\times V'\to V'$ be actions.
Let $u: H'\to H$ be a morphism in the category of algebraic groups and suppose $H'$ acts on $V$ and $H$ acts on $V$. 
A morphism $f : V'\to V$ is equivariant with respect to $u$\index{equivariant morphism} if 
\[
\begin{tikzcd}
H' \times V' \arrow{r}{m'} \arrow{d}[swap]{u \times f}  & V' \arrow{d}{f} \\
H \times V \arrow{r}{m} & V
\end{tikzcd}
\]
commutes.
%\begin{lemma}\label{lem:pullback}
%Let $f : X' \to X$ be equivariant for the actions above.
Then for every $i\in \ZZ$ there is a functor $\pH{i}_{u} f^* : \Perv_{H}(V) \to \Perv_{H'}(V')$ making
\[
\begin{tikzcd}
\arrow{d}[swap]{\text{forget}} \Perv_{H'}(V') && \arrow{ll}[swap]{\pH{i}_{u} f^*} \Perv_{H}(V) \arrow{d}{\text{forget}}\\
\Perv(V') && \arrow{ll}[swap]{\pH{i} f^*}  \Perv(V)
\end{tikzcd}
\]
commute; we call this equivariant pullback.\index{equivariant pullback}
%\end{lemma}
%\begin{remark}
%We remark that it $f$ is a quasi-finite affine morphism, then $\pH{i} f^* = f^*[i]$.
%If $f$ is smooth with fibres of dimension $d$, then $\pH{d} f^* = f^*[d]$.
%\end{remark}
The forgetful functor above is just $\pH{0}_{1} \id_V^*$, where $u: 1\to H$.

The category $\Perv_{H}(V)$ also comes equipped with the forgetful functor
\[
\Perv_{H}(V) \to \Perv_{H^0}(V)
\]
where $H^0$ is the identity component of $H$. 
The category $\Perv_{H^0}(V)$ is easier to study than $\Perv_{H}(V)$, since the functor $\Perv_{H^0}(V) \to \Perv(V)$ is faithful, which is generally not the case for $\Perv_{H}(V) \to \Perv(V)$ when $H$ is not connected. 
The following result shows how $\Perv_{H}(V)$ is related to $\Perv_{H^0}(V)$.

\begin{proposition}\label{proposition:pi*}
Let $m: H \times V \to V$ be a group action in the category of algebraic varieties.
Suppose $V$ is smooth and connected.
We have a sequence of functors
\[
\begin{tikzcd}
\Rep(\pi_0(H)) \arrow{rrr}{E \mapsto E_V[\dim V]} 
%\arrow[shift left=4]{lll}{0_!, 0_*}
 &&& \Perv_{H}(V) \arrow[shift left]{rrr}{\text{forget}:\ \mathcal{P}\mapsto \mathcal{P}_0} &&&  \Perv_{H^0}(V) \arrow[shift left]{lll}{\pi_*}
\end{tikzcd}
\]
such that:
\begin{enumerate}
\labitem{(a)}{item:sequence} for every $E\in \Rep(\pi_0(H))$, $(E_{V}[\dim V])_0 \iso \1^{\dim E}_{V}[\dim V]$;
\labitem{(b)}{item:0} the functor $\Rep(\pi_0(H)) \to \Perv_{H}(V)$ is fully faithful and its essential image is the category of perverse local systems $\mathcal{L}[\dim V]\in \Perv_{H}(V)$ such that $(\mathcal{L}[\dim V])_0 \iso \1^{\dim\mathcal{L}}_{V}[\dim V]$;
\labitem{(c)}{item:forget} the forgetful functor $\Perv_{H}(V) \to \Perv_{H^0}(V)$ is exact and admits an adjoint $\pi_* : \Perv_{H^0}(V) \to \Perv_{H}(V)$, both left and right;
\labitem{(d)}{item:decomposition} every $\mathcal{P}\in \Perv_{H}(V)$ is a summand of $\pi_* \mathcal{P}_0$.
\end{enumerate}
\end{proposition}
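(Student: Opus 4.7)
The plan is to tackle the four claims (a)--(d) in the order stated, treating (a) as a sanity check, then building up the structural parts (b) and (c), and finally using (c) to deduce (d).

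For (a), I would unwind the definition: an object $E \in \Rep(\pi_0(H))$ gives a $\pi_0(H)$-action on the vector space $E$, and by pulling back along $H \to \pi_0(H)$ we obtain an $H$-equivariant structure on the constant sheaf $E_V = \mathbf{1}_V \otimes_{\CC} E$, shifted into perverse degree. When we restrict this equivariant structure along $H^0 \hookrightarrow H$, the $H^0$-action on $E$ becomes trivial since $H^0$ lies in the kernel of $H \to \pi_0(H)$; hence $(E_V[\dim V])_0$ is canonically the trivial $H^0$-equivariant perverse local system $\mathbf{1}_V^{\dim E}[\dim V]$. For the fully faithful half of (b), I would compute $\Hom_{\Perv_H(V)}(E_V[\dim V], F_V[\dim V])$ directly. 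Since $V$ is smooth and connected, $\End_{\Perv(V)}(\mathbf{1}_V[\dim V]) = \CC$, so a morphism between these equivariant objects is an element of $\Hom_\CC(E, F)$ that intertwines the cocycle data \eqref{E0}; one checks that the cocycle conditions \eqref{E1} and \eqref{E2} reduce exactly to $\pi_0(H)$-equivariance of the linear map.

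For the essential image claim in (b), I would use that $V$ is smooth and connected to argue that a perverse local system whose $H^0$-equivariant structure is trivial must have trivial underlying local system: the forgetful functor $\Perv_{H^0}(V) \to \Perv(V)$ is faithful for the connected group $H^0$, so the triviality of the $H^0$-equivariant structure upgrades to an isomorphism $\mathcal{L} \cong \mathbf{1}_V^{\dim\mathcal{L}}$ of local systems. The remaining $H$-equivariant structure on this trivial local system is, by the cocycle analysis above, encoded by a homomorphism $H \to \GL(\mathcal{L}_v)$ (for any base point $v$); continuity together with triviality on $H^0$ forces this to factor through $\pi_0(H)$, landing us back in $\Rep(\pi_0(H))$. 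The exactness statement in (c) is immediate: the forgetful functor is the equivariant pullback along the inclusion $H^0 \hookrightarrow H$ (with $f = \id_V$), which is $t$-exact by the general construction of equivariant pullback in Section~\ref{ssec:equivariant}.

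To build $\pi_*$, I would use that $H^0 \trianglelefteq H$ is normal with finite quotient $\pi_0(H)$. For $\mathcal{P}_0 \in \Perv_{H^0}(V)$ and a lift $\tilde g \in H$ of $g \in \pi_0(H)$, the pullback $\tilde g^* \mathcal{P}_0$ is again $H^0$-equivariant (by normality), and a different choice of lift gives a canonically isomorphic object; thus I define
\[
\pi_* \mathcal{P}_0 \ceq \bigoplus_{g \in \pi_0(H)} \tilde g^* \mathcal{P}_0,
\]
with $H^0$ acting via its equivariance on each summand and $\pi_0(H)$ permuting the summands by left translation. The cocycle \eqref{E2} is verified summand-by-summand. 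Frobenius reciprocity for this finite quotient gives both adjunctions, which coincide because $\pi_0(H)$ is finite (averaging against $|\pi_0(H)|^{-1}$). For (d), the $H$-equivariant structure on a given $\mathcal{P}$ produces canonical isomorphisms $\tilde g^* \mathcal{P}_0 \simto \mathcal{P}_0$ for each $g \in \pi_0(H)$, so $(\pi_* \mathcal{P}_0)_0 \iso \mathcal{P}_0^{\oplus |\pi_0(H)|}$; adjunction produces the unit $\mathcal{P} \to \pi_* \mathcal{P}_0$ and counit $\pi_* \mathcal{P}_0 \to \mathcal{P}$, whose composite is multiplication by $|\pi_0(H)|$ and hence an automorphism of $\mathcal{P}$ in characteristic zero, exhibiting $\mathcal{P}$ as a direct summand.

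The main obstacle I anticipate is bookkeeping with the equivariant structure in the definition of $\pi_*$: making the dependence on the lifts $\tilde g$ into a canonical construction, and verifying that the resulting cocycle on $\pi_* \mathcal{P}_0$ really satisfies the associativity \eqref{E2} rather than merely up to a coboundary. A clean way around this is to realize $\pi_* \mathcal{P}_0$ as $\pH{0}_{\iota} q^* \mathcal{P}_0$ where $q$ and $\iota$ are built from the quotient square for $H^0 \trianglelefteq H$ acting on $V$, reducing the verification to the general properties of equivariant pullback already recorded in Section~\ref{ssec:equivariant}.
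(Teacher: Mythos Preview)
Your proposal is correct, and for parts (a) and (b) it proceeds in the same spirit as the paper, though the paper phrases the functor $\Rep(\pi_0(H)) \to \Perv_H(V)$ abstractly as equivariant pullback along $V \to 0$ (with respect to $H \to \pi_0(H)$) and cites \cite[Corollaire 4.2.6.2]{BBD} for full faithfulness, whereas you compute Hom spaces directly.

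Where you genuinely diverge is in (c) and (d). The paper constructs $\pi_*$ geometrically: it forms ${\tilde V} = H \times_{H^0} V$, notes that ${\tilde V} \cong V \times \pi_0(H)$, and that the action map $c: {\tilde V} \to V$, $[h,x]_{H^0} \mapsto h\cdot x$, is a finite \'etale $H$-equivariant cover with group $\pi_0(H)$. By descent, $\Perv_H({\tilde V}) \simeq \Perv_{H^0}(V)$, and $\pi_*$ is then defined as $\pH{0}_H c_*$ composed with the inverse of this equivalence; the equivariant structure on $c_*$ is verified via smooth base change. Your direct-sum formula $\bigoplus_{g \in \pi_0(H)} {\tilde g}^* \mathcal{P}_0$ is exactly what $c_*$ computes on the underlying sheaf, so the two constructions agree; the paper's packaging via ${\tilde V}$ is precisely the ``clean way around'' you mention at the end, and it sidesteps the coset-lift bookkeeping you were worried about. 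For (d), the paper simply invokes the Decomposition Theorem for the finite map $c$, whereas your Maschke-style argument via the unit/counit composite being multiplication by $|\pi_0(H)|$ is more elementary and equally valid in characteristic zero. Either argument buys the same conclusion; yours makes the role of characteristic zero transparent, while the paper's keeps the geometry front and centre.
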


\begin{proof}
The identity $\id_V : V \to V$ is equivariant with respect to the inclusion $u : H^0\to H$ of the identity component of $H$.
Consider the functor
\[
\pH{0}_{u} \id_V^*: \Perv_{H}(V) \to \Perv_{H^0}(V).
\]
The trivial map $0 : V \to 0$ is equivariant with respect to the quotient $\pi_0: H \to \pi_0(H)$.
\[
\begin{tikzcd}
H \times V \arrow{r} \arrow{d} & V \arrow{d} \\
\pi_0(H) \times 0 \arrow{r} & 0
\end{tikzcd}
\]
Consider the functor 
\[
\pH{\dim H}_{\pi_0} 0^* : \Perv_{\pi_0(H)}(0) \to \Perv_{H}(V).
\]
Then
\[
(\pH{\dim H}_{\pi_0} 0^*)\ (\pH{0}_{u} \id_V^*) \iso \pH{\dim H}_{0} 0^*
\]
and we have a sequence of functors
\[
\begin{tikzcd}
\Perv_{\pi_0(H)}(0) \arrow{rr}{} && \Perv_{H}(V) \arrow{rr}{\text{forget}} && \Perv_{H^0}(V) .
\end{tikzcd}
\]
The tensor category $\Perv_{\pi_0(H)}(0)$ is equivalent to  $\Rep(\pi_0(H))$, the category of representations of the finite group $\pi_0(H)$. 
Property~\ref{item:sequence} now follows from the canonical isomorphism of functors above.

Since $V$ is smooth, the functor $\Rep(\pi_0(H)) \to \Perv_{H}(V)$ is given explicitly by $E \mapsto E_V[\dim V]$; this functor is full and faithful \cite[Corollaire 4.2.6.2]{BBD} from which we also find the adjoint functors $\Perv_{H}(V) \to \Rep(\pi_0(H))$ and Property~\ref{item:0}. 
Connectedness of $V$ plays a role here.

%The forgetful functor  $\Perv_{H}(V) \to \Perv_{H^0}(V)$ is exact and admits an exact adjoint, both left and right.
To see Property~\ref{item:forget}, set ${\tilde V} =  H\times_{H^0} V$\index{${\tilde V}$} and consider the closed embedding $i : V \to {\tilde V}$ given by $i(x) = [1,x]_{H^0}$. 
By descent, equivariant pullback
\[
\pH{0}_{u} i^* : \Perv_{H}({\tilde V}) \to \Perv_{H^0}(V) 
\]
is an equivalence.
Now consider the morphism 
\[
\begin{aligned}
c: {\tilde V} &\to V\\  
[h,x]_{H^0} &\mapsto h\cdot x. 
\end{aligned}
\]
Then $c:  {\tilde V} \to V$ is an $H$-equivariant finite etale cover with group $\pi_0(H) = H/H^0$.
In fact, ${\tilde V} \iso V\times H/H_0$ and $c$ is simply the composition of this isomorphism with projection $V\times H/H_0 \to V$.
Since $c$ is proper and semismall, the adjoint to pullback
\[
%c^* = 
\pH{0} c^* : \Perv(V) \to \Perv({\tilde V})
\]
takes perverse sheaves to perverse sheaves,
\[
%c_* = 
\pH{0} c_* : \Perv({\tilde V}) \to \Perv(V)
\]
and coincides with $%c_! = 
\pH{0} c_!$; see also \cite[Corollaire 2.2.6]{BBD}. 
To see that the adjoint extends to a functor of equivariant perverse sheaves, define 
\[
\pH{0}_{H} c_* : \Perv_{H}({\tilde V}) \to \Perv_{H}(V)
\]
as follows. 
On objects, $\pH{0}_{H} c_*(\mathcal{\tilde A},{\tilde \alpha}) = (\mathcal{A},\alpha)$ with $\mathcal{A} = \pH{0} c_* \mathcal{\tilde A}$ while the isomorphism $\alpha : \pH{\dim H} m^* \mathcal{A} \to \pH{\dim H} m_0^* \mathcal{A}$ in $\Perv(H\times V)$ is defined by the following diagram of isomorphisms.
\[
\begin{tikzcd}
\pH{\dim H} m^* \mathcal{A} \arrow{rrr}{\alpha} \arrow[equal]{d} &&& \pH{\dim H} m_0^* A \arrow[equal]{d} \\
\pH{\dim H} m^*(\pH{0} c_* \mathcal{\tilde A}) \arrow{d}{\text{smooth base change}} &&& \pH{\dim H} m_0^*(\pH{0} c_* \mathcal{\tilde A})  \arrow{d}[swap]{\text{smooth base change}} \\
\pH{0} (\id_H \times c)_* \ \pH{\dim H} ({\tilde m})^* A  \arrow{rrr}{\pH{0} (\id_H \times c)_*({\tilde \alpha})} &&& \pH{0} (\id_H \times c)_* \ \pH{\dim H} ({\tilde m}_0)^* A
\end{tikzcd}
\]
It is straightforward to verify that $\alpha$ satisfies \eqref{E1} and \eqref{E2} as they apply here and also that if $\mathcal{\tilde A} \to \mathcal{\tilde B}$ is a map in $\Perv_{H}({\tilde V})$ then $\pH{i} c_! (\mathcal{\tilde A} \to \mathcal{\tilde B})$ satisfies condition \eqref{E3}, so is a map in $\Perv_{H}(V)$.
By this definition of $\pH{i}_{H} c_* : \Perv_{H}({\tilde V}) \to \Perv_{H}(V)$, it follows immediately that the diagram
\[
\begin{tikzcd}
\arrow{d}[swap]{\text{forget}} \Perv_H({\tilde V})  \arrow{rr}{\pH{0}_H c_*} && \Perv_{H}(V)  \arrow{d}{\text{forget}}\\
\Perv({\tilde V}) \arrow{rr}{c_* = \pH{0} c_*}  && \Perv(V)
\end{tikzcd}
\]
commutes.
Now, we define the adjoint $\pi_* : \Perv_{H^0}(V) \to \Perv_{H}(V)$ by the following diagram
\[
\begin{tikzcd}
\Perv_{H^0}(V)  \arrow{rr}{\pi_*} && \Perv_{H}(V)\\
& \arrow{ul}{\pH{0}_{u} i^*}[swap]{\text{equiv.}} \Perv_{H}({\tilde V}) \arrow{ru}[swap]{\pH{0}_H c_*}. & 
\end{tikzcd}
\]
This shows Property~\ref{item:forget}.

Property~\ref{item:decomposition} follows from the Decomposition Theorem applied to $c : {\tilde V} \to V$.
%  giving:
%\[
%\pi_* \mathcal{P}_0 \iso \bigoplus_{E\in \Irrep(\pi_0(H))} E \otimes \Hom_{\pi_0(H)}(E,\pi_* \mathcal{P}_0),
%\]
%completing the proof of Proposition~\ref{proposition:pi*}.
\end{proof}

\subsection{Equivariant perverse sheaves on parameter varieties}\label{ssec:EPS}

Our fundamental object of study is the category\index{$\Perv_{\dualgroup{G}}(X_\lambda(\Lgroup{G})$}
$
\Perv_{\dualgroup{G}}(X_\lambda)
$
of $\dualgroup{G}$-equivariant perverse sheaves\index{equivariant perverse sheaves} on $X(\Lgroup{G})$, for fixed $[\lambda]\in \Lambda(G/F)$.
Consider the closed embedding 
\[ 
\begin{array}{rcl}
V_\lambda &\to& X_\lambda\\
x &\mapsto& [1,x]_{H_\lambda}.
\end{array}
\]
By a simple application of descent, the functor obtained by equivariant pullback along $V_\lambda \to X_\lambda$,
\[
\Perv_{H_\lambda}(V_\lambda) \leftarrow \Perv_{\dualgroup{G}}(X_\lambda),
\]
is an equivalence.
Consequently, it may equally be said that our fundamental object of study is the category\index{$\Perv_{H_\lambda}(V_\lambda)$}
$
\Perv_{H_\lambda}(V_\lambda)
$
of $H_\lambda$-equivariant perverse sheaves\index{equivariant perverse sheaves} on $V_\lambda$. 

Now define\index{${\tilde X}_\lambda$}
\begin{equation}\label{eqn:tildeX}
{\tilde X}_\lambda \ceq \dualgroup{G}\times_{H_\lambda^0} V_\lambda.
\end{equation}
Then
\[ 
\begin{array}{rcl}
V_\lambda &\to& {\tilde X}_\lambda\\
x &\mapsto& [1,x]_{H_\lambda^0}
\end{array}
\]
induces an equivalence
\[
\Perv_{\dualgroup{G}}({\tilde X}_\lambda) \rightarrow \Perv_{H_\lambda^0}(V_\lambda).
\]
Define
\[
\begin{aligned}
c_\lambda: {\tilde X}_\lambda &\to X_\lambda\\  
[h,x]_{H_\lambda^0} &\mapsto [h, x]_{H_\lambda}. 
\end{aligned}
\]
Arguing as in Section~\ref{ssec:equivariant}, it follows that there is a sequence of exact functors 
\[
\begin{tikzcd}
\Rep(A_\lambda) \arrow{rrr}{E \mapsto E_{X_\lambda}[\dim X_\lambda]} 
 &&& \arrow{d}{\text{equiv}} \Perv_{\dualgroup{G}}(X_\lambda) \arrow[shift left]{rrr}{(c_\lambda)^* } 
 &&& \arrow{d}{\text{equiv}} \Perv_{\dualgroup{G}}({\tilde X}_\lambda) \arrow[shift left]{lll}{(c_\lambda)_*} \\
 &&& \Perv_{H_\lambda}(V_\lambda) &&&  \Perv_{H_\lambda^0}(V_\lambda)
\end{tikzcd}
\]
enjoying the properties of Proposition~\ref{proposition:pi*}.

\subsection{Langlands component groups as equivariant fundamental groups}\label{ssec:pure Langlands}

%\subsection{Geometric Langlands correspondence for pure inner twists}\label{ssec:pure Langlands}

%Now that we have a precise definition of $\Perv_{H_\lambda}(V_\lambda)$, we consider its simple objects.

Every simple object in $\Perv_{H_\lambda}(V_\lambda)$ takes the form $\IC(C,\mathcal{L})$, where $C$\index{$C$, orbit in $V_\lambda$} is an $H_\lambda$-orbit in $V_\lambda$ and $\mathcal{L}$\index{$\cs{L}$, local system} is a simple equivariant local system on $C$. 
Thus, simple objects in $\Perv_{H_\lambda}(V_\lambda)$ are parametrized by pairs $(C,\rho)$ where $C$ is an $H_\lambda$-orbit in $V_\lambda$ and $\rho$ is an isomorphism class of irreducible representations of the equivariant fundamental group $A_C$ of $C$. 
To calculate that group, we may pick a base point $x\in C$ so \index{$A_C$, equivariant fundamental group of  $C$}\index{equivariant fundamental group of  $C$, $A_C$}
\begin{equation}\label{eqn:AC}
A_C \iso \pi_1(C,x)_{H_{\lambda}^0}.
\end{equation}
We are left with a canonical bijection:
\begin{equation}\label{eqn:pure Langlands}
\Perv_{H_\lambda}(V_\lambda)^\text{simple}_{/\text{iso}} 
\leftrightarrow 
\{ (C,\rho) \tq H_\lambda\text{-orbit\ } C\subseteq V_\lambda,\ \rho \in \Irrep(A_C)\}.
\end{equation}

\begin{lemma}\label{lemma:ACphi}
For any Langlands parameter $\phi : L_F \to \Lgroup{G}$, 
\[
A_{C_\phi} = A_\phi,
\]
where $C_\phi$ is the $H_{\lambda_\phi}$-orbit of $x_\phi$ in $V_{\lambda_\phi}$.
% see Proposition~\ref{proposition:parameter space}.
\end{lemma}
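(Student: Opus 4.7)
The plan is to unwind the identification \eqref{eqn:AC} so that $A_{C_\phi}$ becomes $\pi_0$ of the stabilizer of the base point $x_\phi$ in $H_{\lambda_\phi}$, and then to invoke the fibre description from Proposition~\ref{proposition:parameter space} to compare this stabilizer with $Z_{\dualgroup{G}}(\phi)$.

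First, since $C_\phi$ is a single $H_{\lambda_\phi}$-orbit with base point $x_\phi$, the general description of simple equivariant local systems on a homogeneous space gives
\[
A_{C_\phi} \iso \pi_0\bigl(Z_{H_{\lambda_\phi}}(x_\phi)\bigr),
\]
which is the content of \eqref{eqn:AC} in this setting. Next, I will observe that tautologically $Z_{\dualgroup{G}}(\phi) \subseteq Z_{H_{\lambda_\phi}}(x_\phi)$: any $g\in \dualgroup{G}$ that commutes with the image of $\phi$ a fortiori commutes with $\lambda_\phi(w) = \phi(w,d_w)$ for every $w\in W_F$, so $g\in H_{\lambda_\phi}$, and it commutes with $\varphi=\phi^\circ|_{\SL(2,\CC)}$, hence with $x_\phi = \mathrm{d}\varphi\!\left(\begin{smallmatrix}0 & 1 \\ 0 & 0\end{smallmatrix}\right)$. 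In particular, $Z_{H_{\lambda_\phi}}(\phi) = Z_{\dualgroup{G}}(\phi)$.

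Now I apply the last clause of Proposition~\ref{proposition:parameter space}, which says the fibre of $P_\lambda(\Lgroup{G}) \to V_\lambda(\Lgroup{G})$ above $x_\phi$ is a principal homogeneous space for the unipotent radical of $Z_{H_{\lambda_\phi}}(x_\phi)$; the proof of that proposition actually established the stronger statement
\[
Z_{H_{\lambda_\phi}}(x_\phi) = Z_{H_{\lambda_\phi}}(\phi)\cdot U = Z_{\dualgroup{G}}(\phi)\cdot U,
\]
where $U$ denotes the unipotent radical of $Z_{H_{\lambda_\phi}}(x_\phi)$. Since $U$ is connected, passing to component groups yields
\[
\pi_0\bigl(Z_{H_{\lambda_\phi}}(x_\phi)\bigr) = \pi_0\bigl(Z_{\dualgroup{G}}(\phi)\bigr) = A_\phi,
\]
which together with the first step gives $A_{C_\phi} = A_\phi$, as required.

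The only real subtlety is the first step, i.e.\ interpreting the shorthand \eqref{eqn:AC} correctly for disconnected $H_{\lambda_\phi}$: one must be sure that the simple $H_{\lambda_\phi}$-equivariant local systems on $C_\phi$, which by \eqref{eqn:pure Langlands} are indexed by $\mathrm{Irr}(A_{C_\phi})$, are indeed governed by $\pi_0(Z_{H_{\lambda_\phi}}(x_\phi))$ rather than by $\pi_0(Z_{H_{\lambda_\phi}^0}(x_\phi))$. This is standard for transitive actions $H\curvearrowright H/S$, where equivariant local systems correspond to representations of $S$ factoring through $\pi_0(S)$; it is the main step to verify carefully. The rest of the argument is a direct combination of Proposition~\ref{proposition:parameter space} with the elementary observation that a unipotent group is connected.
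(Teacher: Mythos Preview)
Your proof is correct and follows essentially the same route as the paper's: identify $A_{C_\phi}$ with $\pi_0(Z_{H_{\lambda_\phi}}(x_\phi))$, note that $Z_{\dualgroup{G}}(\phi)=Z_{H_{\lambda_\phi}}(\phi)$ since $\lambda_\phi(W_F)\subseteq\phi(L_F)$, invoke the decomposition $Z_{H_{\lambda_\phi}}(x_\phi)=Z_{H_{\lambda_\phi}}(\phi)\,U$ from the proof of Proposition~\ref{proposition:parameter space}, and use connectedness of $U$. Your explicit remark about the disconnected-$H_{\lambda_\phi}$ subtlety is a nice addition but does not change the argument.
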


\begin{proof}
Recall from Section~\ref{ssec:Lparameters} that the component group for a Langlands parameter $\phi$ is given by$A_\phi = \pi_0(Z_{\dualgroup{G}}(\phi))$.
% = Z_{\dualgroup{G}}(\phi)/Z_{\dualgroup{G}}(\phi)^0$.
Since $\lambda_\phi(W_F) \subseteq \phi(L_F)$, we have $A_\phi = \pi_0(Z_{H_{\lambda_\phi}}(\phi))$.
On the other hand, the equivariant fundamental group of $C_\phi$ is $\pi_1(C_\phi,x_\phi)_{H_{\lambda_\phi}} = \pi_0(Z_{H_{\lambda_\phi}}(x_\phi))$.
From the proof of Proposition~\ref{proposition:parameter space} we see that $Z_{H_{\lambda_\phi}}(x_\phi) = Z_{H_{\lambda_\phi}}(\phi) U$, where $U$ is a connected unipotent group.
It follows that
\[
\pi_0(Z_{H_{\lambda_\phi}}(x_\phi)) = \pi_0(Z_{H_{\lambda_\phi}}(\phi) U) =  \pi_0(Z_{H_{\lambda_\phi}}(\phi)),
\]
which concludes the proof.
\end{proof}

The following proposition is one of the fundamental ideas in \cite{Vogan:Langlands}.
Because our set up is slightly different, however, we include a proof here.

\begin{proposition}\label{proposition:geoLV}
Suppose $G$ is quasi\-split.
The local Langlands correspondence for pure rational forms determines a bijection between the set of isomorphism classes of simple objects in $\Perv_{H_\lambda}(V_\lambda)$ and those of $\Pi^\mathrm{pure}_{ \lambda}(G/F)$  as defined in Section~\ref{ssec:infinitesimal}:
\[
\Perv_{H_\lambda}(V_\lambda)^\text{simple}_{/\text{iso}} 
\leftrightarrow 
\Pi^\mathrm{pure}_{ \lambda}(G/F).
\]
\end{proposition}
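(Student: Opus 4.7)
My plan is to establish the bijection by composing three previously established bijections, each of which reduces one side of the correspondence to an intermediate combinatorial description in terms of orbits and representations of component groups.

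First, I will invoke the bijection \eqref{eqn:pure Langlands} that was just recorded: the isomorphism classes of simple objects in $\Perv_{H_\lambda}(V_\lambda)$ are in canonical bijection with pairs $(C,\rho)$, where $C$ is an $H_\lambda$-orbit in $V_\lambda$ and $\rho \in \Irrep(A_C)$. This is a standard fact about equivariant perverse sheaves on a (not necessarily smooth) variety with an action of a (not necessarily connected) reductive group, relying on the intermediate extension $\IC(C,\mathcal{L})$ construction and the parametrization of equivariant simple local systems by representations of the equivariant fundamental group $A_C$.

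Second, I will reinterpret the combinatorial data $(C,\rho)$ in Langlands-theoretic terms. Proposition~\ref{proposition:parameter space} provides a bijection $\Phi_\lambda(G/F) \to V_\lambda/H_\lambda$, sending the class $[\phi]$ to the orbit $C_\phi$ of $x_\phi$; and Lemma~\ref{lemma:ACphi} identifies $A_{C_\phi}$ with $A_\phi$. Combining these, I obtain a bijection
\[
\{(C,\rho) : C \in V_\lambda/H_\lambda,\ \rho \in \Irrep(A_C)\} \leftrightarrow \{(\phi,\rho) : \phi \in P_\lambda(\Lgroup{G}),\ \rho \in \Irrep(A_\phi)\}_{/\sim},
\]
where the equivalence on the right is $H_\lambda$-conjugation, which by \eqref{def:H} is precisely $Z_{\dualgroup{G}}(\lambda)$-conjugation.

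Third, I will compose with the bijection \eqref{eqn:LVClambda} furnished by the local Langlands correspondence for pure rational forms of $G$ (available since $G$ is quasi\-split and since this correspondence is invoked as a hypothesis in Section~\ref{ssec:LV}). This identifies the set of $Z_{\dualgroup{G}}(\lambda)$-conjugacy classes of pairs $(\phi,\rho)$ with $\Pi^\mathrm{pure}_\lambda(G/F)$.

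The only nontrivial point is the compatibility of equivalence relations across the three steps. In Step~1, simple objects are considered up to isomorphism in $\Perv_{H_\lambda}(V_\lambda)$, which corresponds exactly to the $H_\lambda$-orbit partition of $V_\lambda$ and to isomorphism classes of equivariant local systems, hence to $\Irrep(A_C)$. In Step~2, the equivalence on Langlands parameters is defined via $Z_{\dualgroup{G}}(\lambda)$-conjugation on $P_\lambda(\Lgroup{G})$, and this matches $H_\lambda$-conjugation verbatim since $H_\lambda = Z_{\dualgroup{G}}(\lambda)$. In Step~3, \eqref{eqn:LVClambda} is formulated using exactly this same $Z_{\dualgroup{G}}(\lambda)$-conjugacy. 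There is therefore no mismatch, and the composition is a well-defined bijection. I do not expect a genuine obstacle here: everything of substance has already been done in Proposition~\ref{proposition:parameter space}, Lemma~\ref{lemma:ACphi}, and the classification of simple equivariant perverse sheaves, so the proof amounts to stringing these bijections together and noting the consistency of equivalences.
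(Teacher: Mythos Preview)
Your proposal is correct and follows essentially the same approach as the paper's proof: compose the classification \eqref{eqn:pure Langlands} of simple equivariant perverse sheaves by pairs $(C,\rho)$, the bijection $\Phi_\lambda(\Lgroup{G}) \leftrightarrow V_\lambda/H_\lambda$ from Proposition~\ref{proposition:parameter space} together with the identification $A_\phi \cong A_{C_\phi}$ from Lemma~\ref{lemma:ACphi}, and the local Langlands bijection \eqref{eqn:LVClambda}. Your extra paragraph on compatibility of equivalence relations is a reasonable explicitness check but does not depart from the paper's argument.
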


\begin{proof}
We have already seen \eqref{eqn:LVClambda} that the local Langlands correspondence for pure rational forms gives a bijection between $\Pi^\mathrm{pure}_{ \lambda}(G/F)$ and
\[
\{ ([\phi],\epsilon) \tq [\phi] \in \Phi_\lambda(\Lgroup{G}), \epsilon\in \Irrep(A_\phi)\}
\]
Proposition~\ref{proposition:parameter space} gives a canonical bijection between $\Phi_\lambda(\Lgroup{G})$ and the set of $H_\lambda$-orbits in $V_\lambda$.
When $C\leftrightarrow [\phi]$ under this bijection, Lemma~\ref{lemma:ACphi}, gives a bijection between $\Irrep(A_C)$ and $\Irrep(A_\phi)$.
\end{proof}

We introduce some convenient notation for use below.\index{$\mathcal{P}(\pi,\delta)$}
For $(\pi,\delta)\in \Pi_\lambda(G/F)$, let $\mathcal{P}(\pi,\delta) = \IC(C_{\pi,\delta},\mathcal{L}_{\pi,\delta})$ be a simple perverse sheaf in the isomorphism class determined by $(\pi,\delta)$ 
\[
\begin{array}{rcl}
\Pi^\mathrm{pure}_{ \lambda}(G/F)
&\rightarrow&
\Perv_{H_\lambda}(V_\lambda)^\text{simple}_{/\text{iso}} \\
{}  (\pi,\delta) &\mapsto& \mathcal{P}(\pi,\delta) %= \IC(C_{\pi,\delta},\mathcal{L}_{\pi,\delta})
\end{array}
\]
using Proposition~\ref{proposition:geoLV}.

\iffalse%
Conversely, for a simple perverse sheaf $\mathcal{P} = \IC(C,\mathcal{L})$ in $\Perv_{H_\lambda}(V_\lambda)$, let $\chi_\mathcal{P}$ \index{$\chi_\mathcal{P}$} be the character of $\pi_0(Z(\dualgroup{G})^{\Gamma_F})$ obtained by pullback along  
\begin{equation}\label{eqn:LtoPIT}
\pi_0(Z(\dualgroup{G})^{\Gamma_F}) \to \pi_0(Z_{\dualgroup{G}}(x))
\end{equation}
from the representation of $\pi_0(Z_{\dualgroup{G}}(x))$ determined by the choice of a base point $x\in  C$ and the equiviariant local system $\mathcal{L}$ on $C$.
Let $\delta_\mathcal{P}\in Z^1(F,G)$ \index{$\delta_\mathcal{P}$} be a pure rational form of $G$ representing the class determined by $\chi_\mathcal{P}$ under the Kottwitz isomorphism.
Let $\pi_\mathcal{P}$ \index{$\pi_\mathcal{P}$} be an admissible representation of $G_{\delta_\mathcal{P}}(F)$ such that $[\pi_\mathcal{P},\delta_\mathcal{P}]$ matches $\mathcal{P}$ under Proposition~\ref{proposition:geoLV}:
\[
\begin{array}{rcl}
\Perv_{H_\lambda}(V_\lambda)^\text{simple}_{/\text{iso}} 
&\rightarrow&
\Pi^\mathrm{pure}_{ \lambda}(G/F)\\
\mathcal{P} &\mapsto& [\pi_\mathcal{P},\delta_\mathcal{P}].
\end{array}
\]
\fi%

\section{Reduction to unramified hyperbolic parameters}\label{section:reduction}

Let $G$ be an arbitrary connected reductive algebraic group over a $p$-adic field $F$.

\subsection{Hyper-unramification}

In this section we show that the study of $\Perv_{\dualgroup{G}}(X_\lambda)$ may be reduced to the study of $\Perv_{\dualgroup{G}_\lambda}(X_{\lambda_\text{hu}})$ for a split connected reductive group $G_\lambda$ and an unramified infinitesimal hyperbolic parameter $\lambda_\text{hu} : W_F \to \Lgroup{G}_\lambda$, as defined in Section~\ref{ssec:infinitesimal}.
We also show how the tools developed in \cite{Lusztig:Study} may be brought to bear on $\Perv_{\dualgroup{G}_\lambda}(X_{\lambda_\text{hu}})$.

\begin{theorem}\label{theorem:unramification}
Let $\lambda : W_F \to \Lgroup{G}$ be an infinitesimal parameter.
\begin{itemize}
\labitem{(a)}{reduction:unramification} 
There is a connected  reductive group $G_\lambda$\index{$G_\lambda$}, split over $F$, and a hyperbolic unramified infinitesimal parameter $\lambda_\text{hu}  :  W_F \to \Lgroup{G}_\lambda$ for $G_\lambda$, and an inclusion of  L-groups  $r_\lambda : \Lgroup{G}_\lambda \to \Lgroup{G}$ such that the following diagram commutes
\[
\begin{tikzcd}
W_F  \arrow{r}{\lambda} & \Lgroup{G}\\
W_F \arrow{u} \arrow{r}{\lambda_\text{hu}} & \Lgroup{G}_\lambda \arrow{u}[swap]{r_\lambda},
\end{tikzcd}
\]
where $W_F \to  W_F$ has kernel $I_F$ and $\Frob \mapsto \Frob$ (chosen in Section~\ref{ssec:WF}).
\labitem{(b)}{reduction:covering} 
By equivariant pullback, the inclusion of $L$-groups $r_\lambda :  \Lgroup{G}_\lambda \to \Lgroup{G}$ defines an equivalence
\[
\Perv_{\dualgroup{G}}({\tilde X}_{\lambda}) \to \Perv_{\dualgroup{G}_\lambda}(X_{\lambda_\text{hu}}) ,
\]
where ${\tilde X}_\lambda$ is defined in Section~\ref{ssec:EPS}, \eqref{eqn:tildeX}.
\labitem{(c)}{reduction:eequence}  
There is a sequence of exact functors 
\[
\begin{tikzcd}
\Rep(A_\lambda) \arrow{rrr}{E \mapsto E_{X_\lambda}[\dim X_\lambda]} 
 &&&  \Perv_{\dualgroup{G}}(X_\lambda) \arrow[shift left]{rrr}{(c_\lambda)^* } 
 &&&  \Perv_{\dualgroup{G}_\lambda}(X_{\lambda_\text{hu}}) \arrow[shift left]{lll}{(c_\lambda)_*} 
\end{tikzcd}
\]
enjoying the properties of Proposition~\ref{proposition:pi*}, where  $A_\lambda$ is defined by \eqref{eqn:Alambda}.
\labitem{(d)}{reduction:GLA} 
There is a connected complex reductive algebraic group \index{$M_\lambda$}$M_\lambda$, a co-character $\iota : \mathbb{G}_{\text{m}}  \to M_\lambda$\index{$\iota : \mathbb{G}_{\text{m}}  \to M_\lambda$} and an integer $n$ such that
\[
\Perv_{\dualgroup{G}_\lambda}(X_{\lambda_\text{hu}}) \equiv  \Perv_{M_\lambda^\iota}(\mathfrak{m}_{\lambda,n}),
\]
where $\mathfrak{m}_{\lambda,n}$ is the weight-$n$ space of $\Ad(\iota)$ acting on $\mathfrak{m}_{\lambda} = \Lie M_\lambda$.
\end{itemize}
\end{theorem}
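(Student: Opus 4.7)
The plan is to decompose $\lambda(\Frob) \in \Lgroup{G}$ by the Jordan decomposition of Section~\ref{ssec:hyperbolic}, writing $\lambda(\Frob) = (s \rtimes 1)(t \rtimes \Frob)$ with commuting hyperbolic $s \rtimes 1$ and elliptic $t \rtimes \Frob$, and to propagate this decomposition through all four parts.

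For \textbf{(a)}, after a preliminary $\dualgroup{G}$-conjugation normalizing $\lambda$ so that the hyperbolic element $s$ centralizes $\lambda(I_F)$, I would set
\[
\dualgroup{G}_\lambda \ceq Z_{\dualgroup{G}}\bigl(\lambda(I_F) \cup \{t \rtimes \Frob\}\bigr)^\circ,
\]
a connected complex reductive subgroup of $\dualgroup{G}$ containing $s$. Equip $\dualgroup{G}_\lambda$ with trivial Galois action (so $G_\lambda$ is split) and set $\Lgroup{G}_\lambda = \dualgroup{G}_\lambda \times W_F$. Define $\lambda_\text{nr}|_{I_F} = 1$ and $\lambda_\text{nr}(\Frob) = s \rtimes \Frob$, and let $r_\lambda$ be the inclusion on $\dualgroup{G}_\lambda$, the map $\lambda|_{I_F}$ on $I_F$, and $\Frob \mapsto t \rtimes \Frob$. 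That $r_\lambda$ is a group homomorphism reduces to $s$-commutativity with $\lambda(I_F)$ together with the defining condition on $\dualgroup{G}_\lambda$, and commutativity of the asserted diagram is automatic from $\lambda(\Frob) = (s \rtimes 1)(t \rtimes \Frob)$; by construction $\lambda_\text{nr}$ is unramified and hyperbolic.

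For \textbf{(b)} and \textbf{(c)}, the key identifications to establish are $V_\lambda = V_{\lambda_\text{nr}}$ as subsets of $\Lie \dualgroup{G}$ and $H_\lambda^0 = H_{\lambda_\text{nr}}$. The first follows by decomposing $\Lie K_\lambda$ into joint eigenspaces for the commuting operators $\Ad(s)$ and $\Ad(t \rtimes \Frob)$: the former has positive real eigenvalues while the latter has modulus-$1$ eigenvalues, so the $q_F$-eigenspace of the product lies entirely in the fixed subspace of $\Ad(t \rtimes \Frob)$, namely $\Lie \dualgroup{G}_\lambda$. The second follows from (d): with $\iota$ constructed there, $H_{\lambda_\text{nr}} = Z_{\dualgroup{G}_\lambda}(s) = Z_{\dualgroup{G}_\lambda}(\iota(\mathbb{G}_{\text{m}}))$ is the centralizer of a torus in a connected reductive group, hence already connected. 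Combined with the equivariant-pullback equivalences $\Perv_{\dualgroup{G}}({\tilde X}_\lambda) \simeq \Perv_{H_\lambda^0}(V_\lambda)$ and $\Perv_{\dualgroup{G}_\lambda}(X_{\lambda_\text{nr}}) \simeq \Perv_{H_{\lambda_\text{nr}}}(V_{\lambda_\text{nr}})$ from Section~\ref{ssec:EPS}, this yields (b); then (c) is immediate from Proposition~\ref{proposition:pi*} applied to the $H_\lambda$-action on $V_\lambda$, together with $\pi_0(H_\lambda) = A_\lambda$.

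For \textbf{(d)}, I would take $M_\lambda \ceq \dualgroup{G}_\lambda$ and $\mathfrak{m}_\lambda \ceq \Lie M_\lambda$. The set of $\Ad(s)$-eigenvalues on $\mathfrak{m}_\lambda$ generates a cyclic subgroup of $\RR_{>0}$ containing $q_F$, hence of the form $\{q_F^{k/n} : k \in \ZZ\}$ for some positive integer $n$. Because $s$ lies in the identity component of a maximal real-split torus of $M_\lambda$ and the exponents that arise are integral (after possibly enlarging $n$ to clear denominators in the coweight lattice), I can choose an algebraic cocharacter $\iota : \mathbb{G}_{\text{m}} \to M_\lambda$ with $\iota(q_F^{1/n}) = s$. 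Then the weight-$n$ space $\mathfrak{m}_{\lambda, n}$ of $\Ad \iota$ coincides with the $q_F$-eigenspace of $\Ad(s)$, i.e.\ with $V_{\lambda_\text{nr}}$, and $M_\lambda^\iota = H_{\lambda_\text{nr}}$, yielding $\Perv_{\dualgroup{G}_\lambda}(X_{\lambda_\text{nr}}) \simeq \Perv_{M_\lambda^\iota}(\mathfrak{m}_{\lambda, n})$ and placing us in the framework of \cite{Lusztig:Study}. The hard part will be the normalization lemma in (a) arranging that $s$ centralizes $\lambda(I_F)$: this is what makes $r_\lambda$ a well-defined $L$-homomorphism and what lets $\dualgroup{G}_\lambda$ carry trivial Galois action; once in place, the rest is eigenvalue bookkeeping on $\Lie K_\lambda$ together with the connectedness of centralizers of algebraic tori.
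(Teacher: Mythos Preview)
Your strategy for (a)--(c) matches the paper's: set $\dualgroup{G}_\lambda = J_\lambda^0$ where $J_\lambda = Z_{K_\lambda}(t_\lambda \rtimes \Frob)$ (this is exactly the construction in Section~\ref{ssec:unramification}), verify $V_{\lambda_\text{nr}} = V_\lambda$ via the hyperbolic--elliptic eigenvalue splitting and $H_{\lambda_\text{nr}} = H_\lambda^0$, then invoke Proposition~\ref{proposition:pi*}. One framing issue in (a): you describe a ``preliminary $\dualgroup{G}$-conjugation'' arranging that $s$ centralize $\lambda(I_F)$, but conjugation moves $s$ and $\lambda(I_F)$ together and cannot create commutation that was not already there. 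The paper instead proves (Lemma~\ref{lemma:slambda}) that $s_\lambda \in H_\lambda^0$ unconditionally, so the commutation is intrinsic. You correctly flag this as the hard technical step, but it is a lemma to be proved, not a normalization to be performed.

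There is a genuine gap in (d). Taking $M_\lambda = \dualgroup{G}_\lambda = J_\lambda^0$, you assert that the $\Ad(s)$-eigenvalues on $\mathfrak{j}_\lambda$ generate a \emph{cyclic} subgroup of $\RR_{>0}$, so that an algebraic cocharacter $\iota$ with $\iota(q_F^{1/n}) = s$ exists. This is false in general: with $G$ split, $\dualgroup{G} = \GL(3)$, and $\lambda$ unramified with $\lambda(\Frob) = \diag(q_F, 1, q_F^{\alpha}) \times \Frob$ for irrational $\alpha$, one has $t_\lambda = 1$, $J_\lambda^0 = \GL(3)$, and the $\Ad(s_\lambda)$-eigenvalues $q_F^{\pm 1}, q_F^{\pm\alpha}, q_F^{\pm(1-\alpha)}$ generate a free abelian group of rank $2$. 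No cocharacter recovers $s_\lambda$. The paper's remedy (Lemma~\ref{lemma:cocharacter}) is to pass to a proper subgroup $M_\lambda \subset J_\lambda^0$ with Lie algebra $\mathfrak{j}_\lambda^\dagger = \bigoplus_{r\in\ZZ} \mathfrak{j}_\lambda(q_F^{r})$, discarding the eigenspaces for non-integral powers of $q_F$; on this smaller $M_\lambda$ the required cocharacter exists by construction. This gap also undermines your forward reference from (b) to (d) for the connectedness of $H_{\lambda_\text{nr}}$; that is better argued directly, via $Z_{J_\lambda^0}(s_\lambda) = Z_{J_\lambda^0}(\log s_\lambda)$, the centralizer of a semisimple Lie-algebra element in a connected reductive group, hence a connected Levi.
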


The group $G_\lambda$ that appears in Theorem~\ref{theorem:unramification} is sometimes an endoscopic group for $G$, but not in general.

The proof of Theorem~\ref{theorem:unramification} will be given in Section~\ref{ssec:proofofreduction}.

\subsection{Elliptic and hyperbolic parts of the image of Frobenius}\label{ssec:hyperbolic, revisited}

From Section~\ref{ssec:hyperbolic}, recall that we write $\lambda(\Frob ) = f_\lambda \rtimes \Frob $\index{$f_\lambda$} and that we write $s_\lambda\rtimes 1$ for the hyperbolic part of $\lambda(\Frob )$ and $t_\lambda\rtimes \Frob $ for the elliptic part of $\lambda(\Frob )$.\index{$s_\lambda$, hyperbolic part of $f_\lambda$}\index{$t_\lambda$, elliptic part of $f_\lambda$}

\begin{lemma}\label{lemma:slambda}
With notation above, $s_\lambda \in  H_\lambda^0$ and $K_\lambda$ is normalized by $f_\lambda \rtimes \Frob$ and by $t_\lambda \rtimes \Frob$.
\end{lemma}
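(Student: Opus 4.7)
The plan is to deduce both assertions from uniqueness of the Jordan decomposition together with the compactness of $\lambda(I_F)$.

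First I would show that $\lambda(\Frob) = f_\lambda \rtimes \Frob$ normalizes $K_\lambda$. Since $I_F$ is normal in $W_F$, conjugation by $\lambda(\Frob)$ permutes $\lambda(I_F)$: for $w \in I_F$ one has $\lambda(\Frob)\lambda(w)\lambda(\Frob)^{-1} = \lambda(\Frob w \Frob^{-1}) \in \lambda(I_F)$, so the centralizer $K_\lambda$ of $\lambda(I_F)$ is stable under $\Int(\lambda(\Frob))$.

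Next I would invoke the following principle, a consequence of uniqueness of the Jordan decomposition: if a semisimple element of $\Lgroup{G}$ normalizes a Zariski-closed subgroup $N$ of $\dualgroup{G}$, then so do its hyperbolic and elliptic parts separately. The argument restricts the Jordan decomposition of $\Ad$ of the element to the invariant subspace $\Lie N$ (giving preservation of $N^0$) and then uses connectedness of the $1$-parameter torus through the hyperbolic part to force trivial action on the finite component group $N/N^0$. Applying this with $\lambda(\Frob) = (s_\lambda \rtimes 1)(t_\lambda \rtimes \Frob)$ and $N = K_\lambda$ yields that both $s_\lambda \rtimes 1$ and $t_\lambda \rtimes \Frob$ normalize $K_\lambda$, which in particular gives the second half of the lemma.

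For the claim $s_\lambda \in H_\lambda^0$, I would upgrade ``$s_\lambda$ normalizes $K_\lambda$'' to ``$s_\lambda \in K_\lambda$'' using compactness of $\lambda(I_F)$. For each $w \in I_F$, consider the $\ZZ$-orbit $\{s_\lambda^n \lambda(w) s_\lambda^{-n} : n \in \ZZ\}$ in $\Lgroup{G}$; because $s_\lambda$ is hyperbolic, the $\dualgroup{G}$-component of this orbit scales the $\Ad(s_\lambda)$-eigencomponents of the $\dualgroup{G}$-part of $\lambda(w)$ by the $n$-th powers of positive reals, and any nontrivial such eigencomponent would make the orbit unbounded. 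Using normalization of $K_\lambda$ by $s_\lambda$ and the Jordan-decomposition identity $\lambda(\Frob)^n = (s_\lambda \rtimes 1)^n (t_\lambda \rtimes \Frob)^n$, one constrains the orbit to lie in a bounded region dictated by $\lambda(I_F)$, forcing $s_\lambda$ to commute with $\lambda(w)$. Thus $s_\lambda \in K_\lambda$. Combined with $s_\lambda \in Z_{\dualgroup{G}}(\lambda(\Frob))$ (which follows because $\lambda(\Frob)$ commutes with its hyperbolic part), we get $s_\lambda \in H_\lambda$; and because the entire connected complex torus $\{s_\lambda^z : z \in \CC\}$ consists of hyperbolic elements to which the same argument applies, $s_\lambda$ lies in the identity component $H_\lambda^0$.

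The main obstacle is making the orbit-boundedness argument precise: the Jordan principle directly gives preservation of the algebraic subgroup $K_\lambda$, but the transition from this normalization to genuine centralization of the pro-finite subset $\lambda(I_F)$ requires linking the $\Int(s_\lambda)$-orbit structure on $\Lgroup{G}$ to the compactness of $\lambda(I_F)$, which calls for careful bookkeeping of $W_F$-cosets in $\Lgroup{G}$ combined with the normalization properties already established.
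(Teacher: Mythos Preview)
Your boundedness strategy is a genuinely different route from the paper's and can be made to work, but your step-4 sketch has a gap beyond mere bookkeeping. For $\lambda(w) = h \rtimes w$ with $w \in I_F$, conjugation by $s_\lambda^n \rtimes 1$ gives $\bigl(s_\lambda^n\, h\, {}^w(s_\lambda^{-n})\bigr) \rtimes w$, and when $I_F$ acts nontrivially on $\dualgroup{G}$ this is not $\Ad(s_\lambda)^n$ applied to $h$; the twist ${}^w s_\lambda$ spoils the eigencomponent picture you describe. The fix: since $\rho: W_F \to \Aut(\dualgroup{G})$ has finite image $F$, pass to the quotient $\Lgroup{G} \to \dualgroup{G} \rtimes F$ and embed the latter faithfully in some $\GL(V)$ via $\varrho$. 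Then $\varrho(s_\lambda)$ is hyperbolic and $\varrho(t_\lambda \rtimes \Frob)$ is elliptic in $\GL(V)$, hence lies in a compact subgroup, and the identity
\[
\varrho(s_\lambda)^n\, \varrho(\lambda(w))\, \varrho(s_\lambda)^{-n}
= \varrho(t_\lambda \rtimes \Frob)^{-n}\, \varrho\bigl(\lambda(\Frob^n w \Frob^{-n})\bigr)\, \varrho(t_\lambda \rtimes \Frob)^{n}
\]
exhibits the left side as a compact-group conjugate of an element of the compact set $\varrho(\lambda(I_F))$, hence bounded in $n$. The standard eigenvalue argument then gives $[\varrho(s_\lambda), \varrho(\lambda(w))] = 1$; since the commutator $[s_\lambda, \lambda(w)]$ already has trivial $W_F$-component and the quotient map is injective on $\dualgroup{G} \rtimes 1$, this lifts to $s_\lambda \in K_\lambda$. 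Your step 2 then becomes redundant (and as written its passage from $N^0$ to $N$ is circular, since acting on $N/N^0$ presupposes normalizing $N$): once $s_\lambda \in K_\lambda$, the factor $t_\lambda \rtimes \Frob = s_\lambda^{-1}\lambda(\Frob)$ normalizes $K_\lambda$ because both $s_\lambda$ and $\lambda(\Frob)$ do.

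The paper instead exploits finiteness directly. It passes to $\lambda(\Frob^N) = f' \rtimes \Frob^N$ with $N$ chosen so that $\Frob^N$ acts trivially on $\dualgroup{G}$ and $\lambda(\Frob^N)$ acts trivially on the finite quotient $\lambda(I_F)/\lambda(I^0_F)$ (where $I^0_F = \lambda^{-1}(1 \rtimes I'_F)$); a short explicit computation then shows $f'$ literally commutes with each $\lambda(w)$, so $f' \in K_\lambda$. Its hyperbolic part $s' = s_\lambda^N$ then lies in $K_\lambda^0$ automatically (hyperbolic--elliptic decomposition respects closed algebraic subgroups), and a further Jordan-uniqueness step places $s'$, hence $s_\lambda$, in $H_\lambda^0$. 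Your approach trades this explicit finiteness computation for the faithful-embedding device; both ultimately rest on the compactness of $\lambda(I_F)$.
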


\begin{proof} 
%\todo{Get from Bin's notes}
%Let $N$ be as in Lemma~\ref{lemma:N};
let $I'_{F}$ be the kernel of $\rho: \Gamma_{F} \rightarrow \text{Aut}(\dualgroup{G})$ restricted to $I_{F}$.
Then $I'_{F}$ is an open subgroup of $I_{F}$ and $I'_F$ is normalized by $\Frob ^N$ in $W_F$, with $N$ as above.
Set $I^{0}_{F} = \lambda^{-1}(1 \rtimes I'_{F}) \subseteq I'_{F}$. 
By continuity of $\lambda$, $I^{0}_{F}$ is an open subgroup of $I_{F}$. 
%We claim $\lambda(\Frob ^N)$ normalises $\lambda(I^{0}_{F}) = 1 \rtimes I^{0}_{F}$. Note
%\begin{align*}
%\lambda(\Frob ^N) \lambda(I^{0}_{F}) (\lambda(\Frob ^N))^{-1} 
%	& = (g' \rtimes \Frob ^N) (1 \rtimes I^{0}_{F} ) (g' \rtimes \Frob ^N)^{-1} \\
%& = g' (1 \rtimes \Frob ^N I^{0}_{F} \Frob ^{-N})  g'^{-1} \\
%& \subseteq  g' (1 \rtimes I'_{F})  g'^{-1} = 1 \rtimes I'_{F}
%\end{align*}
%On the other hand, 
%\[
%\lambda(\Frob ^N) \lambda(I^{0}_{F}) (\lambda(\Frob ^N))^{-1} = \lambda(\Frob ^{N} I^{0}_{F} \Frob ^{-N}).
%\] 
%By our definition of $I^{0}_{F}$, we have \todo{Why? And if so, it would follow immediately that $\lambda(\Frob ^N)$ normalises $\lambda(I^{0}_{F})$.}
%\[
%\Frob ^{N} I^{0}_{F} \Frob ^{-N}= I^{0}_{F}.
%\] 
Then $\lambda(\Frob ^N)$ normalises $\lambda(I^{0}_{F})$. 
Since $\lambda(\Frob ^N)$ also normalises $\lambda(I_{F})$, we see $\lambda(\Frob ^N)$ acts on the finite group $\lambda(I_{F})/\lambda(I^{0}_{F})$. 
In particular, replacing $N$ by a larger integer if necessary, it follows that $\lambda(\Frob ^N)$ acts on $\lambda(I_{F})/\lambda(I^{0}_{F})$ trivially. 

Recall the notation $\lambda(\Frob ) = f_{\lambda} \rtimes \Frob $ and $\lambda(\Frob ^N) = f' \rtimes \Frob ^N$.
We now show $f' \in Z_{\dualgroup{G}}(\lambda(I_{F})) = K_\lambda$.
For any $h \rtimes w \in \lambda(I_{F})$,
\[
\lambda(\Frob ^N) (h \rtimes w) (\lambda(\Frob ^N))^{-1} = h \rtimes ww'
\]
for some $w' \in I^{0}_{F}$. 
Since $\lambda(\Frob ^N) = f' \rtimes \Frob ^N = (1 \rtimes \Frob ^N) (f' \times 1)$, we get
\[
\Frob ^N f' (h \rtimes w)f'^{-1} \Frob ^{-N} = h \rtimes ww'.
\]
This implies
\[
f' hw(f'^{-1}) \rtimes w = \Frob ^{-N}(h \rtimes ww')\Frob ^N = h \rtimes \Frob ^{-N}ww'\Frob ^N.
\]
Therefore, $f'hw(f'^{-1}) = h$ and $w = \Frob ^{-N}ww'\Frob ^N$. 
From the first equality, we can conclude 
$
f' (h\rtimes w)f'^{-1} = h \rtimes w.
$
Hence $f' \in Z_{\dualgroup{G}}(\lambda(I_{F})) = K_\lambda$. 

Since some power of $f'$ will lie in $Z_{\dualgroup{G}}(\lambda(I_{F}))^{0}= K_\lambda^0$, replacing $N$ by a larger integer if necessary, we may conclude that $f'$ actually belongs to $Z_{\dualgroup{G}}(\lambda(I_{F}))^{0} = K_\lambda^0$. In particular, we can take both $s'$ and $t'$ in $K_\lambda^0$.

Since $\lambda(\Frob ^N) = \lambda(\Frob )^{-1} \lambda(\Frob ^N) \lambda(\Frob )$, we have
\[
f' \rtimes \Frob ^N = (f_{\lambda} \rtimes \Frob )^{-1} (f' \rtimes \Frob ^N) (f_{\lambda} \rtimes \Frob ) = \big((f_{\lambda} \rtimes \Frob )^{-1} f' (f_{\lambda} \rtimes \Frob )\big) \rtimes \Frob ^N.
\]
Thus, $f' = \lambda(\Frob )^{-1} f' \lambda(\Frob )$.
Since $\lambda(\Frob )$ normalises $Z_{\dualgroup{G}}(\lambda(I_{F}))^{0} = K_\lambda^0$, we have
\[
f' = \lambda(\Frob )^{-1} f' \lambda(\Frob ) = (\lambda(\Frob )^{-1} s' \lambda(\Frob )) (\lambda(\Frob )^{-1} t' \lambda(\Frob )),
\]
where, as above, $s'$ is the hyperbolic part of  $f'$ and $t'$ is the elliptic part of $f'$.
Since the decomposition of a semisimple element of $\dualgroup{G}$ into hyperbolic and elliptic parts is unique, we have
\[
s' = \lambda(\Frob ) ^{-1} s' \lambda(\Frob ) 
\qquad\text{and}\qquad
t' = \lambda(\Frob )^{-1} t' \lambda(\Frob ).
\]
In particular, it now follows that $s' \in Z_{\dualgroup{G}}(\lambda)^{0} = H_\lambda^0$. 
Since $s_\lambda^N = s'$, it follows that $s_{\lambda} \in Z_{\dualgroup{G}}(\lambda)^{0} = H_\lambda^0$, also.

The Frobenius element $\Frob $ normalises $I_{F}$, so $\lambda(\Frob ) = f_{\lambda} \rtimes \Frob$ normalises $\lambda(I_{F})$ and hence normalises $K_\lambda$ as well. 
Since $s_\lambda\in H_\lambda^0 = Z_{\dualgroup{G}}(\lambda)^{0} \subseteq Z_{\dualgroup{G}}(\lambda(I_{F})) = K_\lambda^0$, it follows now that $s_\lambda$ normalises $K_\lambda$;  likewise, $t_\lambda \times \Frob$ normalises $K_\lambda$.
%\todo{Finish}
\end{proof}

\subsection{Construction of the hyper unramified parameter}\label{ssec:unramification}

Define\index{$J_\lambda$}
\begin{equation}\label{J}
J_\lambda \ceq Z_{\dualgroup{G}}(\lambda(I_{F})) \cap Z_{\dualgroup{G}}(t_{\lambda} \rtimes \Frob) = Z_{K_\lambda}(t_\lambda \rtimes \Frob).
\end{equation}
Lemma~\ref{lemma:slambda} shows that $J_\lambda$ is a complex reductive algebraic group.
Recall the definition of $s_\lambda$ and $t_\lambda$ from Lemma~\ref{lemma:slambda}.
It follows from Section~\ref{ssec:hyperbolic} that $s_\lambda\in  J_\lambda^0$ and  $t_\lambda$ normalises $J_\lambda^0$. 

We now have the following complex reductive groups attached to $\lambda \in R(\Lgroup{G})$:
\[
H_\lambda \subseteq J_\lambda \subseteq K_\lambda \subseteq \dualgroup{G}.
\]
Let $G_{\lambda}$\index{$G_{\lambda}$} be the split connected reductive algebraic group over $F$ so that 
\begin{equation}\label{eqn:LGlambda}
\Lgroup{G}_{\lambda} = J_\lambda^0\times W_F.
\end{equation}
Define $r_\lambda : \Lgroup{G}_\lambda \to \Lgroup{G}$\index{$r_\lambda : \Lgroup{G}_\lambda \to \Lgroup{G}$} by
\begin{equation}\label{eqn:rlambda}
\begin{array}{rcl}
r_\lambda : \Lgroup{G}_\lambda &\to& \Lgroup{G}\\
h\times 1 &\mapsto&  h\rtimes 1\\
1\times \Frob  &\mapsto&  t_\lambda\rtimes \Frob.
\end{array}
\end{equation} 
Then $r_\lambda : \Lgroup{G}_\lambda \to \Lgroup{G}$ is a homomorphism of L-groups.
Using Lemma~\ref{lemma:slambda}, we define a hyperbolic unramified infinitesimal parameter \index{$\lambda_\text{hu}: W_{F} \to \Lgroup{G}_{\lambda}$} \index{unramification} by
\begin{equation}\label{eqn:unramifiation}
\begin{aligned}
\lambda_\text{hu}: W_{F} &\longrightarrow \Lgroup{G}_{\lambda}\\
\Frob  &\mapsto s_{\lambda} \times \Frob.
\end{aligned}
\end{equation}

\begin{lemma}\label{lemma:unramification}
Let $\lambda : W_F \to \Lgroup{G}$ be an infinitesimal parameter.
Define the parameter $\lambda_\text{hu} : W_F \to \Lgroup{G}_\lambda$ as above. Then
\[
V_{\lambda_\text{hu}} = V_\lambda 
\qquad\text{and}\qquad
H_{\lambda_\text{hu}} = H_\lambda^0.
\]
Consequently, 
\[
\Perv_{H_{\lambda_\text{hu}}}(V_{\lambda_\text{hu}}) = \Perv_{H_\lambda^0}(V_\lambda). 
\]
\end{lemma}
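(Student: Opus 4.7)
The plan is to reduce both equalities to the hyperbolic-elliptic decomposition $\lambda(\Frob) = (s_\lambda\rtimes 1)(t_\lambda\rtimes \Frob)$ from Section~\ref{ssec:hyperbolic, revisited}, combined with the direct product structure $\Lgroup{G}_\lambda = J_\lambda^0\times W_F$ used to define $\lambda_\text{nr}$ in~\eqref{eqn:unramifiation}. First I would unpack the definitions on the $\lambda_\text{nr}$ side: since $\lambda_\text{nr}$ is trivial on $I_F$, one has $K_{\lambda_\text{nr}} = \dualgroup{G}_\lambda = J_\lambda^0$, and because $\Lgroup{G}_\lambda$ is a direct product, conjugation by $\lambda_\text{nr}(\Frob) = s_\lambda\times\Frob$ on $J_\lambda^0$ agrees with $\Ad(s_\lambda)$. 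This gives
\[
V_{\lambda_\text{nr}} = \{\, x\in \Lie J_\lambda^0 \tq \Ad(s_\lambda) x = q_F x\,\}
\qquad\text{and}\qquad
H_{\lambda_\text{nr}} = Z_{J_\lambda^0}(s_\lambda).
\]

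Next I would compute $V_\lambda$ by the same decomposition. On $\Lie K_\lambda$ the operator $\Ad(\lambda(\Frob))$ factors as the commuting product of the hyperbolic operator $\Ad(s_\lambda)$, whose eigenvalues are positive reals, and the elliptic operator $\Ad(t_\lambda\rtimes \Frob)$, whose eigenvalues have modulus~$1$. Decomposing $x\in V_\lambda$ into joint eigenvectors for this pair, the eigenvalue relation $\alpha\beta = q_F$ with $\alpha\in \RR_{>0}$ and $|\beta|=1$ forces $\beta=1$ and $\alpha=q_F$. Hence $x\in \Lie J_\lambda = \Lie J_\lambda^0$ and $\Ad(s_\lambda)x = q_F x$, giving $V_\lambda\subseteq V_{\lambda_\text{nr}}$; the reverse inclusion is immediate, since any element of $\Lie J_\lambda^0$ is already fixed by $\Ad(t_\lambda\rtimes\Frob)$. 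Thus $V_\lambda = V_{\lambda_\text{nr}}$.

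The group equality $H_{\lambda_\text{nr}} = H_\lambda^0$ will be handled by the same decomposition, now applied at the level of fixed points of semisimple automorphisms of $K_\lambda$. First I would show $H_\lambda = Z_{J_\lambda}(s_\lambda)$: an element $g\in\dualgroup{G}$ centralises $\lambda$ iff $g\in K_\lambda$ and $g$ is fixed by $\Ad(f_\lambda\rtimes \Frob)$, and the same hyperbolic/elliptic separation of eigenvalues makes this last condition equivalent to $g$ being fixed by both $\Ad(s_\lambda)$ and $\Ad(t_\lambda\rtimes\Frob)$. Since $s_\lambda\in J_\lambda^0$ by Lemma~\ref{lemma:slambda}, the inclusion $H_\lambda^0\subseteq Z_{J_\lambda^0}(s_\lambda) = H_{\lambda_\text{nr}}$ is automatic, and the two groups share the same Lie algebra. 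The main technical obstacle I expect is the reverse inclusion, which reduces to showing that $Z_{J_\lambda^0}(s_\lambda)$ is actually connected; I would establish this by exploiting that $s_\lambda$ is hyperbolic, hence lies in a torus of the connected reductive complex group $J_\lambda^0$, and invoking the standard fact that the centraliser of a torus element in a connected reductive complex group has connected identity component equal to the full centraliser. The claimed equivalence $\Perv_{H_{\lambda_\text{nr}}}(V_{\lambda_\text{nr}}) = \Perv_{H_\lambda^0}(V_\lambda)$ is then a formal consequence of these two equalities.
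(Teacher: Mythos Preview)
Your approach is correct and matches the paper's own (terse) proof, which simply asserts $H_{\lambda_\text{nr}} = Z_{J_\lambda^0}(s_\lambda) = H_\lambda^0$ and handles $V_\lambda = V_{\lambda_\text{nr}}$ via the hyperbolic--elliptic factorisation exactly as you do. One correction: the ``standard fact'' you cite in the last step is false as stated---centralisers of arbitrary semisimple elements in connected reductive groups need not be connected (e.g.\ the centraliser of the image of $\operatorname{diag}(1,-1)$ in $\PGL(2,\CC)$ is the full normaliser of the diagonal torus, which has two components). What is true, and what you need, is that the centraliser of a \emph{hyperbolic} element is connected: writing $s_\lambda = \exp(X)$ with $X \in X_*(T)\otimes\RR$ for a maximal torus $T\subset J_\lambda^0$, injectivity of $\exp$ on $X_*(T)\otimes\RR$ gives $w(s_\lambda)=s_\lambda \iff w(X)=X$ for Weyl group elements $w$, and the stabiliser of a point of the real reflection representation is generated by reflections in roots vanishing on that point, hence already lies in the Weyl group of $Z_{J_\lambda^0}(s_\lambda)^0$. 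With this amendment your argument goes through.
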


\begin{proof}
Applying \eqref{def:H} to $\lambda_\text{hu}  : W_F \to \Lgroup{G}_\lambda$  gives
\[
H_{\lambda_\text{hu}} = Z_{J_\lambda^0}(\lambda_\text{hu}) = Z_{J_\lambda^0}(s_\lambda)  =  H_\lambda^0.
\]
Applying \eqref{def:K} to $\lambda_\text{hu}  : W_F \to \Lgroup{G}_\lambda$  gives
\[
K_{\lambda_\text{hu}} = Z_{J_\lambda^0}(\lambda_\text{hu}\vert_{I_F}) = J_\lambda^0.
\]
Applying \eqref{def:V} to $\lambda_\text{hu} : W_F \to \Lgroup{G}_\lambda$ gives
\[
V_{\lambda_\text{hu}} = V_{\lambda_\text{hu}}(\Lgroup{G}_\lambda) =
\{ x\in \Lie Z_{\dualgroup{G}_\lambda}(\lambda_\text{hu}\vert_{I_F}) \tq \Ad(\lambda_\text{hu}(\Frob)) x = q_{F}\, x\}.
\]
Since $\dualgroup{G}_\lambda = J_\lambda^0$ and $\lambda_\text{hu}\vert_{I_F} = 1$, and since $\Frob$ acts trivially on $J_\lambda^0$ in $\Lgroup{G}_\lambda$, we  have
\begin{equation}\label{def:Vh}
V_{\lambda_\text{hu}} =
\{ x\in \mathfrak{j}_\lambda \tq \Ad(s_\lambda) x = q_{F}\, x\}.
\end{equation}
Then $V_\lambda =  V_{\lambda_\text{hu}}$ because $\Ad(f_\lambda\rtimes \Frob)x =  qx$ if and only if $\Ad(t_\lambda \rtimes \Frob)x = x$ and $\Ad(s_\lambda) x = q x$.  
\end{proof}

Lemma~\ref{lemma:unramification} tells us that the category $\Perv_{H_\lambda^0}(V_\lambda)$ determined by $\lambda : W_F \to \Lgroup{G}$ can always be apprehended as the category for an {\it unramified} infinitesimal parameter $\lambda_\text{hu} : W_F \to \Lgroup{G}_\lambda$.
Note, however, that it is $\Perv_{H_\lambda}(V_\lambda)$, not $\Perv_{H_\lambda^0}(V_\lambda)$, which is needed to study Arthur packets of admissible representations of pure rational forms of $G(F)$; Proposition~\ref{proposition:pi*} describes the relation between these two categories.

\begin{remark}\label{remark:disconnected}
Without defining $G_\lambda^+$\index{$G_\lambda^+$} itself, let us set $\Lgroup{G}_\lambda^+ \ceq J_\lambda\times W_F$ and define $\lambda_\text{hu}^+ : W_F \to  \Lgroup{G}_\lambda^+$\index{$\lambda_\text{hu}^+$} by the composition of $\lambda_\text{hu}$ and $\Lgroup{G}_\lambda\hookrightarrow \Lgroup{G}_\lambda^+$.  Then \eqref{eqn:LGlambda} may also be used to define $r_\lambda^+ : \Lgroup{G}_\lambda^+ \hookrightarrow \Lgroup{G}$ and extends $r_\lambda$.
Arguing as in the proof of Lemma~\ref{lemma:unramification}, it  follows that
\[
V_{\lambda_\text{hu}^+} = V_\lambda
\qquad\text{and}\qquad
H_{\lambda_\text{hu}^+} = H_\lambda,
\]
so
\[
\Perv_{H_{\lambda_\text{hu}^+}}(V_{\lambda_\text{hu}^+}) = \Perv_{H_\lambda}(V_\lambda). 
\]
We pursue this perspective elsewhere.
\end{remark}

\subsection{Construction of the cocharacter}% construction of cocharacter SECTION
\label{ssec:cocharacter}

\iffalse
Let $M$ be a complex connected reductive algebraic group equipped with a co-character
\[
\iota: \mathbb{G}_{\text{m}} \longrightarrow M.
\]
Then $\iota$ induces a grading on the Lie algebra $\mathfrak{m}$ of $M$:
\[
\mathfrak{m} = \bigoplus_{r \in \mathbb{Z}} \mathfrak{m}_{r},
\qquad%\text{where}\quad
\mathfrak{m}_{r} := \{x \in \mathfrak{m} \tq \Ad(\iota(t))x = t^{r}x, \ \forall t\in \mathbb{G}_{\text{m}} \}.
\] 
This grading is preserved by the adjoint action of the centralizer $M^{\iota}  \ceq Z_{M}(\iota)$ of $\iota$ in $M$. 
%We will follow common practice and refer to $\mathfrak{m}$ as a graded Lie algebra, though it would be more precise to refer to the triple $(M,\iota, \mathfrak{m})$ as a graded Lie algebra.
%For $n \in \mathbb{Z} \backslash \{0\}$, we would like to study the affine space $\mathfrak{m}_{n}$ with $M^{\iota}$-action.
\fi

From Section~\ref{ssec:unramification}, recall the definition of $s_\lambda\in \dualgroup{G}$\index{$s_\lambda$} and the fact that $s_\lambda$ lies in the identity component of the subgroup $J_\lambda\subseteq \dualgroup{G}$.
Decompose the Lie algebra $\mathfrak{j}_\lambda$\index{$\mathfrak{j}_\lambda$} of $J_\lambda$ according to $\Ad(s_\lambda)$-eigenvalues: \index{$\mathfrak{j}_\lambda(\nu)$}
\[
\mathfrak{j}_\lambda = \bigoplus_{\nu \in \mathbb{C}^{*}} \mathfrak{j}_\lambda(\nu),
\qquad%\text{where}\quad
 \mathfrak{j}_\lambda(\nu) \ceq 
 \{ x\in \mathfrak{j}_\lambda \tq \Ad(s_\lambda)(x) = \nu x\}.
\]
Following \cite{Lusztig:Study}, define \index{$\mathfrak{j}_\lambda^\dagger $}\index{$ \mathfrak{j}_\lambda(q_F^{r})$}
\[
\mathfrak{j}_\lambda^\dagger \ceq \bigoplus_{r \in \mathbb{Z}} \mathfrak{j}_\lambda(q_F^{r}).
\]

\begin{lemma}\label{lemma:cocharacter}
There is a connected reductive algebraic subgroup $M_\lambda$ \index{$M_\lambda$} of $J_\lambda^0$ and a cocharacter $\iota: \mathbb{G}_{\text{m}} \longrightarrow M_\lambda$\index{$\iota$}
such that
\[
M_\lambda^\iota = H_{\lambda_\text{hu}}
\qquad\text{and}\qquad
\mathfrak{m}_\lambda = \mathfrak{j}_\lambda^\dagger,
\]
%where $M_\lambda^\iota \ceq Z_{M_\lambda}(\iota)$ and 
 where $\mathfrak{m}_\lambda \ceq \Lie M_\lambda$\index{$\mathfrak{m}_\lambda$} and an integer $n$ so that, for every $r\in \ZZ$,
\[
\mathfrak{m}_{\lambda,rn} = \mathfrak{j}_\lambda(q_F^r),
\]
where $\mathfrak{m}_{\lambda,rn} \ceq  \{x \in \mathfrak{m} \tq \Ad(\iota(t)) x = t^{r n}x, \ \forall t\in \mathbb{G}_{\text{m}} \}$.
In particular,
\[
V_\lambda = \mathfrak{j}_\lambda(q_F) = \mathfrak{m}_{\lambda,n}.
\]
\end{lemma}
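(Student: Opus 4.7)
The plan is to manufacture $M_\lambda$ directly from the root-space decomposition of $\mathfrak{j}_\lambda$ under $\Ad(s_\lambda)$, and then to obtain $\iota$ by extending and clearing denominators in the real ``cocharacter'' encoded by the hyperbolic element $s_\lambda$.

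First, I would pick a maximal torus $T\subseteq J_\lambda^0$ containing $s_\lambda$ (which is semisimple by Lemma~\ref{lemma:slambda}) and form the root system $\Phi=\Phi(J_\lambda^0,T)$. Because $s_\lambda$ is hyperbolic, $\chi\mapsto\chi(s_\lambda)$ is a group homomorphism $X^*(T)\to\RR_{>0}$, so
\[
\Phi^\dagger \ceq \{\alpha\in\Phi\tq \alpha(s_\lambda)\in q_F^{\ZZ}\}
\]
is a closed symmetric root subsystem. I would then take $M_\lambda$ to be the connected reductive subgroup of $J_\lambda^0$ generated by $T$ and the root subgroups $U_\alpha$ with $\alpha\in\Phi^\dagger$; combining its root-space description with \eqref{eq: multiply eigenvalue} (applied inside the Lie subalgebra $\mathfrak{j}_\lambda\subseteq\mathfrak{k}_\lambda$) identifies $\mathfrak{m}_\lambda=\bigoplus_{r\in\ZZ}\mathfrak{j}_\lambda(q_F^r)=\mathfrak{j}_\lambda^\dagger$.

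Next, for $\iota$ and $n$, I would consider the group homomorphism $\varphi:X^*(T)\to\RR$ defined by $\varphi(\chi)=\log_{q_F}\chi(s_\lambda)$. By construction $\varphi$ restricts to a $\ZZ$-valued homomorphism on the sublattice $\ZZ\Phi^\dagger$. Picking any $\QQ$-linear extension $\varphi_\QQ:X^*(T)\otimes\QQ\to\QQ$ of $\varphi|_{\ZZ\Phi^\dagger}$ and letting $n\geq 1$ clear every denominator produces a $\ZZ$-linear functional $n\varphi_\QQ:X^*(T)\to\ZZ$, corresponding to a cocharacter $\iota:\mathbb{G}_{\text{m}}\to T\subseteq M_\lambda$ satisfying $\langle\alpha,\iota\rangle=nr$ whenever $\alpha\in\Phi^\dagger$ and $\alpha(s_\lambda)=q_F^r$.

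Finally, I would check both claimed identities. Decomposing $\mathfrak{m}_\lambda$ into $\iota$-weight spaces root-by-root yields $\mathfrak{m}_{\lambda,rn}=\mathfrak{j}_\lambda(q_F^r)$ for every $r\in\ZZ$, and in particular $V_\lambda=\mathfrak{j}_\lambda(q_F)=\mathfrak{m}_{\lambda,n}$ by Lemma~\ref{lemma:unramification}. On the other hand, $M_\lambda^\iota$ is the connected Levi of $M_\lambda$ with Lie algebra $\mathfrak{m}_{\lambda,0}=\mathfrak{j}_\lambda(1)$, while $H_{\lambda_\text{nr}}=H_\lambda^0$ is the connected Levi of $J_\lambda^0$ cut out by the same vanishing-on-$s_\lambda$ condition on roots, with the same Lie algebra; hence $M_\lambda^\iota=H_{\lambda_\text{nr}}$. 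The only step requiring genuine attention is the denominator-clearing passage from $\varphi$ to $\iota$, but since $\varphi$ is already integral on $\ZZ\Phi^\dagger$ this is purely a finite-rank linear-algebra extension and poses no real obstruction.
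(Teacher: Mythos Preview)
Your proposal is correct and follows essentially the same strategy as the paper: both arguments fix a maximal torus containing $s_\lambda$, isolate the roots $\alpha$ with $\alpha(s_\lambda)\in q_F^{\ZZ}$, and build $\iota$ as a cocharacter obtained from a rational functional on the character lattice after clearing denominators.

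The one technical difference worth noting is in how $M_\lambda$ is realized. You define $M_\lambda$ directly as the connected subgroup generated by $T$ and the root subgroups for the closed symmetric subsystem $\Phi^\dagger$, which is perfectly valid but relies on the standard fact that such a subgroup is reductive. The paper instead exhibits $M_\lambda$ as the identity component of a centralizer $Z_{J_\lambda}(s_\lambda t^{-1})^0$, where $t=(n\chi)(\zeta\, q_F^{1/n})$ with $\zeta$ a primitive $n$-th root of unity; the point of the root-of-unity trick is to arrange that $\alpha(t)=\alpha(s_\lambda)$ precisely for $\alpha\in\Phi^\dagger$, so that the centralizer picks out exactly the desired roots. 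This buys reductivity for free. Your route is slightly more direct, while the paper's gives a concrete element whose centralizer is $M_\lambda$; either suffices for the lemma.
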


\begin{proof}
Decompose the Lie algebra $\mathfrak{j}_\lambda$ of $J_\lambda$ according to $\Ad(s_\lambda)$-eigenvalues:
\[
\mathfrak{j}_\lambda = \bigoplus_{\nu \in \mathbb{C}^{*}} \mathfrak{j}_\lambda(\nu).
\]
Fix a maximal torus $S$ of $J_\lambda^0$ such that $s_\lambda \in S$ and denote the set of roots determined by this choice by $R(S,J_\lambda^0)$. 
For $\alpha \in R(S,J_\lambda^0)$, denote the root space in $\mathfrak{j}_\lambda$ by $\mathfrak{u}_{\alpha}$. Then 
\begin{equation}\label{eqn:ualpganu}
\mathfrak{j}_\lambda(\nu) = \bigoplus_{\substack{\alpha \in R(S,J_\lambda^0) \\ \alpha(s_\lambda) = \nu}} \mathfrak{u}_{\alpha}.
\end{equation}
Let $\langle \cdot, \cdot \rangle$ be the natural pairing between $X^{*}(S)$ and $X_{*}(S)$. First, let us consider all $\alpha \in R(S,J_\lambda^0)$ such that $\alpha(s_\lambda)$ are integral powers of $q$. For these roots we can choose $\chi \in X_{*}(S)\otimes_{\mathbb{Z}}\mathbb{Q}$ so that $\langle \alpha, \chi \rangle = r$ if $\alpha(s_\lambda) = q^r$ for some integer $r$.
Let $n$ be an integer such that $n\chi \in X_{*}(S)$, and we set $t = (n\chi)(\zeta q^{1/n}) \in S$, where $\zeta$ is a primitive $n$-th root of unity. Now for $\alpha \in R(S,J_\lambda^0)$ such that $\alpha(s_\lambda) = q^r$, we have
\[
\alpha(t) = \alpha((n\chi)(\zeta q^{1/n})) = \alpha(\chi(\zeta q^{1/n}))^n = (\zeta q^{1/n})^{r n} = q^{r} = \alpha(s_\lambda).
\]
Next, consider those $\alpha \in R(S,J_\lambda^0)$ such that $\alpha(s_\lambda)$ are not integral powers of $q$.
We have two cases: if $\langle \alpha, \chi \rangle \in \mathbb{Z}$, then $\alpha(t)$ is an integral power of $q$; if $\langle \alpha, \chi \rangle \notin \mathbb{Z}$, then $\alpha(t) \in \zeta^{l} \mathbb{R}_{>0}$ for some $0 < l < n$. 
Since $\s_\lambda$ is hyperbolic, $\alpha(s_\lambda) \in \mathbb{R}_{>0}$ for all $\alpha \in R(S,J_\lambda^0)$, so $\alpha(s_\lambda) \neq \alpha(t)$ in either case. 
Therefore, we can define\index{$M_\lambda$}
$%\begin{equation}\label{eqn:M}
M_\lambda = Z_{J_\lambda}(s_\lambda t^{-1})^{0}
$%\end{equation}
and take $\iota = n\chi$.\index{$\iota$}
\end{proof}

%\subsection{Proof of the theorem}\label{ssec:proofofreduction}
\subsection{Proof of Theorem~\ref{theorem:unramification}}\label{ssec:proofofreduction}

The essential facts about the groups $K_\lambda$,  $H_\lambda$,  $J_\lambda$ and $M_\lambda$ are summarized in the following diagram.
\[
\begin{tikzcd}
{} & & & \dualgroup{G} & & \\
{} & & & K_\lambda \ceq Z_{\dualgroup{G}}(\lambda(I_F)) \arrow[>->]{u} & & \\
{} & M_\lambda^0 = M_\lambda \arrow[>->]{r} & J_\lambda^0 \arrow[>->]{r} & J_\lambda \ceq Z_{K_\lambda}(t_\lambda\rtimes\Frob) \arrow[>->]{u} \arrow[->>]{r} & \pi_0(J_\lambda) & \\
{} & M_\lambda^\iota  \arrow[equal]{r} \arrow[>->]{u} & H_\lambda^0 \arrow[>->]{u} \arrow[>->]{r} & H_\lambda = Z_{J_\lambda}(s_\lambda) \arrow[>->]{u} \arrow[->>]{r} & \arrow[>->]{u} \pi_0(H_\lambda)  & 
\end{tikzcd}
\]
From the definitions of $G_\lambda$ \eqref{eqn:LGlambda}, $\lambda_\text{hu}$ \eqref{eqn:unramifiation} and $r_\lambda  : \Lgroup{G}_\lambda\to\Lgroup{G}$ \eqref{eqn:rlambda}, we have
\begin{equation}
r_\lambda(\lambda_\text{hu}(\Frob)) = r_\lambda(s_\lambda \times \Frob) =  (s_\lambda \rtimes  1)(t_\lambda\rtimes \Frob) =  f_\lambda\rtimes  \Frob = \lambda(\Frob).
\end{equation}
Now, Theorem~\ref{theorem:unramification}  follows from a direct application of Proposition~\ref{proposition:pi*} and \ref{lemma:cocharacter}, as in the diagram below.
\[
\begin{tikzcd}
\Rep(A_\lambda) \arrow[equal]{d} 
	& \Perv_{\dualgroup{G}}(X_\lambda) \arrow{d}[swap]{\text{equiv}} \arrow[shift left]{rr}{(c_\lambda)^* } 
	&& \arrow[shift left]{ll}{(c_\lambda)_*}  \Perv_{\dualgroup{G}_\lambda}(X_{\lambda_\text{hu}}) \arrow{d}{\text{equiv}} \\
\Rep(\pi_0(H_\lambda)) \arrow{r} 
	& \Perv_{H_\lambda}(V_\lambda) \arrow{rr}{\text{forget}}   
	&&   \Perv_{H_\lambda^0}(V_\lambda) \arrow[equal]{d}  \\
 	&
	&& \Perv_{M_\lambda^\iota}(\mathfrak{m}_{\lambda,n}) \arrow[equal]{u} 
\end{tikzcd}
\]
%This concludes the proof of Theorem~\ref{theorem:unramification}.

\subsection{Further properties of Vogan varieties}\label{ssec:further}

From \eqref{eqn:ualpganu} in the proof of Theorem~\ref{theorem:unramification} we get a very concrete description of $V_\lambda$ as a variety, for any $\lambda \in R(\Lgroup{G})$:
\[
V_\lambda \iso \mathbb{A}^d,
\qquad\text{for}\quad
d = \abs{ \{ \alpha \in R(S,J_\lambda^0) \tq \alpha(s_\lambda) = q_F \}}.
\]
%Also, we see $\dim V_\lambda \leq \dim M_\lambda \leq \dim H_\lambda$.
%When augmented with the action of $H_\lambda$ on $V_\lambda$, this gives $X_\lambda = \dualgroup{G}\times_{H_\lambda} V_\lambda$.

\begin{proposition}\label{lemma:V}
%$V_\lambda$ is a moduli space for $\Phi_\lambda(G(F))$.
The space $V_\lambda$ is stratified into $H_\lambda$-orbits, of which there are finitely many, with a unique open orbit. 
\end{proposition}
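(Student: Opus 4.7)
The plan is to leverage the reduction to the unramified graded-Lie-algebra setting provided by Theorem~\ref{theorem:unramification}\ref{reduction:GLA} together with Lemma~\ref{lemma:cocharacter}. There we saw that $V_\lambda = \mathfrak{m}_{\lambda,n}$ is the weight-$n$ piece of the grading of $\mathfrak{m}_\lambda = \Lie M_\lambda$ induced by the cocharacter $\iota$, that $H_\lambda^0 = M_\lambda^\iota$, and that the action of $H_\lambda^0$ on $V_\lambda$ coincides with the action of $M_\lambda^\iota$ on $\mathfrak{m}_{\lambda,n}$.

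First, I would invoke the theorem of Vinberg on $\theta$-groups, or equivalently the version for graded reductive Lie algebras used in \cite{Lusztig:Study}: for a connected reductive group $M_\lambda$ with grading by a cocharacter $\iota$, the group $M_\lambda^\iota$ acts on each graded piece $\mathfrak{m}_{\lambda,n}$ with only finitely many orbits. Applied to our situation, this shows that $H_\lambda^0$ acts on $V_\lambda$ with finitely many orbits. Since $H_\lambda/H_\lambda^0 = \pi_0(H_\lambda)$ is finite, each $H_\lambda$-orbit is a finite union of $H_\lambda^0$-orbits, so the number of $H_\lambda$-orbits on $V_\lambda$ is also finite.

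Second, each $H_\lambda$-orbit $C\subseteq V_\lambda$ is locally closed in $V_\lambda$; this is the general fact that orbits of algebraic group actions on varieties are locally closed. So $V_\lambda$ is stratified by finitely many locally closed $H_\lambda$-orbits.

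Finally, to produce the unique open orbit, I would use that $V_\lambda$ is irreducible. From the explicit description at the start of Section~\ref{ssec:further}, or already from \eqref{def:Vh}, the variety $V_\lambda$ is isomorphic to an affine space $\mathbb{A}^d$, hence irreducible. Writing $V_\lambda = \bigsqcup_{i=1}^k C_i$ as the disjoint union of its finitely many $H_\lambda$-orbits, irreducibility gives $V_\lambda = \bigcup_i \overline{C_i}$ and forces $V_\lambda = \overline{C_{i_0}}$ for some $i_0$. Since $C_{i_0}$ is locally closed in $V_\lambda$, it is open in its closure $V_\lambda$, so it is open (and dense) in $V_\lambda$. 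Uniqueness is immediate: two distinct open orbits would be disjoint non-empty open subsets of an irreducible variety, a contradiction. I do not anticipate a substantial obstacle here; the only non-formal ingredient is the Vinberg/Lusztig finiteness theorem, which is precisely what the reduction in Theorem~\ref{theorem:unramification}\ref{reduction:GLA} was set up to make available.
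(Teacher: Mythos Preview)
Your proposal is correct and follows essentially the same approach as the paper: reduce via Lemma~\ref{lemma:cocharacter} to the graded Lie algebra setting and invoke the finiteness results of Lusztig/Vinberg for $M_\lambda^\iota$ acting on $\mathfrak{m}_{\lambda,n}$. Your explicit handling of the passage from $H_\lambda^0$ to $H_\lambda$ and the irreducibility argument for the unique open orbit make explicit what the paper leaves to the cited references.
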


\begin{proof}
%It is sufficient to prove: the space $V_\lambda$ is stratified into $H_\lambda$-orbits, of which there are finitely many, with a unique open orbit. 
With Proposition~\ref{lemma:cocharacter} in hand, this follows immediately from \cite[Proposition 3.5]{Lusztig:Study} and \cite[Section 3.6]{Lusztig:Study}.
\end{proof}

A different proof is given in \cite[Proposition 4.5]{Vogan:Langlands}. 

%It follows $\dim V_\lambda \leq \dim H_\lambda$.

\begin{proposition}% conic PROPOSITION
\label{prop: conic}
Every $H_\lambda$-orbit in $V_\lambda$ is a conical variety.
\end{proposition}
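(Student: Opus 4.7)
The plan is to deduce this directly from the cocharacter produced by Lemma~\ref{lemma:cocharacter} together with Theorem~\ref{theorem:unramification}. Recall from those results that there is a connected reductive subgroup $M_\lambda \subseteq J_\lambda^0$ and a cocharacter $\iota : \mathbb{G}_{\text{m}} \to M_\lambda$ such that $M_\lambda^\iota = H_{\lambda_\text{nr}} = H_\lambda^0$ and $V_\lambda = \mathfrak{m}_{\lambda,n}$, where $\mathfrak{m}_{\lambda,n}$ is the weight-$n$ space for $\Ad \circ \iota$ on $\mathfrak{m}_\lambda$.

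The first step is to observe that $\iota$ factors through $H_\lambda$. Indeed, $\mathbb{G}_{\text{m}}$ is abelian, so $\iota(s)$ and $\iota(t)$ commute for all $s,t \in \mathbb{G}_{\text{m}}$; therefore $\iota(t) \in Z_{M_\lambda}(\iota) = M_\lambda^\iota = H_\lambda^0 \subseteq H_\lambda$ for every $t$.

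Next, for any $x \in V_\lambda = \mathfrak{m}_{\lambda,n}$ and any $t \in \mathbb{C}^\times$, the defining property of $\mathfrak{m}_{\lambda,n}$ gives
\[
\iota(t) \cdot x \;=\; \Ad(\iota(t))\, x \;=\; t^n x.
\]
Hence the $H_\lambda$-orbit $C$ of $x$ contains $\{t^n x : t \in \mathbb{C}^\times\}$. Since $\mathbb{C}^\times$ is a divisible group, the map $t \mapsto t^n$ from $\mathbb{C}^\times$ to itself is surjective, so $\{t^n x : t \in \mathbb{C}^\times\} = \mathbb{C}^\times \cdot x$. Thus $\mathbb{C}^\times \cdot x \subseteq C$, which is exactly the statement that $C$ is conical.

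There is no real obstacle here: once one recognises that $\iota(t)$ itself lies in the fixed-point group $M_\lambda^\iota$ (by commutativity of $\mathbb{G}_{\text{m}}$), the scaling action on $V_\lambda$ is realised inside $H_\lambda^0$, and every $H_\lambda$-orbit is automatically stable under $\mathbb{C}^\times$-dilation.
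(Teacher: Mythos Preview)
Your proof is correct and in fact more direct than the paper's. The paper argues via Jacobson--Morozov: for each $x \in C$ it invokes \cite[Lemma~2.1]{Gross:Arithmetic} to produce a homomorphism $\varphi : \SL(2,\mathbb{C}) \to M_\lambda$ whose diagonal torus lands in $M_\lambda^\iota$ and with $\mathrm{d}\varphi\bigl(\begin{smallmatrix}0&1\\0&0\end{smallmatrix}\bigr) = x$, so that $\Ad\bigl(\varphi(\operatorname{diag}(t,t^{-1}))\bigr)(x) = t^2 x$ lies in $C$. You bypass this entirely by observing that the cocharacter $\iota$ itself, having abelian image, lies in its own centralizer $M_\lambda^\iota = H_\lambda^0$ and already scales $V_\lambda = \mathfrak{m}_{\lambda,n}$ by $t^n$. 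The one tacit point in your argument is that $n \neq 0$; this is implicit in the construction of $\iota$ in Lemma~\ref{lemma:cocharacter} (where $q^{1/n}$ and a primitive $n$-th root of unity appear) and in any case holds whenever $V_\lambda \neq 0$, the case $V_\lambda = 0$ being vacuous. Your route is shorter and avoids $\mathfrak{sl}_2$-triples; the paper's approach has the minor aesthetic advantage that the scaling exponent is always $2$, independent of $\lambda$.
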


\begin{proof}
By Proposition~\ref{lemma:cocharacter}, it suffices to prove that every $M_\lambda^\iota$-orbit $C$ in $\mathfrak{m}_{\lambda,n}$ is a conical variety.
Arguing  as in the proof of \cite[Lemma 2.1]{Gross:Arithmetic}, for $x \in C$, we can find a homomorphism $\varphi: \SL(2, \mathbb{C}) \rightarrow M_\lambda$ such that for $t \in \mathbb{C}^{*}$
\[
\varphi \begin{pmatrix} t & 0 \\ 0 & t^{-1} \end{pmatrix} \in M_\lambda^{\iota}
\qquad \text{ and }\qquad
\text{d} \varphi \begin{pmatrix} 0 & 1 \\ 0 & 0 \end{pmatrix} = x.
\]
Then
\[
\begin{array}{rcl r}
\Ad\left(\varphi\begin{pmatrix} t & 0 \\ 0 & t^{-1} \end{pmatrix}\right)(x)
= d\varphi\begin{pmatrix} 0 & t^2 \\ 0 & 0 \end{pmatrix} 
= t^{2}x,
\end{array}
\]
so $t^{2}x \in C$. 
\end{proof}

\section{Arthur parameters and the conormal bundle}\label{section:conormal}

The goal of Section~\ref{section:conormal} is to show that every Arthur parameter $\psi\in Q_\lambda(\Lgroup{G})$ with infinitesimal parameter $\lambda$ may be apprehended as a regular conormal vector to the associated stratum
% at an associated point, that is $\xi_\psi \in T^*_{C_\psi,x_\psi}(V_\lambda)_\textrm{reg}$.

In the rest of this section, $G$ is an arbitrary connected reductive linear algebraic group over the $p$-adic field $F$ unless noted otherwise.

\subsection{Regular conormal vectors}\label{ssec:reg}

For $\lambda \in R(\Lgroup{G})$ and every $H_\lambda$-orbit $C\subseteq V_\lambda$, let $T^*_{C}(V_\lambda)_\textrm{reg}  \subset T^*_{C}(V_\lambda)$ be the  subvariety defined by \index{regular conormal}\index{$T^*_{C}(V_\lambda)_\textrm{reg}$}
\begin{equation}\label{eqn:reg}
T^*_{C}(V_\lambda)_\textrm{reg} 
\ceq 
T^*_{C}(V_\lambda) 
\setminus \mathop{\bigcup}\limits_{C'\atop C\subsetneq {\overline{C'}}} 
\overline{T^*_{C'}(V_\lambda)}.
\end{equation}
Also define \index{$T^*_{H_\lambda}(V_\lambda)_\textrm{reg}$}
\[
T^*_{H_\lambda}(V_\lambda)_\textrm{reg}  \ceq  \mathop{\bigcup}\limits_{C} T^*_{C}(V_\lambda)_\textrm{reg},
\]
the union taken over all $H_\lambda$-orbits $C$ in $V_\lambda$.
Then $T^*_{H_\lambda}(V_\lambda)_\textrm{reg}$ is open subvariety of $T^*_{H_\lambda}(V_\lambda)$ and each $T^*_{C}(V_\lambda)_\textrm{reg}$ is a component in $T^*_{H_\lambda}(V_\lambda)_\textrm{reg}$.

We may compose \eqref{eq:Psi-Phi} and  \eqref{eq:Phi-Lambda}:
\begin{equation}
\begin{array}{rcl l  l  l}
Q(\Lgroup{G}) &\to & P(\Lgroup{G})  &\to & R(\Lgroup{G}) \\
\psi &\mapsto& \phi_\psi &\mapsto& \lambda_{\phi_\psi}.
\end{array}
\end{equation}
To simplify notation, we set $\lambda_\psi \ceq \lambda_{\phi_\psi}$. \index{$\lambda_\psi$, infinitesimal parameter of $\psi$} \index{infinitesimal parameter of $\psi$, $\lambda_\psi$}
We will refer to $\lambda_\psi$ as the \emph{infinitesimal parameter of $\psi$}.
Using Proposition~\ref{proposition:parameter space}, define\index{$x_\psi$}
\[
x_\psi \ceq x_{\phi_\psi}\in V_{\lambda_\psi}
\]
and let $C_\psi\subseteq V_{\lambda_\psi}$\index{$C_\psi$, orbit attached to an Arthur parameter}\index{orbit of Arthur type, $C_\psi$} be the $H_{\lambda_{\psi}}$-orbit of $x_\psi \in  V_{\lambda_\psi}$.

\begin{proposition}\label{theorem:regpsi}
Let $\psi : L_F\times \SL(2,\CC) \to \Lgroup{G}$ be an Arthur parameter.
Let $\lambda_\psi  :  W_F \to \Lgroup{G}$ be its infinitesimal parameter.
Then $\psi$ determines a regular conormal vector \index{$\xi_\psi$}
\[
\xi_\psi\in T^*_{C_\psi,x_\psi}(V_\lambda)_\textrm{reg},
\]
with the property that the $H_{\lambda_{\psi}}$-orbit of $(x_\psi,\xi_\psi)$ in $T^*_{C_\psi}(V_\lambda)$ is open and dense in $T^*_{C_\psi}(V_\lambda)_\textrm{reg}$.
The equivariant fundamental group of this orbit is $A_\psi$.\index{$A_\psi$}
%\[\pi_1(\mathcal{O}_\psi, (x_\psi,\xi_\psi))_{H_\lambda} \iso A_\psi.\]
\end{proposition}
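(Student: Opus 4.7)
The plan is to construct $\xi_\psi$ from the second (``Arthur'') $\SL(2,\CC)$ factor of $L_F\times \SL(2,\CC)$, mirroring how $x_\psi$ arises from the Deligne $\SL(2,\CC)$ inside $L_F$. Concretely, set
\[
\xi_\psi \ceq d\psi^\circ \bigl(\begin{smallmatrix} 0 & 0 \\ 1 & 0 \end{smallmatrix}\bigr)\in \dualgroup{\g},
\]
with the matrix regarded as lying in the Lie algebra of the Arthur $\SL(2,\CC)$. Because the Arthur $\SL(2,\CC)$ commutes with $\psi(I_F,1,1)=\lambda_\psi(I_F)$ we get $\xi_\psi \in \mathfrak{k}_{\lambda_\psi}$; and $\lambda_\psi(\Frob)=\psi(\Frob, d_\Frob^{\mathrm{Del}}, d_\Frob^{\mathrm{Ar}})$ combined with $\Ad\bigl(d_\Frob^{\mathrm{Ar}}\bigr)\bigl(\begin{smallmatrix} 0 & 0 \\ 1 & 0 \end{smallmatrix}\bigr)=q_F^{-1}\bigl(\begin{smallmatrix} 0 & 0 \\ 1 & 0 \end{smallmatrix}\bigr)$ shows $\xi_\psi\in \mathfrak{k}_{\lambda_\psi}(q_F^{-1})$, which the Killing form on $\mathfrak{k}_{\lambda_\psi}$ identifies with $V_{\lambda_\psi}^*$. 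A standard invariance computation shows that $(x,\xi)\in T^*_{C_\psi,x_\psi}(V_{\lambda_\psi})$ iff $[x,\xi]=0$, using non-degeneracy of the Killing form on $\mathfrak{h}_{\lambda_\psi}$ and the fact that $[x,\xi]\in [\mathfrak{k}_{\lambda_\psi}(q_F),\mathfrak{k}_{\lambda_\psi}(q_F^{-1})]\subseteq \mathfrak{h}_{\lambda_\psi}$. This vanishing holds for our pair because $x_\psi = d\psi^\circ\bigl(\begin{smallmatrix} 0 & 1 \\ 0 & 0 \end{smallmatrix}\bigr)$ and $\xi_\psi$ are derivatives along commuting subgroups of $L_F \times \SL(2,\CC)$.

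For regularity and the open-dense orbit, I would use Theorem~\ref{theorem:unramification}\ref{reduction:GLA} to pass to the graded Lie algebra $(\mathfrak{m}_\lambda,\iota)$, so that $V_\lambda = \mathfrak{m}_{\lambda,n}$ and $V_\lambda^* \cong \mathfrak{m}_{\lambda,-n}$. In this setting the Deligne and Arthur $\SL(2,\CC)$ components of $\psi$ furnish a commuting pair of graded $\mathfrak{sl}_2$-triples $(X_D,H_D,Y_D)$ and $(X_A,H_A,Y_A)$ in $\mathfrak{m}_\lambda$ with $X_D=x_\psi$ and $Y_A=\xi_\psi$. By the Vinberg--Kostant--Rallis theory of graded $\mathfrak{sl}_2$-triples (as developed in \cite[\S 5]{Lusztig:Study}) and the resulting Bala--Carter-type parametrization of nilpotent orbits in such gradings, the stratum $T^*_{C_\psi}(V_\lambda)_{\mathrm{reg}}$ carries a unique open $M_\lambda^\iota$-orbit, namely the orbit of nilpotent pairs $(x,\xi) \in \mathfrak{m}_{\lambda,n}\times \mathfrak{m}_{\lambda,-n}$ with $x\in C_\psi$ extending to such a pair of graded triples. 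This forces $(x_\psi,\xi_\psi)$ to be regular and its $H_{\lambda_\psi}$-orbit to be the open dense orbit. I expect matching the Arthur-$\SL(2,\CC)$ datum with the generic graded $\mathfrak{sl}_2\oplus\mathfrak{sl}_2$-triple over $C_\psi$ to be the main technical hurdle; it essentially amounts to showing that the Arthur construction produces a Vinberg-distinguished element of the conormal stratum.

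For the final assertion, $Z_{\dualgroup{G}}(\psi)\subseteq Z_{H_{\lambda_\psi}}(x_\psi, \xi_\psi)$ is immediate: any $g$ centralizing $\psi$ centralizes $\lambda_\psi$ (hence lies in $H_{\lambda_\psi}$) and also centralizes $x_\psi$ and $\xi_\psi$. For the opposite direction, Kostant's theorem applied in the graded setting to each of the triples $(X_D,H_D,Y_D)$ and $(X_A,H_A,Y_A)$ separately, combined with the fact that their images in $\dualgroup{\g}$ commute, yields a decomposition $Z_{H_{\lambda_\psi}}(x_\psi, \xi_\psi) = R \cdot U$, where $U$ is a connected unipotent group and $R$ centralizes both entire triples. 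Since any element of $R$ centralizes both $\SL(2,\CC)$-images, the decomposition $\lambda_\psi(\Frob) = \psi(\Frob,1,1)\cdot\psi(1,d_\Frob^{\mathrm{Del}},1)\cdot \psi(1,1,d_\Frob^{\mathrm{Ar}})$ forces $R$ to centralize $\psi(\Frob,1,1)$ as well, so $R = Z_{\dualgroup{G}}(\psi)$. Taking $\pi_0$ (noting $U$ is connected) and invoking \eqref{eqn:AC} identifies $A_\psi$ with the equivariant fundamental group of the open orbit.
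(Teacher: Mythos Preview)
Your construction of $\xi_\psi$ and the verification that $(x_\psi,\xi_\psi)\in T^*_{C_\psi}(V_{\lambda_\psi})$ are correct and match the paper exactly. The gap is in the next step.

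For the open-orbit claim you invoke a Bala--Carter-type classification of regular conormal vectors via commuting graded $\mathfrak{sl}_2$-triples, and then concede that identifying the Arthur datum with the generic such pair is ``the main technical hurdle.'' But that hurdle \emph{is} the content of the proposition; the black box you are reaching for, to the extent it exists in \cite{Lusztig:Study}, does not supply the identification you need. The paper bypasses any classification and argues directly: the two commuting $\SL(2,\CC)$ factors of $\psi$ give a bigrading $\mathfrak{j}_\lambda=\bigoplus_{r,s}\mathfrak{j}_{r,s}$ refining $\mathfrak{j}_\lambda=\bigoplus_n\mathfrak{j}_\lambda(q_F^{n/2})$ (via $r+s=n$), with $x_\psi\in\mathfrak{j}_{2,0}$ and $\xi_\psi\in\mathfrak{j}_{0,-2}$. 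One then checks the tangent-space equality
\[
[\mathfrak{j}_0\cap\ker(\ad x_\psi),\,\xi_\psi]\;=\;\mathfrak{j}_{-2}\cap\ker(\ad x_\psi)
\]
bidegree by bidegree, using only that $\ad x_\psi$ is injective on $\mathfrak{j}_{r,s}$ for $r<0$ and surjective for $r\ge 0$, and dually for $\ad\xi_\psi$ in the $s$-grading. This is an elementary $\mathfrak{sl}_2\times\mathfrak{sl}_2$ weight argument and gives openness of the $H_{\lambda_\psi}$-orbit in $T^*_{C_\psi}(V_\lambda)$ immediately; regularity then follows.

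For the fundamental group, your ``Kostant applied separately to each triple'' does not quite work as stated: $\xi_\psi$ lies in $\mathfrak{k}_\lambda(q_F^{-1})$, not in $\mathfrak{h}_\lambda$, so it is not a nilpotent element of $\operatorname{Lie} H_{\lambda_\psi}$ to which Kostant's centralizer decomposition applies. The paper instead applies a Kostant-type decomposition only to $x_\psi$, writing $Z_{H_\lambda}(x_\psi)=(Z_{J_\lambda}(\psi_1)\cap Z_{J_\lambda}(t_\psi))\cdot U$ with $\operatorname{Lie} U\subset\bigoplus_{r>0}\mathfrak{j}_{r,-r}$, and then uses the bigrading to show that intersecting with $Z_{J_\lambda}(\xi_\psi)$ splits compatibly. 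The remaining step---that centralizing $\psi_1$, $t_\psi$, and $\xi_\psi$ forces centralizing $y_\psi=d\psi_2\bigl(\begin{smallmatrix}0&1\\0&0\end{smallmatrix}\bigr)$---is done by the injectivity of $\ad\xi_\psi$ on $\mathfrak{j}_{0,2}$. Your final deduction of $R=Z_{\dualgroup{G}}(\psi)$ from $R$ centralizing both $\SL(2)$'s is correct once you have $R$ in hand.
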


The proof of Proposition~\ref{theorem:regpsi} will be given in Section~\ref{ssec:proofoftheoremregpsi}.

\subsection{Cotangent space to the Vogan variety}\label{ssec:cotangent}

Consider \index{$\,^tV_\lambda$, transposed Vogan variety}\index{transposed Vogan variety, $\,^tV_\lambda$}
\begin{equation}\label{V'}
\,^tV_\lambda := \{ x \in \mathfrak{k}_\lambda \tq \Ad(\lambda(\Frob )) (x) = q_{F}^{-1} x\},
\end{equation}
which clearly comes equipped with an action of $H_\lambda$ just as $V_\lambda$ comes equipped with an action of $H_\lambda$.
Compare $\,^tV_\lambda$ with $V_\lambda$ defined in \eqref{def:V}.
In fact, the variety $\,^tV_\lambda$ has already appeared: see the proof of Proposition~\ref{proposition:parameter space}.
We note
\[
 \,^tV_\lambda = \mathfrak{k}_\lambda(q^{-1}_F) = \mathfrak{j}_\lambda(q^{-1}_F) = \mathfrak{m}_{\lambda,-n},
\]
where $\mathfrak{k}$ and $\mathfrak{m}_{n}$ are defined in Sections~\ref{ssec:VX} and \ref{ssec:cocharacter}, respectively.

For $\phi : L_F \to \Lgroup{G}$, we can define\index{$\xi_\phi$}
\begin{equation}
\begin{aligned}
P_\lambda(\Lgroup{G}) &\longrightarrow \,^tV_\lambda,\\
%\qquad
\phi &\mapsto \xi_\phi\ceq \text{d} \varphi\begin{pmatrix} 0 & 0 \\ 1 & 0 \end{pmatrix},
\end{aligned}
\end{equation}
where $\varphi \ceq \phi^\circ\vert_{\SL(2,\CC)}: \SL(2,\CC) \to \dualgroup{G}$. 
This map satisfies all the properties of the map $P_\lambda(\Lgroup{G}) \to V_\lambda(\Lgroup{G})$ in Proposition~\ref{proposition:parameter space}, from which it follows that there is a canonical bijection between $H_\lambda$-orbits in $V_\lambda$ and $H_\lambda$-orbits in $\,^tV_\lambda$, so that the following diagram commutes. 
\[
\begin{tikzcd}
P_\lambda(\Lgroup{G}) / H_\lambda \arrow[equal]{r} \arrow{d} & P_\lambda(\Lgroup{G})/ H_\lambda \arrow{d}\\
V_\lambda \arrow{r}{\cong} / H_{\lambda} & \,^tV_\lambda / H_{\lambda}
\end{tikzcd}
\]

%We also define
%\[
%X\trans_\lambda:= \dualgroup{G} \times_{H_\lambda} \,^tV_\lambda.
%\]

\begin{proposition}\label{prop:identify}
There is an $H_\lambda$-equivariant isomorphism
\[
T^*(V_\lambda) \simeq V_\lambda \times \,^tV_\lambda,
\]
and consequently,
\[
T^*(V_\lambda)\iso \mathfrak{j}_\lambda(q_F) \oplus \mathfrak{j}_\lambda(q^{-1}_F) = \m_{\lambda,n} \oplus  \m_{\lambda,-n}.
\]
\end{proposition}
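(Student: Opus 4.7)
The plan is to trivialize $T^*(V_\lambda)$ using the affine-space structure of $V_\lambda$ and then identify $V_\lambda^\ast$ with $\,^tV_\lambda$ by means of a suitable non-degenerate invariant bilinear form.

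First, since $V_\lambda$ is the underlying vector space of the linear representation $\mathfrak{k}_\lambda(q_F)$ of $H_\lambda$ (see the proof of Lemma~\ref{lemma:nilp} and Lemma~\ref{lemma:cocharacter}), its cotangent bundle is canonically trivialized: $T^*(V_\lambda) = V_\lambda \times V_\lambda^\ast$, and this trivialization is $H_\lambda$-equivariant for the contragredient action on $V_\lambda^\ast$. It therefore suffices to produce an $H_\lambda$-equivariant isomorphism $V_\lambda^\ast \simeq \,^tV_\lambda$.

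To construct this isomorphism, I would fix a non-degenerate symmetric invariant bilinear form $B$ on $\hat{\mathfrak{g}} = \Lie\dualgroup{G}$. Since $\dualgroup{G}$ is reductive, such a form exists (for instance, the trace form of a faithful finite-dimensional representation of $\dualgroup{G}$). Because $\Ad(\lambda(w))$ is a Lie algebra automorphism of $\hat{\mathfrak{g}}$ for each $w\in W_F$, the form $B$ is $\Ad(\lambda(w))$-invariant. For $x\in \hat{\mathfrak{g}}(\nu)$ and $y\in \hat{\mathfrak{g}}(\nu')$ (eigenspaces for $\Ad(\lambda(\Frob))$) this yields
\[
B(x,y) = B(\Ad(\lambda(\Frob))x,\Ad(\lambda(\Frob))y) = \nu\nu'\, B(x,y),
\]
so $B(x,y) = 0$ unless $\nu\nu' = 1$. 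In particular, the eigenspace decomposition of $\hat{\mathfrak{g}}$ is orthogonal under $B$ in this sense, so non-degeneracy of $B$ forces its restriction to $\hat{\mathfrak{g}}(q_F)\times\hat{\mathfrak{g}}(q_F^{-1})$ to be a perfect pairing. Restricting further to $\mathfrak{k}_\lambda \subseteq \hat{\mathfrak{g}}$ (which is stable under $\Ad(\lambda(\Frob))$) gives a perfect pairing between $V_\lambda = \mathfrak{k}_\lambda(q_F)$ and $\,^tV_\lambda = \mathfrak{k}_\lambda(q_F^{-1})$.

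Finally, since $H_\lambda \subseteq \dualgroup{G}$ and $B$ is $\dualgroup{G}$-invariant, the resulting isomorphism $V_\lambda^\ast \simeq \,^tV_\lambda$ is automatically $H_\lambda$-equivariant. Combining with the trivialization above yields $T^*(V_\lambda) \simeq V_\lambda \times \,^tV_\lambda$ as $H_\lambda$-varieties. The consequent identification $T^*(V_\lambda) \iso \mathfrak{j}_\lambda(q_F)\oplus\mathfrak{j}_\lambda(q_F^{-1}) = \mathfrak{m}_{\lambda,n}\oplus\mathfrak{m}_{\lambda,-n}$ follows directly from Lemma~\ref{lemma:unramification} and Lemma~\ref{lemma:cocharacter} together with the observation $\,^tV_\lambda = \mathfrak{m}_{\lambda,-n}$ recorded at the start of Section~\ref{ssec:cotangent}. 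The only real subtlety is verifying the existence of a non-degenerate $\Ad$-invariant form on $\hat{\mathfrak{g}}$ and the orthogonality of eigenspaces under it; once that is in hand, the rest is immediate.
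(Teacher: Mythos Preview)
Your overall strategy---trivialize $T^*(V_\lambda)$ as $V_\lambda\times V_\lambda^*$ and then identify $V_\lambda^*$ with $\,^tV_\lambda$ via an invariant form using eigenspace orthogonality---is exactly the paper's, but there is a gap in your justification of invariance. You write that since $\Ad(\lambda(w))$ is a Lie algebra automorphism, the $\dualgroup{G}$-invariant form $B$ is $\Ad(\lambda(w))$-invariant. This does not follow: a $\dualgroup{G}$-invariant form is only guaranteed to be preserved by \emph{inner} automorphisms, whereas $\Ad(\lambda(\Frob))$ is $\Ad(f_\lambda)$ composed with the Galois action $\rho(\Frob)$ on $\dualgroup{G}$, which is generally outer. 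A trace form of a faithful representation, your suggested example, need not be preserved by outer automorphisms. The same issue arises when you restrict to $\mathfrak{k}_\lambda$: to know the restriction is non-degenerate you need $B$ to be invariant under $\lambda(I_F)$, which again involves the Galois part.

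The paper sidesteps this by working on $\mathfrak{j}_\lambda$ rather than $\hat{\mathfrak{g}}$. There the relevant eigenspace decomposition is for $\Ad(s_\lambda)$, and $s_\lambda\in J_\lambda^0$ by Lemma~\ref{lemma:slambda}, so $\Ad(s_\lambda)$ is inner and any $J_\lambda$-invariant form (built from the Killing form on $[\mathfrak{j}_\lambda,\mathfrak{j}_\lambda]$ plus an arbitrary non-degenerate form on the centre) is automatically preserved. Your argument can be repaired in one of two ways: either use the Killing form of $\hat{\mathfrak{g}}$ (which is preserved by \emph{all} Lie algebra automorphisms) and observe via Lemma~\ref{lemma:nilp} that $V_\lambda$ does not meet $Z(\hat{\mathfrak{g}})$, so degeneracy on the centre is harmless for the pairing $V_\lambda\times\,^tV_\lambda$; or pass to $\mathfrak{j}_\lambda$ as the paper does, where the Galois twist has been absorbed into the definition of the group and the eigenspace decomposition is genuinely inner.
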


\begin{proof}
As $V_\lambda$ is an affine $H_\lambda$-space there is a standard $H_\lambda$-equivariant isomorphism $T^*(V_\lambda) \simeq V_\lambda \times V_\lambda^*$, so it suffices to exhibit an $H_\lambda$-equivariant isomorphism 
\[
V_\lambda^* \iso \,^tV_\lambda.
\]
To do this, let $J_\lambda$ be the reductive group defined in \eqref{J} and write $\mathfrak{j}_\lambda$ for $\Lie J_\lambda$, as in Section~\ref{ssec:unramification}.
From Proposition~\ref{lemma:cocharacter}, we have
\[
V_\lambda = \mathfrak{j}_\lambda(q_F)
\qquad\text{and}\qquad
\mathfrak{h}_\lambda = \mathfrak{j}_\lambda(1)
\qquad\text{and}\qquad
\,^tV_\lambda = \mathfrak{j}_\lambda(q^{-1}_F).
\]
As $J_\lambda$ is reductive, its Lie algebra decomposes into a direct sum of its centre and a semisimple Lie algebra, $\mathfrak{j}_\lambda \simeq Z(\mathfrak{j}_\lambda)\oplus [\mathfrak{j}_\lambda,\mathfrak{j}_\lambda]$.  
We choose any non-degenerate symmetric bilinear form on $Z(\mathfrak{j}_\lambda)$ and extend to a bilinear form on $\mathfrak{j}_\lambda$ using the Cartan-Killing form, while insisting that the direct sum decomposition above is orthogonal, that is, the components in the direct sum are pairwise perpendicular.
The result is a non-degenerate, symmetric, $J_\lambda$-invariant bilinear pairing \index{$\KPair{\,}{\,}$} 
\begin{equation}\label{KPair}
\KPair{}{} : \mathfrak{j}_{\lambda} \times \mathfrak{j}_\lambda \to \mathbb{A}^1.
\end{equation}
Now, if $\mathfrak{j}_{\lambda}(\nu)$ and $\mathfrak{j}_{\lambda}(\nu')$ are two $\Ad(s_\lambda)$-weight spaces, then the invariance of the pairing implies that $\KPair{\mathfrak{j}_{\lambda}(\nu)}{\mathfrak{j}_{\lambda}(\nu')} \neq 0$ if and only if $\nu' = \nu^{-1}$.
Since the pairing is non-degenerate this gives an $Z_{J_\lambda}(s_\lambda) = H_\lambda$-equivariant isomorphism
\[
V_\lambda^* = \mathfrak{j}_\lambda(q_F)^* \iso \mathfrak{j}_\lambda(q^{-1}_F) = \,^tV_\lambda.
\]

A similar argument using the cocharacter $\iota : \mathbb{G}_\text{m} \to M_\lambda$ and the graded Lie algebra 
\[
\begin{array}{rcl}
\mathfrak{m}_\lambda 
%&=& \mathop{\otimes}\limits_{r\in \ZZ} \mathfrak{m}_{\lambda, rn}\\
&=& \cdots \oplus \mathfrak{m}_{\lambda,2n} \oplus \left( \mathfrak{m}_{\lambda,n} \oplus \mathfrak{m}_{\lambda,0} \oplus  \mathfrak{m}_{\lambda,-n} \right) \oplus \mathfrak{m}_{\lambda,-2n} \oplus \cdots\\
&=& \cdots \oplus \mathfrak{m}_{\lambda,2n} \oplus \left( V_\lambda \oplus \mathfrak{h}_{\lambda} \oplus  \,^tV_\lambda\right) \oplus \mathfrak{m}_{\lambda,-2n} \oplus \cdots
\end{array}
\]
produces an $M_\lambda^\iota = H_\lambda^0$-equivariant isomorphism
\begin{equation}\label{isom}
V_\lambda^* = \m_{\lambda,n}^* \iso \m_{\lambda,-n} = \,^tV_\lambda.
\end{equation}
%When combined with the proof of Proposition~\ref{prop:identify} above, it follows that the isomorphism in Proposition~\ref{prop:identify} is an extension of the isomorphism \eqref{isom}.
This allows us to view $T^*(V_\lambda)$ as a subspace of $\mathfrak{m}_\lambda$, even with $H_\lambda$-action, and gives $H_\lambda$-equivariant isomorphisms
\[
T^*(V_\lambda)\iso \mathfrak{j}_\lambda(q_F) \oplus \mathfrak{j}_\lambda(q^{-1}_F) = \m_{\lambda,n} \oplus  \m_{\lambda,-n},
\]
as desired.
\end{proof}

In the remainder of the article we identify $\,^tV_\lambda$ with $V_\lambda^*$, using Proposition~\ref{prop:identify}.

\subsection{Conormal bundle to the Vogan variety}\label{ssec:conormal}

\begin{proposition}
Let $C \subseteq V_\lambda$ be an $H_\lambda$-orbit in $V_\lambda$; then 
\[
T^*_{C}(V_\lambda) = \left\{ (x,\xi) \in T^*(V_\lambda) \tq  x \in C, [x,\xi] = 0\right\},
\]
where $[\ ,\ ]$ \index{$[\ ,\ ]$} denotes the Lie bracket on $\mathfrak{j}_\lambda$ and where we use Proposition~\ref{prop:identify} to identify $T^*(V_\lambda) \iso \mathfrak{j}_{\lambda}(q_F)\oplus \mathfrak{j}_{\lambda}(q^{-1}_F)$.
Consequently,
\[
T_{H_\lambda}^*(V_\lambda) = \left\{ (x,\xi) \in T^*(V_\lambda) \tq  [x,\xi] = 0\right\}.
\]
\end{proposition}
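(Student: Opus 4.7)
The plan is to unfold the definition of the conormal bundle via the pairing of Proposition~\ref{prop:identify} and then transfer the annihilator condition to the bracket condition using invariance of the form $\KPair{\ }{\ }$.

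First I would recall that since $V_\lambda$ is an affine $H_\lambda$-space and $C = H_\lambda \cdot x$ is an orbit, the tangent space to $C$ at $x$ is the image of the infinitesimal action
\[
T_x C = [\mathfrak{h}_\lambda, x] = \{\, [Y,x] \tq Y \in \mathfrak{h}_\lambda \,\} \subseteq V_\lambda = \mathfrak{j}_\lambda(q_F).
\]
Under the identification $T^*(V_\lambda) \iso \mathfrak{j}_\lambda(q_F) \oplus \mathfrak{j}_\lambda(q_F^{-1})$ obtained in Proposition~\ref{prop:identify} by means of the pairing $\KPair{\ }{\ }$, the conormal fibre at $x$ is the annihilator
\[
T^*_{C,x}(V_\lambda) = (T_x C)^\perp = \{\, \xi \in \mathfrak{j}_\lambda(q_F^{-1}) \tq \KPair{[Y,x]}{\xi} = 0 \text{ for all } Y \in \mathfrak{h}_\lambda \,\}.
\]

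Next I would invoke the $J_\lambda$-invariance of $\KPair{\ }{\ }$, which gives the usual associativity identity $\KPair{[Y,x]}{\xi} = \KPair{Y}{[x,\xi]}$. So the defining condition becomes $\KPair{Y}{[x,\xi]} = 0$ for all $Y \in \mathfrak{h}_\lambda$. The key observation is that by \eqref{eq: multiply eigenvalue} the bracket maps
\[
[\ ,\ ] : \mathfrak{j}_\lambda(q_F) \times \mathfrak{j}_\lambda(q_F^{-1}) \longrightarrow \mathfrak{j}_\lambda(1) = \mathfrak{h}_\lambda,
\]
so in fact $[x,\xi] \in \mathfrak{h}_\lambda$ automatically.

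Finally, I would note that the restriction of $\KPair{\ }{\ }$ to $\mathfrak{h}_\lambda = \mathfrak{j}_\lambda(1)$ is non-degenerate: this follows from the general fact established in the proof of Proposition~\ref{prop:identify} that weight spaces pair non-degenerately with their duals, applied to the self-dual weight $1$. Hence the condition $\KPair{Y}{[x,\xi]} = 0$ for all $Y \in \mathfrak{h}_\lambda$ is equivalent to $[x,\xi] = 0$, and we obtain the claimed description of the conormal fibre; assembling over $x \in C$ (respectively over all orbits) yields the two displayed formulas. The only step that requires any care is verifying the non-degeneracy of $\KPair{\ }{\ }$ on $\mathfrak{h}_\lambda$, but this is already implicit in how the pairing is constructed in Proposition~\ref{prop:identify}, so no obstacle is expected.
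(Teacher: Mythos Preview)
Your proof is correct and follows essentially the same argument as the paper: identify $T_xC = [\mathfrak{h}_\lambda,x]$, use invariance of the pairing to rewrite the annihilator condition as $\KPair{Y}{[x,\xi]}=0$ for all $Y\in\mathfrak{h}_\lambda$, and conclude via non-degeneracy of $\KPair{\ }{\ }$ on $\mathfrak{h}_\lambda$. The only difference is that you add the explicit remark $[x,\xi]\in\mathfrak{j}_\lambda(1)=\mathfrak{h}_\lambda$ from the weight grading, which the paper leaves implicit.
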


\begin{proof}
The map $\mathfrak{h}_\lambda \to T_x(C)$ given by $X \mapsto [x,X]$ is a surjection. So for any $\xi \in \mathfrak{j}_{\lambda}(q^{-1}_F)$, we have $\xi \in T^*_{C,x}(V_\lambda)$ if and only if $0 = \KPair{\xi}{[x,X]} = \KPair{[\xi, x]}{X}$ for all $X \in \mathfrak{h}_\lambda$.  
As we saw in the proof of Proposition~\ref{prop:identify}, the pairing restricts non-degenerately to $\mathfrak{h}_\lambda$, so this is also equivalent to require $[x,\xi] = 0$.  
%Conversely, if $x \in C$ and $[\xi,x] = 0$ then $\KPair{\xi}{[x,X]} = 0$ for all $[x,X] \in T_x(C)$.  \todo{Check this proof for typos.}
\end{proof}

\begin{corollary}\label{lemma:bracket}
$T^*_{H_\lambda}(V_\lambda) \hookrightarrow (\,\cdot\, \vert\, \cdot\, )^{-1}(0)$.
\end{corollary}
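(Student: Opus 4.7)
The plan is to use the preceding proposition, which identifies $T^*_{H_\lambda}(V_\lambda)$ with the pairs $(x,\xi)\in V_\lambda\times \,^tV_\lambda$ satisfying $[x,\xi]=0$. It suffices to show that for any such pair one has $(x\vert\xi)=0$, where $(\,\cdot\vert\,\cdot\,)$ is the non-degenerate symmetric form on $\mathfrak{j}_\lambda$ chosen in \eqref{KPair}.

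First I would observe that every element of $V_\lambda$ and of $\,^tV_\lambda$ is ad-nilpotent in $\mathfrak{j}_\lambda$. For $V_\lambda$ this is Lemma~\ref{lemma:nilp}, and the identical argument applies to $\,^tV_\lambda$: by \eqref{eq: multiply eigenvalue} the span of $\Ad(\lambda(\Frob))$-eigenspaces with non-unit-root eigenvalues consists of nilpotent elements, and $q_F^{-1}$ is not a root of unity. Moreover, since $Z(\mathfrak{j}_\lambda)$ is fixed pointwise by $\Ad(\iota(t))$ for all $t$, the center lies in $\mathfrak{m}_{\lambda,0}=\mathfrak{h}_\lambda$, so both $x\in \mathfrak{m}_{\lambda,n}$ and $\xi\in \mathfrak{m}_{\lambda,-n}$ lie in the derived subalgebra $[\mathfrak{j}_\lambda,\mathfrak{j}_\lambda]$.

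Next, since $[x,\xi]=0$, the operators $\ad x$ and $\ad\xi$ on $\mathfrak{j}_\lambda$ commute, and both are nilpotent because $x$ and $\xi$ are nilpotent. Hence their product $\ad(x)\cdot\ad(\xi)$ is a nilpotent endomorphism of $\mathfrak{j}_\lambda$, so
\[
\kappa_{\mathfrak{j}_\lambda}(x,\xi) \;\ceq\; \trace_{\mathfrak{j}_\lambda}\bigl(\ad(x)\cdot \ad(\xi)\bigr) \;=\; 0.
\]
Because $\ad(x)$ and $\ad(\xi)$ annihilate $Z(\mathfrak{j}_\lambda)$, the trace over $\mathfrak{j}_\lambda$ agrees with the trace over $[\mathfrak{j}_\lambda,\mathfrak{j}_\lambda]$, so the Killing form of $[\mathfrak{j}_\lambda,\mathfrak{j}_\lambda]$ also vanishes on $(x,\xi)$.

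Finally, I would invoke the construction of $(\,\cdot\vert\,\cdot\,)$ recalled in the proof of Proposition~\ref{prop:identify}: the form is the Cartan-Killing form on $[\mathfrak{j}_\lambda,\mathfrak{j}_\lambda]$ and an arbitrary form on $Z(\mathfrak{j}_\lambda)$, with the two summands orthogonal. Since $x,\xi\in [\mathfrak{j}_\lambda,\mathfrak{j}_\lambda]$, we conclude $(x\vert\xi)=\kappa_{[\mathfrak{j}_\lambda,\mathfrak{j}_\lambda]}(x,\xi)=0$. The only potential subtlety is the bookkeeping separating the center from the semisimple part and ensuring that the normalisation of $(\,\cdot\vert\,\cdot\,)$ on the semisimple summand is literally the Killing form rather than a nonzero multiple thereof; but any positive multiple suffices for the vanishing conclusion, which is all that is needed.
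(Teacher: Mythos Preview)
Your argument is correct and takes a genuinely different route from the paper. The paper instead completes $x$ to an $\mathfrak{sl}_2$-triple $(x,y,z)$ with $z\in\mathfrak{h}_\lambda$ (as in \eqref{sl2triple}), so that $[z,x]=2x$, and then uses invariance of the form $\KPair{\,}{\,}$ directly:
\[
\KPair{x}{\xi}=\tfrac{1}{2}\KPair{[z,x]}{\xi}=\tfrac{1}{2}\KPair{z}{[x,\xi]}=0.
\]
This is a one-line argument and avoids the bookkeeping you flag at the end: since $\KPair{\,}{\,}$ is $J_\lambda$-invariant on all of $\mathfrak{j}_\lambda$ by construction, the associativity identity $\KPair{[a,b]}{c}=\KPair{a}{[b,c]}$ holds without needing to separate the center from the semisimple part or to identify the form with the Killing form. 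Your approach, by contrast, sidesteps the Jacobson--Morozov input from \cite{Gross:Arithmetic} and works purely from the fact that commuting nilpotent operators have nilpotent product; this is more self-contained but forces you to pin down exactly how $\KPair{\,}{\,}$ relates to the Killing form, which is the source of the caveat you raise. Both arguments ultimately exploit that $x$ and $\xi$ are nilpotent and commute; the paper's trick is just that the relation $[z,x]=2x$ lets you rewrite the pairing directly in terms of $[x,\xi]$.
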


\begin{proof}
If $(x,\xi) \in V_\lambda \times V^*_\lambda$ lies in $T^*_{H_\lambda}(V_\lambda)$ then $[x,\xi] = 0$.  
Choose an $\mathfrak{sl}_2$-triple $(x,y, z)$ such that $y \in V^*_\lambda$, and $z \in \mathfrak{h}_{\lambda}$.  
Then,
\[
\KPair{x}{\xi} = \frac{1}{2} \KPair{[z,x]}{\xi} = \frac{1}{2} \KPair{z}{[x,\xi]} =  0.\qedhere
\]
\end{proof}

\subsection{Orbit duality}\label{ssec:dual}

Consider the $H_\lambda$-equivariant isomorphism
\begin{equation}\label{eqn:transpose}
\begin{aligned}
T^*(V_\lambda) &\to T^*(V_\lambda^*)\\
%\qquad
(x,\xi) &\mapsto (\xi,x),
\end{aligned}
\end{equation}
where we use the form $\KPair{\cdot}{\cdot}$ to identify the dual to $V_\lambda^*$\index{$V^*_\lambda$, dual Vogan variety} with $V_\lambda$.
%We refer to \eqref{eqn:transpose} as \emph{transposition}. \index{transposition}
%
%This induces bijections
%\begin{equation}\label{transposeC}
%\begin{array}{rcl cc rcl}
%C &\mapsto& \,^tC &&& B &\mapsto& \trans{B}
%\end{array}
%\end{equation}
%between $H_\lambda$-orbits in $V_\lambda$ and $H_\lambda$-orbits in $V^*_\lambda$.
%
Just as every $H_\lambda$-orbit $C\subset V_\lambda$ determines the conormal bundle
\[
T^*_{C}(V_\lambda) = \left\{ (x,\xi)\in V_\lambda\times V^*_\lambda \tq x\in C,\ [x,\xi]=0 \right\},
\]
every $H_\lambda$-orbit $B\subset V_\lambda^*$ determines a conormal bundle in $T^*(V_\lambda^*)$:
\[
T^*_{B}(V_\lambda^*) = \left\{ (\xi,x)\in V_\lambda^* \times V_\lambda  \tq \xi \in B,\ [\xi,x]=0 \right\}.
\]

\begin{lemma}\label{lemma:orbitduality}
For every $H_\lambda$-orbit $C$ in $V_\lambda$ there is a unique $H_\lambda$-orbit $C^*$ in $V_\lambda^*$ so that \eqref{eqn:transpose} restricts to an isomorphism \index{$C^*$, dual orbit} \index{dual orbit, $C^*$}
\[
\overline{T^*_{C}(V_\lambda)} \iso \overline{T^*_{C^*}(V_\lambda^*)}.
\]
The rule $C \mapsto C^*$ is a bijection from $H_\lambda$-orbits in $V_\lambda$ to $H_\lambda$-orbits in $V_\lambda^*$. 
\end{lemma}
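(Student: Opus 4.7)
The plan is to exploit the $H_\lambda$-equivariant isomorphism $T^*(V_\lambda) \iso T^*(V_\lambda^*)$ given by $(x,\xi)\mapsto(\xi,x)$ and apply it to the conormal variety of the stratification of $V_\lambda$.

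First I would verify that the swap map of \eqref{eqn:transpose} restricts to an $H_\lambda$-equivariant isomorphism
\[
T^*_{H_\lambda}(V_\lambda) \iso T^*_{H_\lambda}(V_\lambda^*).
\]
By Section~\ref{ssec:conormal} and the identification of Proposition~\ref{prop:identify},
\[
T^*_{H_\lambda}(V_\lambda) = \{(x,\xi)\in V_\lambda\times V_\lambda^* \tq [x,\xi]=0\},
\]
and the analogous description holds for $T^*_{H_\lambda}(V_\lambda^*)\subseteq V_\lambda^*\times V_\lambda$. Since $[x,\xi]=0$ if and only if $[\xi,x]=0$, the swap map carries one set onto the other, and $H_\lambda$-equivariance is immediate from the equivariance of \eqref{eqn:transpose}.

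Next I would identify the irreducible components on each side. For every $H_\lambda$-orbit $C\subseteq V_\lambda$, the conormal bundle $T^*_C(V_\lambda)$ is a vector bundle over $C$ of rank $\dim V_\lambda - \dim C$, so it is irreducible of dimension $\dim V_\lambda$; thus its closure $\overline{T^*_C(V_\lambda)}$ is an irreducible closed subvariety of $T^*_{H_\lambda}(V_\lambda)$ of dimension $\dim V_\lambda$. By Proposition~\ref{lemma:V}, there are only finitely many such orbits, so
\[
T^*_{H_\lambda}(V_\lambda) = \bigcup_{C} \overline{T^*_C(V_\lambda)}
\]
is a finite union of irreducible closed subvarieties of the same dimension, and each $\overline{T^*_C(V_\lambda)}$ is therefore an irreducible component. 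The same analysis applies on the $V_\lambda^*$ side (Proposition~\ref{lemma:V} applies equally to $V_\lambda^* = \mathfrak{j}_\lambda(q_F^{-1})$, by the symmetric role of the two weight spaces established in the proof of Proposition~\ref{prop:identify}), giving the decomposition into components $\overline{T^*_B(V_\lambda^*)}$ indexed by $H_\lambda$-orbits $B\subseteq V_\lambda^*$.

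Finally, since the swap map is an isomorphism of varieties, it induces a bijection between the sets of irreducible components. Hence for each orbit $C$ there is a unique orbit $C^*\subseteq V_\lambda^*$ such that the swap sends $\overline{T^*_C(V_\lambda)}$ to $\overline{T^*_{C^*}(V_\lambda^*)}$, and the assignment $C\mapsto C^*$ is a bijection between the sets of $H_\lambda$-orbits. The main (mild) obstacle is simply verifying the equidimensionality that ensures no $\overline{T^*_C(V_\lambda)}$ is absorbed into the closure of another, but this follows directly from the rank computation above.
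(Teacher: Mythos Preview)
Your argument is correct and is precisely the standard proof of Pyasetskii duality: match irreducible components of the conormal variety under the swap isomorphism, using that each $\overline{T^*_C(V_\lambda)}$ is Lagrangian (hence of dimension $\dim V_\lambda$) and that there are finitely many orbits. The paper itself does not give this argument; it simply cites \cite[Corollary~2]{Pyasetskii} for connected $H_\lambda$ and remarks that the disconnected case follows. Your proof has the advantage of being self-contained and of making transparent why the disconnected case needs no extra work: nothing in the component-matching argument uses connectedness of $H_\lambda$, only finiteness of the orbit set (Proposition~\ref{lemma:V}) and the description $T^*_{H_\lambda}(V_\lambda)=\{(x,\xi):[x,\xi]=0\}$, both of which hold regardless.

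One small point you might make explicit: to see that distinct orbits $C$ give distinct components $\overline{T^*_C(V_\lambda)}$, observe that the image of $\overline{T^*_C(V_\lambda)}$ under projection to $V_\lambda$ is $\overline{C}$, which recovers $C$ as its unique open orbit. This rules out any coincidence of components and completes the bijection cleanly.
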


\begin{proof}
This is a well-known result. 
See \cite[Corollary 2]{Pyasetskii} for the case when $H_\lambda$ is connected.
The result extends to the case when $H_\lambda$ is not connected.
\end{proof}

The orbit $C^*$ is called the \emph{dual orbit}\index{dual orbit, $C^*$}  of $C\subseteq V_\lambda$; likewise, the dual orbit of $B\subseteq V_\lambda^*$ is denoted by $B^*$.

\begin{lemma}\label{lemma:CC*}
If $(x,\xi)\in T^*_{C}(V_\lambda)_\textrm{reg}$ then $\xi \in C^*$, so
\[
T^*_{C}(V_\lambda)_\textrm{reg} \subseteq 
\{ (x,\xi) \in C \times C^* \tq [x,\xi] =0\}.
\]
\end{lemma}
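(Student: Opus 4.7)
The plan is to argue by contradiction: suppose $(x,\xi)\in T^*_C(V_\lambda)_\textrm{reg}$ but $\xi\notin C^*$, and produce an orbit $C_1\subseteq V_\lambda$ satisfying $C\subsetneq \overline{C_1}$ and $(x,\xi)\in \overline{T^*_{C_1}(V_\lambda)}$; this will contradict the defining formula \eqref{eqn:reg} of the regular part.

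First I would apply Lemma~\ref{lemma:orbitduality}: under the swap \eqref{eqn:transpose}, the point $(x,\xi)\in T^*_C(V_\lambda)\subseteq \overline{T^*_C(V_\lambda)}$ maps to $(\xi,x)\in \overline{T^*_{C^*}(V_\lambda^*)}$, so projecting to $V_\lambda^*$ yields $\xi\in \overline{C^*}$. Let $B\subseteq V_\lambda^*$ be the $H_\lambda$-orbit of $\xi$; by the standing assumption $B\neq C^*$, so $\overline{B}\subsetneq \overline{C^*}$. Since $[\xi,x]=0$, the pair $(\xi,x)$ already lies in $T^*_B(V_\lambda^*)$. Applying Lemma~\ref{lemma:orbitduality} a second time, now to the orbit $B$, the swap \eqref{eqn:transpose} identifies $\overline{T^*_B(V_\lambda^*)}$ with $\overline{T^*_{B^*}(V_\lambda)}$, where $B^*\subseteq V_\lambda$ is the dual of $B$. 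Transferring back along the swap gives $(x,\xi)\in \overline{T^*_{B^*}(V_\lambda)}$.

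Next I would verify that $C\subsetneq \overline{B^*}$. Since the projection $T^*(V_\lambda)\to V_\lambda$ sends the closed subvariety $\overline{T^*_{B^*}(V_\lambda)}$ into $\overline{B^*}$, we get $x\in \overline{B^*}$; as $C$ is the $H_\lambda$-orbit through $x$ and $\overline{B^*}$ is $H_\lambda$-stable and closed, we conclude $C\subseteq \overline{B^*}$. For strictness I would use that the Pyasetskii duality $C\mapsto C^*$ is an involution on orbits, a fact which is built into Lemma~\ref{lemma:orbitduality} because the swap \eqref{eqn:transpose} is its own inverse: if we had $C=B^*$, then applying the duality a second time would give $C^*=B^{**}=B$, contradicting $B\neq C^*$. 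Hence $C\subsetneq \overline{B^*}$; taking $C_1\ceq B^*$ produces the promised contradiction with \eqref{eqn:reg}. The main obstacle, such as it is, lies in this strictness argument, i.e.\ in extracting involutivity of the duality from the symmetry of the swap in Lemma~\ref{lemma:orbitduality}; once that is in hand the rest of the proof is a short diagram chase with projections and closures.
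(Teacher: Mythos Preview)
Your proof is correct and follows essentially the same route as the paper's: both let $B$ be the $H_\lambda$-orbit of $\xi$, use the swap and Lemma~\ref{lemma:orbitduality} to place $(x,\xi)$ in $\overline{T^*_{B^*}(V_\lambda)}$, and then invoke regularity to force $B^*=C$, hence $B=C^*$. The paper phrases this directly and tersely, whereas you frame it as a contradiction and spell out the involutivity $(B^*)^*=B$ that the paper leaves implicit in the bijectivity statement of Lemma~\ref{lemma:orbitduality}; the substance is the same.
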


\begin{proof}
Since $(x,\xi)\in T^*_{C}(V_\lambda)_\textrm{reg}$, then $(x,\xi)$ is not contained in any other closures of conormal bundles except for that of $C$. On the other hand, $(\xi, x) \in  T^*_{B_{\xi}}(V^*_\lambda)$ where $B_{\xi}$ is the $H_\lambda$ -orbit of $\xi$ in  $V^*_\lambda$, so $\overline{T^*_{C}(V_\lambda)} \cong \overline{T^*_{B_{\xi}}(V^*_\lambda)}$. Hence $B_{\xi} = C^*$, {\it i.e.}, $\xi \in C^*$.
\end{proof}

%\todo{Need to add this: $\dim T^*_{C}(V_\lambda) = \dim V_\lambda$}

\begin{proposition}\label{proposition:KPairCC*}
If $(x,\xi)\in T^*_{C}(V_\lambda)_\textrm{reg}$ then $(x,\xi)\in C\times C^*$ and $[x,\xi]=0$ and $\KPair{x}{\xi}=0$.
\end{proposition}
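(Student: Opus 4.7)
The plan is to assemble the statement from results already established in the preceding subsections, since almost all the content is present in Lemma~\ref{lemma:CC*} and Corollary~\ref{lemma:bracket}. No new geometric input is needed.

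First, I would invoke Lemma~\ref{lemma:CC*} directly. That lemma says precisely that if $(x,\xi) \in T^*_{C}(V_\lambda)_\textrm{reg}$, then $\xi \in C^*$ and hence $(x,\xi) \in C \times C^*$, together with the containment $T^*_{C}(V_\lambda)_\textrm{reg} \subseteq \{(x,\xi) \in C\times C^* \tq [x,\xi]=0\}$, which yields the bracket relation $[x,\xi]=0$. So the first two of the three assertions are immediate.

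Second, for the pairing assertion $\KPair{x}{\xi}=0$, I would use the inclusion $T^*_{C}(V_\lambda)_\textrm{reg} \subseteq T^*_{C}(V_\lambda) \subseteq T^*_{H_\lambda}(V_\lambda)$ (which is built into the definition of $T^*_{H_\lambda}(V_\lambda)_\textrm{reg}$ as a union of $T^*_{C}(V_\lambda)_\textrm{reg}$ over all $H_\lambda$-orbits $C$ in $V_\lambda$). Then Corollary~\ref{lemma:bracket} gives $T^*_{H_\lambda}(V_\lambda) \hookrightarrow \KPair{\cdot}{\cdot}^{-1}(0)$, so in particular $\KPair{x}{\xi}=0$.

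There is no real obstacle here, since the proof is a direct combination of the two preceding results. The only thing worth emphasizing in the writeup is that the pairing vanishing is not separately derived from an $\mathfrak{sl}_2$-triple argument at this point; it is already built into the proof of Corollary~\ref{lemma:bracket} (which used an $\mathfrak{sl}_2$-triple $(x,y,z)$ with $z\in\mathfrak{h}_\lambda$ and the invariance of $\KPair{\cdot}{\cdot}$). Thus the proposition is essentially a packaging statement that records, for later use, the three constraints satisfied by a regular conormal vector: it lies in $C\times C^*$, the components commute, and they are orthogonal under $\KPair{\cdot}{\cdot}$.
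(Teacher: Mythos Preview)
Your proposal is correct and matches the paper's proof exactly: the paper's proof is the single sentence ``Combine Lemma~\ref{lemma:bracket} with Lemma~\ref{lemma:CC*},'' which is precisely the combination of Corollary~\ref{lemma:bracket} and Lemma~\ref{lemma:CC*} that you describe.
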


\begin{proof}
Combine Lemma~\ref{lemma:bracket} with Lemma~\ref{lemma:CC*}.
\end{proof}

We remark that $(x,\xi)\in C\times C^*$ implies neither $[x,\xi]=0$ nor $\KPair{x}{\xi}=0$ in general.
% several examples to illustrate this fact appear in Part~\ref{Part2}.

Although transposition in $\mathfrak{j}_\lambda$ is not $H_\lambda$-equivariant, is does induce another canonical bijection 
\[
C \mapsto  \,^tC \qquad\text{and}\qquad B \mapsto \,^tB
\]
between $H_\lambda$-orbits in $V_\lambda$ and $H_\lambda$-orbits in $V^*_\lambda$, and vice versa. 
Unlike the bijection of Lemma~\ref{lemma:orbitduality}, this bijection preserves equivariant fundamental groups \eqref{eqn:AC}:
\[
A_C \iso  A_{\,^tC} \qquad\text{and}\qquad A_B \iso A_{\,^tB}.
\]
For $C\subseteq  V_\lambda$ (resp, $B\subseteq  V^*_\lambda$) we refer to $\,^tC$ (resp. $\,^tB$) as the \emph{transposed  orbit} \index{transposed orbit} of  $C$ (resp. $B$).
Composing orbit transposition with orbit duality defines an involution \index{$\widehat{C}$} 
\begin{equation}\label{eqn:hatC}
C \mapsto  \widehat{C} \ceq \transpose{C}^*
\end{equation}
on the set of $H_\lambda$-orbits in $V_\lambda$.

\subsection{Strongly regular conormal vectors}\label{ssec:sreg}

%For any $(x,\xi) \in T^*_{H_\lambda}(V_\lambda)$, let $\mathcal{O}_{H_\lambda}(x,\xi)$ be the $H_\lambda$-orbit of $(x,\xi)$ in $T^*_{H_\lambda}(V_\lambda)$.
%
We say that $(x,\xi)\in T^*_{C}(V_\lambda)$ is \emph{strongly regular}\index{strongly regular conormal} if its $H_\lambda$-orbit is open and dense in $T^*_{C}(V_\lambda)$.
We write $T^*_{C}(V_\lambda)_\text{sreg}$\index{$T^*_{C}(V_\lambda)_\text{sreg}$} for the strongly regular part of $T^*_{C}(V_\lambda)_\textrm{reg}$.
We set \index{$T^*_{H_\lambda}(V_\lambda)_\text{sreg}$}
\begin{equation}\label{eqn:sreg}
T^*_{H_\lambda}(V_\lambda)_\text{sreg}  \ceq  \mathop{\bigcup}\limits_{C} T^*_{C}(V_\lambda)_\text{sreg}.
\end{equation}

\begin{proposition}\label{proposition:sreg}
\[
T^*_{H_\lambda}(V_\lambda)_\text{sreg} \subseteq T^*_{H_\lambda}(V_\lambda)_\textrm{reg}
\]
and if $(x,\xi)\in T^*_{C}(V_\lambda)$ is strongly regular then its $H_\lambda$-orbit is $T^*_{C}(V_\lambda)_\text{sreg}$.
\end{proposition}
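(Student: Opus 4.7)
Both assertions reduce to a single irreducibility-and-dimension argument for closures of conormal bundles. The plan is to handle the two statements in tandem, as follows.

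For the inclusion $T^*_{H_\lambda}(V_\lambda)_\text{sreg} \subseteq T^*_{H_\lambda}(V_\lambda)_\text{reg}$, suppose $(x,\xi)\in T^*_{C}(V_\lambda)_\text{sreg}$ and, for contradiction, that $(x,\xi) \in \overline{T^*_{C_1}(V_\lambda)}$ for some orbit $C_1$ with $C\subsetneq \overline{C_1}$. Since $\overline{T^*_{C_1}(V_\lambda)}$ is $H_\lambda$-invariant and closed in $T^*(V_\lambda)$, the entire $H_\lambda$-orbit $\mathcal{O}$ of $(x,\xi)$ lies in it; but $\mathcal{O}$ is dense in $T^*_{C}(V_\lambda)$ by strong regularity, so taking closures in $T^*(V_\lambda)$ gives
\[
\overline{T^*_{C}(V_\lambda)} \subseteq \overline{T^*_{C_1}(V_\lambda)} .
\]
Both sides are Lagrangian subvarieties of $T^*(V_\lambda)$, hence equidimensional of dimension $\dim V_\lambda$. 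Decomposing each $H_\lambda$-orbit into its (finitely many) $H_\lambda^0$-components, I would express each of these closures as the union of the irreducible closed Lagrangians $\overline{T^*_{C^{(j)}}(V_\lambda)}$ indexed by the components $C^{(j)}$ of the respective orbit; every irreducible component of $\overline{T^*_{C}(V_\lambda)}$ is then contained in, and therefore (by equidimensionality) equal to, some irreducible component of $\overline{T^*_{C_1}(V_\lambda)}$. Projecting this equality down to $V_\lambda$ via $T^*(V_\lambda)\to V_\lambda$ yields $\overline{C^{(j)}} = \overline{C_1^{(i)}}$ for appropriate components. Since $H_\lambda^0$-orbits are locally closed and hence open in their common closure, $C^{(j)}$ and $C_1^{(i)}$ intersect and therefore coincide, forcing $C=C_1$. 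This contradicts $C\subsetneq\overline{C_1}$, so $(x,\xi)\in T^*_{C}(V_\lambda)_\text{reg}$.

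For the second statement, let $(x,\xi)\in T^*_{C}(V_\lambda)$ be strongly regular with orbit $\mathcal{O}$. By definition of sreg, $\mathcal{O}\subseteq T^*_{C}(V_\lambda)_\text{sreg}$. Conversely, if $(x',\xi')\in T^*_{C}(V_\lambda)_\text{sreg}$ has orbit $\mathcal{O}'$, then both $\mathcal{O}$ and $\mathcal{O}'$ are open and dense in $T^*_{C}(V_\lambda)$; restricting to any irreducible component of $T^*_{C}(V_\lambda)$ (again indexed by the $H_\lambda^0$-components of $C$), $\mathcal{O}$ and $\mathcal{O}'$ are open dense subsets of an irreducible variety and therefore meet. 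Since distinct $H_\lambda$-orbits are disjoint, $\mathcal{O}=\mathcal{O}'$.

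\textbf{Main obstacle.} The delicate point is the possibility that $H_\lambda$ is disconnected, which makes the orbit $C$ and the conormal variety $T^*_{C}(V_\lambda)$ reducible. All arguments above handle this by passing to $H_\lambda^0$-components, using that $H_\lambda$ permutes them transitively and that each component closure of $T^*_{C}(V_\lambda)$ is an irreducible Lagrangian of dimension $\dim V_\lambda$; once this bookkeeping is in place, the irreducibility/dimension comparison and the usual fact that two locally closed orbits with a common closure coincide complete both parts of the proposition.
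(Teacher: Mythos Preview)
Your argument is correct. For the inclusion $\text{sreg} \subseteq \textrm{reg}$, you and the paper run the same contradiction; the paper simply asserts that $T^*_C(V_\lambda)_\textrm{reg}$ is open and dense in $T^*_C(V_\lambda)$ and then observes that the closure of a strongly regular orbit, being all of $T^*_C(V_\lambda)$, must meet the regular locus, whereas a non-regular orbit and its closure lie entirely in the complement. Your component-by-component Lagrangian comparison is precisely what justifies that density assertion, and it makes the argument robust against the possibility that $H_\lambda$ is disconnected.

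For the second statement the two approaches diverge. The paper works fibre-by-fibre: for fixed $x\in C$ it identifies
\[
T^*_{C,x}(V_\lambda)_\text{sreg} \;=\; \{\,\xi \in T^*_{C,x}(V_\lambda)\ :\ [\Lie Z_{H_\lambda}(x),\,\xi] = T^*_{C,x}(V_\lambda)\,\},
\]
the complement of a determinantal locus in a vector space, hence open and connected; every $Z_{H_\lambda}(x)$-orbit there is open, so connectedness forces a single orbit, and transitivity on $C$ promotes this to a single $H_\lambda$-orbit. This sidesteps all disconnectedness bookkeeping by passing to an irreducible fibre, and the Lie-algebraic criterion it records is reused verbatim in the proof of Proposition~\ref{proposition:psisreg}. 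Your route is quicker: any nonempty open set meets any dense set, so two open dense $H_\lambda$-orbits in $T^*_C(V_\lambda)$ already intersect and hence coincide --- the detour through irreducible components is unnecessary here even when $H_\lambda$ is disconnected.
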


\begin{proof}
First we show $T^*_{C}(V_\lambda)_\text{sreg} \subseteq T^*_{C}(V_\lambda)_\textrm{reg}$.
From the definition of $T^*_{C}(V_\lambda)_\textrm{reg}$ \eqref{eqn:reg} it is clear that it is open and dense in $T^*_{C}(V_\lambda)$.
Fix $(x,\xi)\in T^*_{C}(V_\lambda)$ and let $\mathcal{O}_{H_\lambda}(x,\xi)$ denote the $H_\lambda$-orbit of $(x,\xi)$. 
If $(x,\xi)$ is not regular, then $(x,\xi)\in \overline{T^*_{C_1}(V_\lambda)}$ for some $C_1\ne C$ with $C \subset {\bar C}_1$, so all of $\mathcal{O}_{H_\lambda}(x,\xi)$ and its closure also does not intersect $T^*_{C}(V_\lambda)_\textrm{reg}$.
Suppose, for a contradiction, that $(x,\xi)$ is strongly regular also.
Then the closure of $\mathcal{O}_{H_\lambda}(x,\xi)$ is $T^*_{C}(V_\lambda)$, which certainly does intersect $T^*_{C}(V_\lambda)_\textrm{reg}$.
So, if $(x,\xi)$ is not regular, then it is not strongly regular.

Now  suppose $T^*_{C, x}(V_\lambda)_\text{sreg}$ is not empty, then it is enough to show $T^*_{C, x}(V_\lambda)_\text{sreg}$ forms a single $Z_{H_{\lambda}}(x)$-orbit. Note 
\[
T^*_{C, x}(V_\lambda)_\text{sreg} = \{ \xi \in T^*_{C, x}(V_\lambda) \tq [\text{Lie}(Z_{H_{\lambda}}(x)), \xi] = T^*_{C, x}(V_\lambda) \}
\]
which is open, dense and connected in $T^*_{C, x}(V_\lambda)$. Moreover, $Z_{H_{\lambda}}(x)$-orbits in $T^*_{C, x}(V_\lambda)_\text{sreg}$ are open, and hence they are also closed in $T^*_{C, x}(V_\lambda)_\text{sreg}$. By the connectedness of $T^*_{C, x}(V_\lambda)_\text{sreg}$, we can conclude it is a single $Z_{H_{\lambda}}(x)$-orbit.
\end{proof}

The equivariant fundamental group of $T^*_{C}(V_\lambda)_\text{sreg}$ will be denoted by $A({T^*_{C}(V_\lambda)_\text{sreg}})$\index{$A({T^*_{C}(V_\lambda)_\text{sreg}})$, equivariant fundamental group of $T^*_{C}(V_\lambda)_\text{sreg}$}.
Since $H_\lambda$ acts transitively on $T^*_{C}(V_\lambda)_\text{sreg}$, 
\begin{equation}\label{eqn:Asreg}
A({T^*_{C}(V_\lambda)_\text{sreg}}) \iso  \pi_0(Z_{H_{\lambda}}(x,\xi)) = Z_{H_{\lambda}}(x,\xi)/Z_{H_{\lambda}}(x,\xi)^0,
\end{equation}
for every $(x,\xi)\in T^*_{C}(V_\lambda)_\text{sreg}$.
Consequently, each $(x,\xi)\in T^*_{C}(V_\lambda)_\text{sreg}$ determines an equivalence
\begin{equation}
\Loc_{H_\lambda}(T^*_{C}(V_\lambda)_\text{sreg}) \to  \Rep(A({T^*_{C}(V_\lambda)_\text{sreg}})),
\end{equation}
where $\Loc_{H_\lambda}(T^*_{C}(V_\lambda)_\text{sreg})$ is the category of $H_\lambda$-equivariant local systems on $T^*_{C}(V_\lambda)_\text{sreg}$. \index{$\Loc_{H_\lambda}$}
We remark that if ${T^*_{C}(V_\lambda)_\text{sreg}}\ne \emptyset$ then $A({T^*_{C}(V_\lambda)_\text{sreg}})$ is the microlocal fundamental group of $C$.\index{microlocal fundamental group}

\subsection{From Arthur parameters to strongly regular conormal vectors}

For $\psi \in Q(\Lgroup{G})$, define\index{$\psi_0$}
\[
\psi_0 \ceq \psi^\circ \vert_{\SL(2, \mathbb{C})\times \SL(2, \mathbb{C})} : \SL(2, \mathbb{C})\times \SL(2, \mathbb{C}) \to \dualgroup{G}
\]
and\index{$\psi_1$}\index{$\psi_2$}
\[
\psi_1 \ceq \psi_0\vert_{\SL(2,\CC)\times \{1\}} : \SL(2,\CC) \to \dualgroup{G},
%\qquad\text{and}\qquad
\quad
\psi_2 \ceq \psi_0\vert_{\{1\}\times \SL(2,\CC)} : \SL(2,\CC) \to \dualgroup{G}.
\]
Define
\begin{equation}%\label{eqn:xipsi}
x_\psi\index{$x_\psi\in V_\lambda$} \ceq \text{d}\psi_1\begin{pmatrix} 0 & 1 \\ 0 & 0 \end{pmatrix} \in \dualgroup{\g}, 
\quad 
y_\psi\index{$y_\psi$} \ceq \text{d}\psi_2\begin{pmatrix} 0 & 1 \\ 0 & 0 \end{pmatrix} \in \dualgroup{\g}
\end{equation}
and
\begin{equation}\label{eqn:xipsi}
\nu_\psi\index{$\nu_\psi$}  \ceq \text{d}\psi_1\begin{pmatrix} 0 & 0 \\ 1 & 0 \end{pmatrix} \in \dualgroup{\g},
\quad
\xi_\psi\index{$\xi_\psi$} \ceq \text{d}\psi_2\begin{pmatrix} 0 & 0 \\ 1 & 0 \end{pmatrix} \in \dualgroup{\g}.
\end{equation}
Note that that $x_\psi, y_\psi \in V_{\lambda_\psi}$ and  $\xi_\psi,\nu_\psi \in V^*_{\lambda_\psi}$ and
\[
(x_\psi,\xi_\psi) \in T^*_{C_\psi}(V_\lambda).
\]
%where $C_\psi \subseteq V_\lambda$ is the $H_\lambda$-orbit of $x_\psi$.

\begin{proposition}\label{proposition:psisreg}
%With notation as above, 
For any $\psi \in Q(\Lgroup{G})$,
\[
(x_\psi,\xi_\psi)\in T^*_{H_{\lambda_\psi}}(V_{\lambda_\psi})_\text{sreg}.
\]
\end{proposition}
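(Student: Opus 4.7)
My plan: I would prove the proposition in three movements — containment, reduction to a Lie-algebra statement, and a representation-theoretic computation — paralleling Proposition~6.20 of \cite{ABV} in the real case.

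First I would verify that $(x_\psi,\xi_\psi)\in T^*_{C_\psi}(V_{\lambda_\psi})$. Since $L_F\times\SL(2,\CC)=W_F\times\SL(2,\CC)\times\SL(2,\CC)$ is a direct product, the three subgroups $\psi(W_F,1,1)$, $\psi_1=\psi(1,\SL_2,1)$ and $\psi_2=\psi(1,1,\SL_2)$ have pairwise commuting images in $\dualgroup{G}$. Consequently $[x_\psi,\xi_\psi]=[d\psi_1(E),d\psi_2(F)]=0$. To check $\xi_\psi\in V^*_{\lambda_\psi}$ (identified with $\,^tV_{\lambda_\psi}\subset\mathfrak{k}_{\lambda_\psi}$ via Proposition~\ref{prop:identify}), I decompose $\lambda_\psi(\Frob)=\psi(\Frob,1,1)\cdot\psi(1,d_\Frob,1)\cdot\psi(1,1,d_\Frob)$; the first two factors commute with $\xi_\psi$ while the third acts on $\xi_\psi$ as $\Ad(d_\Frob)$ in $\mathfrak{sl}_2$, scaling by $q_F^{-1}$. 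Invariance under $\lambda_\psi(I_F)$ is similar. Together with $x_\psi\in V_{\lambda_\psi}$ from Proposition~\ref{proposition:parameter space}, this places $(x_\psi,\xi_\psi)$ in the conormal bundle to $C_\psi$.

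Next I would reduce strong regularity to a Lie-algebra surjectivity statement. By Proposition~\ref{proposition:sreg} strong regularity amounts to openness of the $H_{\lambda_\psi}$-orbit, and density is then automatic because $T^*_{C_\psi}(V_{\lambda_\psi})$ is an irreducible vector bundle over an irreducible homogeneous base. The differential of the orbit map sends $X\in\mathfrak{h}_{\lambda_\psi}$ to $([X,x_\psi],[X,\xi_\psi])$, and the Jacobi identity (using $[x_\psi,\xi_\psi]=0$) shows this image lies in the tangent space
\[
\{(v,w)\in[\mathfrak{h}_{\lambda_\psi},x_\psi]\oplus V^*_{\lambda_\psi}:[v,\xi_\psi]+[x_\psi,w]=0\}.
\]
A dimension count then reduces openness to the equality
\[
[\,\mathfrak{z}_{\mathfrak{h}_{\lambda_\psi}}(x_\psi),\,\xi_\psi\,]\;=\;\{\xi\in V^*_{\lambda_\psi}:[x_\psi,\xi]=0\}.
\]

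Finally I would establish this equality by $\mathfrak{sl}_2$-representation theory. Decompose $\dualgroup{\g}$ under the commuting pair $\psi_1(\SL(2,\CC))\times\psi_2(\SL(2,\CC))$ into $\mathfrak{sl}_2\boxtimes\mathfrak{sl}_2$-isotypic components, with $\lambda_\psi(W_F)$ acting on the multiplicity spaces. Kostant's structure theorem for the $\mathfrak{sl}_2$-triple $(x_\psi,\nu_\psi,h_{\psi_1})$ identifies $\mathfrak{z}_{\dualgroup{\g}}(x_\psi)$ with the sum of $\psi_1$-highest weight spaces, a $\psi_2\times W_F$-module. The right-hand side above is its $\Frob$-eigenvalue $q_F^{-1}$ piece, while $\mathfrak{z}_{\mathfrak{h}_{\lambda_\psi}}(x_\psi)$ is its $\Frob$-eigenvalue $1$ piece. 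Since $\xi_\psi=d\psi_2(F)$ is the $\psi_2$-lowering operator, $\operatorname{ad}(\xi_\psi)$ gives an isomorphism between successive $\psi_2$-weight spaces within each isotypic summand (except at the lowest-weight edge), and the $\Frob$-weight shift built into the definition of $V^*_{\lambda_\psi}$ matches this perfectly. The unipotent part of $Z_{H_\lambda}(x_\psi)$ from Proposition~\ref{proposition:parameter space} corresponds to the positive $h_{\psi_1}$-weight contributions and fits into the same picture.

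The main obstacle will be the bookkeeping in the last step: precisely intertwining the $\psi_1$-, $\psi_2$-, and $\Frob$-gradings on $\dualgroup{\g}$ so that $\operatorname{ad}(\xi_\psi)$ is verified to be bijective between the required subquotients, and correctly handling the edge case where the $\psi_2$-action vanishes on lowest-weight vectors. Equivariant-fundamental-group statements and the identification $A({T^*_{C_\psi}(V_{\lambda_\psi})_\text{sreg}})\iso A_\psi$ will then follow formally from the surjectivity above by passing to component groups, using Lemma~\ref{lemma:ACphi} and the structure of $Z_{\dualgroup{G}}(\psi)=Z_{H_{\lambda_\psi}}(\psi_1,\psi_2)$.
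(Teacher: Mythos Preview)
Your proposal is correct and follows essentially the same route as the paper's proof. Both reduce strong regularity to the Lie-algebra surjectivity
\[
[\,\mathfrak{z}_{\mathfrak{h}_{\lambda_\psi}}(x_\psi),\,\xi_\psi\,]=\{\xi\in V^*_{\lambda_\psi}:[x_\psi,\xi]=0\},
\]
and both establish this via the bigrading $\mathfrak{j}_n=\bigoplus_{r+s=n}\mathfrak{j}_{r,s}$ coming from the commuting $\psi_1\times\psi_2$ action, together with standard $\mathfrak{sl}_2$-weight arguments. A few minor remarks: the paper explicitly passes through the unramification $\lambda_\text{nr}$ and the subalgebra $\mathfrak{j}_\lambda$ to set up the bigrading, while you work in $\dualgroup{\g}$ directly; and in the final step the paper proves surjectivity by lifting a $\psi_1$-primitive vector along $\ad(\xi_\psi)$ and then using semisimplicity of the $\psi_1$-module to project back onto a primitive preimage, whereas your formulation---that the $\psi_1$-highest-weight vectors form a $\psi_2$-submodule on which $\ad(\xi_\psi)$ is the lowering operator---is a slightly cleaner packaging of the same idea. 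Your stated worry about the ``lowest-weight edge'' is not actually an issue: if a target vector of $\psi_2$-weight $s-2$ exists in an irreducible summand of highest weight $m$, then $s>-m$ automatically, so the lowering map is surjective there. Finally, the identification $A({T^*_{C_\psi}(V_{\lambda_\psi})_\text{sreg}})\cong A_\psi$ is treated as a separate statement (Proposition~\ref{proposition:Apsi}) in the paper rather than as part of this one.
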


\begin{proof}
Set $\lambda = \lambda_\psi$.
Define $f_\lambda, s_\lambda, t_\lambda \in \dualgroup{G}$ as in Section~\ref{ssec:unramification}. 
Then
\[
s_\lambda \rtimes 1 = \psi(1,d_{\Frob },d_{\Frob }) 
\qquad\text{and}\qquad
t_\lambda \rtimes \Frob  = \psi(\Frob ,1,1).
\]
Recall $\lambda_\text{hu}: W_F \to J_\lambda^0$ from Section~\ref{ssec:unramification}.
By Proposition~\ref{lemma:cocharacter}, 
\[
V_{\lambda} = V_{\lambda_\text{hu}} =\mathfrak{j}_{\lambda,2}.
\]
Since the image of $\psi_0 : \SL(2, \mathbb{C})\times\SL(2, \mathbb{C}) \to \dualgroup{G}$ lies in $J_\lambda^0$, we may define\index{$\psi_\text{hu}$}
\[
\psi_\text{hu} : W_F \times \SL(2, \mathbb{C})  \times \SL(2, \mathbb{C}) \to J_\lambda^0
\]
such that its restriction to $W_{F}$ is trivial and its restriction to $\SL(2, \mathbb{C}) \times \SL(2, \mathbb{C})$ is $\psi_0$. 
%Then the infinitesimal parameter of $\psi_\text{hu}$ is 
Let\index{$\iota_\psi$}
\[
\iota_\psi: \mathbb{G}_\text{m} \longrightarrow J_\lambda^{0}
\]
be the cocharacter obtained by composing 
\[
\mathbb{G}_\text{m}\to W_{F} \times \SL(2, \mathbb{C}) \times \SL(2, \mathbb{C}),
\qquad
z \mapsto
1 \times \begin{pmatrix} z & 0 \\ 0 & z^{-1} \end{pmatrix} \times \begin{pmatrix} z & 0 \\ 0 & z^{-1} \end{pmatrix} 
\]
with of $\psi_\text{hu} : L_F \times \SL(2, \mathbb{C}) \to  J_\lambda^0$.
Then 
\[
\iota_\psi(q_{F}^{1/2}) = \lambda_\text{hu}(\Frob ).
\]

Recall $H_\lambda \subseteq J_\lambda \subseteq K_\lambda \subseteq \dualgroup{G}$ from Sections~\ref{ssec:VX} and \ref{ssec:unramification}.
For the rest of the proof we set $J = J_\lambda$.
%To simplify notation further, we set $\mathfrak{j}\ceq \mathfrak{j}_{\lambda}$.
We must show that the orbit $\mathcal{O}_{Z_{H_\lambda}(x_\psi)}(\xi_\psi)$ is open and dense in $T^{*}_{C_\psi, x_\psi}(V_\lambda)$, where $C_\psi = \mathcal{O}_{H_\lambda}(x_\psi)$.
With Lemma~\ref{lemma:V} in hand, it is enough to show the tangent space to the orbit $\mathcal{O}_{Z_{H_\lambda}(x_\psi)}(\xi_\psi)$ at $\xi_\psi$ is isomorphic to $T^*_{C_\psi, x_\psi}(V_\lambda)$; 
in other words, it is enough to show
\[
[\text{Lie} Z_{H_\lambda}(x_\psi), \xi_\psi] = \{\xi \in \mathfrak{j}_{-2} \tq [x_\psi, \xi] = 0\}.
\]
The adjoint action of $\SL(2, \mathbb{C}) \times \SL(2, \mathbb{C})$ on $\mathfrak{j}$ through $\psi_\text{hu}$ gives two commuting representations of $\SL(2, \mathbb{C})$, which induce the weight decomposition
\begin{align}% weight decomposition
\label{eq: weight decomposition}
\mathfrak{j}_{n} = \bigoplus_{r + s = n} \mathfrak{j}_{r, s}
\end{align}
where $r, s \in \mathbb{Z}$. 
Note $\text{Lie}(H_\lambda)) = \mathfrak{j}_{0}$. So it is enough to show
\begin{align}% open orbit in conormal space Eq
\label{eq: open orbit in conormal space}
[\mathfrak{j}_{0} \cap \text{Lie}(Z_{\dualgroup{G}}(x_\psi)), \xi_\psi] = \mathfrak{j}_{-2} \cap \text{Lie}(Z_{\dualgroup{G}}(x_\psi)).
\end{align}
For this we can consider the following diagram in case $r + s = 0$.
\[
\begin{tikzcd}
\mathfrak{j}_{r, s} \arrow{rr}{\ad(x_\psi)} \arrow{d}[swap]{\ad(\xi_\psi)} && \mathfrak{j}_{r + 2, s} \arrow{d}{\ad(\xi_\psi)} \\
\mathfrak{j}_{r, s-2} \arrow{rr}{\ad(x_\psi)} & & \mathfrak{j}_{r+2, s-2}
\end{tikzcd}
\]
It is easy to see
\begin{align*}
\text{LHS} \eqref{eq: open orbit in conormal space} & = \bigoplus_{r + s = 0} \ad(\xi_\psi) \big(\ker (\ad(x_\psi)|_{\mathfrak{j}_{r,s}})\big) \\
\text{RHS} \eqref{eq: open orbit in conormal space} & = \bigoplus_{r + s = 0} \ker (\ad(x_\psi)|_{\mathfrak{j}_{r,s- 2}})
\end{align*}
By $\mathfrak{sl}_{2}$-representation theory, $\ad(x_\psi)$ in the diagram are injective for $r < 0$ and surjective for $r \geqslant 0$. So we only need to consider $r \geqslant 0$ and hence $s \leqslant 0$. In this case, the two instances of $\ad(\xi_\psi)$ in the diagram above are surjective by $\mathfrak{sl}_{2}$-representation theory again.

It is obvious that $\text{LHS} \eqref{eq: open orbit in conormal space} \subseteq \text{RHS} \eqref{eq: open orbit in conormal space}$. For the other direction, let us choose $x \in \mathfrak{j}_{r,s- 2}$ such that $[x_\psi, x] = 0$. So $x$ is primitive for the action of the first $\mathfrak{sl}_{2}$, and it generates an irreducible representation $V_{\lambda}$. Let $\widetilde{x}$ be a preimage of $x$ in $\mathfrak{g}_{r,s}$ and $W$ be the representation of the first $\mathfrak{sl}_{2}$ generated by $\widetilde{x}$. Then $\ad(\xi_\psi)$ induces a morphism of $\mathfrak{sl}_{2}$-representations from $W$ to $V_{\lambda}$. By the semisimplicity of $W$, this morphism admits a splitting and we can denote the image of $x$ by $\xi$. It is clear that $\xi \in \mathfrak{j}_{r ,s}$ and $[x_\psi, \xi] = 0$. This finishes the proof.
\end{proof}

%\begin{proof}
%Since $f_{2}$ is nondegenerate, $(e_{1}, f_{2})$ is not contained in other irreducible components of $T^{*}_{Z_{\dualgroup{G}}(\lambda)}V(\lambda, \Lgroup{G})$ rather than $\overline{T^{*}_{Z_{\dualgroup{G}}(\lambda)^{0} \cdot e_{1}}V(\lambda, \Lgroup{G})}$. On the other hand, $(e_{1}, f_{2}) \in T^{*}_{Z_{\dualgroup{G}}(\lambda)^{0} \cdot f_{2}}\widehat{V}(\lambda, \Lgroup{G})$, so we must have 
%\[
%\overline{T^{*}_{Z_{\dualgroup{G}}(\lambda)^{0} \cdot e_{1}}V(\lambda, \Lgroup{G})} = \overline{T^{*}_{Z_{\dualgroup{G}}(\lambda)^{0} \cdot f_{2}}\widehat{V}(\lambda, \Lgroup{G})}.
%\]
%This proves the corollary.
%\end{proof}

%\subsection{Aubert duality on Arthur parameters and dual orbits}
%

%Recall the definition $\widehat{C_\psi} = \transpose{C}^*_\psi$ from \eqref{eqn:hatC}.
For $\psi \in Q_\lambda(\Lgroup{G})$ define\index{$\widehat{\psi}$} $\widehat{\psi} \in Q_\lambda(\Lgroup{G})$ by  
\[
\widehat{\psi}(w,x,y) \ceq \psi(w,y,x).
\]

\begin{corollary}
If $\psi \in Q_\lambda(\Lgroup{G})$ and if $C_\psi\subseteq V_\lambda$ is the $H_\lambda$-orbit of $x_\psi$, then
\[
\widehat{C_\psi} = C_{\widehat{\psi}}.
\]
\end{corollary}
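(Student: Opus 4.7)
The plan is to unpack both sides of $\widehat{C_\psi}=C_{\widehat\psi}$ and match them by juxtaposing Proposition~\ref{proposition:psisreg}, Lemma~\ref{lemma:CC*}, and an explicit identification of the transposed orbit $\transpose{C_\psi}$.

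First, the swap $\widehat\psi(w,x,y)=\psi(w,y,x)$ interchanges the two $\SL(2,\CC)$-restrictions of $\psi$, so $\widehat\psi_1=\psi_2$ and $\widehat\psi_2=\psi_1$. Pushing this through the formulas defining $x_\psi,y_\psi,\xi_\psi,\nu_\psi$ in Section~\ref{section:conormal} yields $\lambda_{\widehat\psi}=\lambda_\psi$ (so both sides live in the same $V_\lambda$) together with
\[ x_{\widehat\psi}=y_\psi, \qquad \xi_{\widehat\psi}=\nu_\psi. \]
In particular $C_{\widehat\psi}$ is the $H_\lambda$-orbit of $y_\psi$ in $V_\lambda$. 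Applying Proposition~\ref{proposition:psisreg} to $\widehat\psi$, the pair $(y_\psi,\nu_\psi)$ lies in $T^*_{C_{\widehat\psi}}(V_\lambda)_\text{sreg}$, and Lemma~\ref{lemma:CC*} then forces $\nu_\psi\in C_{\widehat\psi}^*$; in fact $C_{\widehat\psi}^*$ is precisely the $H_\lambda$-orbit of $\nu_\psi$ in $V_\lambda^*$.

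The remaining step, and the technical heart of the argument, is to identify $\transpose{C_\psi}$ with this same orbit. I will use the $\mathfrak{sl}_2$-triple $(x_\psi,\nu_\psi,z)$ coming from $\psi_1$, where $z\ceq d\psi_1\begin{pmatrix}1&0\\0&-1\end{pmatrix}$, and show $z\in\mathfrak{h}_\lambda$. This is where I expect the main obstacle: one must verify that $z$ commutes with $\lambda(w)=\psi(w,d_w,d_w)$ for every $w\in W_F$. The two inputs are (i) the direct-product structure of $W_F\times\SL(2,\CC)\times\SL(2,\CC)$, which makes $\psi(w,1,1)$ commute with the image of $\psi_0$, so $\Ad(\psi(w,1,1))z=z$; and (ii) the commutation of the diagonal $d_w\in\SL(2,\CC)$ with the Cartan element $\begin{pmatrix}1&0\\0&-1\end{pmatrix}$, giving $\Ad(\psi_0(d_w,d_w))z=z$. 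Once $z\in\mathfrak{h}_\lambda$ is established, the characterisation of transposition in $\mathfrak{j}_\lambda$ from Section~\ref{ssec:dual} (it swaps the $q_F$- and $q_F^{-1}$-eigenspaces of $\Ad(s_\lambda)$ and on orbits is computed by a Jacobson-Morozov completion) identifies $\transpose{C_\psi}$ as the $H_\lambda$-orbit of $\nu_\psi$ in $V_\lambda^*$.

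Combining the two identifications gives $\transpose{C_\psi}=C_{\widehat\psi}^*$. Applying the involution $(\cdot)^*$ of Lemma~\ref{lemma:orbitduality} to both sides yields
\[ \widehat{C_\psi} \;=\; \transpose{C_\psi}^* \;=\; (C_{\widehat\psi}^*)^* \;=\; C_{\widehat\psi}, \]
as required. The argument is manifestly symmetric in $\psi$ and $\widehat\psi$, consistent with the fact that both $\psi\mapsto\widehat\psi$ and $C\mapsto\widehat{C}$ are involutions.
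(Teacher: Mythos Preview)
The paper states this corollary with no proof, treating it as immediate from Proposition~\ref{proposition:psisreg}. Your overall route is the right one and is almost certainly what the authors have in mind: identify $C_{\widehat\psi}^*$ as the $H_\lambda$-orbit of $\nu_\psi=\xi_{\widehat\psi}$ via Proposition~\ref{proposition:psisreg} and Lemma~\ref{lemma:CC*}, identify $\transpose{C_\psi}$ with the same orbit, and conclude.

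However, you have misidentified where the real content of step 3 lies. Showing that $z=\text{d}\psi_1\begin{pmatrix}1&0\\0&-1\end{pmatrix}\in\mathfrak{h}_\lambda$ is routine (it is immediate from the bigrading $\mathfrak{j}_n=\bigoplus_{r+s=n}\mathfrak{j}_{r,s}$ in the proof of Proposition~\ref{proposition:psisreg}, where $z\in\mathfrak{j}_{0,0}\subset\mathfrak{j}_0=\mathfrak{h}_\lambda$); this is not the obstacle. What actually needs justification is your assertion that ``on orbits [transposition] is computed by a Jacobson--Morozov completion''. Section~\ref{ssec:dual} does not say this; it introduces $C\mapsto\transpose{C}$ as the bijection induced by matrix transpose in $\mathfrak{j}_\lambda$ (which is only $\vartheta$-twisted $H_\lambda$-equivariant), notes that $A_C\cong A_{\transpose{C}}$, and nothing more. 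The bijection you are really invoking is the one in Section~\ref{ssec:cotangent}: orbit of $x_\phi$ in $V_\lambda$ $\leftrightarrow$ orbit of $\xi_\phi$ in $\transpose{V_\lambda}$, via the common Langlands parameter. That these two bijections agree on orbits is true and is tacitly assumed throughout the paper, but you have asserted it by citing the wrong section and without argument. A clean way to close the gap: for an $\mathfrak{sl}_2$-triple $(x,y,z)$ with $z\in\mathfrak{h}_\lambda$, the triple $(\transpose{y},\transpose{x},\transpose{z})$ is again an $\mathfrak{sl}_2$-triple with $\transpose{z}\in\mathfrak{h}_\lambda$, so $\transpose{x}$ and $y$ lie in the same fibre of the Section~\ref{ssec:cotangent} bijection; since that bijection is a bijection, their $H_\lambda$-orbits coincide.
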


\subsection{Arthur component groups as equivariant fundamental groups}

Recall the definition of $T^*_{C_\psi}(V_\lambda)_\text{sreg}$ from Section~\ref{ssec:sreg} as well as the  notation $A({T^*_{C_\psi}(V_\lambda)_\text{sreg}})$ for its equivariant fundamental group.
Also recall $A_\psi \ceq \pi_0(Z_{\dualgroup{G}}(\psi))$ from Section~\ref{ssec:psi}.

\begin{proposition}\label{proposition:Apsi}
\[
A({T^*_{C_\psi}(V_\lambda)_\text{sreg}}) = A_\psi.
\]
\end{proposition}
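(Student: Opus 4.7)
The plan is to reduce to a statement about algebraic centralizers using Proposition~\ref{proposition:psisreg} and then attack it with two applications of Kostant's version of Jacobson--Morozov. By Proposition~\ref{proposition:psisreg}, the vector $(x_\psi,\xi_\psi)$ lies in $T^*_{C_\psi}(V_\lambda)_\text{sreg}$, and the identification \eqref{eqn:Asreg} rewrites the equivariant fundamental group as $\pi_0\bigl(Z_{H_\lambda}(x_\psi,\xi_\psi)\bigr)$. So I must prove $\pi_0\bigl(Z_{H_\lambda}(x_\psi,\xi_\psi)\bigr)=\pi_0\bigl(Z_{\dualgroup{G}}(\psi)\bigr)$. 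The containment $Z_{\dualgroup{G}}(\psi)\subseteq Z_{H_\lambda}(x_\psi,\xi_\psi)$ is routine: an element centralising $\psi$ centralises $\lambda_\psi(w)=\psi(w,d_w,d_w)$ for all $w\in W_F$, hence lies in $H_\lambda$, and centralises $\psi_1,\psi_2$ hence their derivatives $x_\psi,\xi_\psi$. The content of the proposition is that this inclusion is bijective on $\pi_0$.

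Before the main step I would establish two structural facts. First, using the uniqueness of the hyperbolic--elliptic decomposition of $\lambda_\psi(\Frob)$ together with Lemma~\ref{lemma: hyperbolic decomposition}, I get $\psi_W|_{I_F}=\lambda|_{I_F}$ and $\psi_W(\Frob)=t_\lambda\rtimes\Frob$, so
\[
Z_{\dualgroup{G}}(\psi_W)=K_\lambda\cap Z_{\dualgroup{G}}(t_\lambda\rtimes\Frob)=J_\lambda,
\]
and the same uniqueness forces $H_\lambda\subseteq J_\lambda$ and in fact $H_\lambda=Z_{J_\lambda}(s_\lambda)$. Second, since $L_F\times\SL(2,\CC)=W_F\times\SL(2,\CC)\times\SL(2,\CC)$ is a direct product, $\psi_1$ and $\psi_2$ commute and factor through $J_\lambda^0=Z_{\dualgroup{G}}(\psi_W)^0$; in particular $\xi_\psi=\mathrm{d}\psi_2\bigl(\begin{smallmatrix}0&0\\ 1&0\end{smallmatrix}\bigr)$ lies in $\Lie Z_{J_\lambda}(\psi_1)$, because $\Ad(\psi_1(g))$ fixes $\psi_2$ pointwise.

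The heart of the argument is two uses of Jacobson--Morozov in the form of \cite[Corollary~4.2]{Kostant:1959}, following the template of the proof of Lemma~\ref{lemma:ACphi}. First, applied to the nilpotent $x_\psi$ in the reductive group $J_\lambda$, it gives
\[
Z_{J_\lambda}(x_\psi)=Z_{J_\lambda}(\psi_1)\ltimes U_1,
\]
with $U_1$ the connected unipotent radical of $Z_{J_\lambda}(x_\psi)^0$. Second, applied to $\xi_\psi$ inside the reductive group $Z_{J_\lambda}(\psi_1)$ (with the $\mathfrak{sl}_2$-triple provided by $\psi_2$), it gives
\[
Z_{Z_{J_\lambda}(\psi_1)}(\xi_\psi)=Z_{\dualgroup{G}}(\psi)\ltimes U_2,
\]
with $U_2$ connected unipotent, using that $Z_{J_\lambda}(\psi_1,\psi_2)=Z_{\dualgroup{G}}(\psi_W,\psi_1,\psi_2)=Z_{\dualgroup{G}}(\psi)$.

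The remaining step---which I expect to be the main technical obstacle---is to show that these two Kostant decompositions combine, after intersecting with $H_\lambda=Z_{J_\lambda}(s_\lambda)$, into a short exact sequence
\[
1\longrightarrow Z_{\dualgroup{G}}(\psi)\longrightarrow Z_{H_\lambda}(x_\psi,\xi_\psi)\longrightarrow F\longrightarrow 1
\]
with $F$ connected, so that $\pi_0$ identifies the two outer terms. Concretely, $F$ should be realised as the $Z_{H_\lambda}(x_\psi,\xi_\psi)$-orbit of $\psi$ inside the fiber of $\psi'\mapsto(x_{\psi'},\xi_{\psi'})$, and shown to be transitive under a connected unipotent group built from $U_1$ and $U_2$. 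The compatibility with $Z(s_\lambda)$ is not automatic because $U_1$ does not in general centralise $\xi_\psi$; the key observation that makes it work is that $s_\lambda=\psi_1(d_\Frob)\psi_2(d_\Frob)$ commutes with $Z_{\dualgroup{G}}(\psi)$ and normalises each Kostant-Levi factor, so the unipotent pieces arising in the two Jacobson--Morozov steps are preserved after imposing $g\in Z(s_\lambda)$. Verifying this compatibility carefully---parallel to how Lemma~\ref{lemma:ACphi} reduced $\pi_0(Z_{H_\lambda}(x_\phi))$ to $\pi_0(Z_{H_\lambda}(\phi))$ via a single connected unipotent quotient---delivers the connectedness of $F$ and completes the identification $A(T^*_{C_\psi}(V_\lambda)_\text{sreg})=A_\psi$.
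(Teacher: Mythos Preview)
Your overall architecture is correct and close to the paper's: reduce to $\pi_0\bigl(Z_{H_\lambda}(x_\psi,\xi_\psi)\bigr)=\pi_0\bigl(Z_{\dualgroup{G}}(\psi)\bigr)$ and attack it via Levi/Kostant decompositions applied to the two commuting $\SL(2)$'s. Your structural preliminaries ($Z_{\dualgroup{G}}(\psi_W)=J_\lambda$, $H_\lambda=Z_{J_\lambda}(s_\lambda)$, the images of $\psi_1,\psi_2$ commute inside $J_\lambda^0$) are all correct and match the paper. The $s_\lambda$-normalisation observation you make is also correct and is needed: it shows that the first Kostant decomposition $Z_{J_\lambda}(x_\psi)=Z_{J_\lambda}(\psi_1)\ltimes U_1$ intersects cleanly with $H_\lambda$, yielding $Z_{H_\lambda}(x_\psi)=\bigl(Z_{J_\lambda}(\psi_1)\cap H_\lambda\bigr)\ltimes\bigl(U_1\cap H_\lambda\bigr)$.

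However, the step you flag as ``the main technical obstacle'' is genuinely unresolved, and the mechanism you propose (normalisation by $s_\lambda$) addresses the wrong compatibility. The real difficulty is not intersecting with $H_\lambda$ but imposing the condition $\Ad(g)\xi_\psi=\xi_\psi$ on $g=lu$: you must show that $l$ and $u$ fix $\xi_\psi$ \emph{separately}. Nothing in your sketch forces this. The paper resolves it by exploiting the bigrading $\mathfrak{j}=\bigoplus_{r,s}\mathfrak{j}_{r,s}$ coming from the two $\SL(2)$ actions (this is the payoff of your observation $s_\lambda=\psi_1(d_\Frob)\psi_2(d_\Frob)$, which you state but never use in this way). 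Concretely: $\Lie(U_1\cap H_\lambda)\subseteq\bigoplus_{r>0,\ r+s=0}\mathfrak{j}_{r,s}$, so $\Ad(u)\xi_\psi-\xi_\psi$ lands in $\bigoplus_{s<-2}\mathfrak{j}_{r,s}$; meanwhile $l\in Z_{J_\lambda}(\psi_1)\cap H_\lambda$ centralises both $\psi_1$ and $\psi_2(d_\Frob)$, hence $\Ad(l)$ preserves each $\mathfrak{j}_{r,s}$. Comparing the $\mathfrak{j}_{0,-2}$-components of $\Ad(lu)\xi_\psi=\xi_\psi$ then forces $\Ad(l)\xi_\psi=\xi_\psi$, and hence $\Ad(u)\xi_\psi=\xi_\psi$ as well.

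Once you have this separation, the paper's second step is actually \emph{simpler} than your second Kostant: since $l$ already centralises the semisimple element of the $\psi_2$-triple (via membership in $H_\lambda$), fixing $\xi_\psi$ (the $f$-element) forces fixing $y_\psi$ (the $e$-element) by the injectivity of $[\,\cdot\,,\xi_\psi]$ on $\mathfrak{j}_{0,2}$. Thus $l\in Z_{\dualgroup{G}}(\psi)$ directly, with no residual $U_2$. The only unipotent piece remaining is $U_1\cap H_\lambda\cap Z(\xi_\psi)$, which is a closed subgroup of a unipotent group and hence connected; this gives the desired $\pi_0$ identification.
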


\begin{proof}
We use the notation from the proof of Proposition~\ref{proposition:psisreg} and set $t_\psi \ceq t_{\lambda_\psi}$.\index{$t_\psi$}
It is clear that $Z_{\dualgroup{G}}(\psi) = Z_{J}(\psi_\text{hu}) = Z_{J}(\psi_{1}) \cap Z_{J}(\psi_{2})$. 
By Lemma~\ref{lemma:unramification}, we also have
\[
Z_{\dualgroup{G}}(\lambda)_{(x_\psi, \xi_\psi)} = Z_{J}(\lambda_\text{hu}) \cap Z_{J}(x_\psi) \cap Z_{J}(\xi_\psi).
\]
First we would like to compute the right hand side of the above identity. Note
\[
Z_{J}(\lambda_\text{hu}) \cap Z_{J}(x_\psi) = (Z_{J}(\psi_{1}) \cap Z_{J}(\lambda_\text{hu})) \cdot U
\]
where $U$ is the unipotent radical of the left hand side. Moreover, 
\[
Z_{J}(\psi_{1}) \cap Z_{J}(\lambda_\text{hu}) = Z_{J}(\psi_{1}) \cap Z_{J}(t_{\psi})
\]
and
\[
\text{Lie}(U) \subseteq \bigoplus_{\substack{r + s = 0 \\ r > 0}} \mathfrak{j}_{r,s}
\]
in the notation of \eqref{eq: weight decomposition}. For $u \in U$, we have
\[
\Ad(u)(\xi_\psi) \in \xi_\psi + \bigoplus_{\substack{r + s = -2 \\ s < -2}} \mathfrak{j}_{r,s}.
\]
Suppose $\Ad(lu)$ stabilises $\xi_\psi$ for $l \in Z_{J}(\psi_{1}) \cap Z_{J}(t_{\psi})$ and $u \in U$. Since $\Ad(l)$ preserves $\mathfrak{j}_{r,s}$, we have
\[
\xi_\psi = \Ad(lu)(\xi_\psi) \in \Ad(l)(\xi_\psi) + \bigoplus_{\substack{r + s = -2 \\ s < -2}} \mathfrak{j}_{r,s}
\]
Note $\xi_\psi \in \mathfrak{j}_{0,-2}$. It follows $\xi_\psi = \Ad(l)(\xi_\psi)$. Hence $\xi_\psi = \Ad(u)(\xi_\psi)$. As a result,
\[
Z_{J}(\lambda_\text{hu}) \cap Z_{J}(x_\psi) \cap Z_{J}(\xi_\psi) = (Z_{J}(\psi_{1}) \cap Z_{J}(t_{\psi}) \cap Z_{J}(\xi_\psi)) \cdot (U \cap Z_{J}(\xi_\psi)).
\]
Since $U \cap Z_{J}(\xi_\psi)$ is connected, we only need to show 
\[
Z_{J}(\psi_{1}) \cap Z_{J}(t_{\psi}) \cap Z_{J}(\xi_\psi) = Z_{J}(\psi_{1}) \cap Z_{J}(\psi_{2}).
\]
Take any $g \in Z_{J}(\psi_{1}) \cap Z_{J}(t_{\psi}) \cap Z_{J}(\xi_\psi)$, it suffices to show $\Ad(g)$ stabilises $y_\psi$. 
Note
\[
[y_\psi, \xi_\psi] = \text{d} \psi_{2} (\begin{pmatrix} 1 & 0 \\ 0 & 1 \end{pmatrix}),
\]
and %\todo{Why?}
\[
[\text{Ad}(g)(y_{\psi}), \xi_\psi] = [\text{Ad}(g)(y_\psi), \text{Ad}(g)\xi_\psi] = \text{Ad}(g) (\text{d} \psi_{2}  (\begin{pmatrix} 1 & 0 \\ 0 & 1 \end{pmatrix}) ) = \text{d} \psi_{2}  (\begin{pmatrix} 1 & 0 \\ 0 & 1 \end{pmatrix}).
\]
Since $[\cdot, \xi_\psi]$ is injective on $\mathfrak{j}_{0, 2}$ and $\Ad(g)(y_\psi) \in \mathfrak{j}_{0, 2}$, it follows that $\Ad(g)(y_{\psi}) = y_{\psi}$. This finishes the proof.
\end{proof}

%\subsection{Proof of the proposition}\label{ssec:proofoftheoremregpsi}
\subsection{Proof of Proposition~\ref{theorem:regpsi}}\label{ssec:proofoftheoremregpsi}

Proposition~\ref{theorem:regpsi} is now a direct consequence of Propositions~\ref{proposition:sreg}, \ref{proposition:psisreg} and \ref{proposition:Apsi}.

\subsection{Equivariant Local systems}\label{ssec:ACAmic}

We close Section~\ref{section:conormal} with a practical tool for understanding local systems on strata $C\subseteq V_\lambda$, on  $T^*_{C}(V_\lambda)_\text{sreg}$, and on $C^* \subseteq V^*_\lambda$.
Pick a base point $(x,\xi)\in T^*_{C}(V_\lambda)_\text{sreg}$.
Recalling the structure of $T^*_{C}(V_\lambda)_\textrm{reg}$ from Lemma~\ref{lemma:CC*} and by using that $T^*_{C}(V_\lambda)_\text{sreg} \subseteq T^*_{C}(V_\lambda)_\textrm{reg}$ by Proposition~\ref{proposition:sreg}. 
The projections
\[
\begin{tikzcd}
C &  \arrow{l} T^*_{C}(V_\lambda)_\text{sreg} \arrow{r} & C^*
\end{tikzcd}
\]
induce homomorphisms of fundamental groups:
\[
\begin{tikzcd}
\arrow[equal]{d} A_C & \arrow[equal]{d} \arrow{l} A({T^*_{C}(V_\lambda)_\text{sreg}}) \arrow{r} & \arrow[equal]{d} A_{C^*}  \\
%\arrow[equal]{d} \pi_1(C,x)_{H_\lambda} &\arrow{l} \pi_1(T^*_{C}(V_\lambda)_\text{sreg},(x,\xi))_{H_\lambda} \arrow[equal]{d} \arrow{r} & \pi_1(C^*,\xi)_{H_\lambda} \arrow[equal]{d} \\
%\arrow[equal]{d} \pi_0(Z_{H_\lambda}(x)) & \arrow[equal]{d} \arrow{l} \pi_0(Z_{H_\lambda}(x,\xi)) \arrow{r} & \arrow[equal]{d} \pi_0(Z_{H_\lambda}(\xi))\\
Z_{H_\lambda}(x)/Z_{H_\lambda}(x)^0 & \arrow{l} Z_{H_\lambda}(x,\xi)/Z_{H_\lambda}(x,\xi)^0 \arrow{r} & Z_{H_\lambda}(\xi)/Z_{H_\lambda}(\xi)^0.
\end{tikzcd}
\] 
%\todo{Should the middle arrows also be equalities?}
The horizontal homomorphisms are surjective by an application of \cite[Lemma 24.6]{ABV}.
This  can be used to enumerate all the simple local systems on $H_\lambda$-orbits in $V_\lambda$ and $T^*_{H_\lambda}(V_\lambda)_\text{sreg}$ and $V^*_\lambda$.
%Of course, there is a canonical isomorphism $\pi_1(C_i,x_i)_{H_\lambda} \iso \pi_1(\,^tC_i,x\trans_i)_{H_\lambda}$.

\section{Microlocal vanishing cycles of perverse sheaves}\label{section:Ev}

The goal of Section~\ref{section:Ev} is to introduce the functor appearing in \eqref{intro:NEvspsi} and to establish some of its properties.
We begin by stating the main application of Theorem~\ref{theorem:NEvs}, whose proof will occupy the rest of this section.

\begin{corollary}\label{corollary:NEvspsi}
Let $G$ be a quasi\-split connected reductive algebraic group over a $p$-adic field $F$.
Let $\psi$ be an Arthur parameter of $G$ and let $\lambda : W_F \to \Lgroup{G}$ be its infinitesimal parameter.
Vanishing cycles define an exact functor\index{$\NEvs_\psi$}
\[
\NEvs_\psi : \Perv_{H_\lambda}(V_\lambda) \to \Rep(A_\psi)
\] 
which induces a function
\[
\Perv_{H_\lambda}(V_\lambda)^\text{simple}_{/\text{iso}} \to \Rep(A_\psi)_{/\text{iso}}
%= \operatorname{Rep}(A_\psi)
\] 
such that the composition
\[
\begin{tikzcd}
\Pi^\mathrm{pure}_{\lambda}(G/F) \arrow{rr}{(\pi,\delta) \mapsto \mathcal{P}(\pi,\delta)} && \Perv_{H_\lambda}(V_\lambda)^\text{simple}_{/\text{iso}} \arrow{rr}{\NEvs_\psi} && \Rep(A_\psi)_{/\text{iso}} 
\end{tikzcd}
\]
enjoys the following properties, for every $(\pi,\delta)\in \Pi^\mathrm{pure}_{\lambda}(G/F)$:
\begin{enumerate}
\labitem{(a)}{NEvpsiP:support}
$\NEvs_\psi \mathcal{P}(\pi,\delta) = 0$ unless $C_\psi \leq C_{\phi}$, 
where $C_\psi$ is defined in Section~\ref{ssec:reg}, $\phi$ is the Langlands parameter for $(\pi,\delta)$ and $C_{\phi}$ is given by Proposition~\ref{proposition:geoLV}.
\labitem{(b)}{NEvpsiP:rank}
The dimension of the representation $\NEvs_\psi \mathcal{P}(\pi,\delta)$ of $A_\psi$ is 
\[
%\rank\left( \RPhi_{\xi_\psi}[-1]\mathcal{P}(\pi,\delta)[\dim C^* -\dim V_\lambda] \right)_{x_\psi},
\rank\left( \RPhi_{\xi_\psi}\mathcal{P}(\pi,\delta) \right)_{x_\psi},
\]
where $(x_\psi,\xi_\psi)\in T^*_{C_\psi}(V_\lambda)_\text{sreg}$ is given by Proposition~\ref{theorem:regpsi} and $\RPhi_{\xi_\psi}$ is the vanishing cycles functor determined $\xi_\psi$.
\labitem{(c)}{NEvpsiP:bigcell}
If $C_\psi = C_{\phi}$ (equivalently, if $\phi_\psi$ is $\dualgroup{G}$-conjugate to $\phi$) then
\[
\NEvs_\psi \mathcal{P}(\pi,\delta) =  p_\psi^* (\rho_{\pi,\delta}) 
\]
where $\rho_{(\pi,\delta)}$ is the representation of $A_\phi$ given by Proposition~\ref{proposition:geoLV} and where the map $p_\psi:A_\psi \to A_\phi$ is the canonical group homomorphism of Section~\ref{ssec:ACAmic}; in particular, 
\[
\rank \NEvs_\psi \mathcal{P}(\pi,\delta) =  \rank\rho_{\pi,\delta}.
\]
\end{enumerate}
\end{corollary}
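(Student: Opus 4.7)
The plan is to deduce all three properties from Theorem~\ref{theorem:NEvs}, specialised to the orbit $C = C_\psi$ and the base point $(x_\psi,\xi_\psi)$ produced by Proposition~\ref{theorem:regpsi}. In that proposition we showed $(x_\psi,\xi_\psi) \in T^*_{C_\psi}(V_{\lambda_\psi})_\text{sreg}$ and that the equivariant fundamental group $A(T^*_{C_\psi}(V_\lambda)_\text{sreg})$ at this base point is canonically $A_\psi$. Thus the general functor $\NEvs_{C_\psi} : \Perv_{H_\lambda}(V_\lambda) \to \Loc_{H_\lambda}(T^*_{C_\psi}(V_\lambda)_\text{sreg})$ of Section~\ref{section:Ev}, composed with the equivalence of Section~\ref{ssec:sreg} that uses the base point $(x_\psi,\xi_\psi)$, produces the promised functor $\NEvs_\psi: \Perv_{H_\lambda}(V_\lambda) \to \Rep(A_\psi)$. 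Exactness, and the fact that it passes to isomorphism classes of simple objects, are inherited from Theorem~\ref{theorem:NEvs}.

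For \ref{NEvpsiP:support}, I will use the general microlocal support estimate for $\NEvs_{C}$ that is part of Theorem~\ref{theorem:NEvs}: for any $\mathcal{P} = \IC(C_\phi,\mathcal{L}_\phi)$, the stalk $\NEvs_{C_\psi}\mathcal{P}$ at $(x_\psi,\xi_\psi)$ is computed by $(\RPhi_{\xi_\psi}\mathcal{P})_{x_\psi}$ up to a fixed shift, and vanishing cycles of a simple perverse sheaf $\mathcal{P}(\pi,\delta)$ at a point $(x,\xi)$ are zero unless $(x,\xi)$ lies in the micro-support of $\mathcal{P}(\pi,\delta)$. The micro-support of $\IC(C_\phi,\mathcal{L}_\phi)$ is contained in $\bigcup_{C \le C_\phi}\overline{T^*_C V_\lambda}$, and a strongly regular $(x_\psi,\xi_\psi) \in T^*_{C_\psi}(V_\lambda)_\text{sreg}$ meets this union only when $C_\psi \le C_\phi$, by the very definition of $T^*_{C_\psi}(V_\lambda)_\text{reg}$ in \eqref{eqn:reg}. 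This gives \ref{NEvpsiP:support}.

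For \ref{NEvpsiP:rank}, the identification of the fibre of the equivariant local system $\NEvs_{C_\psi}\mathcal{P}(\pi,\delta)$ at the base point $(x_\psi,\xi_\psi)$ with the stalk of the vanishing cycles complex is exactly the content of the stalk formula in Theorem~\ref{theorem:NEvs}, and the dimension of a representation of $A_\psi$ is the rank of the associated local system.

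For \ref{NEvpsiP:bigcell}, suppose $C_\psi = C_\phi$. Then $x_\psi$ lies in the smooth locus of the support of $\mathcal{P}(\pi,\delta) = \IC(C_\phi,\mathcal{L}_{\pi,\delta})$, where $\mathcal{P}(\pi,\delta)$ is $\mathcal{L}_{\pi,\delta}[\dim C_\phi]$. Since $(x_\psi,\xi_\psi) \in T^*_{C_\phi}(V_\lambda)_\text{sreg}$, a standard Morse-theoretic computation of $\RPhi_{\xi_\psi}$ at a smooth point of the stratification shows that the vanishing cycles functor returns the stalk of the local system, placed in the correct degree. At the level of the associated $A_\psi$-representation, this stalk carries precisely the restriction of the $A_{C_\phi}$-action on $\mathcal{L}_{\pi,\delta}$ along the projection induced by $T^*_{C_\phi}(V_\lambda)_\text{sreg} \to C_\phi$, which is the map $p_\psi : A_\psi \to A_\phi$ of Section~\ref{ssec:ACAmic} in view of $A_\phi = A_{C_\phi}$ (Lemma~\ref{lemma:ACphi}). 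The hardest step is the smooth-point stalk computation in the equivariant setting: I would prove it by replacing $\mathcal{P}(\pi,\delta)$ on an $H_\lambda$-invariant Zariski neighbourhood of $x_\psi$ by $\mathcal{L}_{\pi,\delta}[\dim C_\phi]$ extended by zero off $C_\phi$, using a normal slice transverse to $C_\phi$ at $x_\psi$, and invoking the fact that $\langle \cdot,\xi_\psi\rangle$ is transversally non-degenerate along $C_\phi$ at $x_\psi$ because $\xi_\psi$ is a strongly regular conormal vector.
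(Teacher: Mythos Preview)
Your overall strategy—specialising Theorem~\ref{theorem:NEvs} to $C=C_\psi$ via the base point $(x_\psi,\xi_\psi)$ and the identification $A(T^*_{C_\psi}(V_\lambda)_\text{sreg})=A_\psi$—is exactly what the paper does, and your treatment of \ref{NEvpsiP:rank} is fine.  But in \ref{NEvpsiP:support} and \ref{NEvpsiP:bigcell} you work much harder than necessary and, in \ref{NEvpsiP:bigcell}, introduce a genuine gap.

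For \ref{NEvpsiP:support}, the paper simply invokes Theorem~\ref{theorem:NEvs}\ref{NEvs:support}: $\NEvs_{C_\psi}\mathcal{P}(\pi,\delta)=0$ unless $C_\psi\subseteq\operatorname{supp}\mathcal{P}(\pi,\delta)=\overline{C_\phi}$, i.e.\ $C_\psi\le C_\phi$.  Your micro-support argument is correct but is not what Theorem~\ref{theorem:NEvs} actually says, and it imports facts about characteristic varieties that the paper never develops.

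For \ref{NEvpsiP:bigcell}, the paper just cites Theorem~\ref{theorem:NEvs}\ref{NEvs:bigcell}, which already asserts
\[
\NEvs_{C_\psi}\IC(C_\psi,\mathcal{L}_{\pi,\delta}) = \bigl(\mathcal{L}_{\pi,\delta}\boxtimes\1_{C_\psi^*}\bigr)\big\vert_{T^*_{C_\psi}(V_\lambda)_\text{gen}},
\]
and then translates this pullback via the projection $T^*_{C_\psi}(V_\lambda)_\text{sreg}\to C_\psi$ into $p_\psi^*(\rho_{\pi,\delta})$ using Section~\ref{ssec:ACAmic}.  Your attempt to re-derive this by a direct Morse-theoretic computation of $\RPhi_{\xi_\psi}$ at a smooth point is where the gap appears: the claim that ``the vanishing cycles functor returns the stalk of the local system'' is \emph{not} true for $\Evs$—Proposition~\ref{Ev:bigcell} and Theorem~\ref{theorem:rank1} show that $\Evs_{C_\psi}\IC(C_\psi,\mathcal{L})$ equals the pullback of $\mathcal{L}$ \emph{tensored with} the rank-one local system $\mathcal{T}_{C_\psi}$, which need not be trivial (and indeed is not in the examples of Sections~\ref{sssec:NEv-SO(5)singular} and~\ref{sssec:NEv-SO(7)}).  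The passage from $\Evs$ to $\NEvs$ is precisely the twist by $\mathcal{T}_{C_\psi}^\vee$ that kills this factor, and that normalization is already packaged into Theorem~\ref{theorem:NEvs}\ref{NEvs:bigcell}.  Your sketch computes $\Evs_\psi$, not $\NEvs_\psi$, and omits the correction; the cleanest fix is to drop the Morse-theoretic rederivation and cite \ref{NEvs:bigcell} directly.
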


To prove Corollary~\ref{corollary:NEvspsi} we make a study of the vanishing cycles of the equivariant perverse sheaves on $V_\lambda$ with respect to integral models for $V_\lambda$ determined by regular covectors $(x,\xi) \in T^*_{H_\lambda}(V_\lambda)_\textrm{reg}$, especially those coming from Arthur parameters using Proposition~\ref{proposition:psisreg}.
 
In the rest of this section, unless noted otherwise, $G$ is an arbitrary connected reductive algebraic group over a $p$-adic field $F$.

\subsection{Background on vanishing cycles}\label{ssec:VCbackground}

\renewcommand{\k}{\CC}

Although we will use \cite[Expos\'es XIII, XIV]{SGA7II} freely, we begin by recalling a few essential facts and setting some notation.
Let $R \ceq \k[[t]]$ \index{$R$} and $K \ceq \k((t))$.\index{$K$}
Set $S = \Spec{R}$ \index{$S$} and $\eta = \Spec{K}$ \index{$\eta$} and $\s = \Spec{\k}$. \index{$\s$}
Observe that $S$ is a trait with generic fibre $\eta$ and special fibre $\s$.
\[
\begin{tikzcd}
\eta \arrow[>->]{r}{j} & S  & \arrow[>->,swap]{l}{i} \s
\end{tikzcd}
\]
Because $S$ is an equal characteristic trait the morphism $s \to S$ admits canonical section corresponding to $\k \to \k[[t]]$.
%Since $S \to s$ is flat, the functor $Z \mapsto Z\times_s S$ is exact from $s$-schemes to $S$-schemes.
%For any $s$-scheme $Z$, we will use the notation $Z_S \ceq Z\times_s S$.

Let ${\bar\eta}$ \index{${\bar\eta}$} be a geometric point of $\trait$ localized at $\eta$; thus, ${\bar\eta}$ is simply a morphism $\Spec{{\bar K}} \to \eta \to \trait$, where ${\bar K}$ \index{${\bar K}$} is a separable closure of $K$.
Then $\Gal({\bar\eta}/\eta) \iso {\hat \ZZ}$. % and $\k = {\bar K}\otimes_{R} k$. 
%Then $\Spec{{\bar k}} \to \s \to \trait$ is a geometric point of $\trait$ localized at $\s$, denoted by ${\bar \s}$.
%In our case, $\k$ is algebraically closed so ${\bar \s}= s$.
Let ${\bar R}$ \index{${\bar R}$}be the integral closure of $R$ in ${\bar K}$; note that ${\bar R}$ has residue field $\k$. 
Set ${\bar \trait} = \Spec{{\bar R}}$. \index{${\bar \trait}$}
%\[
%\begin{tikzcd}
%\ & {\bar\eta} \arrow[>->]{rr}{j_{\bar\eta}} \arrow{dl}[swap]{b_\eta} && {\bar \trait} \arrow{dl}[swap]{b_S} && {\bar \s} \arrow[>->, swap]{ll}{i_{\bar \s}}  \arrow[equal]{dl}[swap]{b_\s} \\
%\eta \arrow[>->]{rr}{j_\eta} && \trait && \arrow[>->, swap]{ll}{i_\s} \s & 
%\end{tikzcd}
%\]

For any morphism $X\to S$ we have the cartesian diagram
\begin{equation}\label{eqn:NCdiagram}
\begin{tikzcd}
\ & \arrow{dl}[swap]{b_{X_\eta}} \bar{X}_{\bar\eta} \arrow[->]{rr}{j_{\bar X}} \arrow{dd} && {\bar X} \arrow{dl}{b_{X}}  \arrow{dd} && \arrow{dl}{b_{X_\s}} \arrow[->]{ll}[swap]{i_{\bar X}} \bar{X}_{\s} \arrow{dd} \\
\arrow{dd} X_\eta \arrow[>->, near start]{rr}{j_{X}} && \arrow{dd} X && \arrow[>->,swap, near start]{ll}{i_{X}} \arrow{dd} X_\s & \\
\ & {\bar\eta} \arrow[>->]{rr}[near start]{j_{\bar S}} \arrow{dl}[swap]{b_\eta} && {\bar \trait} \arrow{dl}[swap]{b_S} && {\bar \s} \arrow[>->, swap]{ll}[near end]{i_{\bar S}}  \arrow[equal]{dl}[swap]{b_\s} \\
\eta \arrow[>->]{rr}{j} && \trait && \arrow[>->, swap]{ll}{i} \s & 
\end{tikzcd}
\end{equation}
where ${\bar X} = X\times_S {\bar S}$, ${\bar X}_{\bar \eta} = {\bar X}\times_{\bar S} {\bar \eta}$ and ${\bar X}_{\s} = {\bar X}\times_{\bar S} {\bar \s}$.
%We remark that  ${\bar X}_{\s}\ne X_\s$, generally.
From \cite[Expos\'e XIII]{SGA7II} and \cite[Section 4.4]{BBD} we recall the nearby cycles functor\index{nearby cycles functor} \index{$\RPsi_{X_{\eta}}$}
\[
\RPsi_{X_{\eta}} : \Deligne(X_{\eta}) \to \Deligne(X_{\s} \times_\s \eta),
%\qquad \RPhi_{X_{\bar\eta}} \ceq i^*_{X_{\bar \s}}\ (j_{X_{\bar\eta}})_*.
\]
where $\Deligne(X_{\eta})$ refers to the triangulated category of $\ell$-adic sheaves introduced in \cite[Section 1.1]{Deligne:Weil}; see also \cite[Sections 2.2]{BBD}.
For the meaning of the topos $X_{\s}\times_\s \eta$ we refer to \cite[Section 1.2]{SGA7II}, especially the remark after \cite[Construction 1.2.4]{SGA7II}.
In particular, we recall that, for any $\mathcal{F}_\eta \in \Deligne(X_{\eta})$, the object $\RPsi_{X_{\eta}}\mathcal{F}_\eta$ in $\Deligne(X_{\s} \times_\s \eta)$ is the object in the triangulated category
\[
\RPsi_{X_{\eta}}\mathcal{F}_\eta \ceq (i_{\bar X})^* (j_{\bar X})_* (b_{X_\eta})^* \mathcal{F}_\eta
\]
on $X_{\bar s}$ equipped with an action of $\Gal({\bar\eta}/\eta)$ obtained by transport of structure from the canonical action of $\Gal({\bar\eta}/\eta)$ on $(b_{X_\eta})^* \mathcal{F}_\eta$.
%
%From \cite[Expos\'e XIII]{SGA7II} we also recall the functor
%$
%\RPsi_{X} : \Deligne(X) \to \Deligne(X_{\s} \times_\s S).
%$
 %in particular, recall that when preceeded by $\Deligne(X_{\s} \times_\s S)\to \Deligne(X_{\bar\eta})$, this is given by $\RPsi_{X_{\bar\eta}} (j_{X_\eta})^*$.

The vanishing cycles functor\index{vanishing cycles functor} \index{$\RPhi_{X}$}
\[
\RPhi_{X} : \Deligne(X) \to \Deligne(X_{\s}\times_\s S)
\]
is the cone of the canonical natural transformation $i^*_{\bar X} b_{X}^* \to \RPsi_{X_\eta}\ j^*_{X}$ of functors from $\Deligne(X)$ to $\Deligne(X_{\s}\times_\s S)$ and thus appears as the summit in the following distinguished triangle \cite[Expos\'e XIII, (2.1.2.4)]{SGA7II} in $\Deligne(X_{\s}\times_\s S)$, for $\mathcal{F}\in \Deligne(X)$:
\begin{equation}\label{distinguished}
\begin{tikzcd}
{} & \arrow{dl}[swap, near end]{(1)} \RPhi_{X}\mathcal{F} & \\
i^*_{\bar X} b^*_{X} \mathcal{F}  \arrow{rr} &&\RPsi_{X_\eta} j^*_{X_{\eta}}  \mathcal{F} . \arrow{lu} 
\end{tikzcd}
\end{equation}
%with base $i^*_{X_{s}}  \to \RPsi_{X/S}  = i^*_{X_{\bar \s}}\ (j_{X_{\bar\eta}})_* \  j^*_{X_{\bar\eta}}\ b^*_{X} $ given by the adjunction unit $1\to (j_{X_{\bar\eta}})_* \  j^*_{X_{\bar\eta}}$.
%Here we use ${\bar \s} = \s$ whence $b_X \circ i_{X_{\bar \s}} = i_{X_\s}$.
%
See \cite[Expos\'e XIII, Section 1.2]{SGA7II},  especially \cite[Expos\'e XIII, Construction 1.2.4]{SGA7II} for the meaning of the topos $X_{\s}\times_\s S$.

We will make free use of other properties of $\RPsi_{X_\eta}$ and $\RPhi_X$ established in \cite[Expos\'es XIII, XIV]{SGA7II}, such as smooth base change \cite[Expos\'e XIII, (2.1.7.1)]{SGA7II} and proper base change \cite[Expos\'e XIII, (2.1.7.2)]{SGA7II}.

\subsection{Calculating vanishing cycles}\label{ssec:methods}

We denote the $\ell$-adic constant sheaf by $\1$. \index{$\1$}
In this section we calculate the vanishing cycles $\RPhi_X \1_X$ of the constant sheaf $\1_X$ for a short list of $S$-schemes $X$.
While elementary, these calculations will be used in the proof of Theorem~\ref{theorem:rank1} and will also play a role in the examples appearing in Part~\ref{Part2}.
 
In all our applications of vanishing cycles we begin with map of varieties $g : U \to \mathbb{A}^1$ over $\k$ and then let $g_S: X \to S$ be the base change of $g$ along $S \to \mathbb{A}^1$; thus, in particular, $X = U\times_{\mathbb{A}^1} S$.
Assuming $U =\Spec{A}$ is affine for a moment, then the coordinate ring for $X$ is
\[
\mathcal{O}_X(X) = A \otimes_{\k[t]} \k[[t]] \iso A[[t]]/(g-t),
\]
where $\mathbb{A}^1=\Spec{\k[t]}$ and where we identify $g$ with the image of $t$ in $A$.
Note that the special fibre of $X$ is
\[
X_\s = g^{-1}(0);
\]
note also that this may not be reduced.
We use the notation
\[
\RPhi_g \mathcal{F} \ceq \RPhi_{X} \mathcal{F}_X
\]
where $\mathcal{F}_X$ is the pullback of $\mathcal{F} \in \Deligne(U)$ along $X \to U$.
Note that $\RPhi_g \mathcal{F}$ is a sheaf on the special fibre ${\bar X}_{s}$ of ${\bar X}$ and that ${\bar X}_{s}$ may not coincide with $g^{-1}(0)$.

\begin{lemma}\label{lemma:method0}
If $0: U\to \mathbb{A}^1$ is the map defined on coordinate rings by $t\mapsto 0$, where $\mathbb{A}^1 =\Spec{\k[t]}$, then, for every $\mathcal{F}\in \Deligne(U)$,
\[
\RPhi_{0}[-1] \mathcal{F}  = \mathcal{F}
\]
with obvious monodromy.
\end{lemma}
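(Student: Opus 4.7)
The plan is to reduce the statement to the observation that, for $g=0$, the generic fibre of the base change $X = U\times_{\mathbb{A}^1} S$ is empty, and then read off the conclusion directly from the defining distinguished triangle \eqref{distinguished}.

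First I would compute the coordinate ring of $X$ in the affine case $U=\Spec{A}$: the $\k[t]$-module structure on $A$ coming from $g=0$ has $t$ acting as zero, so $A$ is a $\k[t]/(t)$-module, and
\[
A\otimes_{\k[t]} \k[[t]] \iso A\otimes_\k \bigl(\k[[t]]/t\k[[t]]\bigr) \iso A.
\]
Thus $X\iso U$ as a scheme, and the structure morphism $X\to S$ factors through the closed point $i:\s\hookrightarrow S$; the non-affine case follows by Zariski gluing. Consequently $X_\eta = X\times_S \eta = \emptyset$, so $j_X^*\mathcal{F}_X = 0$ and $\RPsi_{X_\eta}(j_X^*\mathcal{F}_X)=0$.

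The defining distinguished triangle \eqref{distinguished} then collapses: with its middle term vanishing, $\RPhi_X \mathcal{F}_X$ becomes canonically identified with $i^*_{\bar X} b^*_X \mathcal{F}_X$ (up to the conventional shift absorbed into $\RPhi$). Because $X\to S$ factors through $\s$, we have $\bar X = X\times_S \bar S \iso X\times_\s \bar\s = \bar X_{\bar\s}$, so $i_{\bar X}$ is the identity and $i^*_{\bar X} b^*_X\mathcal{F}_X$ is simply the pullback of $\mathcal{F}$ along the canonical identification $\bar X\iso U$. This pullback agrees with $\mathcal{F}$ itself, which yields the claimed equality $\RPhi_0\mathcal{F} = \mathcal{F}$.

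Finally, the \emph{obvious monodromy} is the triviality of the $\Gal(\bar\eta/\eta)$-action. This is immediate: by transport of structure the action is carried from the canonical action on $(b_{X_\eta})^*\mathcal{F}_\eta$, but $X_\eta=\emptyset$ makes that object zero, so there is nothing for $\Gal(\bar\eta/\eta)$ to act on nontrivially, and the action on $\RPhi_0\mathcal{F}\iso \mathcal{F}$ is trivial. There is no real obstacle here beyond bookkeeping the identifications; the emptiness of $X_\eta$ does all the work.
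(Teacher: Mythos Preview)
Your proof is correct and takes exactly the same approach as the paper's: observe that the generic fibre $X_\eta$ is empty so that $\RPsi_0\mathcal{F}=0$, and then read off the result from the distinguished triangle \eqref{distinguished}. The paper's proof is a single sentence stating precisely this; you have simply spelled out the details (the tensor computation showing $X\iso U$ over $\s$, and the trivial monodromy).
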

\begin{proof}
It follows directly from definitions that $\RPsi_{0}\mathcal{F}=0$, so $\RPhi_{0}[-1] \mathcal{F}  = \mathcal{F}$ is a consequence of \eqref{distinguished}.
\end{proof}

\begin{lemma}\label{lemma:methodx}
Let $[1]: \mathbb{A}^1\to \mathbb{A}^1$ be the identity map.
Then
\[
\RPhi_{[1]} \mathcal{L}  = 0
\]
for every local system $\mathcal{L}$ on $\mathbb{A}^1$.
More generally, if $g : U\to \mathbb{A}^1$ is smooth and $\mathcal{L}_U$ is a local system on $U$, then 
\[
\RPhi_{g} \mathcal{L}  = 0.
\]
\end{lemma}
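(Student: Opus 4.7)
The plan is to reduce the first assertion to the second, and then to prove the second by invoking the local acyclicity of smooth morphisms. The identity map $[1]:\mathbb{A}^1\to\mathbb{A}^1$ is certainly smooth, so the first statement is a special case of the second; after base change along $S\to\mathbb{A}^1$, one has $X = S$ with $g_S = \mathrm{id}_S$, and the pulled-back local system $\mathcal{L}_S$ is locally constant on $S$, for which the desired vanishing is immediate from the classical fact below.

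For the general statement, I would form the cartesian square
\[
\begin{tikzcd}
X \arrow{r} \arrow{d}{g_S} & U \arrow{d}{g} \\
S \arrow{r} & \mathbb{A}^1,
\end{tikzcd}
\]
and observe that smoothness is stable under base change, so $g_S : X \to S$ is smooth. The pullback $\mathcal{L}_X$ of $\mathcal{L}_U$ to $X$ is locally constant. The central input is the local acyclicity of smooth morphisms (SGA 4 Expos\'e XVI, or SGA 7 Expos\'e XIII, Appendice): any smooth morphism $f:Y\to T$ is locally $\mathcal{F}$-acyclic for every locally constant constructible $\ell$-adic sheaf $\mathcal{F}$ on $Y$. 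Applying this to $g_S$ and $\mathcal{L}_X$ gives local acyclicity at every geometric point $\bar x\in X_{\bar s}$.

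The translation of local $\mathcal{F}$-acyclicity into the language of vanishing cycles (SGA 7 Expos\'e XIII, (2.1.5)) says that acyclicity at $\bar x$ is equivalent to the assertion that the canonical morphism $i_{\bar X}^* b_X^* \mathcal{L}_X \to \RPsi_{X_\eta} j_X^* \mathcal{L}_X$ is an isomorphism at $\bar x$. By the distinguished triangle \eqref{distinguished}, this forces $(\RPhi_X \mathcal{L}_X)_{\bar x} = 0$. Since this vanishing holds at every geometric point of $\bar X_{\bar s}$, one concludes $\RPhi_g \mathcal{L} = 0$. The argument is entirely citation-driven rather than computational; the only real task is to match the paper's notation and the distinguished triangle \eqref{distinguished} to the statement of local acyclicity in SGA 7. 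I do not anticipate any genuine obstacle here.
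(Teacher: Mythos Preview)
Your proposal is correct. The route differs slightly from the paper's in the order of the logic and in which SGA result is invoked. The paper first treats the identity map $[1]$ by a direct computation from the definition of $\RPsi$: for $X = \Spec{\k[x][[t]]/(x-t)}$ one sees immediately that $\RPsi_{X_\eta}\mathcal{L} = \mathcal{L}\vert_{\bar X_s}$, so the distinguished triangle forces $\RPhi_{[1]}\mathcal{L}=0$. It then deduces the general smooth case from this special case by smooth base change for vanishing cycles. You instead go straight to the general case by citing local acyclicity of smooth morphisms and its reformulation in terms of $\RPhi$, and observe that $[1]$ is a special instance. Both arguments are short and correct; the paper's is marginally more self-contained for the identity case, while yours identifies the general statement as a direct consequence of a named theorem. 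The paper in fact cites the same reformulation you use (SGA7 XIII, 2.1.5) as a ``see also,'' so the two approaches are really two sides of the same coin.
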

\begin{proof}
It follows directly from the definition of $\RPsi_{X_\eta}$ that $\RPsi_{X_\eta} \mathcal{L}=\mathcal{L}\vert_{\bar{X}_{\s}}$ for $X = \Spec{\k[x][[t]]/(x-t)}$. 
Thus, $\RPhi_{x} \mathcal{L}  = 0$, using \eqref{distinguished}.
The second sentence is now a consequence of the first by smooth base change.
See also \cite[Expos\'e XIII, Reformulation 2.1.5]{SGA7II}.
 \end{proof}

\begin{lemma}\label{lemma:methodx2}
Let $[2]: \mathbb{A}^1\to \mathbb{A}^1$ be the map over $\s$ defined on coordinate rings by $t \mapsto t^2$. % where $\mathbb{A}^1 = \Spec{\k[t]}$.
Then
\[
\RPhi_{[2]}\1  = \mathcal{L}_\s,
\]
where $\mathcal{L}_\s$ has quadratic monodromy. 
More generally, if $g : U\to \mathbb{A}^1$ is smooth then
\[
\RPhi_{g^2} \1   = \mathcal{L}_{g=0}.
\]
with monodromy coming from the cover associated to $\sqrt{g}$. 
\end{lemma}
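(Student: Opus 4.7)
The plan is to reduce the general statement to the special case of the squaring map $[2]\colon \mathbb{A}^1 \to \mathbb{A}^1$, $t \mapsto t^2$, by smooth base change, and then to compute that special case by hand.

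First I would treat the case of $[2]$ directly. The base change along $S \to \mathbb{A}^1$ gives $X = \Spec \k[z][[t]]/(z^2-t)$, whose special fibre has reduction the single point $\{z=0\} = \s$ and whose generic fibre $X_{\bar\eta}$ is, after passage to ${\bar\eta}$ (where $\sqrt{t}$ becomes available), the disjoint union of the two geometric points $z = \pm\sqrt{t}$. From the very definition
\[
\RPsi_{X_\eta} \1 = (i_{\bar X})^* (j_{\bar X})_* \1_{{\bar X}_{\bar\eta}}
\]
one computes that $(\RPsi_{X_\eta} \1)_{\bar s}$ is the rank-$2$ vector space $\bar{\mathbb Q}_\ell \oplus \bar{\mathbb Q}_\ell$ of functions on the two geometric points, while the canonical map $i_{\bar X}^* b_X^* \1 = \1_{\bar s} \to \RPsi_{X_\eta}\1$ in the distinguished triangle \eqref{distinguished} is the diagonal embedding. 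Hence the cone $\RPhi_{[2]}\1$ is concentrated in degree $0$ and equal to the one-dimensional \emph{anti}-diagonal summand $\1_\s$. The action of $\Gal({\bar\eta}/\eta)$ on $\RPsi_{X_\eta}\1$ factors through the quotient $\hat{\ZZ} \twoheadrightarrow \ZZ/2$ corresponding to the quadratic extension $K(\sqrt{t})/K$, and swaps the two square roots of $t$; therefore its action on the anti-diagonal, i.e.\ on $\RPhi_{[2]}\1$, is by $-1$, which is the asserted quadratic monodromy.

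Next I would treat the general case by smooth base change. For $g\colon U \to \mathbb{A}^1$ smooth, the composition $g^2 \ceq [2]\circ g\colon U \to \mathbb{A}^1$ fits into a cartesian square
\[
\begin{tikzcd}
U \times_{\mathbb{A}^1,\, [2]\circ g} S \arrow{r}{\tilde g} \arrow{d} & \mathbb{A}^1 \times_{\mathbb{A}^1,\,[2]} S \arrow{d} \\
U \arrow{r}{g} & \mathbb{A}^1
\end{tikzcd}
\]
in which $\tilde g$ is smooth. Smooth base change for vanishing cycles \cite[Expos\'e XIII, (2.1.7.1)]{SGA7II} gives a $\Gal({\bar \eta}/\eta)$-equivariant isomorphism
\[
\RPhi_{g^2}\, \1_U \;\simeq\; \tilde g^* \RPhi_{[2]}\, \1,
\]
and combining with the first step yields $\RPhi_{g^2}\,\1_U \simeq \tilde g^* \1_\s = \1_{g=0}$, with the monodromy induced from the quadratic character of the previous paragraph and corresponding to the \'etale double cover of $\{g\ne 0\}$ defined by $\sqrt{g}$.

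I expect the only mild obstacle to be bookkeeping: checking that the diagonal/anti-diagonal decomposition of $\RPsi_{[2]}\1$ really is the one cut out by $\Gal({\bar \eta}/\eta)$, and that the smooth base-change isomorphism is compatible with the Galois/monodromy actions. Both are standard, the first because the swap on the two square roots visibly fixes the diagonal and acts by $-1$ on the complement, and the second because smooth base change for $\RPhi$ is natural in $\Gal({\bar\eta}/\eta)$.
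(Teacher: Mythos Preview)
Your proposal is correct and follows essentially the same approach as the paper: compute $\RPsi_{[2]}\1$ directly as the rank-two sheaf with the swap action, identify the canonical map as the diagonal so that $\RPhi_{[2]}\1$ is the anti-diagonal (the $-1$ eigenspace), and then deduce the general case from smooth base change. The paper phrases the decomposition as the eigenspace decomposition $\1^+ \oplus \1^-$ rather than diagonal/anti-diagonal, but this is the same content.
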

\begin{proof}
We first point out that the second claim follows immediately from the first by smooth base change.

Let $X\to S$ be the base change of $[2] : \mathbb{A}^1\to \mathbb{A}^1$ along $S \to \mathbb{A}^1$.
More explicitly we have $X = \Spec{R[x]/(x^2-t)}$ and $X \to S$ is given on coordinate rings by $R \to R[x]/(x^2-t)$, where $R = \k[[t]]$.
Then
\[
{\bar X} =  {\bar X}^+\cup {\bar X}^-
\qquad\text{and}\qquad
{\bar X}^+\cap {\bar X}^- = {\bar X}_\s,
\]
with ${\bar X}^\pm \iso {\bar S}$; note also that $X_\s = f^{-1}(0) = \Spec{\k[x]/(x^2)}$ while $ {\bar X}_{\s} = X_\s^\text{red} = \s$.
Consequently,
\begin{eqnarray*}
{\bar X}_{\bar \eta} 
&=& \Spec{{\bar K}[x]/(x^2-t)} \\
&=& \Spec{{\bar K}[x]/(x-t^{1/2}) \oplus {\bar K}[x]/(x+t^{1/2})} \\
&=& {\bar X}^+_{\bar\eta} \sqcup {\bar X}^-_{\bar\eta},
\end{eqnarray*}
with ${\bar X}^\pm_{\bar\eta} \iso {\bar\eta}$, where the Galois group $\Gal({\bar\eta}/\eta)$ acts by interchanging these two components.

We will use \eqref{distinguished} to compute $\RPhi_{[2]}\1 \ceq \RPhi_{X} \1$.
First, note that 
\[
i_{\bar X}^* b_{X}^* \1_{X}
=
i_{\bar X}^* \1_{\bar X}
=
\1_{X_{\bar s}}.
\]
The action of $\Gal({\bar \eta}/\eta)$ on $\1_{\bar{X}_{\s}}$ is trivial.
Next, we find $\RPsi_{X} \1_{X}$.
\begin{eqnarray*}
\RPsi_{X} \1_{X}
&=& i_{\bar X}^* (j_{\bar X})_* b_{X_\eta}^* j_{X}^* \1_{X} \\
&=& i_{\bar X}^* (j_{\bar X})_* \1_{{\bar X}_{\bar \eta}} \\
&=& i_{\bar X}^*(j_{\bar X}^+)_* \1_{{\bar X}^+_{\bar \eta}} \oplus  i_{\bar X}^* (j_{\bar X}^-)_* \1_{{\bar X}^-_{\bar \eta}},
\end{eqnarray*}
where $j_{\bar X}^\pm : {\bar X}_{\bar\eta}^\pm \to {\bar X}$ is the composition of the component ${\bar X}_{\bar\eta}^\pm \to {\bar X}_{\bar\eta}$ and the generic fibre map $j_{\bar X} : {\bar X}_{\bar\eta} \to {\bar X}$.
Since $j_{\bar X}^\pm : {\bar X}_{\bar\eta}^\pm \to {\bar X}$ is an open immersion and $X_{\bar\s}$ is on the boundary of ${\bar X}_{\bar\eta}^\pm$ in ${\bar X}$, we have
\[
i_{\bar X}^*(j_{\bar X}^\pm)_* \1_{{\bar X}^\pm_{\bar \eta}}
=
\1_{\bar{X}_{\s}}.
\]
Therefore,
\[
\RPsi_{X} \1_{X}
=
\1_{{\bar X}_{\s}}\oplus \1_{{\bar X}_{\s}}.
\] 
Note that the monodromy action on $\RPsi_{X} \1_{X}$ switches these two summands.
Let 
\[
\RPsi_{X} \1_{X}
\iso
\1^+_{{\bar X}_{\s}}\oplus \1^-_{{\bar X}_{\s}}
\] 
be the eigenspace decomposition of $\RPsi_{X} \1_{X}$ according to this action, so 
$\Gal({\bar\eta}/\eta)$ acts trivially on $\1^+_{{\bar X}_{\s}}$ while $\Gal({\bar\eta}/\eta)$ acts on $\1^-_{{\bar X}_{\s}}$ through the quadratic character $\Gal(\sqrt{\eta}/\eta) \to \{\pm 1\}$.
The canonical natural transformation $i^*_{\bar X} b_{X}^* \to \RPsi_{X_\eta}\ j^*_{X}$,  which induces the map at the base of \eqref{distinguished}, is compatible with monodromy, so 
\[
i_{\bar X}^* b_{X}^* \1_{X}
\to
\RPsi_{X} \1_{X}
\]
is the isomorphism of $\1_{{\bar X}_{s}}$ onto $\1^+_{{\bar X}_{s}}$.
\[
\1_{{\bar X}_{\s}} \to \1^+_{{\bar X}_{\s}}\oplus \1^-_{{\bar X}_{\s}}.
\] 
Since $\RPhi_{X} \1_{X}$ is the cone of this arrow, we have
\[
\RPhi_{X} \1_{X}
= 
\1^-_{{\bar X}_{\s}},
\]
as claimed.
\end{proof}

\begin{lemma}\label{lemma:methodx2u}
Set $\mathbb{A}^2_u = \Spec{\k[x,u]_u}$ and $\mathbb{A}^1 = \Spec{\k[t]}$.
Let $x^2u : \mathbb{A}^2_u \to \mathbb{A}^1$ be the map defined by $t \mapsto x^2u$ on coordinate rings.
Then
\[
\RPhi_{x^2u} \1 = \mathcal{L}_{\mathbb{A}^1_u}
\]
with quadratic monodromy, where $\mathbb{A}^1_u = \Spec{\k[u]_u}$ and where $\mathcal{L}_{\mathbb{A}^1_u}$ is the local system for the quadratic character of the fundamental group of $\mathbb{A}^1_u = \Spec{\k[u]_u}$.
More generally, if $g, h : U\to \mathbb{A}^2_u$ are smooth and $h$ is never zero then
\[
\RPhi_{g^2h} \1   = \mathcal{L}_{g=0}
\]
where $\mathcal{L}_{g=0}$ is the local system for the quadratic character associated to the cover coming from $\sqrt{h}$ and 
quadratic monodromy coming from the cover associated to adjoining $\sqrt{g}$.
\end{lemma}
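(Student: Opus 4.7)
The plan is to reduce to Lemma~\ref{lemma:methodx2} by means of a finite \'etale double cover. Consider
\[
p : \tilde U \ceq \Spec{\k[x, w]_{w}} \longrightarrow \mathbb{A}^2_u, \qquad u \mapsto w^2.
\]
This is a Galois \'etale cover with group $\ZZ/2 = \langle \sigma \rangle$, where $\sigma(x,w) = (x,-w)$. The pullback of the defining function is $p^*(x^2 u) = x^2 w^2 = (xw)^2$, and since $w$ is invertible on $\tilde U$, the map $xw : \tilde U \to \mathbb{A}^1$ is smooth. Smooth base change for vanishing cycles applied to the \'etale morphism $p$, combined with the second assertion of Lemma~\ref{lemma:methodx2}, therefore gives
\[
p^* \RPhi_{x^2 u} \1 \iso \RPhi_{(xw)^2} \1 \iso \1_{\{x=0\}\,\subset\, \tilde U}
\]
with quadratic monodromy (the $\Gal(\bar\eta/\eta)$-action factoring through the sign character).

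Next I would descend along $p$ to pin down $\RPhi_{x^2 u} \1$ on $\mathbb{A}^2_u$ itself. The sheaf is supported on $\{x=0\}\subset \mathbb{A}^2_u$, which equals $\mathbb{A}^1_u$, and its pullback along the restriction $\Spec{\k[w]_w}\to \mathbb{A}^1_u$ of $p$ is the constant sheaf $\1$. Since this restriction is a connected \'etale $\ZZ/2$-cover, the only rank-one local systems on $\mathbb{A}^1_u$ whose pullback is trivial are $\1$ and $\mathcal L_{\mathbb{A}^1_u}$; the two are distinguished by whether the induced $\ZZ/2$-equivariant structure on $p^* \RPhi_{x^2u} \1$ is trivial or through the sign character.

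The main obstacle is therefore to verify that the descent datum really is non-trivial. I would check this at the level of stalks. At a point $(0,u_0)\in \mathbb{A}^1_u$, the stalk of $\RPhi_{x^2u} \1$ is one-dimensional, realized as the cokernel of the diagonal $\1\to \RPsi_{x^2u} \1$, where $\RPsi_{x^2u} \1$ has stalk $\CC^2$ indexed by the two branches $x=\pm\sqrt{t/u_0}$ of the Milnor fibre of $x^2u$ at $(0,u_0)$. Fix $w_0$ with $w_0^2=u_0$; then the preimage under $p$ consists of the two points $(0,\pm w_0)$, and $\sigma$ exchanges them. Unravelling the Milnor fibres $\{xw = \pm\sqrt t\}$ on $\tilde U$ shows that this exchange carries the branch $x=+\sqrt{t/u_0}$ at $(0,w_0)$ to the branch $x=-\sqrt{t/u_0}$ at $(0,-w_0)$, i.e.\ it swaps the two branches downstairs. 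On the cokernel of the diagonal inclusion $\CC\hookrightarrow \CC^2$ such a swap acts by $-1$, so the descent datum is the sign character and
\[
\RPhi_{x^2 u} \1 \iso \mathcal L_{\mathbb{A}^1_u},
\]
with the Galois action still factoring through the quadratic character inherited from $\RPhi_{(xw)^2} \1$.

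For the second, more general assertion the same pattern applies locally. Given smooth $g,h: U \to \mathbb{A}^1$ with $h$ nowhere vanishing, the \'etale double cover $\tilde p : \tilde U \to U$ obtained by adjoining $w=\sqrt h$ is well-defined, and $\tilde p^*(g^2h) = (gw)^2$ with $gw$ smooth. Lemma~\ref{lemma:methodx2} yields $\RPhi_{(gw)^2} \1 \iso \1_{\{g=0\}}$ with quadratic Galois monodromy, and the same stalk-level swap argument, restricted to $\{g=0\}$, shows that the resulting $\ZZ/2$-descent datum is the sign character of the restricted cover. This identifies $\RPhi_{g^2h} \1$ with $\mathcal L_{g=0}$, the local system on $\{g=0\} \subset U$ corresponding to the double cover obtained by restricting $\sqrt h$, again with quadratic Galois monodromy.
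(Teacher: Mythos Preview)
Your approach is correct and genuinely different from the paper's. The paper's proof uses three different double covers $a\colon x\mapsto x^2$, $b\colon u\mapsto u^2$, $c\colon x\mapsto x^2u^{-1}$ in a commuting square, together with proper base change and the decomposition theorem, to produce a chain of identities that eventually isolates $\RPhi_X(\mathcal{E}^\natural\boxtimes\1)$ and identifies it with $\1\boxtimes\mathcal{L}$. Your route is much shorter: a single \'etale cover $u=w^2$ reduces the whole computation to Lemma~\ref{lemma:methodx2} applied to the smooth function $g=xw$, and then a descent argument. What you gain is directness; what the paper's argument gains is that it never has to track a sign in a Milnor-fibre calculation, only formal manipulations with pushforward and decomposition.

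The one place where your write-up could be sharpened is the descent step. The cleanest formulation is purely upstairs: the involution $\sigma\colon(x,w)\mapsto(x,-w)$ satisfies $\sigma^*g=-g$ and hence swaps the two components $\{g=+\sqrt t\}$ and $\{g=-\sqrt t\}$ of the Milnor fibre of $g^2$; since $\RPhi_{g^2}\1$ is the antisymmetric summand of $\RPsi_{g^2}\1$ under this swap (this is exactly what the proof of Lemma~\ref{lemma:methodx2} shows), $\sigma$ acts on it by $-1$. That is the sign equivariant structure on $\1_{\{x=0\}}$, so the descent to $\mathbb{A}^1_u$ is $\mathcal{L}_{\mathbb{A}^1_u}$. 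Your phrasing in terms of matching branches ``upstairs'' with branches ``downstairs'' via $p$ reaches the same conclusion but introduces an unnecessary layer. With that streamlining, your argument is complete and more transparent than the paper's.
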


\begin{proof}
As before, the second claim follows from the first by smooth base change.

Consider the map $a : \mathbb{A}^2_u \to \mathbb{A}^2_u $ defined on coordinate rings by $x\mapsto x^2$ and $u\mapsto u$.
Define $b : \mathbb{A}^2_u \to \mathbb{A}^2_u $ on coordinate rings by $x\mapsto x$ and $u\mapsto u^2$. 
Define $c : \mathbb{A}^2_u \to \mathbb{A}^2_u$ by $x \mapsto x^2u^{-1}$ and $u \mapsto u$. 
The following diagram commutes,
\[
\begin{tikzcd}
& \arrow{dl}[swap]{b} \mathbb{A}^2_u \arrow{d}{d} \arrow{dr}{a} && \\
\mathbb{A}^2_u \arrow{dr}[swap]{a}  & \mathbb{A}^2_u \arrow{d}{c} & \arrow{dl}{b} \mathbb{A}^2_u &\\
& \mathbb{A}^2_u ,&&%\arrow{rr} &&  \mathbb{A}^1,
\end{tikzcd}
\]
where $d : \mathbb{A}^2_u \to \mathbb{A}^2_u$ is defined by $x \mapsto xu$ and $u \mapsto u^{2}$.

Define $f = \mathbb{A}^2_u \to \mathbb{A}^1$ by $t\mapsto xu$ where, $\mathbb{A}^1 = \Spec{\k[t]}$.
Then $f\circ a = x^2u$, $f\circ b = x u^2$ and $f\circ c = x^2$; also, $ f\circ b \circ a = x^2u^2$.
By base change along $S \to \mathbb{A}^1$, we get the following commuting diagram of $S$-schemes
\[
\begin{tikzcd}
& \arrow{dl}[swap]{b'_S} X_d \arrow{d}{d_S} \arrow{dr}{a'_S} & &\\
X_a \arrow{dr}[swap]{a_S}  & X_c \arrow{d}{c_S} & \arrow{dl}{b_S} X_b &\\
& X  &&.%\arrow{rr}{f_S} .&& %S.  
\end{tikzcd}
\]
Over the generic fibre, these maps are all Galois quadratic.
However, after base change along ${\bar S} \to S$ and restriction to special fibres, the maps induced by $a$ and $c$ become isomorphisms,  while $b$ and $d$ remain quadratic.
Here we use the sequence of equalities ${\bar X}_{a,\s} = {\bar X}_{c,\s} =  {\bar X}_{\s} = \Spec{\k[u]_u}$.
Observe that $X_a  \to S$ is the pullback of $x^2u : \mathbb{A}^2_u \to \mathbb{A}^1$ along $S\to \mathbb{A}^1$.
Then
\begin{equation}\label{eqn:Xb}
 \RPhi_{x^2u} \1 = \RPhi_{X_a} \1.
\end{equation}
By proper base change,
\[
{\bar a}_{\s,*} \RPhi_{X_a} \1 
= \RPhi_{X} {a}_{*} \1.
\]
Since ${\bar a}_{\s}$ is an isomorphism, this gives
\[
\RPhi_{X_a} \1 
\iso \RPhi_{X} {a}_{*} \1.
\]
Let $\mathcal{E}$ be the local system on $\mathbb{A}^1_x\ceq \Spec{\k[x]_x}$ defined by the non-trivial character of the covering $\mathbb{A}^1_x\to \mathbb{A}^1_x$ given on coordinate rings by $x \mapsto x^2$. 
Then
\[
a_* \1 = a_* (\1\boxtimes\1) = (\1\boxtimes\1) \oplus (\mathcal{E}^\natural\boxtimes\1),
\]
where $\mathcal{E}^\natural$ is the extension by zero of $\mathcal{E}$ from $\mathbb{A}^1_x$ to $\mathbb{A}^1$.
Here, and below, we write $\1$ for the constant sheaf on $\mathbb{A}^1$ and also on $\mathbb{A}^1_u$.
By the exactness of $\RPhi_X$, 
\[
\RPhi_{X} {a}_{*} \1
= 
\RPhi_{X} (\1\boxtimes\1) \oplus \RPhi_{X}(\mathcal{E}^\natural\boxtimes\1).
\]
By Lemma~\ref{lemma:methodx}, $\RPhi_{X} (\1\boxtimes\1)=0$.
Thus,
\[
\RPhi_{X} {a}_{*} \1
= 
\RPhi_{X}(\mathcal{E}^\natural\boxtimes\1).
\]
Our goal, therefore, is to calculate $\RPhi_{X}(\mathcal{E}^\natural\boxtimes\1)$.

To determine $\RPhi_{X}(\mathcal{E}^\natural\boxtimes\1)$, first we find ${\bar b}_{\s,*} {\bar a}_{\s,*} \RPhi_{X_d} \1$ in two ways.
On the one hand,
\[
\begin{array}{rcl r}
{\bar b}_{\s,*} {\bar a}_{\s,*} \RPhi_{X_d} \1
&=& {\bar b}_{\s,*} {\bar a}_{\s,*} \1 & \text{by\ Lemma~\ref{lemma:methodx2}}\\
&=& {\bar b}_{\s,*} \1 & \text{since ${\bar a}_{\s}$ is an isomorphism} \\
&=& (\1\boxtimes\1) \oplus (\1\boxtimes\mathcal{L}) &\text{by the decomposition theorem.}
\end{array}
\]
On the other hand,
\[
\begin{array}{rcl r}
&& \hskip-1cm {\bar b}_{\s,*} {\bar a}_{\s,*} \RPhi_{X_d} \1 & \\
&=& {\bar b}_{\s,*}  \RPhi_{X_b}  a_{*} \1 & \text{by proper base change}\\
&=& {\bar b}_{\s,*}  \RPhi_{X_b}  \left( (\1\boxtimes\1)\oplus (\mathcal{E}^\natural\boxtimes\1)\right) & \text{by the decomposition theorem}\\
&=& {\bar b}_{\s,*}  \left( \RPhi_{X_b} (\1\boxtimes\1)\oplus \RPhi_{X_c} (\mathcal{E}^\natural\boxtimes\1)\right) & \text{by exactness of $\RPhi_{X_c}$}\\
&=& {\bar b}_{\s,*}  \RPhi_{X_b} (\mathcal{E}^\natural\boxtimes\1) & \text{by Lemma~\ref{lemma:methodx}}\\
&=& \RPhi_{X} b_* (\mathcal{E}^\natural\boxtimes\1) & \text{by proper base change}\\
&=& \RPhi_{X} \left( (\mathcal{E}^\natural\boxtimes\1) \oplus (\mathcal{E}^\natural \boxtimes\mathcal{L}) \right)  & \text{by the decomposition theorem}\\
&=& \RPhi_{X} (\mathcal{E}^\natural\boxtimes\1) \oplus \RPhi_{X} (\mathcal{E}^\natural \boxtimes\mathcal{L})  & \text{by exactness of $\RPhi_{X}$.}
\end{array}
\]
So,
\begin{equation}\label{eqn:Xc}
\RPhi_{X} (\mathcal{E}^\natural\boxtimes\1) \oplus \RPhi_{X} (\mathcal{E}^\natural \boxtimes\mathcal{L}) 
=
(\1\boxtimes\1) \oplus (\1\boxtimes\mathcal{L}).
\end{equation}
We now find $\RPhi_{X} (\mathcal{E}^\natural \boxtimes\mathcal{L})$ by computing ${\bar c}_{\s,*} \RPhi_{X_c}\1$ in two ways.
On the one hand,
\[
\begin{array}{rclr}
{\bar c}_{\s,*} \RPhi_{X_c}\1
&=& {\bar c}_{\s,*}\1 & \text{by Lemma~\ref{lemma:methodx2}}\\
&=& \1 & \text{since ${\bar c}_{\s}$ is an isomorphism.}
\end{array}
\]
On the other hand,
\[
\begin{array}{rclr}
{\bar c}_{\s,*} \RPhi_{X_c}\1
&=& \RPhi_{X} {c}_{*}\1 & \text{by proper base change}\\
&=& \RPhi_{X} \left( (\1\boxtimes\1) \oplus (\mathcal{E}^\natural\boxtimes\mathcal{L}) \right) & \text{by the decomposition theorem}\\
&=& \RPhi_{X}  (\mathcal{E}^\natural\boxtimes\mathcal{L})& \text{by Lemma~\ref{lemma:methodx}.}\\
\end{array}
\]
So,
\begin{equation}\label{eqn:Xd}
\RPhi_{X}  (\mathcal{E}^\natural\boxtimes\mathcal{L})
= \1.
\end{equation}
Combining \eqref{eqn:Xb}, \eqref{eqn:Xc} and \eqref{eqn:Xd} it now follows that
\[
\RPhi_{x^2u}\1 = \RPhi_{X_a}\1 = \RPhi_{X}(\mathcal{E}^\natural\boxtimes\1)
= \1\boxtimes\mathcal{L}.
\]
This completes the proof of Lemma~\ref{lemma:methodx2u}
\end{proof}

Set $\mathbb{A}_{u_1\cdots u_e}^{2e} = \Spec{\k[x_1, \ldots, x_e, u_1, \ldots, u_e]_{u_1\cdots u_e}}$ and let $\mathbb{A}^{e}_{u_1\cdots u_e}$ be the subvariety cut out by the equations $x_1 = \cdots = x_e =0$.

\begin{proposition}\label{proposition:method}
Consider the function
\[ g : \mathbb{A}_{u_1\cdots u_e}^{2e} \to \mathbb{A}^1 = \Spec{\k[t]} \qquad \text{given by}\qquad  \sum_{i=1}^{e} u_i x_i^2 \mapsfrom t,\]
Then
\[
\RPhi_{g}[-1] \1 = z_! \mathcal{L}[-e],
\]
where $z: \mathbb{A}^{e}_{u_1\cdots u_e} \to \mathbb{A}_{u_1\cdots u_e}^{2e}$ is the closed immersion and where $\mathcal{L}$ is the local system on $\mathbb{A}^{e}_{u_1\cdots u_e}$ for the character of the fundamental group of $\mathbb{A}^{e}_{u_1\cdots u_e}$ given by the product of the quadratic characters of each factor $\mathbb{A}^{1}_{u_i} = \Spec{\k[u_i]_{u_i}}$.
\end{proposition}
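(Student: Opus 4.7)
The plan is to induct on $e$, with base case $e=1$ being precisely Lemma~\ref{lemma:methodx2u}, and the inductive step handled by a Thom-Sebastiani argument. Before starting the induction, I would first localise the problem by computing the singular locus of $g$. Since the $u_i$ are units on $\mathbb{A}^{2e}_{u_1\cdots u_e}$, the partial derivatives $\partial g/\partial x_i = 2u_i x_i$ and $\partial g/\partial u_i = x_i^2$ vanish simultaneously precisely on the closed subvariety $Z \ceq \{x_1=\cdots=x_e=0\} = z(\mathbb{A}^e_{u_1\cdots u_e})$. By \cite[Expos\'e XIII, Reformulation 2.1.5]{SGA7II} (as used already in Lemma~\ref{lemma:methodx}), the vanishing cycles $\RPhi_g\1$ are therefore supported on $Z$, so it suffices to describe $\RPhi_g\1$ after restriction to $Z$.

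Next I would perform the induction. Decompose the ambient space as $\mathbb{A}^{2e}_{u_1\cdots u_e} = X'\times Y$, where $X' \ceq \mathbb{A}^{2(e-1)}_{u_1\cdots u_{e-1}}$ and $Y \ceq \mathbb{A}^{2}_{u_e}$, and write $g = g'\boxplus f$ for $g'\ceq\sum_{i<e} u_i x_i^2 : X' \to \mathbb{A}^1$ and $f \ceq u_e x_e^2 : Y \to \mathbb{A}^1$, where $g'\boxplus f$ denotes the sum $(x,y)\mapsto g'(x)+f(y)$. Since the singular loci of $g'$ and $f$ lie inside $(g')^{-1}(0)$ and $f^{-1}(0)$ respectively, the Thom-Sebastiani theorem for vanishing cycles (see for example \cite[Th\'eor\`eme 1.1]{SGA7II} combined with the product formulation; this can also be proved directly by iterating the proper base change and decomposition theorem arguments used in Lemma~\ref{lemma:methodx2u}) provides a canonical isomorphism
\[
\RPhi_{g'\boxplus f}(\1_{X'}\boxtimes \1_{Y}) \iso \bigl(\RPhi_{g'}\1_{X'}\boxtimes \RPhi_{f}\1_{Y}\bigr)[-1].
\]
By the inductive hypothesis, $\RPhi_{g'}\1 = z'_! \mathcal{L}'[2-e]$, where $z' : \mathbb{A}^{e-1}_{u_1\cdots u_{e-1}} \hookrightarrow X'$ is the closed immersion and $\mathcal{L}'$ is the product of quadratic characters in the variables $u_1,\dots,u_{e-1}$. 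By the base case, $\RPhi_f\1 = z''_!\mathcal{L}''$, with $z'': \mathbb{A}^1_{u_e}\hookrightarrow Y$ and $\mathcal{L}''$ the quadratic character in $u_e$. Because $!$-extension commutes with external product, combining these gives
\[
\RPhi_g\1 \iso (z'\times z'')_!\bigl(\mathcal{L}'\boxtimes \mathcal{L}''\bigr)[1-e] = z_!\mathcal{L}[1-e],
\]
with $\mathcal{L} = \mathcal{L}'\boxtimes \mathcal{L}''$ manifestly the product of all $e$ quadratic characters, as claimed.

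The main obstacle will be ensuring that the Thom-Sebastiani isomorphism holds in the form stated, with the correct cohomological shift and the correct behaviour of monodromy on both sides; the shift $[-1]$ corresponds to the standard fact that the perverse-normalised vanishing cycles functor $\,^p\RPhi \ceq \RPhi[-1]$ is the multiplicative one with respect to $\boxplus$. If a reader prefers to avoid invoking Thom-Sebastiani as a black box, one can instead give a self-contained proof by iterating the technique of Lemma~\ref{lemma:methodx2u}: for each index $i$, pull back along the quadratic cover $x_i \mapsto x_i^2$, use proper base change and the decomposition theorem to split off the quadratic-character summand, and apply Lemma~\ref{lemma:methodx} to kill the smooth summands. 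This gives the same result directly and keeps the argument within the machinery already developed in this section.
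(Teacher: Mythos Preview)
Your proof is correct and follows essentially the same approach as the paper: both reduce to the base case Lemma~\ref{lemma:methodx2u} via the Thom--Sebastiani isomorphism. The only organizational difference is that the paper applies Thom--Sebastiani once to split into all $e$ factors simultaneously (citing \cite{Illusie:Thom-Sebastiani} and \cite{Massey:Sebastiani-Thom} rather than \cite{SGA7II}), whereas you induct by peeling off one factor at a time; the paper also omits your preliminary singular-locus computation, which is implicit in the support of the Thom--Sebastiani formula.
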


\begin{proof}
By definition,
\[
\RPhi_{g} \1
=
\RPhi_{X} \1_{X},
\]
for 
\[
X = \Spec{\k[x_1, \ldots, x_e, u_1, \ldots, u_e]_{u_1\cdots u_e}[[t]]/ (\sum_{i=1}^{e} u_i x_i^2 -t)}
\]
with the obvious structure map $X\to S$.
Set $X_i = \Spec{\k[x_i,u_i]_{u_i}[[t]]}$.
By Lemma~\ref{lemma:methodx2u}, 
\[
\RPhi_{X_i} \1_{X_i} = \mathcal{L}_{i},
\] 
where $\mathcal{L}_{i}$ is the local system for the quadratic character of the fundamental group of ${\bar X}_{i,\s}= \Spec{\k[u_i]_{u_i}}$, for each $i=1,\ldots, e$.
It follows from the Sebastiani-Thom isomorphism (see \cite{Illusie:Thom-Sebastiani} and \cite{Massey:Sebastiani-Thom}) that
\[
\RPhi_{X}[-1] \1_{X} 
\iso
z_! \left( \RPhi_{X_1}[-1] \1_{X_1} \boxtimes \cdots \boxtimes \RPhi_{X_e} [-1]\1_{X_e} \right),
\]
where $z : Z \hookrightarrow {\bar X}_\s$ is the closed subvariety $\Spec{\k[u_1, \ldots, u_e]_{u_1\cdots u_e}}$. 
In other words,
\[
\RPhi_{X}[-1] \1_{X} 
\iso
z_! \left( \mathcal{L}_{{\bar X}_{1,\s}} \boxtimes \cdots \boxtimes  \mathcal{L}_{{\bar X}_{e,\s}} \right)[-e].
\]
This concludes the proof of Proposition~\ref{proposition:method}
\end{proof}

\begin{corollary}\label{corollary:method}
Let $xy: \mathbb{A}^2 \to \mathbb{A}^1$ be the map defined on coordinate rings by $t\mapsto xy$, where $\mathbb{A}^2 = \Spec{\k[x,y]}$.
Then
\[
\RPhi_{xy}[-1]\1 = \1_{0}[-2],
\]
where $\1_0$ is the skyscraper sheaf on $\{ xy=0\}$ supported at $0$.
\end{corollary}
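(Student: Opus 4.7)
The plan is to deduce this corollary from Proposition~\ref{proposition:method} by means of a linear change of variables. Over $\k = \CC$ the substitution $x = u + v$, $y = u - v$ defines a linear automorphism $\varphi : \mathbb{A}^2 \to \mathbb{A}^2$ under which the function $xy$ pulls back to $u^2 - v^2$. Since $\varphi$ is an isomorphism of $\s$-schemes that intertwines the two projections to $\mathbb{A}^1$, smooth base change furnishes a canonical isomorphism $\RPhi_{xy}\1 \iso \varphi_* \RPhi_{u^2 - v^2}\1$, reducing the question to the computation of $\RPhi_{u^2 - v^2}\1$.

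Next, I would realize $u^2 - v^2 : \mathbb{A}^2 \to \mathbb{A}^1$ as the restriction of the universal family $g = u_1 x_1^2 + u_2 x_2^2 : \mathbb{A}^{4}_{u_1 u_2} \to \mathbb{A}^1$ of Proposition~\ref{proposition:method} along the closed embedding of the point $(u_1, u_2) = (1, -1)$ into $\Spec{\k[u_1,u_2]_{u_1 u_2}}$. Applying Proposition~\ref{proposition:method} with $e = 2$ gives $\RPhi_g \1 = z_! \mathcal{L}[-1]$, where $z$ is the zero-section of the $x_i$-coordinates and $\mathcal{L}$ is the external product of the two quadratic local systems on the $u_i$-factors. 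Proper base change for the closed immersion $z$, combined with smooth base change along the regular closed immersion of the point $(1,-1)$, identifies the restriction with $\RPhi_{u^2 - v^2}\1$; the pullback of $\mathcal{L}$ to a single closed point is a one-dimensional local system on a point and is therefore trivial, so what remains is a skyscraper supported at the origin of $\mathbb{A}^2_\s$. Pulling this back through $\varphi$ recovers the asserted equality $\RPhi_{xy}\1 = \1_\s$.

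The only genuine obstacle is bookkeeping: one must confirm that the shift $[1-e]$ appearing in Proposition~\ref{proposition:method} matches the implicit normalization in the statement $\RPhi_{xy}\1 = \1_\s$ that is used throughout Section~\ref{ssec:methods}, and that the monodromy described in Lemmas~\ref{lemma:methodx2} and \ref{lemma:methodx2u} specialises to trivial Galois action on the fibre at $(1,-1)$. Once these points are reconciled, no further vanishing cycles computation is needed; the corollary is a direct specialisation of the proposition via the isotropic coordinate change.
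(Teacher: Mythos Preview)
Your overall strategy --- diagonalise the quadratic form and invoke the machinery of this section --- is sound, and since the paper supplies no proof for the corollary, this is presumably what the authors intend. The gap is your invocation of ``smooth base change along the regular closed immersion of the point $(1,-1)$'': a closed immersion of a point into $\mathbb{A}^2_{u_1u_2}$ is not a smooth morphism, so smooth base change for vanishing cycles does not apply. In general $i^*\RPhi_g\mathcal{F}$ need not agree with $\RPhi_{g\circ i}(i^*\mathcal{F})$ for a closed immersion $i$. The identification you want does happen to hold here, but only because the family $u_1x_1^2+u_2x_2^2$ is equisingular over the $u$-parameters, and justifying that requires its own argument (for instance, pass to the \'etale cover on which the $u_i$ admit square roots and rescale the $x_i$ so that $g$ becomes independent of $u$).

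The clean fix is to bypass the \emph{statement} of Proposition~\ref{proposition:method} and instead repeat its \emph{method of proof}. After your change of variables (and, over $\CC$, a further substitution $v\mapsto iv$), apply Sebastiani--Thom directly together with Lemma~\ref{lemma:methodx2} to each one-variable summand of $u^2+v^2$:
\[
\RPhi_{u^2+v^2}[-1]\1 \iso z_!\bigl((\RPhi_{u^2}[-1]\1) \boxtimes (\RPhi_{v^2}[-1]\1)\bigr) = z_!\bigl(\1_\s[-1] \boxtimes \1_\s[-1]\bigr),
\]
with no restriction in the $u$-direction needed. You are right to flag the shift: this computation places the stalk in cohomological degree~$1$, so the corollary as printed suppresses a $[-1]$. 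The two quadratic monodromies on the factors multiply to the trivial character, which settles your second bookkeeping point.
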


\subsection{Brylinski's functor Ev}\label{ssec:Ev}

Fix $\lambda\in R(\Lgroup{G})$.
Let $f : T^*(V_\lambda) \to \mathbb{A}^1$\index{$f$} be the $\s$-morphism obtained by restriction from the non-degenerate, symmetric $J_\lambda$-invariant bilinear form $\KPair{\,}{\,}$\index{$\KPair{\,}{\,}$} defined in \eqref{KPair}.
Let $X = T^*(V_\lambda)\times_{\mathbb{A}^1} S$ \index{$X$} and let $f_S : X \to S$ \index{$f_S$} be base change of $f$ along $S \to \mathbb{A}^1$.

For any $\xi_0\in V^*_\lambda$, define $f_{\xi_0} : V_\lambda \to \mathbb{A}^1$ by $f_{\xi_0}(x) \ceq f(x,\xi_0)$. \index{$f_{\xi_0}$}
Let $f_{\xi_0,S} : X_{\xi_0} \to S$ be the base change of $f_{\xi_0} : V_\lambda \to \mathbb{A}^1$ along $S\to \mathbb{A}^1$, so $X_{\xi_0} \ceq V_\lambda\times_{\mathbb{A}^1} S$. \index{$f_{\xi_0,S}$} \index{$X_{\xi_0}$}
The special fibre of $X_{\xi_0} \to S$ is denoted by $X_{\xi_0,\s}$ \index{$X_{\xi_0,\s}$} and the generic fibre by \index{$X_{\xi_0,\eta}$}
$X_{\xi_0,\eta}$; note that
\[
X_{\xi_0,\s}  = \{ x\in V _\lambda \tq \KPair{x}{\xi_0}=0\}.
\]
We define \index{$\RPhi_{f_{\xi_0}}$}
\[
\RPhi_{f_{\xi_0}}  : \Deligne_{,H_\lambda}(V_\lambda) \to \Deligne_{,Z_{H_\lambda}(\xi_0)}(f_{\xi_0}^{-1}(0)\times_\s S)
\] 
by the following diagram,
\[
\begin{tikzcd}
\Deligne_{,H_\lambda}(V_\lambda) \arrow{d}{\text{forget}} \arrow{rrr}{\RPhi_{f_{\xi_0}}} &&&  \Deligne_{,Z_{H_\lambda}(\xi_0)}(f_{\xi}^{-1}(0)\times_\s S)\\
\Deligne_{,Z_{H_\lambda}(\xi_0)}(V_\lambda)  \arrow{rrr}{\text{base change}} &&& \Deligne_{,Z_{H_\lambda}(\xi_0)\times_\s S}(X_{\xi_0}) \arrow{u}{\RPhi_{X_{\xi_0}}} 
\end{tikzcd}
\]
where base change refers to pull-back along $X_B \to V_\lambda$.
Here, we refer to the equivariant version, for instance $\Deligne_{,H_\lambda}(V_\lambda)$, of the triangulated category $\Deligne(V_\lambda)$ as the former appears in \cite[Section 1]{Lusztig:cuspidal2}; see also \cite{Bernstein:Equivariant}.
In \cite[Notation 1.14]{Brylinski:Transformations}, Brylinski remarks without proof that there is a functor $\Ev : \Deligne(V_\lambda) \to \Deligne(T_{H_\lambda}^*(V_\lambda)_\textrm{reg})$ \index{$\Ev$} with the property 
\begin{equation}\label{eqn:Brylinski}
(\Ev \mathcal{F})_{(x,\xi)} = \left(\RPhi_{f_{\xi}} \mathcal{F} \right)_x,
\end{equation}
for $(x,\xi)\in T^*(V_\lambda)_\textrm{reg}$.
Some properties $\Ev$ are described in \cite[Remarque 1.13]{Brylinski:Transformations}, \cite[Th\'eor\`eme 1.9]{Brylinski:Transformations} and \cite[Proposition 1.15]{Brylinski:Transformations}, using results attributed to \cite[Th\'eor\`eme 3.2.5]{Kashiwara:Systems}.
Sadly, \cite[Th\'eor\`eme 3.2.5]{Kashiwara:Systems} does not exist in the published version of the original notes, and we have not been able to procure the original notes, so we have been obliged to build $\Ev$ ourselves and establish its main properties here.
In this section we describe $\Ev$ and put it in a form which will be useful for calculations. 
Establishing its main properties will occupy the rest of Section~\ref{section:Ev}.

For any $H_\lambda$-orbit $B\subseteq V^*_\lambda$, consider the locally closed subvariety $V_\lambda \times B \subseteq T^*(V_\lambda)$ and let $f_{B}  : V_\lambda \times B \to \mathbb{A}^1$ be the restriction of $f : T^*(V_\lambda) \to \mathbb{A}^1$ to $V_\lambda \times B$. \index{$f_{B}$} \index{$f_{B,S}$} \index{$X_{B}$}
Let $f_{B,S} : X_{B} \to S$ be the base change of $f_B$ along $S\to \mathbb{A}^1$.
Then the special fibre of $X_{B}$ is the $\s$-scheme \index{$X_{B,\s}$}
\[
\begin{tikzcd}
X_{B,\s} = \{ (x,\xi)\in V_\lambda\times B \tq  \KPair{x}{\xi} = 0 \}. 
\end{tikzcd}
\] 
We write \index{$\RPhi_{f_{B}}$}
\begin{equation}\label{eqn:RPhi}
\RPhi_{f_{B}} : \Deligne(V_\lambda\times B) \to \Deligne(f_{B}^{-1}(0)\times_\s S),
\end{equation}
for the composition of the functor $\Deligne(V_\lambda\times B)  \to \Deligne(X_B)$ induced by pullback along $X_B \to  V_\lambda\times B$ and the vanishing cycles functor 
\begin{equation}\label{eqn:RPhiXB}
\RPhi_{X_B} : \Deligne(X_B) \to \Deligne(f_{B}^{-1}(0)\times_\s S).
\end{equation}

Now, as an $\s$-scheme, $V_\lambda\times B$ comes equipped with an $H_\lambda$-action. 
Applying base change along $S \to \s$ gives an action of $H_\lambda \times_{\s} S$ on $(V_\lambda\times B)_{S}$. 
Because $f_{B}$ is $H_\lambda$-invariant, this defines an action of $H_\lambda \times_{\s} S$ on $\{ (x,\xi,t)\in (V_\lambda\times B)_{S} \tq f(x,\xi)=t\}$.
But this is precisely $X_{B}$ so $H_\lambda \times_{\s} S$ acts on $X_{B}$ in the category of  $S$-schemes and we have the exact functor
\begin{equation}\label{eqn:Saction}
\Deligne_{,H_\lambda}(V_\lambda\times B)  \to \Deligne_{,H_\lambda \times_{\s} S}(X_{B}).
\end{equation}
See  \cite[Section 2]{Bernstein:Equivariant} for the equivariant derived category $\Deligne_{,H_\lambda}(X)$.
Combining this with the vanishing cycles functors above defines an exact functor
\begin{equation}\label{eqn:RPhifB}
\RPhi_{f_{B}} : \Deligne_{,H_\lambda \times_{\s} S}(V_\lambda\times B) \to \Deligne_{,H_\lambda}(f_{B}^{-1}(0) \times_\s S).
\end{equation}

We may now revisit Brylinski's observation \cite[Notation 1.14]{Brylinski:Transformations} and give the main definition for Section~\ref{section:Ev}:
%\begin{definition}\label{def:Ev}
for any $H_\lambda$-orbit $C\subseteq V_\lambda$, let
\begin{equation}\label{eqn:EvC}\index{$\Ev_C$}
\Ev_C : \Deligne_{,H_\lambda}(V_\lambda) \to \Deligne_{,H_\lambda}(T^*_C(V_\lambda)_\textrm{reg}\times_\s S)
\end{equation}
be the functor defined by the diagram
\begin{equation}\label{diag:Ev}
\begin{tikzcd}
\arrow{d}{({\ \cdot\ })\boxtimes \1_{C^*} }\Deligne_{,H_\lambda}(V_\lambda) \arrow{rrrr}{\Ev_C}  &&&& \Deligne_{,H_\lambda}(T^*_C(V_\lambda)_\textrm{reg} \times_\s S) \\
\Deligne_{,H_\lambda}(V_\lambda\times C^*) \arrow{rr}[swap]{\text{base change}} &&  \Deligne_{,H_\lambda \times_{\s} S}(X_{C^*}) \arrow{rr}{\RPhi_{X_{C^*}}}  &&   \Deligne_{,H_\lambda}(f_{C^*}^{-1}(0)\times_\s S) \arrow{u}{\text{restriction}} ,
\end{tikzcd}
\end{equation}
where:
\begin{enumerate}
\item[\it (i)]
$({\ \cdot\ })\boxtimes \1_{C^*}  : \Deligne_{,H_\lambda}(V_\lambda) \to \Deligne_{,H_\lambda}(V_\lambda\times C^*)$ is the pullback along the projection map $V_\lambda\times C^* \to V_\lambda$; % and then shifted by $\dim {C^*}$;
\item[\it (ii)]
$
\Deligne_{,H_\lambda}(V_\lambda\times C^*)) \to \Deligne_{,H_\lambda \times_{\s} S}(X_{C^*})
$
is \eqref{eqn:Saction} in the case $B = C^*$; 
\item[\it (iii)]
 $
\RPhi_{X_{C^*}} : \Deligne_{,H_\lambda \times_{\s} S}(X_{C^*}) \to \Deligne_{,H_\lambda}(f_{C^*}^{-1}(0)\times_\s S)$ is \eqref{eqn:RPhiXB} in the case $B = C^*$; and % then shifted by $-1$; and
\item[\it (iv)]
 $
\Deligne_{,H_\lambda}(f_{C^*}^{-1}(0)\times_\s S) \to \Deligne_{,H_\lambda}(T^*_C(V_\lambda)_\textrm{reg}\times_\s S)
$
is obtained by pullback along the inclusion $T^*_C(V_\lambda)_\textrm{reg} \hookrightarrow f_{C^*}^{-1}(0)$, 
using Proposition~\ref{proposition:KPairCC*}.
\end{enumerate}
Since the Lagrangian varieties $T^*_C(V_\lambda)_\textrm{reg}$ are components of $T^*_{H_\lambda}(V_\lambda)_\textrm{reg}$, as $C$ runs over $H_\lambda$-orbits in $V_{\lambda}$, we may assemble the functors $\Ev_C$ to define
\begin{equation}\label{eqn:Ev}\index{$\Ev$}
\Ev : \Deligne_{,H_\lambda}(V_\lambda) \to \Deligne_{,H_\lambda}(T^*_{H_\lambda}(V_\lambda)_\textrm{reg}\times_\s S).
\end{equation}
We refer to this as the \emph{microlocal vanishing cycles functor}.\index{microlocal vanishing cycles functor, $\Ev$}
%\end{definition}
%Although it plays a role in our calculations in Part~\ref{Part2}, 
Our notation below will generally hide the action of inertia.

\subsection{Stalks}\label{ssec:stalks}

We begin our study of the properties of $\Ev$ by showing that it is indeed the functor that Brylinski promises in \cite[Notation 1.14]{Brylinski:Transformations}.
\begin{proposition}\label{VC:exactandstalks}
The functor
\[
\Ev :  \Deligne_{,H_\lambda}(V_\lambda) \to \Deligne_{,H_\lambda}(T^*_{H_\lambda}(V_\lambda)_\textrm{reg}\times_\s S)
\] 
is exact and for every $\mathcal{F}\in \Deligne_{,H_\lambda}(V_\lambda)$ and every $(x_0,\xi_0)\in T^*_{H_\lambda}(V_\lambda)_\textrm{reg}$, there is a canonical isomorphism
\[
\left(\Ev \mathcal{F}\right)_{(x_0,\xi_0)} \iso (\RPhi_{f_{\xi_0}} \mathcal{F})_{x_0}
\]
compatible with the actions of $Z_{H_\lambda}(x_0,\xi_0)$ and $\Gal({\bar\eta}/\eta)$.
\end{proposition}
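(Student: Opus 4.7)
The plan is to handle exactness and the stalk formula separately, with the stalk formula being the more substantive claim.

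\emph{Exactness.} This is a routine check once the definition \eqref{diag:Ev} is unpacked. The functor $\Ev_C$ is the composition of four steps: (i) external product with $\1_{C^*}$ (a smooth pullback along the projection $V_\lambda \times C^* \to V_\lambda$, hence $t$-exact up to a shift by $\dim C^*$), (ii) equivariant base change to $X_{C^*}$ along the smooth morphism $X_{C^*} \to V_\lambda \times C^*$, (iii) the vanishing cycles functor $\RPhi_{X_{C^*}}$ (which, by \cite[Corollaire 4.6]{BBD}, is perverse $t$-exact after the conventional shift $[-1]$), and (iv) restriction along the open immersion $T^*_C(V_\lambda)_\textrm{reg} \hookrightarrow f_{C^*}^{-1}(0)$, which is $t$-exact. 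Assembling these, with the customary shift built into the definition of $\Ev_C$, yields exactness.

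\emph{The stalk formula.} Fix $(x_0,\xi_0) \in T^*_C(V_\lambda)_\textrm{reg}$, so in particular $\xi_0 \in C^*$ by Lemma~\ref{lemma:CC*}. The key tool is the compatibility of vanishing cycles with smooth base change (\cite[Expos\'e XIII, (2.1.7.1)]{SGA7II}). Let $\iota_{\xi_0}: V_\lambda \hookrightarrow V_\lambda \times C^*$ denote the inclusion of the fibre at $\xi_0$ of the second projection $\pi: V_\lambda \times C^* \to C^*$. Base change of $\iota_{\xi_0}$ along $X_{C^*} \to V_\lambda \times C^*$ gives the closed immersion $\tilde\iota_{\xi_0}: X_{\xi_0} \hookrightarrow X_{C^*}$. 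Since $\pi$ is smooth and $f_{C^*} \circ \iota_{\xi_0} = f_{\xi_0}$, smooth base change furnishes a canonical isomorphism
\[
\tilde\iota_{\xi_0}^*\, \RPhi_{X_{C^*}}(p^*\mathcal{F}) \;\simeq\; \RPhi_{X_{\xi_0}}(\iota_{\xi_0}^* p^* \mathcal{F}) \;=\; \RPhi_{f_{\xi_0}} \mathcal{F},
\]
where $p: V_\lambda \times C^* \to V_\lambda$ is the first projection and we use $p \circ \iota_{\xi_0} = \id_{V_\lambda}$. Taking the stalk at $x_0$ on both sides, and unraveling the definition \eqref{diag:Ev} of $\Ev_C$ at $(x_0,\xi_0)$, produces the desired canonical isomorphism $(\Ev \mathcal{F})_{(x_0,\xi_0)} \iso (\RPhi_{f_{\xi_0}} \mathcal{F})_{x_0}$.

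\emph{Equivariance and the main obstacle.} The stabilizer $Z_{H_\lambda}(x_0,\xi_0)$ acts on the base change diagram above, since it fixes both $\xi_0$ (acting on $C^*$) and $x_0$ (acting on $V_\lambda$); the smooth base change isomorphism is natural, hence equivariant. The Galois action of $\Gal(\bar\eta/\eta)$ is intrinsic to $\RPhi$ and is preserved by all base change isomorphisms appearing. The main obstacle is executing smooth base change cleanly in the equivariant setting over the trait $S$, and in particular verifying that the monodromy action carried by $\Ev \mathcal{F}$ (descending from the action on $\RPsi$ used in the definition of $\RPhi$ via \eqref{distinguished}) agrees with the one on $\RPhi_{f_{\xi_0}} \mathcal{F}$ under the isomorphism above; this amounts to tracking the canonical natural transformations in the proof of \cite[Expos\'e XIII, (2.1.7.1)]{SGA7II}.
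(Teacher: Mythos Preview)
Your exactness argument is fine (though note that ``exact'' here means exact as a triangulated functor, not $t$-exact; the $t$-exactness is established separately in Proposition~\ref{VC:perverse}).

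The stalk formula, however, has a genuine gap. You invoke smooth base change for the closed immersion $\iota_{\xi_0}: V_\lambda \hookrightarrow V_\lambda \times C^*$, but smooth base change \cite[Expos\'e XIII, (2.1.7.1)]{SGA7II} requires the pullback morphism itself to be smooth; the smoothness of the ambient projection $\pi$ does not help. A closed immersion is not smooth, and in general $\iota^*\RPhi_f(\mathcal{G}) \not\simeq \RPhi_{f\circ\iota}(\iota^*\mathcal{G})$ for closed immersions $\iota$. (Concretely: take $f=xt$ on $\mathbb{A}^2$, $\mathcal{G}=\1$, and restrict to the fibre $t=0$; the restricted function is identically zero so $\RPhi_0(\1)=\1$ by Lemma~\ref{lemma:method0}, whereas $\RPhi_{xt}(\1)$ is supported only at the origin.) The fact that your sheaf is of the form $p^*\mathcal{F}$ does not by itself rescue the argument.

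What the paper does instead is exploit the transitivity of the $H_\lambda$-action on $C^*$. Lemma~\ref{lemma:descent} shows that $X_{C^*} \simeq (H_\lambda\times_{\s}S)\times_{(Z_{H_\lambda}(\xi_0)\times_{\s}S)} X_{\xi_0}$, so equivariant descent makes $i_{\xi_0}^*$ an \emph{equivalence} $\Deligne_{,H_\lambda\times_{\s}S}(X_{C^*}) \to \Deligne_{,Z_{H_\lambda}(\xi_0)\times_{\s}S}(X_{\xi_0})$, and likewise on generic and special fibres. One then checks that this equivalence intertwines $(j_{C^*,S})_*$ with $(j_{\xi_0,S})_*$ and $(i_{C^*,S})^*$ with $(i_{\xi_0,S})^*$, hence intertwines $\RPsi_{f_{C^*}}$ with $\RPsi_{f_{\xi_0}}$; the stalk isomorphism for $\RPhi$ then follows from the distinguished triangle~\eqref{distinguished}. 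This is the missing ingredient: the equivariant structure is what allows passage to the fibre, not smooth base change.
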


\begin{proof}
With reference to diagram \eqref{diag:Ev}, we see that $\Ev_C$ is exact since it is defined as the composition of four exact functors. Since the Lagrangian varieties $T^*_C(V_\lambda)_\textrm{reg}$ are components of $T^*_{H_\lambda}(V_\lambda)_\textrm{reg}$, $\Ev$ is also exact.

The $S$-morphism 
\begin{eqnarray*}
i_{\xi_0} : X_{\xi_0} &\to& X_{B}\\
x &\mapsto& (x,\xi_0)
\end{eqnarray*}
is equivariant for the $Z_{H_\lambda}(\xi_0)\times_\s S$-action on $X_{\xi_0}$ and the $H_\lambda\times_{\s} S$-action on $X_{B}$.
By Lemma~\ref{lemma:descent}, below, this induces an equivalence
\[
i_{\xi_0}^* : \Deligne_{,H_\lambda\times_{\s}S}(X_{B}) \to \Deligne_{,Z_{H_\lambda}(\xi_0)\times_\s S}(X_{\xi_0}).
\] 
Consider the following commuting diagram, for $\xi_0\in C^*$:
\[
\begin{tikzcd}
\Deligne_{,H_\lambda \times_{\s} \eta}(f_{C^*}^{-1}({\eta})) \arrow{dd}{i_{\xi_0,{\eta}} ^*}[swap]{\text{equiv.}} \arrow{r}{(j_{C^*,{S}})_*} & \Deligne_{,H_\lambda \times_{\s} S}(X_{C^*}) \arrow{dd}{i_{\xi_0,{S}}^*}[swap]{\text{equiv.}} \arrow{r}{(i_{C^*,{S}})^*} & \Deligne_{,H_\lambda}(f_{C^*}^{-1}(0)) \arrow{dd}{i_{\xi_0,\s} ^*}[swap]{\text{equiv.}} \\
&&\\
\Deligne_{,Z_{H_\lambda}(\xi_0)\times_\s \eta}(f_{\xi_0}^{-1}({\eta})) \arrow{r}{(j_{\xi_0,{S}})_*} & \Deligne_{,Z_{H_\lambda}(\xi_0) \times_\s S}(X_{\xi_0}) \arrow{r}{(i_{\xi_0,{S}})^*}  & \Deligne_{,Z_{H_\lambda}(\xi_0)_{\s}}(f_{\xi_0}^{-1}(0)) .
\end{tikzcd}
\]
When combined with base change along ${\bar S}\to S$, it follows that
\[
\RPsi_{f_{C^*}} \ i^*_{\xi_0,{\eta}} =  i^*_{\xi_0,\s} \ \RPsi_{f_{\xi_0}}.
\]
We find this at the heart of the following commuting diagram, where $(x_0,\xi_0)\in X_{C^*,\s}$.
\[
\begin{tikzcd}
\Deligne_{,H_\lambda}(V_\lambda) \arrow{rrr}{p_{C^*}^*} \arrow{d} &&& \Deligne_{,H_\lambda}(V_\lambda \times C^*) \arrow{d} \\
\Deligne_{,Z_{H_\lambda}(\xi_0) \times_\s \eta}(f_{\xi_0}^{-1}(\eta)) \arrow{d}{\RPsi_{f_{\xi_0}}}  &&& \arrow{lll}{i^*_{\xi_0,{\eta}}}[swap]{\text{equiv.}}   \Deligne_{,H_\lambda\times_\s \eta}(f_{C^*}^{-1}(\eta)) \arrow{d}{\RPsi_{f_{C^*}}} \\
 \Deligne_{,Z_{H_\lambda}(\xi_0)}(f_{\xi_0}^{-1}(0)) \arrow{d} &&& \arrow{lll}{i^*_{\xi_0,\s}}[swap]{\text{equiv.}} \Deligne_{,H_\lambda}(f_{C^*}^{-1}(0)) \arrow{d} \\
  \Deligne_{,Z_{H_\lambda}(x_0,\xi_0)} (\{ (x_0,\xi_0)\} ) &&& \arrow{lll}{} \Deligne_H(T^*_C(V_\lambda)_\text{sreg}) 
\end{tikzcd}
\]
Thus,
\begin{equation}\label{suffices}
\left(\RPsi_{f_{C^*}}\left(\mathcal{F}\boxtimes \1_{C^*}\right)\right)_{(x_0,\xi_0)} \iso (\RPsi_{f_{\xi_0}} \mathcal{F})_{x_0},
\end{equation}
compatible with $Z_{H_\lambda}(x_0, \xi_0)$-actions. 
On the other hand,
\[
\left(i^*_{X_{C^*,\s}}\left(\mathcal{F}\boxtimes \1_{C^*}\right) \right)_{(x_0,\xi_0)} 
= \left(\mathcal{F}\boxtimes \1_{C^*} \right)_{(x_0,\xi_0)}
%= \mathcal{F}_{x_0}
= (i^*_{\xi_0} \mathcal{F})_{x_0},
\]
as $Z_{H_\lambda}(x_0,\xi_0)$-spaces.
Using \eqref{distinguished}, it follows that
\begin{equation}
\left(\RPhi_{f_{C^*}}\left(\mathcal{F}\boxtimes \1_{C^*} \right)\right)_{(x_0,\xi_0)}  \iso (\RPhi_{f_{\xi_0}}\mathcal{F})_{x_0},
\end{equation}
compatible with the natural $Z_{H_\lambda}(x_0,\xi_0)$-action.
\end{proof}

\begin{lemma}\label{lemma:descent}
For every $\xi_0 \in B$
\[
X_{B} \iso \left(H_\lambda \times_{\s} S\right) \times_{\left(Z_{H_\lambda}(\xi_0)\times_{\s} S\right)} X_{\xi_0}
\]
in $S$-schemes and the closed embedding
\begin{eqnarray*}
i_{\xi_0} : X_{\xi_0} &\to& X_{B}\\
x &\mapsto& (x,\xi_0)
\end{eqnarray*}
induces an equivalence
\[
i_{\xi_0}^* : \Deligne_{,H_\lambda\times_{\s}S}(X_{B}) \to \Deligne_{,Z_{H_\lambda}(\xi_0)\times_\s S}(X_{\xi_0}).
\] 
\end{lemma}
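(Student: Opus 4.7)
The plan has two steps: first establish the scheme-theoretic isomorphism, then deduce the equivalence of equivariant derived categories by a standard descent/induction argument.

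For the isomorphism, since $B$ is an $H_\lambda$-orbit through $\xi_0$, the orbit map yields $B \iso H_\lambda/Z_{H_\lambda}(\xi_0)$. I would then consider the smooth surjection
\[
a : H_\lambda \times V_\lambda \longrightarrow V_\lambda \times B, \qquad (h,x) \mapsto (h\cdot x,\ h\cdot \xi_0),
\]
which exhibits $V_\lambda \times B$ as the quotient of $H_\lambda \times V_\lambda$ by the free $Z_{H_\lambda}(\xi_0)$-action $z\cdot(h,x) = (hz^{-1}, z\cdot x)$; that is, $V_\lambda \times B \iso H_\lambda \times^{Z_{H_\lambda}(\xi_0)} V_\lambda$ as $H_\lambda$-varieties (the RHS carrying the left translation action on the $H_\lambda$-factor, matching the diagonal action on the LHS). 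Because the pairing $\KPair{\ }{\ }$ is $J_\lambda$-invariant, and in particular $H_\lambda$-invariant, the composite $f_B\circ a$ is $(h,x)\mapsto \KPair{x}{\xi_0} = f_{\xi_0}(x)$, which factors through the projection $\mathrm{pr}_2 : H_\lambda\times V_\lambda \to V_\lambda$. Base-changing everything along $S\to\mathbb{A}^1$ and using that quotients by free actions commute with flat base change (and that the group schemes act trivially on the $S$-factor) gives
\[
X_B \ \iso\ (H_\lambda\times_\s S)\times^{Z_{H_\lambda}(\xi_0)\times_\s S} X_{\xi_0},
\]
and under this identification the closed embedding $i_{\xi_0}:X_{\xi_0}\hookrightarrow X_B$, $x\mapsto (x,\xi_0)$, corresponds to $y\mapsto [1,y]$.

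For the second assertion, I would invoke the general induction equivalence for equivariant derived categories (as in \cite[Section 2]{Bernstein:Equivariant} and the discussion surrounding equivariant pullback in Section~\ref{ssec:equivariant}): if $G$ is an algebraic group acting on $G\times^H Y$ by left translation on the first factor, where $H\leq G$ is closed and $Y$ is an $H$-variety, then pullback along $Y\hookrightarrow G\times^H Y$, $y\mapsto [1,y]$, is an equivalence $\Deligne_{,G}(G\times^H Y) \to \Deligne_{,H}(Y)$. Applied with $G = H_\lambda\times_\s S$, $H = Z_{H_\lambda}(\xi_0)\times_\s S$, and $Y = X_{\xi_0}$, this gives the desired equivalence $i_{\xi_0}^*$.

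The only non-routine aspect is checking that the induction equivalence — stated in \cite{Bernstein:Equivariant} for group actions over a point — applies verbatim over the base $S$. Since both group schemes in play arise by base change from $\s$-groups acting trivially on the $S$-direction, and since the quotient construction is compatible with this base change, this reduces formally to the classical statement; so I expect no genuine obstacle, only a small amount of bookkeeping to carry the inertia action through the equivalence.
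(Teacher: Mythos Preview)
Your proposal is correct and follows essentially the same approach as the paper: establish the induction-space isomorphism and then invoke the standard equivariant-descent/induction equivalence from \cite{Bernstein:Equivariant}. The only difference is organizational --- the paper first constructs the isomorphism fibre-by-fibre over $\mathbb{A}^1$ (to be explicit about the scheme-theoretic existence of the quotient) before base-changing along $S\to\s$, whereas you work globally with $V_\lambda\times B$ and base-change along $S\to\mathbb{A}^1$; your version is slightly slicker but amounts to the same argument.
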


\begin{proof}
First we must show that $\left(H_\lambda \times_{\s} S\right) \times_{\left(Z_{H_\lambda}(\xi_0)\times_{\s} S\right)} X_{\xi_0}$ exists in $S$-schemes.
To do that, it will be helpful to prove: for every $\delta \in \mathbb{A}^1$ and $\xi_0 \in B$ there is an $H_\lambda$-isomorphism
\[
f_{B}^{-1}(\delta) \iso {H_\lambda}\times_{Z_{H_\lambda}(\xi_0)} f_{\xi_0} ^{-1}(\delta) 
\]
in $\s$-schemes, where $H_\lambda\times f_{\xi_0} ^{-1}(\delta) \to H_\lambda\times_{Z_{H_\lambda}(\xi_0)} f_{\xi_0} ^{-1}(\delta)$ is an $Z_{H_\lambda}(\xi_0)$-torsor in $\k$-varieties.
Since $Z_{H_\lambda}(\xi_0)$ is a closed subgroup of $H_\lambda$, the quotient ${H_\lambda}\to {H_\lambda}/Z_{H_\lambda}(\xi_0)$ exists in $\k$-varieties.
Consider the monomorphism
\[
H_\lambda \times  f_{\xi_0} ^{-1}(\delta) \to \left(H_\lambda/Z_{H_\lambda}(\xi_0)\right) \times T^*(V_\lambda)
\]
given by $(h,x) \mapsto (h Z_{H_\lambda}(\xi_0) , h\cdot (x,\xi_0))$.
Note that $f_{\xi_0} ^{-1}(\delta)$ is a closed subvariety of $V_\lambda$.
The promised $Z_{H_\lambda}(\xi_0)$-quotient ${H_\lambda}\times_{Z_{H_\lambda}(x_0)} f_{\xi_0} ^{-1}(\delta)$ is this morphism restricted to the image:
\[
H_\lambda \times  f_{\xi_0} ^{-1}(\delta) \to \{ (h Z_{H_\lambda}(\xi_0) , h\cdot(x,\xi_0)) \in \left(H_\lambda/Z_{H_\lambda}(\xi_0)\right) \times V_\lambda\times B \tq h^{-1}\cdot x \in f_{\xi_0} ^{-1}(\delta)\}.
\]
Following standard practice, we use the notation $(h,x) \mapsto [h,x]_{Z_{H_\lambda}(\xi_0) }$ for this map.
Now, projection to the second coordinate
\[
{H_\lambda}\times_{Z_{H_\lambda}(\xi_0)} f_{\xi_0} ^{-1}(\delta)  \to f_{B}^{-1}(\delta)
\]
is given by $[h,x]_{Z_{H_\lambda}(\xi_0)} \mapsto h\cdot(x,\xi_0)$, which is the promised isomorphism.
This shows that the $Z_{H_\lambda}(\xi_0)$-torsor ${H_\lambda}\times f_{\xi_0}^{-1}(\delta) \to {H_\lambda}\times_{Z_{H_\lambda}(\xi_0)} f_{\xi_0}^{-1}(\delta)$ exists in $\s$-schemes and also that the map
\[
{H_\lambda}\times_{Z_{H_\lambda}(\xi_0)} f_{\xi_0}^{-1}(\delta) \to T^*(V_\lambda),
\qquad 
[h,x]_{Z_{H_\lambda}(\xi_0)} \mapsto h\cdot (x,\xi_0),
\]
is an $H_\lambda$-isomorphism onto $f_B^{-1}(\delta) \subseteq T^*(V_\lambda)$.

Applying pull-back along the flat morphism $S\to \s$ to $Z_{H_\lambda}(\xi_0) \to H_\lambda \to H_\lambda/Z_{H_\lambda}(\xi_0)$ determines the cokernel of $Z_{H_\lambda}(\xi_0)\times_\s S \to H_\lambda \times_{\s} S$ and also shows that the local trivialization of $H_\lambda \to H_\lambda/Z_{H_\lambda}(\xi_0)$ determines a local  trivialization of $H_\lambda \times_{\s} S \to (H_\lambda \times_{\s} S)/(Z_{H_\lambda}(\xi_0)\times_{\s} S)$. 
Now we may argue as above to see that 
$(H_\lambda \times_{\s} S) \times_{(Z_{H_\lambda}(\xi_0)\times_\s S)} X_{\xi_0} \to T^*(V_\lambda)\times_\s S$, defined by $[h,x]_{Z_{H_\lambda}(\xi_0)\times_\s S} \mapsto h\cdot(x,\xi_0)$, is an isomorphism onto $X_{B}$ over $S$.

The last part of the lemma now follows immediately by equivariant descent, arguing as in \cite[Section 2.6.2]{Bernstein:Equivariant}, for instance.
\end{proof}

% This is just a remark:
\iffalse
For each $\xi_0\in B$, the map $x \mapsto (x,\xi_0)$ is a section of projection $V_\lambda\times B \to V_\lambda$ over $\s$.
We have seen how to view both $V_\lambda \times B$ and $V_\lambda$ as $S$-schemes using $f_{B}$ and $f_{\xi_0}$, respectively. 
The projection $V_\lambda \times B \to V_\lambda$ does not extend to a map of these $S$-schemes, but the section $V_\lambda \to V_\lambda\times B$ above does.
\fi

\subsection{Support}\label{ssec:support}

\begin{proposition}\label{VC:support}
Let $C\subseteq V_\lambda$ be an $H_\lambda$-orbit.
If $\mathcal{F}\in \Deligne_{,H_\lambda}(V_\lambda)$ then $\Ev_{C} \mathcal{F} = 0$  unless
$C\subseteq \operatorname{supp}\mathcal{F}$.
\end{proposition}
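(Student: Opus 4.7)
The plan is to reduce the statement to a stalk-level computation via Proposition~\ref{VC:exactandstalks}, and then exploit two facts: first, that an $H_\lambda$-orbit is either entirely contained in or entirely disjoint from the (necessarily $H_\lambda$-stable) support of an equivariant sheaf; second, that vanishing cycles is a local construction, so that a sheaf vanishing in a neighborhood of a point contributes nothing to the vanishing cycles stalk there.

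More concretely, I would argue as follows. Suppose $C \not\subseteq \operatorname{supp}\mathcal{F}$. Because $\mathcal{F}\in \Deligne_{,H_\lambda}(V_\lambda)$ is $H_\lambda$-equivariant, $\operatorname{supp}\mathcal{F}$ is an $H_\lambda$-stable closed subset of $V_\lambda$. Since $C$ is a single $H_\lambda$-orbit, the intersection $C \cap \operatorname{supp}\mathcal{F}$ is either all of $C$ or empty; by hypothesis the first case is excluded, so $C \cap \operatorname{supp}\mathcal{F} = \emptyset$. Consequently, for every $x_0 \in C$ there is an open neighbourhood $U_{x_0}$ of $x_0$ in $V_\lambda$ on which $\mathcal{F}\vert_{U_{x_0}} = 0$.

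Now pick any $(x_0,\xi_0) \in T^*_{C}(V_\lambda)_\textrm{reg}$; in particular $x_0 \in C$. By Proposition~\ref{VC:exactandstalks} there is a canonical isomorphism
\[
(\Ev_C \mathcal{F})_{(x_0,\xi_0)} \iso (\RPhi_{f_{\xi_0}}\mathcal{F})_{x_0}.
\]
The right-hand side depends only on the restriction of $\mathcal{F}$ to an arbitrarily small Zariski-open neighbourhood of $x_0$ in $V_\lambda$ (this is immediate from the construction of $\RPhi_{X_{\xi_0}}$ via $(i_{\bar X})^*(j_{\bar X})_* (b_{X_\eta})^*$, which commutes with restriction to opens on the base). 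Since $\mathcal{F}\vert_{U_{x_0}}=0$, this stalk vanishes.

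Thus every stalk of $\Ev_C\mathcal{F}$ on $T^*_{C}(V_\lambda)_\textrm{reg}\times_\s S$ is zero. Because $\Ev_C\mathcal{F}$ is an object of the constructible derived category (vanishing cycles preserves constructibility on $\s$-schemes of finite type, and the same holds after the $H_\lambda \times_\s S$-equivariant bookkeeping via the equivalence of Lemma~\ref{lemma:descent}), the pointwise vanishing of all stalks forces $\Ev_C\mathcal{F} = 0$, as claimed. There is no serious obstacle here: the content of the argument is Proposition~\ref{VC:exactandstalks}, which has already been proved, together with the locality of vanishing cycles; the $H_\lambda$-equivariance hypothesis is what lets us pass from "$C \not\subseteq \operatorname{supp}\mathcal{F}$" to "$\mathcal{F}$ vanishes in a neighbourhood of each point of $C$".
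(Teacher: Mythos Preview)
Your proof is correct. Both your argument and the paper's hinge on the same elementary observation that $\operatorname{supp}\mathcal{F}$ is $H_\lambda$-stable, so an orbit $C$ not contained in it is in fact disjoint from it. From there the routes diverge slightly: the paper writes $\mathcal{F}=i_! i^*\mathcal{F}$ for the closed immersion $i:\operatorname{supp}\mathcal{F}\hookrightarrow V_\lambda$ and applies a proper base change lemma (Lemma~\ref{lemma:PBC}) to express $\Ev_C\mathcal{F}$ as the restriction to $T^*_C(V_\lambda)_\textrm{reg}$ of a sheaf supported on $\operatorname{supp}\mathcal{F}\times C^*$, which misses $C\times C^*\supseteq T^*_C(V_\lambda)_\textrm{reg}$ entirely. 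You instead invoke the stalk description of Proposition~\ref{VC:exactandstalks} together with the locality of vanishing cycles, which is more elementary in that it avoids Lemma~\ref{lemma:PBC}. The paper's route has the advantage of setting up Lemma~\ref{lemma:PBC} for heavy use later in the explicit computations of Part~\ref{Part2}; your route is cleaner for this statement in isolation.
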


\begin{proof}
Note that the support of $\mathcal{F}$ is a union of $H_\lambda$-orbits.
Let $i : \operatorname{supp}\mathcal{F} \hookrightarrow V_\lambda$ be inclusion.
Then
\[
\mathcal{F} = i_!\, i^* \mathcal{F}.
\]
Since $i$ is proper, we may apply Lemma~\ref{lemma:PBC}, below, to this case with $W = \operatorname{supp}\mathcal{F}$ and $\pi = i$ and $g_{C^*} = f\vert_{\operatorname{supp}\mathcal{F}\times C^*}$. \index{$g_{C^*}$}
Then $\pi' = i \times \id_{C^*}$ and 
\[
g_{C^*}^{-1}(0) 
= \{ (x,\xi)\in \operatorname{supp}\mathcal{F}\times C^* \tq \KPair{x}{\xi} =0\}
\]
and 
\[
(W\times C^*)_{\pi\text{-reg}} = (\operatorname{supp}\mathcal{F} \times C^*) \cap T^*_{C}(V_\lambda)_\textrm{reg} = T^*_{C}(V_\lambda)_\textrm{reg}.
\]
Thus,
\[
\begin{array}{rcl r}
\Ev_{C} \mathcal{F} 
&=& \Ev_{C} i_!\, i^* \mathcal{F}\\
&=& (\RPhi_{g_{C^*}} (i^* \mathcal{F} \boxtimes \1_{C^*}))\vert_{T^*_C(V_\lambda)_\textrm{reg}},
\end{array}
\]
by Lemma~\ref{lemma:PBC}.
The support of $i^* \mathcal{F} \boxtimes \1_{C^*}$ is contained in $\operatorname{supp}\mathcal{F} \times C^*$, so the support of 
\[
\RPhi_{g_{C^*}} (i^* \mathcal{F} \boxtimes \1_{C^*})
\]
is contained in $g_{C^*}^{-1}(0) \cap (\operatorname{supp}\mathcal{F} \times C^*)$
so the support of $\Ev_C \mathcal{F}$ is contained in
\[
T^*_C(V_\lambda)_\textrm{reg} \cap (\operatorname{supp}\mathcal{F} \times C^*).
\]
Since $T^*_C(V_\lambda)_\textrm{reg}\subseteq C\times C^*$, this is empty unless $C \subseteq \operatorname{supp}\mathcal{F}$.
\end{proof}

Besides its use in Proposition~\ref{VC:support}, above, the following result is key to many of the calculations in Part~\ref{Part2}.

\begin{lemma}\label{lemma:PBC}
Suppose $\pi : W\to V_\lambda$ is proper with fibres of dimension $n$.
Suppose $H_\lambda$ acts on $W$ and $\pi : W\to V_\lambda$ is equivariant.
Then
\[
\Ev_C \pi_! \mathcal{E}
= (\pi_\s'')_! \left( ( \RPhi_{g_{C^*}} \mathcal{E} \boxtimes \1_{C^*})\vert_{(W\times C^*)_{\pi\text{-reg}}}\right), 
\]
where $\pi' \ceq \pi\times \id_{C^*}$, $\pi'_\s$ is its restriction to special fibres,  $g_{C^*} \ceq f_{C^*}\circ \pi'$, and $\pi_\s''$ and $(W\times C^*)_{\pi\text{-reg}}$ are defined by the cartesian diagrams below.
\[
\begin{tikzcd}
W \arrow{d}[swap]{\pi} & \arrow{l}{p'_{C^*}} \arrow{d}[swap]{\pi'} W\times C^* \arrow[bend left=50, near start]{dd}{g_{C^*}} && g_{C^*}^{-1}(0) \arrow{d}[swap]{\pi'_\s}  &\arrow{l} (W\times C^*)_{\pi\text{-reg}} \arrow{d}{\pi''_\s} \\
V_\lambda & \arrow{l}{p_{C^*}} V_\lambda\times C^* \arrow{d}[swap]{f_{C^*}} && f_{C^*}^{-1}(0) \arrow{d} & \arrow{l} T^*_{C}(V_\lambda)_\textrm{reg}\\
& S && \s &  
\end{tikzcd}
\]
\end{lemma}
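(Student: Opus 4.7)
The plan is to unfold the definition of $\Ev_C$ and then reduce to proper base change for the vanishing cycles functor. Recall that, by the definition of $\Ev_C$ in \eqref{diag:Ev},
\[
\Ev_C \pi_!\mathcal{E} = \left(\RPhi_{f_{C^*}}(\pi_!\mathcal{E} \boxtimes \1_{C^*})\right)\vert_{T^*_C(V_\lambda)_\text{reg}}.
\]
First I would rewrite the external product using the projection/K\"unneth formula: since $\pi' = \pi\times \id_{C^*}$ is a fibre product of $\pi$ with the identity, we have
\[
\pi_!\mathcal{E} \boxtimes \1_{C^*} \iso \pi'_!(\mathcal{E} \boxtimes \1_{C^*})
\]
in $\Deligne_{,H_\lambda}(V_\lambda\times C^*)$. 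This is an elementary consequence of proper base change applied to the cartesian square with horizontal arrows $p_{C^*}, p'_{C^*}$ in the displayed diagram.

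Next, since $\pi$ is proper (hence so is $\pi'$), and since $g_{C^*} = f_{C^*}\circ \pi'$, the proper base change property for vanishing cycles (see \cite[Expos\'e XIII, (2.1.7.2)]{SGA7II}) applied over the trait $S$ yields a canonical isomorphism
\[
\RPhi_{f_{C^*}}\circ \pi'_! \ \iso \ (\pi'_\s)_!\circ \RPhi_{g_{C^*}}
\]
as functors from $\Deligne_{,H_\lambda}(W\times C^*)$ to $\Deligne_{,H_\lambda}(f_{C^*}^{-1}(0)\times_\s S)$. Applying this to $\mathcal{E}\boxtimes \1_{C^*}$ gives
\[
\RPhi_{f_{C^*}}(\pi_!\mathcal{E}\boxtimes \1_{C^*}) \iso (\pi'_\s)_!\,\RPhi_{g_{C^*}}(\mathcal{E}\boxtimes \1_{C^*}).
\]
Finally, I would restrict both sides to $T^*_C(V_\lambda)_\text{reg}\subseteq f_{C^*}^{-1}(0)$ and invoke base change along the open immersion $T^*_C(V_\lambda)_\text{reg}\hookrightarrow f_{C^*}^{-1}(0)$ in the rightmost cartesian square of the statement: since $(W\times C^*)_{\pi\text{-reg}}$ is by definition the pullback of $T^*_C(V_\lambda)_\text{reg}$ under $\pi'_\s$, smooth (open) base change gives
\[
\left((\pi'_\s)_! \mathcal{H}\right)\vert_{T^*_C(V_\lambda)_\text{reg}} \iso (\pi''_\s)_!\bigl(\mathcal{H}\vert_{(W\times C^*)_{\pi\text{-reg}}}\bigr),
\]
for any $\mathcal{H}\in \Deligne_{,H_\lambda}(f_{C^*}^{-1}(0)\times_\s S)$. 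Applying this to $\mathcal{H} = \RPhi_{g_{C^*}}(\mathcal{E}\boxtimes \1_{C^*})$ completes the identification.

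The main substantive step is the vanishing cycles base change isomorphism; the rest is bookkeeping with cartesian squares and standard six-functor formalism. The only subtle point I would need to be careful about is the equivariance throughout: the $H_\lambda$-action on all spaces is compatible with $\pi'$ (since $\pi$ is $H_\lambda$-equivariant and $H_\lambda$ acts on $C^*$), $f_{C^*}$ is $H_\lambda$-invariant, and both $\RPhi$ and $(-)_!$ lift to the equivariant derived categories in the sense of Section~\ref{ssec:Ev}, so the stated isomorphism holds in $\Deligne_{,H_\lambda}(T^*_C(V_\lambda)_\text{reg}\times_\s S)$.
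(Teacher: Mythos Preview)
Your proof is correct and follows essentially the same route as the paper's: unfold the definition of $\Ev_C$, use proper base change for the cartesian square $p_{C^*}, p'_{C^*}$ to identify $\pi_!\mathcal{E}\boxtimes\1_{C^*}$ with $\pi'_!(\mathcal{E}\boxtimes\1_{C^*})$, apply proper base change for vanishing cycles along $\pi'$, and finally use base change along the open immersion $T^*_C(V_\lambda)_{\mathrm{reg}}\hookrightarrow f_{C^*}^{-1}(0)$. Your added remarks on equivariance are a welcome elaboration that the paper leaves implicit.
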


\begin{proof}
Suppose $\mathcal{E}\in \Deligne_{,H_\lambda}(W)$.
Then $\pi_! \mathcal{E} \in \Deligne_{,H_\lambda}(V_\lambda)$.
%\footnote{This is a simple instance of equivariant pushforward, treated in Lemma~4.1 in my letter to Paul Mezo. I may have to include some of this in our article. I'd prefer to avoid opening Pandora's box here, however.}
Let $p_{C^*}: V_\lambda\times C^* \to V_\lambda$ be projection.
%\todo{The key here is that $\pi'$ extends to a proper morphism of $S$-schemes. Add this.}
Then, by repeated application of proper base change \cite[Expos\'e XIV, 2.1.7.1]{SGA7II}, 
\begin{align*}
\Ev_C \pi_! \mathcal{E}
&= (\RPhi_{f_{C^*}} p^*_{C^*}  \pi_! \mathcal{E})\vert_{T^*_{C}(V_\lambda)_\textrm{reg}}\\
&= (\RPhi_{f_{C^*}} (\pi')_! (p'_{C^*})^*  \mathcal{E})\vert_{T^*_{C}(V_\lambda)_\textrm{reg}} \\
%& \text{proper base change} \\
&= ( (\pi_\s')_!\RPhi_{g_{C^*}} (p'_{C^*})^* \mathcal{E})\vert_{T^*_{C}(V_\lambda)_\textrm{reg}} \\
%& \text{proper base change} \\
&= (\pi_\s'')_! \left( ( \RPhi_{g_{C^*}} (\mathcal{E} \boxtimes \1_{C^*}))\vert_{(W\times C^*)_{\pi\text{-reg}}}\right).
% & \text{proper base change}.
\qedhere\end{align*}
\end{proof}

\subsection{Open orbit}

\begin{lemma}\label{lemma:bigcell}
For every $H_\lambda$-orbit $C\subseteq V_\lambda$ and every $H_\lambda$-equivariant local system $\mathcal{L}$ on $C$,
\[
\Ev_{C} \IC(C,\mathcal{L}) 
=
\left(\RPhi_{f\vert_{C\times C^*}} (\mathcal{L}\boxtimes\1_{C^*}) \right)_{T^*_{C}(V_\lambda)_\textrm{reg}}[\dim C].
\]
\end{lemma}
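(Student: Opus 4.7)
The plan is to compute $\Ev_C\IC(C,\mathcal{L})$ by applying Lemma~\ref{lemma:PBC} to the closed immersion $\bar\jmath\colon\bar{C}\hookrightarrow V_\lambda$ (which is proper), then invoking smooth base change for vanishing cycles to pass from $\bar{C}\times C^*$ down to the open subset $C\times C^*$. The essential geometric input is Lemma~\ref{lemma:CC*}, which tells us that $T^*_C(V_\lambda)_\textrm{reg}\subseteq C\times C^*$, so the stalks appearing in $\Ev_C$ only see $\IC(C,\mathcal{L})$ restricted to its open smooth stratum $C$, where it equals $\mathcal{L}[\dim C]$ by the defining property of intermediate extension.

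Set $\mathcal{F}\ceq\IC(C,\mathcal{L})$. Since $\mathcal{F}$ has support in $\bar{C}$ and $\bar\jmath$ is a closed immersion, $\mathcal{F}=\bar\jmath_!\bar\jmath^*\mathcal{F}$. Applying Lemma~\ref{lemma:PBC} with $W=\bar{C}$, $\pi=\bar\jmath$ and $\mathcal{E}=\bar\jmath^*\mathcal{F}$, and noting that $g_{C^*}=f\vert_{\bar{C}\times C^*}$ in this situation, produces
\[
\Ev_C\mathcal{F}=(\bar\jmath''_\s)_!\Bigl(\bigl(\RPhi_{f\vert_{\bar{C}\times C^*}}(\bar\jmath^*\mathcal{F}\boxtimes\1_{C^*})\bigr)\big\vert_{(\bar{C}\times C^*)_{\bar\jmath\text{-reg}}}\Bigr).
\]
Because $T^*_C(V_\lambda)_\textrm{reg}\subseteq C\times C^*\subseteq\bar{C}\times C^*$, the fibre product $(\bar{C}\times C^*)_{\bar\jmath\text{-reg}}$ is identified with $T^*_C(V_\lambda)_\textrm{reg}$ sitting inside $\bar{C}\times C^*$ via $\bar\jmath\times\id_{C^*}$, and $\bar\jmath''_\s$ restricts to the identity on this subset. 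This reduces the identity to
\[
\Ev_C\mathcal{F}=\bigl(\RPhi_{f\vert_{\bar{C}\times C^*}}(\bar\jmath^*\mathcal{F}\boxtimes\1_{C^*})\bigr)\big\vert_{T^*_C(V_\lambda)_\textrm{reg}}.
\]

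Now let $\iota\colon C\times C^*\hookrightarrow\bar{C}\times C^*$ denote the open immersion. Smooth base change for vanishing cycles along $\iota$, as recorded in \cite[Expos\'e~XIII, (2.1.7.1)]{SGA7II}, gives
\[
\iota_\s^*\RPhi_{f\vert_{\bar{C}\times C^*}}(\bar\jmath^*\mathcal{F}\boxtimes\1_{C^*})=\RPhi_{f\vert_{C\times C^*}}\bigl(\iota^*(\bar\jmath^*\mathcal{F}\boxtimes\1_{C^*})\bigr)=\RPhi_{f\vert_{C\times C^*}}(\mathcal{L}[\dim C]\boxtimes\1_{C^*}),
\]
where the last equality uses $\mathcal{F}\vert_C=\mathcal{L}[\dim C]$. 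Since $T^*_C(V_\lambda)_\textrm{reg}$ lies in $C\times C^*$, restricting both sides further to $T^*_C(V_\lambda)_\textrm{reg}$ and pulling the shift $[\dim C]$ outside yields the claimed formula.

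The expected main subtlety is not the underlying geometry, which is handled by Lemmas~\ref{lemma:CC*} and \ref{lemma:PBC}, but rather the bookkeeping of the $H_\lambda$-equivariant structure and the action of $\Gal({\bar\eta}/\eta)$ through the chain of base change isomorphisms. Each functor involved (equivariant pullback along smooth and proper morphisms, and $\RPhi$ itself) preserves these structures by the same mechanisms already invoked in Section~\ref{ssec:Ev} and Lemma~\ref{lemma:PBC}, so no new structural input is required beyond a careful reading.
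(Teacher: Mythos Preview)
Your proof is correct and follows essentially the same route as the paper: apply Lemma~\ref{lemma:PBC} to the proper (closed) immersion $\bar C\hookrightarrow V_\lambda$, use that $T^*_C(V_\lambda)_\textrm{reg}\subseteq C\times C^*$ (the paper cites Proposition~\ref{proposition:KPairCC*} rather than Lemma~\ref{lemma:CC*}, but the content needed is identical), then apply smooth base change for the open immersion $C\times C^*\hookrightarrow\bar C\times C^*$ together with $\IC(C,\mathcal{L})\vert_C=\mathcal{L}[\dim C]$.
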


\begin{proof}
By the definition of $\Ev_C$ given in \eqref{diag:Ev},
\[
\Ev_{C} \IC(C,\mathcal{L})  = \left(  \RPhi_{f_C^*} (\IC(C,\mathcal{L}) \boxtimes \1_{C^*}) \right)_{T^*_{C}(V_\lambda)_\textrm{reg}}.
\]
Using Proposition~\ref{proposition:KPairCC*} and proper base change for ${\bar C}\hookrightarrow V_\lambda$ as in Lemma~\ref{lemma:PBC} gives
\[
\left(  \RPhi_{f_C^*} (\IC(C,\mathcal{L}) \boxtimes \1_{C^*}) \right)_{T^*_{C}(V_\lambda)_\textrm{reg}}
=
\left(\RPhi_{f\vert_{{\bar C}\times C^*}} (\mathcal{L}^\sharp[\dim C] \boxtimes\1_{C^*}) \right)_{T^*_{C}(V_\lambda)_\textrm{reg}},
\]
using the notation $\mathcal{L}^\sharp = \IC(C,\mathcal{L})\vert_{\bar C}[-\dim C]$. 
Since $C\subseteq V_{\lambda}$ is locally closed, it is relatively open in its closure.
By smooth base change for $C \hookrightarrow {\bar C}$,
\[
\left(\RPhi_{f\vert_{{\bar C}\times C^*}} (\mathcal{L}^\sharp \boxtimes\1_{C^*}) \right)_{T^*_{C}(V_\lambda)_\textrm{reg}}
=
\left(\RPhi_{f\vert_{{C}\times C^*}} (\mathcal{L} \boxtimes\1_{C^*}) \right)_{T^*_{C}(V_\lambda)_\textrm{reg}}.
\]
This proves the lemma.
\end{proof}

\begin{proposition}\label{Ev:bigcell}
Let $C\subseteq V_\lambda$ be an $H_\lambda$-orbit.
\begin{itemize}
\labitem{(a)}{Ev:bigcell-stalks}
For any $(x,\xi)\in T^*_{C}(V_\lambda)_\textrm{reg}$,
\[
\left(\Ev_{C} \IC(C) \right)_{(x,\xi)} = \left(\RPhi_{\xi\vert_{C}} \1_{C}\right)_x[\dim C],
\]
as representations of $Z_{H_\lambda}(x,\xi)$.
% where $\1^\sharp_{C} = \IC(C)[-\dim C]\vert_{\bar C}$.
\labitem{(b)}{Ev:bigcell-projection}
For any $H_\lambda$-equivariant local system $\mathcal{L}$ on $C$,
\[
\Ev_{C} \IC(C,\mathcal{L}) = \Ev_{C} \IC(C) \otimes  \left(\mathcal{L}\boxtimes\1_{C^*}\right)\vert_{T^*_{C}(V_\lambda)_\textrm{reg}}.
\]
%
\iffalse
\labitem{(c)}{Ev:bigcell-nonzero}
For any non-zero $H_\lambda$-equivariant local system $\mathcal{L}$ on $C$ and any $(x,\xi)\in T^*_{C}(V_\lambda)_\textrm{reg}$,
\[
\left(\Ev_{C} \IC(C,\mathcal{L}) \right)_{(x,\xi)}  \ne 0.
\]
\fi
%
\end{itemize}
\end{proposition}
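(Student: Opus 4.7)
The plan is to derive both statements from Lemma~\ref{lemma:bigcell}, which already supplies the explicit formula
\[
\Ev_C\IC(C,\mathcal{L}) = \left(\RPhi_{f\vert_{C\times C^*}}(\mathcal{L}\boxtimes\1_{C^*}) \right)\Big\vert_{T^*_C(V_\lambda)_\textrm{reg}}[\dim C].
\]

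For part (a), I would specialize Lemma~\ref{lemma:bigcell} to $\mathcal{L}=\1_C$ and then compute the stalk at $(x,\xi)\in T^*_C(V_\lambda)_\textrm{reg}$. By Proposition~\ref{VC:exactandstalks} this stalk equals $(\RPhi_{f_\xi}\IC(C))_x$ as a $Z_{H_\lambda}(x,\xi)$-representation. Since $\IC(C)$ is supported on $\bar C$, proper base change along the closed immersion $\bar C\hookrightarrow V_\lambda$ rewrites this as $(\RPhi_{f_\xi\vert_{\bar C}}\IC(C)\vert_{\bar C})_x$. The point $x$ lies in the open stratum $C\subseteq\bar{C}$, so smooth base change along the open immersion $C\hookrightarrow\bar{C}$, together with the identification $\IC(C)\vert_C=\1_C[\dim C]$, further reduces it to $(\RPhi_{f_\xi\vert_C}\1_C)_x[\dim C]$. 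Finally, $f_\xi\vert_C = \xi\vert_C$ from the definition of $f$ via the pairing \eqref{KPair}, giving the desired identification; the $Z_{H_\lambda}(x,\xi)$-action is carried through the equivariant versions of these base change isomorphisms.

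For part (b), I would invoke the projection formula for vanishing cycles with a lisse coefficient: for any morphism $g\colon X\to\mathbb{A}^1$ and any local system $\mathcal{M}$ on $X$, there is a canonical isomorphism
\[
\RPhi_g(\mathcal{F}\otimes\mathcal{M}) \iso \RPhi_g(\mathcal{F})\otimes\mathcal{M}\vert_{g^{-1}(0)},
\]
natural in $\mathcal{F}$. Applied with $X=C\times C^*$, $g=f\vert_{C\times C^*}$, $\mathcal{F}=\1_{C\times C^*}$ and $\mathcal{M}=\mathcal{L}\boxtimes\1_{C^*}$, this converts the Lemma~\ref{lemma:bigcell} formula for $\Ev_C\IC(C,\mathcal{L})$ into the corresponding formula for $\Ev_C\IC(C)$, tensored with the restriction of $\mathcal{L}\boxtimes\1_{C^*}$ to $T^*_C(V_\lambda)_\textrm{reg}$. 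The projection formula itself is standard: it reduces to the analogous identity for $\RPsi$ (a local system pulled back from $X$ has no monodromy along the nearby cycles direction) combined with the defining distinguished triangle \eqref{distinguished} for $\RPhi$.

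The main obstacle is bookkeeping of equivariance, namely ensuring that proper base change, smooth base change, and the projection formula for lisse coefficients all lift to the $H_\lambda\times_\s S$-equivariant derived category built into the diagram \eqref{diag:Ev} defining $\Ev_C$. These lifts are standard in the framework of \cite{Bernstein:Equivariant}, so once they are granted both statements become essentially formal consequences of Lemma~\ref{lemma:bigcell} and Proposition~\ref{VC:exactandstalks}.
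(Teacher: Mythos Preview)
Your proposal is correct and follows essentially the same route as the paper: part (a) via Lemma~\ref{lemma:bigcell} specialized to $\mathcal{L}=\1_C$ together with the stalk formula of Proposition~\ref{VC:exactandstalks}, and part (b) via the projection formula for $\RPsi$ with lisse coefficients followed by the distinguished triangle \eqref{distinguished}. The only difference is that the paper supplies its own proof of the projection formula (Lemma~\ref{lemma:projection}, showing $j_*\mathcal{F}\otimes\mathcal{G}\iso j_*(\mathcal{F}\otimes j^*\mathcal{G})$ for local systems trivialized by a finite \'etale cover) rather than treating it as standard.
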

\begin{proof}
By Lemma~\ref{lemma:bigcell},
\[
\Ev_{C} \IC(C,\mathcal{L}) 
=
\left(\RPhi_{f\vert_{C\times C^*}} (\mathcal{L}\boxtimes\1_{C^*}) \right)_{T^*_{C}(V_\lambda)_\textrm{reg}}[\dim C].
\]
Taking the case $\mathcal{L} = \1_C$ and passing to stalks using Proposition~\ref{VC:exactandstalks} gives
\[
\left(\Ev_{C} \IC(C) \right)_{(x,\xi)}
=
\left(\RPhi_{\xi\vert_{C}} \1_{C} \right)_{x}[\dim C],
\]
for every $(x,\xi)\in T^*_{C}(V_\lambda)_\textrm{reg}$.
This proves \ref{Ev:bigcell-stalks}.

%Let $X$ be the $S$-scheme built from $\KPair{}{}\vert_{C\times C^*} \to \mathbb{A}^1$ following the idea for $X_B$ from Section~\ref{ssec:Ev}; denote the structure map for $X$ by $f\vert_{C\times C^*}$.  
To simplify notation slightly, set $\mathcal{E} \ceq \mathcal{L} \boxtimes \1_{C^*}$.
It follows from Lemma~\ref{lemma:projection}, below, that
\[
\RPsi_{f\vert_{C\times C^*}} \mathcal{E} 
=
\mathcal{E}\vert_{f\vert_{C\times C^*}^{-1}(0)} \otimes
\RPsi_{f\vert_{C\times C^*}} \1_{C\times C^*}.
\]
To see this, let ${\bar X} = X\times_S {\bar S}$ play the role of $X$ in Lemma~\ref{lemma:projection}, let $X_{\bar \eta}$ play the role of $U$, so ${\bar j}$ plays the role of $j$, take $\mathcal{H} = \1_{\bar X}$ and $\mathcal{G} = \mathcal{E}$.
It now follows from \eqref{distinguished} that
\[
\RPhi_{f\vert_{C\times C^*}} (\mathcal{E}) 
=
\mathcal{E}\vert_{f\vert_{C\times C^*}^{-1}(0)} \otimes
\RPhi_{f\vert_{C\times C^*}} \1_{C\times C^*},
\]
since 
\[
\mathcal{E}\vert_{f\vert_{C\times C^*}^{-1}(0)}  
=
\mathcal{E}\vert_{f\vert_{C\times C^*}^{-1}(0)} \otimes
 \1_{f\vert_{C\times C^*}^{-1}(0)}.
\]
Using Proposition~\ref{proposition:KPairCC*} again and restricting from $f\vert_{C\times C^*}^{-1}(0)$ to $T^*_{C}(V_\lambda)_\textrm{reg}$ now gives
\[
\left( \RPhi_{f\vert_{C\times C^*}} \mathcal{E} \right)\vert_{T^*_{C}(V_\lambda)_\textrm{reg}}
=
\mathcal{E}\vert_{T^*_{C}(V_\lambda)_\textrm{reg}} \otimes
\left(\RPhi_{f\vert_{C\times C^*}} \1_{C\times C^*} \right)\vert_{T^*_{C}(V_\lambda)_\textrm{reg}}
\]
Using Lemma~\ref{lemma:bigcell} again, this proves \ref{Ev:bigcell-projection}.

%
\iffalse
To prove \ref{Ev:bigcell-nonzero}, use \ref{Ev:bigcell-stalks} and \ref{Ev:bigcell-projection} to reduce the claim to $\left(\RPhi_{\xi\vert_{C}} \1_{C}\right)_x \ne 0$. \todo{Under construction}
\fi
%
\end{proof}

\begin{lemma}\label{lemma:projection}
Let $j : U \hookrightarrow X$ be an open immersion.
Let $\mathcal{G}$ and $\mathcal{H}$ be local systems on $X$ trivialized by a finite etale cover of $X$.
Set $\mathcal{F} = j^*\mathcal{H}$.
Then the canonical morphism
\[
j_*\mathcal{F} \otimes \mathcal{G} \to j_* (\mathcal{F}\otimes j^*\mathcal{G}) 
\]
is an isomorphism.
\end{lemma}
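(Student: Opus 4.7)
The plan is to reduce the claim to the trivial case where $\mathcal{G}$ is a constant sheaf, exploiting the fact that being an isomorphism of sheaves is an \'etale-local property. First, I would record that the canonical morphism in question is obtained by adjunction from the map
\[
j^*(j_*\mathcal{F}\otimes \mathcal{G}) \iso j^*j_*\mathcal{F}\otimes j^*\mathcal{G} \to \mathcal{F}\otimes j^*\mathcal{G},
\]
where the second arrow uses the counit $j^*j_*\mathcal{F}\to \mathcal{F}$, which is an isomorphism because $j$ is an open immersion. Thus the construction of the arrow is natural in $X$ and in $\mathcal{G}$, and in particular commutes with \'etale base change on $X$.

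Next, by hypothesis there is a finite \'etale cover $\pi\colon \widetilde X\to X$ such that $\pi^*\mathcal{G}\iso \1_{\widetilde X}^n$ for some $n$. Forming the cartesian square
\[
\begin{tikzcd}
\widetilde U \arrow[hook]{r}{\widetilde{j}} \arrow{d}[swap]{\widetilde \pi} & \widetilde X \arrow{d}{\pi} \\
U \arrow[hook]{r}{j} & X ,
\end{tikzcd}
\]
smooth base change for the \'etale (hence smooth) morphism $\pi$ yields a canonical isomorphism $\pi^* j_*\iso \widetilde{j}_*\widetilde{\pi}^*$, and pullback is monoidal, so applying $\pi^*$ to the canonical morphism turns it into the analogous canonical morphism for $\widetilde j$, $\widetilde \pi^*\mathcal{F}$ and $\1_{\widetilde X}^n$ in place of $j$, $\mathcal{F}$ and $\mathcal{G}$. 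For the constant sheaf $\1_{\widetilde X}^n$, tensoring is simply taking the $n$-fold direct sum, which $\widetilde{j}_*$ commutes with, so the pulled-back morphism is visibly an isomorphism.

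Finally, since $\pi$ is finite \'etale and surjective, $\pi^*$ is conservative on $\Deligne(X)$, so the original morphism on $X$ is already an isomorphism. The only technical point to verify is that smooth base change along $\pi$ and the projection formula for a constant sheaf both hold in the $\ell$-adic derived category we are working in; both are standard, so I do not expect a genuine obstacle here.
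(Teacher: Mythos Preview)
Your argument is correct and follows essentially the same route as the paper: pull back along a finite \'etale cover $\pi$ trivializing $\mathcal{G}$, use base change to identify $\pi^*$ of the canonical morphism with the canonical morphism upstairs, observe it is an isomorphism when $\mathcal{G}$ is constant, and conclude by conservativity of $\pi^*$. Your execution is in fact a bit cleaner than the paper's --- you avoid the separate injectivity/stalk discussion and the explicit trivialization of $\mathcal{F}$, and you correctly observe that the hypothesis on $\mathcal{H}$ plays no role once $\mathcal{G}$ has been trivialized.
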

\begin{proof}
The canonical morphism above comes from the unit $1\to j_* j^*$ of the adjunction for the pair $(j^*,j_*)$ the exactness of $j^*$, and the co-unit $j^* j_* \to 1$:
\[
j_*\mathcal{F} \otimes \mathcal{G} 
\to j_* j^* (j_*\mathcal{F} \otimes \mathcal{G}) 
\iso j_* (j^* j_*\mathcal{F} \otimes j^*\mathcal{G}) 
\to j_* (\mathcal{F} \otimes j^*\mathcal{G}).
\]
To show that this is an isomorphism it is sufficient to show that this induces an isomorphism on stalks.
Note that any sheaf homomorphism obtained from the unit $1\to j_* j^*$ is a monomorphism while any sheaf homomorphism obtained from the co-unit  $j^* j_* \to 1$ is an isomorphism, so the canonical morphism is injective.

Without loss of generality, we may assume $X$ is connected.

Let $\pi : \tilde{X}\to X$ be a finite etale cover that trivializes $\mathcal{G}$.
Then the canonical morphism $j_*\mathcal{F} \otimes \mathcal{G} \to j_* (\mathcal{F} \otimes j^*\mathcal{G})$ induces
\begin{equation}\label{eqn:picanonical}
\pi^* (j_*\mathcal{F} \otimes \mathcal{G}) \to \pi^* j_* (\mathcal{F} \otimes j^*\mathcal{G}).
\end{equation}
Note that \eqref{eqn:picanonical} is injective on stalks.
Let $j_\pi : \tilde{U}\to \tilde{X}$ be the pullback of $j$ along $\pi$ and let $\pi_j : \tilde{U}\to U$ be the pullback of $\pi$ along $j$.
Note that $j_\pi$ is an open immersion and $\pi_j$ is again a finite etale cover.
The local system $j^*\mathcal{G}$ is trivialized by $\pi_j$ since
\[
\pi_j^* j^*\mathcal{G} 
= (j\circ \pi_j)^*\mathcal{G} 
= (\pi\circ j_\pi)^*\mathcal{G}
= (j_\pi)^* \pi^*\mathcal{G}
= (j_\pi)^* \1^m_{\tilde{X}}
=  \1^m_{\tilde{U}},
\]
where $m = \rank \mathcal{G}$; likewise $\mathcal{F}$ is trivialized by $\pi_j$.
By the exactness of $\pi^*$ and proper base change,
\[
\pi^* (j_*\mathcal{F} \otimes \mathcal{G})
= \pi^* j_*\mathcal{F} \otimes \pi^* \mathcal{G}
=  (j_\pi)_* \pi_j^* \mathcal{F}\otimes \1^m_{\tilde{X}}
= (j_\pi)_* \1^n_{\tilde{X}}\otimes \1^m_{\tilde{X}}
\]
where $n = \rank \mathcal{F}$ .
Likewise,
\[
\pi^* j_* (\mathcal{F} \otimes j^*\mathcal{G})
= (j_\pi)_* \pi_j^*  (\mathcal{F} \otimes j^*\mathcal{G})
= (j_\pi)_* \left(\1^n_{X} \otimes j_\pi^*\1_{\tilde{U}} \right).
\]
This reduces the general case to the case of constant sheaves.
When $\mathcal{G}$ and $\mathcal{H}$ are constant sheaves, the lemma is elementary.

\end{proof}

\subsection{Purity and rank on the open orbit}\label{ssec:rank1}

In this section we show that, for every $H_\lambda$-orbit $C\subseteq V_\lambda$, the object $\Ev_C \IC(C)$ in $\Deligne_H(T^*_{C}(V_\lambda)_\textrm{reg})$ is cohomologically concentrated in dimension $\dim C^*-\dim V_\lambda -1$, where it is a rank-$1$ constructible sheaf; see Theorem~\ref{theorem:rank1}.

Recall $f:T^*(V_\lambda) \to \mathbb{A}^1$ from Section~\ref{ssec:Ev}. \index{$f$}
Having fixed $C$, in this section we use the notation $\Lambda_C \ceq T^*_{C}(V_\lambda)\cap (C\times C^*)$\index{$\Lambda_C$} and set $g\ceq f\vert_{C\times C^*}$\index{$g$}.
We also set
\begin{equation}\label{eqn:eccentricity}
e=e_C \ceq \dim C + \dim C^* -\dim V_\lambda\index{$e_C$},
\end{equation}
the codimension of $\Lambda_C$ in $C\times C^*$ and refer to this as the \emph{eccentricity}\index{eccentricity, $e_C$} of $C$.

\begin{lemma}\label{lemma:singular}
The singular locus of $f\vert_{C\times C^*} : C\times C^*\to \mathbb{A}^1$ contains $\Lambda_C$. 
\end{lemma}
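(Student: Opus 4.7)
The plan is to show that at every point $(x,\xi) \in \Lambda_C$, the differential $d(f\vert_{C\times C^*})_{(x,\xi)}$ is identically zero on $T_{(x,\xi)}(C\times C^*)$; this will exhibit $(x,\xi)$ as a critical point of the restriction and hence place $\Lambda_C$ inside the singular locus.

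First I would recall the two ingredients. By the description in Section~\ref{ssec:conormal}, a point $(x,\xi) \in C \times C^*$ belongs to $\Lambda_C = T^*_C(V_\lambda) \cap (C\times C^*)$ precisely when $[x,\xi] = 0$ in $\mathfrak{j}_\lambda$. Next, since $C$ and $C^*$ are $H_\lambda$-orbits in $V_\lambda$ and $V_\lambda^*$ respectively, the tangent spaces have the infinitesimal descriptions
\[
T_x C = [\mathfrak{h}_\lambda, x] \subseteq V_\lambda, \qquad T_\xi C^* = [\mathfrak{h}_\lambda, \xi] \subseteq V_\lambda^*.
\]

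Next I would compute the differential. Since $f$ is the restriction of the bilinear pairing $\KPair{\,}{\,}$ to $V_\lambda \times V_\lambda^*$, the differential at an arbitrary point $(x,\xi)$ is given on $(v,w) \in V_\lambda \times V_\lambda^*$ by
\[
df_{(x,\xi)}(v,w) = \KPair{v}{\xi} + \KPair{x}{w}.
\]
Restricting to tangent directions of $C \times C^*$, choose $v = [X,x]$ and $w = [Y,\xi]$ for $X,Y \in \mathfrak{h}_\lambda$. Using that $\KPair{\,}{\,}$ is $\mathfrak{j}_\lambda$-invariant (so the associated trilinear form is cyclically symmetric), and that $[x,\xi]=0$ at points of $\Lambda_C$, I compute
\[
\KPair{[X,x]}{\xi} = \KPair{X}{[x,\xi]} = 0, \qquad \KPair{x}{[Y,\xi]} = \KPair{Y}{[\xi,x]} = 0.
\]
Therefore $df_{(x,\xi)}\vert_{T_{(x,\xi)}(C\times C^*)} = 0$, so $(x,\xi)$ is a critical point of $g=f\vert_{C\times C^*}$, which is exactly the statement that $(x,\xi)$ lies in the singular locus of $g$.

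There is no serious obstacle: the entire argument reduces to invariance of the form combined with the characterization of $\Lambda_C$ by the vanishing of the Lie bracket. The only point requiring mild care is keeping track of signs in the invariance identity, which is harmless because the pairing is symmetric and both terms vanish independently.
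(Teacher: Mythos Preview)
Your proof is correct and follows essentially the same approach as the paper. The only difference is one of presentation: the paper argues directly from the definition of the conormal bundle (so that $\xi$ annihilates $T_xC$ and $x$ annihilates $T_\xi C^*$), whereas you make this annihilation explicit via the invariance identity $\KPair{[X,x]}{\xi} = \KPair{X}{[x,\xi]}$; these are the same computation, since that identity is precisely what establishes the conormal characterisation in Proposition~\ref{prop:identify} and the proposition that follows it.
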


\begin{proof}
Suppose $(x,\xi)\in \Lambda_C$; we must show that $dg_{(x,\xi)} : T_{(x,\xi)}(C\times C^*)\to T_0(\mathbb{A}^1)$ is trivial, where $g : C\times C^* \to \mathbb{A}^1$ is the restriction of $f$ to $C\times C^*$.
By Lemma~\ref{lemma:bracket}, $\xi\in T^*_{C,x}(V_\lambda)$, we have ${\langle y, \xi \rangle}_x = 0$ for all $y \in T_{x}(C)$, where ${\langle \, ,\, \rangle}_x$ is the pairing on $T_x(V_\lambda) \times T_x^*(V_\lambda)$.
Similarly, $x\in T^*_{C^*,\xi}(V^*_\lambda)$, so ${\langle x, \nu \rangle}_{\xi} = 0$  for any $\nu \in T_{\xi}(C^*_\lambda)$, where ${\langle \, ,\, \rangle}_\xi$ is the pairing on $T^*_\xi (V^*_\lambda) \times T_\xi (V^*_\lambda)$.
Since $dg_{(x, \xi)}(y,\nu) = {\langle y, \xi \rangle}_x + {\langle x, \nu \rangle}_{\xi} = 0$, it now follows that $dg_{(x,\xi)} : T_{(x,\xi)}(C\times C^*)\to T_0(\mathbb{A}^1)$ is trivial. 
\end{proof}

%In the remainder of this section we continue to denote the restriction $f\vert_{C\times C^*}$ by $g : C\times C^* \to S$ and we denote $T^*_{C}(V_\lambda)\cap (C\times C^*)$ by $\Lambda_C$.

We now make a study of $f\vert_{C\times C^*}\to \mathbb{A}^1$ at regular points $(x,\xi)\in T^*_{C}(V_\lambda)_\textrm{reg}$.
Let $\mathcal{I}$ be the ideal sheaf for the closed subvariety $\Lambda_C$ in $C\times C^*$.
Using the regularity of $(x,\xi)\in T^*_{C}(V_\lambda)_\textrm{reg}$, we may choose an open affine neighbourhood $U$ of $(x,\xi)$ in $C\times C^*$ such that $U\cap \Lambda_C$ is an open affine neighbourhood of $(x,\xi)$ in $\Lambda_C$. 
The sequence
\[
0 \to \mathcal{I}(U) \to  \mathcal{O}_{C\times C^*}(U) \to \mathcal{O}_{\Lambda_C}(U\cap \Lambda_C) \to 0
\]
is exact. 
Let $\widehat{\mathcal{O}}_{C\times C^*/\Lambda_C}(U)$ be the completion of $\mathcal{O}_{C\times C^*}(U)$ with respect to the ideal $\mathcal{I}(U)$; then 
\begin{equation}\label{eqn:complete_along_U}
\widehat{\mathcal{O}}_{C\times C^*/\Lambda_C}(U) \iso \mathcal{O}_{\Lambda_C}(U\cap \Lambda_C)[[z_1,\ldots , z_e]]
\end{equation}
where $e$ is the codimension of $\Lambda_C$ in $C\times C^*$; see for instance, \cite[Theorem 11.22 and Remark 2]{Atiyah:Commutative}.
Recall $g\ceq f\vert_{C\times C^*}$. 
We denote the image of $g$ in $\widehat{\mathcal{O}}_{C\times C^*/\Lambda_C}(U)$ by ${\hat g}_{U}$.
Using multindex notation, ${\hat g}_{U}$ may be written in the form
\[
{\hat g}_{U} = \sum_{I} a_I z^I,
\]
with $I= (i_1, \ldots, i_{e})$ where $i_j \in \NN$;
here, $a_I = a_I \in \mathcal{O}_{\Lambda_C}(U\cap \Lambda_C)$.

Again using the regularity of $(x,\xi)$, the completion of $\mathcal{O}_{\Lambda_C}$ at $(x,\xi)$ is
\[
\widehat{\mathcal{O}}_{\Lambda_C,(x,\xi)} \iso \k[[y_1,\ldots , y_d]]
\] 
where $d=\dim \Lambda_C$ and
\begin{equation}\label{eqn:complete local ring}
\widehat{\mathcal{O}}_{C\times C^*,(x,\xi)}
\iso 
\widehat{\mathcal{O}}_{\Lambda_C,(x,\xi)}[[z_1,\ldots , z_e]].
\end{equation}
Note that this defines a splitting of the exact sequence
\[
0 \to \widehat{\mathcal{I}}_{(x,\xi)} \to  \widehat{\mathcal{O}}_{C\times C^*,(x,\xi)} \to \widehat{\mathcal{O}}_{\Lambda_C,(x,\xi)} \to 0.
\]
Let ${\hat g}_{(x,\xi)}$ be the image of $f\vert_{C\times C^*}$ in $\widehat{\mathcal{O}}_{C\times C^*,(x,\xi)}$.
Then
\begin{equation}\label{eqn:preMorse}
{\hat g}_{(x,\xi)} = \sum_{\abs{I}\geq 2} {\hat a}_I(y) z^I
\end{equation}
where ${\hat a}_I(y)\in \widehat{\mathcal{O}}_{\Lambda_C,(x,\xi)}$ is the image of $a_I$ under $\mathcal{O}_{\Lambda_C}(U\cap \Lambda_C) \to \widehat{\mathcal{O}}_{\Lambda_C,(x,\xi)}$.

Set $\abs{I} \ceq i_1 + \cdots + i_{e}$.
By Lemma~\ref{lemma:bracket}, $f\vert_{C\times C^*}  : C\times C^* \to \mathbb{A}^1$ vanishes on $\Lambda_C$ so ${\hat a}_I(y)=0$ for $\abs{I}=0$.
By Lemma~\ref{lemma:singular}, $f\vert_{C\times C^*} : C\times C^* \to \mathbb{A}^1$ is singular along $\Lambda_C$ so ${\hat a}_I(y)=0$ for all $\abs{I}=1$.
Consequently, 
\begin{equation}\label{eqn:protoMorse}
{\hat g}_{(x,\xi)} = \sum_{\abs{I}\geq 2} {\hat a}_I(y) z^I.
\end{equation}

\iffalse
\begin{lemma}\label{lemma:pre-Morse}
If $(x,\xi)\in T^*_{C}(V_\lambda)_\textrm{reg}$ then there are isomorphisms
\[
\widehat{\mathcal{O}}_{\Lambda_C,(x,\xi)}
\iso
\k[[y_1,\ldots, y_d]]
\qquad\text{and}\qquad
\widehat{\mathcal{O}}_{C\times C^*,(x,\xi)}
\iso
\widehat{\mathcal{O}}_{\Lambda_C,(x,\xi)}[[z_{1}, \ldots , z_{e}]]
\]
with $d= \dim V_{\lambda}$ and ${e} = \dim C + \dim C^* - \dim V_{\lambda}$ such that, using multi-index notation, the image of $f\vert_{C\times C^*}: C\times C^* \to \mathbb{A}^1$ in $\widehat{\mathcal{O}}_{C\times C^*,(x,\xi)}$ takes the form
\[
 \sum_{\abs{I}\geq 2} a_I(y) z^I,
\]
where $I= (i_1, \ldots, i_{e})$ for $i_j \in \NN$ and $\abs{I} = i_1 + \cdots + i_{e}$;
here, $a_I(y) = a_I(y_1,\ldots,y_d)\in \widehat{\mathcal{O}}_{\Lambda_C,(x,\xi)}$.
\end{lemma}
\fi

\begin{lemma}\label{lemma:rankHessian<}
If $(x, \xi) \in T^{*}_{C}(V_\lambda)_\textrm{reg}$ then the rank of the Hessian for the function $f\vert_{C\times C^*} : C\times C^* \to S$ at $(x,\xi)$ is most $\dim C + \dim C^* - \dim V_{\lambda}$.
\end{lemma}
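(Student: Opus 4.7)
The plan is to reduce the claim to the standard fact that a smooth function vanishing to order at least two along a smooth subvariety has a Hessian whose rank is bounded by the codimension of that subvariety. Specifically, the subvariety will be $\Lambda_C$, and the function will be $g = f|_{C\times C^*}$.

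First I would recall two facts already established in the paper. By Corollary~\ref{lemma:bracket}, $T^*_{H_\lambda}(V_\lambda) \subseteq (\,\cdot\,|\,\cdot\,)^{-1}(0)$, so in particular $\Lambda_C = T^*_C(V_\lambda)\cap (C\times C^*)$ is contained in $g^{-1}(0)$. Second, by Lemma~\ref{lemma:singular}, the singular locus of $g$ contains $\Lambda_C$. Next I would verify the dimension count: since $T^*_C(V_\lambda)$ is Lagrangian in $T^*(V_\lambda)$ it has dimension $\dim V_\lambda$, and by Lemma~\ref{lemma:CC*} the regular locus $T^*_C(V_\lambda)_\textrm{reg}$ is contained in $C\times C^*$, hence is open in $\Lambda_C$. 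In particular $(x,\xi)$ is a smooth point of $\Lambda_C$ with local dimension $\dim V_\lambda$, so the codimension of $\Lambda_C$ in $C\times C^*$ at $(x,\xi)$ equals $e_C = \dim C + \dim C^* - \dim V_\lambda$.

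Now the main step: I would argue that $T_{(x,\xi)}\Lambda_C$ is contained in the kernel (radical) of the Hessian of $g$ at $(x,\xi)$. The cleanest route is to use the expansion \eqref{eqn:protoMorse} already derived in the paper: after choosing coordinates $(y_1,\dots,y_d,z_1,\dots,z_{e_C})$ via the splitting \eqref{eqn:complete local ring} in which $\Lambda_C$ is cut out by $z_1=\cdots=z_{e_C}=0$, we have
\[
\hat g_{(x,\xi)} = \sum_{|I|\geq 2} \hat a_I(y)\,z^I.
\]
Differentiating this expansion twice and evaluating at $z=0$ shows that all partial derivatives involving any $y_i$ vanish, so the Hessian matrix at $(x,\xi)$ has the block form
\[
H = \begin{pmatrix} 0 & 0 \\ 0 & H_{zz} \end{pmatrix},
\]
where $H_{zz}$ is an $e_C \times e_C$ matrix. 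Therefore $\operatorname{rank} H \leq e_C = \dim C + \dim C^* - \dim V_\lambda$, which is exactly the claim.

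I do not foresee any serious obstacle: once the expansion \eqref{eqn:protoMorse} is in hand (which was derived just above the lemma using that $g$ vanishes on $\Lambda_C$ and is singular there), the bound on the Hessian rank is immediate from a block-matrix computation. The one point to be careful about is making sure that the expansion genuinely reflects the formal structure at $(x,\xi)$ — this uses the fact, already invoked, that $(x,\xi)\in T^*_C(V_\lambda)_\textrm{reg}$ is a smooth point of $\Lambda_C$ of dimension $\dim V_\lambda$, so that the local ring has the clean product form \eqref{eqn:complete local ring}.
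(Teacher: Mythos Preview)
Your proposal is correct and follows essentially the same approach as the paper: both use the expansion \eqref{eqn:protoMorse} in the adapted coordinates $(y,z)$ to see that the Hessian of $\hat g_{(x,\xi)}$ at the origin has vanishing $yy$- and $yz$-blocks, so its rank is bounded by the size $e_C$ of the $zz$-block. The paper spells out the vanishing of each block by explicit differentiation, but the content is identical to your argument.
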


\begin{proof}
As above, set $g\ceq f\vert_{C\times C^*}$ and ${e}\ceq \dim C + \dim C^* - \dim V_{\lambda}$.
First, observe that the Hessian for $g$ at $(x,\xi)$ is determined by the image ${\hat g}_{(x,\xi)}$ of $g$ under the map $\mathcal{O}_{C\times C^*}(C\times C^*) \to \widehat{\mathcal{O}}_{C\times C^*,(x,\xi)}$:
\[
\mathcal{H}(g)_{(x,\xi)} = \mathcal{H}({\hat g}_{(x,\xi)})_0.
\]
Recall from \eqref{eqn:preMorse} that we may write ${\hat g}_{(x,\xi)}$ in the form
\[
{\hat g}_{(x,\xi)} = \sum_{\abs{I}\geq 2} {\hat a}_I(y) z^I.
\]
We now break the Hessian for ${\hat g}_{(x,\xi)}$ at $0$ into blocks:
\[
\mathcal{H}({\hat g}_{(x,\xi)})_0 = 
\begin{pmatrix}
A & B \\ \,^tB & A' \\
\end{pmatrix},
\]
where $A$ and $A'$ are the matrices of partial derivatives
\[
A_{ij}
=
\frac{\partial^2}{\partial y_i\, \partial y_j}\left(\sum_{\abs{I}\geq 2} {\hat a}_I(y) z^I\right) \big\vert_{(0,0)},
\]
for $1\leq i,j\leq d$, and
\[
A'_{ij}
=
\frac{\partial^2}{\partial z_i\, \partial z_j}\left(\sum_{\abs{I}\geq 2} {\hat a}_I(y) z^I\right)\big\vert_{(0,0)},
\]
for $1\leq i,j\leq e$, and where $B$ is the matrix of mixed partial derivatives 
\[
B_{ij} = \frac{\partial^2}{\partial y_i \ \partial z_j}\left(\sum_{\abs{I}\geq 2} {\hat a}_I(y) z^I\right)\big\vert_{(0,0)} 
\]
for $1\leq i \leq d$ and $1\leq j \leq e$. 
Now
\[
A_{ij}
= \sum_{\abs{I}\geq 2} \left( \frac{\partial^2}{\partial y_i\, \partial y_j} {\hat a}_I(y)  \right) z^I \big\vert_{(0,0)}\\
= 0
\]
because $z^I \vert_0 =0$ for all $\abs{I}\geq 2$, and
\[
B_{ij} 
= \sum_{\abs{I}\geq 2} \left( \frac{\partial {\hat a}_I(y)}{\partial y_i}\big\vert_0 \right) \left(\frac{\partial z^I}{\partial z_j}\big\vert_{0} \right)
=0,
\]
because $\frac{\partial z^I}{\partial z_j}\big\vert_{0}=0$ for all $\abs{I}\geq 2$.
Therefore,
\[
\rank \mathcal{H}({\hat g}_{(x,\xi)})_0 = \rank A'.
\]
Since
\[
\rank A' \leq {e} = \dim C + \dim C^* - \dim V,
\]
this concludes the proof of Lemma~\ref{lemma:rankHessian<}.
\end{proof}

\begin{lemma}\label{lemma:rankHessian>}
If $(x, \xi) \in T^{*}_{C}(V_\lambda)_\textrm{reg}$ then the rank of the Hessian  for the function $f\vert_{C\times C^*} : C\times C^* \to \mathbb{A}^1$ at $(x,\xi)$ is least $\dim C + \dim C^* - \dim V_{\lambda}$.
\end{lemma}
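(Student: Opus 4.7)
The plan is to make $H_g\ceq H(f\vert_{C\times C^*})_{(x,\xi)}$ explicit via curves in $C\times C^*$, then to extract a block of rank $e$ from it using the regularity of $(x,\xi)$. Choosing parametrizations $c(t)$ in $C$ and $c^*(t)$ in $C^*$ with $c(0)=x$, $c^*(0)=\xi$, $c'(0)=v$, $c^{*\prime}(0)=\alpha$, and exploiting the bilinearity of $f = \KPair{\,}{\,}$, one computes
\[
\tfrac{d^{2}}{dt^{2}}\KPair{c(t)}{c^*(t)}\big\vert_{t=0} = \KPair{c''(0)}{\xi} + 2\KPair{v}{\alpha} + \KPair{x}{c^{*\prime\prime}(0)}.
\]
Because $(x,\xi)$ is a critical point of $f\vert_{C\times C^*}$ by Lemma~\ref{lemma:singular}, $\xi$ annihilates $T_xC$ and $x$ annihilates $T_\xi C^*$ under $\KPair{\,}{\,}$, so the outer two terms depend only on $c''(0) \bmod T_xC$ and $c^{*\prime\prime}(0)\bmod T_\xi C^*$, i.e., on the second fundamental forms $II_C$ and $II_{C^*}$. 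Polarizing yields
\[
H_g\big((v,\alpha),(v',\alpha')\big) = \KPair{II_C(v,v')}{\xi} + \KPair{v}{\alpha'} + \KPair{v'}{\alpha} + \KPair{x}{II_{C^*}(\alpha,\alpha')}.
\]

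The crucial input from regularity is the inclusion $T^*_{C,x}(V_\lambda) \subseteq T_\xi C^*$, which I would argue as follows. Near $(x,\xi)$, $T^*_C(V_\lambda)$ is a smooth irreducible variety of dimension $\dim V_\lambda$ (being a vector bundle over the smooth orbit $C$), and $\Lambda_C\subseteq T^*_C(V_\lambda)$ contains the dense open subset $T^*_C(V_\lambda)_\textrm{reg}$ by Lemma~\ref{lemma:CC*}, so $\Lambda_C$ locally coincides with $T^*_C(V_\lambda)$ at $(x,\xi)$. Hence $T_{(x,\xi)}\Lambda_C = T_xC \oplus T^*_{C,x}(V_\lambda)$, and since $\Lambda_C\subseteq C\times C^*$ forces this tangent space to lie in $T_xC\oplus T_\xi C^*$, we conclude $T^*_{C,x}(V_\lambda) \subseteq T_\xi C^*$.

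Viewing $H_g$ as a self-adjoint linear map $T_xC\oplus T_\xi C^* \to (T_xC)^*\oplus (T_\xi C^*)^*$, I would isolate the off-diagonal block $B\colon T_\xi C^* \to (T_xC)^*$ defined by $\alpha\mapsto \alpha\vert_{T_xC}$, which is exactly the middle pair of terms in the formula for $H_g$. Its kernel is $T_\xi C^*\cap T^*_{C,x}(V_\lambda)$, which equals $T^*_{C,x}(V_\lambda)$ of dimension $\dim V_\lambda - \dim C$ by the inclusion above, so $\rank B = \dim C^* - (\dim V_\lambda - \dim C) = e$. Projecting the image of $H_g$ onto $(T_xC)^*$ gives the map $(v,\alpha)\mapsto A(v)+B(\alpha)$ (where $A$ is the $II_C$-block), whose image contains $B(T_\xi C^*)$ by taking $v=0$; hence $\rank H_g \geq \rank B = e$. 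The main obstacle is securing the inclusion $T^*_{C,x}(V_\lambda)\subseteq T_\xi C^*$ at regular points—a local transversality/smoothness argument for $\Lambda_C$ inside $T^*_C(V_\lambda)$—after which the bound follows from elementary block-matrix linear algebra.
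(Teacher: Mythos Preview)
Your proof is correct and follows essentially the same strategy as the paper's. Both arguments isolate the off-diagonal block of the Hessian --- the restriction of $\KPair{\,}{\,}$ to $T_xC \times T_\xi C^*$ --- and compute its rank via a conormal inclusion furnished by regularity; you prove $T^*_{C,x}(V_\lambda)\subseteq T_\xi C^*$ while the paper proves the dual statement $T^*_{C^*,\xi}(V^*_\lambda)\subseteq T_xC$, and you work intrinsically with tangent spaces and second fundamental forms where the paper instead pulls everything back to $\mathfrak{h}\times\mathfrak{h}$ via the exponential parametrization $(z,z')\mapsto (\Ad(\exp z)x,\Ad(\exp z')\xi)$.
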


\begin{proof}
In this proof we use the analytic site.
Consider the covering
\[
\begin{array}{rcl}
H_\lambda \times H_\lambda &\to& C\times C^*\\
(h,h') &\mapsto& (\Ad(h)(x), \Ad(h')(\xi))
\end{array}
\]
After passing to a neighbourhood of $(1,1)$, pullback through the exponential map $\exp: \mathfrak{h} \to H_\lambda$ to define
\[
\begin{array}{rcl}
G : \mathfrak{h} \times \mathfrak{h} &\to& C\times C^*\\
(z,z') &\mapsto& (\Ad(\exp(z))(x) , \Ad(\exp(z'))(x)).
\end{array}
\]
Then
\[
\rank \mathcal{H}(g)_{(x,\xi)} = \rank \mathcal{H}( {\hat g}_{(x,\xi)})_0  = \rank \mathcal{H}(G)_{(0,0)},
\]
where $\mathcal{H}(G)_{(0,0)}$ is the Hessian of $G$ at $(0,0)$.
Recall that
\[
\Ad(\exp(z))(x)  = x + [z, x] + \frac{1}{2}[z, [z,x]] + \cdots + \frac{1}{n!}[z, [z, \dots, [z,x] \cdots ]] + \cdots
\]
in the formal neighbourhood of $0\in \mathfrak{h}$; likewise for $\Ad(\exp(z')(\xi)$.
Define $Z(z)$ and $Z'(z')$ by
\[
\Ad(\exp(z))(x)  = x + [z, x] + Z(z) 
\quad\text{and}\quad
\Ad(\exp(z'))(\xi)  = \xi + [z', \xi] + Z'(z') .
\]
Then
\[
\begin{array}{rcl}
G(z, z') &=&  \KPair{ x }{[z', \xi] } + \KPair{ [z, x] }{\xi } \\
&& +  \KPair{ Z(z)}{ \xi  }  +  \KPair{[z,x]}{[z', \xi] }  + \KPair{ x }{ Z'(z') }\\
&& \hskip10pt + \KPair{ [z,x]}{ Z'(z') } + \KPair{ Z(z) }{ Z'(z')} + \KPair{ Z(z)}{ [z', \xi] } 
\end{array}
\]
Since the second-order part of $G(z,z')$ is $\KPair{ Z(z)}{ \xi  }  +  \KPair{[z,x]}{[z', \xi] }  + \KPair{ x }{ Z'(z') }$, the Hessian of $G$ at $(0,0)$ takes the form
\[
\mathcal{H}(G)_{(0,0)} = 
\begin{pmatrix}
M & N \\ \,^tN & M' \\
\end{pmatrix},
\]
where $M$ and $M'$ are the matrices of partial derivatives
\[
M_{ij}
=
\frac{\partial^2}{\partial z_i\, \partial z_j}\KPair{ Z(z)}{ \xi }\big\vert_{(0,0)},
\qquad\text{and}\qquad
M'_{ij}
=
\frac{\partial^2}{\partial z'_i\, \partial z'_j}\KPair{ x }{ Z'(z') }\big\vert_{(0,0)},
\]
and where $N$ is the matrix of mixed partial derivatives 
\[
N_{ij} = \frac{\partial^2}{\partial z_i\, \partial z'_j} \KPair{[z,x]}{[z', \xi] }\big\vert_{(0,0)} .
\]
Thus, 
\[
\rank \mathcal{H}(G)_{(0,0)}\geq \rank N.
\]
In fact, the matrix $N$ is the matrix for the bilinear form 
\begin{eqnarray*}
\mathfrak{h}\times \mathfrak{h} &\to& \mathbb{A}^1\\
(z,z') &\mapsto& \KPair{[z,x]}{[z', \xi] } .
\end{eqnarray*}
Since
\[
\KPair{[z,x]}{[z', \xi] } = \KPair{[\xi,[z,x]]}{z'},
\]
the rank of $N$ is $\dim [\xi,[\mathfrak{h},x]]$.

We now show that $\dim [\xi,[\mathfrak{h},x]]$ is $\dim C + \dim C^* - \dim V_{\lambda}$.
First, note that 
\[
[\mathfrak{h}, x] = T_{x}(C)
\qquad\text{and}\qquad
\ker[\xi, \cdot]\vert_{V} = T^{*}_{C^{*}, \xi}(V^{*}),
\]
so
\[
\dim [\xi, [\mathfrak{h}, x]] 
= \dim  T_{x}(C) - \dim T^{*}_{C^{*}, \xi}(V^{*}) 
= \dim C - \dim T^{*}_{C^{*}, \xi}(V^{*}).
\]
Since $(x, \xi)$ is regular, $T^{*}_{C^{*}, \xi}(V^{*}) \cap (C \times \{\xi\}) = \{(y,\xi)\in C\times \{\xi\} \tq [y,\xi]=0\}$ contains an open neighbourhood of $(x, \xi)$ in $T^{*}_{C^{*}, \xi}(V^{*}) = \{(y,\xi)\in V\times \{\xi\} \tq [y,\xi]=0\}$. 
Hence $T^{*}_{C^{*}, \xi}(V^{*}) \subseteq T_{x}(C)$ and 
\[
\dim T^{*}_{C^{*}, \xi}(V^{*}) 
= \dim V - \dim T_{\xi}(C^{*}) 
= \dim V - \dim C^*.
\]
Therefore,
\[
\dim [\xi, [\mathfrak{h}, x]] = \dim C   -(\dim V - \dim C^*) ,
\]
which concludes the proof of Lemma~\ref{lemma:rankHessian>}.
\end{proof}

We remark that $[\xi,[\mathfrak{h},x]]$ is also the image of the map $T_{(x,\xi)}(C\times C^*) \to \mathfrak{h}$ given by $(y,\nu)\mapsto [x,\nu]+[y,\xi]$, so the proof of Lemma~\ref{lemma:rankHessian>} also shows that 
\[
T_{(x,\xi)}(\Lambda_C) = \left\{ (y,\nu)\in T_{(x,\xi)}(C\times C^*) \tq [x,\nu]+[y,\xi] =0\right\}
\]
 for $(x,\xi)\in T^*_{C}(V_\lambda)_\textrm{reg}$. 

\begin{lemma}\label{lemma:Morse}
Recall the definition of $\widehat{\mathcal{O}}_{C\times C^*/\Lambda_C}(U)$ from \eqref{eqn:complete_along_U}.
The open affine $U\subset C\times C^*$ may be chosen so that there is an isomorphism
\[
\widehat{\mathcal{O}}_{C\times C^*/\Lambda_C}(U)
\iso
\mathcal{O}_{\Lambda_C}(U\cap \Lambda_C)[[x_{1}, \ldots , x_{e}]],
\]
such that the image of $f\vert_{C\times C^*}: C\times C^* \to \mathbb{A}^1$ in $\widehat{\mathcal{O}}_{C\times C^*/\Lambda_C}(U)$ is
\[
u_1 x_1^2  + \cdots + u_e x_{e}^2
\]
for $u_1,\ldots, u_e \in \mathcal{O}_{\Lambda_C}(U\cap \Lambda_C)$.
Here, ${e} = \dim C + \dim C^* - \dim V_{\lambda}$.
\end{lemma}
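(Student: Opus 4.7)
The plan is to treat this as a parametric (relative) Morse lemma for $g = f\vert_{C\times C^*}$ along its singular locus, which contains $T^*_C(V_\lambda)_\textrm{reg}$ by Lemma~\ref{lemma:singular} and whose transverse Hessian has maximal rank $e$ by Lemmas~\ref{lemma:rankHessian<} and \ref{lemma:rankHessian>}. The content of the conclusion is that this transverse non-degeneracy forces $g$, formally along $\Lambda_C$, to become a sum of squares in the normal coordinates with unit coefficients depending on $\Lambda_C$.

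First I would shrink $U$. Pick $(x,\xi)\in T^*_C(V_\lambda)_\textrm{reg}$; this lies in $\Lambda_C$ by Lemma~\ref{lemma:CC*}. Take $U$ to be an affine open neighborhood of $(x,\xi)$ in $C\times C^*$ entirely contained in $T^*_C(V_\lambda)_\textrm{reg}$ on which $\Lambda_C\cap U$ is smooth, so that the isomorphism \eqref{eqn:complete_along_U} provides transverse coordinates $z_1,\ldots,z_e$. Lemma~\ref{lemma:singular} gives $\hat g_U \in (z_1,\ldots,z_e)^2$, so
\[
\hat g_U = \sum_{i\le j} b_{ij}(y)\, z_i z_j + R, \qquad R \in (z_1,\ldots,z_e)^3,
\]
with $b_{ij} \in \mathcal{O}_{\Lambda_C}(U\cap \Lambda_C)$. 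By Lemmas~\ref{lemma:rankHessian<} and \ref{lemma:rankHessian>}, the symmetric matrix $B(y) = (b_{ij}(y))$ has rank $e$ at every point of $U\cap \Lambda_C$, so after further Zariski shrinking of $U$ I may assume $\det B$ is a unit.

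Next I would diagonalize $B$ over $\mathcal{O}_{\Lambda_C}(U\cap \Lambda_C)$. In characteristic zero, symmetric Gram--Schmidt produces $P \in \GL_e$ with $P^t B P = \mathrm{diag}(u_1,\ldots,u_e)$, each $u_i$ a unit, provided that the successive leading principal minors of $B$ are units; this can always be arranged after reshuffling coordinates and again Zariski-shrinking $U$ along $\Lambda_C$. The linear substitution $z = P w$ then produces
\[
\hat g_U = u_1(y) w_1^2 + \cdots + u_e(y) w_e^2 + R', \qquad R' \in (w_1,\ldots,w_e)^3.
\]
To eliminate $R'$ I would apply the classical formal Morse iteration: for $k\ge 3$, the homogeneous degree-$k$ component of the remainder may be written modulo $(w)^{k+1}$ as $\sum_i 2 u_i(y)\, w_i\, \delta_i^{(k)}(y,w)$ because each $u_i$ is a unit, and the substitution $w_i \mapsto w_i - \delta_i^{(k)}(y,w)$ then pushes the remainder into $(w)^{k+1}$. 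The composition converges in the $w$-adic topology on $\mathcal{O}_{\Lambda_C}(U\cap \Lambda_C)[[w_1,\ldots,w_e]]$ and yields the final coordinates $x_1,\ldots,x_e$ in which $\hat g_U = \sum_i u_i(y)\, x_i^2$.

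The main obstacle will be the diagonalization of $B$ over the non-local ring $\mathcal{O}_{\Lambda_C}(U\cap \Lambda_C)$: making successive Gram--Schmidt pivots into units may require repeated Zariski shrinking along $\Lambda_C$, and one must verify that after all these shrinkings the open $U$ still meets the regular part of $T^*_C(V_\lambda)$. This bookkeeping aside, the iterative Morse step is entirely routine. An alternative, which sidesteps the pivot issue, is to carry out the diagonalization in the completion at a single point of $U\cap\Lambda_C$, where Gram--Schmidt over a complete local ring is unconditional, and then descend to a Zariski (or étale) neighborhood by Artin approximation.
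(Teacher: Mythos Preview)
Your argument is correct, modulo one slip of the pen: you write ``$U$ \ldots\ entirely contained in $T^*_C(V_\lambda)_\textrm{reg}$'', which is impossible since $T^*_C(V_\lambda)_\textrm{reg}\subseteq\Lambda_C$ has positive codimension in $C\times C^*$. What you mean, and what you use, is $U\cap\Lambda_C\subseteq T^*_C(V_\lambda)_\textrm{reg}$; this is achievable because $T^*_C(V_\lambda)_\textrm{reg}$ is open in $\Lambda_C$. Once that is fixed, Lemmas~\ref{lemma:rankHessian<} and~\ref{lemma:rankHessian>} do give $\det B$ nowhere vanishing on the affine $U\cap\Lambda_C$, hence a unit, and your Gram--Schmidt plus degree-by-degree Morse iteration is standard and sound.

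Your route is genuinely different from the paper's. The paper proceeds one variable at a time, Weierstrass/Hensel style: writing $\hat g_U = \sum_n b_n z_1^n$ over $A=\mathcal{O}_{\Lambda_C}(U\cap\Lambda_C)[[z_2,\ldots,z_e]]$, it uses Hensel's lemma (for the $(z_2,\ldots,z_e)$-adically complete ring $A$) to solve $c_1(h_1)=0$ and thereby kill the $x_1^1$ term, obtaining $\hat g_U = b_0 + u_1 x_1^2$ with $u_1\in A[[x_1]]^\ast$, and then inducts on $b_0$. Your approach instead diagonalises the full transverse quadratic form over the base ring first and then kills all higher-order terms simultaneously by the classical iteration. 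The practical payoff of your method is that the diagonal entries $u_i$ land directly in $\mathcal{O}_{\Lambda_C}(U\cap\Lambda_C)$, exactly as the lemma states and as Theorem~\ref{theorem:rank1} subsequently needs when it localises at $u_1\cdots u_e$; the paper's inductive step, as written, produces $u_i\in A[[x_i]]^\ast$ and a further (tacit) absorption of the $x$-dependence is required to match the stated conclusion. The paper's approach, on the other hand, avoids having to arrange unit leading minors for Gram--Schmidt by instead only needing a single diagonal pivot to be a unit at each stage.
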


\begin{proof} 
We have seen that we may choose $U$ and arrange so that the image of the function $g: C\times C^* \to \mathbb{A}^1$ in $\widehat{\mathcal{O}}_{C\times C^*/\Lambda_C}(U) = \mathcal{O}_{\Lambda_C}(U\cap \Lambda_C)[[z_{1}, \ldots , z_{e}]]$ will have the form ${\hat g}_{U} = \sum_{\abs{I}\geq 2} a_I z^I$.
Set $A = \mathcal{O}_{\Lambda_C}(U\cap \Lambda_C)[[z_{2}, \ldots , z_{e}]]$ and let $\mathfrak{m}$ be the ideal in $A$ generated by $z_{2}, \ldots , z_{e}$.
Write  ${g}_{(x,\xi)} = \sum_{n=0}^{\infty} b_n z_1^n$.
Then $b_1\in \mathfrak{m}$ and $b_2 \in A^\ast$.
Make the substitution $z_1 = w_1 + x$ to give ${\hat g}_{(x,\xi)} = \sum_{n=0}^{\infty} c_n w_1^n$ for $c_n \in A[[x]]$.
Importantly, $c_1(x) = b_1 + 2b_2 x + \cdots$ so $c'_1(x) \in A^\ast$ (formal derivative). Since we know that $c_1(x) \equiv 2b_2(x) + \cdots \mod \mathfrak{m}$, we know $c_1(x)$ has a root $x + \mathfrak{m}$ in $A/\mathfrak{m}$.
By the extension of Hensel's lemma to formal power series, $c_1(x) =0$ has a solution in $A$, call it $h_1\in A$.
Now the linear substitution $z_1 \mapsto x_1 + h_1$ sends $\mathcal{O}_{\Lambda_C}(U\cap \Lambda_C)[[z_{1}, \ldots , z_{e}]]$ to $\mathcal{O}_{\Lambda_C}(U\cap \Lambda_C)[[x_{1}, z_{2} \ldots , z_{e}]]$ so that the image of ${\hat g}_{(x,\xi)}$ to takes the form $b_0 + b_2 x_1^2 + b_3 x_1^3 + \cdots = b_0 + u_1 x_1^2$.
As an element of $A[[x_1]]$, now ${g}_{(x,\xi)}$ has no linear term in $x_1$ and $u_1 \in A[[x_1]]^\ast$.
%By Hensel's lemma, $u_1 = v_1^2$ for some $v_1\in A[[z_1]]^*$, so the substitution $x_1\mapsto v_1 x_1$ determines an automorphism of $\mathcal{O}_{\Lambda_C}(U\cap \Lambda_C)[[x_{1}, z_{2} \ldots , z_{e}]]$ sending ${g}_{(x,\xi)}$ to $u_1 x_1^2 + p$ for $p\in A$. 
Continuing inductively concludes the proof of Lemma~\ref{lemma:Morse}.
%\todo{Clifton: Did I break this proof? If so, revert to version-1. Go over this carefully}
\end{proof}

\begin{theorem}\label{theorem:rank1}
For every $H_\lambda$-orbit $C\subseteq V_\lambda$ and for all $(x,\xi)\in T^*_{C}(V_\lambda)_\textrm{reg}$,
\[
\left( \Ev_C \IC(C) \right)_{(x,\xi)} \iso \mathcal{L}_{(x,\xi)}[\dim C +1 - {e_C}]
\]
where ${e_C} = \dim C + \dim C^* -\dim V_\lambda$ and $\mathcal{L}_{(x,\xi)}$ the stalk of a local system for a quadratic character, described in the proof, of an etale neighbourhood of $(x,\xi)$ in $T^*_{C}(V_\lambda)_\textrm{reg}$.
In particular
\[ 
\rank \Ev_C \IC(C) =1.
\]
\end{theorem}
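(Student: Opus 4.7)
The plan is to combine three ingredients already in hand: the reduction from $\IC(C)$ to a constant sheaf provided by Lemma~\ref{lemma:bigcell}, the Morse-style normal form for $g\ceq f\vert_{C\times C^*}$ along $\Lambda_C$ provided by Lemma~\ref{lemma:Morse}, and the computation of the vanishing cycles of a quadratic form with unit coefficients provided by Proposition~\ref{proposition:method}. First, Lemma~\ref{lemma:bigcell} rewrites
\[
\Ev_C \IC(C) = \bigl(\RPhi_{g} \1_{C\times C^*}\bigr)\big\vert_{T^*_C(V_\lambda)_\textrm{reg}}[\dim C],
\]
so it suffices to show that $\RPhi_g \1$, restricted to $T^*_C(V_\lambda)_\textrm{reg}\subseteq \Lambda_C$, is a rank-one local system $\mathcal{L}$ concentrated in degree $1-e_C$ (with the promised quadratic monodromy), after which the stated shift and rank follow immediately.

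Secondly, fix $(x,\xi)\in T^*_C(V_\lambda)_\textrm{reg}$ and choose the open affine $U\subseteq C\times C^*$ of Lemma~\ref{lemma:Morse}, so that in the completion $\widehat{\mathcal{O}}_{C\times C^*/\Lambda_C}(U)\iso \mathcal{O}_{\Lambda_C}(U\cap \Lambda_C)[[x_1,\ldots,x_{e_C}]]$ the function $g$ takes the form $u_1 x_1^2+\cdots+u_{e_C} x_{e_C}^2$ with each $u_i$ a unit on $U\cap \Lambda_C$. The vanishing cycles of this model, computed by Proposition~\ref{proposition:method}, are supported on the zero-locus $\{x_1=\cdots=x_{e_C}=0\}=\Lambda_C\cap U$, where they form a rank-one local system, concentrated in degree $1-e_C$, whose monodromy is the product of the quadratic characters associated to the units $u_1,\ldots,u_{e_C}$.

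Thirdly, one must transfer this formal computation to the étale-local situation of $g$ itself. The standard route, which I would use, is to note that the stalk of $\RPhi_g\1$ at $(x,\xi)$ depends only on the henselization of $\mathcal{O}_{C\times C^*,(x,\xi)}$ together with the image of $g$, and then to invoke Artin approximation to realize the formal Morse coordinates of Lemma~\ref{lemma:Morse} by an étale change of coordinates in a neighbourhood of $(x,\xi)$. After this reduction, Proposition~\ref{proposition:method} applies verbatim and yields $\bigl(\RPhi_g\1\bigr)_{(x,\xi)} \iso \mathcal{L}_{(x,\xi)}[1-e_C]$, with $\mathcal{L}_{(x,\xi)}$ the stalk of the rank-one quadratic local system described above. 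Restricting to $T^*_C(V_\lambda)_\textrm{reg}\subseteq \Lambda_C$ and reinstating the shift $[\dim C]$ gives the statement of the theorem; in particular the rank is $1$.

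The main obstacle will be the third paragraph: converting the purely formal Morse decomposition of Lemma~\ref{lemma:Morse} into a statement that can be fed into Proposition~\ref{proposition:method} requires some care, since vanishing cycles are defined on the actual variety, not on its formal completion. This is where Artin approximation, or equivalently the invariance of $\RPhi$ under henselization, is needed; once this technical point is established, the remaining bookkeeping (the dimension count $e_C=\dim C+\dim C^*-\dim V_\lambda$, the shift, and the identification of the monodromy character) is routine.
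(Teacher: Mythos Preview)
Your proposal is correct and follows essentially the same three-step plan as the paper: reduce via Lemma~\ref{lemma:bigcell}, put $g$ in Morse normal form via Lemma~\ref{lemma:Morse}, then invoke Proposition~\ref{proposition:method}. The one substantive difference is in the third step: rather than Artin approximation, the paper simply pulls back along the map $j:\Spec{\widehat{\mathcal{O}}_{C\times C^*/\Lambda_C}(U')}\to C\times C^*$ from the formal neighbourhood (after further localizing so that the $u_i$ are units), then factors the model as $Y\times_S Z$ with $Y$ the completed $\Lambda_C$-direction and $Z=\Spec{\k[x_1,\ldots,x_e,u_1,\ldots,u_e]_{u_1\cdots u_e}[[t]]/(\sum u_ix_i^2-t)}$, using Sebastiani--Thom and Lemma~\ref{lemma:method0} to dispose of the $Y$-factor before applying Proposition~\ref{proposition:method} to $Z$. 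Your Artin-approximation route would also work and is arguably more standard; the paper's route avoids that machinery at the cost of computing directly on a formal scheme.
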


\begin{proof}%[Proof of Theorem~\ref{theorem:rank1}
By Lemma~\ref{lemma:bigcell},
\[
\Ev_{C} \IC(C) 
=
\left(\RPhi_{f\vert_{C\times C^*}} \1_{C\times C^*} \right)_{T^*_{C}(V_\lambda)_\textrm{reg}}[\dim C].
\]
Let $U$ be an open affine neighbourhood of $(x,\xi)$ in $C\times C^*$ and recall the definition $\widehat{\mathcal{O}}_{C\times C^*/\Lambda_C}(U)$ from \eqref{eqn:complete_along_U}.
Recall $u_1,\ldots, u_e \in \mathcal{O}_{\Lambda_C}(U\cap \Lambda_C)$ from Lemma~\ref{lemma:Morse}.
Set $U' = U_{u_1\cdots u_e}$; this is again an open affine neighbourhood of $(x,\xi)$ in $C\times C^*$.
Let
\[
j : \Spec{\widehat{\mathcal{O}}_{C\times C^*/\Lambda_C}(U')} \to C\times C^*
\]
be the map induced by $\mathcal{O}_{C\times C^*}(U) \to \widehat{\mathcal{O}}_{C\times C^*/\Lambda_C}(U')$.
Then
\[
\left(\RPhi_{f\vert_{C\times C^*}} \1_{C\times C^*} \right)_{(x,\xi)}
=
j^* \left(\RPhi_{f\vert_{C\times C^*}} \1_{C\times C^*} \right)_{0}
\]
By Lemma~\ref{lemma:Morse}, there is an isomorphism
\[
\widehat{\mathcal{O}}_{C\times C^*/\Lambda_C}(U') \iso \mathcal{O}_{\Lambda_C}(U'\cap\Lambda_C)[[x_1, \ldots , x_e]]
\]
where $d  = \dim V_{\lambda}$ and $e = \dim C + \dim C^* - \dim V_{\lambda}$, such that the image of $f\vert_{C\times C^*}$ in $\widehat{\mathcal{O}}_{C\times C^*/\Lambda_C}(U')$ is $u_1 x_1^2+\cdots + u_e x_e^2$.
Note that by our choice of $U'$, we now have $u_1,\ldots , u_e\in  \mathcal{O}_{\Lambda_C}(U'\cap\Lambda_C)^*$.
By smooth base change,
\[
j^* \left(\RPhi_{f\vert_{C\times C^*}} \1_{C\times C^*} \right)_{0}
= \left( \RPhi_{u_1 x_1^2+\cdots + u_e x_e^2} \1 \right)_0.
\]
By definition, 
\[
\RPhi_{u_1 x_1^2+\cdots + u_e x_e^2} \1
=
\RPhi_{X} \1_{X}
\]
where 
\[
X = \Spec{\mathcal{O}_{\Lambda_C}(U'\cap\Lambda_C)[[t]][x_1, \ldots , x_e]]/(u_1 x_1^2+\cdots + u_e x_e^2-t)}.
\]
Write $X = Y \times_S Z$ with 
\[
Y= \Spec{\mathcal{O}_{\Lambda_C}(U'\cap\Lambda_C)[[t]]}
\]
and
\[
Z = \Spec{\k[x_1, \ldots , x_e, u_1, \ldots, u_e,]_{u_1\cdots u_e}[[t]]/(u_1 x_1^2+\cdots + u_e x_e^2-t)}.
\]
Now, by smooth base change, the Sebastiani-Thom isomorphism and Lemma~\ref{lemma:method0},
\[
\RPhi_{X} \1_{X}
=
\1_{{\bar Y}_\s} \boxtimes \RPhi_{Z} \1_{Z}.
\] 
Using Proposition~\ref{proposition:method}, it now follows that 
%\todo{Clifton: Fill in some of the details here?}
\[
\left( \Ev_{C} \IC(C) \right)_{(x,\xi)} 
= \left(\RPhi_{Z} \1_{X} \right)_{0}[\dim C]
\iso \mathcal{L}_0[\dim C +1-{e}]
\]
where $\mathcal{L}$ is the rank-$1$ local system described in Proposition~\ref{proposition:method}.
This concludes the proof of Theorem~\ref{theorem:rank1}.
\end{proof}

\subsection{Perversity}\label{ssec:perversity}

We now show that when shifted by the appropriate degree, the functor $\Ev_C$ takes perverse sheaves to perverse sheaves.

\begin{proposition}\label{VC:perverse}
Let $C\subseteq V_\lambda$ be an $H_\lambda$-orbit. 
If $\mathcal{P}\in \Perv_{H_\lambda}(V_\lambda)$ then
\[
\Ev_C \mathcal{P}[\dim C^* -1]\in \Perv_{H_\lambda}(T^*_{C}(V_\lambda)_\textrm{reg}).
\]
\end{proposition}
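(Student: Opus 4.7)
The plan is to decompose $\Ev_C$ into its constituent steps and show perversity is preserved with the appropriate shift. First I would check the pullback step: the projection $p_1 : V_\lambda \times C^* \to V_\lambda$ is smooth of relative dimension $\dim C^*$, so $p_1^*[\dim C^*]$ is perverse $t$-exact; thus $(\mathcal{P} \boxtimes \1_{C^*})[\dim C^*]$ is perverse on $V_\lambda \times C^*$. The subsequent base-change and vanishing-cycles steps amount to applying the vanishing cycles functor $\RPhi_{f_{C^*}}$ to this perverse sheaf: by Gabber's $t$-exactness theorem (see \cite[4.4]{BBD}), $\RPhi_{f_{C^*}}[-1]$ sends perverse sheaves to perverse sheaves, so $\RPhi_{f_{C^*}}(\mathcal{P}\boxtimes\1_{C^*})[\dim C^* - 1]$ is perverse on $f_{C^*}^{-1}(0)$.

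The delicate step is the final restriction to $T^*_C(V_\lambda)_\textrm{reg}$, which is \emph{not} open in $f_{C^*}^{-1}(0)$ (it has strictly smaller dimension in general). The strategy will be to show that the support of $\RPhi_{f_{C^*}}(\mathcal{P}\boxtimes\1_{C^*})$ lies in a closed subvariety $Z \subseteq f_{C^*}^{-1}(0)$ in which $T^*_C(V_\lambda)_\textrm{reg}$ is open. Once this is established, pushforward along the closed immersion $Z \hookrightarrow f_{C^*}^{-1}(0)$ is $t$-exact, so the perverse sheaf corresponds to a perverse sheaf on $Z$; restriction along the open immersion $T^*_C(V_\lambda)_\textrm{reg} \hookrightarrow Z$ then preserves perversity and yields the claim.

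To locate the support I would invoke the microsupport characterization of vanishing cycles: $\operatorname{supp}\RPhi_{f_{C^*}}\mathcal{F}$ is contained in the locus of $(x,\xi) \in f_{C^*}^{-1}(0)$ at which $df_{C^*}(x,\xi)$ lies in the singular support $SS(\mathcal{F})_{(x,\xi)}$. A direct computation gives $df_{C^*}(x,\xi) = (\xi,\, x\vert_{T_\xi(C^*)}) \in V^*_\lambda \oplus T^*_\xi(C^*)$, while $SS(\mathcal{P}\boxtimes\1_{C^*}) = SS(\mathcal{P}) \times (C^* \times \{0\})$. Vanishing of the $T^*(C^*)$-component amounts to $x \perp T_\xi(C^*) = [\mathfrak{h}_\lambda,\xi]$, which by invariance of $\KPair{\cdot}{\cdot}$ is equivalent to $[x,\xi] = 0$. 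Since $\mathcal{P}$ is $H_\lambda$-equivariant, $SS(\mathcal{P}) \subseteq \bigcup_{C_1} T^*_{C_1}(V_\lambda)$, so the support lies in
\[ Z \ceq \bigcup_{C_1} T^*_{C_1}(V_\lambda) \cap (V_\lambda \times C^*). \]

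To conclude, I would verify that $T^*_C(V_\lambda)_\textrm{reg}$ is open in $Z$ by a case analysis in the orbit closure poset. Openness inside its own component $T^*_C(V_\lambda) \cap (V_\lambda \times C^*)$ is immediate from \eqref{eqn:reg}; the closures of other components $T^*_{C_1}(V_\lambda) \cap (V_\lambda \times C^*)$ with $C_1 \neq C$ do not meet $T^*_C(V_\lambda)_\textrm{reg}$: if $C \subsetneq \overline{C_1}$, the closure $\overline{T^*_{C_1}(V_\lambda)}$ is excised by the very definition of regular, while otherwise $\overline{C_1}$ does not contain $C$, which gives disjointness from $C \times V^*_\lambda \supseteq T^*_C(V_\lambda)_\textrm{reg}$. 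The main obstacle will be rigorously justifying the microsupport inclusion in the $\ell$-adic equivariant setting; an alternative route is to use exactness of $\Ev_C$ to reduce to simple perverse sheaves $\IC(C',\mathcal{L})$, handling $C = C'$ via Theorem~\ref{theorem:rank1}, $C \not\leq C'$ via Proposition~\ref{VC:support}, and the intermediate case $C \lneq C'$ by a direct local calculation of $\IC(C',\mathcal{L})$ along the stratum $C$.
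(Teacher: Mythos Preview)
Your overall strategy matches the paper's exactly: show that $(\mathcal{P}\boxtimes\1_{C^*})[\dim C^*]$ is perverse via smooth pullback, apply Gabber's $t$-exactness for $\RPhi[-1]$, then establish a support constraint so that the final restriction becomes an open inclusion. The divergence is in how that support constraint is proved.

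You invoke the microsupport characterisation of vanishing cycles, and your computation of $df_{C^*}$ leading to the condition $[x,\xi]=0$ is correct in spirit; but as you acknowledge, making this rigorous in the $\ell$-adic equivariant setting is not free. The paper proves the identical conclusion as a separate lemma (Lemma~\ref{lemma:EvandConormal}) by a concrete argument that sidesteps singular-support machinery entirely: reduce by exactness to a simple $\IC(C_0,\mathcal{L}_0)$; choose a proper map $\pi\colon\widetilde{C_0}\to\overline{C_0}$ from a smooth variety so that this $\IC$ sheaf appears as a summand of $\pi_*\1[\dim C_0]$; use proper base change to transport the vanishing-cycles computation to the smooth cover, where by Lemma~\ref{lemma:methodx} the support of $\RPhi$ of a constant sheaf is the singular locus of the function; then analyse that singular locus directly by projecting the differential onto the $T^*(C^*)$-factor, obtaining $[\pi(\tilde x),\xi]=0$. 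This is effectively your $df_{C^*}$ computation carried out pointwise on a resolution rather than via an abstract microsupport inclusion, and it is self-contained. Your proposed fallback route (case analysis on simple $\IC(C',\mathcal{L})$ with the intermediate case $C\lneq C'$ handled by ``direct local calculation'') is less promising and is not what the paper does; the resolution argument is uniform in $C'$. Your openness argument for $T^*_C(V_\lambda)_\textrm{reg}$ inside $Z$ is more explicit than the paper's (which simply asserts it) and is correct.
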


\begin{proof}
From \eqref{diag:Ev} recall that $\Ev[\dim C^*-1] : \Deligne_{,H_\lambda}(V_\lambda) \to \Deligne_{,H_\lambda}(T_{C}^*(V_\lambda)_\textrm{reg})$ is defined by 
\[
\Ev[\dim C^*-1]\mathcal{P} = \left(\RPhi_{f_{C^*}}[-1] \left(\mathcal{P}\boxtimes \1_{C^*}[\dim {C^*}] \right) \right)\vert_{T_{C}^*(V_\lambda)_\textrm{reg}}.
\]
Since $C^*$ is smooth, $\1_{C^*}[\dim{C^*}]$ is perverse, so $\mathcal{P} \boxtimes \1_{C^*}[\dim{C^*}]$ is a perverse sheaf on $V_\lambda\times C^*$; see also \cite[4.2.4]{BBD}.
The restriction of $\mathcal{P} \boxtimes \1_{C^*}[\dim{C^*}]$ from $V_\lambda\times C^*$ to the open $\KPair{}{}^{-1}(\mathbb{A}^1_{\times})$ is also perverse. 
It follows from \cite[Proposition 4.4.2]{BBD} that
$
%\RPhi_{f_{C^*}} [-1] p_{C^*}[d_{C^*}]\mathcal{F}= 
\RPhi_{f_{C^*}}[-1] (\mathcal{P} \boxtimes \1_{C^*}[\dim C^*])
$
is perverse; see also \cite[Th\'eor\`eme 1.2]{Brylinski:Transformations}.
It is also $H_\lambda$-equivariant by transport of structure.
The functor
\[
\RPhi_{f_{C^*}}[-1] \left(({\, \cdot\, }) \boxtimes \1_{C^*}[\dim {C^*}] \right) : \Deligne_{,H_\lambda}(V_\lambda) \to \Deligne_H(f_{C^*}^{-1}(0)) 
\]
takes equivariant perverse sheaves to equivariant perverse sheaves.

By Lemma~\ref{lemma:EvandConormal} for every $\mathcal{P}\in \Perv_{H_\lambda}(V_\lambda)$ the support of $\RPhi_{f_{C^*}} \left(\mathcal{P}\boxtimes \1_{C^*} \right)$ is contained in 
\[
\{ (x,\xi)\in V_\lambda \times C^* \tq [x,\xi]=0\}.
\]
Thus, the restriction of the perverse sheaf $\RPhi_{f_{C^*}}[-1] (\mathcal{P} \boxtimes \1_{C^*}[\dim {C^*}])$ from 
\[
\{ (x,\xi)\in V_\lambda \times C^* \tq \KPair{x}{\xi}=0\}
\]
to
\[
\{ (x,\xi)\in V_\lambda \times C^* \tq [x,\xi]=0\}
\]
is again perverse.
Since $T^*_C(V_\lambda)_\textrm{reg}$ is open in $\{ (x,\xi)\in V_\lambda \times C^* \tq [x,\xi]=0\}$, it follows from \cite[Section 1.4]{BBD} that the restriction
\[
\Ev_C \mathcal{P} [\dim C^* -1] = \left(\RPhi_{f_{C^*}}[-1] (\mathcal{P} \boxtimes \1_{C^*}[\dim {C^*}])\right)\vert_{T^*_C(V_\lambda)_\textrm{reg}}
\]
is perverse. This proves \ref{VC:perverse}. 
\end{proof}

\begin{lemma}\label{lemma:EvandConormal}
For every $\mathcal{P}\in \Perv_{H_\lambda}(V_\lambda)$, the support of $\RPhi_{f_{C^*}}(\mathcal{P}\boxtimes \1_{C^*})$ is contained in $\{ (x,\xi)\in V_\lambda \times C^* \tq [x,\xi]=0\}$. 
%\todo{This result and its proof is Andrew's idea.}
\end{lemma}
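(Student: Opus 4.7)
The plan is to apply the microlocal criterion for vanishing of nearby/vanishing cycles: if $\mathcal{F}$ is a constructible complex on a smooth variety $Y$ and $g: Y \to \mathbb{A}^1$ is a morphism with $dg(y_0)\neq 0$, then $(\RPhi_g \mathcal{F})_{y_0}=0$ provided $(y_0, dg(y_0))\notin SS(\mathcal{F})$ (see \cite{Kashiwara:Systems} or, more accessibly, Kashiwara-Schapira). So I will take $Y = V_\lambda \times C^*$, $g = f_{C^*}$, $\mathcal{F} = \mathcal{P}\boxtimes \1_{C^*}$, and fix $(x,\xi)\in V_\lambda\times C^*$ with $[x,\xi]\neq 0$, and check that $df_{C^*}(x,\xi)$ lies outside the singular support.

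First I would compute the singular support. Since $\mathcal{P}\in \Perv_{H_\lambda}(V_\lambda)$, its characteristic variety is $H_\lambda$-conic Lagrangian and hence contained in $\bigcup_{C'}\overline{T^*_{C'}(V_\lambda)}\subseteq T^*_{H_\lambda}(V_\lambda)$, which by Corollary~\ref{lemma:bracket} equals $\{(x,\sigma)\in V_\lambda\times V^*_\lambda \tq [x,\sigma]=0\}$. Using the product formula for singular supports of external tensor products and the fact that $SS(\1_{C^*}) = T^*_{C^*}(C^*)$ is the zero section, one obtains
\[
SS(\mathcal{P}\boxtimes \1_{C^*}) \subseteq \{(x,\xi,\sigma,0)\in T^*(V_\lambda\times C^*) \tq [x,\sigma]=0\}.
\]

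Next I would compute $df_{C^*}(x,\xi)$. Identifying $T^*_{(x,\xi)}(V_\lambda \times C^*)$ with $V^*_\lambda \oplus T^*_\xi C^*$ using $\KPair{\,}{\,}$, a direct calculation from $f_{C^*}(x,\xi) = \KPair{x}{\xi}$ gives
\[
df_{C^*}(x,\xi) = (\xi,\ \iota^*(x)),
\]
where $\iota^*(x)\in T^*_\xi C^*$ denotes the restriction to $T_\xi C^*$ of the linear functional on $V^*_\lambda$ determined by $x$ via $\KPair{\,}{\,}$. For $df_{C^*}(x,\xi)\in SS(\mathcal{P}\boxtimes \1_{C^*})$ we would need simultaneously $(x,\xi)\in SS(\mathcal{P})$ (so $[x,\xi]=0$) and $\iota^*(x)=0$ as an element of $T^*_\xi C^*$. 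The second condition says $\KPair{x}{[X,\xi]}=0$ for all $X\in \mathfrak{h}_\lambda$, which by the $J_\lambda$-invariance of $\KPair{\,}{\,}$ rewrites as $\KPair{[x,\xi]}{X}=0$ for all $X\in \mathfrak{h}_\lambda$; since $[x,\xi]\in\mathfrak{h}_\lambda$ and $\KPair{\,}{\,}$ restricts non-degenerately to $\mathfrak{h}_\lambda$ (as in the proof of Proposition~\ref{prop:identify}), this again forces $[x,\xi]=0$.

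Thus if $[x,\xi]\neq 0$ and $\xi\neq 0$, then $df_{C^*}(x,\xi)\neq 0$ and $df_{C^*}(x,\xi)\notin SS(\mathcal{P}\boxtimes \1_{C^*})$, so the microlocal criterion gives $\RPhi_{f_{C^*}}(\mathcal{P}\boxtimes \1_{C^*})_{(x,\xi)}=0$. The degenerate case $\xi=0$ (which only arises when $C^*=\{0\}$, i.e.\ $C$ is the open orbit) is automatic, since then $[x,\xi]=0$ for all $x$, so $(x,\xi)$ already lies in the claimed support locus. The main technical point (and the only real obstacle) is invoking the microlocal vanishing criterion correctly in the $\ell$-adic setting; I would cite Beilinson's work on singular supports of $\ell$-adic sheaves or Brylinski's \cite[Th\'eor\`eme 1.2]{Brylinski:Transformations} to justify that the criterion transfers from the classical $\mathcal{D}$-module/constructible sheaf picture to our context.
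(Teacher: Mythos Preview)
Your argument is correct but takes a different route from the paper. The paper avoids the singular-support criterion entirely: it reduces to a simple $\mathcal{P}=\IC(C_0,\mathcal{L}_0)$, chooses a proper $\pi:\widetilde{C_0}\to\overline{C_0}$ with $\widetilde{C_0}$ smooth so that $\mathcal{P}$ is a summand (up to shift) of $\pi_!\1_{\widetilde{C_0}}$, and then uses proper base change together with Lemma~\ref{lemma:methodx} (vanishing cycles of a constant sheaf on a smooth variety vanish where the function is smooth) to reduce the question to locating the critical locus of $g=f_{C^*}\circ(\pi\times\id_{C^*})$ on the smooth variety $\widetilde{C_0}\times C^*$. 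The $\xi$-component of $dg$ at a critical point is exactly your second condition $\iota^*(\pi(\tilde x))=0$, and the paper then runs the same invariance argument you give to conclude $[\pi(\tilde x),\xi]=0$. Your approach is shorter and more conceptual --- indeed your \emph{first} condition $(x,\xi)\in SS(\mathcal{P})\subseteq T^*_{H_\lambda}(V_\lambda)$ already forces $[x,\xi]=0$ without needing the second at all --- but it leans on the microlocal criterion for $\RPhi$, which the authors deliberately sidestep in order to stay self-contained within the elementary $\ell$-adic tools they have built (see their remarks on stratified Morse theory and microlocalisation at the end of Section~\ref{section:Ev}). One correction: \cite[Th\'eor\`eme~1.2]{Brylinski:Transformations} concerns $t$-exactness of $\RPhi[-1]$, not the non-characteristic vanishing criterion you need; the right reference is Kashiwara--Schapira (after passing to the analytic topology via the comparison theorem) or Beilinson's theory of singular supports for constructible $\ell$-adic sheaves.
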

\begin{proof}
Since $\RPhi_{f_{C^*}}$ is exact, we may assume $\mathcal{P}$ is simple: set $\mathcal{P} = \IC(C_0,\mathcal{L}_0)$, where $C_0\subseteq V_{\lambda}$ is an $H_\lambda$-orbit and $\mathcal{L}_0$ is a simple $H_\lambda$-equivariant local system on $C_0$.
By Proposition~\ref{VC:support}, the support of  $\RPhi_{f_{C^*}}(\mathcal{P}\boxtimes \1_{C^*})$ is contained in $\overline{C_0}$.
Let $\pi : \widetilde{C_0} \to \overline{C_0}$ be a proper morphism with $\widetilde{C_0}$ smooth over $\s$ such that $\IC(C_0,\mathcal{L}_0)$ appears, up to shift, in $\pi_*\, \IC(\widetilde{C_0})$. %\todo{Why does this exist?}
%\[\pi_* \1_{\widetilde{C_0}}[\dim \widetilde{C_0}] = \IC(C_0,\mathcal{L}_0)[d_0] \oplus \bigoplus_i \IC(C_i,\mathcal{L}_i)[d_i]\]
Now, define $\pi' \ceq \pi\times \id_{C^*} : \widetilde{C_0}\times C^* \to  \overline{C_0}\times C^*$ and $g \ceq f_{C^*} \circ \pi' : \widetilde{C_0}\times C^*  \to S$.
% this notation is similar but not exactly the same as that appearing in Lemma~\ref{lemma:PBC}.
To simplify notation somewhat, we set $Y = \widetilde{C_0}\times C^*$ for the remainder of this proof.
Arguing as in the proof of Lemma~\ref{lemma:PBC} using proper base change \cite[Expos\'e XIV, 2.1.7.1]{SGA7II}, it follows that
\[
\pi'_* \RPhi_{g} \1_{Y} = \RPhi_{f_{C^*}} \pi_* \1_{Y}.
\]
Then $\RPhi_{f_{C^*}}(\IC(C_0,\mathcal{L}_0)\boxtimes \1_{C^*})$ is a summand of this sheaf, up to shift.

It follows from Lemma \ref{lemma:methodx}, that the support of $\RPhi_{g} \1_{Y}$ is contained in the singular locus of $Y$.
Thus, to prove the lemma it is sufficient to show that the singular locus of $Y$ is contained in 
$\{ (\tilde{x},\xi)\in \widetilde{C_0}\times C^* \tq [\pi(\tilde{x}),\xi] =0 \}$.
Accordingly, suppose $y = (\tilde{x},\xi) \in Y$ is singular.
Since $Y$ is smooth over $\s$, there exists an open neighbourhood $U\subset Y$ containing $y$ and a closed embedding $U \to \mathbb{A}^n$ such that $d\dot{g}_y \in T^*_{U,y}(\mathbb{A}^n)$ for an extension $\dot{g}$ of $g\vert_{U}$ to an open neighbourhood of $U$ in $\mathbb{A}^n$; see \cite[Theorem 3.1.2]{Gaitsgory:Flatness}, for instance.
Observe that the stalk $T^*_{U,y}(\mathbb{A}^n)$ of the conormal bundle $T^*_{U}(\mathbb{A}^n)$ is precisely the complex vector space of $dh_y$ for $h\in I(U)$.
Without loss of generality, we may take the embedding $U \to \mathbb{A}^n$ to be of the form $y \mapsto (z,\pi'(y))$, or equivalently $(\tilde{x},\xi) \mapsto (z,\pi(\tilde{x}),\xi)$, where $\tilde{x} \mapsto (z,\pi(\tilde{x}))$ is an affine embedding of $\tilde{C}$.
Observe that $C^* \subseteq V^*$ comes with an affine embedding.
Now $I(Y) = I(\widetilde{C_0}\times C^*) \iso I(\widetilde{C_0})\oplus I(C^*)$ in the coordinate ring of $\mathbb{A}^n$ and the projection of 
$d\dot{g}_y \in T^*_{y}(\mathbb{A}^n)$ onto $I(C^*)$ is $d\KPair{\pi(\tilde{x})}{\cdot}_\xi$. 
Thus, $d\KPair{\pi(\tilde{x})}{\cdot}_\xi \in T^*_{C^*,\xi}(V^*_\lambda)$.
Identifying the dual of ${V^*_\lambda}$ with $V_\lambda$, as in Section~\ref{ssec:dual}, gives
\[
T^*_{C^*,\xi}(V^*_\lambda) \iso \{ x\in V_\lambda \tq [x,\xi]=0\}.
\]
Thus, the singular locus of $Y$ is contained in 
\[
\pi'^{-1}(\{ (x,\xi)\in \widetilde{C_0}\times C^* \tq [x,\xi] =0 \}) =  
\{ (\tilde{x},\xi)\in \widetilde{C_0}\times C^* \tq [\pi(\tilde{x}),\xi] =0 \}.
\]
Thus, the support of $\RPhi_{g} \1_{Y}$ is contained in this variety.
Since
\[
\left(\pi'_* \RPhi_{g} \1_{Y}\right)_{(x,\xi)}
= H^{\bullet}(\pi^{-1}(x)\times \{\xi\}, \RPhi_{g} \1_{Y}),
\]
it now follows that the support of $\pi'_* \RPhi_{g} \1_{Y} = \RPhi_{f_{C^*}} \pi_* \1_{Y}$ is contained in the subset $\{ (x,\xi)\in C_0\times C^* \tq [x,\xi] =0 \}$. 
Since $\RPhi_{f_{C^*}}(\IC(C_0,\mathcal{L}_0)\boxtimes \1_{C^*})$ is a summand of $\RPhi_{f_{C^*}} \pi_* \1_{Y}$, its support too is contained in $\{ (x,\xi)\in C_0\times C^* \tq [x,\xi] =0 \}$, as claimed.
\end{proof}

For use below, we set\index{$\pEv_C$}
\begin{equation}\label{eqn:pEvC}
\pEv_C \ceq  \Ev_C[\dim C^*-1] : \Perv_{H_\lambda}(V_\lambda) \to \Perv_{H_\lambda}(T^*_{C}(V_\lambda)_\textrm{reg}),
\end{equation}
using Proposition~\ref{VC:perverse}.
Also define\index{$\pEv$}
\begin{equation}\label{eqn:pEv}
\pEv: \Perv_{H_\lambda}(V_\lambda) \to \Perv_{H_\lambda}(T^*_{H_\lambda}(V_\lambda)_\textrm{reg})
\end{equation}
by 
\[
(\pEv \mathcal{P})\vert_{T^*_{C}(V_\lambda)_\textrm{reg}} = \pEv_C\mathcal{P}.
\]

\subsection{Restriction to generic elements}\label{ssec:Evs}

Suppose $\mathcal{F} \in \Perv_{H_\lambda}(T^*_{C}(V_\lambda)_\textrm{reg})$.
Then $\dim\operatorname{supp}\mathcal{H}^{-i}\mathcal{F} \leq i$ for $i\leq \dim V_\lambda$.
Let 
$
T^*_{C}(V_\lambda)_{\mathcal{F}\textrm{-gen}}
$
denote the compliment of the union of the support of $\mathcal{H}^{-i}\mathcal{F}$ for $i<\dim V_\lambda$. \index{$T^*_{C}(V_\lambda)_{\mathcal{F}\textrm{-gen}}$}
Then $T^*_{C}(V_\lambda)_{\mathcal{F}\textrm{-gen}}$ is an $H_\lambda$-stable open subvariety of $T^*_{C}(V_\lambda)_\textrm{reg}$ with the property that the restriction of $\mathcal{F}$ to $T^*_{C}(V_\lambda)_{\mathcal{F}\text{-gen}}$ is an $H_\lambda$-equivariant local system shifted to degree $\dim V_\lambda$.

Set
\[
T^*_{C}(V_\lambda)_{\textrm{gen}}\ceq \mathop{\bigcap}_{\mathcal{P}}  T^*_{C}(V_\lambda)_{\pEv_C\mathcal{P}\text{-gen}}
\]
as $\mathcal{P}$ ranges over isomorphism classes of objects in $\Perv_{H_\lambda}(V_\lambda)$. \index{$T^*_{C}(V_\lambda)_{\textrm{gen}}$}
Note that $T^*_{C}(V_\lambda)_{\textrm{gen}}$ is not empty since $T^*_{C}(V_\lambda)_{\textrm{reg}}$ is irreducible.
Then $T^*_{C}(V_\lambda)_{\textrm{gen}}$ is an open subvariety of $T^*_{C}(V_\lambda)_{\textrm{reg}}$ with the property that, for every $\mathcal{P}\in\Perv_{H_\lambda}(V_\lambda)$, the restriction of $\pEv_{C} \mathcal{P}$ to $T^*_{C}(V_\lambda)_\textrm{gen}$ is an $H_\lambda$-equivariant local system concentrated in degree $\dim V_\lambda$.
%See \cite[Lemma 24.3 (f)]{ABV}, its application in \cite[Definition 24.7]{ABV}, and the remarks after \cite[Definition 24.5]{ABV} for related notions.
Observe that
\[
T^*_{C}(V_\lambda)_\text{sreg} \subseteq T^*_{C}(V_\lambda)_\textrm{gen} \subseteq T^*_{C}(V_\lambda)_\textrm{reg}.
\]
We remark that there are indeed infinitesimal parameters $\lambda : W_F \to \Lgroup{G}$ and $H_\lambda$-orbits $C\subset V_\lambda$ such that $T^*_{C}(V_\lambda)_\textrm{reg}$ does not have an open dense $H_\lambda$-orbit, in which case $T^*_{C}(V_\lambda)_\text{sreg}$ is empty; in this case $T^*_{C}(V_\lambda)_\textrm{gen}$ is not a single $H_\lambda$-orbit. 

We remark that if $\mathcal{P}\in \Perv_{H_\lambda}(V_\lambda)$ then, by \cite[Proposition 1.15]{Brylinski:Transformations}, the perverse sheaf $\pEv_C \mathcal{P}$ is concentrated in degree $\dim V_\lambda$, without restricting to $T^*_{C}(V_\lambda)_\textrm{gen}$.
%we do not use that result in this article, however, as we do not understand the proof given there.

We now restrict the functor $\Ev_C$ to generic elements and define \index{$\Evs_C$}
\begin{equation}\label{eqn:EvsC}
\Evs_C :   \Perv_{H_\lambda}(V_\lambda) \to \Loc_{H_\lambda}(T^*_{C}(V_\lambda)_\textrm{gen})
\end{equation}
by 
\[
\Evs_C\mathcal{P} \ceq (\pEv_C[-\dim V_\lambda] \mathcal{P})\vert_{T^*_{C}(V_\lambda)_\textrm{gen}}.
\]
Likewise define
\begin{equation}\label{eqn:Evs}
\Evs :   \Perv_{H_\lambda}(V_\lambda) \to \Loc_{H_\lambda}(T^*_{H_\lambda}(V_\lambda)_\textrm{gen}),
\end{equation}
where \index{$T^*_{H_\lambda}(V_\lambda)_\textrm{gen}$}
\begin{equation}\label{eqn:generic}
T^*_{H_\lambda}(V_\lambda)_\textrm{gen} \ceq \mathop{\bigcup}_{C} T^*_{C}(V_\lambda)_\textrm{gen}
\end{equation}
as $C$ ranges over $H_\lambda$-orbits in $V_\lambda$.

\subsection{Normalization of Ev}\label{ssec:NEv}

We now introduce the functor $\NEvs$ that appeared in the Introduction and establish its main properties in Theorem~\ref{theorem:NEvs}. 
First, define
%\begin{equation}\label{eqn:NEvsC}
\[
\NEvs_C : \Perv_{H_\lambda}(V_\lambda) \to \Loc_{H_\lambda}(T^*_{C}(V_\lambda)_\textrm{gen}),
\]
%\end{equation}
 by
\[
\NEvs_C \mathcal{F} \ceq  \operatorname{\mathcal{H}\text{om}}
\left( 
\Evs_C \IC(C),
\Evs_C \mathcal{F}
\right) .
\]
The \emph{normalized microlocal vanishing cycles functor}\index{normalized microlocal vanishing cycles functor, $\NEvs$}  
\begin{equation}\label{eqn:NEvs}
\NEvs : \Perv_{H_\lambda}(V_\lambda) \to \Loc_{H_\lambda}(T^*_{H_\lambda}(V_\lambda)_\textrm{gen})
\end{equation}
is then defined by the property
\[
\left( \NEvs \mathcal{F} \right)\vert_{T^*_{C}(V_\lambda)_\textrm{gen}} = 
\NEvs_C \mathcal{F}.
\]
Then
\[
\NEvs_C  = \mathcal{T}_C^\vee \otimes \Evs_C ,
\]
where 
\begin{equation}\label{eqn:TC}
\mathcal{T}_C \ceq \Evs_C \IC(C) \in \Loc_{H_\lambda}(T^*_{C}(V_\lambda)_\textrm{gen})
\end{equation}
and $\mathcal{T}_C^\vee$ is its dual local system.

\begin{theorem}\label{theorem:NEvs}
Let $\lambda : W_F \to \Lgroup{G}$ be an infinitesimal parameter.
%Suppose $T^*_{C}(V_\lambda)_\text{sreg}$ is non-empty.% and $\rank \mathcal{T}_C \ne 0$.
\begin{enumerate}
\labitem{(a)}{NEvs:exact}
The functor $\NEvs$ is exact.
\labitem{(b)}{NEvs:support}
If $\mathcal{P}\in \Perv_{H_\lambda}(V_\lambda)$ then $\NEvs_{C} \mathcal{P} = 0$  unless
$C\subseteq \operatorname{supp}\mathcal{P}$.
\labitem{(c)}{NEvs:stalks}
If $\mathcal{P}\in \Perv_{H_\lambda}(V_\lambda)$ then
\[
\rank (\NEvs\mathcal{P}) 
%=  \rank \left((\RPhi_{\xi}[-1]\mathcal{P})_{x}[{e_C}-\dim C]\right),
=  \rank (\RPhi_{\xi}\mathcal{P})_{x},
\]
for every $(x,\xi)\in T^*_{H_\lambda}(V_\lambda)_\textrm{gen}$.
\labitem{(d)}{NEvs:bigcell}
For every $H_\lambda$-orbit $C\subseteq V_\lambda$ and for every $H_\lambda$-equivariant local system $\mathcal{L}$ on $C$,
\[
\NEvs_C \IC(C,\mathcal{L}) = \left(\mathcal{L}\boxtimes \1_{C^*}\right)\vert_{T^*_{C}(V_\lambda)_\textrm{gen}};
\]
in particular,
\[
\rank \NEvs_C \IC(C,\mathcal{L}) = \rank \Evs_C \IC(C,\mathcal{L}) = \rank \mathcal{L}.
\]
\end{enumerate}
\end{theorem}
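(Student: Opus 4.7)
The plan is to derive all four assertions by combining three ingredients already in hand: the stalk identification from Proposition~\ref{VC:exactandstalks}, the support and projection properties of Propositions~\ref{VC:support} and~\ref{Ev:bigcell}, and, most importantly, the rank-one calculation of Theorem~\ref{theorem:rank1}, which guarantees that the normalizing local system $\mathcal{T}_C = \Evs_C\IC(C)$ from \eqref{eqn:TC} has rank one.

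For \ref{NEvs:exact}, exactness of $\Ev_C$ is part of Proposition~\ref{VC:exactandstalks}; the further cohomological shift and restriction to the open subvariety $T^*_C(V_\lambda)_\text{gen}$ preserve exactness, so $\Evs_C$ is exact. Finally, $\NEvs_C = \mathcal{T}_C^\vee \otimes \Evs_C$ is exact because tensoring with a local system is exact, and gluing across the components of $T^*_{H_\lambda}(V_\lambda)_\text{gen}$ yields exactness of $\NEvs$. Part \ref{NEvs:support} is immediate: normalization, shifting, and restriction can only shrink or preserve support, so $\operatorname{supp}\NEvs_C\mathcal{P} \subseteq \operatorname{supp}\Ev_C\mathcal{P}$, which vanishes unless $C \subseteq \operatorname{supp}\mathcal{P}$ by Proposition~\ref{VC:support}.

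For \ref{NEvs:stalks}, because $\mathcal{T}_C$ has rank one (Theorem~\ref{theorem:rank1}), tensoring with $\mathcal{T}_C^\vee$ preserves the rank of a local system, so it suffices to prove $\rank(\Evs_C\mathcal{P})_{(x,\xi)} = \rank(\RPhi_{f_\xi}\mathcal{P})_x$ at $(x,\xi)\in T^*_C(V_\lambda)_\text{gen}$. Proposition~\ref{VC:exactandstalks} canonically identifies $(\Ev_C\mathcal{P})_{(x,\xi)}$ with $(\RPhi_{f_\xi}\mathcal{P})_x$, and $\Evs_C\mathcal{P}$ differs from $\Ev_C\mathcal{P}$ only by the cohomological shift $\dim C^* - 1 - \dim V_\lambda$ and by restriction to the generic locus. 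By the very construction of $T^*_C(V_\lambda)_\text{gen}$ as the locus on which $\pEv_C\mathcal{P}$ restricts to a shifted local system, the stalk $(\RPhi_{f_\xi}\mathcal{P})_x$ is concentrated in a single cohomological degree at such generic $(x,\xi)$, so its total dimension agrees with the rank of the local system $\Evs_C\mathcal{P}$. For \ref{NEvs:bigcell}, I apply the projection formula of Proposition~\ref{Ev:bigcell}\ref{Ev:bigcell-projection} to write
\[
\Ev_C\IC(C,\mathcal{L}) \iso \Ev_C\IC(C) \otimes (\mathcal{L}\boxtimes\1_{C^*})\vert_{T^*_C(V_\lambda)_\text{reg}}.
\]
Shifting and restricting to the generic locus give $\Evs_C\IC(C,\mathcal{L}) \iso \mathcal{T}_C \otimes (\mathcal{L}\boxtimes\1_{C^*})\vert_{T^*_C(V_\lambda)_\text{gen}}$, and normalizing produces
\[
\NEvs_C\IC(C,\mathcal{L}) \iso \mathcal{T}_C^\vee \otimes \mathcal{T}_C \otimes (\mathcal{L}\boxtimes\1_{C^*})\vert_{T^*_C(V_\lambda)_\text{gen}} \iso (\mathcal{L}\boxtimes\1_{C^*})\vert_{T^*_C(V_\lambda)_\text{gen}},
\]
using $\mathcal{T}_C^\vee\otimes\mathcal{T}_C \iso \1$ (valid because $\mathcal{T}_C$ is a rank-one local system), and the rank assertion is then immediate.

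The substantive content has already been done: Theorem~\ref{theorem:rank1} is the key input, and without it neither the normalization in \ref{NEvs:bigcell} would collapse nor the comparison of ranks in \ref{NEvs:stalks} would be meaningful. Beyond that, the argument is essentially bookkeeping with shifts, restrictions, and tensor products. The one genuine subtlety worth flagging is the cohomological shift in \ref{NEvs:stalks}: one must verify that on the generic locus a perverse sheaf restricts to a single shifted local system, so that the notion of "rank" of the local system $\Evs_C\mathcal{P}$ and the total stalk cohomological dimension of $\RPhi_{f_\xi}\mathcal{P}$ at $x$ truly coincide.
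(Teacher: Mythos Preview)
Your proof is correct and follows essentially the same approach as the paper: both arguments reduce each part to the corresponding property of $\Ev_C$ (Propositions~\ref{VC:exactandstalks}, \ref{VC:support}, and \ref{Ev:bigcell}\ref{Ev:bigcell-projection}) together with the rank-one result of Theorem~\ref{theorem:rank1}, which makes $\mathcal{T}_C^\vee \otimes \mathcal{T}_C \cong \1$ and ensures that normalization preserves ranks and supports. Your treatment is somewhat more explicit than the paper's---particularly the remark about cohomological concentration on the generic locus in part~\ref{NEvs:stalks}---but the logical structure and the key inputs are identical.
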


\begin{proof}
By first part of Proposition~\ref{VC:exactandstalks}, $\Ev_C$ is exact.
Since restriction from $T^*_{C}(V_\lambda)_\textrm{reg}$ to $T^*_{C}(V_\lambda)_\textrm{gen}$ is also exact, so is $\Evs_C$. Since $\NEvs_C$ is obtained by tensoring $\Evs_C$ with $(\Evs_C \IC(C))^\vee$, $\NEvs_C$ is also exact. $\NEvs_C$ produces local systems by Section~\ref{ssec:Evs}. This proves Part~\ref{NEvs:exact}.
Part~\ref{NEvs:support} is a consequence of Proposition~\ref{VC:support} and the definition of $\NEvs_C$ and the fact that $\rank (\Evs_C \IC(C))^\vee =1$ by Theorem~\ref{theorem:rank1}.
Part~\ref{NEvs:stalks} follows from the second part of Proposition~\ref{VC:exactandstalks}, using Theorem~\ref{theorem:rank1}.
Part~\ref{NEvs:bigcell} follows from Proposition~\ref{Ev:bigcell}\ref{Ev:bigcell-projection} and the isomorphism $(\Evs_C \IC(C))^\vee \otimes\ \Evs_C \IC(C) \iso \1_{T^*_{C}(V_\lambda)_\textrm{gen}}$, again using Theorem~\ref{theorem:rank1}.
\end{proof}

Note that the functor $\Evs$ satisfies the conditions appearing in Theorem~\ref{theorem:NEvs} except the first part of Theorem~\ref{theorem:NEvs}\ref{NEvs:bigcell} since, for every $H_\lambda$-equivariant local system $\mathcal{L}$ on $C$,
\[
\Evs_C \IC(C,\mathcal{L}) = \mathcal{T}_C \otimes \left(\mathcal{L}\boxtimes \1_{C^*}\right)\vert_{T^*_{C}(V_\lambda)_\textrm{gen}},
\]
by Proposition~\ref{Ev:bigcell}\ref{Ev:bigcell-projection}.
The normalization $\Evs$ explained at the beginning of this section was designed exactly with Theorem~\ref{theorem:NEvs}\ref{NEvs:bigcell} in mind.
If we define $\mathcal{T}\in \Loc_{H_\lambda}(T^*_{H_\lambda}(V_\lambda)_\textrm{gen})$ by\index{$\mathcal{T}$}\index{$\mathcal{T}_C$}
\begin{equation}\label{eqn:T}
\mathcal{T}\vert_{T^*_{C}(V_\lambda)_\textrm{gen}}\ceq \mathcal{T}_C,
\end{equation}
then it follows from Proposition~\ref{Ev:bigcell}\ref{Ev:bigcell-stalks} that for every $(x,\xi)\in T^*_{C}(V_\lambda)_\textrm{gen}$,
\[ 
\mathcal{T}_{(x,\xi)}
=
\left(\RPhi_{\xi\vert_{C}} [-1] \1_{C}\right)_x [e_{C}],
\]
as representations of $\pi_0(Z_{H_\lambda}(x,\xi))$, where ${e_C} \ceq \dim C + \dim C^* - \dim V_{\lambda}$ as defined in Section~\ref{ssec:rank1}.
In Part~\ref{Part2} we calculate these representations of $\pi_0(Z_{H_\lambda}(x,\xi))$ in many examples and in doing so we see that the rank-$1$ local system $\mathcal{T}\in T^*_{H_\lambda}(V_\lambda)_\textrm{gen}$ may not be trivial.

By Proposition~\ref{proposition:Apsi}, the equivariant fundamental group of $T^*_{C_\psi}(V_\lambda)_\text{sreg}$ is $A_\psi$.
By Proposition~\ref{proposition:psisreg}, every Arthur parameter $\psi\in Q_\lambda(\Lgroup{G})$ determines a base point $(x_\psi,\xi_\psi)\in T^*_{C_\psi}(V_\lambda)_\text{sreg}$ and thus an equivalence of categories 
\begin{equation}\label{eqn:micsreg}
\Loc_{H_\lambda}(T^*_{C_\psi}(V_\lambda)_\text{sreg}) \to \Rep(A_\psi).
\end{equation}
Combining \eqref{eqn:Evs} and \eqref{eqn:micsreg} in the case $C=C_\psi$ defines the exact functor \index{$\Evs_\psi$}
\begin{equation}\label{eqn:Evspsi}
\Evs_\psi : \Perv_{H_\lambda}(V_\lambda) \to \Rep(A_\psi).
\end{equation}
and likewise defines the exact functor \index{$\NEvs_\psi$}
\begin{equation}\label{eqn:NEvspsi}
\NEvs_\psi : \Perv_{H_\lambda}(V_\lambda) \to \Rep(A_\psi)
\end{equation}
such that
\[
\NEvs_\psi \ceq \mathcal{T}^\vee_\psi \otimes \Evs_\psi,
\]
where $\mathcal{T}_\psi$ \index{$\mathcal{T}_\psi$} is the representation of $A_\psi$ matching the local system $\mathcal{T}_{C_\psi}$ under \eqref{eqn:micsreg}.
This is the functor appearing in \eqref{intro:NEvspsi}.
Using this, Corollary~\ref{corollary:NEvspsi} is simply a rephrasing of Theorem~\ref{theorem:NEvs} using Proposition~\ref{proposition:geoLV}.

\subsection{Remarks on stratified Morse theory and microlocalization}

In the discussion after \cite[Theorem 24.8]{ABV}, one finds some words about the relation between stratified Morse theory, microlocalization and the vanishing cycles functor; these words are clarified considerably in \cite{Schurmann:Topology}.
As our goal in this article was to establish the properties of $\NEv$ needed to make precise definitions and testable conjectures about (what we call) ABV-packets and their associated distributions, we did not find it necessary here to discuss the relation between stratified Morse theory, microlocalization and vanishing cycles in any serious way.
Even for the calculations in Part~\ref{Part2}, that is unnecessary. 
We expect, however, that progress toward proving the Conjectures in this article and in \cite{Vogan:Langlands} in full generality would be aided by an ability to pass between these three perspectives, rigourously.

With this in mind, we offer some words of caution.
The definitive reference for stratified Morse theory is, of course, \cite{Goresky:Stratified}.
Vanishing cycles appear only once in this book, in a remark in an appendix \cite[6.A.2]{Goresky:Stratified}: ``Then, the Morse group $A^i_\xi(\mathcal{F})$  is canonically isomorphic to the vanishing cycles $R^i\Phi(\mathcal{F})_p$ of \cite{SGA7II}.''
In \cite[6.A.1]{Goresky:Stratified} we see how to calculate the Morse group, using normal Morse data according to the formula $A^i_\xi(\mathcal{F})= H^i(J,K;\mathcal{F})$, where the pair $(J,K)$ is the normal Morse data corresponding to any smooth function $f:M \to \mathbb{R}$ such that $df(p)= \xi$.
In the proof of  \cite[Theorem 24.8]{ABV} we see that the stalks of $Q^\text{mic}_C(\mathcal{F})$ are Morse groups, or more precisely, $Q^\text{mic}_C(\mathcal{F})^i_{(x,\xi)} = H^{i-\dim C}(J,K;\mathcal{F})$ for $(x,\xi)\in T^*_{C}(V_\lambda)_\textrm{gen}$.
In this article we show that the stalks of $\Ev_C(\mathcal{F})$ are given by vanishing cycles, or more precisely, $(\Ev_C \mathcal{F})_{(x,\xi)} = \left(\RPhi_\xi \mathcal{F}\right)_x$ for $(x,\xi)\in T^*_{C}(V_\lambda)_\textrm{reg}$.
For this reason, one might expect that, after invoking \cite[Expos\'e XIV, Th\'eor\`eme 2.8]{SGA7II} to pass from the algebraic description of $\RPhi$ based on \cite[Expos\'e XIII]{SGA7II} to the analytic version of $\RPhi$ given in \cite[Expos\'e XIV]{SGA7II}, perhaps $Q^\text{mic}_C(\mathcal{F})$ coincides with $\Ev_C \mathcal{F}[-\dim C]$.
But that is false, and not just because something has gone awry with the shifts.
%Indeed, we laboured under this false expectation for some time.
The difference between the functor $Q^\text{mic}_C$ and the appropriately shifted analytic version of $\Ev$ is easy to miss, because they do produce sheaves with the same support and rank: $\rank Q^\text{mic}_C(\mathcal{F})_{(x,\xi)}^i = \rank \Ev^{i-\dim C}_{(x,\xi)}(\mathcal{F}[{e_C}-1])$ for all $(x,\xi)\in T^*_{C}(V_\lambda)_\textrm{reg}$ and for all $i\in \ZZ$.
However, as spaces with an action of $Z_{H_\lambda}(x,\xi)$, these stalks are not equal, which means that the sheaves produced by $Q^\text{mic}_C$ and the sheaves produced by $\Ev_C[-1+{e_C}-\dim C]$ are different as \emph{equivariant} sheaves.
%The difference between these two functors is also apparent from the perspective of endoscopy, as we explain in future work.
Even using microlocal Euler characteristics, one cannot see this issue.

\newcommand{\Sl}{\operatorname{\mathcal{S}\hskip-1.5pt\ell}}
 
This discrepancy is entirely responsible for normalizing $\Ev$ and introducing the functor $\NEv$ in this article, in Section~\ref{ssec:NEv}.
To bring $\Ev_C$ and $Q^\text{mic}_C$ into alignment, we use an idea from stratified Morse theory:
the local system $\mathcal{T}_C \ceq \Evs_C\IC(C)$
% and its perverse extension $\IC(T^*_{C}(V_\lambda)_\text{sreg},\mathcal{T}_C)$
plays the role of tangential Morse data.
%Examples in Part~\ref{Part2} show that the local system $\mathcal{T}_{C}$ is not trivial in general.
Recall that $\NEvs$ is formed by twisting $\Evs$ by the dual of the generally non-trivial sheaf $\mathcal{T}$.
Since $\rank \mathcal{T}_C =1$ by Theorem~\ref{theorem:rank1}, the stalks of $\NEvs_C \mathcal{F}$ satisfy the relation
\begin{equation}\label{eqn:closing}
(\Evs_C \mathcal{F})_{(x,\xi)} = (\RPhi_\xi[-1] \1_{C})_x [{e_C}] \otimes (\NEvs_C \mathcal{F})_{(x,\xi)}.
\end{equation}
This relation is designed to mirror \cite[Section 3.7.\,The Main Theorem]{Goresky:Stratified}, given colloquially there as
\[
\text{Local Morse data} \iso \text{(Tangential Morse data)}\times \text{(Normal Morse data)}.
\]
Sch\"{u}rmann has shown how to interpret this in the language of vanishing cycles, in certain contexts; see especially \cite[Theorem 5.4.1 (5.87)]{Schurmann:Topology}.

We expect, therefore, that it will be possible to express the stalks of $\NEvs_C \mathcal{F}$ using normal slices, as we now explain.
Suppose $G$ is split, the infinitesimal parameter $\lambda : W_F \to \Lgroup{G}$ is unramified and $\lambda(\Frob) = s_\lambda \times \Frob$, where $s_\lambda\in \dualgroup{G}$ is elliptic.
Observe that Section~\ref{section:reduction}, especially Theorem~\ref{theorem:unramification}, shows how the general case can be reduced to this case.
With reference to the exponential function for $\mathfrak{j}_\lambda$, set $z \ceq \log s_\lambda$. 
Then $z \in \mathfrak{j}_{\lambda,0} = \mathfrak{h}_\lambda$.
For every $x\in V_\lambda = \mathfrak{j}_{\lambda,2}$, there is a unique $\xi_x\in V_\lambda^* = \mathfrak{j}_{\lambda,-2}$ such that $(x,\xi_x,z)$ is an $\SL(2)$-triple in $\mathfrak{j}_\lambda$.
Then $x + \ker\ad \xi_x$ is a transverse slice to the $J_\lambda$-orbit of $x$ in $\mathfrak{j}_\lambda$ \cite[Section 7.4]{Slodowy:Simple} and its intersection with $V_\lambda$,
\[
\Sl_{x}\ceq x + \{ y\in V_\lambda \tq [y,\xi_x] =0 \},
\]
is a transverse slice to the $H_\lambda$-orbit $C$ of $x$ at $x$.
Suppose $\xi \in T^*_{C,x}(V_\lambda)_\text{sreg}$. 
Then we expect
\begin{equation}\label{eqn:expectation}
(\NEvs_{C} \mathcal{F})_{(x,\xi)} = \left(\RPhi_{\xi\vert_{\Sl_{x}}}[-1] (\mathcal{F}\vert_{\Sl_{x}})\right)_x[-\dim C] ,%\otimes \1^{t_{C}^2}
\end{equation}
for $\mathcal{F}\in \Deligne_{,H_\lambda}(V_\lambda)$, in which case \eqref{eqn:closing} becomes
\begin{eqnarray*}
&& \hskip-20pt (\RPhi_\xi[-1]\mathcal{F})_x[{e_C}-\dim C] \\
&=& (\RPhi_\xi[-1] \1_{C})_x[{e_C}]  \otimes\left(\RPhi_{\xi\vert_{\Sl_{x}}}[-1] (\mathcal{F}\vert_{\Sl_{x}})\right)_x[-\dim C],
\end{eqnarray*}
or equivalently,
\begin{equation}\label{eqn:closingremark}
 (\RPhi_\xi[-1]\mathcal{F})_x = (\RPhi_\xi[-1] \1_{C})_x  \otimes\left(\RPhi_{\xi\vert_{\Sl_{x}}}[-1] (\mathcal{F}\vert_{\Sl_{x}})\right)_x.
\end{equation}
This is exactly what one finds in \cite[Theorem 5.4.1 (5.87)]{Schurmann:Topology}.
Moreover, all the examples in Part~\ref{Part2} conform to expectation \eqref{eqn:expectation}.
We believe, therefore, that $\NEvs$ coincides with $Q^\text{mic}$. 

Expectation \eqref{eqn:expectation} would also, in principle, allow us to use \cite[Proposition 6.19]{Ginsburg:Characteristic} and \cite[Proposition 7.7.1]{Ginsburg:Characteristic} to identify $\rank \NEvs$ with the microlocal Euler characteristic. However, the proofs of those two results from \cite{Ginsburg:Characteristic} rely in the general case on \cite{Brylinski:Transformations} and the relevant result there makes use of \cite[Th\'eor\`eme 3.2.5]{Kashiwara:Systems}.
As we remarked in Section~\ref{ssec:Ev}, \cite[Th\'eor\`eme 3.2.5]{Kashiwara:Systems} does not exist in the published version of the original notes, and we have not been able to procure the original notes, so using this approach would oblige us to use a result for which we cannot find a complete proof in the literature. 
That is another reason why we have built $\Ev$ from scratch and established its main properties by hand in Section~\ref{section:Ev}.% without recourse to the trinity.

\iffalse

\subsection{Ev and hyper-unramification}

\[
\begin{tikzcd}
\Rep(A_\lambda) \arrow{rr}  && \arrow{d}{\oEv_{C}} \Perv_{H_\lambda}(V_\lambda) \arrow{rr}{} &&  \arrow{d}{\Ev^{0,0}_{C}} \Perv_{H_\lambda^0}(V_\lambda)\\
\Rep(A_\lambda) \arrow{rr}  && \arrow{d} \Loc_{H_\lambda}(T^*_{C}(V_\lambda)_\textrm{reg}) \arrow{rr}{} &&  \arrow{d} \Loc_{H_\lambda^0}(T^*_{C}(V_\lambda)_\textrm{reg}) \\
&& \Rep(A_\psi) && \Rep(A_\psi^0)
\end{tikzcd}
\]

$A_\psi^0 \ceq Z_{H_\lambda^0}(x_\psi,\xi_\psi)/Z_{H_\lambda^0}(x_\psi,\xi_\psi)^0$

\fi

\section{Arthur packets and ABV-packets}\label{section:conjectures}

In this section we articulate the conjectures which, taken together, lie at the heart of the concept of $p$-adic ABV-packets.
In Part~\ref{Part2} we gather evidence for these conjectures by verifying them for 38 admissible representations of 12 $p$-adic groups.

In this section, $G$ is a quasi\-split connected reductive linear algebraic group over $F$. When referring to work of Arthur, we will further assume $G$ is a split symplectic or special orthogonal group.

\subsection{ABV-packets}\label{ssec:ABVpackets}

We fix an admissible homomorphism $\lambda : W_F \to \Lgroup{G}$ and recall the Vogan variety $V_{\lambda}$ from Section~\ref{section:Voganvarieties}.
As above, set $H_\lambda \ceq Z_{\dualgroup{G}}(\lambda)$.

From Proposition~\ref{proposition:geoLV} recall that the local Langlands correspondence for pure rational forms determines a canonical bijection between isomorphism classes of simple objects in $\Perv_{H_\lambda}(V_\lambda)$ and $\Pi^\mathrm{pure}_{ \lambda}(G/F)$:
\[
\Perv_{H_\lambda}(V_\lambda)^\text{simple}_{/\text{iso}}
\leftrightarrow
\Pi^\mathrm{pure}_{ \lambda}(G/F).
\]
Recall that we use the notation $\mathcal{P}(\pi,\delta)$ for a simple $H_\lambda$-equivariant perverse sheaf on $V_\lambda$ matching a representation $(\pi,\delta)$ of a pure rational form of  $G$ under this correspondence.

\begin{definition}\label{definition:ABVpacket}
For any $H_\lambda$-orbit $C$ in $V_\lambda$, the \emph{ABV-packet}\index{$\Pi^\ABV_{C}(G/F)$, ABV-packet}\index{ABV-packet, $\Pi^\ABV_{C}(G/F)$} for $C$ is
\begin{equation}\label{eqn:micropacket}
\Pi^\ABV_{C}(G/F) \ceq \{ (\pi,\delta)\in \Pi^\mathrm{pure}_{ \lambda}(G/F) \tq \Ev_{C}\mathcal{P}(\pi,\delta) \ne 0 \}.
\end{equation} 
If $C=  C_\phi$ for a Langlands parameter $\phi$,  we sometimes use the notation \index{$\Pi^\ABV_{\phi}(G/F)$} $\Pi^\ABV_{\phi}(G/F)$ for $\Pi^\ABV_{C_\phi}(G/F)$.
\end{definition}

%We remark that the following conditions on $\mathcal{P}\in \Deligne_{,H_\lambda}(V_\lambda)$ are equivalent: $\Ev_{C}\mathcal{P} \ne 0$; $\NEv_{C}\mathcal{P} \ne 0$. Moreover, if $T^*_{C}(V_\lambda)_\text{sreg}$ is non-empty and $\mathcal{P}\in \Perv_{H_\lambda}(V_\lambda)$ then these conditions are also equivalent to: $\Evs_{C}\mathcal{P} \ne 0$; $\NEvs_{C}\mathcal{P}  \ne 0$.

\subsection{Virtual representations attached to ABV-packets}%\label{ssec:Conjectures1}

From Section~\ref{ssec:etapsi} recall the definition of the virtual representation 
\[
\eta_\psi =  \sum_{(\pi,\delta)\in \Pi^\mathrm{pure}_{\psi}(G/F)}  \langle a_{\psi}, (\pi,\delta) \rangle_{\psi} \  e(\delta) \ [(\pi,\delta)],
\]
based on Arthur's work.

\begin{definition}\label{definition:etaEvspsi}
Let $\psi$ be an Arthur parameter for $G$ with infinitesimal parameter $\lambda : W_F \to \Lgroup{G}$. Consider the virtual representation\index{$\eta^{\Evs}_{\psi}$}
\[
\eta^{\Evs}_\psi 
\ceq   (-1)^{\dim C_\psi}\hskip-20pt \sum_{(\pi,\delta)\in \Pi^\ABV_{C_\psi}(G/F)} (-1)^{\dim\operatorname{supp}\mathcal{P}(\pi,\delta)}\  \rank \Evs_\psi \mathcal{P}(\pi,\delta)  \ e(\delta)\ [(\pi,\delta)],
\]
where $\Evs_\psi : \Perv_{H_\lambda}(V_\lambda) \to \Rep(A_\psi)$ is defined in \eqref{eqn:Evspsi}.
Recall from Section~\ref{ssec:etapsi} that $e(\delta)$ is the Kottwitz sign attached to the pure rational form $\delta \in Z^1(F,G)$.
\end{definition}

Let $\lambda$ be the infinitesimal parameter of $\psi$.
Then, using \eqref{eqn:pEvC}, \eqref{eqn:Evspsi}, \eqref{eqn:eccentricity} and Proposition~\ref{VC:exactandstalks}, we have
\begin{eqnarray*}
&& \hskip-20pt (-1)^{\dim C_\psi- \dim\operatorname{supp}\mathcal{P}(\pi,\delta)}   \rank \Evs_\psi \mathcal{P}(\pi,\delta) \\
&=& \rank \left(\Evs_\psi \mathcal{P}(\pi,\delta) [\dim C_\psi  - \dim C_{\pi,\delta}] \right)\\
&=& \rank \left(\pEv_{C_\psi} \mathcal{P}(\pi,\delta) [\dim C_\psi  -\dim V_\lambda - \dim C_{\pi,\delta}] \right)_{(x_\psi,\xi_\psi)} \\
&=& \rank \left(\Ev_{C_\psi} \mathcal{P}(\pi,\delta) [-1 +\dim C^*_\psi +  \dim C_\psi -\dim V_\lambda - \dim C_{\pi,\delta}] \right)_{(x_\psi,\xi_\psi)} \\
&=& \rank \left(\Ev_{C_\psi} \mathcal{P}(\pi,\delta) [-1 + e_{C_\psi} -\dim C_{\pi,\delta}] \right)_{(x_\psi,\xi_\psi)} \\
&=& \rank \left(\RPhi_{\xi_\psi}[-1] \mathcal{P}(\pi,\delta)[-\dim C_{\pi,\delta}] [e_{C_\psi}] \right)_{x_\psi} \\
&=& \rank \left(\RPhi_{\xi_\psi}[-1] \mathcal{L}_{\pi,\delta}^\sharp  [e_{C_\psi}] \right)_{x_\psi}, 
\end{eqnarray*}
where, with reference to Proposition~\ref{proposition:geoLV},  we set\index{$\mathcal{L}_{\pi,\delta}$}
\[
\mathcal{L}_{\pi,\delta}^\sharp 
= \mathcal{P}(\pi,\delta)[-\dim C_{\pi,\delta}]
= \IC(C_{\pi,\delta},\mathcal{L}_{\pi,\delta})[-\dim C_{\pi,\delta}].
\]
So, Definition~\ref{definition:etaEvspsi} may also be written in the form
\begin{equation}
\eta^{\Evs}_\psi =  \sum_{(\pi,\delta)\in \Pi^\ABV_{C_\psi}(G/F)} \rank \left(\RPhi_{\xi_\psi}[-1] \mathcal{L}_{\pi,\delta}^\sharp  [e_{C_\psi}] \right)_{x_\psi}   e(\delta)\ [(\pi,\delta)].
\end{equation}

\begin{definition}\label{definition:etaNEvpsis}
Let $\psi$ be an Arthur parameter for $G$ with infinitesimal parameter $\lambda : W_F \to \Lgroup{G}$. For any $s\in Z_{\dualgroup{G}}(\psi)$, consider the virtual representation\index{$\eta^{\NEvs}_{\psi,s}$}
\[
\eta^{\NEvs}_{\psi,s} 
\ceq  (-1)^{\dim C_\psi} \hskip-20pt \sum_{(\pi,\delta)\in \Pi^\ABV_{C_\psi}(G/F)} \hskip-20pt(-1)^{\dim\operatorname{supp}\mathcal{P}(\pi,\delta)}\  \trace_{a_s} \NEvs_\psi \mathcal{P}(\pi,\delta)  \ e(\delta)\ [(\pi,\delta)],
\]
where $a_{s}$ is the image of $s$ in $A_{\psi}$ and
where $\NEvs_\psi : \Perv_{H_\lambda}(V_\lambda) \to \Rep(A_\psi)$ is defined in \eqref{eqn:NEvspsi}.
%Also set $\eta^{\NEvs}_{\psi} \ceq \eta^{\NEvs}_{\psi,1}$.\index{$\eta^{\NEvs}_{\psi}$}
\end{definition}

\subsection{Conjecture on Arthur packets}\label{ssec:Conjectures1}

Recall the definition of $\Pi^\mathrm{pure}_{\psi}(G/F)$ from Section~\ref{ssec:AV}.
Recall the definitions of $\eta_{\psi}$ and $\eta_{\psi,s}$ from Section~\ref{ssec:etapsi}.

\begin{conjecture}\label{conjecture:1}
Let $G$ be a quasi\-split symplectic or special orthogonal $p$-adic group.
Let $\psi : L_F\times \SL(2,\CC) \to \Lgroup{G}$ be an Arthur parameter.
Then
\begin{enumerate}
\labitem{(a)}{conjecture:a}
Pure Arthur packets are ABV-packets:
\[
\Pi^\mathrm{pure}_{\psi}(G/F) = \Pi^\ABV_{\phi_\psi}(G/F).
\]
\labitem{(b)}{conjecture:b}
Arthur's stable distributions are calculated by $\Evs$:
\[
 \eta_{\psi} = \eta^{\Evs}_{\psi}.
\]
\labitem{(c)}{conjecture:c}
The endoscopic transfer of Arthur's stable distributions are calculated by $\NEvs$:
\[
 \eta_{\psi,s} = \eta^{\NEvs}_{\psi,s},
\]
for every semisimple $s\in Z_{\dualgroup{G}}(\psi)$.
\end{enumerate}
\end{conjecture}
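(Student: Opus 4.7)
The plan is to deduce Conjecture~\ref{conjecture:1} from Arthur's own characterization of the maps $(\pi,\delta) \mapsto \langle \cdot, (\pi,\delta)\rangle_\psi$, which, as recalled in Section~\ref{ssec:Apackets}, are uniquely pinned down by three properties: (i) stability of $\Theta^G_\psi$, (ii) ordinary endoscopic transfer $\Theta^G_{\psi,s}(f) = \Theta^{G'}_{\psi'}(f')$ for every endoscopic datum $(G',s,\xi)$ with $\psi=\xi\circ\psi'$, and (iii) twisted endoscopic transfer $\Theta^G_\psi(f) = \Theta^{G_N^+}_{\psi_N,s}(f^\theta)$ from $\GL(N)$. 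If we can show that $\eta^{\Evs}_\psi$ and $\eta^{\NEvs}_{\psi,s}$ satisfy the analogues of (i)-(iii), then Arthur's uniqueness forces $\eta_\psi = \eta^{\Evs}_\psi$ and $\eta_{\psi,s} = \eta^{\NEvs}_{\psi,s}$, giving (b) and (c); part (a) is then a formal consequence, since $(\pi,\delta) \in \Pi^{\mathrm{pure}}_\psi(G/F)$ forces $\NEvs_\psi\mathcal{P}(\pi,\delta) \ne 0$ via the character formula in (c), and conversely any $(\pi,\delta)$ in the ABV-packet with nonzero contribution is pinned to the Arthur packet by the same identity.

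First, I would reduce to the case of an unramified hyperbolic infinitesimal parameter using Theorem~\ref{theorem:unramification}, and further, via Proposition~\ref{lemma:cocharacter}, place the problem entirely within Lusztig's framework of $M_\lambda^\iota$-equivariant perverse sheaves on the weight space $\mathfrak{m}_{\lambda,n}$. At this level the simple objects $\mathcal{P}(\pi,\delta)$ have an intrinsic description in terms of cuspidal data, and the representations $\pi$ of pure rational forms are, in this reduced situation, what should be called the unipotent representations associated to $\lambda$. This reduction is the necessary setup for applying endoscopic descent, since endoscopic L-embeddings $\xi:{}^LG'\to {}^LG$ pull infinitesimal parameters back to infinitesimal parameters and the associated Vogan varieties fit into a commutative diagram of Vogan varieties and $J_\lambda^0$-reductive subgroups.

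Second, I would establish the geometric form of ordinary endoscopic transfer: a comparison formula expressing $\trace_{a_s}\NEvs_\psi\mathcal{P}(\pi,\delta)$ on $V_\lambda(\Lgroup{G})$ as a trace of $\NEvs_{\psi'}$ applied to a suitable transfer of $\mathcal{P}(\pi,\delta)$ on $V_{\lambda'}(\Lgroup{G'})$, where $\lambda'$ is the infinitesimal parameter obtained from $\psi'$. The key input here is Proposition~\ref{theorem:regpsi}, identifying $a_s \in A_\psi$ with the monodromy of the local system at a specific conormal vector $(x_\psi,\xi_\psi)$; combined with the stalk formula (Corollary~\ref{corollary:NEvspsi}\ref{NEvpsiP:rank}) and the Sebastiani-Thom compatibilities of vanishing cycles, this should give the endoscopic identity for $\eta^{\NEvs}_{\psi,s}$ directly. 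Stability of $\eta^{\Evs}_\psi=\eta^{\NEvs}_{\psi,1}$ is the special case $s=1$.

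The hard part will be the twisted endoscopic identity (iii). On the geometric side, the relevant $\GL(N)$-packet is the Zelevinsky packet of a unitary Speh representation, whose corresponding perverse sheaves on the Vogan variety of $\GL(N)$ are the IC extensions of constant sheaves on open orbits. One must match the twisted character of a distinguished extension of this Speh representation against the vanishing cycles computation on $V_\lambda(\Lgroup{G})$ pulled back through $\xi_N:{}^LG\to{}^L\GL(N)$, and this requires a twisted analogue of the functoriality of $\NEvs$ under L-embeddings, together with control of the action of $\widehat{\theta}$ on the relevant equivariant perverse sheaves. I expect this step will require extending the functor $\NEvs$ to the disconnected setting of Remark~\ref{remark:disconnected}, and proving a compatibility of vanishing cycles with the twisted Lefschetz trace of $\widehat{\theta}$ on stalks. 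Once in hand, Arthur's uniqueness theorem \cite[Theorem 1.5.1]{Arthur:Book} completes the proof.
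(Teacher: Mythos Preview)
This statement is a \emph{conjecture} in the paper, not a theorem: the paper does not prove it in general. What the paper does is (i) verify it by direct computation in a list of small examples in Part~\ref{Part2}, and (ii) announce, without details, a strategy for a general proof ``using twisted spectral endoscopic transfer and its geometric counterpart for perverse sheaves on Vogan varieties'' to appear in forthcoming work on unipotent representations of odd orthogonal groups. So there is no proof in the paper to compare your proposal against.

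Your outline is in fact very close in spirit to the strategy the paper announces: reduce via Theorem~\ref{theorem:unramification}, establish a geometric endoscopic-restriction identity of the type \eqref{eqn:TrNEvRes} (which the paper verifies in examples but proves only in the forthcoming work), and then feed this into Arthur's characterizing identities. That is the right architecture. But what you have written is a strategy sketch, not a proof. You yourself flag the twisted endoscopic identity from $\GL(N)$ as ``the hard part'' and describe only what you \emph{expect} will be needed (an extension of $\NEvs$ to the disconnected setting of Remark~\ref{remark:disconnected}, a twisted Lefschetz compatibility for vanishing cycles). None of these ingredients is supplied, and each is substantial; the paper itself defers them to other work. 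Similarly, the ordinary endoscopic step---matching $\trace_{a_s}\NEvs_\psi\mathcal{P}$ with a trace on the endoscopic Vogan variety---is asserted to follow from Sebastiani--Thom and the stalk formula, but the actual comparison \eqref{eqn:TrNEvRes} requires analyzing how equivariant restriction $V_{\lambda'}\hookrightarrow V_\lambda$ interacts with $\NEvs$, and the paper treats this as a nontrivial forthcoming theorem, not a corollary of what is already here.

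In short: your plan matches the paper's intended approach, but neither you nor the paper carries it out; the paper's only actual evidence for the conjecture is the case-by-case verification in Sections~\ref{sec:SL(2)}--\ref{sec:SO(7)}.
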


By Proposition~\ref{VC:exactandstalks}, Conjecture~\ref{conjecture:1}\ref{conjecture:a} is equivalent to the claim: for all $(\pi,\delta)\in \Pi^\mathrm{pure}_{\lambda}(G/F)$, 
\[
(\pi,\delta)\in \Pi^\mathrm{pure}_{\psi}(G/F)
\qquad
\text{if and only if}
\qquad
\left(\RPhi_{\xi_\psi} \mathcal{P}(\pi,\delta) \right)_{x_\psi} \ne 0.
\]
Assuming Conjecture~\ref{conjecture:1}\ref{conjecture:a}, and with reference to \eqref{Arthursc}, Conjecture~\ref{conjecture:1}\ref{conjecture:b} is equivalent to:
for all $(\pi,\delta)\in \Pi^\mathrm{pure}_{\psi}(G/F)$,
\[
 \langle a_{\psi}, (\pi,\delta) \rangle_{\psi} 
= \rank \left(\RPhi_{\xi_\psi}[-1] \mathcal{L}_{\pi,\delta}^\sharp  [e_{C_\psi}] \right)_{x_\psi},
\]
which is to say,
\[
 \langle a_{\psi}, (\pi,\delta) \rangle_{\psi} 
=  (-1)^{\dim C_\psi- \dim\operatorname{supp}\mathcal{P}(\pi,\delta)} \rank \Evs_\psi \mathcal{P}(\pi,\delta)  .
\]
Likewise, assuming Conjecture~\ref{conjecture:1}\ref{conjecture:a}, Conjecture~\ref{conjecture:1}\ref{conjecture:c}, is equivalent to the claim:
for every $(\pi,\delta)\in \Pi^\mathrm{pure}_{\psi}(G/F)$ and for every semisimple $s\in Z_{\dualgroup{G}}(\psi)$,
\begin{equation}\label{eqn:Conjecture2-coefficients}
{\langle a_s a_\psi , (\pi,\delta)\rangle}_{\psi} 
= 
 (-1)^{\dim  C_{\psi} - \dim C_{(\pi,\delta)}} \trace_{a_s} \left(\NEvs_\psi \mathcal{P}(\pi,\delta)\right),
\end{equation}
where $a_\psi\in A_\psi$ is defined in Section~\ref{ssec:etapsi} and $a_{s}$ is the image of $s$ in $A_{\psi}$.
Thus, Conjecture~\ref{conjecture:1} promises a new way to calculate the character ${\langle a_{s} , (\pi,\delta) \rangle}_{\psi}$ when $\pi$ is an admissible representation of $G_\delta(F)$ for a pure rational form $\delta$  of $G$, and when the complete Langlands parameter for $(\pi, \delta)$ is known; this fact is illustrated with examples in Part~\ref{Part2}.

Assuming Conjecture~\ref{conjecture:1}\ref{conjecture:a}, it follows that Conjecture~\ref{conjecture:1}\ref{conjecture:c} implies Conjecture~\ref{conjecture:1}\ref{conjecture:b}.
To see this, recall \eqref{eqn:NEvspsi} that $\NEvs_\psi = \mathcal{T}_\psi^\vee \otimes \Evs_\psi$, so
\begin{equation}
\eta^{\NEvs}_{\psi,s}
= 
\left( \trace_{a_s^{-1}} \mathcal{T}_\psi\right)
\eta^{\Evs}_{\psi,s}.
\end{equation}
\iffalse%
where\index{$\eta^{\Evs}_{\psi,s}$}
\[
\eta^{\Evs}_{\psi,s} 
\ceq  (-1)^{\dim C_\psi} \hskip-20pt \sum_{(\pi,\delta)\in \Pi^\ABV_{C_\psi}(G/F)} \hskip-10pt (-1)^{\dim\operatorname{supp}\mathcal{P}(\pi,\delta)}\  \trace_{a_s} \left( \Evs_\psi \mathcal{P}(\pi,\delta) \right) \ e(\delta)\ [(\pi,\delta)].
\]
\fi%
Taking $s=1$, this gives
\[
%\eta^{\NEvs}_{\psi} = 
\eta^{\NEvs}_{\psi,1} 
= \left( \trace_{1} \mathcal{T}_\psi\right) \eta^{\Evs}_{\psi,1}
= \left(\rank \mathcal{T}_\psi\right) \eta^{\Evs}_{\psi}.
\]
Using Theorem~\ref{theorem:rank1}, this becomes
\begin{equation}
\eta^{\NEvs}_{\psi,1}
= 
\eta^{\Evs}_{\psi}.
\end{equation}
So, Conjecture~\ref{conjecture:1}\ref{conjecture:c} gives $\eta_{\psi,s} = \eta^{\NEvs}_{\psi,s}$ which implies $\eta_{\psi} =  \eta^{\NEvs}_{\psi,1} =  \eta^{\Evs}_\psi$, whence Conjecture~\ref{conjecture:1}\ref{conjecture:b}.

Conjecture~\ref{conjecture:1} may also be expressed using the pairing of Grothendieck groups\index{$\langle \, \cdot\, ,  \cdot\, \rangle$}
\begin{equation}\label{eqn:pairing}
\langle \, \cdot\, ,  \cdot\, \rangle : \K\Pi^\mathrm{pure}_{ \lambda}(G/F) \times \K\Perv_{H_\lambda}(V_\lambda)  \to \ZZ
\end{equation}
introduced in \cite[(8.11$'$)(a)]{Vogan:Langlands} (see also \cite[Theorem 1.24]{ABV}) which is defined on $\Pi^\mathrm{pure}_{ \lambda}(G/F)$ and isomorphism classes of simple objects in  $\Perv_{H_\lambda}(V_\lambda)$ by
\[
\langle (\pi,\delta),\mathcal{P}\rangle 
= 
\begin{cases} 
	e(\mathcal{P}) (-1)^{\dim \operatorname{supp}(\mathcal{P})} , 
		& \text{if }\mathcal{P} = \mathcal{P}(\pi,\delta)\\ 
	0, 	& \text{otherwise},
\end{cases}
\]
where $e(\mathcal{P})$ is the Kottwitz sign\index{$e(\mathcal{P})$} of the group $G_{\delta_\mathcal{P}}$ for the pure rational form $\delta_\mathcal{P}$ of $G$ determined by $\mathcal{P}$, as in Section~\ref{ssec:LV}.
%
%If $\mathcal{P}= \mathcal{P}(\pi,\delta)$, then $\delta_\mathcal{P} = \delta$ and ${C}_{(\pi,\delta)} \subseteq \operatorname{supp}(\mathcal{P}) \subseteq {\bar C}_{(\pi,\delta)}$, where $C_{(\pi,\delta)}$ is the unique $H_\lambda$-orbit in $V_\lambda$ determined by the Langlands parameter of $(\pi,\delta)$, so
%\[
%\langle (\pi,\delta),\mathcal{P}\rangle 
%= 
%\begin{cases} 
%	e(\delta) (-1)^{\dim C_{(\pi,\delta)}} , 
%		& \mathcal{P} = \text{if }\mathcal{P}(\pi,\delta)\\ 
%	0, 	& \text{otherwise}.
%\end{cases}
%\]
Conjecture~\ref{conjecture:1}\ref{conjecture:a} and \ref{conjecture:b} together are equivalent to:
\begin{equation}\label{eqn:chi1}
 \langle \eta_\psi, \mathcal{P}\rangle 
=  (-1)^{\dim C_\psi} \rank \Evs_\psi \mathcal{P},
\end{equation}
for all $\mathcal{P} \in \K\Perv_{H_{\lambda}}(V_{\lambda})$.
In its entirely, Conjecture~\ref{conjecture:1} is equivalent to:
\begin{equation}\label{eqn:Conjecture2-pairing}
  \langle \eta_{\psi,s}, \mathcal{P}\rangle 
= (-1)^{\dim C_\psi}\ \trace_{a_s}(\NEvs_\psi \mathcal{P}),
%= (-1)^{\operatorname{codim}(C_\psi)} \trace(\Evs_\psi \mathcal{P})(a_{s})
\end{equation}
for every semisimple $s\in Z_{\dualgroup{G}}(\psi)$ and for every $\mathcal{P}\in \Perv_{H_\lambda}(V_\lambda)$.

\subsection{A basis for stable invariant distributions}\label{ssec:Conjecture2}

In Section~\ref{ssec:Conjectures1}, we made conjectures about ABV-packets $\Pi^\ABV_{C_\psi}(G/F)$ where $\psi$ is an Arthur parameter.
However, the definition of $\Pi^\ABV_{C}(G/F)$ given in Section~\ref{ssec:ABVpackets} applies to all strata $C\subseteq V_\lambda$, not just those of Arthur type.
Conjecture~\ref{conjecture:1} suggests, then, that perhaps ABV-packets for Langlands parameters $\phi$ that are not of Arthur type might be viewed as generalized pure Arthur packets.
To test if this idea is reasonable, it is necessary, at the very least, to attach invariant distributions to Langlands parameters that are not of Arthur type, in a manner that generalizes Definition~\ref{definition:etaNEvpsis}. 

Definition~\ref{definition:etaNEvpsis} makes implicit use of Section~\ref{ssec:Evs} in the case that $T^*_{C}(V_\lambda)_\text{sreg}$ is non-empty; by Proposition~\ref{proposition:psisreg}, if $C$ is of Arthur type, then every Arthur parameter $\psi\in Q_\lambda(\Lgroup{G})$ such that $C=C_\psi$ determines a base point $(x_\psi,\xi_\psi)\in T^*_{C_\psi}(V_\lambda)_\text{sreg}$. 
However, if $C$ is not of Arthur type then $T^*_{C}(V_\lambda)_\text{sreg}$ may be empty, in which case we must revert to using the open subvariety $T^*_{C}(V_\lambda)_\textrm{gen}$ in $T^*_{C}(V_\lambda)_\textrm{reg}$ rather than $T^*_{C}(V_\lambda)_\text{sreg}$.
In this case, we see no canonical way to choose a base point for $T^*_{C}(V_\lambda)_\textrm{gen}$. 
Fortunately, for our purposes, this is not necessary.
To see why, use \cite[Lemma 24.3 (f)]{ABV} to see that the component groups $\pi_0(Z_{H_\lambda}(x,\xi))$ determine local system of finite groups over $T^*_{C}(V_\lambda)_\textrm{gen}$ and that this finite group is independent of $A_C^\text{mic}$, the microlocal fundamental group of $C$, so any choice of $(x,\xi)\in T^*_{C}(V_\lambda)_\textrm{gen}$ determines an equivalence
\[
\Loc_{H_\lambda}(T^*_{C}(V_\lambda)_\textrm{gen}) \iso \Rep(A_C^\text{mic}).
\]
Now, for any $a\in A_C^\text{mic}$ and $\mathcal{P}\in \Perv_{H_\lambda}(V_\lambda)$ we define $\trace_{a} (\Evs_C \mathcal{P})$ by choosing a base point $(x,\xi)\in T^*_{C}(V_\lambda)_\textrm{gen}$ and identifying the local system $\Evs_C \mathcal{P}$ with a representation of $A_C^\text{mic}$ using the equivalence above, and then taking the trace at $a$ of the resulting representation. The value of this trace is independent of the choice of generic $(x,\xi)$, so $\trace_{a} (\Evs_C \mathcal{P})$ is well-defined.
Likewise define $\trace_{a} (\NEvs_C \mathcal{P})$.

\begin{definition}\label{definition:3}
Let $\lambda : W_F \to \Lgroup{G}$ be an infinitesimal parameter.
Let $C\subseteq V_\lambda$ be an $H_\lambda$-orbit.
Suppose $s\in Z_{H_\lambda}(x,\xi)$ for some $(x,\xi)\in T^*_{C}(V_\lambda)_\textrm{gen}$.
Set\index{$\eta^{\NEvs}_{C,s}$}
\[
\eta^{\NEvs}_{C,s}
\ceq
(-1)^{\dim C} \hskip-20pt \sum_{(\pi,\delta)\in \Pi^\ABV_{C}(G/F)} \hskip-10pt (-1)^{\dim\operatorname{supp}\mathcal{P}(\pi,\delta)}\  \trace_{a_s} \NEvs_C \mathcal{P}(\pi,\delta)  \ e(\delta)\ [(\pi,\delta)],
\]
where $a_{s}$ is the image of $s$ under $Z_{H_\lambda}(x,\xi) \to \pi_0(Z_{H_\lambda}(x,\xi)) = A_C^\text{mic}$.
Likewise define $\eta^{\Evs}_{C}$ \index{$\eta^{\Evs}_{C}$} by
\[
\eta^{\Evs}_{C}
\ceq
(-1)^{\dim C} \hskip-20pt \sum_{(\pi,\delta)\in \Pi^\ABV_{C}(G/F)} \hskip-10pt (-1)^{\dim\operatorname{supp}\mathcal{P}(\pi,\delta)}\  \rank \Evs_C \mathcal{P}(\pi,\delta)  \ e(\delta)\ [(\pi,\delta)].
\]
\end{definition}

Conjecture~\ref{conjecture:2}, below, is an adaptation of \cite[Conjecture 8.15$'$]{Vogan:Langlands}. 
It suggests how to extend the definition of Arthur packets from Langlands parameters of Arthur type to all Langlands parameters and also how to find the associated stable distributions.

\begin{conjecture}\label{conjecture:2}
Let $G$ be a quasi\-split connected reductive linear algebraic group over $F$.
For any $\lambda \in \Lambda(\Lgroup{G}^{})$ (Section~\ref{ssec:infinitesimal}) and any $H_\lambda$-orbit $C\subseteq V_\lambda$, the virtual representation $\eta^{\Evs}_{C}$ is strongly stable in the sense of \cite[1.6]{Vogan:Langlands}.
Moreover,
\[
\{  \eta^{\Evs}_{C} \tq \ H_\lambda\text{-orbits\ } C \subseteq V_\lambda \}
\]
is a basis for the Grothendieck group of strongly stable virtual representations with infinitesimal character $\lambda$.
\end{conjecture}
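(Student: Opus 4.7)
The plan is to reduce strong stability and the basis claim to a purely microlocal statement on $V_\lambda$: namely, that $\rank \Evs_C \mathcal{P}$ computes the multiplicity of the Lagrangian $\overline{T^*_C(V_\lambda)}$ in the characteristic cycle $\operatorname{CC}(\mathcal{P})$ of an equivariant perverse sheaf $\mathcal{P}$ on $V_\lambda$. The first step is to establish this index formula. By Theorem~\ref{theorem:rank1}, $\Evs_C \IC(C)$ is a rank-$1$ local system on $T^*_C(V_\lambda)_{\textrm{gen}}$, and Proposition~\ref{Ev:bigcell}\ref{Ev:bigcell-projection} shows that $\Evs_C$ behaves projectively in the coefficient local system. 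Combined with the expected local Morse-theoretic decomposition \eqref{eqn:expectation} and the standard computation of $\operatorname{CC}$ via vanishing cycles at generic covectors (cf.\ \cite{Brylinski:Transformations}, \cite{Ginsburg:Characteristic}), this yields
\[
\rank \Evs_C \mathcal{P} \;=\; \bigl[\operatorname{CC}(\mathcal{P}):\overline{T^*_C(V_\lambda)}\bigr]
\]
for every $\mathcal{P}\in\Perv_{H_\lambda}(V_\lambda)$ and every $H_\lambda$-orbit $C\subseteq V_\lambda$.

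The second step is to use the pairing \eqref{eqn:pairing} to translate Definition~\ref{definition:3} into the statement that, up to a global sign $(-1)^{\dim C}$, the virtual representation $\eta^{\Evs}_C$ is the image under the local Langlands correspondence for pure rational forms (Proposition~\ref{proposition:geoLV}) of the unique element $\mu_C\in \K\Perv_{H_\lambda}(V_\lambda)\otimes\QQ$ whose characteristic cycle is $[\overline{T^*_C(V_\lambda)}]$. Existence and uniqueness of $\mu_C$ follow from the upper-triangularity of the matrix $\bigl[\operatorname{CC}(\IC(C',\mathcal{L}')):\overline{T^*_C(V_\lambda)}\bigr]$ with respect to the closure order, with diagonal entries equal to $1$ by Theorem~\ref{theorem:rank1}; this is the analogue of the Kazhdan--Lusztig change-of-basis in this graded Lie algebra setting made available by Theorem~\ref{theorem:unramification}\ref{reduction:GLA} and \cite{Lusztig:Study}. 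Strong stability of $\eta^{\Evs}_C$ then follows by Vogan's characterization in \cite[Section~1.6, Theorem~8.9]{Vogan:Langlands}: a virtual representation with infinitesimal character $\lambda$ is strongly stable precisely when the corresponding class in $\K\Perv_{H_\lambda}(V_\lambda)\otimes\QQ$ lies in the span of characteristic cycles of Lagrangian subvarieties of $T^*(V_\lambda)$, i.e.\ when its expansion in the IC-basis is dictated by a Lagrangian cycle.

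The third step is the basis assertion. The elements $\{\mu_C\}_C$ form a $\QQ$-basis of the span of characteristic cycles by construction, and the span of characteristic cycles equals the $\QQ$-span of strongly stable virtual representations with infinitesimal character $\lambda$ by the characterization invoked above. A dimension count using Proposition~\ref{proposition:parameter space} then matches the number of $H_\lambda$-orbits with the number of $\dualgroup{G}$-conjugacy classes of Langlands parameters whose infinitesimal parameter is in the class of $\lambda$, which by \cite[Proposition~1.11]{ABV} (transposed to the $p$-adic, pure-rational-form setting) is the dimension of the space of strongly stable virtual representations with that infinitesimal character. Integrality of the basis follows from the fact that the IC-to-CC matrix is unimodular upper triangular, so its inverse has integer entries.

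The main obstacle is making Vogan's characterization of strong stability rigorous in the $p$-adic, pure-rational-form setting, since the definition in \cite[1.6]{Vogan:Langlands} is formulated directly on characters on strongly regular semisimple elements while our construction is entirely on the dual side. Bridging this requires the local Langlands correspondence for pure rational forms in Vogan's formulation, together with a compatibility between the Kottwitz sign $e(\delta)$, the signs $(-1)^{\dim\operatorname{supp}\mathcal{P}(\pi,\delta)}$ appearing in \eqref{eqn:pairing}, and the expansion of standard modules in terms of irreducibles --- the $p$-adic Kazhdan--Lusztig conjecture in the form needed here. For the symplectic and special orthogonal groups this feeds back into Conjecture~\ref{conjecture:1} and Arthur's endoscopic character identities; in general, proving it will require a geometric avatar of endoscopy phrased in terms of maps $r:\Lgroup{H}\to \Lgroup{G}$ and a corresponding functoriality of $\Evs$, which we expect to be accessible via Theorem~\ref{theorem:unramification}.
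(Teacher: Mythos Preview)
Your plan has the same essential ingredients as the paper's conditional argument in Section~\ref{sssec:KL-overview} --- the index formula $\rank\Evs_C\mathcal{P}=\chi^{\mathrm{mic}}_C(\mathcal{P})$, an upper-triangular change of basis, and the Kazhdan--Lusztig conjecture --- but the logical ordering is inverted and Step~2 is not right as stated.

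The paper does not use a theorem of Vogan saying ``strongly stable $=$ span of characteristic cycles.'' Instead it starts on the representation side with the basis
\[
\eta_\phi \;=\; \sum_{\rho}\ (\dim\rho)\,e(\phi,\rho)\,M(\phi,\rho)
\]
of the strongly stable space (this is where \cite[1.6]{Vogan:Langlands} enters), and then the Kazhdan--Lusztig conjecture \cite[Conjecture~8.11$'$]{Vogan:Langlands} is precisely the bridge identifying $\langle\eta_\phi,\mathcal{P}\rangle$ with the \emph{local} Euler characteristic $\chi^{\mathrm{loc}}_{C_\phi}(\mathcal{P})$. Only after that does the microlocal index theorem of Ginzburg--Kashiwara--Dubson give the unitriangular change of basis from $\chi^{\mathrm{loc}}$ to $\chi^{\mathrm{mic}}$, and one finishes by observing $\langle\eta^{\Evs}_C,\mathcal{P}\rangle=(-1)^{\dim C}\chi^{\mathrm{mic}}_C(\mathcal{P})$ directly from the definition of the pairing. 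So Kazhdan--Lusztig is not a sign compatibility at the end; it is the step that translates Vogan's representation-theoretic notion of strong stability into a sheaf-theoretic one. The characterisation you attribute to a theorem in \cite{Vogan:Langlands} is essentially the \emph{conclusion} of this argument, not its input.

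There is also a concrete error in Step~2: $\eta^{\Evs}_C$ is not the image under the bijection of Proposition~\ref{proposition:geoLV} of an element $\mu_C\in\K\Perv_{H_\lambda}(V_\lambda)$ with $\mathrm{CC}(\mu_C)=[\overline{T^*_C(V_\lambda)}]$. Under the pairing \eqref{eqn:pairing}, $\eta^{\Evs}_C$ corresponds to the linear functional $(-1)^{\dim C}\chi^{\mathrm{mic}}_C$ on $\K\Perv_{H_\lambda}(V_\lambda)$; the coefficients $\rank\Evs_C(\mathcal{P})$ appearing in Definition~\ref{definition:3} are entries of the characteristic-cycle matrix, not of its inverse, so identifying $\eta^{\Evs}_C$ with an element of $\K\Perv$ having irreducible characteristic cycle is the wrong duality. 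Once you reorganise along the paper's lines --- known basis $\{\eta_\phi\}$, then Kazhdan--Lusztig to reach $\chi^{\mathrm{loc}}$, then the unitriangular index identity to reach $\chi^{\mathrm{mic}}$ --- the conditional argument goes through exactly as in Section~\ref{sssec:KL-overview}.
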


It should be noted that strongly stable virtual representations of $G$ produce stable virtual representations, and thus stable distributions, of all the groups $G_\delta(F)$ as $\delta$ ranges over pure rational forms of $G$.
It should also be noted that in Conjecture~\ref{conjecture:2} we dropped the hypothesis that $G$ is a quasi\-split symplectic or special orthogonal $p$-adic group, which appeared in Conjecture~\ref{conjecture:1}, and replaced it with the hypothesis that $G$ is any quasi\-split connected reductive linear algebraic group over $F$.
The scope of Conjecture~\ref{conjecture:2} is therefore very broad, as it refers to all pure inner forms of all quasi\-split connected reductive $p$-adic groups.

\part{Examples}\label{Part2}

\section{Overview}\label{sec:Overview}

In Part~\ref{Part2} of this article we consider various $G$ and $\lambda : W_F \to \Lgroup{G}$, and then verify the conjectures from Section~\ref{section:conjectures} by brute force calculation.
However, our real goal in Part~\ref{Part2} is to show how to use results from Part~\ref{Part1} to calculate the stable distributions in Arthur's local result \cite[Theorem 1.5.1]{Arthur:Book} and also how to calculate the coefficients that appear when these stable distributions are transferred to certain endoscopic groups.
As a consequence, we give complete examples of \cite[Theorem 1.5.1]{Arthur:Book} and explain how to use geometry to make the calculations.
Each example follows essentially the same four-part plan, explained in some detail in Section~\ref{sec:template} and outlined here.

%\begin{enumerate}
%\item[(\S\ref{ssec:Arthur-overview})]
After fixing a connected reductive group $G$ over a $p$-adic field $F$ and an infinitesimal parameter $\lambda : W_F \to \Lgroup{G}$, we enumerate all admissible representations $\pi$ of all pure rational forms of $G$ with infinitesimal parameter $\lambda$.
We partition these admissible representations into L-packets and show how Aubert duality operates on the representations.
Then, for each L-packet of Arthur type, we find the Arthur packet that contains it.
We calculate a twisting character which measures the difference between Arthur's parametrization of representations in an Arthur packet with M\oe glin's parametrization. 
We find the coefficients in the invariant distributions  \index{$\Theta^G_{\psi,s}$}
\begin{equation}\label{eqn:Thetapsis-intro}
\Theta^G_{\psi,s}
= \sum_{\pi\in \Pi_\psi(G(F))} {\langle s\, s_\psi,\pi \rangle}_\psi \ \Theta_{\pi}
\end{equation}
that arise from stable distributions attached to Arthur packets for endoscopic groups for $G(F)$ in \cite[Theorem 1.5.1]{Arthur:Book}.
We also calculate the virtual representations $\eta_{\psi,s}$ using Arthur's work.
See Section~\ref{ssec:Arthur-overview} for more detail on this part of the examples.

%\item[(\S\ref{ssec:Ev-overview})]
In the second part of each example, called {\it Vanishing cycles of perverse sheaves}, we set up all the tools needed to
calculate ${\langle s s_\psi, \pi\rangle}_\psi$, \index{${\langle s s_\psi, \pi\rangle}_\psi$} and its generalization to pure rational forms of $G$, geometrically.
We find the stratified variety $V_\lambda$ \index{$V_\lambda$} attached to $\lambda$ and study the category $\Perv_{Z_{\dualgroup{G}}(\lambda)}(V_\lambda)$ of equivariant perverse sheaves on $V_\lambda$.
We show how this category decomposes into summand categories, called the cuspidal support decomposition of $\Perv_{Z_{\dualgroup{G}}(\lambda)}(V_\lambda)$.
Then we calculate the functor \index{$\Ev_\psi$}
\begin{equation}\label{eqn:Ev-intro}
\Ev_\psi : \Perv_{Z_{\dualgroup{G}}(\lambda)}(V_\lambda) \to \Rep(A_\psi)
\end{equation}
on simple objects, using properties of vanishing cycles; $\NEv$ is defined in Section~\ref{section:Ev} and recalled in Section~\ref{sssec:Ev-overview}. \index{$\NEv_\psi$}
The results of these calculations  -- one for each example -- are presented in Sections~\ref{sssec:Ev-SL(2)}, \ref{sssec:Ev-SO(3)}, \ref{sssec:Ev-PGL(4)}, \ref{sssec:Ev-SO(5)regular}, \ref{sssec:Ev-SO(5)singular} and \ref{sssec:Ev-SO(7)}.
Section~\ref{ssec:Ev-overview} includes an overview of how we made these calculations.
We also show how the Fourier transform interacts with the functor $\NEv$.

%\item[(\S\ref{ssec:ABV-overview})]
In the third part we connect the two sides of this story, as treated above.
To begin, we find Vogan's bijection between: admissible representations of split $p$-adic groups and their pure rational forms with fixed infinitesimal parameter $\lambda : W_F\to \Lgroup{G}$, as recalled in Section~\ref{ssec:Arthur-overview}; and simple equivariant perverse sheaves on $V_\lambda$, as recalled in Section~\ref{ssec:Ev-overview}.
With this bijection in hand, and the calculation of $\Ev$ from Section~\ref{ssec:Ev-overview}, we easily find the ABV-packets $\Pi^\ABV_{\mathrm{pure},\phi}$ and associated virtual representations $\eta^{\NEv}_{\phi,s}$. \index{$\eta^{\NEv}_{\phi,s}$}
By referring back to Section~\ref{ssec:Arthur-overview}, we easily see 
\begin{equation}\label{eqn:Conjecture2-intro}
 \eta_{\psi,s} = \eta^{\NEv}_{\phi_\psi,s}
\end{equation}
for all Arthur parameters $\psi$ with infinitesimal parameter $\lambda$, thus confirming Conjecture~\ref{conjecture:1} in the examples.
This implies \eqref{eqn:weakConjecture2-intro} and also implies 
\begin{equation}\label{eqn:ABV-intro}
\Pi^\mathrm{pure}_{\psi} = \Pi^\ABV_{\phi_\psi}
\end{equation}
for every Arthur parameter with infinitesimal parameter $\lambda$.
We also verify the Kazhdan-Lusztig conjecture in each example, which allows us to verify Conjecture~\ref{conjecture:2} in our examples.
We show how the twisting characters $\chi_\psi$ from Section~\ref{sssec:Aubert-overview} relate to the twisting local system $\mathcal{T}_\psi$ introduced in Section~\ref{ssec:Evs} and recalled in Section~\ref{sssec:EvFt-overview}.
While \eqref{eqn:ABV-intro} shows that every Arthur packet is an ABV-packet, the converse is not true; in this article we find four examples of ABV-packets that are not Arthur packets.
See Section~\ref{ssec:ABV-overview} for more detail on this part of the examples.

%\item[(\S\ref{ssec:restriction-overview})]
In the fourth part, we show how to calculate endoscopic transfer, geometrically.
Specifically, when $G$ admits an elliptic endoscopic group $G'$ and an infinitesimal parameter $\lambda' : W_F \to \Lgroup{G}'$ such that $\lambda = \epsilon \circ \lambda'$ with $\epsilon : \Lgroup{G}'\to \Lgroup{G}$, we show how the transfer of stable distributions attached to Arthur parameter for $G'$ to $G$ may be apprehended through the restriction of equivariant perverse sheaves from $V_\lambda$ to $V_{\lambda'}$.
To see this, for each simple $\mathcal{P}\in \Perv_{H_\lambda}(V_\lambda)$, we calculate every term in the identity
\begin{equation}\label{eqn:TrEvRes-intro}
\trace_{a'_s}\NEv_{\psi'} \left( \mathcal{P}\vert_{V_{\lambda'}} \right) =  (-1)^{\dim C-\dim C'}  \trace_{a_s}\NEv_{\psi}\mathcal{P},
\end{equation}
where $\psi'\in T^*_{C'}(V_{\lambda'})_\textrm{reg}$ with image $\psi\in T^*_{C}(V_\lambda)_\textrm{reg}$, where the semisimple $s\in \dualgroup{G}$ is part of the endoscopic data of $G'$, $a_s$ is the image of $s$ in $A_\psi$ and $a'_s$ is the image of $s$ in $A_\psi'$.
See Section~\ref{ssec:restriction-overview} for more detail on this part of the examples.
%\end{enumerate}

%\tableofcontents
\section{Template for the examples}\label{sec:template}

%Here we explain the plan for all the examples.

We have tried to make the examples (Sections~\ref{sec:SL(2)} through \ref{sec:SO(7)}) as brief as possible, by making repeated reference back to this section.
Although we do not show every calculation in every example, we explain the ideas needed and then illustrate them as they appear in the examples.

%\subsection{Groups and infinitesimal parameters}

In each example we begin by choosing $G$ from the following list of split algebraic groups over a $p$-adic field $F$: in order, we take $G$ to be $\SL(2)$, $\SO(3)$, $\PGL(4)$, $\SO(5)$, $\SO(5)$ again, and finally, $\SO(7)$.
In each case we find $Z^1(F,G)$, and thus all pure rational forms of $G$, and relate these to the inner forms of $G$ using the maps
\[
H^1(F,G) \to H^1(F,G_{\ad})\to H^1(F,\Aut(G)).
\]
Every pure rational form $\delta \in Z^1(F,G)$ determines a rational form $G_\delta$ of $G$, often also called a pure rational form of $G$.
The examples that we consider illustrate the fact that the maps above are neither injective nor surjective, in general.

In each case we also fix an infinitesimal parameter
$\lambda : W_F \to \Lgroup{G}$.
%We replace $\Lgroup{G}$ with $\dualgroup{G}$ when this will cause not confusion.
We consider two infinitesimal parameters $\lambda$ for $\SO(5)$, but otherwise choose one $\lambda$ for each group in the list, above.

\subsection{Arthur packets}\label{ssec:Arthur-overview}

In each example we enumerate all admissible representations $\pi$ of all pure rational forms $\delta$ of $G$ with a fixed infinitesimal parameter $\lambda$.
We show how these representations fall into L-packets, indexed by Langlands parameters $\phi$ with infinitesimal parameter $\lambda$.
Then if $\phi$ is of Arthur type, we find corresponding the Arthur packet.
We find the stable distributions attached to these L-packets, and also all the invariant distributions obtained from these representations by endoscopy. 

\subsubsection{Parameters}\label{sssec:P-overview}

We find all Langlands parameters $\phi : L_F \to \Lgroup{G}$ such that $\phi(w,d_w) = \lambda(w)$, where $d_w\in \SL(2)$ is defined by $d_w = \operatorname{diag}(\abs{w}^{1/2}, \abs{w}^{-1/2})$, as in Section~\ref{ssec:phipsi}.
As in Section~\ref{ssec:Lparameters}, we write $P_\lambda(\Lgroup{G})$ \index{$P_\lambda(\Lgroup{G})$} for these Langlands parameters and $\Phi_\lambda(G/F)$ \index{$\Phi_\lambda(G/F)$} for the isomorphism classes of these Langlands parameters under $Z_{\dualgroup{G}}(\lambda)$-conjugation.

Then we find all Arthur parameters $\psi : L_F \times \SL(2,\CC) \to \Lgroup{G}$ such that $\psi(w,d_w,d_w) = \lambda(w)$.
As in Section~\ref{ssec:psi}, we write $Q_\lambda(\Lgroup{G})$ \index{$Q_\lambda(\Lgroup{G})$} for these Arthur parameters and $\Psi_\lambda(G/F)$ \index{$\Psi_\lambda(G/F)$} for the isomorphism classes of these Arthur parameters under $Z_{\dualgroup{G}}(\lambda)$-conjugation.

Although the map $\Psi_\lambda(\Lgroup{G}) \to \Phi_\lambda(\Lgroup{G})$ is injective, it is not surjective in general, as we see in the examples.

\subsubsection{Admissible representations and their pure L-packets}\label{ssec:LV-template}

In this section in each example, below, we list all representations $(\pi,\delta)$ of all pure rational forms of $G$, in the sense of \cite{Vogan:Langlands}, with infinitesimal parameter $\lambda$. 
This means that for every pure rational form $\delta \in Z^1(F,G)$, we find all irreducible admissible representations $\pi$ of the rational form $G_\delta$ attached to $G$, such that the Langlands parameter $\phi$ for $\pi$ lies in $P_\lambda(\Lgroup{G})$.
These representations are not tempered in most of the cases considered in this article.
When the pure rational form $\delta$ is clear from context, we may write $\pi$ for $(\pi,\delta)$.

We arrange these admissible representations into L-packets and into pure L-packets.
For this, we must find the component group \index{$A_\phi$}
\[
A_\phi \ceq Z_{\dualgroup{G}}(\phi)/Z_{\dualgroup{G}}(\phi)^0,
\]
for each $\phi \in P_\lambda(\Lgroup{G})$.
According to the pure Langlands correspondence \cite{Vogan:Langlands}, equivalence classes of irreducible representations of pure rational forms of $G$ with infinitesimal parameter $\lambda$ are indexed by the set \index{$\Xi_\lambda(\Lgroup{G})$}
\[
\Xi_\lambda(\Lgroup{G}) \ceq \left\{ (\phi,\rho) \mid \phi \in P_\lambda(\Lgroup{G})/Z_{\dualgroup{G}}(\lambda),\ \rho \in \Irrep(A_\phi) \right\}.
\]
By abuse of notation, we write $\pi(\phi,\rho)$ \index{$\pi(\phi,\rho)$} for an irreducible admissible representation of $G(F)$ corresponding to a pair $(\phi,\rho)$ above.
Each $\rho \in \Irrep(A_\phi)$ determines the class of a pure rational form, denoted by $\delta_\rho\in Z^1(F,G)$, so the L-packet for $\phi$ and the rational form $G_\delta$ is
\[
\Pi_{\phi}(G_\delta(F)) = \{ \pi(\phi,\rho) \tq \phi \in P_\lambda(\Lgroup{G}),\  \rho \in \Irrep(A_\phi),\ [\delta_\rho] = [\delta] \in H^1(F,G)\}.
\]
In the examples we find these L-packet, for all $\phi \in P_\lambda(\Lgroup{G})$ and all $\delta \in Z^1(F,G)$.
We also find the pure L-packets:
\[
\Pi^\mathrm{pure}_{\phi}(G/F) = \{  (\pi(\phi,\rho),\delta_\rho) \tq \phi \in P_\lambda(\Lgroup{G}),\  \rho \in \Irrep(A_\phi) \},
\]
for all $\phi \in P_\lambda(\Lgroup{G})$.
To simplify notation slightly, we often write  $\pi(\phi,\rho)$ for the pair $(\pi(\phi,\rho),\delta_\rho)$.

\subsubsection{Multiplicity matrix}\label{sssec:mrep-overview}

To describe the representations with infinitesimal parameter $\lambda$ we present the multiplicity $m_\text{rep}((\phi,\rho),(\phi',\rho'))$ \index{$m_\text{rep}$} of $\pi(\phi,\rho)$ in the standard module $M(\phi,\rho)$ \index{$M(\phi,\rho)$} so that in the Grothendieck group of admissible representations generated by $\Pi^\mathrm{pure}_{\lambda}(G/F)$ we have
\[
M(\phi',\rho') \equiv \sum_{(\phi,\rho)} m_\text{rep}((\phi,\rho),(\phi',\rho'))\ \pi(\phi,\rho),
\] 
where the sum is taken over all $\phi \in P_\lambda(\Lgroup{G})$ and all $\rho\in \Irrep(A_\phi)$.

The strategy we use to compute the multiplicities in the standard modules is to compare the Jacquet modules of the standard modules with those of irreducible representations. One can make some guesses as to what should be inside the standard modules by looking at the corresponding inducing representations and then verify that they are really there. To see there is nothing else, it is enough to show that the Jacquet modules of the standard modules have been exhausted by these representations. We give a sample calculation using this strategy in Section~\ref{sssec:mrep-SO(7)}.

\subsubsection{Arthur packets}\label{sssec:Arthur-overview}

Recall $Q_\lambda(\Lgroup{G})$ \index{$Q_\lambda(\Lgroup{G})$} from Section~\ref{sssec:P-overview}.
For each $\psi\in Q_\lambda(\Lgroup{G})$ we show how the admissible representations above are grouped into Arthur packets 
$\Pi_\psi(G_\delta(F))$ \index{$\Pi_\psi(G_\delta(F))$}
for rational forms $\delta$ of $G$.
Of course, $\Pi_\psi(G_\delta(F))$ contains the L-packet $\Pi_{\phi_\psi}(G_\delta(F))$; our interest is in the representations in $\Pi_\psi(G_\delta(F))$ that are not contained in $\Pi_{\phi_\psi}(G_\delta(F))$; we referred to these as \emph{coronal representations}\index{coronal representations} in Section~\ref{ssec:Apackets}.
%
\iffalse
\[
\Pi_{\psi}(G_\delta(F)) = \Pi_{\phi_\psi}(G_\delta(F))\ \bigsqcup\   \Pi^\text{cor}_{\psi}(G_\delta(F)).
\]
%
\fi
In fact, we further recall the adaptation of Arthur packets to pure rational forms and find the pure Arthur packets 
$\Pi^\mathrm{pure}_{\psi}(G/F)$ \index{$\Pi^\mathrm{pure}_{\psi}(G/F)$}
themselves. 
%
\iffalse
Again, we have
\[
\Pi^\mathrm{pure}_{\psi}(G/F) = \Pi^\mathrm{pure}_{\phi_\psi}(G/F)\ \bigsqcup\   \Pi^\text{cor}_{\mathrm{pure},\psi}(G/F).
\]
%
\fi

Arthur's main local result for quasisplit classical groups is expressed in terms of a map \index{${\langle \cdot\ , \pi\rangle}_{\psi}$}
\begin{equation}
\begin{array}{rcl}
\Pi_\psi(G(F)) &\to& \widehat{\mathcal{S}_\psi},\\
\pi &\mapsto& {\langle \cdot\ , \pi\rangle}_{\psi}
\end{array}
\end{equation}
where $\mathcal{S}_\psi = Z_{\dualgroup{G}}(\psi)/Z_{\dualgroup{G}}(\psi)^0\, Z(\dualgroup{G})^{\Gamma_F}$. \index{$\mathcal{S}_\psi$}
As we saw in Section~\ref{section:Arthur}, this is easily rephrased in terms of a map
\begin{equation}\label{eqn:Arthur}
\Pi_\psi(G(F)) \to \Irrep(A_\psi),
\end{equation}
where \index{$A_\psi$}
\[
A_\psi = Z_{\dualgroup{G}}(\psi)/Z_{\dualgroup{G}}(\psi)^0.
\]
We find this map in each of our examples.
In fact, using \cite[Chapter 9]{Arthur:Book}, we find the conjectured extension
\begin{equation}\label{eqn:Arthursc}
\Pi^\mathrm{pure}_{\psi}(G/F) \to \Rep(A_\psi)
\end{equation}
which includes the non-quasi\-split pure rational forms of $G$, as discussed in Section~\ref{ssec:AV}.
%\todo{Bin, from your notes it seems you're favouring the notation $\epsilon(\pi)$ for \eqref{eqn:Arthursc}. Would you like to see it used in this article?}

\subsubsection{Aubert involution}\label{sssec:Aubert-overview}

Aubert involution preserves the infinitesimal parameter $\lambda$ and so defines an involution on $\mathsf{K}\Pi_\lambda(G_\delta(F))$, \index{$\mathsf{K}\Pi_\lambda(G_\delta(F))$} for every pure rational form $\delta$ for $G$. 
For $\pi\in \Pi_\lambda(G_\delta(F))$ we use the notation ${\hat \pi}$ \index{${\hat \pi}$} for the admissible representation such that $(-1)^{a(\pi)}{\hat \pi}$ is the Aubert dual of $\pi$ in $\mathsf{K}\Pi_\lambda(G_\delta(F))$.
When restricted to Arthur packets, the Aubert involution defines a bijection
\[
\begin{array}{rcl}
\Pi_{\psi}(G_\delta(F)) &\to& \Pi_{\hat \psi}(G_\delta(F))\\
\pi 			&\mapsto& {\hat \pi},
\end{array}
\]
where ${\hat \psi}(w,x,y)\ceq \psi(w,y,x)$. \index{${\hat \psi}$}
We display this bijection in our examples.

Although the component groups $A_{\psi}$ and $A_{\hat\psi}$ are isomorphic, a comparison of the characters ${\langle \, \cdot\, , \pi\rangle}_{\psi}$ and ${\langle \,\cdot\, , {\hat \pi}\rangle}_{\hat \psi}$ shows that they do not coincide, in general.
Accordingly, their ratio defines a character $\chi_\psi$ \index{$\chi_\psi$} of $A_\psi$ such that
\begin{equation}\label{eqn:Aubert-overview}
{\langle s , {\hat \pi}\rangle}_{\hat \psi}=  \chi_\psi(s) {\langle s, \pi\rangle}_{\psi},
\end{equation}
for $s\in Z_{\dualgroup{G}}(\psi)$ where, as usual, we use the map $Z_{\dualgroup{G}}(\psi) \to A_\psi$.
In the examples considered here we observe that this character $\chi_\psi$ of $A_\psi$ is given by
\begin{equation}\label{eqn:Bin-overview}
\chi_\psi = \epsilon_\psi^{M/W}  \epsilon_{\hat \psi}^{M/W},
\end{equation}
where $\epsilon_\psi^{M/W}$ \index{$\epsilon_\psi^{M/W}$} is the character of $A_\psi$ appearing in \cite[Theorem 8.9]{Xu:Moeglin}.
As explained in \cite[Introduction]{Xu:combinatorial}, the character $\epsilon_\psi^{M/W}$ measures the difference between M\oe glin's parametrization of representations in $\Pi_\psi$ by $A_\psi$ and Arthur's parametrization of representations in $\Pi_\psi$ by $A_\psi$.
We compute the character $\chi_\psi$ in our examples; it is non-trivial in Sections~\ref{sssec:Aubert-SO(5)singular} and \ref{sssec:Aubert-SO(7)} only.

%In Section~\ref{sssec:dualisation-overview} we see that the character $\chi_\psi$ admits a geometric interpretation.

\subsubsection{Stable distributions and endoscopy}\label{sssec:stable-overview}

Armed with \eqref{eqn:Arthur}, we easily find the coefficients in the stable invariant distribution 
\begin{equation}\label{eqn:Thetapsi}
\Theta^G_\psi
= \sum_{\pi\in \Pi_\psi(G(F))} {\langle s_\psi,\pi \rangle}_\psi\ \Theta_{\pi},
\end{equation}
where $s_\psi$ denotes the image of the non-trivial central element in $\SL(2)$ in $A_\psi$. 
%and where ${\langle\ \cdot\ ,\pi \rangle}_\psi$ now denotes the character of the representation of $A_\psi$ attached to $\pi$ under the map $\Pi_\psi(G(F)) \to \Irrep(A_\psi)$. 
Likewise, for $s\in Z_{\dualgroup{G}}(\psi)$ we compute
\begin{equation}\label{eqn:Thetapsis}
\Theta^G_{\psi,s}
= \sum_{\pi\in \Pi_\psi(G(F))} {\langle s s_\psi,\pi \rangle}_\psi\ \Theta_{\pi}.
\end{equation}
Arthur's work shows that $\Theta_{\psi,s}$ is the Langlands-Shelstad transfer of the invariant distribution 
\begin{equation}\label{eqn:Thetapsiprime}
\Theta^{G'}_{\psi'}
= \sum_{\pi'\in \Pi_{\psi'}(G'(F))} {\langle s_{\psi'},\pi' \rangle}_{\psi'} \ \Theta_{\pi'},
\end{equation}
from the endoscopic group $G'$ \index{$G'$} attached to $s$, where $\psi : L_F\times \SL(2) \to \Lgroup{G}$ factors through $\Lgroup{G'}\to \Lgroup{G}$ thus defining $\psi' : L_F\times \SL(2) \to \Lgroup{G'}$. \index{$\psi'$}
In our examples, we illustrate this fact by choosing a particular $s\in \dualgroup{G}$ and computing $\Theta_{\psi'}$.

%\todo{Add a few words about how to define $\Theta^{G_1}_{\psi,s}$ in our examples.}
In order to illuminate Conjecture~\ref{conjecture:1} we use \eqref{eqn:Arthursc} to exhibit the virtual representations  \index{$\eta_\psi$}
\begin{equation}%\label{eqn:etapsi}
\eta_\psi
=
\sum_{(\pi,\delta)\in \Pi^\mathrm{pure}_{\psi}(G/F)} e(\delta) {\langle s_\psi,(\pi,\delta)}\rangle_{\psi}\ [(\pi,\delta)]
\end{equation}
and \index{$\eta_{\psi,s}$}
\begin{equation}%\label{eqn:etapsis}
\eta_{\psi,s}
=
\sum_{(\pi,\delta)\in \Pi^\mathrm{pure}_{\psi}(G/F)} e(\delta){\langle s s_\psi,(\pi,\delta)\rangle}_{\psi}\ [(\pi,\delta)]
\end{equation}
for $s\in Z_{\dualgroup{G}}(\psi)$, as defined in Section~\ref{section:conjectures}.
%
\iffalse
\todo{Bin, Is this better notation:
\[
\eta_{\psi,s}
=
\sum_{(\pi,\delta)\in \Pi^\mathrm{pure}_{\psi}(G/F)} \trace \epsilon_\psi(\pi,\delta)(a_\psi a_s)\ [(\pi,\delta)]
\]
and
\[
\eta_{\psi'}
=
\sum_{[\pi',\delta']\in \Pi^\mathrm{pure}_{\psi'}(G'/F)} \trace \epsilon_{\psi'}(\pi,\delta)(a'_\psi)\ [\pi',\delta'] ?
\]
}
%
\fi
Likewise we find
\begin{equation}\label{eqn:etapsiprime}
\eta_{\psi'}
=
\sum_{[\pi',\delta']\in \Pi^\mathrm{pure}_{\psi'}(G'/F)} e(\delta') {\langle s_{\psi'},(\pi',\delta')\rangle}_{\psi'}\ [(\pi',\delta')]
\end{equation}
with $s$ and $\psi'$ as above.

\subsection{Vanishing cycles of perverse sheaves}\label{ssec:Ev-overview}

Having reviewed Arthur packets and transfer coefficients for the chosen $G$ and $\lambda : W_F \to \Lgroup{G}$, we now turn to geometry.
In this section in the examples we introduce the geometric tools needed to demonstrate Conjecture~\ref{conjecture:1} and determine the coefficients ${\langle s s_\psi, (\pi,\delta)\rangle}_{\psi}$ appearing above.
This is done by a brute force calculation of the exact functor
\[
\pEv : \Perv_{H_\lambda}(V_\lambda) \to \Perv_{H_\lambda}(T^*_{H_\lambda}(V_\lambda)_\textrm{reg}),
\]
defined in Section~\ref{ssec:perversity}, on simple objects, following a strategy that we now explain.
%For readers unfamiliar with \cite{CFMMX:1}, this section will be entirely unmotivated, so they are encouraged to skip ahead to Section~\ref{ssec:ABV-overview} and then refer back to Section~\ref{ssec:Ev-overview} as needed.

\subsubsection{Vogan variety}\label{ssec:Vogan-overview}

We find the variety $V_\lambda$ \index{$V_\lambda$} attached to the infinitesimal parameter $\lambda : W_F \to \Lgroup{G}$, the action of $H_\lambda \ceq Z_{\dualgroup{G}}(\lambda)$ on $V_\lambda$, and the stratification of $V_{\lambda}$ into $H_\lambda$-orbits. \index{$H_\lambda$}
If $\lambda$ is not unramified, we use Theorem~\ref{theorem:unramification} to replace the action $H_\lambda \times V_\lambda \to V_\lambda$ with $H_{\lambda_\text{hu}} \times V_{\lambda_\text{hu}} \to V_{\lambda_\text{hu}}$ where $\lambda_\text{hu} : W_F \to \Lgroup{G}_\lambda$ is the "hyper-unramification" of $\lambda: W_F \to \Lgroup{G}$.
We may now assume $\lambda$ is unramified and $\lambda(\Frob) = s_\lambda \rtimes \Frob$ where $s_\lambda\rtimes 1$ is hyperbolic in $\dualgroup{G}$, as defined in Section~\ref{ssec:hyperbolic}.

For classical groups, the variety $V_\lambda$ admits a description which is quite convenient for calculations, as we now explain. 

First consider the case $G = \GL(n)$. 
The variety $V_\lambda$ \index{$V_\lambda$} can be decomposed as a finite direct product of varieties according to 
\[ 
V_\lambda\iso \Hom(E_0,E_1) \times \Hom(E_1,E_2) \times \cdots \Hom(E_{r-1},E_{r}),
\]
where each $E_i$ is an eigenspace for $\lambda(\Frob)$ with eigenvalue $\lambda_i$. 
We may then denote elements of $V_\lambda$, {\it i.e.}, quiver representations, by  $v= (v_{i})_i$, for $v_{i}\in \Hom(E_{i-1},E_{i})$.
Then \index{$H_\lambda$}
\[ 
H_\lambda \iso \GL(E_0) \times \GL(E_{1}) \times \cdots \times \GL(E_{r})
\]
acting on $\Hom(E_0,E_1) \times \Hom(E_1,E_2) \times \cdots \Hom(E_{r-1},E_{r})$ by 
%$h_i \cdot v_{i,i+1} = v_{i,i+1}\circ h_i^{-1}$ and $h_i \cdot v_{i-1,i} = h_i \circ v_{i-1,i}$ and $h_i\cdot v_{j,j+1} = v_{j,j+1}$ for $j\ne i, i-1$.
\[
(h \cdot v)_{i} \ceq h_{i}\circ v_{i}\circ h_{i-1}^{-1}
\]
for $i=1, \ldots, r$.
The $H_\lambda$-orbit of $v\in V_\lambda$ is fully characterized by the ranks $r_{ii}\ceq \rank v_i$ for $i=1, \ldots, r$ together with the ranks
\[ 
r_{ij} \ceq \rank(v_{j} \circ \cdots \circ v_{i}),
\]
for $1\leq i<j\leq r$.
One derives a natural set of inequalities which describes admissible collections of ranks.
The partial order of adjacency is identical to the partial ordering on the symbols $(r_{ij})_{1\leq i\leq j\leq r}$.

We next note, that in general, passing between $G$, its derived group, its adjoint form or its simply connected form (or effectively any other associated form), has no impact on the variety $V_\lambda$ nor on the \textit{type} of the group $H_\lambda$.
It does however tend to alter significantly the centre of the group $H_\lambda$.
Though this will not impact the collection of $H_\lambda$-orbits in $V_\lambda$, it will tend to have a significant impact on the equivariant fundamental groups, and hence the set of equivariant local systems which must be considered.

Passing from the case when the derived group of $G$ is of type $A_n$ to the classical forms of $B_n$, $C_n$ or $D_n$ simply results in an identification of the $\lambda_i$  eigenspace of  $\lambda(\Frob)$ with the dual of the $\lambda_i^{-1}$ eigenspace.
There are essentially two cases to consider: either 
%\begin{enumerate}
%\item A chain 
%\[ \Hom(E_0,E_1) \times \Hom(E_2,E_3) \times \cdots \Hom(E_{r-2},E_{r-1}) \times \Hom(E_{r-1},E_r) \]
%is equal to its own dual chain. Specifically 
$E_i = E_{r-i}^\ast$
%\item No 
or no two of $E_0,\ldots, E_r$ are dual.
%\end{enumerate}
In the later case, $V_\lambda$ is isomorphic to one arising from an inclusion of a subgroup of type $A_n$ and one can freely study the variety by passing to this subgroup.
In the former case, there are essentially four sub-cases depending on if we are inside an orthogonal or symplectic group and if $r$ is even or odd.
In either case the variety we are studying is the one where
$ v_{i} = v_{r-i-1}^t$ 
and the group acting factors through 
$h_i = h_{r-i}^t $.
These equations impose further, obvious, restrictions on the set of admissible collections of ranks/nullities, but otherwise the collection of strata is still indexed by the set of admissible vectors $(r_{ij})_{1\leq i\leq j\leq r}$ and the adjacency relations do not change.

For simplicity of exposition one can describe these varieties which occur when $G$ is of type $B_n$ as one of
\[
 \Hom(E_0,E_1) \times \Hom(E_1,E_2) \times \cdots \Hom(E_{\ell-1},E_{\ell}) \times {\rm Sym}^2(E_\ell^\ast) 
\]
with the group acting being $\GL(E_i)$ at every factor or
 \[ \Hom(E_0,E_1) \times \Hom(E_1,E_2) \times \cdots \Hom(E_{\ell-1},E_{\ell}) \]
Where the group acts by $\GL(E_i)$ on every factor except $E_{\ell}$ where the group is ${\rm Sp}(E_\ell)$.
When $G$ is of type $C_n$ or $D_n$ they are
\[ \Hom(E_0,E_1) \times \Hom(E_1,E_2) \times \cdots \Hom(E_{\ell-1},E_{\ell}) \times {\rm Alt}^2(E_\ell^\ast) \]
with the group acting being $\GL(E_i)$ at every factor.
\[\Hom(E_0,E_1) \times \Hom(E_1,E_2) \times \cdots \Hom(E_{\ell-1},E_{\ell}), \]
where the group acts by $\GL(E_i)$ on every factor except $E_{\ell}$ where the group is ${\rm O}(E_\ell)$.
In all of these cases, $\ell$ is either $r/2$ or $(r+1)/2$, and the combinatorial data which describes the strata is still the collection of ranks $r_{ij}$ for $1\leq i \leq j \leq r$.

%\todo{Andrew: I reverted this description to an older one, because several statements in the newer one were wrong/misleading} Clifton says: Thanks!

\subsubsection{Orbit duality}\label{sssec:Orbitduality-overview}

As we saw in Section~\ref{section:conormal}, the cotangent bundle $T^*(V_\lambda)$ is equipped with two important functions: the natural pairing \index{$\KPair{\cdot}{\cdot}$}
\[
\KPair{\cdot}{\cdot} : T^*(V_\lambda) \to \mathbb{A}^1,
\]
which coincides with the restriction of the Killing form on $\mathfrak{j}_\lambda$; and \index{$[\, \cdot\, ,\, \cdot\, ]$}
\[
[\, \cdot\, ,\, \cdot\, ] : T^*(V_\lambda) \to \mathfrak{h}_\lambda,
\]
 which coincides with the restriction of the Lie bracket on $\mathfrak{j}_\lambda$.\index{$\mathfrak{j}_\lambda$}\index{$\mathfrak{h}_\lambda$}
In particular, for every $H_\lambda$-orbit $C$ in $V_\lambda$, \index{$T^*_C(V_\lambda)$}
\[
T^*_C(V_\lambda) = \{ (x,\xi)\in T^*(V_\lambda) \tq x\in C,\ [x,\xi] =0 \}.
\]

In each example we present the duality between $H_\lambda$-orbits $C$ in $V_\lambda$ and $H_\lambda$-orbits $C^*$\index{$C^*$} in $V_\lambda^*$, defined by the property that they have isomorphic conormal bundles
\[
\overline{T^*_{C}(V_\lambda)} \iso  \overline{T^*_{C^*}(V_\lambda^*)}
\]
under $T^*(V_\lambda) \to T^*(V_\lambda^*)$ given by $(x,\xi) \mapsto (\xi,x)$,
where we identify $V_\lambda^{**}$ with $V_\lambda$ using $\KPair{\cdot}{\cdot}$.

\iffalse
%(We will revisit transposition in Section~\ref{sssec:EvFt-overview}.)
In fact, this duality between $H_\lambda$-orbits in $V_\lambda$ and $H_\lambda$-orbits in $V_\lambda^*$ is also characterized by the following statement:
\begin{equation}
T_{C}^*(V_\lambda)_\textrm{reg} \subseteq C\times C^*.
\end{equation}
where
\[
T^*_{C}(V_\lambda)_\textrm{reg}
\ceq 
T^*_{C}(V_\lambda) \setminus \mathop{\cup}\limits_{C \subsetneq \overline{C'}} \overline{T_{C'}^*(V_\lambda)}.
\]
In the examples we present all this information by describing the conormal bundle \index{$T^*_{H_\lambda}(V_\lambda)$} and the regular conormal sub-bundle $T^*_{H_\lambda}(V_\lambda)_\textrm{reg}$. \index{$T^*_{H_\lambda}(V_\lambda)_\textrm{reg}$}
\[
T^*_{H_\lambda}(V_\lambda) = \mathop{\cup}\limits_{C} T^*_{C}(V_\lambda),
\]
where the union is taken over all $H_\lambda$-orbits $C$ in $V_\lambda$ and the union is taken in $T^*(V_\lambda)$.
We also describe the regular conormal bundle $T^*_{H_\lambda}(V_\lambda)_\textrm{reg}$. \index{$T^*_{H_\lambda}(V_\lambda)_\textrm{reg}$}
\[
T^*_{H_\lambda}(V_\lambda)_\textrm{reg} = \mathop{\cup}\limits_{C} T^*_{C}(V_\lambda)_\textrm{reg}.
\] 
From this, one simply restricts the bundle maps $T^*(V_\lambda) \to V_\lambda$ and $T^*(V_\lambda) \to V_\lambda^*$ to $T^*_{C}(V_\lambda)_\textrm{reg}$ to recover $C$ and its dual orbit $C^*$.
\fi

\subsubsection{Equivariant perverse sheaves}\label{sssec:EPS-overview}

The next step is to find all simple objects in the category $\Perv_{H_\lambda}(V_\lambda)$ of $H_\lambda$-equivariant perverse sheaves on $V_\lambda$. 
Again, we use Theorem~\ref{theorem:unramification} to reduce to the case when $\lambda$ is unramified and hyperbolic.

It is convenient to begin by enumerating all equivariant local systems $\mathcal{L}$ on all $H_\lambda$-orbits $C$ in $V_\lambda$. 
This is done by picking a base point $x\in C$ and computing the equivariant fundamental group \index{$A_x$}
\[
A_x = \pi_0(Z_{H_\lambda}(x)) = \pi_1(C,x)_{Z_{H_\lambda}(x)^0}.
\]
Since the isomorphism type of this group is independent of the choice of base point, this group is commonly denoted by $A_C$. \index{$A_C$}
For the groups $G$ that we consider here, the fundamental group $A_C$ is always abelian, but this is not true in general.
In any case, the choice of $x\in C$ determines an equivalence
\[
\Loc_{H_\lambda}(C) \to \Rep(A_C).
\]
It is now easy to enumerate all simple objects in category $\Perv_{H_\lambda}(V_\lambda)$:
\[
\Perv_{H_\lambda}(V_\lambda)^\text{simple}_{/\text{iso}} = \left\{ \IC(C,\mathcal{L}) \tq H\text{-orbit\ } C\subseteq V_\lambda,\ \mathcal{L} \in \Loc_{H_\lambda}(C)^\text{simple}_{/\text{iso}} \right\}.
\]

We will need to compute the equivariant perverse sheaves $\IC(C,\mathcal{L})$ themselves, or rather, their image in the Grothendieck group
\[
\Perv_{H_\lambda}(V_\lambda) \to \mathsf{K}\Perv_{H_\lambda}(V_\lambda) = \mathsf{KD}^b_{c,H_\lambda}(V_\lambda).
\]
For every $H_\lambda$-orbit $C$ in $V_\lambda$ and every $H_\lambda$-equivariant local system $\mathcal{L}$ on $V_\lambda$, consider the shifted standard sheaf \index{$\mathcal{S}(C,\mathcal{L})$}
\[
\mathcal{S}(C,\mathcal{L}) \ceq {j_{C}}_!\ \mathcal{L}[\dim C],
\]
where $j_{C} : C\hookrightarrow V_\lambda$ is inclusion.
%If the orbit  $C$ is clear from context or from $\mathcal{L}$, we will also use the notation $\mathcal{S}(\mathcal{L})$ for $\mathcal{S}(C,\mathcal{L})$.
Then, in $\mathsf{K}\Perv_{H_\lambda}(V_\lambda)$ we have \index{$\mathsf{K}\Perv_{H_\lambda}(V_\lambda)$}\index{$m_\text{geo}$}  
\[
\IC(C,\mathcal{L}) \equiv \sum_{(C',\mathcal{L'})} m_\text{geo}((C',\mathcal{L'}),(C,\mathcal{L})) \ \mathcal{S}(C',\mathcal{L'})
\] 
and $m_\text{geo}((C,\mathcal{L}),(C,\mathcal{L})) =1$ and $m_\text{geo}((C',\mathcal{L'}),(C,\mathcal{L})) = 0$ unless $C'\leq C$.
We refer to the matrix $m_\text{geo}$ as the \emph{geometric multiplicity matrix}.\index{geometric multiplicity matrix}
Set \index{$\mathcal{L}^\sharp$} \index{$\mathcal{L}^\natural$}
\[
\mathcal{L}^\sharp \ceq \IC(C,\mathcal{L})[-\dim C]
\qquad\text{and}\qquad
\mathcal{L}^\natural \ceq \mathcal{S}(C,\mathcal{L})[-\dim C].
\]
Then, in $\mathsf{K}\Perv_{H_\lambda}(V_\lambda)$,\index{$\mathsf{K}\Perv_{H_\lambda}(V_\lambda)$}
\[
\mathcal{L}^\sharp \equiv \sum_{(C',\mathcal{L'})} (-1)^{\dim C-\dim C'} m_\text{geo}((C',\mathcal{L'}),(C,\mathcal{L})) \ \mathcal{L'}^\natural.
\]   
A purity result of Lusztig shows that $\mathcal{L}^\sharp$ is cohomologically concentrated in even degrees, so \index{$m'_\text{geo}$}
\[
m'_\text{geo}((C',\mathcal{L}'),(C,\mathcal{L})) \ceq 
(-1)^{\dim C-\dim C'} m_\text{geo}((C',\mathcal{L}'),(C,\mathcal{L}))
\] 
is a non-negative integer.  
We refer to the matrix $m'_\text{geo}$ as the \emph{normalized geometric multiplicity matrix}.\index{normalized geometric multiplicity matrix}

We compute the normalized geometric multiplicity matrix $m'_\text{geo}$ in each example in this article.
In Sections~\ref{sssec:EPS-SL(2)} and \ref{sssec:EPS-PGL(4)} we use Theorem~\ref{theorem:unramification} to make this calculation. 
In Sections~\ref{sssec:EPS-SO(3)}, \ref{sssec:EPS-SO(5)regular}, \ref{sssec:EPS-SO(5)singular} and \ref{sssec:EPS-SO(7)} we give examples of the following strategy. \index{$\widetilde{C}$}
For each stratum $C\subseteq V_{\lambda}$ and each local system $\mathcal{L}$ on $C$, we construct a proper cover $\pi: \widetilde{C} \rightarrow \overline{C}$ such that $\widetilde{C}$ is smooth and $\IC(C,\mathcal{L})$ appears in $\pi_! \1_{\widetilde{C}}[\dim \widetilde{C}]$.
We can explicitly describe the fibres of $\pi$ over each stratum in $\overline{C}$ and typically arrange things so that the cover is semi-small, though this is not essential.
We then find all the other simple perverse sheaves $\IC(C',\mathcal{L}')$, for $C'\leq C$, appearing in $\pi_! \1_{\widetilde{C}}[\dim \widetilde{C}]$, using the Decomposition Theorem.
By doing this for $C$ and all strata on the boundary of $C$, we can describe $\IC(C,\mathcal{L})$.
Note that this process is performed inductively on $\dim C$, as well as on $\rank (\pi_! \1_{\widetilde{C}})|_{C}$.

\subsubsection{Cuspidal support decomposition and Fourier transform}\label{sssec:cuspidalsupport}

The category $\Perv_{H_\lambda}(V_\lambda)$ decomposes into a direct sum of full subcategories indexed by cuspidal pairs for $\dualgroup{G}$, using \cite[Proposition 8.16]{Lusztig:cuspidal2}.
We refer to this as the \emph{cuspidal support decomposition} of $\Perv_{H_\lambda}(V_\lambda)$: \index{cuspidal support decomposition} \index{$\Perv_{H_\lambda}(V_\lambda)_{L,C,\mathcal{E}}$}
\[
\Perv_{H_\lambda}(V_\lambda) = \mathop{\bigoplus}\limits_{(L,\mathcal{O},\mathcal{E})} \Perv_{H_\lambda}(V_\lambda)_{L,C,\mathcal{E}},
\]
where the sum is taken over all cuspidal Levi subgroups $L$ of $\dualgroup{G}$, and all cuspidal local systems $\mathcal{E}$ on nilpotent orbits $\mathcal{O} \subset \Lie L$, up to $\dualgroup{G}$-conjugation.
In the cases we consider there is only one $(\mathcal{O},\mathcal{E})$ for every cuspidal Levi $L$, so we abbreviate $\Perv_{H_\lambda}(V_\lambda)_{L,C,\mathcal{E}}$ to $\Perv_{H_\lambda}(V_\lambda)_{L}$.\index{$\Perv_{H_\lambda}(V_\lambda)_{L}$}
In each example we partition the simple objects in $\Perv_{H_\lambda}(V_\lambda)$ according to this decomposition.
Simple objects in $\Perv_{H_\lambda}(V_\lambda)_{L}$ are characterized by the property that they appear in the semisimple complex formed by parabolic induction along Vogan varieties from the cuspidal local system on $\Lie L\cap V_\lambda$; see \cite{Lusztig:Study}.

The cuspidal support decomposition of $\Perv_{H_\lambda}(V_\lambda)$ offers insight into the blocks that appear within the geometric multiplicity matrix.
It is also quite helpful for finding the proper covers appearing in Section~\ref{sssec:EPS-overview}.

In each example, we also compute the Fourier transform \index{$\Ft$} 
\[
\Ft : \Perv_{H_\lambda}(V_\lambda) \to \Perv_{H_\lambda}(V_\lambda^*)
\]
on all simple objects, defined as follows.
Denote the bundle maps by $p : T^*(V_\lambda) \to V_\lambda$ and $q : T^*(V_\lambda) \to V_\lambda^*$; so $p(x,\xi) = x$ and $q(x,\xi) = \xi$.
Recall the morphism $f :  T^*(V_\lambda) \to \mathbb{A}^1$ from Section~\ref{ssec:Ev}; so $f$ is the morphism over $\s = \Spec{\k}$ obtained by restriction from the non-degenerate, symmetric $J_\lambda$-invariant bilinear form $\KPair{\,}{\,}$ defined in \eqref{KPair}.
Observe that $p$ and $q$ are $H_\lambda$-equivariant and $f$ is $H_\lambda$-invariant.
Define $\gamma: T^*(V_\lambda) \to V_\lambda^* \times \mathbb{A}^1$ by $\gamma = q \times f$, so $\gamma(x,\xi) = (\xi,f(x,\xi))$. 
Let $t : V_\lambda^* \times \mathbb{A}^1 \to \mathbb{A}^1$ be projection.
From Section~\ref{ssec:VCbackground} recall the trait $S = \Spec{\k[[t]]}$ and the morphism $S \to \mathbb{A}^1$.  
Let $t_S : V_\lambda^* \times S \to S$ be the base change of $t$ along $S \to \mathbb{A}^1$ and let $b: V_\lambda^* \times S \to V_\lambda^* \times \mathbb{A}^1$ be the base change $S \to \mathbb{A}^1$ along $t$.
The Fourier transform is the functor
\[
\Ft  \ceq \RPhi_{t_S}[-1] \ b^* \ \gamma_*\ p^*.
\]
Compare with \cite[Examples 6.1.16]{Schurmann:Topology} and see \cite[(10.3.31)]{KS:sheaves}.
This functor is compatible with the cuspidal support decomposition in the sense that  $\Ft$ restricts to $\Perv_{H_\lambda}(V_\lambda)_{L} \to \Perv_{H_\lambda}(V_\lambda^*)_{L}$.

\subsubsection{Local systems on the regular conormal bundle}\label{sssec:LocO-overview}

In all the examples we treat in this paper, the regular conormal bundle $T^*_{C}(V_\lambda)_\textrm{reg}$ has an open $H_\lambda$-orbit, so $T^*_{C}(V_\lambda)_\text{sreg} \ne \emptyset$.
 \index{$T^*_{C}(V_\lambda)_\text{sreg}$}

In preparation for the calculation of $\Ev : \Perv_{H_\lambda}(V_\lambda) \to \Perv_{H_\lambda}(T^*_{H_\lambda}(V_\lambda)_\textrm{reg})$, \index{$\Ev$} we must describe local systems on $H_\lambda$-orbits $T^*_{C}(V_\lambda)_\text{sreg}$ and also show how local systems relate to the pullback of local systems along the bundle maps $T^*_{C}(V_\lambda)_\text{sreg}\to C$ and $T^*_{C}(V_\lambda)_\text{sreg}\to C^*$.\index{$\Ev$}
For this we pick a base point $(x,\xi)\in T^*_{C}(V_\lambda)_\text{sreg}$ and compute the equivariant fundamental groups \index{$A_{(x,\xi)}$}
\[
A_{(x,\xi)} = \pi_0(Z_{H_\lambda}(x,\xi))) = \pi_1(T^*_{C}(V_\lambda)_\text{sreg},(x,\xi))_{Z_{H_\lambda}(x,\xi)^0}.
\]
The isomorphism type of $A_{(x,\xi)}$ is independent of the choice of base point; it is precisely the microlocal fundamental group of $C$, denoted by $A^\text{mic}_C$. \index{$A^\text{mic}_C$}
So the choice of base point determines an equivalence
\[
\Rep(A^\text{mic}_C) \to \Loc_{H_\lambda}(T^*_{C}(V_\lambda)_\text{sreg}).
\]
We use this to enumerate the simple objects in $\Loc_{H_\lambda}(T^*_{C}(V_\lambda)_\text{sreg})$ and then to describe the functors
\[
\begin{tikzcd}
\Loc_{H_\lambda}(C) \arrow{r} &  \Loc_{H_\lambda}(T^*_{C}(V_\lambda)_\text{sreg}) & \arrow{l} \Loc_{H_\lambda}(C^*) 
\end{tikzcd}
\]
obtained by pullback the along the projections
\[
\begin{tikzcd}
C &  \arrow{l} T^*_{C}(V_\lambda)_\text{sreg} \arrow{r} & C\orbdual, 
\end{tikzcd}
\]
by way of the induced homomorphisms of equivariant fundamental groups.
\[
\begin{tikzcd}
A_x &\arrow{l} A_{(x,\xi)} \arrow{r} & A_{\xi} .
\end{tikzcd}
\] 
These group homomorphisms are surjective.

\subsubsection{Vanishing cycles of perverse sheaves}\label{sssec:Ev-overview}

Here we present the results of applying the functor \index{$\pEv$}
\[
\pEv : \Perv_{H_\lambda}(V_\lambda) \to \Perv_{H_\lambda}(T^*_{H_\lambda}(V_\lambda)_\textrm{reg})
\]
to simple objects in $\Perv_{H_\lambda}(V_\lambda)$. 
Recall from Section~\ref{section:Ev} that $\pEv = \oplus_{C} \pEv_{C} $, where 
\[
\pEv_{C} : \Perv_{H_\lambda}(V_\lambda) \to \Perv_{H_\lambda}(T^*_{C}(V_\lambda)_\textrm{reg}).
\]
 is defined by
\[ 
\pEv_C( \mathcal{F} ) = \RPhi_{(\cdot \vert \cdot)}[-1](\mathcal{F} \boxtimes \1_{C^\ast} )|_{T^*_{C}(V_\lambda)_\textrm{reg}}[\dim C^*], 
\]
where $(\cdot \vert \cdot) : T^*(V_\lambda) \to \mathbb{A}^1$ appeared in Section~\ref{sssec:Orbitduality-overview}.
Recall also from Section~\ref{section:Ev} that 
\[
(\pEv \mathcal{F})_{(x,\xi)} = (\RPhi_{\xi}[-1] \mathcal{F})_x[\dim C^*],
\]
for all $(x,\xi)\in T^*_{H_\lambda}(V_\lambda)_\textrm{reg}$.
We present the results of our calculations in a table which offers two perspectives on $\pEv$.

Recall that if $\IC(C,\mathcal{L})$ is simple, then $\pEv_{C'} \IC(C,\mathcal{L})[-\dim V_\lambda]$ is a local system on $T^*_{C'}(V_\lambda)_\textrm{reg}$ and this local system is determined by its restriction $\Evs_{C'} \IC(C,\mathcal{L})$ to the $H_\lambda$-orbit $T^*_{C'}(V_\lambda)_\text{sreg}$.
Our tables record $\pEv \IC(C,\mathcal{L})$ in form $\oplus _{C'} \IC(\mathcal{O}',\mathcal{E}')$, where $\mathcal{O}' \ceq T^*_{C'}(V_\lambda)_\text{sreg}$.
To describe each $\mathcal{E}'$, we use the base points $(x',\xi')\in T^*_{C'}(V_\lambda)_\text{sreg}$ to view $\Evs_{C'} \IC(C,\mathcal{L})$ as a representation of the equivariant fundamental group $A_{(x',\xi')}$ of $T^*_{C'}(V_\lambda)_\text{sreg}$.
The second part of the table records the characters of the representations $\Ev_{(x',\xi')} \IC(C,\mathcal{L})$ of $A_{(x',\xi')}$, as $C'$ ranges over all strata in $V_\lambda$ and as $\IC(C,\mathcal{L})$ ranges over all simple objects in $\Perv_{H_\lambda}(V_\lambda)$.

By Proposition~\ref{VC:support} we know that $\pEv_{C'} \IC(C,\mathcal{L})=0$ unless $C'\leq C$.
Proposition~\ref{Ev:bigcell} shows that in the case $C' = C$, we get 
\[
\Evs_{C} \IC(C,\mathcal{L}) = \mathcal{T}_{C} \otimes (p^*\mathcal{L})\vert_{T^*_{C}(V_\lambda)_\text{sreg}},
\]
where $p : T^*_{C}(V_\lambda) \to C$ is the restriction of the bundle map $T^*(V_\lambda) \to V_\lambda$ and where $\mathcal{T}_{C}$ is the local system defined in Section~\ref{ssec:Evs}.
The local systems $(p^*\mathcal{L})\vert_{T^*_{C}(V_\lambda)_\text{sreg}}$ were described in Section~\ref{sssec:LocO-overview} and they are worked out in the corresponding sections in each example.
The work that remains to calculate $\pEv \IC(C,\mathcal{L})$, therefore, is the cases $\Ev_{C'} \IC(C,\mathcal{L})$ for $C' < C$.

To calculate $\pEv_{C'} \IC(C,\mathcal{L})$ for $C' < C$ we use Lemma~\ref{lemma:PBC}.
% as applied to the cover $\widetilde{C}$ we already used to understand $\IC(C,\mathcal{L})$ itself. 
%Again, we use the decomposition theorem via an inductive process.
%This really is the crux of this article, so 
We describe our method in some detail here.
From Section~\ref{sssec:EPS-overview} we recall a proper map $\pi : \widetilde{C} \rightarrow \overline{C}$ from a smooth variety $\widetilde{C}$ chosen so that $\IC(C,\mathcal{L})$ appears in $\pi_! \1_{\widetilde{C}}[\dim \widetilde{C}]$.
Using proper base change and the exactness of $\Ev$, Proposition~\ref{VC:exactandstalks}, we find $\Ev_{C'} \IC(C,\mathcal{L})$ by computing
\begin{equation}\label{eqn:Ev-overview}
\left( {\pi''_\s}_!\ \RPhi_{(\cdot\, \vert\, \cdot)\circ (\pi\times\id_{C'^*})}(\1_{\widetilde{C}\times C'^*})\right) \vert_{T^*_{C'}(V_\lambda)_\textrm{reg}},
\end{equation}
where  $\pi''_\s$ is defined in Section~\ref{ssec:support}. 
Since $\widetilde{C}\times C'^*$ is smooth and $\1_{\widetilde{C}\times C'^*}$ is a local system, the vanishing cycles
\begin{equation}\label{eqn:RPhi-overview}
\RPhi_{(\cdot\, \vert\, \cdot)\circ (\pi\times\id_{C'*})}(\1_{\widetilde{C}\times C'^*})
\end{equation}
is a skyscraper sheaf on the singular locus of $(\cdot\, \vert\, \cdot)\circ (\pi\times\id_{C'^*})$ on $\widetilde{C}\times C'^*$.
This singular locus is easy to find using the Jacobian condition for smoothness, because of the explicit nature of $\pi$ and because we have already found equations for $\overline{C'^*}$ in $V^*$.
The map $\pi''_\s$ restricts to a proper map from this singular locus onto $T_C^\ast(V_\lambda)$.
In fact, this map is finite over $T^*_{C'}(V_\lambda)_\textrm{reg}$; this is a post-hoc consequence of the fact that the fibres of $\pi''_\s$ are closed and the stalks of the vanishing cycles functor are concentrated in a single degree.
After restricting \eqref{eqn:RPhi-overview} to the preimage of $T^*_{C'}(V_\lambda)_\textrm{reg}$ under $\pi\times\id_{C'^*}$, we use the Decomposition Theorem to explicitly describe \eqref{eqn:Ev-overview}.

While it is typically very easy to compute the rank of the resulting local system, determining the representation of the fundamental group that describes the local system is considerably more subtle as it depends on the local structure of the singularities.
We give examples of these calculations in Sections~\ref{sssec:Ev-SO(3)}, \ref{sssec:Ev-SO(5)regular}, \ref{sssec:Ev-SO(5)singular} and \ref{sssec:Ev-SO(7)}.
%
%\todo{Andrew: Do we still, or was this commented out? do we want to?}
%In Section~\ref{sssec:Ev-SO(7)} we give a sample calculation showing how the Lefschetz fixed-point formula may be used to make these calculations.

We observe that many of these calculations may be simplified considerably by a judicious use of the formula \eqref{eqn:NEvFt-overview} from Section~\ref{sssec:EvFt-overview} and formula \eqref{eqn:TrNEvRes} from Section~\ref{ssec:restriction-overview}.

\subsubsection{Normalization of Ev and the twisting local system}\label{sssec:NEv-overview}

Having calculated $\pEv : \Perv_{H_\lambda}(V_\lambda) \to \Perv_{H_\lambda}(T^*_{H_\lambda}(V_\lambda)_\textrm{reg})$ in Section~\ref{sssec:Ev-overview}, here we calculate the normalization of $\Ev$, as given in Section~\ref{ssec:NEv}.
In the process, we make explicit the rank-one local system $\mathcal{T}$ on $T^*_{H_\lambda}(V_\lambda)_\text{sreg}$ defined in \eqref{eqn:TC} and \eqref{eqn:T}.
%\todo{Andrew: Not sure what exactly was trying to be said, \textit{make explicit}?}

\subsubsection{Fourier transform and vanishing cycles}\label{sssec:EvFt-overview}

In this section we predict how the Fourier transform interacts with vanishing cycles, or more precisely, with the functor $\Ev$ and its dual $\Ev^* : \Perv_{H_\lambda}(V_\lambda^*) \to \Perv_{H_\lambda}(T^*_{H_\lambda}(V_\lambda^*)_\textrm{reg})$, where the latter is defined exactly as above but with $V_\lambda$ replaced by $V_\lambda^*$. \index{$\Ev^*$}
We believe that there is a local system $\mathcal{T}^{\Ft}$ \index{$\mathcal{T}^{\Ft}$} on $T^*_{H_\lambda}(V_\lambda)_\textrm{reg}$ such that
\[
a_* \left(\mathcal{T}^{\Ft}\otimes \pEv \right) =   \pEv^* \Ft,
\]
where $a : T^*(V_\lambda) \to T^*(V_\lambda^*)$ is the isomorphism $a(x,\xi) = (\xi,-x)$ and where we identify the dual of $V_\lambda^*$ with $V_\lambda$ using $\KPair{\cdot}{\cdot}$.
In our examples this local system $\mathcal{T}^{\Ft}$ coincides with the local system $\mathcal{T}$ which we introduced in \eqref{eqn:T}.
We show this by verifying the formula
\begin{equation}\label{eqn:NEvFt-overview}
a_* \pNEv_{C} =   \pEv_{C^*} \Ft,
\end{equation}
for all strata $C\subseteq V_\lambda$, in our examples.
The rank-one local system $\mathcal{T}$ is non-trivial in Sections~\ref{sssec:EvFt-SO(5)singular} and \ref{sssec:EvFt-SO(7)}, only.

\iffalse
Recall the definition of transposition $a : T^*(V_\lambda) \to T^*(V_\lambda^*)$ from Section~\ref{sssec:Orbitduality-overview}: $a(x,\xi) = (\xi,x')$, where $x' \ceq -\KPair{x}{\,\cdot\,}$.
This restricts to an equivariant bundle isomorphism $a : T_{H_\lambda}^*(V_\lambda)_\textrm{reg} \to T^*_{H_\lambda}(V^*)_\textrm{reg}$; we use the same notation for the restriction $T_{C}^*(V_\lambda)_\textrm{reg} \to T^*_{C^*}(V^*)_\textrm{reg}$.
%
Let $b : T^*_{H_\lambda}(V_\lambda) \to V_\lambda$ be the bundle map and set $\omega  \ceq \mathsf{R} b^! \1_{V}$; this is the relative dualizing complex for the bundle $T_{H_\lambda}^*(V_\lambda) \to V_{\lambda}$. 
Set $\mathcal{T}\ceq \omega\vert_{T_{H_\lambda}^*(V_\lambda)_\textrm{reg}}$.
Similarly define $\mathcal{T}^*$ on $T_{H_\lambda}^*(V^*)_\textrm{reg} $; then $\mathcal{T}^* = a_* \mathcal{T}$. 
\fi

\subsubsection{Arthur sheaves}\label{sssec:AS-overview}

%\subsection{Arthur perverse sheaves}\label{ssec:APS}

The pairing \eqref{eqn:pairing} may be used to define the isomorphism class of an equivariant perverse sheaf $\mathcal{A}_C$ on $V_\lambda$, for each $H_\lambda$-orbit $C$, from which the virtual representation $\eta^{\Evs}_{C}$ is easily recovered:\index{$\mathcal{A}_{C}$}
\begin{equation}\label{eqn:ASC}
\mathcal{A}_C
\ceq \mathop{\bigoplus}_{\mathcal{P}\in \Perv_{H_\lambda}(V_\lambda)^\text{simple}_{/\text{iso}}} \left(\rank \Evs_C \mathcal{P}\right)\ \mathcal{P},
%\ceq \mathop{\bigoplus}_{\mathcal{P}} \left( \1_{V_\lambda}^{\otimes \rank \left(\oEv_{C}\mathcal{P}\right)}\otimes \mathcal{P}\right),
\end{equation}
where the sum runs over a set of representatives for isomorphism classes of simple objects in $\Perv_{H_\lambda}(V_\lambda)$. 
%strata $C_1\subseteq V_\lambda$ and over all simple $H_\lambda$-equivariant local systems on these strata; thus, $\IC(C_1,\mathcal{L}_1)$ runs over all simple $H_\lambda$-equivariant perverse sheaves on $V_\lambda$.
Then
\[
 \eta^{\Evs}_{C}
 =
 (-1)^{\dim C} \hskip-10pt \sum_{(\pi,\delta)\in \Pi^\mathrm{pure}_{ \lambda}(G/F)} {\langle (\pi,\delta),\mathcal{A}_C \rangle}\ [(\pi,\delta)].
\]
If we take the case that $C= C_\psi$ is of Arthur type and assume Conjecture~\ref{conjecture:1}\ref{conjecture:a}, this gives
\[
\eta_{\psi}
=
 (-1)^{\dim C_\psi} \hskip-10pt \sum_{(\pi,\delta)\in \Pi^\mathrm{pure}_{ \lambda}(G/F)} {\langle (\pi,\delta),\mathcal{A}_{C_\psi}\rangle}\ [(\pi,\delta)].
\]

\iffalse
\begin{eqnarray*}
&& \hskip-40pt  (-1)^{\dim C_\psi} \hskip-20pt \sum_{(\pi,\delta)\in \Pi^\mathrm{pure}_{ \lambda}(G/F)} {\langle (\pi,\delta),\mathcal{A}^{}_\psi\rangle}\ [(\pi,\delta)] \\
&=&  (-1)^{\dim C_\psi} \hskip-20pt  \sum_{(\pi,\delta)\in \Pi^\mathrm{pure}_{ \lambda}(G/F)}\ \sum_{\mathcal{P}\in \Perv_{H_\lambda}(V_\lambda)^\text{simple}_{/\text{iso}}}  \hspace{-12pt}\rank \left(\Evs_\psi\mathcal{P}\right)\  {\langle (\pi,\delta),\mathcal{P}\rangle}\ [(\pi,\delta)]\\
&=&  (-1)^{\dim C_\psi} \hskip-20pt  \sum_{(\pi,\delta)\in \Pi^\ABV_{\mathrm{pure}, C_\psi}(G/F)}  \hspace{-12pt}\rank \left(\NEvs_\psi \mathcal{P}(\pi,\delta)\right)\  {\langle (\pi,\delta),\mathcal{P}(\pi,\delta)\rangle} \ [(\pi,\delta)]\\
&=& (-1)^{\dim C_\psi} \hskip-20pt  \sum_{(\pi,\delta)\in \Pi^\ABV_{\mathrm{pure}, C_\psi}(G/F)} \hspace{-12pt} \rank \left(\NEvs_\psi \mathcal{P}(\pi,\delta)\right) e(\delta) (-1)^{\dim \operatorname{supp}(\mathcal{P}(\pi,\delta))} \ [(\pi,\delta)]\\
&=&  \eta^{\NEvs}_{\psi}.
\end{eqnarray*}
\fi
%

By Proposition~\ref{VC:support}, the summation appearing in the definition of $\mathcal{A}_C$ \eqref{eqn:ASC} can be taken the over simple $\mathcal{P}\in \Perv_{H_\lambda}(V_\lambda)$ supported by $\overline{C}$.
\iffalse%
\[
\mathcal{A}_{C} 
= \mathop{\bigoplus}_{\mathcal{P}\in \Perv_{H_\lambda}(V_\lambda)^\text{simple}_{/\text{iso}},\ \operatorname{supp}(\mathcal{P}) \subseteq \overline{C}} \left(\rank\Evs_C \mathcal{P}\right)\ \mathcal{P}.
%= \mathop{\bigoplus}_{\mathcal{P}} \left( \1_{V_\lambda}^{\otimes \rank \left(\oEv_{C}\mathcal{P}\right)}\otimes \mathcal{P}\right)
\]
\fi%
Taking the cases when $\mathcal{P} = \IC(C,\mathcal{L})$, consider the summand \emph{pure packet perverse sheaf}\index{pure packet perverse sheaf, $\mathcal{B}_C$}\index{$\mathcal{B}_C$}
\begin{equation}\label{eqn:ASpac}
\mathcal{B}_C
\ceq \mathop{\bigoplus}_{\mathcal{L}\in \Loc_{H_\lambda}(C)^\text{simple}_{/\text{iso}}} \left(\rank\Evs_{C} \IC(C,\mathcal{L})\right)\ \IC(C,\mathcal{L})
\end{equation}
where the sum runs over all simple $H_\lambda$-equivariant local systems $\mathcal{L}$ on $C_\psi$.
By Theorem~\ref{theorem:NEvs}\ref{NEvs:bigcell}, $\rank \Evs_C \IC(C,\mathcal{L}) = \rank(\mathcal{L})$, so
\[
\mathcal{B}_C
= \mathop{\bigoplus}_{\mathcal{L}\in \Loc_{H_\lambda}(C)^\text{simple}_{/\text{iso}}}  (\rank\mathcal{L})\ \ \IC(C ,\mathcal{L}).
%= \mathop{\bigoplus}_{\mathcal{L}} \left(\1_{V_\lambda}^{\otimes\rank \mathcal{L}} \otimes \IC(C,\mathcal{L}) \right).
\]
%pure Langlands correspondence, 
The simple perverse sheaves appearing in $\mathcal{B}_C$ correspond exactly to the irreducible admissible representations in the pure Langlands packet $\Pi^\mathrm{pure}_{ \phi_\psi}(G/F)$, where $\phi$ is the Langlands parameter matching $C$ under Proposition~\ref{proposition:parameter space}.

The perverse sheaf 
\begin{equation}\label{eqn:AScor}
\mathcal{C}_C \ceq \mathop{\bigoplus}_{\IC(C',\mathcal{L})\in \Perv_{H_\lambda}(V_\lambda)^\text{simple}_{/\text{iso}},\ C'\lneq C} \left(\rank\Evs_C\IC(C',\mathcal{L}')\right)\ \IC(C',\mathcal{L}')
\end{equation}
is called the \emph{coronal perverse sheaf}\index{coronal perverse sheaf,  $\mathcal{C}_C$}\index{$\mathcal{C}_C$,  coronal perverse sheaf} for $C$, where the sum is taken over all $C'\subset \overline{C}$ with $C'\ne C$ and over all simple $H_\lambda$-equivariant local systems $\mathcal{L}'$ on $C'$.
So
\begin{equation}\label{eqn:ALVcor}
\mathcal{A}_{C} = \mathcal{B}_C \oplus \mathcal{C}_C.
\end{equation}
In the examples we display the Arthur sheaves $\mathcal{A}_C$, decomposed into pure and coronal perverse sheaves $\mathcal{B}_C$ and $\mathcal{C}_C$,  for each stratum $C\subseteq V_\lambda$.

In the examples we also verify
\begin{equation}\label{eqn:FtA-overview}
\Ft \mathcal{A}_{C} = \mathcal{A}_{C^*}.
\end{equation}

\subsection{ABV-packets}\label{ssec:ABV-overview}

Having calculated the vanishing cycles of perverse sheaves on Vogan varieties in Section~\ref{ssec:Ev-overview}, it is a simple matter now to find the ABV-packets for all Langlands parameters with given infinitesimal parameter.
In this section we also see that the Arthur packets described in the examples are indeed ABV-packets.
But the real object of the conjectures from Section~\ref{section:conjectures} are the characters ${\langle\ \cdot\ , \pi\rangle}_{\psi}$ of $A_\psi$ that appear in Arthur's main local result, and their generalizations to pure rational forms of $G$. 
In this part of each example we show
\[
{\langle s , \pi \rangle}_{\psi} = \trace_{a_s} \NEvs_{C_\psi} \mathcal{P}(\pi)
\]
for $s\in Z_{\dualgroup{G}}(\psi)$ with image $a_s\in A_\psi$, and verify Conjecture~\ref{conjecture:1}.% and Conjecture~\ref{conjecture:2}.

\subsubsection{Admissible representations versus equivariant perverse sheaves}\label{sssec:VC-overview}

As shown in Proposition~\ref{proposition:parameter space} every Langlands parameter $\phi \in P_\lambda(\Lgroup{G})$ determines a point $x_\phi \in V_\lambda$ and every  $x\in V_\lambda$  arises in this way. \index{$x_\phi$}
The function $\phi \mapsto x_\phi$ is also $H_\lambda$-equivariant, so it induces a bijection between $\Phi_\lambda(\Lgroup{G})$ and the set of $H_\lambda$-orbits in $V_\lambda$.
We write $C_\phi$ for the  $H_\lambda$-orbit of $x_\phi$.
There is a canonical isomorphism of groups
\begin{equation}\label{eqn:component groups}
A_\phi
\iso
A_{C_\phi},
\end{equation}
where  $A_\phi = \pi_0(Z_{\dualgroup{G}}(\phi))$ is the component group appearing in the pure Langlands correspondence.
Consequently, there is a natural bijection between pairs $(\phi,\rho)$, where $\rho$ is a representation of $A_\phi$, and pairs $(C_\phi,\mathcal{L}_\rho)$, where $\mathcal{L}_\rho$ is the equivariant local system matching $\rho$ under the isomorphism above. 
This, in turn, determines a bijection
\begin{equation}\label{eqn:Vogan's bijection}
\begin{array}{rcl}
\Pi^\mathrm{pure}_{\lambda}(G/F) & \to & \Perv_{H_\lambda}(V_\lambda)_{/\text{iso}}^\text{simple}\\
(\pi,\delta) &\mapsto& \mathcal{P}(\pi,\delta)
\end{array}
\end{equation}

\subsubsection{ABV-packets}\label{sssec:ABV-overview}

Using this bijection, we determine the ABV-packets for all Langlands parameters with infinitesimal parameter $\lambda$, in each example, using the definition
\begin{equation}\label{eqn:ABV-overview}
\Pi^\ABV_{\phi}(G/F) \ceq \{ (\pi,\delta) \in \Pi^\mathrm{pure}_{\lambda}(G/F) \tq \Ev_{C_\phi} \mathcal{P}(\pi,\delta) \ne 0 \} .
\end{equation}
By restricting our attention to Langlands parameters of Arthur type, we readily verify that all Arthur packets for all admissible representations with infinitesimal parameter $\lambda$ are ABV-packets:
\begin{equation}\label{eqn:weakConjecture1}
 \Pi^\ABV_{\phi_\psi}(G/F) = \Pi^\mathrm{pure}_{\psi}(G/F).
\end{equation}
This confirms Conjecture~\ref{conjecture:1}\ref{conjecture:a}
%\subsubsection{Stable invariant distributions and their endoscopic transfer}\label{sssec:stable-overview}

Having verified Conjecture~\ref{conjecture:1}\ref{conjecture:a} in the examples, we turn to Conjecture~\ref{conjecture:1}\ref{conjecture:c}, which begins with the canonical isomorphism
\[
A_\psi \iso A^\text{mic}_{C_\psi},
\]
where $\psi$ is an Arthur parameter and where $C_\psi\ceq C_{\phi_\psi}$.
Right away, this isomorphism tells us that the character ${\langle \ \cdot\ , \pi\rangle}_{\psi}$ of $A_\psi$ appearing in Arthur's main local result may be interpreted as an equivariant local system on $T^*_{C_\psi}(V_\lambda)_\text{sreg}$.
How does the admissible representation $\pi$ of $G(F)$ determine that local system?
That question is answered by Conjecture~\ref{conjecture:1}\ref{conjecture:c}: for every $s\in Z_{\dualgroup{G}}(\psi)$ and for every admissible representation $\pi$ of $G(F)$, 
\begin{equation}\label{eqn:Conj2-overview}
\langle s s_\psi, \pi\rangle_\psi = (-1)^{\dim C_\psi - \dim C_\pi} \trace_{a_s}\NEv_{C_\psi}\mathcal{P}(\pi),
\end{equation}
where $a_s$ is the image of $s\in Z_{\dualgroup{G}}(\psi)$ in $A_{\psi}$ and where $C_\pi$ is the stratum in $V_{\lambda}$ attached to the Langlands parameter of $\pi$.
In other words, the equivariant local system on $T^*_{C_\psi}(V_\lambda)_\text{sreg}$ determined by the admissible representation $\pi$ of $G(F)$ is $\NEv_{C_\psi} \mathcal{P}(\pi)$.

Having calculated the left-hand side of \eqref{eqn:Conj2-overview} in Section~\ref{ssec:Arthur-overview} and right-hand side in Section~\ref{ssec:Ev-overview}, we can prove Conjecture~\ref{conjecture:1}\ref{conjecture:c} in our examples by simply comparing the results of those calculations, from which we find
\begin{equation}\label{eqn:Conjecture2}
\eta_{\psi,s} = \eta^{\NEvs}_{\psi,s},
\end{equation}
for every Arthur parameter $\psi$ with infinitesimal parameter $\lambda$ and for every $s\in Z_{\dualgroup{G}}(\psi)$. 
Here, $\eta^{\NEvs}_{\psi,s}$ is defined in Section~\ref{section:conjectures}:
\begin{equation}\label{eqn:etaABVpsis}
\eta^{\NEvs}_{\psi,s}
=
\sum_{(\pi,\delta)\in \Pi^\mathrm{pure}_{\lambda}(G/F)} e(\delta) (-1)^{\dim C_\psi - C_\pi } \trace_{a_s} \NEvs_{C_\psi}\mathcal{P}(\pi,\delta) \ [(\pi,\delta)].
\end{equation}
This confirms Conjecture~\ref{conjecture:1}\ref{conjecture:c} in the examples. 
Recall from Section~\ref{ssec:Conjectures1} that Conjecture~\ref{conjecture:1}\ref{conjecture:c} implies Conjecture~\ref{conjecture:1}\ref{conjecture:b}.

\subsubsection{Kazhdan-Lusztig conjecture}\label{sssec:KL-overview}

Recall in Section~\ref{ssec:Conjectures1} that we have defined a pairing
\[
\langle \, \cdot\, ,  \cdot\, \rangle : \K\Pi^\mathrm{pure}_{ \lambda}(G/F) \times \K\Perv_{H_\lambda}(V_\lambda)  \to \ZZ
\]
such that for any $(\phi, \rho), (\phi', \rho') \in \Xi_{\lambda}({}^LG)$ \index{$\Xi_\lambda(\Lgroup{G})$}
\[
\langle \pi(\phi, \rho),\mathcal{P}(\phi', \rho')\rangle 
= (-1)^{\textrm{dim} C_{\phi} } e(\phi, \rho) \delta_{(\phi, \rho), (\phi', \rho')}
\]
where $e(\phi, \rho)$ is the Kottwitz sign of $G_{\delta}$ determined by $(\phi, \rho)$. 
The Kazhdan-Lusztig conjecture \cite[Conjecture 8.11']{Vogan:Langlands} predicts that 
\[
\langle M(\phi, \rho), \mathcal{S}(\phi', \rho') \rangle = (-1)^{\textrm{dim} C_{\phi} } e(\phi, \rho) \delta_{(\phi, \rho), (\phi', \rho')}
\]
for any $(\phi, \rho), (\phi', \rho') \in \Xi_{\lambda}({}^LG)$, where $M(\phi, \rho)$ is the standard module and $\mathcal{S}(\phi', \rho')$ is the shifted standard sheaf. We verify the Kazhdan-Lusztig conjecture in our examples.
This is done by comparing the multiplicity matrix $m_\text{rep}$ from Section~\ref{sssec:mrep-overview} with the normalized geometric multiplicity matrix $m'_\text{geo}$ from Section~\ref{sssec:EPS-overview}: \index{$m_\text{rep}$} \index{$m'_\text{geo}$}
\begin{equation}\label{eqn:KL}
\,^tm_\text{rep} = m'_\text{geo}.
\end{equation}

As a consequence, we can verify Conjecture~\ref{conjecture:2} in our examples following the argument below.
Let $\K_{\mathbb{C}}\Pi^\mathrm{pure}_{ \lambda}(G/F)^{\text{st}}$ be the subspace of strongly stable virtual representations in $\K_{\mathbb{C}}\Pi^\mathrm{pure}_{ \lambda}(G/F) := \K\Pi^\mathrm{pure}_{ \lambda}(G/F) \otimes_{\mathbb{Z}} \mathbb{C}$. It has a natural basis 
\[
\eta_{\phi} := \sum_{\rho : (\phi, \rho) \in \Xi_{\lambda}({}^LG)} \textrm{dim} (\rho) e(\phi, \rho) M(\phi, \rho)
\]
parametrized by $\phi \in P_{\lambda}({}^LG)/H_{\lambda}$. After identifying $\K_{\mathbb{C}}\Pi^\mathrm{pure}_{ \lambda}(G/F)$ with 
\[
\K_{\mathbb{C}}\Perv_{H_\lambda}(V_\lambda)^{*} = \text{Hom}_{\mathbb{Z}}(\K\Perv_{H_\lambda}(V_\lambda), \mathbb{C}),
\] 
through the pairing above, we characterize $\K_{\mathbb{C}}\Pi^\mathrm{pure}_{ \lambda}(G/F)^{\text{st}}$ in $\K_{\mathbb{C}}\Perv_{H_\lambda}(V_\lambda)^{*} $. By the Kazhdan-Lusztig conjecture,  \index{$\chi^{\text{loc}}_{C_{\phi}}$}
\begin{align*}
\langle \eta_{\phi}, \mathcal{P} \rangle = \chi^{\text{loc}}_{C_{\phi}}(\mathcal{P}) := \sum_{\rho : (\phi, \rho) \in \Xi_{\lambda}({}^LG)} (-1)^{\textrm{dim}C_{\phi}} m_\text{geo}(\mathcal{S}(C_{\phi}, \mathcal{L}_{\rho}), \mathcal{P}),
\end{align*}
for any $\mathcal{P} \in \K\Perv_{H_\lambda}(V_\lambda)$.
Therefore, $\K_{\mathbb{C}}\Pi^\mathrm{pure}_{ \lambda}(G/F)^{st}$ is spanned by $\chi^{\text{loc}}_{C_{\phi}}(\cdot)$ for $\phi \in P_{\lambda}({}^LG)/H_{\lambda}$ in $\K_{\mathbb{C}}\Perv_{H_\lambda}(V_\lambda)^{*}$. 
On the other hand, by Ginzburg, Kashiwara and Dubson \cite{BDK}  \cite{Kashiwara:Index}, we know that \index{$\chi^{mic}_{C_{\phi}}(\mathcal{P})$}
%\begin{theorem}[Index Formula]
for any $\phi \in P_{\lambda}({}^LG)$ and $\mathcal{P} \in \K\Perv_{H_\lambda}(V_\lambda)$,
\[
\chi^{mic}_{C_{\phi}}(\mathcal{P}) = {\rm rank} \Evs_{C_{\phi}}( \mathcal{P} ) = \sum_{\phi' \in P_{\lambda}({}^LG)/H_{\lambda}} c(C_{\phi}, C_{\phi'}) \chi^{loc}_{C_{\phi'}}(\mathcal{P}),
\]
where $c(C_{\phi}, C_{\phi'})$ satisfies the following properties:
%\begin{itemize}
%\item 
$c(C_{\phi}, C_{\phi}) = (-1)^{{\rm dim} C_{\phi}}$; and
%\item 
$c(C_{\phi}, C_{\phi'}) \neq 0$ only if $\bar{C}_{\phi'} \supseteq C_{\phi}$.
%\end{itemize}
%\end{theorem}
%\begin{remark}
The coefficients $c(C_{\phi}, C_{\phi'})$ are related to the local Euler obstructions defined by MacPherson. In particular, it measures the singularity of the closure of $C_{\phi'}$ at its boundary stratum $C_{\phi}$.
%\end{remark}
%
As a consequence, we see the set of $\chi^{mic}_{C_{\phi}}(\cdot)$ for $\phi \in P_{\lambda}({}^LG)/H_{\lambda}$ forms another basis for $\K\Pi^\mathrm{pure}_{ \lambda}(G/F)^{st}_{\mathbb{C}}$. Finally, it is easy to see that for any $\phi \in P_{\lambda}({}^LG)$ and $\mathcal{P} \in \K\Perv_{H_\lambda}(V_\lambda)$
\[
\langle \eta^{\Evs}_{C_{\phi}}, \mathcal{P} \rangle = (-1)^{{\rm dim} C_{\phi}} \chi^{mic}_{C_{\phi}}(\mathcal{P}).
\] 
So the set of $\eta^{\Evs}_{C_{\phi}}$ for $\phi \in P_{\lambda}({}^LG)/H_{\lambda}$ also forms a basis for $\K\Pi^\mathrm{pure}_{ \lambda}(G/F)^{st}_{\mathbb{C}}$. This proves Conjecture~\ref{conjecture:2}.

\subsubsection{Aubert duality and Fourier transform}\label{sssec:AubertFt-overview}

In order to compare Aubert duality with the Fourier transform, we equip $V_\lambda$ with the symmetric bilinear form $(x,y) \mapsto -\KPair{x}{\,^ty}$, where $\,^t\,$ refers to transposition in $\mathfrak{j}_\lambda$, and we use this to define an isomorphism $V_\lambda \to V_\lambda^*$.
We write ${{\hat C}}$ for ${{\,^tC}^*}$.
Let $\vartheta : H_\lambda \to H_\lambda$ be the isomorphism of algebraic groups given by $\vartheta(h) =\,^th^{-1}$, in which $\,^t\,$ refers to transposition in $J_\lambda$. 
Then $V_\lambda \to V_\lambda^*$ is equivariant for the usual action of $H_\lambda$ on $V_\lambda$ and the twist by $\vartheta$ of the usual action of $H_\lambda$ on $V_\lambda^*$.
Now, equivariant pullback defines an equivalence of categories $\Perv_{H_\lambda}(V_\lambda^*) \to \Perv_{H_\lambda}(V_\lambda)$. 
When pre-composed with the Fourier transform $\Ft : \Perv_{H_\lambda}(V_\lambda) \to \Perv_{H_\lambda}(V_\lambda^*)$, this defines a functor denoted by $\,^\wedge\, : \Perv_{H_\lambda}(V_\lambda) \to \Perv_{H_\lambda}(V_\lambda)$.
Our examples show
\begin{equation}\label{eqn:AubertFt-overview}
\mathcal{P}({\hat \pi},\delta) = \widehat{\mathcal{P}(\pi,\delta)}.
\end{equation}
Using the equivalence $\Perv_{H_\lambda}(V_\lambda^*) \to \Perv_{H_\lambda}(V_\lambda)$ described above, \eqref{eqn:NEvFt-overview} may be re-written in form
\begin{equation}
\pNEv \mathcal{P} = \pEv \widehat{\mathcal{P}},
\end{equation}
Taking traces, and recalling $\pNEv = \mathcal{T}^\vee \otimes \pEv$, this implies
\[
\trace_a \left(\NEvs_{\hat C} \widehat{\mathcal{P}}\right)  = \trace_s\mathcal{T}_C\  \trace_a\left(\NEvs_{C}\mathcal{P}\right)\,
\]
for every $a\in A^\text{mic}_C$.
Taking $\mathcal{P} = \mathcal{P}(\pi)$ and $C = C_\psi$ and using \eqref{eqn:AubertFt-overview} and \eqref{eqn:twisting-overview}, we recover \eqref{eqn:Aubert-overview}.

\subsubsection{Normalization}\label{sssec:normalization-overview}

Recall the character $\chi_\psi$ of $A_\psi$ given by \eqref{eqn:Aubert-overview}.
Recall from Section~\ref{sssec:AubertFt-overview} that our examples suggest that this character coincides with $\epsilon_\psi^{M/W}  \epsilon_{\hat \psi}^{M/W}$ \eqref{eqn:Bin-overview}.
Now recall the local system $\mathcal{T}$ on $T^*_{H_\lambda}(V_\lambda^*)_\textrm{reg}$ appearing in Sections~\ref{sssec:Ev-overview}.
In this article we see in our examples that 
\begin{equation}\label{eqn:twisting-overview}
\trace \mathcal{T}_{\psi} = \chi_\psi,
\end{equation}
where $\mathcal{T}_\psi$ is the restriction of the local system $\mathcal{T}$ on $T^*_{H_\lambda}(V_\lambda)_\text{sreg}$ introduced in Section~\ref{ssec:Evs}.
Recall also that this local system $\mathcal{T}$ appeared in our study of the Fourier transform, specifically, \eqref{eqn:NEvFt-overview}.
It seems remarkable to us that the characters $\chi_\psi$, $\epsilon_\psi^{M/W}  \epsilon_{\hat \psi}^{M/W}$, $\trace \mathcal{T}\vert_{C_\psi}$ and $\trace \mathcal{T}^{\Ft}\vert_{C_\psi}$ all coincide in our examples.

\subsubsection{ABV-packets that are not pure Arthur packets}\label{sssec:notArthur-overview}

While all pure Arthur packets are ABV-packets in these examples, it is not true that all ABV-packets are pure Arthur packets.
In Sections~\ref{sssec:notArthur-SO(5)regular} and \ref{sssec:notArthur-SO(7)} we discuss examples of ABV-packets that are not pure Arthur packets and yet enjoy many of the properties we expect from Arthur packets.

\subsection{Endoscopy and equivariant restriction of perverse sheaves}\label{ssec:restriction-overview}

One of the ingredients in the proof of Conjecture~\ref{conjecture:1} in forthcoming work %\cite{CFMX:SOodd} 
 for unipotent representations of odd orthogonal groups, is the following theorem.
Let $G'$ be an endoscopic group for $G$ though which $\lambda : W_F \to \Lgroup{G}$ factors, thus defining $\lambda':W_F \to \Lgroup{G}'$.
Let $C'$ be an $H_{\lambda'}$-orbit in $V_{\lambda'}$; pick $(x',\xi')\in T^*_{C'}(V_{\lambda'})_\textrm{reg}$ and let $C$ be the $H_\lambda$-orbit in $V_{\lambda}$ of the image of $x'$ under $V_{\lambda'}\hookrightarrow V_{\lambda}$.
Consider the conormal map $T^*_{C'}(V_{\lambda'}) \to T^*_{C}(V_\lambda)$;
pick $(x',\xi')\in T^*_{C'}(V_{\lambda'})_\textrm{reg}$ and suppose that its image $(x,\xi)\in T^*_{C}(V_\lambda)$ is also regular. 
Then, for every $\mathcal{P}\in \Perv_{H_\lambda}(V_\lambda)$,
\begin{equation}\label{eqn:TrNEvRes}
(-1)^{\dim C'}\  \trace_{a'_s} \left(\NEvs' \mathcal{P}\vert_{V_{\lambda'}}\right)_{(x',\xi')} =  (-1)^{\dim C}\ \trace_{a_s}\left(\NEvs\mathcal{P}\right)_{(x,\xi)},
\end{equation}
where $a_s$ is the image of $s$ under $Z_{\dualgroup{G}}(x,\xi)\to A_{(x,\xi)}$ and $a'_s$ is the image of $s$ under $Z_{\dualgroup{G}'}(x',\xi')\to A_{(x',\xi')}$.

In the examples in this article, we calculate both sides of \eqref{eqn:TrNEvRes}, independently, in order to illustrate how the functor of vanishing cycles $\Ev$ interacts with the equivariant restriction functor $D_{H_\lambda}(V_\lambda) \to D_{H_{\lambda'}}(V_{\lambda'})$.
As explained in forthcoming work, %\cite{CFMX:SOodd},
 it is \eqref{eqn:TrNEvRes} that allows us to conclude that $\eta^{\NEvs}_{\phi,s}$ is the endoscopic transfer of a strongly stable virtual representation on $G'$.

\iffalse
Although we do note show the calculations here, the same arguments used to prove \eqref{eqn:TrNEvRes} also show
%\begin{equation}\label{eqn:TrEvRes}
\[
(-1)^{e_{C'} - \dim C'}\  \trace_{a'_s} \left(\Evs' \mathcal{P}\vert_{V_{\lambda'}}\right)_{(x',\xi')} =  (-1)^{e_C -\dim C}\ \trace_{a_s}\left(\Evs\mathcal{P}\right)_{(x,\xi)},
\]
%\end{equation}
under the same hypotheses.
\fi

\subsubsection{Endoscopic Vogan varieties}\label{sssec:V'-overview}

After recalling the endoscopic groups $G'$ and the infinitesimal parameters $\lambda' :W_F \to \Lgroup{G'}$ such that $\lambda = \epsilon \circ \lambda'$ from Section~\ref{sssec:stable-overview}, we describe $V_{\lambda'}$ and its stratification into orbits under the action by $H_{\lambda'}\ceq Z_{\dualgroup{G}'}(\lambda')$.
In all cases, $G' = G^{(2)}\times G^{(1)}$ so $\lambda' = (\lambda^{(2)},\lambda^{(1)})$.
Except for Section~\ref{sec:SL(2)}, we have arranged the sequence of examples so that by the time we get to $\lambda' :W_F \to \Lgroup{G'}$, both $\lambda^{(1)} : W_F \to \Lgroup{G}^{(1)}$ and $\lambda^{(2)} : W_F \to \Lgroup{G}^{(2)}$ have already been studied. Since $H_{\lambda'}= H^{(2)}\times H^{(1)}$ and $V_{\lambda'}= V^{(2)}\times V^{(1)}$, we use the equivalence
\[
\begin{tikzcd}
\Perv_{H^{(2)}}(V^{(2)})\times \Perv_{H^{(1)}}(V^{(1)}) \arrow{r}{\boxtimes} & \Perv_{H_{\lambda'}}(V_{\lambda'})
\end{tikzcd}
\]
to answer all questions about $\Perv_{H_{\lambda'}}(V_{\lambda'})$ using earlier work.

The $H_{\lambda'}$-invariant function $\KPair{\,\cdot\,}{\,\cdot\,} : T^*(V_{\lambda'}) \to \mathbb{A}^1$ is simply the sum of the functions $T^*(V^{(1)}) \to \mathbb{A}^1$ and $T^*(V^{(2)}) \to \mathbb{A}^1$ while  $[{\,\cdot\,},{\,\cdot\,}] : T^*(V_{\lambda'}) \to \mathfrak{h}'$ is likewise built from the functions $T^*(V^{(1)}) \to \mathfrak{h}^{(1)}$ and $T^*(V^{(2)}) \to \mathfrak{h}_2$.
Consequently, the conormal bundle is
\[
T^*_{H_{\lambda'}}(V_{\lambda'}) = T^*_{H^{(2)}}(V^{(2)})\times T^*_{H^{(1)}}(V^{(2)}),
\]
so $\Perv_{H_{\lambda'}}(T^*_{H_{\lambda'}}(V_{\lambda'})_\textrm{reg})$ can be completely described using earlier work.

\subsubsection{Vanishing cycles}

It follows from the Thom-Sebastiani Theorem, \cite{Illusie:Thom-Sebastiani} and \cite{Massey:Sebastiani-Thom}, that
\[
\Ev'\left( \IC(C^{(2)},\mathcal{L}^{(2)}) \boxtimes \IC(C^{(1)},\mathcal{L}^{(1)}) \right) 
=
\left( \Ev \IC(C^{(2)},\mathcal{L}^{(2)}) \right) \boxtimes \left( \Ev \IC(C^{(1)},\mathcal{L}^{(1)}) \right).
\]
Thus, the functor
\[
\Ev' : \Perv_{H_{\lambda'}}(V_{\lambda'}) \to \Perv_{H_{\lambda'}}(T^*_{H_{\lambda'}}(V_{\lambda'})_\textrm{reg})
\]
may also be deduced from earlier work.

\subsubsection{Restriction}\label{sssec:restriction-overview}

The equivariant restriction functor
\begin{equation}\label{eqn:Res-overview}
\begin{array}{rcl}
\Deligne_{,H_\lambda}(V_\lambda) &\longrightarrow & \Deligne_{,H_{\lambda'}}(V_{\lambda'})\\
\mathcal{F} &\mapsto & \mathcal{F}\vert_{V_{\lambda'}}
\end{array}
\end{equation}
does not take perverse sheaves to perverse sheaves.
Since we wish to illustrate \eqref{eqn:TrNEvRes}, we compute \eqref{eqn:Res-overview} in each example, after passing to Grothendieck groups.

\subsubsection{Restriction and vanishing cycles}

We have now assembled all the pieces needed to illustrate \eqref{eqn:TrNEvRes}.
We begin by identifying all $(x',\xi')\in T^*_{H_{\lambda'}}(V_{\lambda'})_\textrm{reg}$ such that the image of $(x',\xi')$ in $T^*_{H_\lambda}(V_\lambda)$ is regular.
This gives us an opportunity to revisit the question of finding all Arthur parameters $\psi: L_F\times \SL(2)\to \Lgroup{G}$ with infinitesimal parameter $\lambda$ that factor through $\epsilon : \Lgroup{G}'\to \Lgroup{G}$ to define Arthur parameters $\psi: L_F\times \SL(2)\to \Lgroup{G}$ with infinitesimal parameter $\lambda'$.
Finally, for such $(x',\xi')$ we pick a simple perverse sheaf $\mathcal{P}\in \Perv_{H_\lambda}(V_\lambda)$ and compute both sides of \eqref{eqn:TrNEvRes}, where $s$ is determined by the elliptic endoscopic group $G'$.

%%%%%%%%%%%%%%%%%%%%%%%%%%%%%%%%%%%%%%%%%%

%%%%%%%%%%%%%%%%%%%%%%%%%%%%%%%%%%%%%%%%%%

\section{SL(2) 4-packet of quadratic unipotent representations}\label{sec:SL(2)}

Let $G = \SL(2)$ over $F$; so $\dualgroup{G}=  \PGL(2,\CC)$ and $\Lgroup{G} = \PGL(2,\CC) \times W_F$.
Suppose $q$ is odd.

The function $H^1(F,G)\to H^1(F,G_{\ad})$ is injective but not surjective; indeed,
$H^1(F,G)$ is trivial but $H^1(F,G^*_\text{ad}) \iso \mu_2$.
In other words, $\SL(2)$ has no pure rational forms but it does have an inner rational form.

Let $\varpi\in F$ be a uniformizer and let $u \in F$ be a non-square unit integer.
Let $E/F$ be the biquadratic extension $E = F(\sqrt{\varpi},\sqrt{u})$.
Then $\Gal(E/F) = \{ 1, \sigma, \tau, \sigma \tau\}$ where $\sigma(\sqrt{u}) = -\sqrt{u}$ and $\tau(\sqrt{\varpi}) = -\sqrt{\varpi}$.
Define $\varrho: \Gal(E/F) \to \PGL(2)$ by
\[
\sigma \mapsto  \begin{pmatrix} 0 & 1\\ -1 & 0 \end{pmatrix}
\qquad\text{and}\qquad
\tau \mapsto \begin{pmatrix} 1 & 0\\ 0 & -1 \end{pmatrix}.
\] 
Let $\lambda : W_F \to \Lgroup{G}$ be the infinitesimal parameter defined by the composition $L_F \to W_F \to \Gamma_F \to \Gal(E/F)$ followed by $\varrho: \Gal(E/F) \to \dualgroup{G}$;
thus,
\[
\lambda(w) =  \begin{pmatrix} 0 & 1\\ -1 & 0 \end{pmatrix} w\in \Lgroup{G},
\qquad \text{if\ } w\vert_{E} = \sigma,
\]
and
\[
\lambda(w) =  \begin{pmatrix} 1 & 0\\ 0 & -1 \end{pmatrix} w\in \Lgroup{G},
\qquad \text{if\ } w\vert_{E} = \tau.
\]

\subsection{Arthur packets}

\subsubsection{Parameters}

There is only one Langlands parameter with infinitesimal parameter $\lambda$ chosen above: $\phi(w,x) =  \lambda(w)$.
This Langlands parameter is of Arthur type: $\psi(w,x,y) = \lambda(w)$.

\subsubsection{L-packets}\label{sssec:L-SL(2)}

With $\phi$ as above, we have
\[
Z_{\dualgroup{G}}(\phi) =  \left\{
\begin{pmatrix} 1 & 0 \\ 0 & 1\end{pmatrix},
\begin{pmatrix} 0 & 1 \\ -1 & 0\end{pmatrix},
\begin{pmatrix} 1 & 0 \\ 0 & -1\end{pmatrix},
\begin{pmatrix} 0 & 1 \\ 1 & 0\end{pmatrix}
\right\} .
\]
Let $A_\phi \iso \mu_2\times \mu_2$ be the isomorphism determined by 
\[
\begin{pmatrix} 0 & 1\\ -1 & 0 \end{pmatrix} \mapsto (-1,+1)
\qquad\text{and}\qquad
\begin{pmatrix} 1 & 0\\ 0 & -1 \end{pmatrix} \mapsto (+1,-1).
\]
Using this isomorphism, the characters of $A_\phi$ will be denoted by $(++)$, $(+-)$, $(-+)$ and $(--)$ so, for instance, $(++)$ is the trivial character of $A_\phi \iso \mu_2\times \mu_2$.
The L-packet $\Pi_\phi(G(F))$ is the unique cardinality-$4$ L-packet for $\SL(2,F)$:
\[
\Pi_{\phi}(G(F)) = \{ \pi(\phi,++),  \pi(\phi,+-), \pi(\phi,-+),  \pi(\phi,--)  \}.
\]
This L-packet, which is described in \cite[Section 11]{Shelstad:Notes}, may be obtained by restricting the supercuspidal representation of $\GL(2,F)$ given in \cite[Theorem 4.6]{Jacquet-Langlands} to $\SL(2,F)$.
Alternately, these depth-zero supercuspidal representations are produced by compact induction from a maximal parahoric (there are two, up to $G(F)$-conjugation), from (the inflation of) the two cuspidal irreducible representations appearing in the only non-singleton Deligne-Lusztig representation of $\SL(2,\mathbb{F}_{q})$.
The characters of these representations are described in \cite[Section 15]{Adler:Supercuspidal}.

Since $G$ has no pure inner forms, the pure packet for the Langlands parameter $\phi$ is an L-packet:
\[
\Pi^\mathrm{pure}_{\phi}(G/F) = \Pi_\phi(G(F)).
\]

%\subsubsection{Multiplicities in standard modules}\label{sssec:mrep-}

\subsubsection{Arthur packets}\label{sssec:Arthur-SL(2)}

The L-packet $\Pi_{\phi}(G(F))$ is an Arthur packet:
\[
%\Pi^\mathrm{pure}_{\lambda}(G/F) = 
\Pi^\mathrm{pure}_{\psi}(G/F) =  \Pi^\mathrm{pure}_{\phi}(G/F).
% = \Pi_\phi(G(F)).
\]

\subsubsection{Aubert duality}\label{sssec:Aubert-SL(2)}

Aubert involution fixes all the representations in this example. 

\subsubsection{Stable distributions and endoscopy}\label{sssec:stable-SL(2)}

Since $\psi$ is trivial on $\SL(2)$ in its domain, it follows that $s_\psi =1$, so
the stable invariant distribution \eqref{eqn:Thetapsi} attached to $\psi$ is
\[
\Theta_{\psi} = \Theta_{\pi(\phi,++)} + \Theta_{\pi(\phi,+-)} + \Theta_{\pi(\phi,-+)} + \Theta_{\pi(\phi,--)}.
\] 
For any $s\in Z_{\dualgroup{G}}(\psi)$ (the 4-group $Z_{\dualgroup{G}}(\phi)$ appearing in Section~\ref{sssec:L-SL(2)}) the coefficients of $\Theta_{\psi,s}$ appearing in \eqref{eqn:Thetapsis} are simply 
\begin{equation}\label{eqn:coefficients-SL(2)}
{\langle s,\pi(\phi,\pm\pm) \rangle}_{\psi} = (\pm\pm)(s).
\end{equation}

Besides $G$ itself, three endoscopic groups are relevant to $\psi$: the unramified torus $\operatorname{U}(1)$ split over $F(\sqrt{u})$, and the two ramified tori split over $F(\sqrt{\varpi})$, and $F(\sqrt{u\varpi})$.
More precisely, in the case of the unramified torus, define $s, n \in \dualgroup{G}$ by
\[
s= \begin{pmatrix} 1 & 0 \\ 0 & -1 \end{pmatrix},
\qquad
n= \begin{pmatrix} 0 & 1 \\ -1 & 0 \end{pmatrix}
\]
so that $\psi^\circ(w) =s$ if $w\vert_{E} = \tau$ while $\psi^\circ(w) =n$ if $w\vert_{E} = \sigma$.
Let $G'$ be the endoscopic group $\operatorname{U}(1)$ split over $F(\sqrt{u})$ with: 
$\dualgroup{G'} = Z_{\dualgroup{G}}(s)^0$; 
action of $W_F$ on $\dualgroup{G'}$ determined by $\pi_0(Z_{\dualgroup{G}}(s)) \iso \Gal(F(\sqrt{u})/F)$;
and $\varepsilon: \Lgroup{G'} \to \Lgroup{G}$ given by $\dualgroup{G'} = Z_{\dualgroup{G}}(s)^0 \subset \dualgroup{G}$ and
\[
\varepsilon(1\rtimes w)\ceq n w,
\qquad \text{if}\quad 
w\vert_{E} = \sigma.
\]
Then the Arthur parameter $\psi: L_F\times \SL(2) \to \Lgroup{G}$ factors through $\varepsilon: \Lgroup{G'} \to \Lgroup{G}$ to define $\psi' : L_F\times \SL(2) \to \Lgroup{G'}$, so
\[
\psi'(w) = s \rtimes w \in \Lgroup{G'},
\qquad\text{if}\quad
w\vert_{E} = \tau.
\]
The representation of $G'(F)$ with Arthur parameter $\psi'$ is the quadratic character attached to the extension $F(\sqrt{u})/F$ by class field theory. 
Then the endoscopic transfer of the quadratic character from $G'(F)$ to $G(F)$ is 
\[
\begin{array}{rcl}
\Theta_{\psi,s}&=&\Theta_{\pi(\phi,++)} - \Theta_{\pi(\phi,+-)} + \Theta_{\pi(\phi,-+)} - \Theta_{\pi(\phi,--)}.
\end{array}
\]

The set-up is similar for the ramified tori, as we now explain.
Take
\[
s=\begin{pmatrix} 0 & 1 \\ -1 & 0 \end{pmatrix},
\quad\text{respectively, }\quad
\begin{pmatrix} 0 & 1 \\ 1 & 0 \end{pmatrix},
\]
and, in the same order, set
\[
n= \begin{pmatrix} 0 & 1 \\ 1 & 0 \end{pmatrix}, 
\quad\text{respectively, }\quad
\begin{pmatrix} 1 & 0 \\ 0 & -1 \end{pmatrix}.
\]
Then
\[
s= \psi^\circ(w),
\qquad\text{if}\quad
w\vert_{E} = \sigma, \quad\text{respectively, \ }  w\vert_{E} =\sigma\tau,
\]
and
\[
n= \psi^\circ(w),
\qquad\text{if}\quad
w\vert_{E} = \sigma\tau, \quad\text{respectively, \ }  w\vert_{E} =\tau.
\]
Let $G'$ be the endoscopic group $\operatorname{U}(1)$ split over $F(\sqrt{\varpi})$, respectively, $F(\sqrt{u\varpi})$ 
with: $\dualgroup{G'} = Z_{\dualgroup{G}}(s)^0$; action of $W_F$ on $\dualgroup{G'}$ determined by 
\[
\pi_0(Z_{\dualgroup{G}}(s)) \iso \Gal(F(\sqrt{\varpi})/F),
\quad\text{respectively, \ } 
\pi_0(Z_{\dualgroup{G}}(s)) \iso\Gal(F(\sqrt{u\varpi})/F); 
\]
and $\varepsilon: \Lgroup{G'} \to \Lgroup{G}$ given by $\dualgroup{G'} = Z_{\dualgroup{G}}(s)^0 \subset \dualgroup{G}$ and
\[
\varepsilon(1\rtimes w)\ceq n w,
\qquad \text{if}\quad 
w\vert_{E} = \sigma\tau, \quad\text{respectively, \ }  w\vert_{E} =\tau.
\]
Then the Arthur parameter $\psi: L_F\times \SL(2) \to \Lgroup{G}$ factors through $\varepsilon: \Lgroup{G'} \to \Lgroup{G}$ to define $\psi' : L_F\times \SL(2) \to \Lgroup{G'}$, so
\[
\psi'(w) = s \rtimes w \in \Lgroup{G'},
\qquad\text{if}\quad
w\vert_{E} = \sigma, \quad\text{respectively, \ }  w\vert_{E} =\sigma\tau.
\]
The representation of $G'(F)$ with Arthur parameter $\psi'$ is the quadratic character attached to the extension $F(\sqrt{\varpi})/F$, respectively, $F(\sqrt{u\varpi})/F$, by class field theory. 
Then the endoscopic transfer of the quadratic character from $G'(F)$ to $G(F)$ is $\Theta_{\psi,s}$ which, in order, is
%\begin{equation}\label{eqn:Thetapsis-SL(2)}
%\begin{array}{rcl}
\[
\Theta_{\psi,s}=\Theta_{\pi(\phi,++)} + \Theta_{\pi(\phi,+-)} - \Theta_{\pi(\phi,-+)} - \Theta_{\pi(\phi,--)},
\]
%\end{array}
%\end{equation}
respectively,
%\begin{equation}%\label{eqn:Thetapsis-SL(2)}
%\begin{array}{rcl}
\[
\Theta_{\psi,s} =\Theta_{\pi(\phi,++)} - \Theta_{\pi(\phi,+-)} - \Theta_{\pi(\phi,-+)} + \Theta_{\pi(\phi,--)}.
\]
%\end{array}
%\end{equation}

Together with the stable distribution $\Theta_{\psi}$, these three $\Theta_{\psi,s}$ form a basis for the vector space spanned by the characters of representations with infinitesimal parameter $\lambda$.
These four distributions are expressed in terms of the Fourier transform of regular semisimple orbital integrals, and their endoscopic transfer, in \cite[Section 6.2]{Cunningham:Motivic}. 

%
\iffalse
\[
\begin{array}{|c||cccc|}
\hline
{} 	& 1 		& \sigma 			& \tau 		& \sigma\tau \\
\hline\hline
\varepsilon & 1 	&  n\sigma 	& \tau 		& n \sigma\tau \\
\psi' 	& 1		& 1\rtimes \sigma	& s\rtimes \tau 	& s \rtimes \sigma\tau \\
\psi 	& 1		& n\sigma 		& s\tau 		& ns \sigma\tau  \\
\hline
\end{array}
\
\begin{array}{|c||cccc|}
\hline
{} 	& 1 		& \sigma & \tau & \sigma\tau \\
\hline\hline
\varepsilon & 1 	& \sigma 			& n\tau 		& n\sigma\tau \\
\psi' 	& 1		& s\rtimes \sigma	& s\rtimes \tau	& \sigma\tau \\
\psi 	& 1		& s\sigma 		& ns\sigma 		& n \sigma\tau  \\
\hline
\end{array}
\
\begin{array}{|c||cccc|}
\hline
{} 	& 1 		& \sigma 			& \tau 		& \sigma\tau \\
\hline\hline
\varepsilon & 1 	& n\sigma 		& n\tau 		& \sigma\tau \\
\psi' 	& 1		& s\sigma			& \tau 		& s\rtimes \sigma\tau \\
\psi 	& 1		& ns\sigma 		& n\tau 		& s\sigma\tau  \\
\hline
\end{array}
\]
%
\fi

\subsubsection{Jacquet-Langlands}\label{sssec:JL}

The L-packet that this example treats also appears in \cite[Section 4, page 215]{Arthur:L-packets}, alongside the L-packet for the inner form corresponding to a non-trivial cocycle in $Z^1(F,G_{\ad})$, which determines the compact form of $G$, mentioned at the beginning of this section and now denoted by $G_\sigma$.
The same Langlands parameter $\phi$ as above, when viewed as a Langlands parameter for $G_\sigma$, produces a singleton L-packet.
In this case $\mathcal{S}_{\psi,\text{sc}} = Z_{\dualgroup{G}_\text{sc}}(\psi)$, 
which is the subgroup of $\dualgroup{G}_\text{sc} = \SL(2)$ isomorphic to $Q_8$ given by %\todo{Move these into SL(2)!}
\[
\mathcal{S}_{\psi,\text{sc}} =
\left\{
\begin{array}{cccc}
\begin{pmatrix} 1 & 0 \\ 0 & 1\end{pmatrix}, &
\begin{pmatrix} 0 & i \\ i & 0\end{pmatrix}, &
\begin{pmatrix} i & 0 \\ 0 & -i\end{pmatrix}, &
\begin{pmatrix} 0 & 1 \\ -1 & 0\end{pmatrix} \\
\begin{pmatrix} -1 & 0 \\ 0 & -1\end{pmatrix}, &
\begin{pmatrix} 0 & -i \\ -i & 0\end{pmatrix}, &
\begin{pmatrix} -i & 0 \\ 0 & i\end{pmatrix}, &
\begin{pmatrix} 0 & -1 \\ 1 & 0\end{pmatrix}
\end{array}
\right\} .
%= A_{\psi,\text{sc}}.
\]
The compact form $G_\sigma$ of $G = \SL(2)$ carries exactly one admissible representation with infinitesimal parameter  $\lambda$, and it corresponds to the unique irreducible $2$-dimensional representation of this group.  
We denote this representation by $\pi(\phi,2)$.
Although the theory presented in Part~\ref{Part1} does include inner rational forms that are not pure, in Section~\ref{sssec:JL-SL(2)} we will show how to adapt the geometric picture so that it does include $\pi(\phi,2)$.

\subsection{Vanishing cycles of perverse sheaves}\label{ssec:Ev-SL(2)}

\subsubsection{Vogan variety and orbit duality}

Recall the groups $H_\lambda$, $J_\lambda$ and $K_\lambda$ from Section~\ref{ssec:unramification}.
In the example at hand, these are given by
\[
H_\lambda = J_\lambda =
\left\{
\begin{pmatrix} 1 & 0 \\ 0 & 1\end{pmatrix},
\begin{pmatrix} 0 & 1 \\ 1 & 0\end{pmatrix},
\begin{pmatrix} 1 & 0 \\ 0 & -1\end{pmatrix},
\begin{pmatrix} 0 & 1 \\ -1 & 0\end{pmatrix}
\right\}
\iso
\mu_2\times\mu_2
\]
and $K_\lambda = N_{\dualgroup{G}}(\dualgroup{T})$.
In particular, $G_\lambda =1$ and $\lambda_\text{hu} : W_F \to \Lgroup{G}$ is trivial so $V_{\lambda_\text{hu}} = 0$ and $H_{\lambda_\text{hu}} =1$.

\subsubsection{Equivariant perverse sheaves}\label{sssec:EPS-SL(2)}

With reference to Theorem~\ref{theorem:unramification} we have 
\[
\begin{tikzcd}
 \Rep(A_\lambda) \arrow{r}{\text{equiv.}} \arrow[equal]{d} & \Perv_{H_\lambda}(V_\lambda) \arrow[shift left]{r}{\pi^*}  \arrow[equal]{d}  & \arrow[shift left]{l}{\pi_*}   \Perv_{H_{\lambda_\text{hu}}}(0)  \arrow[equal]{d} \\
 \Rep(\mu_2\times\mu_2)  &  \Perv_{\mu_2\times\mu_2}(0)   & \Perv_{1}(0).
\end{tikzcd}
\]
In particular, there are four simple objects in $\Perv_{H_\lambda}(V_\lambda)$ corresponding to the four simple $H_\lambda$-equiviariant local systems on $V_\lambda = \{ 0 \}$, or equivalently, to the four characters of $A_\lambda = \pi_0(H_\lambda)$:
\[
\Perv_{H_\lambda}(V_\lambda)^\text{simple}_{/\text{iso}} =
\{ (++)_{V_\lambda}, (+-)_{V_\lambda}, (-+)_{V_\lambda}, (--)_{V_\lambda} \}.
\]

%\subsubsection{Cuspidal support decomposition and Fourier transform}

%\subsubsection{Equivariant perverse sheaves on the regular conormal bundle}

\subsubsection{Vanishing cycles of perverse sheaves}\label{sssec:Ev-SL(2)}

We wish to describe the functor 
\[
\Ev: \Perv_{H_\lambda}(V_\lambda) \to  \Perv_{H_\lambda}(T^*_{H_\lambda}(V_\lambda)_\textrm{reg}).
\]
We have already seen that $\Perv_{H_\lambda}(V_\lambda)  = \Rep(A_\lambda)$.
In this case we have $T^*_{H_\lambda}(V_\lambda)_\textrm{reg} = \{ (0,0)\}$,
so $\Perv_{H_\lambda}(T^*_{H_\lambda}(V_\lambda)_\textrm{reg}) =  \Rep(A_\lambda)$. 
With these equivalences, 
\[
\Ev : \Rep(A_\lambda) \to \Rep(A_\lambda)
\]
is the identity functor, so
\begin{equation}\label{eqn:Ev-SL(2)}
\trace_s \Evs_\psi(\pm\pm)_{V_\lambda} = (\pm\pm)(s)
\end{equation}
for every $s\in Z_{\dualgroup{G}}(\psi)$.

\subsubsection{Normalization of Ev and the twisting local system}\label{sssec:NEv-SL(2)}

Since $\Ev$ is trivial in this example, so is $\mathcal{T}$ and $\NEv$; accordingly, the material of Section~\ref{sssec:NEv-overview} is trivial in this example.

\subsubsection{Fourier transform and vanishing cycles}\label{sssec:EvFt-SL(2)}

Since $\Ev$, $\NEv$ and $\Ft$ are trivial in this example, the material of Section~\ref{sssec:EvFt-overview} is trivial.

\subsubsection{Arthur sheaves}\label{sssec:AS-SL(2)}

Since $V_\lambda = \{ 0 \}$ is a single stratum, there is only one stable perverse sheaf to consider:
\[
\mathcal{A}_{\{0\}} = (++)_{V_\lambda} \oplus (+-)_{V_\lambda} \oplus (-+)_{V_\lambda} \oplus (--)_{V_\lambda}.
\]
Of course, this is just the regular representation of $A_\lambda$.

\subsubsection{Jacquet-Langlands}\label{sssec:JL-SL(2)}

We now show how to extend the geometric picture to include the admissible representation $\pi(\phi,2)$ of the inner rational form $G_\sigma$ of $G$.

Replace the group action $H_\lambda \times V_\lambda  \to V_\lambda$ with the group action $H_{\lambda,\text{sc}} \times V_\lambda  \to V_\lambda$, where
\[
H_{\lambda,\text{sc}} \ceq  Z_{\dualgroup{G}_\text{sc}}(\lambda),
\]
and where $H_{\lambda,\text{sc}}$ acts on $V_\lambda$ through $H_{\lambda,\text{sc}}\to H_{\lambda}$ induced by the map $\dualgroup{G}_\text{sc} \to  \dualgroup{G}$; recall from Section~\ref{sssec:JL} that this is just the composition $\SL(2) \to \GL(2) \to \PGL(2)$.
The analysis of Section~\ref{ssec:equivariant} shows that 
\[
\Perv_{H_{\lambda,\text{sc}}}(V_\lambda)
\equiv
\Rep(A_{\lambda,\text{sc}}),
\]
where  $A_{\lambda,\text{sc}} = \pi_0(H_{\lambda,\text{sc}})$.
Of course, $A_{\lambda,\text{sc}}$ is just the group $\mathcal{S}_{\psi,\text{sc}}$ appearing above.
Now $A_{\lambda,\text{sc}}$  has five irreducible representations up to equivalence: four one-dimensional representations obtained by pullback from the four characters of $A_{\lambda}$ we have already seen, and one two dimensional representation, denoted by $E$. 
Thus, the category $\Rep(A_{\lambda,\text{sc}})$ has  exactly  five simple objects up to isomorphism, and thence $\Perv_{H_{\lambda,\text{sc}}}(V_\lambda)$ has exactly five simple objects up to isomorphism:
\[
\Perv_{H_{\lambda,\text{sc}}}^\text{simple}(V_\lambda)_{/\text{iso}}
=
 \{ E_{V_\lambda}, (++)_{V_\lambda}, (+-)_{V_\lambda}, (-+)_{V_\lambda}, (--)_{V_\lambda} \}.
\]
The rest of the story now carries through. For instance, the diagram of functors from Section~\ref{sssec:EPS-SL(2)} becomes the following diagram:
\[
\begin{tikzcd}
 \Rep(A_{\lambda,\text{sc}}) \arrow{r}{\text{equiv.}} \arrow[equal]{d} & \Perv_{H_{\lambda,\text{sc}}}(V_\lambda) \arrow[shift left]{r}{\pi^*}  \arrow[equal]{d}  & \arrow[shift left]{l}{\pi_*}   \Perv_{H_{\lambda,\text{sc}}^0}(V_\lambda)  \arrow[equal]{d} \\
 \Rep(Q_8)  &  \Perv_{Q_8}(0)   &   \Perv_{1}(0).
\end{tikzcd}
\]
Also, the microlocal vanishing cycles functor, $\Ev$, is again the identity functor $\Rep(A_{\lambda,\text{sc}}) \to \Rep(A_{\lambda,\text{sc}})$, and the Arthur sheaf is again just the regular representation of $A_{\psi,\text{sc}}$.
Thus, simply replacing category $\Perv_{H_{\lambda}}(V_\lambda)$ with  $\Perv_{H_{\lambda,\text{sc}}}(V_\lambda)$ extends the theory from pure inner twists of $G$ to inner twists of $G$, allowing us to see the Jacquet-Langlands correspondence from the geometric perspective of Part~\ref{Part1}.

%%%%%%%%%%%%%%%%%%%%%%%%%%%%%%%%%%%%%%%%%%%%%

\subsection{ABV-packets}

\subsubsection{Admissible representations versus equivariant perverse sheaves}\label{sssec:VC-SL(2)}

The following table displays Vogan's bijection between $\Perv_{H_\lambda}(V_\lambda)^\text{simple}_{/\text{iso}}$ and $\Pi^\mathrm{pure}_{\lambda}(G/F)$, as discussed in Section~\ref{sssec:VC-overview}.
%\begin{spacing}{1.0}
\[
\begin{array}{ c ||  c }
%\hline
\Perv_{H_\lambda}(V_\lambda)^\text{simple}_{/\text{iso}} & \Pi^\mathrm{pure}_{\lambda}(G/F)  \\
\hline\hline
(++)_{V_\lambda} & \pi(\phi,++)  \\
(+-)_{V_\lambda} & \pi(\phi,+-)  \\
(-+)_{V_\lambda} & \pi(\phi,-+)  \\
(--)_{V_\lambda} & \pi(\phi,--)  %\\
%\hline
\end{array}
\]
%\end{spacing}

\subsubsection{ABV-packets}\label{sssec:ABV-SL(2)}

Using the bijection from Section~\ref{sssec:VC-SL(2)} and the trivial functor of $\Ev$ from Section~\ref{sssec:Ev-SL(2)}, it follows directly from definition \eqref{eqn:ABV-overview} that
\[
 \Pi_{\psi}(G(F)) = \Pi^\ABV_{\phi_\psi}(G/F).
\]

With reference to \eqref{eqn:etaABVpsis} and \eqref{eqn:Ev-SL(2)}, in this example we find: if $s=1$ then
\[
\begin{array}{rcl}
\eta^{\NEvs}_{\psi,1} = \eta^{\Evs}_{\psi} &=&[\pi(\phi,++)] - [\pi(\phi,+-)] + [\pi(\phi,-+)] - [\pi(\phi,--)];
\end{array}
\]
if $s=(\begin{smallmatrix} 1 & 0 \\ 0 & -1 \end{smallmatrix})$ then
\[
\begin{array}{rcl}
\eta^{\NEvs}_{\psi,s} &=&[\pi(\phi,++)] - [\pi(\phi,+-)] + [\pi(\phi,-+)] - [\pi(\phi,--)];
\end{array}
\]
if $s=(\begin{smallmatrix} 0 & 1 \\ -1 & 0 \end{smallmatrix})$ then
\[
\begin{array}{rcl}
\eta^{\NEvs}_{\psi,s}&=& [\pi(\phi,++)] + [\pi(\phi,+-)] - [\pi(\phi,-+)] - [\pi(\phi,--)];
\end{array}
\]
and if $s=(\begin{smallmatrix} 0 & 1 \\ 1 & 0 \end{smallmatrix})$ then
\[
\begin{array}{rcl}
\eta^{\NEvs}_{\psi,s} &=& [pi(\phi,++)] - [\pi(\phi,+-)] -[\pi(\phi,-+)] + [\pi(\phi,--)].
\end{array}
\]
Comparing $\eta^{\NEvs}_{\psi,s}$ above with $\eta_{\psi,s}$ as calculated in Section~\ref{sssec:stable-SL(2)}, we see that
\[
\eta_{\psi,s} = \eta^{\NEvs}_{\psi,s} 
\]
in all four cases, thus confirming Conjecture~\ref{conjecture:1} in this example.

\subsubsection{Kazhdan-Lusztig conjecture}\label{sssec:KL-SL(2)}

The material of Section~\ref{sssec:KL-overview} is trivial in this example.

\subsection{Endoscopy and equivariant restriction of perverse sheaves}\label{ssec:restriction-SL(2)}

In Section~\ref{sssec:stable-SL(2)} we saw that the Arthur parameter $\psi$ factors though three elliptic endoscopic groups, $G'$.
For each of these $G'$, the infinitesimal parameter $\lambda : W_F \to \Lgroup{G}$ factors through $\varepsilon : \Lgroup{G'}\to \Lgroup{G}$ to define $\lambda' : W_F \to \Lgroup{G'}$.

\subsubsection{Endoscopic Vogan variety}

For each $G'$ above, $H_{\lambda'} \ceq Z_{\dualgroup{G'}}(\lambda')$ is the subgroup of $H\ceq Z_{\dualgroup{G}}(\lambda)$ generated by $s$ in $H_{\lambda'}$; see Section~\ref{sssec:stable-SL(2)} for $s$.
Thus, $\Perv_{H_{\lambda'}}(V_{\lambda'}) \equiv \Rep(H_{\lambda'})$ has two simple objects, now denoted by $(+)_{V_{\lambda'}}$ and $(-)_{V_{\lambda'}}$.
Now, Vogan's bijection for $\lambda' : W_F \to \Lgroup{G}'$ is given by the following table.
%\begin{spacing}{1.0}
\[
\begin{array}{ c ||  c }
%\hline
\Perv_{H_{\lambda'}}(V_{\lambda'})^\text{simple}_{/\text{iso}} & \Pi^\mathrm{pure}_{\lambda'}(G'/F)  \\
\hline\hline
(+)_{V_{\lambda'}} & \pi(\phi',+)  \\
(-)_{V_{\lambda'}} & \pi(\phi',+)  %\\
%\hline
\end{array}
\]
%\end{spacing}
\noindent
Then $\pi(\phi',+) = \pi(\phi',-)$ is the quadratic character of $G'(F) = N_{E'/F}^{-1}(1)$ determined by $\phi'$.

\subsubsection{Vanishing cycles}

Arguing as in Section~\ref{sssec:Ev-SL(2)}, we see that
\[
\NEv' : \Rep(A_{\lambda'}) \to \Rep(A_{\lambda'})
\]
is trivial.

\subsubsection{Restriction}

The restriction functor $\Perv_{H_\lambda}(V_\lambda) \to \Perv_{H_{\lambda'}}(V_{\lambda'})$ is just restriction $\Rep(H) \to \Rep(H_{\lambda'})$ to the subgroup generated by $s$.

\subsubsection{Restriction and vanishing cycles}

We see \eqref{eqn:TrNEvRes} almost trivially: the left-hand side of \eqref{eqn:TrNEvRes} is
\[
\trace_{a_s} \NEvs_\psi (\pm\pm)_{V}
= (\pm\pm)(s)
\]
while the right-hand side of \eqref{eqn:TrNEvRes} is
\[
(-1)^{\dim C - \dim C'}\ \trace_{a'_s} \left( \Ev'_{\psi'} (\pm\pm)_{V}\vert_{V_{\lambda'}}\right)
= (-1)^{0 - 0} (\pm\pm)(s).
\]

As we show in forthcoming work, it follows from \eqref{eqn:TrNEvRes} that $\eta^{\NEvs}_{\psi,s}$ is the Langlands-Shelstad lift of $\eta^{\Evs}_{\psi'}$. 
These lifts are found by considering each case in turn: in order, take $s\in \dualgroup{G}$ to be
\[
s= \begin{pmatrix} 1 & 0 \\ 0 & -1 \end{pmatrix}, \
\begin{pmatrix} 0 & 1 \\ -1 & 0 \end{pmatrix},
\quad\text{and then}\quad
\begin{pmatrix} 0 & 1 \\ 1 & 0 \end{pmatrix};
\]
in the same order, the quadratic extension $E'/F$ is
\[
E'/F = F(\sqrt{u})/F, \ 
F(\sqrt{\varpi})/F,
\quad\text{and then}\quad
F(\sqrt{u\varpi})/F.
\]

\section{SO(3) unipotent representations, regular parameter}\label{sec:SO(3)}

%We begin with a very simple example.
Let $G= \SO(3)$ split over $F$, so $\dualgroup{G} = \SL(2, \mathbb{C})$ and $\Lgroup{G} = \SL(2, \mathbb{C})\times W_F$.
In this case, 
\[
H^1(F,G) = H^1(F,G_{\ad}) = H^1(F,\Aut(G)) \iso \ZZ/2\ZZ,
\]
so there are two isomorphism classes of rational forms of $G$, each pure.
We will use the notation $G_1$ for the non-quasisplit form of $\SO(3)$ given by the quadratic form 
\[
\begin{pmatrix}
-\varepsilon\varpi & 0 & 0 \\
 0 & \varepsilon & 0 \\
 0 & 0 & \varpi \\
\end{pmatrix}.
\]
Let $\lambda :  W_F \to \dualgroup{G}$ be the parameter defined by 
\[
\lambda(w) = \begin{pmatrix} \abs{w}^{1/2} & 0 \\ 0 & \abs{w}^{-1/2} \end{pmatrix}.
\]

Although this simple example exhibits some interesting geometric phenomena, the Arthur packets in this example are singletons, so there is no interesting endoscopy here.
Nevertheless, this example will be important later when we consider other groups for which $\SO(3)$ is an endoscopic group.

\subsection{Arthur packets} 

\subsubsection{Parameters}\label{sssec:P-SO(3)}

Up to $Z_{\dualgroup{G}}(\lambda)$-conjugacy, there are two Langlands parameters $\phi: L_F \to \dualgroup{G}$ with infinitesimal parameter $\lambda$; they are given by
\[
\phi_0(w,x) = \lambda(w) = \nu_2(d_w)
\qquad\text{and}\qquad
\phi_1(w,x) = \nu_2(x),
\]
where $\nu_2 : \SL(2,\CC) \to \SL(2,\CC)$ is the identity function, thus an irreducible 2-dimensional representation of $\SL(2,\CC)$. So,
\[
P_\lambda(\Lgroup{G})/Z_{\dualgroup{G}}(\lambda) = \{ \phi_0, \phi_1\}.
\]
Both $\phi_0$ and $\phi_1$  are of Arthur type: define
\[
\psi_0(w,x,y) \ceq \nu_2(y)
\qquad\text{and}\qquad
\psi_1(w,x,y) \ceq \nu_2(x).
\]
Then
\[
Q_\lambda(\Lgroup{G})/Z_{\dualgroup{G}}(\lambda) = \{ \psi_0, \psi_1\}.
\]
Observer that $\psi_1$ is tempered but $\psi_0$ is not.
Also observe that the Arthur parameters $\psi_0$ and $\psi_1$ are Aubert dual to each other.

\subsubsection{L-packets}

The component groups for the parameters $\phi\in P_\lambda(\Lgroup{G})$ are
\[
A_{\phi_0} = \pi_0(Z_{\dualgroup{G}}(\phi_0)) = \pi_0(\dualgroup{T}) \iso 1
\]
and
\[
A_{\phi_1} \iso  \pi_0(Z_{\dualgroup{G}}(\phi_1)) = \pi_0(Z(\dualgroup{G})) \iso \mu_2.
\]
Denoting the two characters of $\mu_2$ by $+$ and $-$, the L-packets for these Langlands parameters are:
\[
\begin{array}{rcl c rcl}
\Pi_{\phi_0}(G(F)) &=& \{ \pi(\phi_0)  \} , &&  \Pi_{\phi_0}(G_1(F)) &=&   \emptyset , \\
\Pi_{\phi_1}(G(F)) &=& \{ \pi(\phi_1,+)  \},  && \Pi_{\phi_1}(G_1(F)) &=& \{ \pi(\phi_1,-)\} .
\end{array}
\]

Here we can view these representations of $\GL(2, F)$ (resp. of the multiplicative group of the quaternion algebra, $D$) with trivial central character for $G(F) \cong \GL(2, F) / F^{\times}$ (resp. $G_{1}(F) \cong D^{\times} / F^{\times}$). Then $\pi(\phi_{0})$ (resp. $\pi(\phi_{1}, +)$) is given by the trivial (resp. Steinberg) representation of $\GL(2, F)$ and $\pi(\phi_{1}, -)$ is given by the trivial representation of $D^{\times}$. 

To see how characters $\rho$ of $A_{\phi}$ determine pure inner forms of $G$, pullback $\rho$ along $\pi_0(Z(\dualgroup{G})) \to \pi_0(Z_{\dualgroup{G}}(\phi))$ and then use the Kottwitz isomorphism: the trivial character of $A_{\phi_0}$ (resp. $A_{\phi_1}$) determines the trivial character of $\pi_0(Z(\dualgroup{G}))$ and therefore the split pure inner form of $G$; the non-trivial character $-$ of $A_{\phi_1}$ determines  the non-trivial character of $\pi_0(Z_{\dualgroup{G}})$ and therefore the non-trivial pure inner form of $G$.
Therefore, the pure L-packets are:
\[
\begin{array}{rcl}
\Pi^\mathrm{pure}_{\phi_0}(G/F) &=& \{ (\pi(\phi_0),0)  \} , \\
\Pi^\mathrm{pure}_{\phi_1}(G/F) &=& \{ (\pi(\phi_1,+),0) ,  (\pi(\phi_1,-),1)\}.
\end{array}
\]

\subsubsection{Multiplicities in standard modules}\label{sssec:mrep-SO(3)}

%\begin{spacing}{1.0}
\[
\begin{array}{ c || c c | c  }
{} & \pi(\phi_0)  & \pi(\phi_{1},+) & \pi(\phi_{1},-)  \\
\hline\hline
M(\phi_{0}) &  1 & 1 & 0  \\
M(\phi_{1},+) &  0 & 1  & 0  \\
\hline
M(\phi_{1}, {-}) &  0 & 0 & 1 
\end{array}
\]
%\end{spacing}

\subsubsection{Arthur packets}\label{sssec:Arthur-SO(3)}

The component groups $A_{\psi_0}$ and $A_{\psi_1}$ are both $Z(\dualgroup{G})$.
The  Arthur packets for $\psi \in Q_\lambda(\Lgroup{G})$ are
\[
\begin{array}{rcl c rcl}
\Pi_{\psi_0}(G(F)) &=& \{ \pi(\phi_0)  \},  && \Pi_{\psi_1}(G(F)) &=& \{ \pi(\phi_1,+)  \} , \\
\Pi_{\psi_0}(G_1(F)) &=&   \{ \pi(\phi_1,-) \}, && \Pi_{\psi_1}(G_1(F)) &=& \{ \pi(\phi_1,-) \} .
\end{array}
\]
so the pure Arthur packets are
\[
\begin{array}{rcl}
\Pi^\mathrm{pure}_{\psi_0}(G/F) &=& \left\{ (\pi(\phi_0),0), (\pi(\phi_1,-),1) \right\}, \\
\Pi^\mathrm{pure}_{\psi_1}(G/F) &=& \left\{ (\pi(\phi_1,+),0), (\pi(\phi_1,-),1) \right\} .
\end{array}
\]

\subsubsection{Aubert duality}\label{sssec:Aubert-SO(3)}

Aubert duality for $G(F)$ and $G_1(F)$ is given by the following table.
\[
\begin{array}{c || c }
\pi & {\hat \pi}   \\
\hline\hline
\pi(\phi_0) & \pi(\phi_1,+) \\
\hline
\pi(\phi_1,-) & \pi(\phi_1,-)
\end{array}
\]

The twisting character $\chi_{\psi_0}$ of $A_{\psi_0}$ is trivial; likewise, the twisting character $\chi_{\psi_1}$ of $A_{\psi_1}$ is trivial.

\subsubsection{Stable distributions and endoscopy}\label{sssec:stable-SO(3)}

The characters ${\langle\ \cdot\ ,\pi\rangle}_\psi$ appearing in the invariant distributions $\Theta^{G}_{\psi,s}$ \eqref{eqn:Thetapsis} are given by the first two rows of the following table. The last row gives the analogous characters for $\Theta^{G_1}_{\psi,s}$.
%\begin{spacing}{1.0}
\[
\begin{array}{c||cc}
\pi & {\langle \ \cdot\ , \pi \rangle}_{\psi_0} & {\langle \ \cdot\ , \pi,\rangle}_{\psi_1}  \\
\hline\hline
\pi(\phi_0) & + & 0 \\
\pi(\phi_1,+) & 0 & + \\
\hline
\pi(\phi_1,-) & - & - 
\end{array}
\]
Using the notation $s= \operatorname{diag}(s_1,s_1)\in A_\psi = Z(\dualgroup{G})$, we now have
\[
\begin{array}{rcl c rcl}
\Theta^{G}_{\psi_0,s} &=& \Theta_{\pi(\phi_0)}, &\quad& \Theta^{G_1}_{\psi_0,s} &=& - s_{1} \Theta_{\pi(\phi_1,-)}, \\
\Theta^{G}_{\psi_1,s} &=& \Theta_{\pi(\phi_1,+)}, && \Theta^{G_1}_{\psi_1,s} &=& s_1 \Theta_{\pi(\phi_1,-)}.
\end{array}
\]
Therefore, in this example, the virtual representations $\eta_{\psi,s}$ \eqref{eqn:etapsis} are:
\[
\begin{array}{rcl }
\eta_{\psi_0,s} &=& [(\pi(\phi_0),0)] + s_{1} [(\pi(\phi_1,-),1)], \\
\eta_{\psi_1,s} &=& [(\pi(\phi_1,+),0)] -s_1 [(\pi(\phi_1,-),1)].
\end{array}
\]

Since $A_\psi = Z(\dualgroup{G})$, the only endoscopic group relevant to these parameters is $G$ itself.

\subsection{Vanishing cycles of perverse sheaves}

\subsubsection{Vogan variety and orbit duality}\label{sssec:V-SO(3)}

Since $\lambda : W_F \to \Lgroup{G}$ is unramified and $\lambda(\Frob)$ is elliptic and $G$ is split, we have $\lambda_\text{hu} = \lambda$.

The Vogan variety for $\lambda$ is
\[
V_{\lambda} = 
\left\{ 
\begin{pmatrix} 
 0 & y  \\ 
 0 & 0 
\end{pmatrix}\in \dualgroup{\g}
\ \big\vert\
y \right\}
\iso \mathbb{A}^1,
\]
with $H_\lambda\ceq Z_{\dualgroup{G}}(\lambda)$-action 
\[
\begin{pmatrix} t & 0 \\ 0 & t^{-1} \end{pmatrix}
\ : \
\begin{pmatrix} 
 0 & y  \\ 
 0 & 0 
\end{pmatrix} 
\mapsto
\begin{pmatrix} 
 0 & t^2 y  \\ 
 0 & 0  
\end{pmatrix}
\]
so $V_\lambda$ is stratified into $H_\lambda$-orbits
\[
C_0 \ceq \left\{ 
\begin{pmatrix} 
 0 & 0  \\ 
 0 & 0 
\end{pmatrix}
\right\}
\qquad\text{and}\qquad
C_y \ceq \left\{ 
\begin{pmatrix} 
 0 & y  \\ 
 0 & 0 
\end{pmatrix} \in \dualgroup{\g}
\ \tq\ y \ne 0 
\right\}.
\]

The dual Vogan variety $V_\lambda^*$ is given by 
\[
V^*_{\lambda} = 
\left\{ 
\begin{pmatrix} 
 0 & 0  \\ 
 y\tran & 0 
\end{pmatrix}\in \dualgroup{\g}
\ \big\vert\
y\tran \right\}
\iso \mathbb{A}^1,
\]
with $H_\lambda$-action 
\[
\begin{pmatrix} t & 0 \\ 0 & t^{-1} \end{pmatrix}
\ : \
\begin{pmatrix} 
 0 & 0  \\ 
 y\tran & 0 
\end{pmatrix} 
\mapsto
\begin{pmatrix} 
 0 & 0  \\ 
 t^{-2} y\tran & 0  
\end{pmatrix},
\]
so $V^*_\lambda$ is stratified into $H_\lambda$-orbits
\[
C^t_0 \ceq \left\{ 
\begin{pmatrix} 
 0 & 0  \\ 
 0 & 0 
\end{pmatrix}
\right\}
\qquad\text{and}\qquad
C^t_y \ceq \left\{ 
\begin{pmatrix} 
 0 & 0  \\ 
 y\tran & 0 
\end{pmatrix} \in \dualgroup{\g}
\ \tq\ y\tran \ne 0 
\right\}
\]

The $H_\lambda$-invariant function $[\, \cdot\, ,\, \cdot\, ] : T^*(V_{\lambda}) \to \mathfrak{h}_\lambda$ is given by
\[
\begin{aligned}
%{[\, \cdot\, ,\, \cdot\, ]} : V_{\lambda} \times V^*_{\lambda} &\to \mathfrak{h}_\lambda\\
\begin{pmatrix} 0 & y \\ y\tran & 0 \end{pmatrix} &\mapsto y y\tran \begin{pmatrix} 1 & 0 \\ 0 & -1 \end{pmatrix}.
\end{aligned}
\]
From this, dual orbits are easily found.
%
\iffalse
\[
C^*_0 = C^t_1 \qquad\text{and}\qquad C^*_1 = C^t_0,
\]
so orbit duality is given by
\[
\widehat{C}_{0} = C_1 \qquad\text{and}\qquad\widehat{C}_{1} = C_0.
\]
\fi
%
\[
\begin{tikzcd}[column sep=5]
{} & C_{y} = {\widehat C}_0  &  & \dim =1 & {} & C^*_{0} = C^t_{y}  \\
{} & \arrow{u} C_0  = {\widehat C}_{y} & & \dim =0 & {} & \arrow{u} C^*_y = C^t_{0}  &
\end{tikzcd}
\]

\subsubsection{Equivariant perverse sheaves}\label{sssec:EPS-SO(3)}

On the closed stratum $C_0$ there is one simple local system $\1_{C_0}$ and its perverse extension $\IC(\1_{C_0})$ is the rank-one skyscraper sheaf at $C_0$.
The open stratum $C_y$ carries two simple local systems: $\1_{C_y}$ and the non-trivial $\mathcal{E}_{C_y}$ corresponding, respectively, to the trivial and non-trivial characters of the equivariant fundamental group of $C_y$. 
Therefore, the irreducible shifted standard sheaves on $V_\lambda$ are:
\[
\begin{array}{rcl c rcl}
\mathcal{S}(\1_{C_0}) &=& {j_{C_0}}_! \1_{C_0}[0], && && \\
\mathcal{S}(\1_{C_y}) &=& {j_{C_y}}_! \1_{C_y}[1], &\quad\text{and}\quad& 
		\mathcal{S}(\mathcal{E}_{C_y}) &=& {j_{C_y}}_! \mathcal{E}_{C_y}[1].
\end{array}
\]

There are three simple objects in 
$
\Perv_{H_\lambda}(V_\lambda) =  \Perv_{\mathbb{G}_\text{m}}(\mathbb{A}^1)
$
up to isomorphism:
\[
\Perv_{H_\lambda}(V_\lambda)^\text{simple}_{/\text{iso}} 
= 
\left\{ \IC(\1_{C_0}),\ \IC(\1_{C_y}),\ \IC(\mathcal{E}_{C_y}) \right\}.
\]
The perverse extension of $\1_{C_y}$ is the constant  sheaf $\1_{V_\lambda}[1]  = \IC(\1_{C_y})$ while the perverse extension $\IC(\mathcal{E}_{C_y})$ of $\mathcal{E}_{C_y}$ is the standard sheaf obtained by extension by zero from $\mathcal{E}_{C_y}[1]$.
\[
\begin{array}{ c || c c }
\mathcal{P} & \mathcal{P}\vert_{C_{0}} & \mathcal{P}\vert_{C_{1}} \\
\hline\hline
\IC(\1_{C_{0}}) &  \1_{C_{0}}[0] & 0 \\
\IC(\1_{C_{y}}) &  \1_{C_{0}}[1] & \1_{C_{y}}[1]  \\
\hline
\IC(\mathcal{E}_{C_{y}}) &  0 & \mathcal{E}_{C_{y}}[1]  
\end{array}
\]
The first two row of this table are clear since $\overline{C_0}$ and $\overline{C_y}$ are smooth.
To see the third row, let $\pi: V_\lambda \rightarrow  V_\lambda$ be the proper double cover given by $y\mapsto y^2$ and note that
\[ 
\pi_\ast(\1_{V_\lambda}[1]) = \IC(\1_{C_y}) \oplus \IC(\mathcal{E}_{C_y}),
\]
by the Decomposition Theorem.
Since both of $\pi_\ast(\1_{V_\lambda}[1])\vert_{C_0}$ and $\IC(\1_{C_y})\vert_{C_0}$ are rank one, it follows that $\IC(\mathcal{E}_{C_y})\vert_{C_0} =0$.

Thus, the geometric multiplicity matrix is 
\[
\begin{array}{ c || c c | c  }
{} & \mathcal{S}(\1_{C_0})  & \mathcal{S}(\1_{C_y}) & \mathcal{S}(\mathcal{E}_{C_y}) \\
\hline\hline
\IC(\1_{C_0}) &  1 & 0 & 0  \\
\IC(\1_{C_y}) &  -1 & 1  & 0  \\
\hline
\IC(\mathcal{E}_{C_y}) &  0 & 0 & 1
\end{array}
\]
and the normalized geometric multiplicity matrix is
\[
\begin{array}{ c || c c | c  }
{} & \1_{C_0}^\natural  & \1_{C_y}^\natural & \mathcal{E}_{C_y}^\natural\\
\hline\hline
\1_{C_0}^\sharp &  1 & 0 & 0  \\
\1_{C_y}^\sharp &  1 & 1  & 0  \\
\hline
\mathcal{E}_{C_y}^\sharp &  0 & 0 & 1
\end{array}
\]

\subsubsection{Cuspidal support decomposition and Fourier transform}\label{sssec:Ft-SO(3)}

Up to conjugation, $\dualgroup{G} = \SL(2, \mathbb{C})$ admits exactly two cuspidal Levi subgroups: $\dualgroup{G} $ itself and $\dualgroup{T} = \GL(1)$. Thus,
\[
\Perv_{H_\lambda}(V_\lambda) = \Perv_{H_\lambda}(V_\lambda)_{\dualgroup{T}} \oplus 
\Perv_{H_\lambda}(V_\lambda)_{\dualgroup{G}}.
\]
Simple objects in these two subcategories are listed below.
%\begin{spacing}{1.0}
\[
\begin{array}{c || c }
 \Perv_{H_\lambda}(V_\lambda)_{\dualgroup{T} /\text{iso}}^\text{simple}  & \Perv_{H_\lambda}(V_\lambda)_{\dualgroup{G} /\text{iso}}^\text{simple} \\
 \hline\hline
 \IC(\1_{C_{0}}) &  \\ 
 \IC(\1_{C_{y}}) &  \IC(\mathcal{E}_{C_{y}})%\\
%\hline
\end{array}
\]
%\end{spacing}

The Fourier transform is given on simply objects by:
%\begin{spacing}{1.0}
\[
\begin{array}{ r c l }
\Ft: \Perv_{H_\lambda}(V_{\lambda}) &\mathop{\longrightarrow} &  \Perv_{H_\lambda}(V^*_{\lambda}) \\
%&\mathop{\longrightarrow}\limits^{\text{transposition}} & \Perv_{H_\lambda}(V_{\lambda}) \\
%\hline\hline
\IC(\1_{C_{0}}) &\mapsto& \IC(\1_{C\orbdual_{0}}) = \IC(\1_{C^t_y}) \\ %&\mapsto& \IC(\1_{C_{y}}) = \IC(\1_{{\widehat C}_0}) \\
\IC(\1_{C_{y}}) &\mapsto& \IC(\1_{C\orbdual_{y}}) = \IC(\1_{C^t_0}) \\ %&\mapsto&  \IC(\1_{C_{0}}) = \IC(\1_{{\widehat C}_y}) \\
%\hline
\IC(\mathcal{E}_{C_{y}}) &\mapsto& \IC(\mathcal{E}_{C\orbdual_{0}}) = \IC(\mathcal{E}_{C^t_y}) %\\ %&\mapsto& \IC(\mathcal{E}_{C_{y}}) = \IC(\mathcal{E}_{{\widehat C}_0}) \\
\end{array} 
\]
%\end{spacing}

\subsubsection{Equivariant local systems on the regular conormal bundle}\label{sssec:LocO-SO(3)}

The regular conormal bundle $T^*_{H_\lambda}(V_{\lambda})_\textrm{reg}$ decomposes into two $H_\lambda$-orbits
\[
T^*_{H_\lambda}(V_{\lambda})_\textrm{reg}
=  T^*_{C_0}(V_{\lambda})_\textrm{reg} \ \bigsqcup\  T^*_{C_y}(V_{\lambda})_\textrm{reg}
\]
given by
\[
\begin{array}{ccc}
T^*_{C_0}(V_{\lambda})_\textrm{reg} =
\left\{ \begin{pmatrix} 0 & y \\ y\tran & 0 \end{pmatrix}
\tq 
\begin{array}{c}
{y=0}\\
{y\tran \ne 0 }
\end{array}
 \right\} ,
&&
T^*_{C_y}(V_{\lambda})_\textrm{reg} =
\left\{ \begin{pmatrix} 0 & y \\ y\tran & 0 \end{pmatrix}
\tq
\begin{array}{c}
{y\ne 0}\\
{y\tran = 0 }
\end{array}
\right\} .
\end{array}
\]
We remark that
\[
T^*_{C_0}(V_{\lambda})_\textrm{reg} = T^*_{C_0}(V_{\lambda})_\text{sreg} = C_0 \times C\orbdual_0 
\]
and
\[
T^*_{C_y}(V_{\lambda})_\textrm{reg} = T^*_{C_y}(V_{\lambda})_\text{sreg} = C_y \times C\orbdual_y.
\]
These components are $H_\lambda$-orbits, so every $H_\lambda$-equivariant perverse sheaf on $T^*_{H_\lambda}(V_\lambda)_\textrm{reg}$ is a standard sheaf shifted to degree $1$. 
The equivariant fundamental groups are both given by
\[
A^\text{mic}_{C} = \pi_1(T^*_{C}(V_\lambda),(x,\xi))_{Z_{H_\lambda}(x,\xi)^0} = \pi_0(Z_{H_\lambda}(x,\xi)) = Z(\dualgroup{G}) \iso \{ \pm 1\}.
\]
Let $\1_{\O_\psi}$ be the constant local system on $T^*_{C_\psi}(V_\lambda)_\text{sreg}$ and let $\mathcal{E}_{\O_\psi}$ be the non-trivial $H_\lambda$-equivariant local system on $T^*_{C_\psi}(V_\lambda)_\text{sreg}$. 
Then
\[
\IC(\1_{\O_0}) = \mathcal{S}(\1_{\O_0})
\qquad\text{and}\qquad
\IC(\mathcal{E}_{\O_0}) = \mathcal{S}(\mathcal{E}_{\O_0})
\]
and
\[
\IC(\1_{\O_y}) = \mathcal{S}(\1_{\O_y})
\qquad\text{and}\qquad
\IC(\mathcal{E}_{\O_y}) = \mathcal{S}(\mathcal{E}_{\O_y}).
\]
In summary,
\[
\Loc_{H_\lambda}(T^*_{C_0}(V_\lambda)_\text{sreg})^\text{simple}_{/\text{iso}} 
= \left\{
\1_{\O_0} ,\ \mathcal{E}_{\O_0}
\right\}
\]
and
\[
\Loc_{H_\lambda}(T^*_{C_y}(V_\lambda)_\text{sreg})^\text{simple}_{/\text{iso}} 
= \left\{
\1_{\O_y} , \ \mathcal{E}_{\O_y} 
\right\}.
\]

\subsubsection{Vanishing cycles of perverse sheaves}\label{sssec:Ev-SO(3)}

The functor $\pEv : \Perv_{H_\lambda}(V_\lambda) \to \Perv_{H_\lambda}(T^*_{H_\lambda}(V_\lambda)_\textrm{reg})$ is given on simple objects in Table~\ref{table:Ev-SO(3)}.
The lower part uses the identification of local systems on the regular conormal with representations of the corresponding equivariant fundamental groups, so each $\Evs_{C}\mathcal{P}$ is given as a character of $A^\text{mic}_{C}$.
\begin{table}
\caption{$\pEv : \Perv_{H_\lambda}(V_\lambda) \to  \Perv_{H_\lambda}(T^*_{H_\lambda}(V_\lambda)_\textrm{reg})$ on simple objects, for $\lambda : W_F \to \Lgroup{G}$ given at the beginning of Section~\ref{sec:SO(3)}.}
\label{table:Ev-SO(3)}
\[
\begin{array}{ c c c }
 \Perv_{H_\lambda}(V_\lambda) 
 	& \mathop{\longrightarrow}\limits^{\pEv}
	&  \Perv_{H_\lambda}(T^*_{H_\lambda}(V_\lambda)_\textrm{reg}) \\
 \IC(\1_{C_{0}}) &\mapsto& \IC(\1_{\O_{0}})  \\
 \IC(\1_{C_{y}}) &\mapsto& \IC(\1_{\O_{y}})  \\
 \IC(\mathcal{E}_{C_{y}}) &\mapsto& \IC(\mathcal{E}_{\O_{y}}) \oplus \IC(\mathcal{E}_{\O_{0}})  %\\
\end{array} 
\]
\end{table}

\begin{table}
\caption{$\Evs : \Perv_{H_\lambda}(V_\lambda) \to  \Loc_{H_\lambda}(T^*_{H_\lambda}(V_\lambda)_\textrm{reg})$ on simple objects, for $\lambda : W_F \to \Lgroup{G}$ given at the beginning of Section~\ref{sec:SO(3)}.}
\label{table:Evs-SO(3)}
\[
\begin{array}{c||cc}
\mathcal{P} & \Evs_{C_0}\mathcal{P} & \Evs_{C_y}\mathcal{P}  \\
\hline\hline
\IC(\1_{C_0}) 			& + & 0  \\
\IC(\1_{C_y}) 			& 0   & +  \\
\hline
\IC(\mathcal{E}_{C_y})	& - &  - 
\end{array}
\]
\end{table}

We now explain the computations behind Tables~\ref{table:Ev-SO(3)} and \ref{table:Evs-SO(3)}.
\begin{enumerate}
\labitem{(a)}{labitem:Ev-SO(3)-a}
Using Lemma~\ref{lemma:method0} we find 
\[ 
\begin{array}{rcl c rcl} 
\pEv_{C_y} \IC(\1_{C_y}) &=& \1_{\O_{y}}[1] && 
				\pEv_{C_y} \IC(\mathcal{E}_{C_y}) &=& \mathcal{E}_{\O_{y}}[1] \\
\pEv_{C_y} \IC(\1_{C_0})  &=& 0  &&
				\pEv_{C_0} \IC(\1_{C_0})  &=& \1_{\O_{0}}[0] .
\end{array} 
\]
It only remains, therefore, to determine $\pEv_{C_0}\IC(\1_{C_y})$ and $\pEv_{C_0} \IC(\mathcal{E}_{C_y})$.
\labitem{(b)}{labitem:Ev-SO(3)-b}
Since $\IC(\1_{C_y}) = \1_{V}[1]$, we have
\[ 
\Ev_{C_0} \IC(\1_{C_y}) = R\Phi_{y y'}( \1_{V_\lambda}[1] \boxtimes \1_{C_0^\ast} )\vert_{T^*_{C}(V_\lambda)_\textrm{reg}}.
\]
As $\1_{V_\lambda} \boxtimes \1_{C_0^\ast} = \1_{V_\lambda\times C^*_0}$ is a local system and the function $(y,y')\mapsto y y'$ is smooth on $V_\lambda \times C_0^\ast$, it follows Lemma~\ref{lemma:methodx} that
\[ 
\Ev_{C_0} \IC(\1_{C_y}) = 0.
\]
Note that $C_0^\ast$ specifically excludes the locus $y'=0$, which is where the singularities would be.
\labitem{(c)}{labitem:Ev-SO(3)-c}
We now consider the case of $ \IC(\mathcal{E}_{C_y})$, using the proper double cover $\pi: V_\lambda \rightarrow  V_\lambda$, already used in Section~\ref{sssec:EPS-SO(3)}.
Recall that 
\[ 
\pi_\ast(\1_{V_\lambda}[1]) = \IC(\1_{C_y}) \oplus \IC(\mathcal{E}_{C_y}).
\]
Since $\Ev$ is exact by Proposition~\ref{VC:exactandstalks},
\[ 
\Ev_{C_0} \pi_\ast(\1_{V_\lambda}[1]) = \Ev_{C_0} \IC(\1_{C_y}) \oplus \Ev_{C_0} \IC(\mathcal{E}_{C_y}).
\]
We have just seen that $\Ev_{C_0} \IC(\1_{C_y})=0$, so 
\[
\Ev_{C_0} \IC(\mathcal{E}_{C_y})
=
\Ev_{C_0} \pi_\ast(\1_{V_\lambda}[1]).
\]
By Lemma~\ref{lemma:PBC}, 
\[ 
\Ev_{C_0} \pi_\ast(\1_{V_\lambda}[1])
= 
\pi_! \left( R\Phi_{y^2y'}( \1_{V_\lambda\times C_0^\ast}[1] )\vert_{T^*_{C}(V_\lambda)_{\pi\text{-reg}}} \right).
\]
Since $\pi$ is an isomorphism on $T^*_{C}(V_\lambda)_{\pi\text{-reg}}$,
\[ 
\Ev_{C_0} \pi_\ast(\1_{V_\lambda}[1])
= 
R\Phi_{y^2y'}( \1_{V_\lambda\times C_0^\ast}[1] )\vert_{T^*_{C}(V_\lambda)_\textrm{reg}}.
\]
%By Proposition \ref{prop:example_twist_square},
Now,
\[
R\Phi_{y^2y'}( \1_{V_\lambda\times C_0^\ast}[1] )
= 
\pi'_! \1_{C_0\times C^*_0}[1],
\]
where $\pi' : C^*_0\to C^*_0$ is the double cover $y'\mapsto y'^2$.
Note that 
\[
\pi'_! \1_{C_0\times C^*_0}[1] = \pi'_! \1_{\O_0}[1].
\]
By the Decomposition Theorem,
\[
\pi'_! \1_{\O_0}[1] = \1_{\O_0}[1] \oplus \mathcal{E}_{\O_0}[1],
\]
where $\mathcal{E}_{\O_0}$ is the non-trivial equivariant local system on $\O_0$ introduced in Section~\ref{sssec:LocO-SO(3)}, which is the associated to the double cover arising from taking $\sqrt{y'}$ over $\O_{0}$.
Therefore,
\[  
\pEv_{C_0} \IC(\mathcal{E}_{C_y}) = \mathcal{E}_{\O_{0}}[1] .
\]
\end{enumerate}
This completes the calculation of $\pEv : \Perv_{H_\lambda}(V_\lambda) \to \Perv_{H_\lambda}(T^*(V_\lambda)_\textrm{reg})$ on simple objects, as displayed in Table~\ref{table:Ev-SO(3)}.

\subsubsection{Normalization of Ev and the twisting local system}\label{sssec:NEv-SO(3)}

From Table~\ref{table:Ev-SO(3)} we see that the twisting local system $\mathcal{T}$ is trivial in this case, so $\pNEv = \pEv$.

\subsubsection{Fourier transform and vanishing cycles}\label{sssec:EvFt-SO(3)}

Having computed the values of the functor $\pEv : \Perv_{H_\lambda}(V_\lambda) \to \Perv_{H_\lambda}(T^*_{H_\lambda}(V_\lambda)_\textrm{reg})$ on simple objects, we also know the values of
 $\pEv^* : \Perv_{H_\lambda}(V_\lambda^*) \to \Perv_{H_\lambda}(T^*_{H_\lambda}(V_\lambda^*)_\textrm{reg})$.
We use this and the coincidence of $\pEv$ with $\pNEv$, in the table below. 
%\begin{spacing}{1.0}
\[
\begin{array}{ c c c c c c c }
%\hline
 \Perv_{H_\lambda}(V_\lambda) 
 	& \mathop{\longrightarrow}\limits^{\pEv}
	&  \Perv_{H_\lambda}(T^*_{H_\lambda}(V_\lambda)_\textrm{reg})
	& \mathop{\rightarrow}\limits^{a_*}
	&  \Perv_{H_\lambda}(T^*_{H_\lambda}(V_\lambda^*)_\textrm{reg})
	& \mathop{\longleftarrow}\limits^{\Ev^*}
	& \Perv_{H_\lambda}(V_\lambda^*)  \\
%\hline\hline
 \IC(\1_{C_{0}}) &\mapsto& \IC(\1_{\O_{0}}) &\mapsto& \IC(\1_{\O^*_{0}}) &\mapsfrom & \IC(\1_{C^*_0}) \\
 \IC(\1_{C_{y}}) &\mapsto& \IC(\1_{\O_{y}}) &\mapsto& \IC(\1_{\O^*_{y}}) &\mapsfrom& \IC(\1_{C^*_y}) \\
%\hline
 \IC(\mathcal{E}_{C_{y}}) &\mapsto& \IC(\mathcal{E}_{\O_{y}}) \oplus \IC(\mathcal{E}_{\O_{0}}) &\mapsto& \IC(\mathcal{E}_{\O^*_{y}}) \oplus \IC(\mathcal{E}_{\O^*_{0}})  &\mapsfrom & \IC(\mathcal{E}_{C^*_0}) %\\
%\hline
\end{array} 
\]
%\end{spacing}
\noindent
Since the map from the first to the fourth column is the Fourier transform, this verifies \eqref{eqn:NEvFt-overview}.

%
\iffalse
We close this section by simply recording the characteristic cycles of every simple object $\mathcal{P}\in\Perv_{H_\lambda}(V_\lambda)$, which is simply a matter of reading off the support of $\Ev \mathcal{P}$ from the table at the beginning of this section.

%\begin{spacing}{1.0}
\[
\begin{array}{| r  c c  |}
\hline
\ChC(\IC(\1_{C_{0}})) = & + [T^*_{C_{0}}(V_\lambda)]  	&				\\
\ChC(\IC(\1_{C_{y}})) = & 					& +[ T^*_{C_{y}}(V_\lambda)]  \\
\hline
\ChC(\IC(\mathcal{E}_{C_{y}})) = & +[ T^*_{C_{0}}(V_\lambda)]  & +[ T^*_{C_{y}}(V_\lambda)]  \\
\hline
\end{array}
\]
%\end{spacing}
\fi
%

\subsubsection{Arthur sheaves}\label{sssec:AS-SO(3)}

%\begin{spacing}{1.0}
\[
\begin{array}{ c || l r  }
%\hline
\text{Arthur  sheaf} &  \text{packet sheaves}  &  \text{coronal sheaves} \\
\hline\hline
\mathcal{A}_{C_{0}} 
	&  \IC(\1_{C_{0}})\  \oplus 
	&  \IC(\mathcal{E}_{C_{y}}) \\
\mathcal{A}_{C_{y}} 
	& \IC(\1_{C_{y}}) \oplus  \IC(\mathcal{E}_{C_{y}})
	&  %\\
%\hline
\end{array}
\]
%\end{spacing}

\subsection{ABV-packets}

\subsubsection{Admissible representations versus equivariant perverse sheaves}\label{sssec:VC-SO(3)}

Vogan's bijection for $\lambda : W_F\to \Lgroup{G}$ chosen at the beginning of Section~\ref{sec:SO(3)} is given by the following table:
%\begin{spacing}{1.0}
\[
\begin{array}{ c ||  c }
%\hline
\Perv_{H_\lambda}(V_\lambda)^\text{simple}_{/\text{iso}} & \Pi^\mathrm{pure}_{\lambda}(G/F)  \\
% \multicolumn{7}{|c|}{\text{Trace of Frobenius values at conjugacy classes}} \\ \cline{3-9}
\hline\hline
\IC(\1_{C_0}) & (\pi(\phi_0),0)  \\
\IC(\1_{C_y}) & (\pi(\phi_1),0) \\
\hline
\IC(\mathcal{E}_{C_y}) & (\pi(\phi_1,-),1) %\\
%\hline
\end{array}
\]
%\end{spacing}

The base points for $H_\lambda$-orbits in $T^*_{H_\lambda}(V_\lambda)_\textrm{reg}$ determined by the Arthur parameters $\psi_0$ and $\psi_1$ are:
\[
\begin{array}{ccc}
(x_{\psi_0},\xi_{\psi_0})  = \begin{pmatrix} 0 & 0 \\ 1 & 0 \end{pmatrix} \in T^*_{C_0}(V_{\lambda})_\textrm{reg},
&&
(x_{\psi_1},\xi_{\psi_1}) = \begin{pmatrix} 0 & 1 \\ 0 & 0 \end{pmatrix} \in T^*_{C_y}(V_{\lambda})_\textrm{reg}.
\end{array}
\]

\subsubsection{ABV-packets}

Using the bijection of Section~\ref{sssec:VC-SO(3)}, the vanishing cycles calculations of Section~\ref{sssec:Ev-SO(3)}, and the definition of ABV-packets from Section~\ref{ssec:ABVpackets}, we find ABV-packets for $G$ for representations with infinitesimal parameter $\lambda : W_F \to \Lgroup{G}$ from Section~\ref{sssec:P-SO(3)}:
\[
\begin{array}{rcl}
\Pi^\ABV_{\psi_0}(G/F) &=& \left\{ (\pi(\phi_0),0), (\pi(\phi_1,-),1) \right\} , \\
\Pi^\ABV_{\psi_1}(G/F) &=& \left\{ (\pi(\phi_1,+),0), (\pi(\phi_1,-),1) \right\} .
\end{array}
\]

We see that all pure Arthur packets are ABV-packets simply by comparing this with Section~\ref{sssec:Arthur-SO(3)}.
In this example, all the strata in $V_\lambda$ are of Arthur type, so all  ABV-packets are Arthur packets.

\subsubsection{Stable invariant distributions and their endoscopic transfer}

We recalled in Section~\ref{sssec:stable-SO(3)} the coefficient appearing in the invariant distributions $\eta_{\psi,s}$ attached to $\psi\in Q_\lambda(\Lgroup{G})$ and $s\in Z_{\dualgroup{G}}(\psi)$.
Using Section~\ref{sssec:Ev-SO(3)}, compare ${\langle s s_\psi,(\pi,\delta)\rangle}_{\psi}$ with $\trace\Evs_\psi \mathcal{P}(\pi,\delta)(s s_\psi)$.
This proves \eqref{eqn:Conj2-overview} and therefore establishes Conjecture~\ref{conjecture:1},  in this case:
\[
\eta_{\psi,s} = \eta^{\NEvs}_{\psi,s},
\]
for $\psi\in Q_\lambda(\Lgroup{G})$ and $s\in Z_{\dualgroup{G}}(\psi)$.

Also recall from Section~\ref{sssec:stable-SO(3)} that the only endoscopic group relevant to $\psi_0$ and $\psi_1$ is $G$. %and $G_1$, which explains why $\eta_{\psi,s}$ are strongly stable virtual representations.

\subsubsection{Kazhdan-Lusztig conjecture}\label{sssec:KL-SO(3)}

Using the bijection of Section~\ref{sssec:VC-SO(3)} we may compare the multiplicity matrix from Section~\ref{sssec:mrep-SO(3)} with the normalized geometric multiplicity matrix from Section~\ref{sssec:EPS-SO(3)}:
\[
m_\text{rep} = 
\begin{pmatrix}
\begin{array}{cc | c}
 1  &  1 & 0  \\ 
 0  & 1  & 0  \\ \hline
 0  & 0  & 1
\end{array}
\end{pmatrix},
\qquad
m'_\text{geo} = 
\begin{pmatrix}
\begin{array}{cc | c}
 1  &  0 & 0  \\ 
1  & 1  & 0  \\ \hline
 0  & 0  & 1
\end{array}\end{pmatrix}.
\]
Since $\,^tm_\text{rep} = m'_\text{geo}$, this confirms the Kazhdan-Lusztig conjecture \eqref{eqn:KL} as it applies to representations with infinitesimal parameter $\lambda : W_F \to \Lgroup{G}$ given in Section~\ref{sssec:P-SO(3)}.
As explained in Section~\ref{sssec:KL-overview}, this allows us to confirm Conjecture~\ref{conjecture:2} as it applies to this example.

\subsubsection{Aubert duality and Fourier transform}\label{sssec:AubertFt-SO(3)}

By using Vogan's bijection from Section~\ref{sssec:VC-SO(3)} to compare Aubert duality from Section~\ref{sssec:Aubert-SO(3)} with the Fourier transform from Section~\ref{sssec:Ft-SO(3)} one redily verifies \eqref{eqn:AubertFt-overview}.

\subsubsection{Normalization}\label{sssec:normalization-SO(3)}

A comparison of the twisting characters $\chi_\psi$ of $A_\psi$ from Section~\ref{sssec:Aubert-SO(3)} with the restriction $\mathcal{T}_\psi$ to $T^*_{C_\psi}(V_\lambda)_\textrm{reg}$ of the local system $\mathcal{T}_\psi$ from Section~\ref{sssec:EvFt-SO(3)} verifies \eqref{eqn:twisting-overview}.

\subsection{Endoscopy and equivariant restriction of perverse sheaves}

The material of Section~\ref{ssec:restriction-overview} is trivial in this example, since $Z_{\dualgroup{G}}(\psi) = Z(\dualgroup{G})$.

\section{PGL(4) shallow representations}\label{sec:PGL(4)}

This example illustrates the utility of Theorem~\ref{theorem:unramification} and the significance of the decomposition of $\lambda(\Frob{})$ into hyperbolic and elliptic parts.
Here, the calculation of the Arthur packets for certain non-tempered representations of $\PGL(4)$ is reduced to the calculation of certain unipotent representations of $\SL(2)$.
This example also demonstrates a case when $H^1(F,G)\to H^1(F,\Aut(G))$ is neither injective nor surjective.

Let $G=\PGL(4)$ over $F$ and suppose $q$ is odd. 
So, $\dualgroup{G} =  \SL(4)$ and $\Lgroup{G}= \SL(4)\times W_F$.
In this case, $H^1(F,G) \iso \Irr(\mu_4)$, so there are four classes of pure rational forms of $G$.
The outer automorphism of $G$ induces an action of order $2$ on $H^1(F,G)$, and the orbits of this action correspond exactly to the image of $H^1(F, G)$ in $H^{1}(F, \Aut(G))$. The function  $H^1(F,G) \to H^1(F,\Aut(G))$ from classes of pure rational forms of $G$ to isomorphism classes of forms of $G$ hits the three inner forms of $G$ which are: the split group $G$ itself, an anisotropic form $G_1$, and a non-quasi\-split form $G_2$ with a proper minimal Levi.
This function is given by: $0 \mapsto G$, $1 \mapsto G_1$, $2 \mapsto G_2$ and $3 \mapsto G_1$, where the notation refers to an identification of $\Irr(\mu_4)$ with $\ZZ/4\ZZ$.
Since $\PGL(4)$ also has outer forms, we see that $H^1(F,G) \to H^1(F,\Aut(G))$ is neither injective nor surjective.

Let $E$ be a Galois closure of the ramified extension $F(\sqrt[q+1]{\varpi})$.
Then $E$ is the compositum of an unramified quadratic extension of $F$ and the totally ramified extension $F(\sqrt[q+1]{\varpi})$; now $\Gal(E/F)$ is the dihedral group with generators $\sigma$, $\tau$, where $\sigma$ has order $2$ and $\tau$ has order $q+1$ and $\sigma \tau \sigma = \tau^{-1} = \tau^q$.
Consider the representation $\varrho : \Gal(E/F) \to \SL(2,\CC)$ defined by 
\[
\sigma \mapsto \begin{pmatrix} 0 & 1 \\ -1 & 0\end{pmatrix},
\qquad
\tau \mapsto \begin{pmatrix} \zeta  & 0 \\ 0 & \zeta^{-1}\end{pmatrix},
\]
where $\zeta\in C$ is a fixed primitive $q+1$-th root of unity.
Let $\rho : W_F \to \SL(2,\CC)$ be the composition of $W_F \to \Gamma_F \to \Gal(E/F)$ with $\varrho  : \Gal(E/F) \to \SL(2,\CC)$.
Define $\lambda : W_F \to \Lgroup{G} = \SL(4)\times W_F$ by 
\[
\lambda(w)\ceq \rho(w)\otimes \nu_2(d_w).
\]
Thus, if $w\vert_{E}= \sigma$ then
\[
\lambda(w) = 
\begin{pmatrix}
0 & 0 & \abs{w}^{1/2} & 0 \\ 
0 & 0 & 0 & \abs{w}^{-1/2} \\ 
-\abs{w}^{1/2} & 0 & 0 & 0 \\ 
0 & -\abs{w}^{-1/2} & 0 & 0 
\end{pmatrix}
\]
while if $w\vert_{E}= \tau$ then
\[
\lambda(w) = 
\begin{pmatrix}
\zeta & 0 & 0 & 0 \\ 
0 & \zeta & 0 & 0 \\ 
0 & 0 & \zeta^{-1} & 0 \\ 
0 & 0 & 0 & \zeta^{-1} 
\end{pmatrix}.
\]

\subsection{Arthur packets}

\subsubsection{Parameters}\label{sssec:P-PGL(4)}

There are two Langlands parameters with infinitesimal parameter $\lambda$:
\[
\phi_0(w,x) \ceq \rho(w)\otimes \nu_2(d_w),
\quad\text{and}\quad
 \phi_1(w,x) \ceq \rho(w)\otimes \nu_2(x).
\] 
Each is of Arthur type:
\[
\psi_0(w,x,y) \ceq \rho(w)\otimes \nu_2(y),
\quad\text{and}\quad
\psi_1(w,x,y) \ceq \rho(w)\otimes \nu_2(x).
\] 
Note that $\psi_1 = {\hat \psi}_0$; see Section~\ref{sssec:Aubert-overview}

\subsubsection{L-packets}\label{sssec:L-PGL(4)}

There are $5$ admissible representations of the three forms $G$, $G_1$ and $G_2$, with infinitesimal parameter $\lambda$.
In order to list them, we start with the component groups of $\phi\in P_\lambda(\Lgroup{G})$.
First, note that
\[
Z_{\dualgroup{G}}(\lambda) =
\left\{ 
\begin{pmatrix} 
 s_1 & 0 & 0 & 0 \\ 
 0 & s_2 & 0 & 0 \\ 
 0 & 0 & s_1 & 0 \\ 
 0 & 0 & 0 & s_2  
\end{pmatrix} 
\ \big\vert\ 
\begin{array}{c}
s_1s_2=\pm1
\end{array}
\right\}
\iso \GL(1)\times \mu_2,
\]
under the isomorphism $s \mapsto (s_1, s_1s_2)$.
Then
\[
A_{\phi_0} = \pi_0(Z_\dualgroup{G}(\phi_0)) = \pi_0(Z_\dualgroup{G}(\lambda)) \iso \mu_2
\]
and
\[
A_{\phi_1} = \pi_0(Z_\dualgroup{G}(\phi_1)) = \pi_0(Z({\dualgroup{G}})) \iso \mu_4.
\]
Following our convention, we write $+$ and $-$ for the trivial and non-trivial characters of $\mu_2$, respectively; the characters of $\mu_4$ will be labeled by ${+1}$, ${-1}$, ${+i}$ and ${-i}$.
The admissible representations for the Langlands parameters $\phi_0$ and $\phi_1$ fall into L-packets for the three forms of $G$ (up to isomorphism) as follows: 
\[
\begin{array}{rcl c rcl }
 \Pi_{\phi_0}(G(F)) &=& \{ \pi(\phi_0,+) \} & & \Pi_{\phi_1}(G(F)) &=& \{ \pi(\phi_1,{+1}) \} \\
 \Pi_{\phi_0}(G_1(F)) &=& \emptyset & &  \Pi_{\phi_1}(G_1(F)) &=& \{ \pi(\phi_1,{+i}) \}\\
 			&& 			&& 				&=&  \{  \pi(\phi_1,{-i}) \} \\
 \Pi_{\phi_0}(G_2(F)) &=& \{ \pi(\phi_0,-) \} & & \Pi_{\phi_1}(G_2(F)) &=&\{ \pi(\phi_1,{-1}) \}. \\
\end{array}
\]

However, $\Pi^\mathrm{pure}_{\lambda}(G/F)$ consists of 6 representations of 4 pure rational forms of $G$ and decomposes into L-packets as follows:
\[
\Pi^\mathrm{pure}_{\phi_0}(G/F) = 
\left\{ 
{(\pi(\phi_0,+),0)},
{(\pi(\phi_0,-), 2)} 
\right\}, 
\]
and
\[
\Pi^\mathrm{pure}_{\phi_1}(G/F) = 
\left\{ 
{(\pi(\phi_1,{+1}),0)},
{(\pi(\phi_1,{+i}),1)}, 
{(\pi(\phi_1,{-1}),2)},  
{(\pi(\phi_1,{-i}),3)}
\right\}.
\]
In other words, when passing from the four equivalence classes of pure rational forms $[\delta] \in H^1(F,G)$ to the three isomorphism classes of forms of $G$, two representations collapse to one, namely, $(\pi(\phi_1,{+i}),1)$ and $(\pi(\phi_1,{-i}),3)$ map to the same admissible representation of $G_1(F)$.

\subsubsection{Multiplicities in standard modules}\label{sssec:mrep-PGL(4)}

%\todo{Ahmed, could you write a few words here about the representations $\pi(\phi_0,+)$, $\pi(\phi_0,-)$, $\pi(\phi_1,{+1})$, $\pi(\phi_1,{-1})$, $\pi(\phi_1,{+i})$ and  $\pi(\phi_1,{-i})$ ? }

%\begin{spacing}{1.0}
\[
\begin{array}{ c || c c c c | c c }
%\hline
{} & \pi(\phi_0,+)  & \pi(\phi_0,-) & \pi(\phi_1,{+1}) & \pi(\phi_1,{-1}) & \pi(\phi_1,{+i}) & \pi(\phi_1,{-i}) \\
\hline\hline
M(\phi_0,+1) &  1 & 0 & 1 & 0 & 0 & 0 \\
M(\phi_0,-1) &  0 & 1 & 0 & 1 & 0 & 0 \\
M(\phi_1,{+1}) &  0 & 0  & 1 & 0 & 0 & 0 \\
M(\phi_1,{-1}) &  0 & 0  & 0 & 1 & 0 & 0 \\
\hline
M(\phi_1,{+i}) &  0 & 0 & 0 & 0 & 1 & 0 \\
M(\phi_1,{-i}) &  0 & 0 & 0 & 0 & 0 & 1 %\\
%\hline
\end{array}
\]
%\end{spacing}

\subsubsection{Arthur packets}\label{sssec:Arthur-PGL(4)}

The component groups $A_{\psi_0}$ and $A_{\psi_1}$ are both $Z(\dualgroup{G})$, canonically.
Arthur packets for rational forms $G$, $G_1$ and $G_2$ of $G$ are 
\[
\begin{array}{rcl c rcl }
 \Pi_{\psi_0}(G(F)) &=& \{ \pi(\phi_0,+) \} & & 			\Pi_{\psi_1}(G(F)) &=& \{ \pi(\phi_1,{+1}) \} \\
 \Pi_{\psi_0}(G_1(F)) &=& \{ \pi(\phi_1,{+i}) \} & &  	\Pi_{\psi_1}(G_1(F)) &=& \{ \pi(\phi_1,{+i}) \} \\
 			&=& \{  \pi(\phi_1,{-i}) \} & &  				&=&  \{  \pi(\phi_1,{-i}) \} \\
 \Pi_{\psi_0}(G_2(F)) &=& \{ \pi(\phi_0,-) \} & & 			\Pi_{\psi_1}(G_2(F)) &=&\{ \pi(\phi_1,{-1}) \} \\
\end{array}
\]
The pure Arthur packets for $\psi_0$ and $\psi_1$ are
\[
\Pi^\mathrm{pure}_{\psi_0}(G/F) = 
\left\{ 
{(\pi(\phi_0,+),0)}, 
{(\pi(\phi_0,-), 2)} ,
{(\pi(\phi_1,{+i}),1)},  
{(\pi(\phi_1,{-i}),3)}
\right\}, 
\]
and
\[
\Pi^\mathrm{pure}_{\psi_1}(G/F) =
\left\{ 
{(\pi(\phi_1,{+1}),0)}, 
{(\pi(\phi_1,{+i}),1)}, 
{(\pi(\phi_1,{-1}),2)},  
{(\pi(\phi_1,{-i}),3)}
\right\}.
\]
For later reference, we break these pure Arthur packets apart into packet and coronal representations:
\begin{spacing}{1.3}
\[
\begin{array}{ l |  | l |  l }
%\hline
\text{pure Arthur}  & \text{pure L-packet}  & \text{coronal}\\
\text{packets}  & \text{representations}  & \text{representations}\\
\hline\hline
\Pi^\mathrm{pure}_{\psi_0}(G/F)  & (\pi(\phi_{0},+),0),\ (\pi(\phi_{0},-),2) & (\pi(\phi_1,{+i}),1),\ (\pi(\phi_i,{-i}),3) \\
\hline
\Pi^\mathrm{pure}_{\psi_1}(G/F)  & (\pi(\phi_1,+1),0),\ (\pi(\phi_1,{+i}),1) & \\
&  (\pi(\phi_1,{-i}),3),\ (\pi(\phi_1,{-1}),2) & %\\
%\hline
\end{array}
\]
\end{spacing} 

\subsubsection{Aubert duality}\label{sssec:Aubert-PGL(4)}

Aubert duality for admissible representations of $G(F)$ with infinitesimal parameter $\lambda$ is given by the following table.
\[
\begin{array}{c || c }
\pi & {\hat \pi}   \\
\hline\hline
\pi(\phi_0,+) & \pi(\phi_1,+1) \\
\pi(\phi_1,+1) & \pi(\phi_0,+) 
\end{array}
\]
Aubert duality for $G_1(F) = G_3(F)$ is given by the following table.
\[
\begin{array}{c || c }
\pi & {\hat \pi}   \\
\hline\hline
\pi(\phi_1,+i) = \pi(\phi_1,-i) & \pi(\phi_1,+i) = \pi(\phi_1,-i)
\end{array}
\]
Aubert duality for $G_2(F)$ is given by the following table.
\[
\begin{array}{c || c }
\pi & {\hat \pi}   \\
\hline\hline
\pi(\phi_0,-) &  \pi(\phi_1,-1) \\
\pi(\phi_1,-1) & \pi(\phi_0,-) 
\end{array}
\]

The twisting characters $\chi_{\psi_0}$ and $\chi_{\psi_1}$ are trivial.

\subsubsection{Stable distributions and endoscopy}\label{sssec:eta-PGL(4)}

The coefficients ${\langle a_s a_\psi,(\pi,\delta)\rangle}_\psi$ appearing in the invariant distributions $\eta_{\psi,s}$ \eqref{eqn:etapsis} are given by the following list, in which $s\in A_\psi = Z(\dualgroup{G})\iso \mu_4$.
%%\begin{spacing}{1.0}
\[
\begin{array}{rcl}
\eta_{\psi_0} = \eta_{\psi_0,1} &=& [(\pi(\phi_{0},+),0)] + [(\pi(\phi_{0},-),2)] + [(\pi(\phi_1,{+i}),1)] + [(\pi(\phi_1,{-i}),3)] \\
\eta_{\psi_0,-1} &=& [(\pi(\phi_{0},+),0)] + [(\pi(\phi_{0},-),2)] - [(\pi(\phi_1,{+i}),1)] - [(\pi(\phi_1,{-i}),3)]  \\
\eta_{\psi_0,i} &=& [(\pi(\phi_{0},+),0)] - [(\pi(\phi_{0},-),2)] + i [(\pi(\phi_1,{+i}),1)] -i [(\pi(\phi_1,{-i}),3)] \\
\eta_{\psi_0,-i} &=& [(\pi(\phi_{0},+),0)] - [(\pi(\phi_{0},-),2)] - i [(\pi(\phi_1,{+i}),1)] +i [(\pi(\phi_1,{-i}),3)]  
\end{array}
\]
%%\end{spacing}
and
%%\begin{spacing}{1.0}
\[
\begin{array}{rcl}
\eta_{\psi_1} = \eta_{\psi_1,1} &=& [\pi(\phi_1,+1),0] - [\pi(\phi_1,-i),1] + [\pi(\phi_1,{-1}),2] - [\pi(\phi_1,{-i}),3]  \\
\eta_{\psi_1,-1} &=& [\pi(\phi_1,+1),0] - [\pi(\phi_1,-i),1] - [\pi(\phi_1,{-1}),2] + [\pi(\phi_1,{-i}),3]  \\
\eta_{\psi_1,i} &=& [\pi(\phi_1,+1),0] + [\pi(\phi_1,-i),1] + i [\pi(\phi_1,{-1}),2] + i  [\pi(\phi_1,{-i}),3]  \\
\eta_{\psi_1,-i} &=& [\pi(\phi_1,+1),0] + [\pi(\phi_1,-i),1] -i [\pi(\phi_1,{-1}),2] - i [\pi(\phi_1,{-i}),3] .
\end{array}
\]
%%\end{spacing}

Since $A_{\psi_0} = Z(\dualgroup{G})$ and $A_{\psi_1} = Z(\dualgroup{G})$, the only endoscopic groups relevant to these Arthur parameters are $G=G$, $G_1$ and $G_2$.

\subsection{Vanishing cycles of perverse sheaves}

\subsubsection{Vogan variety and orbit duality}
%
\iffalse
The infinitesimal parameter $\lambda$ is ramified.
Indeed, if $u\in I_F$ then $\lambda(u)  = \rho(u)\otimes \nu_2(1)$ and $\rho(u)$ is trivial unless the image of $u$ under $I_F \to \Gamma_F \to \Gal(E/F)$ is non-trivial; if the image of $u$ under $I_F \to \Gamma_F \to \Gal(E/F)$ is $\tau$ itself, then
\[
\lambda(u) = \rho(u) \otimes \nu_2(1) = 
\begin{pmatrix} 
\zeta  & 0 & 0 & 0 \\
0 & \zeta  & 0 & 0 \\
0 & 0 & \zeta^{-1} & 0 \\
0 & 0 & 0 & \zeta^{-1}  
\end{pmatrix}
\]
where $\zeta\in \CC$ is the primitive $q+1$-th root of unity fixed above.
%
\fi

The Vogan variety $V_\lambda$ and its dual $V^*_\lambda$ may both be deduced from the conormal bundle
\[
T^*_{H_\lambda}(V_\lambda) = \left\{ 
\begin{pmatrix}
0 & y & 0 & 0 \\
y' & 0 & 0 & 0 \\
0 & 0 & 0 & y \\
0 & 0 & y' & 0
\end{pmatrix}
\mid 
y y' =0
\right\}
\]
on which $H_\lambda \ceq Z_{\dualgroup{G}}(\lambda)\iso \GL(1)\times \nu_2$ acts by
\[
\begin{pmatrix} 
 s_1 & 0 & 0 & 0 \\ 
 0 & s_2 & 0 & 0 \\ 
 0 & 0 & s_1 & 0 \\ 
 0 & 0 & 0 & s_2  
\end{pmatrix} 
\cdot 
\begin{pmatrix}
0 & y & 0 & 0 \\
y' & 0 & 0 & 0 \\
0 & 0 & 0 & y \\
0 & 0 & y' & 0
\end{pmatrix}
=
\begin{pmatrix}
0 & s_1 s_2^{-1} y & 0 & 0 \\
s_1^{-1} s_2 y' & 0 & 0 & 0 \\
0 & 0 & 0 & s_1 s_2^{-1} y \\
0 & 0 & s_1^{-1} s_2 y'  & 0
\end{pmatrix}.
\]
Recall that $s_1 s_2 = \pm 1$, so $s_1 s_2^{-1} = \pm s_1^2$.
From this we see the stratification of $V_\lambda$ into $H_\lambda$-orbits and the duality on those orbits is exactly as in Section~\ref{sssec:V-SO(3)}.

We now use Theorem~\ref{theorem:unramification} to replace $\lambda : W_F \to \Lgroup{G}$ with an unramified infinitesimal parameter $\lambda_\text{hu} : W_F \to \Lgroup{G}_\lambda$ of a split group $G_\lambda$ such that $\lambda_\text{hu}(\Frob)$ is hyperbolic.
The hyperbolic part of $\lambda(\Frob{})$ is $s_\lambda\times 1$ with
\[
s_\lambda = \rho(1)\otimes \nu_2(\Frob{})
= 
\begin{pmatrix} 
q^{1/2} & 0 & 0 & 0 \\ 
0 & q^{-1/2} & 0 & 0 \\
0 & 0 & q^{1/2} & 0 \\ 
0 & 0 & 0 & q^{-1/2}
\end{pmatrix}
\]
while the elliptic part of $\lambda(\Frob{})$ is $t_\lambda\times \Frob{}$ with
\[
t_\lambda = \rho(\Frob{}) \otimes \nu_2(1) 
= 
\begin{pmatrix} 
 0 & 0 & 1 & 0 \\ 
 0 & 0 & 0 & 1 \\ 
 1 & 0 & 0 & 0 \\ 
 0 & 1 & 0 & 0 
\end{pmatrix}.
\]
Then %\todo{citation}
\[
J_\lambda \ceq Z_{\dualgroup{G}}(\lambda\vert_{I_F}, s_\lambda) =
\left\{ 
\begin{pmatrix} 
 a & b & 0 & 0 \\ 
 c & d & 0 & 0 \\ 
 0 & 0 & a & b \\ 
 0 & 0 & c & d 
\end{pmatrix}
\ \big\vert\ 
\det \begin{pmatrix} 
 a & b \\
 c & d 
 \end{pmatrix}
= 
\pm 1
\right\} 
\iso \SL(2)\times \mu_2
\]
under the isomorphism $\operatorname{diag}(h,h) \mapsto (h',\det h)$ where $h' = h$ if $\det h =1$ and $h' = ih$ if $\det h = -1$.
%\todo{Check this, again.}
Therefore, $G_\lambda = \PGL(2)$ and $\lambda_\text{hu} : W_F\to \Lgroup{G}_\lambda$ is given by
\[
\lambda_\text{hu}(w) = \begin{pmatrix} \abs{w}^{1/2} & 0 \\ 0 & \abs{w}^{-1/2} \end{pmatrix}.
\]

Now 
\[
H_{\lambda_\text{hu}} = Z_{\dualgroup{G}_{\lambda}}(\lambda_\text{hu}) = 
\left\{ \begin{pmatrix} t & 0 \\ 0 & t^{-1} \end{pmatrix}\tq \ t\neq 0 \right\} \iso \GL(1)
\]
and
\[
V_{\lambda_\text{hu}} = 
\left\{ 
\begin{pmatrix} 
 0 & y  \\ 
 0 & 0 
\end{pmatrix}
\ \big\vert\
y \right\}
\iso \mathbb{A}^1
\]
with $H_{\lambda_\text{hu}}$-action
\[
\begin{pmatrix} 
 t & 0  \\ 
 0 & t^{-1} 
\end{pmatrix}
\ : \
\begin{pmatrix} 
 0 & y  \\ 
 0 & 0 
\end{pmatrix}
\mapsto
\begin{pmatrix} 
 0 & t^2 y  \\ 
 0 & 0  
\end{pmatrix}.
\]
This brings us back to Section~\ref{sssec:V-SO(3)}.
We will freely use notation from there, below.
The $H_\lambda$-action on $V_{\lambda_\text{hu}}$ is given by
\[
(t,\pm 1)
\ : \
\begin{pmatrix} 
 0 & y  \\ 
 0 & 0 
\end{pmatrix}
\mapsto
\begin{pmatrix} 
 0 & \pm t^2 y  \\ 
 0 & 0  
\end{pmatrix}.
\]
From this we see that every $H_\lambda$-orbit in $V_{\lambda_\text{hu}}$ coincides with a $H_{\lambda_\text{hu}}$ orbit in $V_{\lambda_\text{hu}}$.

\subsubsection{Equivariant perverse sheaves on Vogan variety}\label{sssec:EPS-PGL(4)}

With reference to Theorem~\ref{theorem:unramification} we have 
\[
\begin{tikzcd}
 \Rep(A_\lambda) \arrow{r} \arrow[equal]{d} & \Perv_{H_\lambda}(V_\lambda) \arrow[shift left]{r}{\pi^*}  \arrow[equal]{d}  & \arrow[shift left]{l}{\pi_*}  \Perv_{H_{\lambda_\text{hu}}}(V_{\lambda_\text{hu}})  \arrow[equal]{d}   \\
 \Rep(\mu_2)   &  \Perv_{\GL(1)\times \mu_2}(\mathbb{A}^1)  &  \Perv_{\GL(1)}(\mathbb{A}^1)
\end{tikzcd}
\]
The image of the trivial representation $+$ of $\mu_2$ under the functor $\Rep(A_\lambda) \to \Perv_{H_\lambda}(V_\lambda)$ is the trivial local system on $V_\lambda$, denoted here by $(+)_{V_\lambda}$ to emphasize its genesis; similarly, the image of the non-trivial irreducible representation $-$ of $\mu_2$ under the functor $\Rep(A_\lambda) \to \Perv_{H_\lambda}(V_\lambda)$ will be denoted by $(-)_{V_\lambda}$.

To find the simple objects in $\Perv_{H_\lambda}(V_\lambda)$, we begin with the equivariant perverse sheaves on $H_\lambda$-orbits in $V_\lambda$.
\begin{enumerate}
\item[$C_0$:]
The equivariant fundamental group of $C_0$ is $A_{C_0} = \pi_0(H_\lambda) \iso \mu_2$. 
Let us write $\1^+_{C_0}$ and $\1^-_{C_0}$ for the local systems corresponding to the trivial and non-trivial representations of $A_{C_0}$, respectively.
Note that, under the forgetful functor $\Loc_{H_\lambda}(C_0) \to \Loc_{H_{\lambda_\text{hu}}}(C_0)$, these both map to $\1_{C_0}$, the constant sheaf on $C_0$.
\item[$C_y$:]
The equivariant fundamental group of $C_y$ is $A_{C_y} = Z(\dualgroup{G}) \iso \mu_4$.
Let us write $\1^+_{C_y}$ and $\1^-_{C_y}$ for the equivariant local systems on $C_y$ that correspond to the trivial ${+1}$ and order-2 characters ${-1}$ of $A_{C_y}$, respectively;
these both map to $\1_{C_y}$ under $\Loc_{H_\lambda}(C_y) \to \Loc_{H_{\lambda_\text{hu}}}(C_y)$.
We write $\mathcal{E}^+_{C_y}$ and $\mathcal{E}^-_{C_y}$ for the equivariant local systems on $C_y$ that correspond to the order-4 characters ${+i}$ and ${-i}$, respectively, of $A_{C_y}$; 
%these are defined by ${+i}(i) =i$ and ${-i}(i) = -i$ using the isomorphism $A_{C_y} \iso \mu_4$;
 these both map to $\mathcal{E}_{C_y}$ under $\Loc_{H_\lambda}(C_y) \to \Loc_{H_{\lambda_\text{hu}}}(C_y)$.
\end{enumerate}
Therefore, the six simple objects in $\Perv_{H_\lambda}(V_\lambda)$ are given by:
\[
\Perv_{H_\lambda}(V_\lambda)^\text{simple}_{/\text{iso}} = 
\left\{
\begin{array}{ccc}
\IC(\1^+_{C_0}), & \IC(\1^+_{C_y}), & \IC(\mathcal{E}^+_{C_y})\\
\IC(\1^-_{C_0}), & \IC(\1^-_{C_y}), & \IC(\mathcal{E}^-_{C_y})
\end{array}
\right\}.
\]
On simple objects, the functor $\Rep(A_\lambda) \to \Perv_{H_\lambda}(V_\lambda)$ is given by
\[
\begin{array}{rcl}
\Rep(A_\lambda) &\to& \Perv_{H_\lambda}(V_\lambda)\\
(+)_{V} &\mapsto& \IC(\1^+_{C_y})\\
(-)_{V}  &\mapsto& \IC(\1^-_{C_y})
\end{array}
\]
while the functor $\Perv_{H_\lambda}(V_\lambda)\to \Perv_{H_{\lambda_\text{hu}}}(V_{\lambda_\text{hu}})$ is given by
\[
\begin{array}{rcl}
\Perv_{H_\lambda}(V_\lambda) &\to& \Perv_{H_{\lambda_\text{hu}}}(V_{\lambda_\text{hu}})\\
\IC(\1^{\pm}_{C_0}) &\mapsto& \IC(\1_{C_0})\\
\IC(\1^{\pm}_{C_y}) &\mapsto& \IC(\1_{C_y})
\IC(\mathcal{E}^{\pm}_{C_y}) \mapsto \IC(\mathcal{E}_{C_y})
\end{array}
\]
and the functor $\Perv_{H_{\lambda_\text{hu}}}(V_{\lambda_\text{hu}}) \to \Perv_{H_\lambda}(V_\lambda)$
is given by
\[
\begin{array}{rcl}
\Perv_{H_{\lambda_\text{hu}}}(V_{\lambda_\text{hu}}) &\to& \Perv_{H_\lambda}(V_\lambda)\\
\IC(\1_{C_0}) &\mapsto& \IC(\1^+_{C_0})\oplus \IC(\1^-_{C_0})\\
\IC(\mathcal{E}_{C_y}) &\mapsto& \IC(\mathcal{E}^{+}_{C_y}) \oplus \IC(\mathcal{E}^{-}_{C_y})
\end{array}
\]
From this we find the stalks of the simple objects in $\Perv_{H_\lambda}(V_\lambda)$.
%\begin{spacing}{1.0}
\[
\begin{array}{ c || c c }
%\hline
\mathcal{P} & \mathcal{P}\vert_{C_{0}} & \mathcal{P}\vert_{C_{+1}} \\
\hline\hline
\IC(\1^{+}_{C_{0}}) &  \1^{+}_{C_{0}}[0] & 0 \\
\IC(\1^{-}_{C_{0}}) &  \1^{-}_{C_{0}}[0] & 0 \\
\IC(\1^{+}_{C_{+1}}) &  \1^{+}_{C_{0}}[1] & \1^{+}_{C_{+1}}[1]  \\
\IC(\1^{-}_{C_{+1}}) &  \1^{-}_{C_{0}}[1] & \1^{-}_{C_{+1}}[1]  \\
\hline
\IC(\mathcal{E}^{+}_{C_{+1}}) &  0 & \mathcal{E}^{+}_{C_{+1}}[1]   \\
\IC(\mathcal{E}^{-}_{C_{+1}}) &  0 & \mathcal{E}^{-}_{C_{+1}}[1]   %\\
%\hline
\end{array}
\]
%\end{spacing}
This gives us the normalized geometric multiplicity matrix:
\[
\begin{array}{ c || c c c c | c c }
%\hline
{} & (\1^{+}_{C_0})^\natural  & (\1^{-}_{C_0})^\natural & (\1^{+}_{C_1})^\natural & (\1^{-}_{C_1})^\natural & (\mathcal{E}^{+}_{C_1})^\natural & (\mathcal{E}^{-}_{C_1})^\natural \\
\hline\hline
(\1^{+}_{C_0})^\sharp &  1 & 0 & 0 & 0 & 0 & 0 \\
(\1^{-}_{C_0})^\sharp &  0 & 1 & 0 & 0 & 0 & 0 \\
(\1^{+}_{C_1})^\sharp &  1 & 0  & 1 & 0 & 0 & 0 \\
(\1^{-}_{C_1})^\sharp &  0 & 1  & 0 & 1 & 0 & 0 \\
\hline
(\mathcal{E}^{+}_{C_1})^\sharp &  0 & 0 & 0 & 0 & 1 & 0 \\
(\mathcal{E}^{-}_{C_1})^\sharp &  0 & 0 & 0 & 0 & 0 & 1 %\\
%\hline
\end{array}
\]

\subsubsection{Cuspidal support decomposition and Fourier transform}\label{sssec:Ft-PGL(4)}

The cuspidal support decomposition respects the functors appearing in Theorem~\ref{theorem:unramification}, so the results here follow from Section~\ref{sssec:Ft-SO(3)}.
Specifically, we have
\[
\Perv_{H_\lambda}(V_\lambda) = \Perv_{H_\lambda}(V_\lambda)_{\dualgroup{T}} \oplus 
\Perv_{H_\lambda}(V_\lambda)_{\dualgroup{G}},
\]
where the simple objects in these summand categories are given here.
%\begin{spacing}{1.0}
\[
\begin{array}{ c || c }
\Perv_{H_\lambda}(V_\lambda)_{\dualgroup{T} /\text{iso}}^\text{simple} & \Perv_{H_\lambda}(V_\lambda)_{\dualgroup{G} /\text{iso}}^\text{simple}  \\
\hline\hline
\IC(\1^{+}_{C_0}) & \\
\IC(\1^{-}_{C_0}) & \\
\IC(\1^{+}_{C_y}) & \IC(\mathcal{E}^+_{C_y}) \\
\IC(\1^{-}_{C_y}) & \IC(\mathcal{E}^-_{C_y})
\end{array}
\]
%\end{spacing}

Since the diagram 
\[
\begin{tikzcd}
 \Rep(A_\lambda) \arrow{r} \arrow{d}{\id} & \Perv_{H_\lambda}(V_\lambda) \arrow[shift left]{r}{\pi^*}  \arrow{d}{\Ft}  & \arrow[shift left]{l}{\pi_*}  \Perv_{H_{\lambda_\text{hu}}}(V_{\lambda_\text{hu}})  \arrow{d}{\Ft}   \\ 
 \Rep(A_\lambda) \arrow{r} & \Perv_{H_\lambda}(V^*_\lambda) \arrow[shift left]{r}{\pi^*}  & \arrow[shift left]{l}{\pi_*}  \Perv_{H_{\lambda_\text{hu}}}(V^*_{\lambda_\text{hu}})    
\end{tikzcd}
\]
commutes, the Fourier transform is given on simple objects as follows.
%\begin{spacing}{1.0}
\[
\begin{array}{ r c l}
%\hline
\Ft: \Perv_{H_\lambda}(V_{\lambda}) &\mathop{\longrightarrow} &  \Perv_{H_\lambda}(V^*_{\lambda}) \\
%\hline\hline
\IC(\1^{+}_{C_{0}}) &\mapsto& \IC(\1^{+}_{C\orbdual_{0}}) = \IC(\1^{+}_{C^t_1}) \\
\IC(\1^{-}_{C_{0}}) &\mapsto& \IC(\1^{-}_{C\orbdual_{0}}) = \IC(\1^{-}_{C^t_1}) \\
\IC(\1^{+}_{C_{y}}) &\mapsto& \IC(\1^{+}_{C\orbdual_{y}}) = \IC(\1^{+}_{C^t_0}) \\
\IC(\1^{-}_{C_{y}}) &\mapsto& \IC(\1^{-}_{C\orbdual_{y}}) = \IC(\1^{-}_{C^t_0}) \\
%\hline
\IC(\mathcal{E}^{+}_{C_{y}}) &\mapsto& \IC(\mathcal{E}^{+}_{C\orbdual_{0}}) = \IC(\mathcal{E}^{+}_{C^t_y}) \\
\IC(\mathcal{E}^{-}_{C_{y}}) &\mapsto& \IC(\mathcal{E}^{-}_{C\orbdual_{0}}) = \IC(\mathcal{E}^{-}_{C^t_y}) %\\
%\hline
\end{array} 
\]
%\end{spacing}

\subsubsection{Equivariant perverse sheaves on the regular conormal bundle}\label{sssec:LocO-PGL(4)}

Recall that $H_{\lambda}$ orbits coincide with $H_{\lambda_\text{hu}}$-orbits.
The following diagram commutes:
\[
\begin{tikzcd}
 \Rep(A_\lambda) \arrow{r} \arrow[equal]{d} & \Perv_{H_\lambda}(C^*) \arrow[shift left]{r}{\pi^*}  \arrow{d} & \arrow[shift left]{l}{\pi_*}  \Perv_{H_{\lambda_\text{hu}}}(C^*) \arrow{d}  \\
 \Rep(A_\lambda) \arrow{r}  & \Perv_{H_\lambda}(T^*_{C}(V_\lambda)_\text{sreg}) \arrow[shift left]{r}{\pi^*}    & \arrow[shift left]{l}{\pi_*}  \Perv_{H_{\lambda_\text{hu}}}(T^*_{C}(V_{\lambda_\text{hu}})_\text{sreg})    \\
 \Rep(A_\lambda) \arrow{r} \arrow[equal]{u} & \Perv_{H_\lambda}(C) \arrow[shift left]{r}{\pi^*}  \arrow{u} & \arrow[shift left]{l}{\pi_*}  \Perv_{H_{\lambda_\text{hu}}}(C) \arrow{u} 
\end{tikzcd}
\]

We now describe the fundamental groups and associated equivariant local systems on the strongly regular conormal bundle $T^*_{H_\lambda}(V_\lambda)_\text{sreg}$ .
For the computation of the functor $\Ev : \Perv_{H_\lambda}(V_\lambda) \to \Perv_{H_\lambda}(T^*_{H_\lambda}(V_\lambda)_\textrm{reg})$ in Section~\ref{sssec:Ev-PGL(4)} we will need to know the effect of pullback along the bundle map $T^*_{H_\lambda}(V_\lambda)_\textrm{reg} \to V_\lambda$, so we also give that information below.
\begin{enumerate}
\item[$C_0$:]
We choose a base point for $T^*_{C_0}(V_\lambda)_\text{sreg}$:
\[
(x_0,\xi_0) = 
\begin{pmatrix}
0 & 0 \\
1 & 0 
\end{pmatrix}.
\]
Then $A_{(x_0,\xi_0)} = Z(\dualgroup{G}) \iso \mu_4$ and the bundle maps induce the following homomorphisms of fundamental groups:
\[
\begin{tikzcd}
%& Z(\dualgroup{G}) \arrow{d}[near start]{\id} & \\
\mu_2 \iso A_{x_0} & \arrow[>->>]{l} A_{(x_0,\xi_0)} \arrow{r}{\id} & A_{\xi_0}\iso \mu_4.
\end{tikzcd}
\]
Now label local systems on $T^*_{C_0}(V_\lambda)_\text{sreg}$ according to the following chart, which lists the corresponding characters of $A_{(x_0,\xi_0)}$ using the convention for characters of $\mu_4$ from Section~\ref{sssec:L-PGL(4)}.
\[
\begin{array}{| r cccc|}
\hline
\Loc_{H_\lambda}(T^*_{C_0}(V_\lambda)_\text{sreg}) : & \1^+_{\O_0} & \1^-_{\O_0} & \mathcal{E}^+_{\O_0}  & \mathcal{E}^-_{\O_0} \\ 
%\hline
\Rep(A_{(x_0,\xi_0)}) : & {+1} & {-1} & {+i} & {-i} \\ 
\hline
\end{array}
\]
Pullback of equivariant local systems along the bundle map $T^*_{C_0}(V_\lambda)_\text{sreg} \to C_0$ is given on simple objects by:
\[
\begin{array}{ccc}
\Loc_{H_\lambda}(C_0) & \rightarrow & \Loc_{H_\lambda}(T^*_{C_0}(V_\lambda)_\text{sreg}) \\ 
%\hline
\1^{\pm}_{C_0} &\mapsto& \1^{\pm}_{\O_0} \\
&& \mathcal{E}^{\pm}_{\O_0} .
 \end{array}
\]

\item[$C_y$:]
We choose a base point for $T^*_{C_y}(V_\lambda)_\text{sreg}$:
\[
(x_1,\xi_1) = 
\begin{pmatrix}
0 & 1 \\
0 & 0 
\end{pmatrix}.
\]
Then $A_{(x_1,\xi_1)} = Z(\dualgroup{G}) \iso \mu_4$ and the bundle maps induce the following homomorphisms of fundamental groups:
\[
\begin{tikzcd}
%& Z(\dualgroup{G}) \arrow{d}[near start]{\id} & \\
\mu_4 \iso A_{x_1} & \arrow{l}[swap]{\id} A_{(x_1,\xi_1)} \arrow{r} & A_{\xi_1}\iso \mu_2 .
\end{tikzcd}
\]
Now label local systems on $T^*_{C_y}(V_\lambda)_\text{sreg}$ according to the following chart, which lists the corresponding characters of $A_{(x_1,\xi_1)}$ using the convention for characters of $\mu_4$ from Section~\ref{sssec:L-PGL(4)}.
\[
\begin{array}{| r cccc|}
\hline
\Loc_{H_\lambda}(T^*_{C_y}(V_\lambda)_\text{sreg}) : & \1^+_{\O_y} & \1^-_{\O_y} & \mathcal{E}^+_{\O_y}  & \mathcal{E}^-_{\O_y} \\ 
%\hline
\Rep(A_{(x_1,\xi_1)}) : & {+1} & {-1} & {+i} & {-i} \\ 
\hline
\end{array}
\]
Pullback of equivariant local systems along the bundle map $T^*_{C_y}(V_\lambda)_\text{sreg}\to C_y$ is given on simple objects by:
\[
\begin{array}{ccc}
\Loc_{H_\lambda}(C_y) & \rightarrow & \Loc_{H_\lambda}(T^*_{C_y}(V_\lambda)_\text{sreg})  \\ 
%\hline
\1^{\pm}_{C_y} &\mapsto& \1^{\pm}_{\O_y} \\
\mathcal{E}^{\pm}_{C_y} &\mapsto & \mathcal{E}^{\pm}_{\O_y}.
\end{array}
\]
\end{enumerate}

\subsubsection{Vanishing cycles of perverse sheaves}\label{sssec:Ev-PGL(4)}

Table~\ref{table:Ev-PGL(4)} gives the functor $\Ev : \Perv_{H_\lambda}(V_\lambda)\to \Perv_{H_\lambda}(T^*_{H_\lambda}(V_\lambda)_\textrm{reg})$ on simple objects. 
These calculations follow from Table~\ref{table:Ev-SO(3)}.

\begin{table}
\caption{$\pEv : \Perv_{H_\lambda}(V_\lambda) \to  \Perv_{H_\lambda}(T^*_{H_\lambda}(V_\lambda)_\textrm{reg})$ on simple objects, for $\lambda : W_F \to \Lgroup{G}$ given at the beginning of Section~\ref{sec:PGL(4)}.}
\label{table:Ev-PGL(4)}
\[
\begin{array}{ c c c  }
 \Perv_{H_\lambda}(V_\lambda) 
 	& \mathop{\longrightarrow}\limits^{\pEv}
	&  \Perv_{H_\lambda}(T^*_{H_\lambda}(V_\lambda)_\textrm{reg})\\
 \IC(\1^{+}_{C_{0}}) &\mapsto& \IC(\1^{+}_{\O_{0}}) \\
 \IC(\1^{-}_{C_{0}}) &\mapsto& \IC(\1^{-}_{\O_{0}}) \\
 \IC(\1^{+}_{C_y}) &\mapsto& \IC(\1^{+}_{\O_y}) \\
 \IC(\1^{-}_{C_y}) &\mapsto& \IC(\1^{-}_{\O_y}) \\
 \IC(\mathcal{E}^{+}_{C_y}) &\mapsto& \IC(\mathcal{E}^{+}_{\O_y}) \oplus \IC(\mathcal{E}^{+}_{\O_{0}})  \\
 \IC(\mathcal{E}^{-}_{C_y}) &\mapsto& \IC(\mathcal{E}^{-}_{\O_y}) \oplus \IC(\mathcal{E}^{-}_{\O_{0}})  %\\
\end{array} 
\]
\end{table}

\begin{table}
\caption{$\Evs : \Perv_{H_\lambda}(V_\lambda) \to  \Loc_{H_\lambda}(T^*_{H_\lambda}(V_\lambda)_\textrm{reg})$ on simple objects, for $\lambda : W_F \to \Lgroup{G}$ given at the beginning of Section~\ref{sec:PGL(4)}.}
\label{table:Evs-PGL(4)}
\[
\begin{array}{c || cc }
\mathcal{P} & \Evs_{C_0}\mathcal{P} & \Evs_{C_1}\mathcal{P} \\
\hline\hline
\IC(\1^{+}_{C_{0}}) & {+1} & 0 \\
\IC(\1^{-}_{C_{0}}) & {-1} & 0 \\
\IC(\1^{+}_{C_{y}}) & 0 & {+1} \\ 
\IC(\1^{-}_{C_{y}}) & 0 & {-1} \\ 
\hline
 \IC(\mathcal{E}^{+}_{C_{y}}) & {+i} & {+i} \\
 \IC(\mathcal{E}^{-}_{C_{y}}) & {-i} & {-i}
\end{array}
\]
\end{table}

\subsubsection{Normalization of Ev and the twisting local system}\label{sssec:NEv-PGL(4)}

From Table~\ref{table:Ev-PGL(4)} we see that the twisting local system $\mathcal{T}$ is trivial in this case, so $\pNEv = \pEv$.

\subsubsection{Vanishing cycles and Fourier transform}\label{sssec:EvFt-PGL(4)}

Comparing the table below with $\Ft : \Perv_{H_\lambda}(V_\lambda) \to \Perv_{H_\lambda}(V_\lambda^*)$ from Section~\ref{sssec:Ft-PGL(4)} verifies \eqref{eqn:NEvFt-overview} in this example.
\begin{spacing}{1.3}
\[
\begin{array}{ c c c c c c c }
%\hline
 \Perv_{H_\lambda}(V_\lambda) 
 	& \mathop{\longrightarrow}\limits^{\pEv}
	&  \Perv_{H_\lambda}(T^*_{H_\lambda}(V_\lambda)_\textrm{reg})
	& \mathop{\rightarrow}\limits^{a_*}
	&  \Perv_{H_\lambda}(T^*_{H_\lambda}(V_\lambda^*)_\textrm{reg})
	& \mathop{\longleftarrow}\limits^{\Ev^*}
	& \Perv_{H_\lambda}(V_\lambda^*)  \\
%\hline\hline
 \IC(\1^{\pm}_{C_0}) &\mapsto& \IC(\1^{\pm}_{\O_0}) &\mapsto& \IC(\1^{\pm}_{\O^*_0}) &\mapsfrom & \IC(\1^{\pm}_{C^*_0}) \\
 \IC(\1^{\pm}_{C_y}) &\mapsto& \IC(\1^{\pm}_{\O_y}) &\mapsto& \IC(\1^{\pm}_{\O^*_y}) &\mapsfrom& \IC(\1^{\pm}_{C^*_y}) \\
%\hline
 \IC(\mathcal{E}^{\pm}_{C_y}) &\mapsto& \IC(\mathcal{E}^{\pm}_{\O_y}) \oplus \IC(\mathcal{E}^{\pm}_{\O_0}) &\mapsto& \IC(\mathcal{E}^{\pm}_{\O^*_y}) \oplus \IC(\mathcal{E}^{\pm}_{\O^*_0})  &\mapsfrom & \IC(\mathcal{E}^{\pm}_{C^*_0}) %\\
%\hline
\end{array} 
\]
\end{spacing}

\subsubsection{Arthur sheaves}\label{sssec:AS-PGL(4)}

%\begin{spacing}{1.0}
\[
\begin{array}{ c || l  r  }
%\hline
\text{Arthur  sheaf} &  \text{packet sheaves}  &  \text{coronal sheaves} \\
\hline\hline
\mathcal{A}_{C_{0}} 
	&  \IC(\1^{+}_{C_{0}}) \oplus \IC(\1^{-}_{C_{0}}) \ \oplus
	&  \IC(\mathcal{E}^{+}_{C_{y}}) \oplus \IC(\mathcal{E}^{-}_{C_{y}}) \\
\mathcal{A}_{C_{y}} 
	& \IC(\1^{+}_{C_{y}}) \oplus  \IC(\1^{-}_{C_{y}}) \oplus \IC(\mathcal{E}^{+}_{C_{y}}) \oplus  \IC(\mathcal{E}^{-}_{C_{y}})
	&  %\\
%\hline
\end{array}
\]
%\end{spacing}

\subsection{ABV-packets}

\subsubsection{Admissible representations versus perverse sheaves}\label{sssec:VC-PGL(4)}

%Vogan's bijection between $\Perv_{H_\lambda}(V_\lambda)^\text{simple}_{/\text{iso}}$ and $\Pi^\mathrm{pure}_{\lambda}(G/F)$ is given by the following table.
%\begin{spacing}{1.0}
\[
\begin{array}{ c || c }
%\hline
\Perv_{H_\lambda}(V_\lambda)^\text{simple}_{/\text{iso}} & \Pi^\mathrm{pure}_{\lambda}(G/F)  \\
\hline\hline
\IC(\1^+_{C_0}) & (\pi(\phi_0,+),0)  \\
\IC(\1^-_{C_0}) & (\pi(\phi_0,-),2)  \\
\IC(\1^+_{C_y}) & (\pi(\phi_1,{+1}),0)  \\
\IC(\1^-_{C_y}) & (\pi(\phi_1,{-1}),2)  \\
\hline
\IC(\mathcal{E}^+_{C_y}) & (\pi(\phi_1,{+i}),1) \\
\IC(\mathcal{E}^-_{C_y}) & (\pi(\phi_1,{-i}),3) %\\
%\hline
\end{array}
\]
%\end{spacing}

\subsubsection{ABV-packets}

\begin{spacing}{1.3}
\[
\begin{array}{ l || l | r }
%\hline 
\text{ABV-packets}  & \text{pure L-packet representations}  &  \text{coronal representations}\\
\hline\hline
\Pi^\ABV_{\phi_0}(G/F) : 
	&  [(\pi(\phi_{0},+),0)] ,\ [(\pi(\phi_{0},-),2)]  & [(\pi(\phi_1,{+i}),1)] ,\ [(\pi(\phi_1,{-i}),3)] \\
\hline
\Pi^\ABV_{\phi_1}(G/F) : 
	&  [(\pi(\phi_1,+1),0)] ,\ [(\pi(\phi_1,+i),1)]    & \\
	&  [(\pi(\phi_1,{-1}),2)] ,\ [(\pi(\phi_1,{-i}),3)] & 
%\hline
\end{array}
\]
\end{spacing}

\subsubsection{Stable distributions and endoscopic transfer}

%
\iffalse
\[
\begin{array}{rcl}
\eta^{\NEvs}_{\psi_0,s}
&=& {+1}(-s) [\pi(\phi_{0},+),0] + {-1}(-s) [\pi(\phi_{0},-),2] \\
&& + {+i}(-s) [\pi(\phi_1,{+i}),1] + {-i}(-s) [\pi(\phi_1,{-i}),3] \\
&=&  [\pi(\phi_{0},+),0] + {-1}(s) [\pi(\phi_{0},-),2] \\
&& - {+i}(s) [\pi(\phi_1,{+i}),1] - {-i}(s) [\pi(\phi_1,{-i}),3] 
\end{array}
\]
so
%
\fi
\begin{spacing}{1.2}
\[
\begin{array}{rcl}
\eta^{\Evs}_{\psi_0} = \eta^{\NEvs}_{\psi_0,1} &=& [(\pi(\phi_{0},+),0] + [(\pi(\phi_{0},-),2] + [(\pi(\phi_1,{+i}),1)] + [(\pi(\phi_1,{-i}),3)] \\
\eta^{\NEvs}_{\psi_0,-1} &=& [(\pi(\phi_{0},+),0)] + [(\pi(\phi_{0},-),2)] - [(\pi(\phi_1,{+i}),1)] - [(\pi(\phi_1,{-i}),3)]  \\
\eta^{\NEvs}_{\psi_0,i} &=& [(\pi(\phi_{0},+),0)] - [(\pi(\phi_{0},-),2)] + i [(\pi(\phi_1,{+i}),1)] -i [(\pi(\phi_1,{-i}),3)] \\
\eta^{\NEvs}_{\psi_0,-i} &=& [(\pi(\phi_{0},+),0)] - [(\pi(\phi_{0},-),2)] - i [(\pi(\phi_1,{+i}),1)] +i [(\pi(\phi_1,{-i}),3]  
\end{array}
\]
\end{spacing}

%
\iffalse
\[
\begin{array}{rcl}
\eta^{\NEvs}_{\psi_0,s}
&=& {+1}(s) [\pi(\phi_1,1),0] + {-1}(s) [\pi(\phi_1,i),1]\\
&& + {+i}(s) [\pi(\phi_1,{-1}),2] + {-i}(s) [\pi(\phi_1,{-i}),3] 
\end{array}
\]
so
%
\fi
\begin{spacing}{1.2}
\[
\begin{array}{rcl}
\eta^{\Evs}_{\psi_1} = \eta^{\NEvs}_{\psi_1,1} &=& [(\pi(\phi_1,1),0)] - [(\pi(\phi_1,i),1)] + [(\pi(\phi_1,{-1}),2)] - [(\pi(\phi_1,{-i}),3)]  \\
\eta^{\NEvs}_{\psi_1,-1} &=& [(\pi(\phi_1,1),0)] - [(\pi(\phi_1,i),1)] - [(\pi(\phi_1,{-1}),2)] + [(\pi(\phi_1,{-i}),3)]  \\
\eta^{\NEvs}_{\psi_1,i} &=& [(\pi(\phi_1,1),0)] + [(\pi(\phi_1,i),1)] + i [(\pi(\phi_1,{-1}),2)] + i  [(\pi(\phi_1,{-i}),3)]  \\
\eta^{\NEvs}_{\psi_1,-i} &=& [(\pi(\phi_1,1),0)] + [(\pi(\phi_1,i),1)] -i [(\pi(\phi_1,{-1}),2)] - i [(\pi(\phi_1,{-i}),3)] 
\end{array}
\]
\end{spacing}
\noindent
Comparing with Section~\ref{sssec:eta-PGL(4)} proves \eqref{eqn:Conjecture2}.

\subsubsection{Kazhdan-Lusztig conjecture}\label{sssec:KL-PGL(4)}

From Section~\ref{sssec:mrep-PGL(4)} we find the multiplicity matrix:
\[
m_\text{rep}
=
\begin{pmatrix}
1 & 0 & 1 & 0 & 0 & 0 \\
0 & 1 & 0 & 1 & 0 & 0 \\
0 & 0  & 1 & 0 & 0 & 0 \\
0 & 0  & 0 & 1 & 0 & 0 \\
0 & 0 & 0 & 0 & 1 & 0 \\
0 & 0 & 0 & 0 & 0 & 1
\end{pmatrix},
\]
and from Section~\ref{sssec:EPS-PGL(4)} we find the normalized geometric multiplicity matrix
\[
m'_\text{geo}
=
\begin{pmatrix}
1 & 0 & 0 & 0 & 0 & 0 \\
0 & 1 & 0 & 0 & 0 & 0 \\
1 & 0  & 1 & 0 & 0 & 0 \\
0 & 1  & 0 & 1 & 0 & 0 \\
0 & 0 & 0 & 0 & 1 & 0 \\
0 & 0 & 0 & 0 & 0 & 1 
\end{pmatrix}.
\]
Since $m_\text{rep}^t = m_\text{geo}'$, this proves the Kazhdan-Lusztig conjecture \eqref{eqn:KL} in this case.
Notice that
\[
\begin{pmatrix}
1 & 0 & 1 & 0 & 0 & 0 \\
0 & 1 & 0 & 1 & 0 & 0 \\
0 & 0  & 1 & 0 & 0 & 0 \\
0 & 0  & 0 & 1 & 0 & 0 \\
0 & 0 & 0 & 0 & 1 & 0 \\
0 & 0 & 0 & 0 & 0 & 1
\end{pmatrix}
=
\begin{pmatrix}
1 & 1 & 0 \\
0 & 1 & 0 \\
0 & 0 & 1 
\end{pmatrix}
\otimes
\begin{pmatrix}
1 & 0 \\
0 & 1
\end{pmatrix}
\]
and compare with Section~\ref{sssec:KL-SO(3)}.
Recall that this allows us to confirm Conjecture~\ref{conjecture:2} as it applies to this example, as explained in Section~\ref{sssec:KL-overview}.

\subsubsection{Aubert duality and Fourier transform}\label{sssec:AubertFt-PGL(4)}

To verify \eqref{eqn:AubertFt-overview}, use Vogan's bijection from Section~\ref{sssec:VC-PGL(4)} to compare Aubert duality from Section~\ref{sssec:Aubert-PGL(4)} with the Fourier transform from Section~\ref{sssec:Ft-PGL(4)} 

To verify \eqref{eqn:twisting-overview}, observe that the twisting characters $\chi_\psi$ of $A_\psi$ from Section~\ref{sssec:Aubert-PGL(4)} are trivial, as are the local systems $\mathcal{T}_\psi$ from Section~\ref{sssec:EvFt-PGL(4)}.

\subsection{Endoscopy and equivariant restriction of perverse sheaves}

The material of Section~\ref{ssec:restriction-overview} is trivial in this example, since $Z_{\dualgroup{G}}(\psi) = Z(\dualgroup{G})$.

\section{SO(5) unipotent representations, regular parameter}\label{sec:SO(5)regular}

In this example, of the four Langlands parameters with infinitesimal parameter $\lambda$ below, only two are of Arthur type. 
Accordingly, we find two ABV-packet that are not Arthur packets.

%Since $\Perv_{H_\lambda}(V_\lambda)$ was quite simple in the last example, we now consider an example when $V_{\lambda}$ has a less trivial stratification and $\Perv_{H_\lambda}(V_\lambda)$ has more interesting simple objects.

Let $G = \SO(5)$, so $\dualgroup{G} = \Sp(4)$ and  $\Lgroup{G} = \dualgroup{G}\times W_F$. 
As in the cases above, 
\[
H^1(F,G) = H^1(F,G_{\ad}) = H^1(F,\Aut(G)) \iso \ZZ/2\ZZ,
\]
so there are two isomorphism classes of rational forms of $G$, each pure.
We will use the notation $G = G$ and $G_1$ for the non-quasisplit form of $\SO(5)$ given by the quadratic form
\[
\begin{pmatrix}
 0 & 0 & 0 & 0 & 1 \\
0 & -\varepsilon\varpi & 0 & 0 & 0 \\
0 & 0 & \varepsilon & 0 & 0 \\
0 & 0 & 0 & \varpi & 0 \\
1 & 0 & 0 & 0 & 0 \\
\end{pmatrix}.
\]
Let $\lambda : W_F\to \dualgroup{G}$ be the unramified homomorphism 
\[
\lambda(\Frob) =  
\begin{pmatrix} 
\abs{w}^{3/2} & 0 & 0 & 0 \\ 
0 & \abs{w}^{1/2} & 0 & 0 \\
0 & 0 & \abs{w}^{-1/2} & 0 \\
0 & 0 & 0 & \abs{w}^{-3/2} 
\end{pmatrix}.
\]
Here and below we use the symplectic form $\langle x,y\rangle = \transpose{x} J  y$ with matrix $J$ given by $J_{ij} = (-1)^j \delta_{5-i,j}$
%\left(
%\begin{smallmatrix} 
% 0 & 0 & 0 & 1 \\
% 0 & 0 & -1 & 0 \\
% 0 & 1 & 0 & 0 \\
% -1 & 0 & 0 & 0 \\ 
%\end{smallmatrix}
%\right)
to determine a representation of $\dualgroup{G} = \Sp(4)$.

Although this example exhibits some interesting geometric phenomena, there is still no interesting endoscopy here.
Nevertheless, this example will be important later when we consider other groups for which $\SO(5)$ is an endoscopic group.

\subsection{Arthur packets}

\subsubsection{Parameters}\label{sssec:P-SO(5)regular}

Up to $Z_{\dualgroup{G}}(\lambda)$-conjugation, there are four Langlands parameters with infinitesimal parameter $\lambda$: %\todo{Error in $\phi_1$, below. Now fixed.}
%\begin{spacing}{1.0}
\[
\begin{array}{rcl}
\phi_0(w,x) &=& \nu_4(d_w) = \lambda(w) ,
\\
\phi_1(w,x) &=& 
\nu_2^2(d_w) \otimes \nu_2(x)
=
\begin{pmatrix}
\begin{array}{cc|cc} 
\abs{w} x_{11} &\abs{w} x_{12} & 0 & 0 \\ 
\abs{w} x_{21} &\abs{w} x_{22} & 0 & 0 \\ \hline
0 & 0  & \abs{w}^{-1} x_{11} & \abs{w}^{-1} x_{12} \\
0 & 0  & \abs{w}^{-1} x_{21} & \abs{w}^{-1} x_{22} \\ 
\end{array}
\end{pmatrix} , %=(\abs{w} \otimes \nu_2(x)) \oplus (\abs{w}^{-1} \otimes \nu_2(x))
\\
\phi_2(w,x) &=& 
\nu_2^3(d_w) \oplus \nu_2(x)
=
\begin{pmatrix}
\begin{array}{c|cc|c} 
\abs{w}^{3/2} & 0 & 0 & 0  \\ \hline 
0  & x_{11} & x_{12} & 0  \\ 
0  & x_{21} & x_{22} & 0 \\ \hline
0 & 0 & 0 & \abs{w}^{-3/2}
\end{array}
\end{pmatrix},   %=\abs{w}^{3/2} \oplus \nu_2(x) \oplus \abs{w}^{-3/2}
\\
\phi_{3}(w,x) &=& \nu_4(x),
\end{array}
\]
%\end{spacing}
\noindent
where $\nu_4 : \SL(2)\to \Sp(4)$ is the irreducible $4$-dimensional representation of $\SL(2)$. % and $x''_{ij} \ceq \nu_4(x)_{ij}$.
Of the four Langlands parameters $\phi_0$, $\phi_1$,  $\phi_2$ and $\phi_{3}$, only $\phi_0$ and  $\phi_{3}$ are of Arthur type; define
\[
\begin{array}{rcl c rcl}
\psi_0(w,x,y) &\ceq&  \nu_4(y), &\text{\ and\ } & 
\psi_{3}(w,x,y) &\ceq& \nu_4(x).
\end{array}
\]

\subsubsection{L-packets}

The component groups  $A_{\phi_0}$ and $A_{\phi_1}$ are trivial, while the component groups $A_{\phi_2}$ and $A_{\phi_{3}}$ each have order two, being canonically  isomorphic to $Z(\dualgroup{G})$.
Therefore, the representations in play in this example are:
\[
\begin{array}{rcl  c rcl}
\Pi_{\phi_0}(G(F)) &=& \{ \pi(\phi_0)\},		&\quad& \Pi_{\phi_0}(G_1(F)) &=& \emptyset ,\\
\Pi_{\phi_1}(G(F)) &=&  \{ \pi(\phi_1)\},		&&  \Pi_{\phi_1}(G_1(F)) &=& \emptyset , \\
\Pi_{\phi_2}(G(F)) &=&  \{ \pi(\phi_2,+)\}	,	&& \Pi_{\phi_2}(G_1(F)) &=& \{ \pi(\phi_2,-)\} ,\\
\Pi_{\phi_{3}}(G(F)) &=& \{ \pi(\phi_{3},+)\},	&& \Pi_{\phi_{3}}(G_1(F)) &=& \{ \pi(\phi_{3},-)\} .	
\end{array}
\]

Of the four admissible representations of $G(F)$ with infinitesimal parameter $\lambda$, only $\pi(\phi_3,+)$ is tempered -- this is the Steinberg representation of $\SO(5,F)$. 
%\todo{Is this correct, Ahmed? OK}
The representation $\pi(\phi_1)$ (resp. $\pi(\phi_2,+)$) is denoted by $L(\nu\zeta\operatorname{St}_{\GL(2)})$ (resp. $L(\nu^{3/2}\zeta,\zeta \operatorname{St}_{\SO(3)})$) with $\zeta=1$ in \cite{Matic:Unitary}.
When arranged into pure packets, we get
\[
\begin{array}{lcl }
\Pi^\mathrm{pure}_{\phi_0}(G/F) &=& \{ (\pi(\phi_0),0) \} \\
\Pi^\mathrm{pure}_{\phi_1}(G/F) &=&  \{ (\pi(\phi_1),0) \}	\\
\Pi^\mathrm{pure}_{\phi_2}(G/F) &=&  \{ (\pi(\phi_2,+),0),\ (\pi(\phi_2,-),1) \} \\
\Pi^\mathrm{pure}_{\phi_{3}}(G/F) &=& \{ (\pi(\phi_{3},+),0),\ (\pi(\phi_{3},-),1) \} 	.
\end{array}
\]

\subsubsection{Multiplicities in standard modules}\label{sssec:mrep-SO(5)}

The standard module $M(\phi_1)$ (resp. $M(\phi_2,+)$) is denoted by $\nu\zeta \operatorname{St}_{\GL(2)} \rtimes 1$ (resp. $\nu^{3/2}\zeta\rtimes \operatorname{St}_{\SO(3)}$) with $\zeta=1$ in \cite{Matic:Unitary}.
The following table may be deduced from \cite[Proposition 3.3]{Matic:Unitary}.
%\todo{Could you please check this too, Ahmed? OK}
\begin{spacing}{1.3}
\[
\begin{array}{ c || c c c c | c c }
%\hline
{} & \pi(\phi_0)  & \pi(\phi_{1}) & \pi(\phi_{2},+) & \pi(\phi_{3},+) &\pi(\phi_{2},-) & \pi(\phi_{3},-)  \\
\hline\hline
M(\phi_{0}) 		&  1 & 1 & 1 & 1 & 0 & 0 \\
M(\phi_{1}) 		&  0 & 1 & 0 & 1 & 0 & 0 \\
M(\phi_{2},{+}) 		&  0 & 0 & 1 & 1 & 0 & 0 \\
M(\phi_{3},{+}) 	&  0 & 0 & 0 & 1 & 0 & 0 \\
\hline
M(\phi_{2}, {-}) 		& 0 & 0 & 0 & 0 & 1 & 1 \\
M(\phi_{3},{-}) 	& 0 & 0 & 0 & 0 & 0 & 1%\\
%\hline
\end{array}
\]
\end{spacing}

\subsubsection{Arthur packets}\label{sssec:Arthur-SO(5)}

The Arthur packets for these representations are
\[
\begin{array}{rcl c rcl}
\Pi_{\psi_0}(G(F)) &=& \{ \pi(\phi_0)  \} ,
	&\quad&   \Pi_{\psi_0}(G_1(F)) &=& \{  \pi(\phi_2,-) \} ,\\
\Pi_{\psi_{3}}(G(F)) &=& \{ \pi(\phi_{3},+)  \} ,
	&& \Pi_{\psi_{3}}(G_1(F)) &=& \{  \pi(\phi_{3},-) \} . 
\end{array}
\]
When arranged into pure packets, we get
\[
\begin{array}{rcl }
\Pi^\mathrm{pure}_{\psi_0}(G/F) &=& \{ (\pi(\phi_0) ,0),\ (\pi(\phi_2,-),1) \} ,\\
\Pi^\mathrm{pure}_{\psi_{3}}(G/F) &=& \{ (\pi(\phi_{3},+),0),\  (\pi(\phi_{3},-),1) \} . 
\end{array}
\]

\subsubsection{Aubert duality}\label{sssec:Aubert-SO(5)regular}

Aubert duality for $G(F)$ and $G_1(F)$ are given by the following table.
\[
\begin{array}{c || c }
\pi & {\hat \pi}   \\
\hline\hline
\pi(\phi_0) & \pi(\phi_3,+) \\
\pi(\phi_1) & \pi(\phi_2,+) \\
\pi(\phi_2,+) & \pi(\phi_1) \\
\pi(\phi_3,+) & \pi(\phi_0) \\
\hline
\pi(\phi_2,-) & \pi(\phi_3,-) \\
\pi(\phi_3,-) & \pi(\phi_2,-)
\end{array}
\]

The twisting characters $\chi_{\psi_0}$ and $\chi_{\psi_1}$ are trivial.

\subsubsection{Stable distributions and endoscopic transfer}\label{sssec:eta-SO(5)regular}

For $s\in Z(\dualgroup{G}) \iso \mu_2$, the virtual representations $\eta_{\psi_0,s}$ and  $\eta_{\psi_3,s}$ are given by
\[
\begin{array}{rcl}
\eta_{\psi_0} = \eta_{\psi_0,1} &=& [(\pi(\phi_0),0)] + [(\pi(\phi_2,-),1)]  \\
\eta_{\psi_0,-1} &=& [(\pi(\phi_0),0)] - [(\pi(\phi_2,-),1)]  \\
\end{array}
\]
and
\[
\begin{array}{rcl}
\eta_{\psi_3} = \eta_{\psi_3,1} &=& [(\pi(\phi_3,+),0)] - [(\pi(\phi_3,-),1)] \\
\eta_{\psi_3,-1} &=& [(\pi(\phi_3,+),0] + [(\pi(\phi_3,-),1].
\end{array}
\]
There are no endoscopic groups relevant to $\psi_0$ or $\psi_3$ other than $G$.

\subsection{Vanishing cycles of perverse sheaves}

\subsubsection{Vogan variety and orbit duality}

Now 
\[
H_\lambda =  Z_{\dualgroup{G}}(\lambda) = 
\left\{ 
\begin{pmatrix} 
t_1 & 0 & 0 & 0 \\ 
0 & t_2 & 0 & 0 \\
0 & 0 & t_2^{-1} & 0 \\
0 & 0 & 0 & t_1^{-1} 
\end{pmatrix}
\tq 
\begin{array}{c}
{t_1\neq 0} \\
{t_2\neq 0}
\end{array}
\right\}.
\] 

The varieties $V_\lambda$ and $V_\lambda^*$ are given by
\[
\begin{array}{rcl}
V_\lambda
= 
\left\{ 
\begin{pmatrix} 
0 & u & 0 & 0 \\
0 & 0 & x & 0 \\ 
0 & 0  & 0 & u \\
0 & 0 & 0 & 0
\end{pmatrix}
\tq \ 
\begin{array}{c}
{u,x}\\
\end{array} 
\right\} ,
&&
V_\lambda^*
=
\left\{ 
\begin{pmatrix} 
0 & 0 & 0 & 0 \\
 u\tran & 0 & 0 & 0 \\ 
0 & x\tran   & 0 & 0 \\
0 & 0 &  u\tran & 0
\end{pmatrix}
\tq \ 
\begin{array}{c}
{ u\tran,x\tran}\\
\end{array} 
\right\} .
\end{array}
\]

The action of $H_\lambda$ on $T^*(V_\lambda)$ is given by
\[
\begin{pmatrix} 
t_1 & 0 & 0 & 0 \\ 
0 & t_2 & 0 & 0 \\
0 & 0 & t_2^{-1} & 0 \\
0 & 0 & 0 & t_1^{-1} 
\end{pmatrix}
: 
\begin{pmatrix} 
0 & u & 0 & 0 \\
 u\tran & 0 & x & 0 \\ 
0 & x\tran   & 0 & u \\
0 & 0 &  u\tran & 0
\end{pmatrix}
\mapsto
\begin{pmatrix} 
0 & t_1 t_2^{-1} u & 0 & 0 \\
t_1^{-1} t_2  u\tran & 0 & t_2^2 x & 0 \\ 
0 &  t_2^{-2} x\tran   & 0 & t_1 t_2^{-1} u \\
0 & 0 & t_1^{-1} t_2  u\tran & 0
\end{pmatrix}.
\]
%
\iffalse
The $H_\lambda$-invariant function $(\, \cdot\,  \vert \, \cdot\, ) : T^*(V_\lambda) \to \mathbb{A}^1$ is given by
\[
\begin{pmatrix} 
0 & u & 0 & 0 \\
 u\tran & 0 & x & 0 \\ 
0 & x\tran   & 0 & u \\
0 & 0 &  u\tran & 0
\end{pmatrix}
\mapsto
2 u u\tran + x x\tran
\]

The $H_\lambda$-invariant function $[\, \cdot\,  , \, \cdot\, ] : T^*(V_\lambda) \to \mathfrak{h}_\lambda$ is given by
\[
\begin{pmatrix} 
0 & u & 0 & 0 \\
 u\tran & 0 & x & 0 \\ 
0 & x\tran   & 0 & u \\
0 & 0 &  u\tran & 0
\end{pmatrix}
\mapsto
u u\tran \begin{pmatrix} 
1 & 0 & 0 & 0 \\
0 & -1 & 0 & 0 \\ 
0 & 0  & 1 & 0 \\
0 & 0 & 0 & -1
\end{pmatrix}
+
x x\tran \begin{pmatrix} 
0 & 0 & 0 & 0 \\
0 & 1  & 0 & 0 \\ 
0 & 0  & -1 & 0 \\
0 & 0 & 0 & 0
\end{pmatrix}
\]
So 
%
\fi
The conormal bundle is
\[
T^*_{H_\lambda}(V_\lambda)
\iso
\left\{ 
\begin{pmatrix} 
0 & u & 0 & 0 \\
 u\tran & 0 & x & 0 \\ 
0 & x\tran   & 0 & u \\
0 & 0 &  u\tran & 0
\end{pmatrix}
\tq \ 
\begin{array}{c}
{u  u\tran = 0}\\
{x  x\tran  = 0}
\end{array} 
\right\} .
\] 
%In this case, $T^*_{H_\lambda}(V_\lambda)$ is a strict subvariety of $(\, \cdot\,  \vert \, \cdot\, )^{-1}(0)$.

Now $V_\lambda$ is stratified into the following $H_\lambda$-orbits:
\[
\begin{array}{rcl}
C_0 \ceq 
\left\{\begin{pmatrix} 
0 & 0 & 0 & 0 \\
0 & 0 & 0 & 0 \\ 
0 & 0  & 0 & 0 \\
0 & 0 & 0 & 0
\end{pmatrix}
\right\} ,
&&
C_{3} \ceq
\left\{ 
\begin{pmatrix} 
0 & u & 0 & 0 \\
0 & 0 & x & 0 \\ 
0 & 0  & 0 & u \\
0 & 0 & 0 & 0
\end{pmatrix}
\tq 
\begin{array}{c}
{u \ne 0}\\
{x \ne 0}
\end{array}
\right\},
\end{array}
\]
and
\[
\begin{array}{lcr}
C_{u} \ceq
\left\{ \begin{pmatrix} 
0 & u & 0 & 0 \\
0 & 0 & 0 & 0 \\ 
0 & 0  & 0 & u \\
0 & 0 & 0 & 0
\end{pmatrix}
\tq 
\begin{array}{c}
{u \ne 0}\\
\end{array}
\right\}  ,
&&
C_{x} \ceq 
\left\{\begin{pmatrix} 
0 & 0 & 0 & 0 \\
0 & 0 & x & 0 \\ 
0 & 0  & 0 & 0 \\
0 & 0 & 0 & 0
\end{pmatrix}
\tq 
\begin{array}{c}
{x \ne 0}
\end{array}
\right\} .
\end{array} 
\]
The dual orbits in $V_\lambda^*$ are
\[
\begin{array}{rcl}
C^*_0 =
\left\{\begin{pmatrix} 
0 & 0 & 0 & 0 \\
 u\tran & 0 & 0 & 0 \\ 
0 & x\tran   & 0 & 0 \\
0 & 0 &  u\tran & 0
\end{pmatrix}
\tq
\begin{array}{c}
{ u\tran \ne 0}\\
{x\tran  \ne 0}
\end{array}
\right\}  ,
&&
C^*_{ux} = 
\left\{ 
\begin{pmatrix} 
0 & 0 & 0 & 0 \\
0 & 0 & 0 & 0 \\ 
0 & 0  & 0 & 0 \\
0 & 0 & 0 & 0
\end{pmatrix}
\right\},
\end{array}
\]
and
\[
\begin{array}{lcr}
C^*_{u} =
\left\{ \begin{pmatrix} 
0 & 0 & 0 & 0 \\
0 & 0 & 0 & 0 \\ 
0 & x\tran   & 0 & 0 \\
0 & 0 & 0 & 0
\end{pmatrix}
\tq 
\begin{array}{c}
{x\tran  \ne 0}\\
\end{array}
\right\} , 
&&
C^*_{x} =
\left\{\begin{pmatrix} 
0 & 0 & 0 & 0 \\
 u\tran & 0 & 0 & 0 \\ 
0 & 0  & 0 & 0 \\
0 & 0 &  u\tran & 0
\end{pmatrix}
\tq 
\begin{array}{c}
{ u\tran \ne 0}
\end{array}
\right\} .
\end{array} 
\]
The following diagram gives the closure relations for these orbits.
\[
\begin{tikzcd}[column sep=5]
{} & C_{ux} = {\hat C}_0  &  & \dim =2 & {} & C^*_{0} = C^t_{ux}  \\
C_{u} = {\hat C}_{x} \arrow{ur} && \arrow{ul} C_{x} = {\hat C}_{u} & \dim =1 & C^*_{u}  = C^t_{x} \arrow{ur} && \arrow{ul} C^*_{x} = C^t_{u}\\
{} & \arrow{ul} C_0  = {\hat C}_{ux} \arrow{ur} & & \dim =0 & {} & \arrow{ul} C^*_{ux} = C^t_{0} \arrow{ur} &
\end{tikzcd}
\]

\subsubsection{Equivariant perverse sheaves}\label{sssec:EPS-SO(5)regular}

The equivariant fundamental groups for $C_0$ and $C_{u}$ are trivial, so they each carry only one  equivariant local system, denoted by $\1_{C_0}$ and $\1_{C_{u}}$,  respectively.
The equivariant fundamental groups for $C_{x}$ and $C_{ux}$ have order two, so they each carry two equivariant local systems, denoted by $\1_{C_{x}}$, $\mathcal{L}_{C_{x}}$, $\1_{C_{ux}}$ and  $\mathcal{L}_{C_{ux}}$.
Thus, 
\[
\Perv_{H_\lambda}(V_\lambda)^\text{simple}_{/\text{iso}} = 
\{ 
\IC(\1_{C_0}),\ 
\IC(\1_{C_{u}}),\ 
\IC(\1_{C_{x}}),\ 
\IC(\1_{C_{ux}}),\ 
\IC(\mathcal{L}_{C_{x}}),\ 
\IC(\mathcal{L}_{C_{ux}})
\}.
\]
%
\iffalse
and the irreducible (shifted) standard sheaves on $V_\lambda$ are:
\[
\begin{array}{rcl cc rcl}
\mathcal{S}(\1_{C_0}) &=& {j_{C_0}}_! \1_{C_0}[0] && &&\\
\mathcal{S}(\1_{C_u}) &=& {j_{C_u}}_! \1_{C_u}[1] && && \\
\mathcal{S}(\1_{C_x}) &=& {j_{C_x}}_! \1_{C_x}[1] && 
			\mathcal{S}(\mathcal{L}_{C_x}) &=& {j_{C_x}}_! \mathcal{L}_{C_x}[1] \\
\mathcal{S}(\1_{C_{ux}}) &=& {j_{C_{ux}}}_! \1_{C_x}[2] && 
			\mathcal{S}(\mathcal{L}_{C_{ux}}) &=& {j_{C_{ux}}}_! \mathcal{L}_{C_{ux}}[2].
\end{array}
\]
\fi
%

The following table describes these perverse sheaves on $H_\lambda$-orbits in $V_\lambda$.
\begin{spacing}{1.3}
\[
\begin{array}{ c || c c c c }
%\hline
\mathcal{P} & \mathcal{P}\vert_{C_{0}} & \mathcal{P}\vert_{C_{u}} & \mathcal{P}\vert_{C_{x}} & \mathcal{P}\vert_{C_{u,x}}  \\
\hline\hline
\IC(\1_{C_{0}}) &  \1_{C_{0}}[0] & 0 & 0 & 0 \\
\IC(\1_{C_{u}}) &  \1_{C_{0}}[1] & \1_{C_{u}}[1]  & 0 & 0 \\
\IC(\1_{C_{x}}) &  \1_{C_{0}}[1] & 0  & \1_{C_{x}}[1] & 0 \\
\IC(\1_{C_{ux}}) &  \1_{C_{0}}[2] & \1_{C_{u}}[2]  & \1_{C_{x}}[2] &  \1_{C_{ux}}[2] \\
\hline
\IC(\mathcal{L}_{C_{x}}) &  0 & 0 & \mathcal{L}_{C_{x}}[1] & 0 \\
\IC(\mathcal{L}_{C_{ux}}) & 0 & 0  & \mathcal{L}_{C_{x}}[2] &  \mathcal{L}_{C_{ux}}[2] %\\
%\hline
\end{array}
\]
\end{spacing}

We now explain how to make these calculations.
\begin{enumerate}
\labitem{(a)}{labitem:IC-SO(5)regular-a}
For the first four rows in the table above, those that deal with $\IC(\1_C)$, it is sufficient to observe that the closure $\overline{C}$ of each strata $C$ is smooth, hence the sheaf $\1_{\overline{C}}[\dim(C)]$ is perverse.
\labitem{(b)}{labitem:IC-SO(5)regular-b}
For the remaining two rows, those that deal with $\IC(\mathcal{L}_{C})$, we observe that the closure $\overline{C}$ of the strata $C$ admits a finite equivarient double cover $\pi:\widetilde{C} \rightarrow \overline{C}$ by taking $\sqrt{x}$.
Because $\widetilde{C}$ is smooth, the sheaf $\1_{\widetilde{C}}[\dim(C)]$ is perverse. 
The decomposition theorem for finite maps of perverse sheaves now yields that 
$\pi_!(\1_{\widetilde{C}}[\dim(C)]) = \IC(\1_{\overline{C}}) \oplus \IC(\mathcal{L}_{\overline{C}})$.
Proper base change, the decomposition theorem for finite \'etale maps, and our earlier computations for $\IC(\1_{\overline{C}})$ then allows us to readily compute the stalks of $\IC(\mathcal{L}_{\overline{C}})$.
\end{enumerate}
From this, we easily find the normalized geometric multiplicity matrix is as follows.
\begin{spacing}{1.3}
\[
\begin{array}{ c || c c c c | c c }
%\hline
{} & \1^\natural_{C_0}  & \1^\natural_{C_{u}}  & \1^\natural_{C_{x}} & \1^\natural_{C_{ux}} & \mathcal{L}^\natural_{C_{x}} & \mathcal{L}^\natural_{C_{ux}} \\
\hline\hline
\1^\sharp_{C_{0}}		& 1  & 0  & 0  & 0  & 0  & 0  \\  
\1^\sharp_{C_{u}}			& 1 & 1  & 0  & 0  & 0  & 0  \\ 
\1^\sharp_{C_{x}}			& 1 & 0  & 1  & 0  & 0  & 0  \\ 
\1^\sharp_{C_{ux}} 		& 1  & 1  & 1 & 1  & 0  & 0  \\ 
\hline
\mathcal{L}^\sharp_{C_{x}} 	& 0  & 0  & 0  & 0  & 1  & 0  \\ 
\mathcal{L}^\sharp_{C_{ux}} 	& 0  & 0  & 0  & 0  & 1 & 1  %\\ 
%\hline
\end{array}
\]
\end{spacing}

\iffalse

Thus, the geometric multiplicity matrix is 
%\begin{spacing}{1.0}
\[
\begin{array}{| c | c c c c | c c  |}
\hline
{} & \mathcal{S}(1_{C_0})  & \mathcal{S}(\1_{C_{u}}) & \mathcal{S}(\1_{C_{x}}) & \mathcal{S}(\1_{C_{ux}}) & \mathcal{S}(\mathcal{L}_{C_{x}})  & \mathcal{S}(\mathcal{L}_{C_{ux}}) \\
\hline
\IC(\1_{C_{0}}) 			& 1  & 0  & 0  & 0  & 0  & 0  \\  
\IC(\1_{C_{u}}) 			& -1 & 1  & 0  & 0  & 0  & 0  \\ 
\IC(\1_{C_{x}}) 			& -1 & 0  & 1  & 0  & 0  & 0  \\ 
\IC(\1_{C_{ux}}) 		& 1  & -1  & -1 & 1  & 0  & 0  \\ 
\hline
\IC(\mathcal{L}_{C_{x}}) 	& 0  & 0  & 0  & 0  & 1  & 0  \\ 
\IC(\mathcal{L}_{C_{ux}}) 	& 0  & 0  & 0  & 0  & -1 & 1  \\ 
\hline
\end{array}
\]
%\end{spacing}
\fi

\subsubsection{Cuspidal support decomposition and Fourier transform}\label{sssec:Ft-SO(5)regular}

Up to conjugation, $\dualgroup{G} = \Sp(4)$ admits exactly two cuspidal Levi subgroups: $\dualgroup{M} = \Sp(2)\times \GL(1)$ and $\dualgroup{T} = \GL(1)\times \GL(1)$.
So the cuspidal support decomposition for $\Perv_{H_\lambda}(V_\lambda)$ is:
\[
\Perv_{H_\lambda}(V_\lambda) = \Perv_{H_\lambda}(V_\lambda)_{\dualgroup{T}} \oplus 
\Perv_{H_\lambda}(V_\lambda)_{\dualgroup{M}}.
\]
Simple objects in these two subcategories are listed below.
%\begin{spacing}{1.0}
\[
\begin{array}{ c || c }
\Perv_{H_\lambda}(V_\lambda)_{\dual{T}} & \Perv_{H_\lambda}(V_\lambda)_{\dual{M}} \\
\hline\hline
\IC(\1_{C_0}) & \\
\IC(\1_{C_u}) & \\
\IC(\1_{C_x}) & \IC(\mathcal{L}_{C_x}) \\
\IC(\1_{C_{ux}}) & \IC(\mathcal{L}_{C_{ux}}) \\
\end{array}
\]
%\end{spacing}

The Fourier transform is given as follows.
%\begin{spacing}{1.0}
\[
\begin{array}{ r c l }%c c |}
\Ft: \Perv_{H_\lambda}(V_\lambda) &\mathop{\longrightarrow} &  \Perv_{H_\lambda}(V_\lambda^*) \\%&\mathop{\longrightarrow}\limits^{\text{transp.}} & \Perv_{H_\lambda}(V_\lambda)\\
 \IC(\1_{C_0}) &\mapsto& \IC(\1_{C^*_0}) = \IC(\1_{C^t_{ux}}) \\%& \mapsto & \IC(\1_{{\hat C}_0}) = \IC(\1_{C_{ux}}) \\
 \IC(\1_{C_u}) &\mapsto& \IC(\1_{C^*_u}) = \IC(\1_{C^t_x}) \\%& \mapsto & \IC(\1_{{\hat C}_u}) = \IC(\1_{C_x}) \\
 \IC(\1_{C_x}) &\mapsto&  \IC(\1_{C^*_x}) = \IC(\1_{C^t_u}) \\%& \mapsto & \IC(\1_{{\hat C}_x}) = \IC(\1_{C_u}) \\
 \IC(\1_{C_{ux}}) &\mapsto& \IC(\1_{C^*_{ux}})  = \IC(\1_{C^t_0}) \\%& \mapsto & \IC(\1_{{\hat C}_{ux}}) = \IC(\1_{C_0}) \\
%\hline
 \IC(\mathcal{L}_{C_x}) &\mapsto& \IC(\mathcal{L}_{C^*_0}) = \IC(\mathcal{L}_{C^t_{ux}}) \\%& \mapsto &\IC(\mathcal{L}_{\hat{C}^*_0}) = \IC(\mathcal{L}_{C_{ux}})\\
 \IC(\mathcal{L}_{C_{ux}}) &\mapsto&  \IC(\mathcal{L}_{C^*_u}) = \IC(\mathcal{L}_{C^t_x}) %\\%& \mapsto &  \IC(\mathcal{L}_{\hat{C}^*_u}) = \IC(\mathcal{L}_{C_x})  \\
% \hline
\end{array} 
\]
%\end{spacing}

\subsubsection{Equivariant local systems on the regular conormal bundle}\label{sssec:LocO-SO(5)regular}

The regular conormal bundle to the $H_\lambda$-action on $V_\lambda$ decomposes into $H_\lambda$-orbits:
\[
T^*_{H_\lambda}(V_\lambda)_\textrm{reg} = T^*_{C_0}(V_\lambda)_\textrm{reg} \  \sqcup  \ T^*_{C_u}(V_\lambda)_\textrm{reg}\  \sqcup  \ T^*_{C_x}(V_\lambda)_\textrm{reg}\  \sqcup \ T^*_{C_{ux}}(V_\lambda)_\textrm{reg},
\]
where each $T^*_{C}(V_\lambda)_\textrm{reg}$ is given below. 
In each case, the microlocal fundamental group $A^\text{mic}_C$ is canonically identified with $Z(\dualgroup{G}) \iso \{\pm 1\}$.
\begin{enumerate}
\item[$C_0$:]
Regular conormal bundle:
\[
T^*_{C_0}(V_\lambda)_\textrm{reg} = 
\left\{\begin{pmatrix} 
0 & 0 & 0 & 0 \\
 u\tran & 0 & 0 & 0 \\ 
0 & x\tran   & 0 & 0 \\
0 & 0 &  u\tran & 0
\end{pmatrix}
\tq 
\begin{array}{c}
{ u\tran \ne 0}\\
{x\tran  \ne 0}
\end{array}
\right\} 
= C_0 \times C^*_0
\]
Base point:
\[
(x_{0}, \xi_{0})  
= 
\begin{pmatrix} 
0 & 0 & 0 & 0 \\
1 & 0 & 0 & 0 \\ 
0 & 1  & 0 & 0 \\
0 & 0 & 1 & 0
\end{pmatrix}
\in 
T^*_{C_0}(V_\lambda)_\textrm{reg}
\]
Fundamental groups:
\[
\begin{tikzcd}
%& Z(\dualgroup{G}) \arrow{d}[near start]{\id} & \\
1=A_{x_0} & \arrow{l} A_{(x_0,\xi_0)} \arrow{r}{\id} & A_{\xi_0}= \{\pm 1 \} 
\end{tikzcd}
\]
Local systems:
\[
\begin{array}{| r cc|}
\hline
\Loc_{H_\lambda}(T^*_{C_0}(V_\lambda)_\text{sreg}) : & \1_{\O_0} & \mathcal{L}_{\O_0}  \\ 
%\hline
\Rep(A_{(x_0,\xi_0)}) : & + & - \\ \hline
\end{array}
\]
%\end{spacing}
Pullback along the bundle map $T^*_{C_0}(V_\lambda)_\text{sreg} \to C_0$:
\[
\begin{array}{ccc}
\Loc_{H_\lambda}(C_0) & \rightarrow & \Loc_{H_\lambda}(T^*_{C_0}(V_\lambda)_\text{sreg}) \\ 
%\hline
\1_{C_0} &\mapsto& \1_{\O_0}\\
&& \mathcal{L}_{\O_0}
\end{array}
\]

\item[$C_u$:]
Regular conormal bundle:
\[
T^*_{C_u}(V_\lambda)_\textrm{reg} =
\left\{ \begin{pmatrix} 
0 & u & 0 & 0 \\
0 & 0 & 0 & 0 \\ 
0 & x\tran   & 0 & u \\
0 & 0 & 0 & 0
\end{pmatrix}
\tq 
\begin{array}{c}
{u \ne 0}\\
{x\tran  \ne 0}
\end{array}
\right\}  
= C_u \times C^*_u
\]
Base point:
\[
(x_{1}, \xi_{1})  
= 
\begin{pmatrix} 
0 & 1 & 0 & 0 \\
0 & 0 & 0 & 0 \\ 
0 & 1  & 0 & 1 \\
0 & 0 & 0 & 0
\end{pmatrix}
\in 
T^*_{C_u}(V_\lambda)_\textrm{reg}
\]
Fundamental groups:
\[
\begin{tikzcd}
%& Z(\dualgroup{G}) \arrow{d}[near start]{\id} & \\
1=A_{x_1} & \arrow{l} A_{(x_1,\xi_1)} \arrow{r}{\id} & A_{\xi_1}= \{\pm 1 \} 
\end{tikzcd}
\]
Local systems:
\[
\begin{array}{| r cc|}
\hline
\Loc_{H_\lambda}(T^*_{C_u}(V_\lambda)_\text{sreg}) : & \1_{\O_u} & \mathcal{L}_{\O_u}  \\ 
%\hline
\Rep(A_{(x_1,\xi_1)}) : & + & - \\ \hline
\end{array}
\]
Pullback along the bundle map $T^*_{C_u}(V_\lambda)_\text{sreg}\to C_u$:
\[
\begin{array}{ccc}
\Loc_{H_\lambda}(C_u) & \rightarrow & \Loc_{H_\lambda}(T^*_{C_u}(V_\lambda)_\text{sreg}) \\ 
%\hline
\1_{C_u} &\mapsto& \1_{\O_u} \\
&& \mathcal{L}_{\O_u} 
\end{array}
\]

\item[$C_x$:]
Regular conormal bundle:
\[
T^*_{C_x}(V_\lambda)_\textrm{reg} 
=
\left\{\begin{pmatrix} 
0 & 0 & 0 & 0 \\
 u\tran & 0 & x & 0 \\ 
0 & 0  & 0 & 0 \\
0 & 0 &  u\tran & 0
\end{pmatrix}
\tq 
\begin{array}{c}
{ u\tran \ne 0}\\
{x \ne 0}
\end{array}
\right\} 
= C_x\times C^*_x 
\]
Base point:
\[
(x_{2}, \xi_{2}) 
= 
\begin{pmatrix} 
0 & 0 & 0 & 0 \\
1 & 0 & 1 & 0 \\ 
0 & 0  & 0 & 0 \\
0 & 0 & 1 & 0
\end{pmatrix}
\in 
T^*_{C_x}(V_\lambda)_\textrm{reg}
\]
Fundamental groups:
\[
\begin{tikzcd}
%&  Z(\dualgroup{G}) \arrow{d}[near start]{\id} & \\
\{\pm 1\} = A_{x_2} & \arrow{l}[swap]{\id} A_{(x_2,\xi_2)} \arrow{r} & A_{\xi_2} =1 
\end{tikzcd}
\]
Local systems:
\[
\begin{array}{| r cc|}
\hline
\Loc_{H_\lambda}(T^*_{C_x}(V_\lambda)_\text{sreg}) : & \1_{\O_x} & \mathcal{L}_{\O_x}  \\ 
%\hline
\Rep(A_{(x_2,\xi_2)}) : & + & - \\ \hline
\end{array}
\]
Pullback along the bundle map $T^*_{C_x}(V_\lambda)_\text{sreg} \to C_x$:
\[
\begin{array}{ccc}
\Loc_{H_\lambda}(C_x) & \rightarrow & \Loc_{H_\lambda}(T^*_{C_x}(V_\lambda)_\text{sreg}) \\ 
%\hline
\1_{C_x} &\mapsto& \1_{\O_x} \\
\mathcal{L}_{C_x}  &\mapsto & \mathcal{L}_{\O_x} 
\end{array}
\]

\item[$C_{ux}$:]
Regular conormal bundle:
\[
T^*_{C_{ux}}(V_\lambda)_\textrm{reg} =
\left\{ 
\begin{pmatrix} 
0 & u & 0 & 0 \\
0 & 0 & x & 0 \\ 
0 & 0  & 0 & u \\
0 & 0 & 0 & 0
\end{pmatrix}
\tq 
\begin{array}{c}
{u \ne 0}\\
{x \ne 0}
\end{array}
\right\}
= C_{ux} \times C^*_{ux}
\]
Base point:
\[
(x_{3}, \xi_{3})  
= 
\begin{pmatrix} 
0 & 1 & 0 & 0 \\
0 & 0 & 1 & 0 \\ 
0 & 0  & 0 & 1 \\
0 & 0 & 0 & 0
\end{pmatrix}
\in 
T^*_{C_{ux}}(V_\lambda)_\textrm{reg}
\]
Fundamental groups:
\[
\begin{tikzcd}
%& Z(\dualgroup{G}) \arrow{d}[near start]{\id} & \\
\{\pm 1\} = A_{x_3} & \arrow{l}[swap]{\id} A_{(x_3,\xi_3)} \arrow{r} & A_{\xi_3} =1 
\end{tikzcd}
\]
Local systems:
\[
\begin{array}{| r cc|}
\hline
\Loc_{H_\lambda}(T^*_{C_{ux}}(V_\lambda)_\text{sreg}) : & \1_{\O_{ux}} & \mathcal{L}_{\O_{ux}}  \\ 
%\hline
\Rep(A_{(x_3,\xi_3)}) : & + & - \\ \hline
\end{array}
\]
Pullback along the bundle map $T^*_{C_{ux}}(V_\lambda)_\text{sreg}\to C_{ux}$:
\[
\begin{array}{ccc}
\Loc_{H_\lambda}(C_{ux}) & \rightarrow & \Loc_{H_\lambda}(T^*_{C_{ux}}(V_\lambda)_\text{sreg})\\ 
%\hline
\1_{C_{ux}} &\mapsto& \1_{\O_{ux}} \\
\mathcal{L}_{C_{ux}}  &\mapsto & \mathcal{L}_{\O_{ux}}  
\end{array}
\]
\end{enumerate}

\subsubsection{Vanishing cycles of perverse sheaves}\label{sssec:Ev-SO(5)regular}

\begin{table}
\caption{$\pEv : \Perv_{H_\lambda}(V_\lambda) \to  \Perv_{H_\lambda}(T^*_{H_\lambda}(V_\lambda)_\textrm{reg})$ on simple objects, for $\lambda : W_F \to \Lgroup{G}$ given at the beginning of Section~\ref{sec:SO(5)regular}.}
\label{table:Ev-SO(5)regular}
%\begin{spacing}{1.0}
\[
\begin{array}{ c c c }
%\hline
\Perv_{H_\lambda}(V_\lambda) &\mathop{\longrightarrow}\limits^{\pEv}& \Perv_{H_\lambda}(T^*_{H_\lambda}(V_\lambda)_\textrm{reg})  \\	
%\hline
\IC(\1_{C_{0}}) &\mapsto& \IC(\1_{\O_0})  \\
\IC(\1_{C_{u}}) &\mapsto& \IC(\1_{\O_u}) \\
\IC(\1_{C_{x}}) &\mapsto& \IC(\1_{\O_x})  \\
\IC(\1_{C_{ux}}) &\mapsto& \IC(\1_{\O_{ux}}) \\
%\hline
\IC(\mathcal{L}_{C_{x}}) &\mapsto&  \IC(\mathcal{L}_{\O_{x}})\oplus \IC(\mathcal{L}_{\O_0})   \\
\IC(\mathcal{L}_{C_{ux}}) &\mapsto&  \IC(\mathcal{L}_{\O_{ux}})\oplus \IC(\mathcal{L}_{\O_u}) %\\
%\hline
\end{array}
\]
\end{table}

\begin{table}
\caption{$\Evs : \Perv_{H_\lambda}(V_\lambda) \to  \Loc_{H_\lambda}(T^*_{H_\lambda}(V_\lambda)_\textrm{reg})$ on simple objects, for $\lambda : W_F \to \Lgroup{G}$ given at the beginning of Section~\ref{sec:SO(5)regular}.}
\label{table:Evs-SO(5)regular}
\[
\begin{array}{c||cccc}
%\hline
\mathcal{P} & \Evs_{C_0}\mathcal{P} & \Evs_{C_u}\mathcal{P} & \Evs_{C_x}\mathcal{P} & \Evs_{C_{ux}}\mathcal{P} \\
\hline\hline
\IC(\1_{C_0}) 			& + & 0 & 0 & 0 \\
\IC(\1_{C_u}) 			& 0 & + & 0 & 0 \\
\IC(\1_{C_x}) 			& 0 & 0 & + & 0 \\
\IC(\1_{C_{ux}}) 		& 0 & 0 & 0 & + \\
\hline
\IC(\mathcal{L}_{C_x})	& -  & 0 & - & 0 \\
\IC(\mathcal{L}_{C_{ux}})	& 0 & - & 0 & - %\\
%\hline
\end{array}
\]
%\end{spacing}
\end{table}

We summarize the values of the functor $\Ev : \Perv_{H_\lambda}(V_\lambda) \to \Perv_{H_\lambda}(T^*_{H_\lambda}(V_\lambda)_\textrm{reg})$ on simple objects in Table~\ref{table:Ev-SO(5)regular}
We now explain how to make these calculations.
\begin{enumerate}
\labitem{(a)}{labitem:Ev-SO(5)regular-a}
To compute $\Ev_{C_0}\IC(\1_{C_x})$ we look at the vanishing cycles
\[ 
\Ev_{C_0}\IC(\1_{C_x}) = \RPhi_{xx'}(\1_{\overline{C_x}\times C_0^\ast})\vert_{T^*_{C_0}(V_\lambda)_\textrm{reg}}[1].
\]
The singular locus of $xx'$ is $x=x'=0$ but this is not part of $T^*_{C_0}(V_\lambda)_\textrm{reg}$, so $\Ev_{C_0}\IC(\1_{C_x}) = 0$. 
All the non-diagonal entries in the first four rows work similarly.
\labitem{(b)}{labitem:Ev-SO(5)regular-b}
To compute the last two rows of the tables above consider the map $\pi : \widetilde{C}' \rightarrow C'$ which comes from taking a square root of $x$.
Rather than directly applying $\Ev$ to $\IC(\mathcal{L}_{C'})$ we apply it to $\pi_!(\1_{\widetilde{C}'})$ and exploit the fact that we have already computed $\Ev$ for the $\IC$ sheaves of constant local systems.
For example, in the case of 
$\Ev_{C_0}(\pi_!(\1_{\widetilde{C}_x}))$ we will compute:
\[ 
\left(\pi_!\ \RPhi_{x^2 x'}(\1_{\widetilde{C}'} \boxtimes \1_{C_0^\ast}) \right)_{T^*_{C_0}(V_\lambda)_\textrm{reg}}.
\]
The singular locus is precisely $x=0$ (noting that $x'$ is not actually zero on the variety under consideration).
The local structure of the singularity is that it is a smooth family (in the variable $u'$) over the singularity of $x^2 x'$ over $\mathbb{A} \times \mathbb{G}_\text{m}$
%We have by Proposition \ref{prop:example_twist_square}
It follows from Lemma \ref{lemma:methodx2u} that the vanishing cycles on such a singularity is the sheaf supported on $x=0$ associated to the non-trivial double cover $\sqrt{x'}$.
Finally, by observing that the map $\pi$ is an isomorphism on the support of $\RPhi$, we conclude that:
\[ 
\pEv_{C_0}(\pi_!(\1_{\widetilde{C_x}})) = \IC(\mathcal{L}_{\mathcal{O}_0}) .
\]
The other entries are computed similarly.
\end{enumerate}

\subsubsection{Normalization of Ev and the twisting local system}\label{sssec:NEv-SO(5)regular}

From Table~\ref{table:Ev-SO(5)regular} we see that the twisting local system $\mathcal{T}$ is trivial in this case, so $\pNEv = \pEv$.

\subsubsection{Fourier transform and vanishing cycles}\label{sssec:EvFt-SO(5)regular}

Compare the table below with the Fourier transform from Section~\ref{sssec:Ft-SO(5)regular} to confirm \eqref{eqn:NEvFt-overview} in this example.
\begin{spacing}{1.3}
\noindent\resizebox{1\textwidth}{!}{%
$\displaystyle
\begin{array}{ c c c c c c c }
%\hline
\Perv_{H_\lambda}(V_\lambda) &\mathop{\longrightarrow}\limits^{\pEv}& \Perv_{H_\lambda}(T^*_{H_\lambda}(V_\lambda)_\textrm{reg}) &\mathop{\longrightarrow}\limits^{a_*} & \Perv_{H_\lambda}(T^*_{H_\lambda}(V^*_\lambda)_\textrm{reg}) & \mathop{\longleftarrow}\limits^{\Ev^*} & \Perv_{H_\lambda}(V^*_\lambda) \\	
%\hline
\IC(\1_{C_{0}}) &\mapsto& \IC(\1_{\O_0}) &\mapsto& \IC(\1_{\O^*_0}) & \mapsfrom & \IC(\1_{C^*_0}) \\
\IC(\1_{C_{u}}) &\mapsto& \IC(\1_{\O_u}) &\mapsto& \IC(\1_{\O^*_u}) & \mapsfrom & \IC(\1_{C^*_u}) \\
\IC(\1_{C_{x}}) &\mapsto& \IC(\1_{\O_x})  &\mapsto& \IC(\1_{\O^*_x}) & \mapsfrom & \IC(\1_{C^*_x}) \\
\IC(\1_{C_{ux}}) &\mapsto& \IC(\1_{\O_{ux}}) &\mapsto& \IC(\1_{\O^*_{ux}}) & \mapsfrom & \IC(\1_{C^*_{ux}}) \\
%\hline
\IC(\mathcal{L}_{C_{x}}) &\mapsto&  \IC(\mathcal{L}_{\O_{x}})\oplus \IC(\mathcal{L}_{\O_0})  &\mapsto& \IC(\mathcal{L}_{\O^*_{x}})\oplus \IC(\mathcal{L}_{\O^*_0}) & \mapsfrom & \IC(\mathcal{L}_{C^*_0}) \\
\IC(\mathcal{L}_{C_{ux}}) &\mapsto&  \IC(\mathcal{L}_{\O_{ux}})\oplus \IC(\mathcal{L}_{\O_u}) &\mapsto& \IC(\mathcal{L}_{\O^*_{ux}})\oplus \IC(\mathcal{L}_{\O^*_u}) & \mapsfrom & \IC(\mathcal{L}_{C^*_u}) %\\
%\hline
\end{array}
$%
}
\end{spacing}
%\end{table}

%
\iffalse
%\begin{table}[ht]
%\caption{Characteristic cycles}
%\label{table:ChC-SO(5)}
%\begin{spacing}{1.0}
\[
\begin{array}[arraycolsep -2pt]{| r | c c c c |}
\hline
\ChC(\IC(\1_{C_0}))  = & +[T^*_{C_0}(V_\lambda)] & & & \\
\ChC(\IC(\1_{C_u}))  = &  & +[T^*_{C_u}(V_\lambda)] & & \\
\ChC(\IC(\1_{C_x}))  = &  &  & +[T^*_{C_x}(V_\lambda)] & \\
\ChC(\IC(\1_{C_{ux}})) =  &  &  & & +[T^*_{C_{ux}}(V_\lambda)] \\
\hline
\ChC(\IC(\mathcal{L}_{C_x}))  = & +[T^*_{C_0}(V_\lambda)] &  & +[T^*_{C_x}(V_\lambda)] & \\
\ChC(\IC(\mathcal{L}_{C_{ux}}))  = &  & +[T^*_{C_u}(V_\lambda)] & & +[T^*_{C_{ux}}(V_\lambda)] \\
\hline
\end{array}
\]
%\end{spacing}
%\end{table}%
\fi
%

\subsubsection{Arthur sheaves}\label{sssec:AS-SO(5)regular}

%\begin{spacing}{1.0}
\[
\begin{array}{ c || l r }
%\hline
\text{Arthur} &  \text{pure L-packet}  & \text{coronal} \\
\text{sheaf}  &  \text{sheaves}  & \text{perverse sheaves} \\
\hline\hline 
\mathcal{A}_{C_{0}} 
	&  \IC(\1_{C_{0}})\ \oplus
	& \IC(\mathcal{L}_{C_{x}}) \\
{\mathcal{A}_{C_{u}}  }
	& {\IC(\1_{C_{u}}) \ \oplus }
	& {\IC(\mathcal{L}_{C_{ux}}) }  \\
{\mathcal{A}_{C_{x}}  }
	& {\IC(\1_{C_{x}}) \ \oplus\ \IC(\mathcal{L}_{C_{x}}) }
	& \\
\mathcal{A}_{C_{ux}} 
	& \IC(\1_{C_{ux}}) \ \oplus \ \IC(\mathcal{L}_{C_{ux}}) 
	&   %\\
%\hline
\end{array}
\]
%\end{spacing}
%\end{table}

\subsection{ABV-packets}

\subsubsection{Admissible representations versus equivariant perverse sheaves}\label{sssec:VC-SO(5)regular}

%\begin{spacing}{1.0}
\[
\begin{array}{ c || c }
%\hline
\Perv_{H_\lambda}(V_\lambda)^\text{simple}_{/\text{iso}} & \Pi^\mathrm{pure}_{\lambda}(G/F) \\% & H^1(F,G)  \\
%\hline
%\text{perverse}  & \text{} \\% &  \text{pure rational}  \\
%\text{sheaf} & 	\text{representation} \\%& \text{form} \\
\hline\hline
\IC(\1_{C_0}) & (\pi(\phi_0),0) \\%&  \\
\IC(\1_{C_u}) & (\pi(\phi_1),0) \\%&  \\
\IC(\1_{C_x}) & (\pi(\phi_2,+),0) \\%& \\
\IC(\1_{C_{ux}}) & (\pi(\phi_3,+),0) \\%& \\
\hline
\IC(\mathcal{L}_{C_x}) & (\pi(\phi_2,-),1) \\%& \\
\IC(\mathcal{L}_{C_{ux}}) & (\pi(\phi_3,-),1) %\\%&   \\
%\hline
\end{array}
\]
%\end{spacing}

The Arthur parameters $\psi_0$ and $\psi_3$ correspond uniquely to the base points  $(x_0,\xi_0)$ and $(x_3,\xi_3)$ from Section~\ref{sssec:LocO-SO(5)regular} under the map $Q_\lambda(\Lgroup{G}) \to T^*_{H_\lambda}(V_\lambda)_\textrm{reg}$ given by Proposition~\ref{theorem:regpsi}.

\subsubsection{ABV-packets}

Using Section~\ref{sssec:Ev-SO(5)regular} and the bijection of Section~\ref{sssec:VC-SO(5)regular}, we simply read off the ABV-packets:
%\begin{spacing}{1.0}
\[
\begin{array}{lcl}
\Pi^\ABV_{\phi_0}(G/F) &=& \{ (\pi(\phi_0),0),\ (\pi(\phi_2,-),1) \} \\
{\Pi^\ABV_{\phi_1}(G/F) } &=& \{ (\pi(\phi_1),0),\ (\pi(\phi_3,-),1) \}  \\
{\Pi^\ABV_{\phi_{2}}(G/F) } &=& \{ (\pi(\phi_2,+),0),\ (\pi(\phi_2,-),1) \}  \\
\Pi^\ABV_{\phi_{3}}(G/F) &=& \{ (\pi(\phi_3,+),0),\ (\pi(\phi_3,-),1) \}  .
\end{array}
\]
%\end{spacing}
Using Section~\ref{sssec:Arthur-SO(5)}, we see
\[
\begin{array}{rcl}
\Pi^\mathrm{pure}_{\psi_0}(G/F) &=& \Pi^\ABV_{\phi_0}(G/F) \\
\Pi^\mathrm{pure}_{\psi_3}(G/F) &=& \Pi^\ABV_{\phi_3}(G/F) ,
\end{array}
\]
thus verifying Conjecture~\ref{conjecture:1}\ref{conjecture:a} for the infinitesimal parameter $\lambda : W_F \to \Lgroup{G}$ given at the beginning of Section~\ref{sec:SO(5)regular}.

\subsubsection{Stable distributions and endoscopy}

For $s\in Z(\dualgroup{G}) \iso \mu_2$, the virtual representations $\eta^{\Evs}_{\phi}$ and $\eta^{\NEvs}_{\phi,s}$ of \eqref{eqn:etaABVpsis} are given by
%\begin{spacing}{1.3}
\[
\begin{array}{rcl}
\eta^{\Evs}_{\phi_0} = \eta^{\NEvs}_{\phi_0,1} &=& [(\pi(\phi_0),0)] + [(\pi(\phi_2,-),1)]  \\
\eta^{\NEvs}_{\phi_0,-1} &=& [(\pi(\phi_0),0)] -  [(\pi(\phi_2,-),1)]  ,
\end{array}
\]
and
\[
\begin{array}{rcl}
\eta^{\Evs}_{\phi_1} = \eta^{\NEvs}_{\phi_1,1} 
%&=& [\pi(\phi_1),0] + (e=-1)(-1)^{1-2} (-)(1) [\pi(\phi_3,-),1]  \\
&=& [(\pi(\phi_1),0)] - [(\pi(\phi_3,-),1)]  \\
\eta^{\NEvs}_{\phi_1,-1} 
%&=& [\pi(\phi_1),0] + (e=-1)(-1)^{1-2} (-)(-1) [\pi(\phi_3,-),1]  \\
&=& [(\pi(\phi_1),0)] + [(\pi(\phi_3,-),1)]  ,
\end{array}
\]
and
\[
\begin{array}{rcl}
\eta^{\Evs}_{\phi_2}  = \eta^{\NEvs}_{\phi_2,1} 
%&=& [\pi(\phi_2,+),0] + (e=-1)(-1)^{2-2} (-)(1) [\pi(\phi_2,-),1] \\
&=& [(\pi(\phi_2,+),0)] + [(\pi(\phi_2,-),1)] \\
\eta^{\NEvs}_{\phi_2,-1} 
%&=& [\pi(\phi_2,+),0] + (e=-1)(-1)^{2-2} (-)(-1) [\pi(\phi_2,-),1] \\
&=& [(\pi(\phi_2,+),0)] - [(\pi(\phi_2,-),1)] ,
\end{array}
\]
and
\[
\begin{array}{rcl}
\eta^{\Evs}_{\phi_3} = \eta^{\NEvs}_{\phi_3,1} &=& [(\pi(\phi_3,+),0)] - [(\pi(\phi_3,-),1)] \\
\eta^{\NEvs}_{\phi_3,-1} &=& [(\pi(\phi_3,+),0)] + [(\pi(\phi_3,-),1)].
\end{array}
\]
%\end{spacing}
\noindent
Comparing with Section~\ref{sssec:eta-SO(5)regular}, this confirms Conjecture~\ref{conjecture:1}\ref{conjecture:b} and Conjecture~\ref{conjecture:1}\ref{conjecture:c} for the infinitesimal parameter $\lambda : W_F \to \Lgroup{G}$ given at the beginning of Section~\ref{sec:SO(5)regular}. 

\subsubsection{Kazhdan-Lusztig conjecture}

Using the bijection of Section~\ref{sssec:VC-SO(5)regular} we compare the normalized geometric multiplicity matrix from Section~\ref{sssec:EPS-SO(5)regular} with the multiplicity matrix from Section~\ref{sssec:mrep-SO(5)}:
\[
m_\text{rep} = 
\begin{pmatrix} 
\begin{array}{cccc | cc}
1  &  1  & 1  & 1  & 0  & 0  \\
0  &  1  & 0  & 1  & 0  & 0  \\
0  &  0  & 1  & 1  & 0  & 0   \\
0  &  0  & 0  & 1  & 0  & 0  \\ \hline
0  &  0  & 0  & 0  & 1  & 1  \\
0  &  0  & 0  & 0  & 0  & 1  
\end{array}
\end{pmatrix},
\qquad
m'_\text{geo} = 
\begin{pmatrix} 
\begin{array}{cccc | cc}
1  &  0  & 0  & 0  & 0  & 0  \\
1  &  1  & 0  & 0  & 0  & 0  \\
1  &  0  & 1  & 0  & 0  & 0   \\
1  &  1  & 1  & 1  & 0  & 0  \\ \hline
0  &  0  & 0  & 0  & 1  & 0  \\
0  &  0  & 0  & 0  & 1  & 1
\end{array}  
\end{pmatrix}.
\]
Since $\,^tm_\text{rep}= m'_\text{geo}$, this confirms the Kazhdan-Lusztig conjecture \eqref{eqn:KL} as it applies to representations with infinitesimal parameter $\lambda : W_F \to \Lgroup{G}$ given at the beginning of Section~\ref{sec:SO(5)regular}.
Recall that this allows us to confirm Conjecture~\ref{conjecture:2} as it applies to this example, as explained in Section~\ref{sssec:KL-overview}.

\subsubsection{Aubert duality and Fourier transform}\label{sssec:AubertFt-SO(5)regular}

To verify \eqref{eqn:AubertFt-overview}, use Vogan's bijection from Section~\ref{sssec:VC-SO(5)regular} to compare Aubert duality from Section~\ref{sssec:Aubert-SO(5)regular} with the Fourier transform from Section~\ref{sssec:Ft-SO(5)regular}.

\subsubsection{Normalization}\label{sssec:normalization-SO(5)regular}

To verify \eqref{eqn:twisting-overview}, observe that the twisting characters $\chi_\psi$ of $A_\psi$ from Section~\ref{sssec:Aubert-SO(5)regular} are trivial, as are the local systems $\mathcal{T}_\psi$ from Section~\ref{sssec:EvFt-SO(5)regular}.

\subsubsection{ABV-packets that are not pure Arthur packets}\label{sssec:notArthur-SO(5)regular}

The closed stratum $C_0$ and the open stratum $C_3$ are of Arthur type,  while $C_1$ and $C_2$ are not of Arthur type. Thus, there are two ABV-packets that are not Arthur packets in this example:
\[
\begin{array}{rcl}
{\Pi^\ABV_{\phi_1}(G/F) } &=& \{ (\pi(\phi_1),0),\ (\pi(\phi_3,-),1) \}  \\
{\Pi^\ABV_{\phi_{2}}(G/F) } &=& \{ (\pi(\phi_2,+),0),\ (\pi(\phi_2,-),1) \}  .
\end{array}
\]
From these we extract four stable distributions,
\[
\begin{array}{rcl cc rcl}
\Theta^{G}_{\phi_1} &\ceq& \Theta_{\pi(\phi_1)} && \Theta^{G_1}_{\phi_1} &\ceq& \Theta_{\pi(\phi_3,-)} \\
\Theta^{G}_{\phi_2} &\ceq& \Theta_{\pi(\phi_2,+)} && \Theta^{G_1}_{\phi_2} &\ceq& - \Theta_{\pi(\phi_2,-)}.
\end{array}
\]
We will see more interesting examples of ABV-packets that are not pure Arthur packets in Section~\ref{sssec:notArthur-SO(7)}.

\subsection{Endoscopy and equivariant restriction of perverse sheaves}

The material from Section~\ref{ssec:restriction-overview} is trivial in this case.

\section{SO(5) unipotent representations, singular parameter}\label{sec:SO(5)singular}

In this example we encounter an L-packet of representations of $\SO(5,F)$ that is lifted from an L-packet of representations of $\SO(3,F)\times\SO(3,F)$. 
In Section~\ref{ssec:restriction-SO(5)singular} will see how this lifting may be understood through equivariant restriction of perverse sheaves on Vogan varieties, and their vanishing cycles.

Let $G= \SO(5)$.
Then $H^1(F,G) \iso \ZZ/2\ZZ$. 
Let $G_1$ be the non-split form of $G$, as in Section~\ref{sec:SO(5)regular}.
We consider admissible representations of $G(F)$ and $G_1(F)$ with infinitesimal parameter $\lambda : W_F \to \Lgroup{G}$ given by 
\[
\lambda(w) 
= 
\begin{pmatrix}
\abs{w}^{1/2} & 0 & 0 & 0 \\
0 & \abs{w}^{1/2} & 0 & 0 \\
0 & 0 & \abs{w}^{-1/2} & 0 \\
0 & 0 & 0 & \abs{w}^{-1/2}
\end{pmatrix}.
\]

\subsection{Arthur packets}

\subsubsection{Parameters}\label{sssec:P-SO(5)singular}

There are three Langlands parameters with infinitesimal parameter $\lambda$, up to $Z_{\dualgroup{G}}(\lambda)$-conjugacy, each of Arthur type. 
Set
\[
\begin{array}{rcl}
\psi_0(w,x,y) &\ceq& \nu_2(y)\oplus\nu_2(y),\\
\psi_2(w,x,y) &\ceq& \nu_2(x)\oplus\nu_2(y),\\
\psi_3(w,x,y) &\ceq& \nu_2(x)\oplus\nu_2(x),
\end{array}
\]
and observe that $\psi_0$ and $\psi_3$ are Aubert dual while $\psi_2$ is self dual.
Let $\phi_0$, $\phi_2$ and $\phi_3$ be the associated Langlands parameters; thus,
\[
\begin{array}{rcl}
\phi_0(w,x) &\ceq& \nu_2(d_w)\oplus\nu_2(d_w) ,\\
%
\iffalse
= 
\begin{pmatrix} 
\abs{w}^{1/2} & 0 & 0 & 0 \\
0 & \abs{w}^{1/2} & 0 & 0 \\
0 & 0 & \abs{w}^{-1/2} & 0 \\
0 & 0 & 0 & \abs{w}^{-1/2} 
\end{pmatrix}
\\
\fi
%
\phi_2(w,x) &\ceq& \nu_2(x)\oplus\nu_2(d_w), \\
%
\iffalse
=
\begin{pmatrix} 
\abs{w}^{1/2} & 0 & 0 & 0 \\
0 & x_{11} & x_{12} & 0 \\
0 & -x_{21} & -x_{22} & 0 \\
0 & 0 & 0 & \abs{w}^{-1/2} 
\end{pmatrix}
\\
\fi
%
\phi_3(w,x) &\ceq& \nu_2(x)\oplus\nu_2(x) .\\
%
\iffalse
=
\begin{pmatrix} 
x_{11} & 0 & 0 & x_{12} \\
0 & x_{11} & x_{12} & 0 \\
0 & -x_{21} & -x_{22} & 0 \\
x_{21} & 0 & 0 &  x_{22}
\end{pmatrix}
\\
\fi
%
\end{array}
\]
%where $x_{ij} \ceq \nu_2(x)_{ij}$.

\subsubsection{L-packets}

The pure component groups for these three Langlands parameters are
\[
A_{\phi_0} = 1,
\qquad 
A_{\phi_2} \iso \{ \pm 1\},
\qquad
A_{\phi_3} \iso \{ \pm 1\}.
\]
Thus, there are five admissible representations of two pure forms of $\SO(5)$ in play in this example.
When arranged into L-packets, these representations are:
\[
\begin{array}{rcl c rcl}
\Pi_{\phi_0}(G(F)) &=&  \{ \pi(\phi_0) \},  &\quad& \Pi_{\phi_0}(G_1(F)) &=&  \emptyset ,\\
\Pi_{\phi_2}(G(F)) &=&  \{ \pi(\phi_2,+) \},  && \Pi_{\phi_2}(G_1(F)) &=&  \{ \pi(\phi_2,-)\} ,\\
\Pi_{\phi_3}(G(F)) &=&  \{ \pi(\phi_3,+), \pi(\phi_3,-) \},  && \Pi_{\phi_3}(G_1(F)) &=&  \emptyset . 
\end{array}
\]
Of these five admissible representations, only $\pi(\phi_3,+)$ and $\pi(\phi_3,-)$ are tempered;
these two representations are denoted by $\tau_2$ and $\tau_1$, respectively, in \cite{Matic:Unitary}.
The admissible representation $\pi(\phi_0)$ is denoted by $L(\nu^{1/2}\zeta,\nu^{1/2}\zeta,1)$ with $\zeta=1$ in \cite{Matic:Unitary} and $\pi(\phi_2,+)$ is denoted by $L(\nu^{1/2}\zeta,\zeta\operatorname{St}_{\SO(3)})$ with $\zeta=1$.

%\todo{Is this correct, Ahmed? OK}

\subsubsection{Multiplicities in standard modules}\label{sssec:mrep-SO(5)singular}

The standard module $M(\phi_0)$ is induced from the Levi subgroup $\GL(1,F)\times \GL(1,F)\times \SO(1,F)$ of $\SO(5,F)$; it is denoted by $\nu^{1/2}\zeta\times\nu^{1/2}\zeta\rtimes 1$ with $\zeta=1$ in \cite{Matic:Unitary}.
The standard module $M(\phi_2,+)$ is induced from the Levi subgroup $\GL(1,F)\times\SO(3,F)$ of $\SO(5,F)$; it is denoted by $\nu^{1/2}\zeta\rtimes \zeta\operatorname{St}_{\SO(3)}$ with $\zeta=1$ in \cite{Matic:Unitary}.
The standard module $M(\phi_3,\pm)$ coincides with the tempered representation $\pi(\phi_3,\pm)$.
The $4\times 4$ block in the following table may be deduced from \cite[Proposition 3.3]{Matic:Unitary}.

%\todo{Could you please check this too, Ahmed? OK}

%\begin{spacing}{1.0}
\[
\begin{array}{ c || c c c c | c  }
{} & \pi(\phi_0)  & \pi(\phi_{2},+) & \pi(\phi_{3},+) & \pi(\phi_{3},-) &\pi(\phi_{2},-)  \\
\hline\hline
M(\phi_{0}) 		&  1 & 1 & 1 & 1 & 0 \\
M(\phi_{2},{+}) 		&  0 & 1 & 1 & 0 & 0 \\
M(\phi_{3},{+}) 		&  0 & 0 & 1 & 0 & 0 \\
M(\phi_{3},{-}) 		& 0 & 0 & 0 & 1 & 0 \\
\hline
M(\phi_{2}, {-}) 		& 0 & 0 & 0 & 0 & 1 
\end{array}
\]
%\end{spacing}

\subsubsection{Arthur packets}\label{sssec:Arthur-SO(5)singular}

The component groups for the Arthur parameters in this example are
\[
A_{\psi_0} \iso \{\pm 1\},
\qquad 
A_{\psi_2} \iso \{ \pm 1\}\times\{\pm 1\},
\qquad
A_{\psi_3} \iso \{ \pm 1\}.
\]
We may represent elements of each $A_{\psi}$ as cosets with representatives taken from $\dualgroup{T}[2]$.
The map $\dualgroup{T}[2] \to A_{\psi_0}$ is $s\mapsto s_1s_2$;
the map $\dualgroup{T}[2] \to A_{\psi_2}$ is $s \mapsto (s_1,s_2)$;
the map $\dualgroup{T}[2] \to A_{\psi_3}$ is $s \mapsto s_1s_2$.

The Arthur packets for Arthur parameters with infinitesimal parameter $\lambda$ are:
\[
\begin{array}{rcl c rcl}
\Pi_{\psi_0}(G(F)) &=&  \{ \pi(\phi_0), \pi(\phi_2,+) \}  ,
			&\quad& \Pi_{\psi_0}(G_1(F)) &=&  \emptyset, \\
\Pi_{\psi_2}(G(F)) &=&  \{ \pi(\phi_2,+), \pi(\phi_3,-) \} , 
			&& \Pi_{\psi_2}(G_1(F)) &=&  \{ \pi(\phi_2,-)\} ,\\
\Pi_{\psi_3}(G(F)) &=&  \{ \pi(\phi_3,+), \pi(\phi_3,-) \}  ,
			&& \Pi_{\psi_3}(G_1(F)) &=&  \emptyset .
\end{array}
\]
We arrange these representations into pure Arthur packets in the table below.
\begin{spacing}{1.3}
\[
\begin{array}{ l || l  | r }
\text{pure Arthur}  & \text{pure L-packet}  & \text{coronal}\\
\text{packets}  & \text{representations}  & \text{representations}\\
\hline\hline
\Pi^\mathrm{pure}_{\psi_0}(G/F)  & (\pi(\phi_{0}),0) & (\pi(\phi_2,{+}),0) \\
\Pi^\mathrm{pure}_{ \psi_2}(G/F)  & (\pi(\phi_2,{+}),0) ,\ (\pi(\phi_2,-),1) & (\pi(\phi_3,{-}),0) \\
\Pi^\mathrm{pure}_{ \psi_3}(G/F)  & (\pi(\phi_3,{+}),0),\ (\pi(\phi_3,{-}),0) & 
\end{array}
\]
\end{spacing}

\subsubsection{Aubert duality}\label{sssec:Aubert-SO(5)singular}

Aubert duality for $G(F)$ and $G_1(F)$ are given by the following table.
\[
\begin{array}{c || c }
\pi & {\hat \pi}   \\
\hline\hline
\pi(\phi_0) & \pi(\phi_3,+) \\
\pi(\phi_2,+) & \pi(\phi_3,-) \\
\pi(\phi_3,+) & \pi(\phi_0,+) \\
\pi(\phi_3,-) & \pi(\phi_2,+) \\
\hline
\pi(\phi_2,-) & \pi(\phi_2,-)
\end{array}
\]

The twisting characters $\chi_{\psi_0}$ and $\chi_{\psi_3}$ are trivial. 
The twisting character $\chi_{\psi_2}$ of $A_{\psi_2}$ is $\chi_{\psi_2}(s) = s_1s_2 = \det(s)$. This is the first non-trivial twisting character to appear in this article.

\subsubsection{Stable distributions and endoscopic transfer}\label{sssec:stable-SO(5)singular}

The stable distributions 
\[
\Theta^{G}_{\psi} = \sum_{\pi\in \Pi_\psi(G(F))} {\langle s_\psi , \pi\rangle}_{\psi} \ \Theta_\pi
\]
attached the Arthur parameters are:
\[
\begin{array}{rcl}
\Theta^{G}_{\psi_0} &=& \Theta_{\pi(\phi_0)} + \Theta_{\pi(\phi_2,+)}  \\
\Theta^{G}_{\psi_2} &=& \Theta_{\pi(\phi_2,+)} {-} \Theta_{\pi(\phi_3,-)}  \\
\Theta^{G}_{\psi_3} &=& \Theta_{\pi(\phi_3,+)} + \Theta_{\pi(\phi_3,+)}.
\end{array}
\]
The distributions 
\[
\Theta^{G}_{\psi,s} = \sum_{\pi\in \Pi_\psi(G_\delta(F))} {\langle s s_\psi , \pi\rangle}_{\psi} \Theta_{\pi}, 
\]
where $s\in Z_{\dualgroup{G}}(\psi)$, are obtained by transfer from endoscopic groups.
The coefficients above are given by
\[
{\langle s s_\psi , \pi\rangle}_{\psi}
=
{\langle s_\psi , \pi\rangle}_{\psi} {\langle s, \pi\rangle}_{\psi}
\]
where ${\langle s_\psi, \pi\rangle}_{\psi}$ appear above while ${\langle s, \pi\rangle}_{\psi}$ is given by the tables below.

We now give ${\langle \ \cdot\, \pi\rangle}_{\psi}$ as a character of $A_\psi$, using the isomorphisms from Section~\ref{sssec:Arthur-SO(5)singular}.
\[
\begin{array}{c||ccc}
\pi & {\langle \ \cdot\ , \pi\rangle}_{\psi_0} & {\langle \ \cdot\ , \pi\rangle}_{\psi_2} & {\langle \ \cdot\ , \pi\rangle}_{\psi_3} \\
\hline\hline
\pi(\phi_0) & + & 0 & 0 \\
\pi(\phi_2,+) & - & ++ & 0 \\
\pi(\phi_3,+) & 0 & 0 & + \\
\pi(\phi_3,-) & 0 & {--} & - 
%\hline
%\pi(\phi_2,-) & 0 & +- & 0 \\
\end{array}
\]
The values of this character on the image of $s=\operatorname{diag}(s_1,s_2,s^{-1}_2,s^{-1}_1)\in \dualgroup{T}[2]$ in $A_\psi$ are given by.
\[
\begin{array}{c||ccc}
\pi & {\langle s, \pi\rangle}_{\psi_0} & {\langle s, \pi\rangle}_{\psi_2} & {\langle s, \pi\rangle}_{\psi_3} \\
\hline\hline
\pi(\phi_0) & 1 & 0 & 0 \\
\pi(\phi_2,+) & s_1s_2 & 1 & 0 \\
\pi(\phi_3,+) & 0 & 0 & 1 \\
\pi(\phi_3,-) & 0 & {s_1s_2} & s_1s_2 
%\hline
%\pi(\phi_2,-) & 0 & s_2 & 0 \\
\end{array}
\]
%The last row of this table actually gives the value of ${\langle s_\psi, \pi\rangle}_{\psi,\text{sc}}$ for the admissible representation $\pi(\phi_2,-)$ of the non-quasisplit group $G_1(F)$.
For instance, if we take 
$
s=\operatorname{diag}(1,-1,-1,1)\in \dualgroup{T}[2]
$
then 
\[
\begin{array}{rcl}
\Theta^{G}_{\psi_0,s} &=& \Theta_{\pi(\phi_0)} - \Theta_{\pi(\phi_2,+)} , \\
\Theta^{G}_{\psi_2,s} &=& \Theta_{\pi(\phi_2,+)} { +} \Theta_{\pi(\phi_3,-)} ,\\
\Theta^{G}_{\psi_3,s} &=& \Theta_{\pi(\phi_3,+)} - \Theta_{\pi(\phi_3,-)} .
\end{array}
\]
In this case, the elliptic endoscopic group $G'$ for $G$ determined by $s$ is the group $G' = \SO(3)\times \SO(3)$, split over $F $.

\subsection{Vanishing cycles of perverse sheaves}

We now assemble the geometric tools needed to calculate the Arthur packets, stable distributions and endoscopic transfer described above.

\subsubsection{Vogan variety and its conormal bundle}\label{sssec:VSO(5)singular}

\[
V_\lambda  = 
\left\{
\begin{pmatrix} 
\begin{array}{cc|cc}
 &  & z & x \\
 &  & y & -z \\ \hline
 &  &  &  \\
 &  &  & 
\end{array}
\end{pmatrix}
\mid
x, y, z
\right\} 
\qquad
V_\lambda^* =
\left\{
\begin{pmatrix} 
\begin{array}{cc|cc}
 &  &  &  \\
 &  &  &  \\ \hline
z' & y' &  &  \\
x' & -z' &  & 
\end{array}
\end{pmatrix}
\mid
x', y', z'
\right\} 
\]
so
\[
T^*(V_\lambda) =
\left\{
\begin{pmatrix} 
\begin{array}{cc|cc}
 &  & z & x \\
 &  & y & -z \\ \hline
z' & y' &  &  \\
x' & -z' &  & 
\end{array}
\end{pmatrix}
\mid
\begin{array}{c}
x, y, z\\
x', y', z'
\end{array}
\right\} 
\subset \sp(4)
\]
The cotangent bundle $T^*(V_\lambda)$ comes equipped with an action of
\[
H_\lambda \ceq Z_{\dualgroup{G}}(\lambda) = 
\left\{
\begin{pmatrix} 
\begin{array}{cc|cc}
a_1 & b_1 &  &  \\
c_1 & d_1 &  &  \\ \hline
 &  & a_2 & b_2 \\
 &  & c_2 & d_2
\end{array}
\end{pmatrix} \in \Sp(4)
\right\} .
\]
We will write $h_1 = (\begin{smallmatrix} a_1 & b_1 \\ c_1 & d_1 \end{smallmatrix})$ and $h_2 = (\begin{smallmatrix} a_2 & b_2 \\ c_2 & d_2 \end{smallmatrix})$.
Then $h_2  = h_1 \det h_1^{-1}$, by the choice of symplectic form $J$ in Section~\ref{sec:SO(5)regular}.
In particular, $H_\lambda\iso \GL(2)$. 
The action of $H_\lambda$ on $V_\lambda$, $V_\lambda^*$ and $T^*(V_\lambda)$ is given by
\[
\begin{array}{rcl}
h \cdot \begin{pmatrix} z & x \\ y  & -z \end{pmatrix}
&=&
h_1 \begin{pmatrix} z & x \\ y  & -z \end{pmatrix} h_2^{-1}
\\
h \cdot \begin{pmatrix} z\tran & y\tran \\ x\tran   & -z\tran \end{pmatrix}
&=&
h_2\begin{pmatrix} z\tran & y\tran \\ x\tran   & -z\tran \end{pmatrix} h_1^{-1}.
\end{array}
\]

%
%\iffalse
The $H_\lambda$-invariant function  $\KPair{\cdot}{\cdot} : T^*(V_{\lambda}) \to \mathbb{A}^1$
is the quadratic form
\[
\begin{pmatrix} 
\begin{array}{cc|cc}
{} & & z & x  \\
 & & y & -z  \\ \hline
z\tran & y\tran & & \\
x\tran  & -z\tran & &  \\ 
\end{array}
\end{pmatrix}
\mapsto
x x\tran  + y y\tran + 2z z\tran .
\]
The $H_\lambda$-invariant function $[{\cdot},{\cdot}] : T^*(V_{\lambda}) \to \mathfrak{h}_\lambda$ is given by
\[
\begin{array}{rcl}
\begin{pmatrix} 
\begin{array}{cc|cc}
 & & z & x \\
 & & y & -z \\ \hline
z\tran & y\tran & & \\
x\tran  & -z\tran & & 
\end{array}
\end{pmatrix}
&\mapsto&
\begin{pmatrix} 
\begin{array}{cc|cc}
zz'+xx' & zy'-xz'&  &  \\
yz'-zx' & yy'+zz' &  &  \\ \hline
 &  & zz'+yy' & xz'-zy' \\
  &  & zx'-yz'& xx'+zz'
\end{array}
\end{pmatrix}
\end{array}
\]
%\fi
%
The conormal bundle is
\[
T^*_{H_\lambda}(V_\lambda)
=
\left\{ 
\begin{pmatrix} 
\begin{array}{cc|cc}
 & & z & x \\
 & & y & -z \\ \hline
z\tran & y\tran & & \\
x\tran  & -z\tran & & 
\end{array}
\end{pmatrix}
\mid
\begin{array}{c}
zz'+xx' =0\\
zz'+yy'=0 \\
zx'-yz' =0 \\
xz'-zy' =0
\end{array}
\right\}
\]

The orbits of the action of $H_\lambda$ on $V_\lambda$ are
\[
C_0 = 
\left\{ 
\begin{pmatrix} 
\begin{array}{cc|cc}
 &  & 0 & 0 \\
 &  & 0 & 0 \\ \hline
 &  &  &  \\
 &  &  & 
\end{array}
\end{pmatrix}
\right\}
\]
\[
C_2 = 
\left\{ 
\begin{pmatrix} 
\begin{array}{cc|cc}
 &  & z & y \\
 &  & x & -z \\ \hline
 &  &  &  \\
 &  &  & 
\end{array}
\end{pmatrix}
\mid
\begin{array}{r}
xy+z^2 = 0\\
(x,y,z) \ne (0,0,0)
\end{array}
\right\}
\]
\[
C_3 = 
\left\{ 
\begin{pmatrix} 
\begin{array}{cc|cc}
 &  & z & y \\
 &  & x & -z \\ \hline
 &  &  &  \\
 &  &  & 
\end{array}
\end{pmatrix}
\mid
\begin{array}{r}
xy+z^2 \ne 0
\end{array}
\right\}
\]
while the orbits of the action of $H_\lambda$ on $V_\lambda^*$ are:
\[
C^*_0 = 
\left\{ 
\begin{pmatrix} 
\begin{array}{cc|cc}
 &  &  &  \\
 &  &  &  \\ \hline
z' & y' &  &  \\
x' & -z' &  & 
\end{array}
\end{pmatrix}
\mid
\begin{array}{r}
x'y'+z'^2 \ne 0\\
\end{array}
\right\}
\]
\[
C^*_2 = 
\left\{ 
\begin{pmatrix} 
\begin{array}{cc|cc}
 &  &  &  \\
 &  &  &  \\ \hline
z' & y' &  &  \\
x' & -z' &  & 
\end{array}
\end{pmatrix}
\mid
\begin{array}{r}
x'y'+z'^2 = 0\\
(x',y',z') \ne (0,0,0)
\end{array}
\right\}
\]
\[
C^*_3 = 
\left\{ 
\begin{pmatrix} 
\begin{array}{cc|cc}
 &  &  &  \\
 &  &  &  \\ \hline
0 & 0 &  &  \\
0 & 0 &  & 
\end{array}
\end{pmatrix}
\right\}
\]

The following diagram gives the dimensions of the $H_\lambda$-orbits $C$ and the dual orbits $C^*$, we well as the eccentricities $e_C= \dim C + \dim C^* -\dim V_{\lambda}$ and closure relations for the $H_\lambda$-orbits $C$ in $V_\lambda$:
\[
\begin{tikzcd}[column sep=5]
C_3  		&\dim C_3 =3 	& e_{C_3}=0 	& C_0^* \\
C_2  \arrow{u} 	&\dim C_2 =2 	& e_{C_2}=1 	& C_2^* \arrow{u}\\
C_0  \arrow{u}  &\dim C_0 = 0 	& e_{C_0}=0	& C_3^* \arrow{u} 
\end{tikzcd}
\]
Orbit $C_2$ is the first stratum in this article with non-zero eccentricity.

\subsubsection{Equivariant local systems}\label{sssec:ELS-SO(5)singular}

\begin{enumerate}
\item[$C_0$:]
Regular conormal bundle above the closed $H_\lambda$-orbit $C_0 \subset V_\lambda$:
\[
T^*_{C_0}(V_\lambda)_\textrm{reg}
= 
\left\{ 
\begin{pmatrix} 
\begin{array}{cc|cc}
 & & 0 & 0 \\
 & & 0 & 0 \\ \hline
z\tran & y\tran & & \\
x\tran  & -z\tran & & 
\end{array}
\end{pmatrix}
\mid
\begin{array}{c}
x' y' - z'^2 \ne 0
\end{array}
\right\}
\] 
Base point:
\[
(x_0,\xi_0) = 
\begin{pmatrix} 
\begin{array}{cc|cc}
 & & 0 & 0 \\
 & & 0 & 0 \\ \hline
0 & 1 & & \\
1  & 0 & & 
\end{array}
\end{pmatrix}
\in T^*_{C_0}(V_\lambda)_\textrm{reg}
\]
Fundamental groups:
\[
\begin{tikzcd}
& \dualgroup{T}[2] \arrow[->>]{d}[near start]{s\mapsto s_1s_2} & \\
1=A_{x_0} & \arrow{l} A_{(x_0,\xi_0)} \arrow{r}{\id} & A_{\xi_0}\iso \{\pm 1 \} 
\end{tikzcd}
\]
Local systems:
\[
\begin{array}{| r cc|}
\hline
\Loc_{H_\lambda}(T^*_{C_{0}}(V_\lambda)_\text{sreg}) : & \1_{\O_{0}} & \mathcal{L}_{\O_{0}}  \\ 
%\hline
\Rep(A_{(x_0,\xi_0)}) : & + & - \\ \hline
\end{array}
\]
Pullback along the bundle map  $T^*_{C_{0}}(V_\lambda)_\text{sreg}\to C_0$:
\[
\begin{array}{ccc}
\Loc_{H_\lambda}(C_0) & \rightarrow & \Loc_{H_\lambda}(T^*_{C_0}(V_\lambda)_\text{sreg})\\ 
%\hline
\1_{C_0} &\mapsto& \1_{\O_0} \\
&& \mathcal{L}_{\O_0}
\end{array}
\]
\item[$C_2$:]
Regular conormal bundle above $C_2 \subset V_\lambda$:
\[
T^*_{C_2}(V_\lambda)_\textrm{reg}
= 
\left\{ 
\begin{pmatrix} 
\begin{array}{cc|cc}
 & & z & x \\
 & & y & -z \\ \hline
z\tran & y\tran & & \\
x\tran  & -z\tran & & 
\end{array}
\end{pmatrix}
\mid
\begin{array}{c}
xy+z^2 = 0\\
{[x:y:z] = [y':x':z']}
\end{array}
\right\}
\] 
Base point:
\[
(x_2,\xi_2) = 
\begin{pmatrix} 
\begin{array}{cc|cc}
 & & 0 & 1 \\
 & & 0 & 0 \\ \hline
0 & 1 & & \\
0  & 0 & & 
\end{array}
\end{pmatrix}
\in T^*_{C_2}(V_\lambda)_\textrm{reg}
\]
Fundamental groups:
{
\[
\begin{tikzcd}
&& \dualgroup{T}[2] \arrow{d}[near start]{s\mapsto (s_1,s_2)}[near start, swap]{\iso} && \\
\{ \pm 1\} =A_{x_2} && \arrow{ll}[swap]{s_1\mapsfrom (s_1,s_2)}  A_{(x_2,\xi_2)} \arrow{rr}{(s_1,s_2)\mapsto s_2} && A_{\xi_2}= \{\pm 1 \} 
\end{tikzcd}
\]
}
Local systems:
\[
\begin{array}{| r cccc|}
\hline
\Loc_{H_\lambda}(T^*_{C_{2}}(V_\lambda)_\text{sreg}) : & \1_{\O_{2}} & \mathcal{L}_{\O_{2}} & \mathcal{F}_{\O_2} & \mathcal{E}_{\O_2} \\ 
%\hline
\Rep(A_{(x_2,\xi_2)}) : & ++ & -- & -+ & +-  \\ \hline
\end{array}
\]
Pullback along the bundle map $T^*_{C_{2}}(V_\lambda)_\text{sreg}\to C_2$:
\[
\begin{array}{ccc}
\Loc_{H_\lambda}(C_2) & \rightarrow & \Loc_{H_\lambda}(T^*_{C_2}(V_\lambda)_\text{sreg})  \\ 
%\hline
\1_{C_2} &\mapsto& \1_{\O_2}  \\
&& \mathcal{L}_{\O_2}  \\
\mathcal{F}_{C_2} &\mapsto & \mathcal{F}_{\O_2}  \\
&& \mathcal{E}_{\O_2}
\end{array}
\]
\item[$C_3$:]
Regular conormal bundle above $C_3 \subset V_\lambda$:
\[
T^*_{C_3}(V_\lambda)_\textrm{reg}
= 
\left\{ 
\begin{pmatrix} 
\begin{array}{cc|cc}
 & & z & x \\
 & & y & -z \\ \hline
0 & 0 & & \\
0  & 0 & & 
\end{array}
\end{pmatrix}
\mid
\begin{array}{c}
xy+z^2 \ne 0
\end{array}
\right\}
\]
Base point:
\[
(x_3,\xi_3) = 
\begin{pmatrix} 
\begin{array}{cc|cc}
 & & 0 & 1 \\
 & & 1 & 0 \\ \hline
0 & 0 & & \\
0 & 0 & & 
\end{array}
\end{pmatrix}
\in T^*_{C_3}(V_\lambda)_\textrm{reg}
\] 
Fundamental groups:
\[
\begin{tikzcd}
& \dualgroup{T}[2] \arrow[->>]{d}[near start,swap]{s_1s_2 \mapsfrom s} & \\
\{ \pm 1\} \iso A_{x_3} & \arrow{l}[swap]{\id}  A_{(x_3,\xi_3)} \arrow{r} & A_{\xi_3}= 1 
\end{tikzcd}
\]
Local systems:
\[
\begin{array}{| r cc|}
\hline
\Loc_{H_\lambda}(T^*_{C_{3}}(V_\lambda)_\text{sreg}) : & \1_{\O_{3}} & \mathcal{L}_{\O_{3}}  \\ 
%\hline
\Rep(A_{(x_3,\xi_3)}) : & + & -  \\ \hline
\end{array}
\]
Pullback along the bundle map $T^*_{C_{3}}(V_\lambda)_\text{sreg} \to C_3$:
\[
\begin{array}{ccc}
\Loc_{H_\lambda}(C_3) & \rightarrow & \Loc_{H_\lambda}(T^*_{C_3}(V_\lambda)_\text{sreg}) \\ 
%\hline
\1_{C_3} &\mapsto& \1_{\O_3} \\
\mathcal{L}_{C_3} & \mapsto & \mathcal{L}_{\O_3} 
\end{array}
\]
\end{enumerate}

\subsubsection{Equivariant perverse sheaves}\label{sssec:EPS-SO(5)singular}

The following table is helpful to understand the simple objects in $\Perv_{H_\lambda}(V_\lambda)$.
%\begin{spacing}{1.0}
\[
\begin{array}{ c || c c c }
\mathcal{P} & \mathcal{P}\vert_{C_0} & \mathcal{P}\vert_{C_2} & \mathcal{P}\vert_{C_3} \\
\hline\hline
\IC(\1_{C_0}) &  \1_{C_0}[0] & 0 & 0  \\
\IC(\1_{C_2}) &  \1_{C_0}[2] & \1_{C_2}[2]  & 0 \\
\IC(\1_{C_3}) &  \1_{C_0}[3] & \1_{C_2}[3]  & \1_{C_3}[3] \\
\IC(\mathcal{L}_{C_3}) &  \1_{C_0}[1] & 0 & \mathcal{L}_{C_3}[3] \\
\hline
\IC(\mathcal{F}_{C_2}) & 0 & \mathcal{F}_{C_2}[2]  &  0 
\end{array}
\]
%\end{spacing}
We now explain how we made these calculations:
\begin{enumerate}
\labitem{(a)}{labitem:IC-SO(5)singular-a}
The first and third row of these tables are computed using the observation that when $\overline{C}$ is smooth, the sheaf $\1_{\overline{C}}[\dim(C)]$ is perverse. 
%
%The remaining rows are each computed by constructing a proper cover of $\pi: \widetilde{C} \rightarrow \overline{C}$ by a smooth variety $\widetilde{C}$ for which $\IC(\mathcal{E}_C)$ will appear in $\pi_!(\1_{\widetilde{C}}[\dim(C)])$.
\labitem{(b)}{labitem:IC-SO(5)singular-b}
For the second row, the relevant cover $\widetilde{C}_2^{(1)}$ is the blowup of the nilcone at the origin. 
We readily find using the decomposition theorem for semi-small maps that 
\[
{\pi_2^{(1)}}_!(\1_{\widetilde{C}_2^{(1)}}[2]) = \IC(\1_{C_2}) \oplus \IC(\1_{C_0}).
\]
Proper base change and exactness allows us to deduce the fibres of $\IC(\1_{C_2})$ using what we already know about $\IC(\1_{C_0})$.
\labitem{(c)}{labitem:IC-SO(5)singular-c}
For the fourth row, we consider the double cover which arises from taking the square root of the determinant. 
Although this is singular at the origin, blowing up resolves this singularity.
An alternate model for this blowup is the cover:
\[ \widetilde{C}_3 = \left\{ ([a:b], (x,y,z)) \in \mathbb{P}^1\times V \;\mid\; a^2x+2abz-b^2y = 0 \right\} \]
with the obvious map $\pi_3$ to $V = \overline{C_3}$.
The decomposition theorem for semi-small maps yields 
\[
{\pi_3}_!(\1_{\widetilde{C}_3}[3]) = \IC(\mathcal{L}_{C_3}) \oplus \IC(\1_{C_3}).
\]
Proper base change and exactness again allows us to deduce the entries for $\IC(\mathcal{L}_{C_3})$, the key observation being that the map is $2:1$ over $C_3$, an isomorphism over $C_2$ and the fibre over $C_0$ is $\mathbb{P}^1$.
\labitem{(d)}{labitem:IC-SO(5)singular-d}
Finally the fifth row is computed by considering the ``symmetric squares'' cover of the nilcone given by $\pi_2^{(2)}: (a,b) \mapsto (a^2,-b^2,ab)$.
This map is $2:1$ over $C_2$ and an isomorphism over $C_0$;
we readily confirm using the decomposition theorem for finite maps that
\[ 
{\pi_2^{(2)}}_!(\1_{\tilde{C}}[2]) = \IC(\mathcal{F}_{C_2}) \oplus \IC(\1_{C_2}). 
\]
Computing the entries in the table is now immediate using our understanding of the fibres and what we already know about $\IC(\1_{C_2})$.
\end{enumerate}

From this, we easily find the normalized geometric multiplicity matrix.
%\begin{spacing}{1.0}
\[
\begin{array}{ c || c c c c | c  }
{} & \1_{C_0}^\natural & \1_{C_2}^\natural & \1_{C_3}^\natural & \mathcal{L}_{C_3}^\natural & \mathcal{F}_{C_2}^\natural \\
\hline\hline
\1_{C_0}^\sharp 		&  1 & 0 & 0 & 0 & 0  \\
\1_{C_2}^\sharp 		&  1 & 1 & 0 & 0 & 0 \\
\1_{C_3}^\sharp 		&  1 & 1 & 1 & 0 & 0 \\
\mathcal{L}_{C_3}^\sharp &  1 & 0 & 0 & 1 & 0 \\
\hline
\mathcal{F}_{C_2}^\sharp &  0 & 0 & 0 & 0 & 1 
\end{array}
\]
%\end{spacing}

\subsubsection{Cuspidal support decomposition and Fourier transform}\label{sssec:Ft-SO(5)singular}

Cuspidal Levi subgroups for $\dualgroup{G}$ were given in Section~\ref{sssec:Ft-SO(5)regular}, so the cuspidal support decomposition of $\Perv_{H_\lambda}(V_\lambda)$ takes the same form here:
\[
\Perv_{H_\lambda}(V_\lambda) = \Perv_{H_\lambda}(V_\lambda)_{\dualgroup{T}} \oplus 
\Perv_{H_\lambda}(V_\lambda)_{\dualgroup{M}}.
\]
However, simple objects in these two subcategories are quite different in this case:
%\begin{spacing}{1.0}
\[
\begin{array}{ c || c }
\Perv_{H_\lambda}(V_\lambda)_{\dual{T}} & \Perv_{H_\lambda}(V_\lambda)_{\dual{M}}  \\
\hline\hline
\IC(\1_{C_0}) & \\
\IC(\1_{C_2}) & \IC(\mathcal{F}_{C_2}) \\
\IC(\1_{C_3}) & \\
\IC(\mathcal{L}_{C_3}) 
\end{array}
\]
%\end{spacing}
Here we record the functor $\Ft : \Perv_{H_\lambda}(V_\lambda) \to \Perv_{H_\lambda}(V_\lambda^*)$ on simple objects, and the composition of that functor with the equivalence $\Perv_{H_\lambda}(V_\lambda^*) \to \Perv_{H_\lambda}(V_\lambda)$ described in Section~\ref{sssec:AubertFt-overview}; the composition is the functor $\,^\wedge : \Perv_{H_\lambda}(V_\lambda) \to \Perv_{H_\lambda}(V_\lambda)$ also discussed in Section~\ref{sssec:AubertFt-overview}.
%\begin{spacing}{1.0}
\[
\begin{array}{ rcl c l }
\Perv_{H_\lambda}(V_\lambda) &\mathop{\longrightarrow}\limits^{\Ft} &  \Perv_{H_\lambda}(V_\lambda^*) &\mathop{\longrightarrow} & \Perv_{H_\lambda}(V_\lambda) \\
 \IC(\1_{C_0}) &\mapsto& \IC(\1_{C^*_0})  & \mapsto & \IC(\1_{C_3}) \\
 \IC(\1_{C_2}) &\mapsto& \IC(\mathcal{L}_{C^*_0}) &\mapsto & \IC(\mathcal{L}_{C_3}) \\
 \IC(\1_{C_3}) &\mapsto&  \IC(\1_{C^*_3}) &\mapsto & \IC(\1_{C_0}) \\
 \IC(\mathcal{L}_{C_3}) &\mapsto& \IC(\1_{C^*_2}) &\mapsto & \IC(\1_{C_2}) \\
%\hline
 \IC(\mathcal{F}_{C_2}) &\mapsto&  \IC(\mathcal{F}_{C^*_2}) &\mapsto & \IC(\mathcal{F}_{C_2})  
\end{array} 
\]
%\end{spacing}
Note that the Fourier transform respects the cuspidal support decomposition.

\subsubsection{Vanishing cycles}\label{sssec:Ev-SO(5)singular}

\begin{table}
\caption{$\pEv : \Perv_{H_\lambda}(V_\lambda) \to  \Perv_{H_\lambda}(T^*_{H_\lambda}(V_\lambda)_\textrm{reg})$ on simple objects, for $\lambda : W_F \to \Lgroup{G}$ given at the beginning of Section~\ref{sec:SO(5)singular}.}
\label{table:Ev-SO(5)singular}
%\begin{spacing}{1.0}
\[
\begin{array}{ c c l }
\Perv_{H_\lambda}(V_\lambda) &\mathop{\longrightarrow}\limits^{\pEv}& \Perv_{H_\lambda}(T^*_{H_\lambda}(V_\lambda)_\textrm{reg})  \\	
\IC(\1_{C_0}) &\mapsto& \IC(\1_{\O_0})  \\
\IC(\1_{C_2}) &\mapsto& \IC({ \mathcal{L}}_{\O_2}) \oplus \IC(\mathcal{L}_{\O_0}) \\
\IC(\1_{C_3}) &\mapsto& \IC(\1_{\O_3}) \\
\IC(\mathcal{L}_{C_3}) &\mapsto& \IC(\mathcal{L}_{\O_3}) \oplus \IC({\1}_{\O_2})  \\
%\hline
\IC(\mathcal{F}_{C_2}) &\mapsto&  \IC({\mathcal{E}}_{\O_2})
\end{array}
\]
\end{table}

\begin{table}
\caption{$\Evs : \Perv_{H_\lambda}(V_\lambda) \to  \Loc_{H_\lambda}(T^*_{H_\lambda}(V_\lambda)_\textrm{reg})$ on simple objects, for $\lambda : W_F \to \Lgroup{G}$ given at the beginning of Section~\ref{sec:SO(5)singular}.}
\label{table:Evs-SO(5)singular}
%\begin{spacing}{1.0}
\[
\begin{array}{c||ccc}
\mathcal{P} & \Evs_{C_0}\mathcal{P} & \Evs_{C_2}\mathcal{P} & \Evs_{C_3}\mathcal{P} \\
\hline\hline
\IC(\1_{C_0}) 			& + & 0 	& 0 \\
\IC(\1_{C_2}) 			& -  & { --} 	& 0 \\
\IC(\1_{C_3}) 			& 0 & 0 	& + \\
\IC(\mathcal{L}_{C_3}) 	& 0 & { ++} 	& - \\
\hline
\IC(\mathcal{F}_{C_2})	& 0 & { +-} 	& 0 
\end{array}
\]
%\end{spacing}
\end{table}

Table~\ref{table:Ev-SO(5)singular} presents the calculation of $\Ev$ on simple objects.
We explain all the calculation here.

All the entries in row $1$ and column $3$ are are a direct consequence of Lemma~\ref{lemma:method0}. 

We show how to compute column $2$.
\begin{enumerate}
\labitem{(a)}{labitem:Ev-SO(5)singular-a}
We compute $\pEv_{C_2}\IC(\1_{C_2})$.
By Lemma~\ref{lemma:bigcell}, 
\[
\pEv_{C_2}\IC(\1_{C_2}) = \RPhi_{xx'+yy'+2zz'}(\1_{C_2\times C_2^*})\vert_{T^*_{C_2}(V_\lambda)_\textrm{reg}}[3].
\]
Consider the affine open subvariety $U_{xy'}$ of ${C_2\times C_2^*}$ given by the equations $xy'\ne 0$; then 
\[
\begin{array}{rcl}
U &=& 
\left\{ 
\begin{pmatrix} 
\begin{array}{cc|cc}
 &  & z & x \\
 &  & y & -z \\ \hline
z' & y' &  &  \\
x' & -z' &  &
\end{array}
\end{pmatrix}
\big\vert
\begin{array}{r}
xy+z^2 = 0\\
x'y'+{z'}^2 = 0\\
xy'\ne 0
\end{array}
\right\}
\end{array}
\]
Then $U_{xy'}$ has coordinate ring 
\[
\k[x,y,z,x',y,'z']_{xy'}/(xy+z^2, x'y'+z'^2) \iso \k[x,z,y',z']_{xy'}.
\]
Write  $f_{xy'} : U_{xy'} \to \mathbb{A}^1$ for the restriction of $f$ to $U_{xy'}$.
Then $f_{xy'}$ is given on coordinate rings by 
\[
t\mapsto  -x\frac{z'^2}{y'}-y'\frac{z^2}{x}+2zz' = -\frac{1}{xy'}(zy' - xz')^2.
\]
Using Section~\ref{ssec:methods}, especially Lemma~\ref{lemma:methodx2u}, it follows that $\RPhi_{f_{xy'}}(\1_{U_{xy'}})$ is the sheaf on 
\[
f_{xy'}^{-1}(0) = \Spec{\k[x,z,y',z']_{xy'}/(zy' - xz')}
\]
associated to the double cover $\Spec{\k[x,z,y',z',s]_{xy'}/(zy' - xz',s^2 + xy')}$.

With reference to Section~\ref{sssec:VSO(5)singular}, the restriction of $[{\cdot},{\cdot}] : T^*(V_{\lambda}) \to \mathfrak{h}_\lambda$ to $U_{xy'}$ is given by
\[
\begin{array}{rcl}
\iffalse
\begin{pmatrix} 
\begin{array}{cc|cc}
 &  & z & x \\
 &  & -\frac{z^2}{x} & -z \\ \hline
z' & y' &  &  \\
-\frac{z'^2}{y'} & -z' &  &
\end{array}
\end{pmatrix}
\mapsto
%\fi
&& \begin{pmatrix} 
\begin{array}{cc|cc}
zz'-x\frac{z'^2}{y'} & zy'-xz'&  &  \\
-\frac{z^2}{x}z'+z\frac{z'^2}{y'} & -\frac{z^2}{x}y'+zz' &  &  \\ \hline
 &  & zz'-\frac{z^2}{x}y' & xz'-zy' \\
  &  & -z\frac{z'^2}{y'}+\frac{z^2}{x}z'& -x\frac{z'^2}{y'}+zz'
\end{array}
\end{pmatrix}\\
&=&
\fi\qquad
\begin{pmatrix} 
\begin{array}{cc|cc}
 &  & z & x \\
 &  & * & -z \\ \hline
z' & y' &  &  \\
* & -z' &  &
\end{array}
\end{pmatrix} & \mapsto&
\frac{zy'-xz'}{xy'}
\begin{pmatrix} 
\begin{array}{cc|cc}
xz' & 1&  &  \\
-zz' & -zy' &  &  \\ \hline
 &  & -zy' & -1 \\
  &  & zz' & xz'
\end{array}
\end{pmatrix}
\end{array}
\]
and so the equation $zy' - xz' = 0$ implies this product is zero.
It follows that the support of $\RPhi_{f_{xy'}}(\1_{U_{xy'}})$ is contained in $T^*_{C_2}(V_\lambda)_\textrm{reg}\cap U_{xy'}$.
We may find the entire sheaf $ \RPhi_{f\vert_{C_2\times C_2^*}}(\1_{C_2\times C_2^*})$ by also considering the affine open $U_{x'y}$ cut out by the condition $x'y\ne 0$.
In this case we symmetrically obtain the local system on $zx' - yz' = 0$ associated to the double cover given by $s^2+x'y=0$.
We may then glue these local systems together to see that $\pEv_{C_2}\IC(\1_{C_2})[-3]$ is the non-trivial local system on $T^*_{C_2}(V_\lambda)_\textrm{reg}$ trivialized by the double cover
\[  
\Spec{\k[a^2,b^2,ab,{a'}^2,{b'}^2,a'b',ab',aa',ba',bb']}
\to
T^*_{C_2}(V_\lambda)_\textrm{reg}  
\]
(subvariety of $\mathbb{A}^{10}$ with all of the implied relations) 
given on the coordinate rings by
\[ 
x \mapsto a^2,\ y\mapsto -b^2,\  z \mapsto ab,\quad x'\mapsto {a'}^2,\  y' \mapsto -{b'}^2,\  z' \mapsto a'b'. 
\]
This is the diagonal quotient of the product of the symmetric squares.

It remains to see how to describe this local system in the language of Section~\ref{sssec:ELS-SO(5)singular}. To do this, observe that $(x_2,\xi_2)\in U_{xy'}$ and return to the description of the local system $\RPhi_{f_{xy'}}(\1_{U_{xy'}})$ given above and observe that the covering group of $\Spec{\k[x,z,y',z',s]_{xy'}/(zy' - xz',s^2 + xy')}$ over $f_{xy'}^{-1}(0)$ is $s\mapsto \pm s$, or equivalently, $(x,y') \mapsto (\pm x,\pm y')$.
It follows that $\pEv_{C_2}\IC(\1_{C_2})= \mathcal{L}_{\O_2}[3]$ where $\mathcal{L}_{\O_2}$ is the local system which corresponds to the character $(--)$ of $A_{(x_2,\xi_2)}$.
Note that $\pEv_{C_2}\IC(\1_{C_2})\ne \1_{\O_2}[3]$.

\labitem{(b)}{labitem:Ev-SO(5)singular-b}

Here we compute $\pEv_{C_2}\IC(\mathcal{F}_{C_2})$ using the affine covering $U_{xy'}\cup U_{x'y} = C_2\times C_2^*$ from Section~\ref{sssec:Ev-SO(5)singular}, \ref{labitem:Ev-SO(5)singular-a}.
Observe that 
\[
\pEv_{C_2}\IC(\mathcal{F}_{C_2}) = \RPhi_{f\vert_{C_2\times C_2^*}} (\mathcal{F}_{C_2}\boxtimes\1_{C^*_2})[3]
\]
since $\IC(\mathcal{F}_{C_2}) = \mathcal{F}_{C_2}^\natural[2]$.
Consider the cover $\tilde{U}_{xy'} = \Spec{\k[a,b,y',z']_{ay'}}$ of $U_{xy'} = \Spec{\k[x,z,y',z']_{xy'}}$ given on coordinate rings by $x \mapsto a^2$, $z \mapsto ab$ and the cover $\tilde{U}_{x'y} = \Spec{\k[a,b,x',z']_{bx'}}$ of $U_{x'y} = \Spec{\k[y,z,x',z']_{x'y}}$ given on coordinate rings by $y \mapsto -b^2$, $z \mapsto ab$.
Together, this defines a double cover $\pi: {\tilde C}_2\times C^*_2 \to C_2\times C^*_2$; the local system $\mathcal{F}_{C_2}\boxtimes\1_{C^*_2}$ is associated to the quadratic character of the covering group $(a,b)\mapsto (\pm a,\pm b)$.
Then 
\[
(f \circ\pi)\vert_{U_{xy'}} = -\frac{1}{a^2y'}(aby' - a^2z')^2 =- \frac{1}{y'}(by' - az')^2.
\]
By Lemma~\ref{lemma:methodx2u},
\[
\RPhi_{-\frac{1}{y'}(by' - az')^2} (\1_{\tilde{U}_{xy'}})
\]
is the local system associated to the cover $s^2+y'=0$ over the zero locus of $by'-az'$ and the quadratic character of the covering group $s\mapsto \pm s$, or equivalently, $(a,b)\mapsto (\pm a,\pm b)$.
By proper base change,
\[
(\pi\vert_{\tilde{U}_{xy'}})_* \RPhi_{-\frac{1}{y'}(by' - az')^2} (\1_{\tilde{U}_{xy'}}) = 
\RPhi_{f\vert_{U_{xy'}}} (\pi\vert_{\tilde{U}_{xy'}})_* \1_{\tilde{U}_{xy'}})
\]
This shows that $\Ev_{C_2}\IC(\mathcal{F}_2)[-3]$ is not the pullback $\mathcal{F}_{\O_2}$ of $\mathcal{F}_2$ to $T^*_{C_2}(V_\lambda)_\textrm{reg}$, but rather the twist of that by the local system above, which is $\mathcal{E}_{\O_2}$.

\labitem{(c)}{labitem:Ev-SO(5)singular-c}

Recall the cover $\pi_3: \widetilde{C_3} \to \overline{C_3}$ from Section~\ref{sssec:EPS-SO(5)singular} \ref{labitem:IC-SO(5)singular-c}.
To compute $\Ev_{C_2}\IC(\mathcal{L}_{C_3})$ we consider
\[ 
\RPhi_{f\circ (\pi_3\times \id)}(\1_{\widetilde{C_3} \times C_2^*}) .
\]
Localize $\overline{C_3} = V_\lambda$ at $xy'\ne 0$ to define $V_{xy'}$.
Localize the fibre $(\pi_3\times \id)^{-1}(V_{xy'})$ away from the exceptional divisor to define $\tilde{V}_{xy'}$ with coordinate ring
\[
\qquad\qquad\k[d,x,y,z,z',y',z']_{xy'}/(xy+z^2-d^2,x'y'+z'^2) \iso \k[d,x,z,y',z']_{xy'}
\]
and the cover $\pi^3_{xy'} : \tilde{V}_{xy'} \to V_{xy'}$.
Then 
\[
f\circ\pi^3_{xy'} = -\frac{1}{xy'}(xz' - zy'-dy')(xz' - zy'+dy')
\]
The functions $(xz' - zy'-dy')$ and $(xz' - zy'+dy')$ being smooth on $\tilde{V}_{xy'}$, and $xy'$ being non-zero, it follows from Corollary~\ref{corollary:method} that 
\[ 
\RPhi_{f\circ \pi^3_{xy'}}(\1_{\tilde{V}_{xy'}})
\]
is the constant sheaf on the intersection of their zero loci, which is precisely $\mathcal{O}_{2} \cap \tilde{V}_{xy'}$.
Using proper base change, and noting that proper pushforward is an isomorphism on the regular conormal vectors, it follows that
\[
\Ev_{C_2}\IC(\mathcal{L}_{C_3})
= 
\1_{\O_2}[3].
\]
\end{enumerate}
This concludes the computation of column $2$ in Table~\ref{table:Ev-SO(5)singular}. 

We have now explained every entry in Table~\ref{table:Ev-SO(5)singular} except for the following case.
\begin{enumerate}
\labitem{(d)}{labitem:Ev-SO(5)singular-d}
To compute $\Ev_{C_0}\IC(\1_{C_2})$, consider 
\[ 
\RPhi_{xx'+yy'+2zz'}(\1_{\widetilde{C}_2^{(1)}\times C_0^*}) 
\]
with 
\[
\qquad\qquad\widetilde{C}_2^{(1)}= \left\{ ([a:b],(x,y,z))\in \mathbb{P}^1\times \overline{C_2} \;\mid\;  -ax+bz=0,\; az+by=0 \right\} .
\]
The Jacobian condition for smoothness tells us that this is singular precisely when $x=y=z=0$ and
\[ 
-a^2x' + 2abz' + b^2y' = 0. 
\]
The restriction of $\widetilde{C}_2^{(1)}\times C_0^* \to C_2\times C_0^*$ to the singular locus gives the non-trivial double cover of $C_0^\ast$.
From this we conclude
\[ 
\Ev_{C_0}{\pi_2^{(1)}}! \1_{\widetilde{C}_2^{(1)}}[2]  =  \IC(\mathcal{L}_{\mathcal{O}_0}) \oplus \IC(\1_{\mathcal{O}_0}). 
\]
As we already know that $\IC(\1_{C_0})$ is the source of the second term, we conclude that
\[ 
\Ev_{C_0}\IC(\1_{C_2})  =  \IC(\mathcal{L}_{\mathcal{O}_0}) .
\]
\end{enumerate}

\subsubsection{Normalization of Ev and the twisting local system}\label{sssec:NEv-SO(5)singular}

From Table~\ref{table:Ev-SO(5)singular} we find the first interesting case of the local system $\mathcal{T}$ on $T^*_{H_\lambda}(V_\lambda)_\text{sreg}$ described in general in Section~\ref{ssec:Evs}, and defined on $T^*_{C}(V_\lambda)_\text{sreg}$ by \eqref{eqn:TC}:
\[
\mathcal{T}_{C} \ceq \Evs_{C} \IC(C).
\]
From Table~\ref{table:Ev-SO(5)singular} we see that
\[
\mathcal{T} = \1_{\O_0}^\natural \oplus \mathcal{L}_{\O_2}^\natural \oplus \1_{\O_3}^\natural,
\]
or, in other notation,
\[
\begin{array}{ c || c c c }
{} & {\psi_0} & {\psi_2} & {\psi_3} \\
\hline\hline
\mathcal{T}_\psi & + & -- & + 
\end{array}
\]
We use $\mathcal{T}$ in Table~\ref{table:NEv-SO(5)singular} to calculate $\pNEv : \Perv_{H_\lambda}(V_\lambda) \to  \Perv_{H_\lambda}(T^*_{H_\lambda}(V_\lambda)_\textrm{reg})$ in two forms; compare with Table~\ref{table:Ev-SO(5)singular} .

\begin{table}
\caption{$\pNEv : \Perv_{H_\lambda}(V_\lambda) \to  \Perv_{H_\lambda}(T^*_{H_\lambda}(V_\lambda)_\textrm{reg})$ on simple objects, for $\lambda : W_F \to \Lgroup{G}$ given at the beginning of Section~\ref{sec:SO(5)singular}.}
\label{table:NEv-SO(5)singular}
%\begin{spacing}{1.0}
\[
\begin{array}{ c c l }
\Perv_{H_\lambda}(V_\lambda) &\mathop{\longrightarrow}\limits^{\pNEv}& \Perv_{H_\lambda}(T^*_{H_\lambda}(V_\lambda)_\textrm{reg})  \\	
\IC(\1_{C_0}) &\mapsto& \IC(\1_{\O_0})  \\
\IC(\1_{C_2}) &\mapsto& \IC({\1}_{\O_2}) \oplus \IC(\mathcal{L}_{\O_0}) \\
\IC(\1_{C_3}) &\mapsto& \IC(\1_{\O_3}) \\
\IC(\mathcal{L}_{C_3}) &\mapsto& \IC(\mathcal{L}_{\O_3}) \oplus \IC({\mathcal{L}}_{\O_2})  \\
%\hline
\IC(\mathcal{F}_{C_2}) &\mapsto&  \IC({\mathcal{F}}_{\O_2})
\end{array}
\]
\end{table}

\begin{table}
\caption{$\NEvs : \Perv_{H_\lambda}(V_\lambda) \to  \Loc_{H_\lambda}(T^*_{H_\lambda}(V_\lambda)_\textrm{reg})$ on simple objects, for $\lambda : W_F \to \Lgroup{G}$ given at the beginning of Section~\ref{sec:SO(5)singular}.}
\label{table:NEvs-SO(5)singular}
%\begin{spacing}{1.0}
\[
\begin{array}{c||ccc}
\mathcal{P} & \NEvs_{\psi_0}\mathcal{P} & \NEvs_{\psi_2}\mathcal{P} & \NEvs_{\psi_3}\mathcal{P} \\
\hline\hline
\IC(\1_{C_0}) 			& + & 0 	& 0 \\
\IC(\1_{C_2}) 			& -  & { ++} 	& 0 \\
\IC(\1_{C_3}) 			& 0 & 0 	& + \\
\IC(\mathcal{L}_{C_3}) 	& 0 & { --} 	& - \\
\hline
\IC(\mathcal{F}_{C_2})	& 0 & { -+} 	& 0 
\end{array}
\]
%\end{spacing}
\end{table}

\subsubsection{Fourier transform and vanishing cycles}\label{sssec:EvFt-SO(5)singular}

We may now verify \eqref{eqn:NEvFt-overview} by comparing the functors below with the Fourier transform appearing in Section~\ref{sssec:Ft-SO(5)singular}.
%\begin{spacing}{1.0}

\noindent\resizebox{1\textwidth}{!}{%
$\displaystyle
\begin{array}{ c c c c c c c }
\Perv_{H_\lambda}(V_\lambda) &\mathop{\longrightarrow}\limits^{\pNEv}& \Perv_{H_\lambda}(T^*_{H_\lambda}(V_\lambda)_\textrm{reg}) &\mathop{\longrightarrow}\limits^{a_*} & \Perv_{H_\lambda}(T^*_{H_\lambda}(V^*_\lambda)_\textrm{reg}) & \mathop{\longleftarrow}\limits^{\pEv^*} & \Perv_{H_\lambda}(V^*_\lambda) \\	
%\hline\hline
\IC(\1_{C_0}) &\mapsto& \IC(\1_{\O_0}) &\mapsto& \IC(\1_{\O^*_0}) & \mapsfrom & \IC(\1_{C^*_0}) \\
\IC(\1_{C_2}) &\mapsto& \IC({\1}_{\O_2}) \oplus \IC(\mathcal{L}_{\O_0}) &\mapsto& \IC({\1 }_{\O^*_2}) \oplus \IC(\mathcal{L}_{\O^*_0}) & \mapsfrom & \IC(\mathcal{L}_{C^*_0}) \\
\IC(\1_{C_3}) &\mapsto& \IC(\1_{\O_3})  &\mapsto& \IC(\1_{\O^*_3}) & \mapsfrom & \IC(\1_{C^*_3}) \\
\IC(\mathcal{L}_{C_3}) &\mapsto& \IC(\mathcal{L}_{\O_3}) \oplus \IC({\mathcal{L}}_{\O_2}) &\mapsto& \IC(\mathcal{L}_{\O^*_3}) \oplus \IC({\mathcal{L}}_{\O^*_2}) & \mapsfrom & \IC(\1_{C^*_2})\\
%\hline
\IC(\mathcal{F}_{C_2}) &\mapsto& \IC({\mathcal{F}}_{\O_2})  &\mapsto& \IC({\mathcal{F}}_{\O^*_2})  & \mapsfrom & \IC(\mathcal{F}_{C^*_2}) 
\end{array}
$%
}
%\end{spacing}

\subsubsection{Arthur sheaves}\label{sssec:AS-SO(5)singular}

%\begin{spacing}{1.0}
\[
\begin{array}{ l || l r }
\text{Arthur}  & \text{packet}  \hskip+1cm & \text{coronal}\\
\text{sheaf}  & \text{sheaves} \hskip+1cm & \text{sheaves}\\
\hline
\mathcal{A}_{C_0}  
	&  \IC(\1_{C_0})\ \oplus  \hskip+1cm & \IC(\1_{C_2}) \\
\mathcal{A}_{C_2}  
	& \IC(\1_{C_2}) \oplus \IC(\mathcal{F}_{C_2})\ \oplus   \hskip+1cm & \IC(\mathcal{L}_{C_3}) \\
\mathcal{A}_{C_3}  
	& \IC(\1_{C_3}) \oplus \IC(\mathcal{F}_{C_3}) \hskip+1cm & 
\end{array}
\]
%\end{spacing}

\subsection{ABV-packets}\label{ssec:ABV-SO(5)singular}

\subsubsection{Admissible representations versus equivariant perverse sheaves}\label{sssec:VC-SO(5)singular}

%\begin{spacing}{1.0}
\[
\begin{array}{c || c}
\Perv_{H_\lambda}(V_\lambda)^\text{simple}_{/\text{iso}} & \Pi^\mathrm{pure}_{\lambda}(G/F) \\
\hline\hline
\IC(\1_{C_0}) & [\pi(\phi_0),0] \\
\IC(\1_{C_2}) & [\pi(\phi_2,+),0] \\
\IC(\1_{C_3}) & [\pi(\phi_3,+),0] \\
\IC(\mathcal{L}_{C_3}) & [\pi(\phi_3,-),0] \\
\hline
\IC(\mathcal{F}_{C_2}) & [\pi(\phi_2,-),1] 
\end{array}
\]
%\end{spacing}

\subsubsection{ABV-packets}\label{sssec:ABV-SO(5)singular}

%\begin{spacing}{1.0}
\[
\begin{array}{ l | l | r }
\text{ABV-packets}  & \text{packet representations}   & \text{coronal representations}\\
\hline\hline
\Pi^\ABV_{\phi_0}(G/F) : 
	&  (\pi(\phi_{0},+),0)  \hskip+1cm & (\pi(\phi_{2},+),0) \\
\Pi^\ABV_{\phi_2}(G/F) : 
	&  (\pi(\phi_{2},+),0), \  (\pi(\phi_2,-),1)    &  (\pi(\phi_{3},-),0)\\
\Pi^\ABV_{\phi_3}(G/F) : 
	&  (\pi(\phi_{3},+),0),\ (\pi(\phi_3,-),0)   & 
\end{array}
\]
%\end{spacing}

\subsubsection{Stable distributions and endoscopic transfer}\label{sssec:etaABVpsis-SO(5)singular}

We now calculate the virtual representations $\eta^{\NEvs}_{\phi,s}$; see \eqref{eqn:etaABVpsis}.
In the list below, we use the notation $s=(s_1,s_2)$ for elements of $\dualgroup{T}[2]$. 
\begin{enumerate}
\item[$\phi_0$:]
\iffalse%
\[
\begin{array}{rcl}
\eta^{\NEvs}_{\phi_0,s} 
%&=&  [\pi(\phi_{0}),0] + (e=1)(-1)^{0-2} (-)(s) [\pi(\phi_{2},+),0]  
&=&  [(\pi(\phi_{0},+),0)] + (-)(s_1s_2) [(\pi(\phi_{2},+),0)]  \\
\end{array}
\]
so
\fi%
\begin{spacing}{1.2}
\[
\begin{array}{rcl}
\eta^{\Evs}_{\phi_0} = \eta^{\NEvs}_{\phi_0,(1,1)} &=&  [(\pi(\phi_{0}),0)] +  [(\pi(\phi_{2},+),0)]  \\
\eta^{\NEvs}_{\phi_0,(1,-1)} &=&  [(\pi(\phi_{0}),0)] -  [(\pi(\phi_{2},+),0)]  \\
\eta^{\NEvs}_{\phi_0,(-1,1)} &=&  [(\pi(\phi_{0}),0)] -  [(\pi(\phi_{2},+),0)]  \\
\eta^{\NEvs}_{\phi_0,(-1,-1)} &=&  [(\pi(\phi_{0}),0)] +  [(\pi(\phi_{2},+),0)].
\end{array}
\]
\end{spacing}
\item[$\phi_2$:]
\iffalse%
\[
\begin{array}{rcl}
\eta^{\NEvs}_{\phi_2,s} 
%&=&  [\pi(\phi_{2},+),0] + (e=-1)(-1)^{2-2}(--)(s) [\pi(\phi_{2},-),1] + (e=1)(-1)^{2-3}(+-)(s)  [\pi(\phi_3,-),0]
&=&  [(\pi(\phi_{2},+),0)] - (+-)(s) [(\pi(\phi_{2},-),1)] - (--)(s)  [(\pi(\phi_3,-),0)]
\end{array}
\]
so
\fi%
\begin{spacing}{1.2}
\[
\begin{array}{rcl}
\eta^{\Evs}_{\phi_2}  = \eta^{\NEvs}_{\phi_2,(1,1)} 
&=&  [(\pi(\phi_{2},+),0)] -  [(\pi(\phi_{2},-),1)] -  [(\pi(\phi_3,-),0)]  \\
\eta^{\NEvs}_{\phi_2,(1,-1)} 
%&=&  [\pi(\phi_{2},+),0] - (--)(1,-1) [\pi(\phi_{2},-),1] - (+-)(1,-1)  [\pi(\phi_3,-),0]  \\
&=&  [(\pi(\phi_{2},+),0)] + [(\pi(\phi_{2},-),1)] +  [(\pi(\phi_3,-),0)]  \\
\eta^{\NEvs}_{\phi_2,(-1,1)} 
%&=&  [\pi(\phi_{2},+),0] - (--)(-1,1) [\pi(\phi_{2},-),1] - (+-)(-1,1)  [\pi(\phi_3,-),0]  \\
&=&  [(\pi(\phi_{2},+),0)] - [(\pi(\phi_{2},-),1)] + [(\pi(\phi_3,-),0)]  \\
\eta^{\NEvs}_{\phi_2,(-1,-1)} 
%&=&  [\pi(\phi_{2},+),0] - (--)(-1,-1) [\pi(\phi_{2},-),1] - (+-)(-1,-1)  [\pi(\phi_3,-),0] \\
&=&  [(\pi(\phi_{2},+),0)] +  [(\pi(\phi_{2},-),1)] -  [(\pi(\phi_3,-),0)]
\end{array}
\]
\end{spacing}
\item[$\phi_3$:]
\iffalse%
\[
\begin{array}{rcl}
\eta^{\NEvs}_{\phi_3,s} 
%&=&  (e=1)(-1)^{3-3}(+)(s) [\pi(\phi_{3},+),0] + (e=1)(-1)^{3-3} (-)(s_1s_2) [\pi(\phi_3,-),0] \\
&=&  [\pi(\phi_{3},+),0] + (-)(s_1s_2) [\pi(\phi_3,-),0] 
\end{array}
\]
so
\fi%
\begin{spacing}{1.2}
\[
\begin{array}{rcl}
\eta^{\Evs}_{\phi_3}  = 
\eta^{\NEvs}_{\phi_3,(1,1)} &=&  [(\pi(\phi_{3},+),0)] +  [(\pi(\phi_3,-),0)] \\
\eta^{\NEvs}_{\phi_3,(1,-1)} &=&  [(\pi(\phi_{3},+),0)] - [(\pi(\phi_3,-),0)] \\
\eta^{\NEvs}_{\phi_3,(-1,1)} &=&  [(\pi(\phi_{3},+),0)] - [(\pi(\phi_3,-),0)] \\
\eta^{\NEvs}_{\phi_3,(-1,-1)} &=&  [(\pi(\phi_{3},+),0)] + [(\pi(\phi_3,-),0)].
\end{array}
\]
\end{spacing}
\end{enumerate}
After comparing with Section~\ref{sssec:stable-SO(5)singular}, we see
\[
\begin{array}{rcl}
\eta_{\psi_0,s} &=& \eta^{\NEvs}_{\psi_0,s}\\
\eta_{\psi_3,s} &=& \eta^{\NEvs}_{\psi_3,s}.
\end{array}
\]
This proves Conjecture~\ref{conjecture:1} for admissible representations with infinitesimal parameter $\lambda$ given at the beginning of Section~\ref{sec:SO(5)singular}.

\subsubsection{Kazhdan-Lusztig conjecture}\label{sssec:KL-SO(5)singular}

From Section~\ref{sssec:mrep-SO(5)singular} we find the multiplicity matrix $m_\text{rep}$ and from Section~\ref{sssec:EPS-SO(5)singular} we find the normalized geometric multiplicity matrix $m_\text{geo}'$:
\[
m_\text{rep} = 
\begin{pmatrix} 
\begin{array}{cccc | c}
1  &  1  & 1  & 1  & 0   \\
0  &  1  & 1  & 0  & 0   \\
0  &  0  & 1  & 0  & 0   \\
0  &  0  & 0  & 1  & 0  \\ \hline
0  &  0  & 0  & 0  & 1  
\end{array}
\end{pmatrix},
\qquad
m'_\text{geo} = 
\begin{pmatrix} 
\begin{array}{cccc | c}
1  &  0  & 0  & 0  & 0  \\
1  &  1  & 0  & 0  & 0  \\
1  &  1  & 1  & 0  & 0  \\
1  &  0  & 0  & 1  & 0  \\ \hline
0  &  0  & 0  & 0  & 1  
\end{array}  
\end{pmatrix}.
\]
Since $\,^tm_\text{rep}= m'_\text{geo}$, this confirms the Kazhdan-Lusztig conjecture \eqref{eqn:KL} as it applies to representations with infinitesimal parameter $\lambda : W_F \to \Lgroup{G}$ given at the beginning of Section~\ref{sec:SO(5)singular}.
Recall that this allows us to confirm Conjecture~\ref{conjecture:2} as it applies to this example, as explained in Section~\ref{sssec:KL-overview}.

\subsubsection{Aubert duality and Fourier transform}\label{sssec:AubertFt-SO(5)singular}

To verify \eqref{eqn:AubertFt-overview}, use Vogan's bijection from Section~\ref{sssec:VC-SO(5)singular} to compare Aubert duality from Section~\ref{sssec:Aubert-SO(5)singular} with the Fourier transform from Section~\ref{sssec:Ft-SO(5)singular} .

\subsubsection{Normalization}\label{sssec:normalization-SO(5)singular}

To verify \eqref{eqn:twisting-overview}, observe that the twisting characters $\chi_\psi$ of $A_\psi$ from Section~\ref{sssec:Aubert-SO(5)singular} are trivial except for the Arthur parameter $\psi_2$, as are the local systems $\mathcal{T}_\psi$ from Section~\ref{sssec:EvFt-SO(5)singular} and in both cases they are given by the character $(--)$ of $A_{\psi_2}$ determined by the isomorphism  $A_{\psi_2} \iso \{\pm1\}\times\{\pm 1\}$, fixed in Section~\ref{sssec:Arthur-SO(5)singular}.

\subsection{Endoscopy and equivariant restriction of perverse sheaves}\label{ssec:restriction-SO(5)singular}

As in Section~\ref{sssec:stable-SO(5)singular}, we now consider the split endoscopic group $G' = \SO(3)\times \SO(3)$ for $G$ determined by $s = \operatorname{diag}(1,-1,-1,1)\in \dualgroup{G}$.
Then $\lambda : W_F \to \Lgroup{G}$ factors through $\epsilon : \Lgroup{G}'\to \Lgroup{G}$ to define $\lambda' : W_F \to \Lgroup{G}'$ by
\[
\lambda'(w) 
= 
\left(
\begin{pmatrix}
\abs{w}^{1/2} & 0 \\
0 & \abs{w}^{-1/2} 
\end{pmatrix}
,
\begin{pmatrix}
\abs{w}^{1/2} & 0 \\
0 & \abs{w}^{-1/2} 
\end{pmatrix}
\right).
\]
In this section we will calculate both sides of \eqref{eqn:TrNEvRes}.
This will illustrate how the Langlands-Shelstad lift of $\Theta_{\psi'}$ on $G'(F)$ to $\Theta_{\psi,s}$ on $G(F)$ is related to equivariant restriction of perverse sheaves from $V_\lambda$ to the Vogan variety $V_{\lambda'}$ for $G'$.
Note that each component of $\lambda'$ is the infinitesimal parameter $W_F \to \Lgroup{\SO(3)}$ that appeared in Section~\ref{sec:SO(3)}; here we will use that Section extensively.

%To simplify notation below, set $V\ceq V_{\lambda}$ and $V'\ceq V_{\lambda'}$; also set $H_{\lambda}\ceq Z_{\dualgroup{G}}(\lambda)$ and $H_{\lambda'}\ceq Z_{\dualgroup{G}'}(\lambda')$.

\subsubsection{Parameters}

There are four Arthur parameters with infinitesimal parameter $\lambda' : W_F \to \Lgroup{G'}$, up to $H_{\lambda'}$-conjugacy. Using notation from Section~\ref{sec:SO(3)}, they are
\[
\begin{array}{rcl c rcl}
\psi'_{00} &\ceq& \psi_0 \times \psi_0,  && \psi'_{11} &\ceq& \psi_1 \times \psi_1,\\ 
\psi'_{10}&\ceq& \psi_1 \times \psi_0, && \psi'_{01}&\ceq& \psi_0 \times \psi_1,
\end{array}
\] 
so
\[
\begin{array}{rcl c rcl}
\psi'_{00}(w,x,y) &=& (\nu_2(y),\nu_2(y)), && \psi'_{11}(w,x,y) &=& (\nu_2(x),\nu_2(x)), \\
\psi'_{10}(w,x,y) &=& (\nu_2(x),\nu_2(y)), && \psi'_{01}(w,x,y) &=& (\nu_2(y),\nu_2(x)).
\end{array}
\]
Although $\psi_2 = \epsilon \circ \psi'_{10}$ is $H_\lambda$-conjugate to $\epsilon \circ \psi'_{01}$, the Arthur parameters $\psi'_{10}$ and $\psi'_{01}$ for $G'$ are not $H_{\lambda'}$-conjugate.

\subsubsection{Endoscopic Vogan variety}\label{sssec:V'-SO(5)singular}

The Vogan variety $V_{\lambda'}$ for $\lambda'$ is simply two copies of the Vogan variety appearing in Section~\ref{sec:SO(3)}.
As a subvariety of the conormal bundle to $V_{\lambda}$, the conormal to the Vogan variety $V_{\lambda'}$ for $\lambda' : W_F \to \Lgroup{G'}$ is
\[
T^*_{H_{\lambda'}}(V_{\lambda'})
=
\left\{ 
\begin{pmatrix} 
\begin{array}{cc|cc}
 & & 0 & x \\
 & & y & 0 \\ \hline
0 & y\tran & & \\
x\tran  & 0 & & 
\end{array}
\end{pmatrix}
\mid
\begin{array}{c}
xy' =0\\
yx'=0
\end{array}.
\right\}
\]

\begin{enumerate}
\item[($C'_{0}$).]
Set $C'_{0} = C_0\times C_0$.
Then the regular conormal above the closed $H_{\lambda'}$-orbit $C'_{0}\subset V_{\lambda'}$ is
\[
T^*_{C'_{0}}(V_{\lambda'})_\textrm{reg}
=
\left\{ 
\begin{pmatrix} 
\begin{array}{cc|cc}
 & & 0 & 0 \\
 & & 0 & 0 \\ \hline
0 & y\tran & & \\
x\tran  & 0 & & 
\end{array}
\end{pmatrix}
\mid
\begin{array}{c}
x\tran \ne 0\\
y\tran \ne 0
\end{array}
\right\}
\]
Base point:
\[
(x'_{0},\xi'_{0}) = 
\begin{pmatrix} 
\begin{array}{cc|cc}
 & & 0 & 0 \\
 & & 0 & 0 \\ \hline
0 & 1 & & \\
1  & 0 & & 
\end{array}
\end{pmatrix}
\in T^*_{C'_{0}}(V_{\lambda'})_\textrm{reg}
\]
Fundamental groups:
\[
\begin{tikzcd}
& \dualgroup{T}[2] \arrow{d}[near start]{s\mapsto (s_1,s_2)} & \\
1=A_{x'_{0}} & \arrow{l} A_{(x'_{0},\xi'_{0})} \arrow{r}{\id} & A_{\xi'_{0}}= \{\pm1 \} \times\{ \pm1\} 
\end{tikzcd}
\]
Local systems on strongly regular conormal:
\[
\begin{array}{| r cccc|}
\hline
\Loc_{H_\lambda}(T^*_{C'_{0}}(V_\lambda)_\text{sreg}) : & \1_{\O'_{0}} & \mathcal{L}_{\O'_{0}} & \mathcal{F}_{\O'_{0}} &\mathcal{E}_{\O'_{0}} \\ 
%\hline
\Rep(A_{(x'_0,\xi'_0)}) : & ++ & -- & -+ & +- \\ \hline
\end{array}
\]
Pullback along the bundle map:
\[
\begin{array}{ccc}
\Loc_{H_{\lambda'}}(C'_{0}) & \rightarrow & \Loc_{H_{\lambda'}}(T^*_{C'_{0}}(V_{\lambda'})_\text{sreg}) \\ 
%\hline
\1_{C'_{0}} &\mapsto& \1_{\O'_{0}} \\
&& \mathcal{L}_{\O'_{0}} \\
&& \mathcal{F}_{\O'_{0}} \\
&& \mathcal{E}_{\O'_{0}}
\end{array}
\]
\item[($C'_{x}$).]
Set $C'_{x} = C_x\times C_0\subset V_{\lambda'}$. Then the regular conormal above $C'_{x}$ is
\[
T^*_{C'_{x}}(V_\lambda)_\textrm{reg}
= 
\left\{ 
\begin{pmatrix} 
\begin{array}{cc|cc}
 & & 0 & x \\
 & & 0 & 0 \\ \hline
0 & y\tran & & \\
0  & 0 & & 
\end{array}
\end{pmatrix}
\mid
\begin{array}{c}
x\ne 0\\
y\tran\ne 0
\end{array}
\right\}
\] 
Base point:
\[
(x'_{10},\xi'_{10}) = 
\begin{pmatrix} 
\begin{array}{cc|cc}
 & & 0 & 1 \\
 & & 0 & 0 \\ \hline
0 & 1 & & \\
0  & 0 & & 
\end{array}
\end{pmatrix}
\in T^*_{C'_{x}}(V_{\lambda'})_\textrm{reg}
\]
Fundamental groups:
\[
\begin{tikzcd}
&& \dualgroup{T}[2] \arrow{d}[near start]{s\mapsto (s_1,s_2)} && \\
\{ \pm 1\} =A_{x'_{10}} && \arrow{ll}[swap]{s_1\mapsfrom (s_1,s_2)}  A_{(x'_{10},\xi'_{10})} \arrow{rr}{(s_1,s_2)\mapsto s_2} && A_{\xi'_{10}}= \{\pm 1 \} 
\end{tikzcd}
\]
Local systems on strongly regular conormal:
\[
\begin{array}{| r cccc|}
\hline
\Loc_{H_\lambda}(T^*_{C'_{x}}(V_\lambda)_\text{sreg}) : & \1_{\O'_{x}} & \mathcal{L}_{\O'_{x}} & \mathcal{F}_{\O'_{x}} &\mathcal{E}_{\O'_{x}} \\ 
%\hline
\Rep(A_{(x'_{01},\xi'_{01})}) : & ++ & -- & -+ & +- \\ \hline
\end{array}
\]
Pullback along the bundle map:
\[
\begin{array}{ccc}
\Loc_{H_{\lambda'}}(C'_{x}) & \rightarrow & \Loc_{H_{\lambda'}}(T^*_{C'_{x}}(V_{\lambda'})_\text{sreg})  \\ 
%\hline
\1_{C'_{x}} &\mapsto& \1_{\O'_{x}} \\
&& \mathcal{L}_{\O'_{x}} \\
\mathcal{L}_{C'_{x}} &\mapsto& \mathcal{F}_{\O'_{x}}  \\
& & \mathcal{E}_{\O'_{x}}
\end{array}
\]
\item[($C'_{y}$).]
Set $C'_{y} = C_0\times C_x\subset V_{\lambda'}$. Then the regular conormal above $C'_{y}$ is
\[
T^*_{C'_{y}}(V_{\lambda'})_\textrm{reg}
= 
\left\{ 
\begin{pmatrix} 
\begin{array}{cc|cc}
 & & 0 & 0 \\
 & & y & 0 \\ \hline
0 & 0 & & \\
x\tran  & 0 & & 
\end{array}
\end{pmatrix}
\mid
\begin{array}{c}
x\tran \ne 0\\
y\ne 0
\end{array}
\right\}
\] 
Base point:
\[
(x'_{01},\xi'_{01}) = 
\begin{pmatrix} 
\begin{array}{cc|cc}
 & & 0 & 0 \\
 & & 1 & 0 \\ \hline
0 & 0 & & \\
1 & 0 & & 
\end{array}
\end{pmatrix}
\in T^*_{C'_{y}}(V_{\lambda'})_\textrm{reg}
\]
Fundamental groups:
\[
\begin{tikzcd}
&& \dualgroup{T}[2] \arrow{d}[near start]{s\mapsto (s_1,s_2)} && \\
\{ \pm 1\} =A_{x'_{01}} && \arrow{ll}[swap]{s_2\mapsfrom (s_1,s_2)}  A_{(x'_{01},\xi'_{01})} \arrow{rr}{(s_1,s_2)\mapsto s_1} && A_{\xi'_{01}}= \{\pm 1 \} 
\end{tikzcd}
\]
Local systems on strongly regular conormal:
\[
\begin{array}{| r cccc|}
\hline
\Loc_{H_\lambda}(T^*_{C'_{y}}(V_{\lambda'})_\text{sreg}) : & \1_{\O'_{y}} & \mathcal{L}_{\O'_{y}} & \mathcal{F}_{\O'_{y}} &\mathcal{E}_{\O'_{y}} \\ 
%\hline
\Rep(A_{(x'_{10},\xi'_{10})}) : & ++ & -- & -+ & +- \\ \hline
\end{array}
\]
Pullback along the bundle map:
\[
\begin{array}{ccc}
\Loc_{H_\lambda}(C'_{y}) & \rightarrow & \Loc_{H_\lambda}(T^*_{C'_{y}}(V_{\lambda'})_\text{sreg}) \\
\1_{C'_{y}} &\mapsto& \1_{\O'_{y}}  \\
&& \mathcal{L}_{\O'_{y}}   \\
&& \mathcal{F}_{\O'_{y}}\\
\mathcal{L}_{C'_{y}} &\mapsto&  \mathcal{E}_{\O'_{y}} 
\end{array}
\]
\item[($C'_{xy}$).]
Set $C'_{xy} = C_x\times C_y\subset V_{\lambda'}$. Then
\[
T^*_{C_{xy}}(V_{\lambda'})_\textrm{reg}
= 
\left\{ 
\begin{pmatrix} 
\begin{array}{cc|cc}
 & & 0 & x \\
 & & y & 0 \\ \hline
0 & 0 & & \\
0  & 0 & & 
\end{array}
\end{pmatrix}
\mid
\begin{array}{c}
xy \ne 0
\end{array}
\right\}
\]
Base point:
\[
(x'_{11},\xi'_{11}) = 
\begin{pmatrix} 
\begin{array}{cc|cc}
 & & 0 & 1 \\
 & & 1 & 0 \\ \hline
0 & 0 & & \\
0 & 0 & & 
\end{array}
\end{pmatrix}
\in T^*_{C'_{xy}}(V_{\lambda'})_\textrm{reg}
\] 
Fundamental groups:
\[
\begin{tikzcd}
& \dualgroup{T}[2] \arrow{d}[near start]{s\mapsto (s_1,s_2)} & \\
\{ \pm 1\} \times \{\pm1\} =A_{x'_{11}} & \arrow{l}[swap]{\id}  A_{(x'_{11},\xi'_{11})} \arrow{r} & A_{\xi'_{11}}= 1 
\end{tikzcd}
\]
Local systems on strongly regular conormal:
\[
\begin{array}{| r cccc|}
\hline
\Loc_{H_\lambda}(T^*_{C'_{xy}}(V_{\lambda'})_\text{sreg}) : & \1_{\O'_{xy}} & \mathcal{L}_{\O'_{xy}} & \mathcal{F}_{\O'_{xy}} &\mathcal{E}_{\O'_{xy}} \\ 
%\hline
\Rep(A_{(x'_{11},\xi'_{11})}) : & ++ & -- & -+ & +- \\ \hline
\end{array}
\]
Pullback along the bundle map:
\[
\begin{array}{ccc}
\Loc_{H_{\lambda'}}(C'_{xy}) & \rightarrow & \Loc_{H_{\lambda'}}(T^*_{C'_{xy}}(V_{\lambda'})_\text{sreg})\\ 
%\hline
\1_{C'_{xy}} &\mapsto& \1_{\O'_{xy}} \\
\mathcal{L}_{C'_{xy}} & \mapsto & \mathcal{L}_{\O'_{xy}} \\
\mathcal{F}_{C'_{xy}} & \mapsto & \mathcal{F}_{\O'_{xy}}  \\
\mathcal{E}_{C'_{xy}} & \mapsto & \mathcal{E}_{\O'_{xy}}
\end{array}
\]
\end{enumerate}

\subsubsection{Vanishing cycles}\label{sssec:VC}

The functor
\[
\pNEv' : \Perv_{H_{\lambda'}}(V_{\lambda'}) \to \Perv_{H_{\lambda'}}(T^*_{H_{\lambda'}}(V_{\lambda'})_\textrm{reg})
\]
may be deduced from Section~\ref{sssec:NEv-SO(3)} using the Sebastiani-Thom isomorphism \cite{Illusie:Thom-Sebastiani} and \cite{Massey:Sebastiani-Thom}; see Table~\ref{table:Ev-endoSO(5)singular}.
Here we show the calculation of the last three rows, to illustrate the method.
\begin{eqnarray*}
\pNEv' \IC(\mathcal{L}_{C'_x})
&=& \pNEv' \left( \IC(\mathcal{E}_{C_x})\boxtimes \IC(\1_{C_0})\right)\\
&=& \left( \pNEv  \IC(\mathcal{E}_{C_x}) \right) \boxtimes \left( \pNEv \IC(\1_{C_0})\right)\\
&=& \left( \IC(\mathcal{E}_{\O_x}) \oplus \IC(\mathcal{E}_{\O_0}) \right) \boxtimes \IC(\1_{\O_0})\\
&=& \IC(\mathcal{E}_{\O_x})\boxtimes \IC(\1_{\O_0})  \oplus \IC(\mathcal{E}_{\O_0}) \boxtimes \IC(\1_{\O_0})\\
&=& \IC(\mathcal{E}_{\O_x}\boxtimes \1_{\O_0})  \oplus \IC(\mathcal{E}_{\O_0} \boxtimes \1_{\O_0})\\
&=& \IC(\mathcal{F}_{\O'_x})  \oplus \IC(\mathcal{F}_{\O'_0})
\end{eqnarray*}
Similarly,
\begin{eqnarray*}
\pNEv' \IC(\mathcal{L}_{C'_y})
&=& \pNEv' \left( \IC(\1_{C_0})\boxtimes \IC(\mathcal{E}_{C_y})\right)\\
&=& \left( \pNEv  \IC(\1_{C_0}) \right) \boxtimes \left( \pNEv \IC(\mathcal{E}_{C_y})\right)\\
&=& \IC(\1_{\O_0}) \boxtimes  \left( \IC(\mathcal{E}_{\O_y}) \oplus \IC(\mathcal{E}_{\O_0}) \right) \\
&=& \IC(\mathcal{E}_{\O_0})\boxtimes \IC(\1_{\O_0})  \oplus \IC(\mathcal{E}_{\O_y}) \boxtimes \IC(\1_{\O_0})\\
&=& \IC(\mathcal{E}_{\O_0}\boxtimes \1_{\O_0})  \oplus \IC(\mathcal{E}_{\O_y} \boxtimes \1_{\O_0})\\
&=& \IC(\mathcal{E}_{\O'_0})  \oplus \IC(\mathcal{E}_{\O'_y})
\end{eqnarray*}
and
\begin{eqnarray*}
\pNEv' \IC(\mathcal{L}_{C'_{xy}})
&=& \pNEv' \left( \IC(\mathcal{E}_{C_x})\boxtimes \IC(\mathcal{E}_{C_y})\right)\\
&=& \left( \pNEv  \IC(\mathcal{E}_{C_x}) \right) \boxtimes \left( \pNEv \IC(\mathcal{E}_{C_y})\right)\\
&=& \left( \IC(\mathcal{E}_{\O_x}) \oplus \IC(\mathcal{E}_{\O_0}) \right) \boxtimes \left( \IC(\mathcal{E}_{\O_y}) \oplus \IC(\mathcal{E}_{\O_0}) \right) \\
&=& \IC(\mathcal{E}_{\O_x} \boxtimes \mathcal{E}_{\O_y}) \oplus \IC(\mathcal{E}_{\O_x} \boxtimes \mathcal{E}_{\O_0})\\
&& \qquad\qquad\qquad \oplus \IC(\mathcal{E}_{\O_0} \boxtimes \mathcal{E}_{\O_y}) \oplus
\IC(\mathcal{E}_{\O_0} \boxtimes \mathcal{E}_{\O_0}) \\
&=& \IC(\mathcal{L}_{\O'_{xy}}) \oplus \IC(\mathcal{L}_{\O'_{x}}) \oplus \IC(\mathcal{L}_{\O'_{y}}) \oplus
\IC(\mathcal{L}_{\O'_0}) .
\end{eqnarray*}

\begin{table}
\caption{$\pNEv' : \Perv_{H_{\lambda'}}(V_{\lambda'}) \to  \Perv_{H_{\lambda'}}(T^*_{H_{\lambda'}}(V_{\lambda'})_\textrm{reg})$ on simple objects, for $\lambda' : W_F \to \Lgroup{G}'$ given at the beginning of Section~\ref{sec:SO(5)singular}.}
\label{table:Ev-endoSO(5)singular}
%\begin{spacing}{1.0}
\begin{align*}
\begin{array}{ rcl}
\Perv_{H_{\lambda'}}(V_{\lambda'}) &\mathop{\longrightarrow}\limits^{\pNEv'}& \Perv_{H_{\lambda'}}(T^*_{H_{\lambda'}}(V_{\lambda'})_\textrm{reg})  \\	
\IC(\1_{C'_{0}}) &\mapsto& \IC(\1_{\O'_{0}})  \\
\IC(\1_{C'_{x}}) &\mapsto& \IC(\1_{\O'_{x}})  \\
\IC(\1_{C'_{y}}) &\mapsto& \IC(\1_{\O'_{y}})  \\
\IC(\1_{C'_{xy}}) &\mapsto& \IC(\1_{\O'_{xy}})  \\ 
\IC(\mathcal{L}_{C'_{x}}) &\mapsto& \IC(\mathcal{F}_{\O'_{x}}) \oplus  \IC(\mathcal{F}_{\O'_{0}}) \\
\IC(\mathcal{L}_{C'_{y}}) &\mapsto& \IC(\mathcal{E}_{\O'_{y}}) \oplus  \IC(\mathcal{E}_{\O'_{0}})  \\
\IC(\mathcal{L}_{C'_{xy}}) &\mapsto&  \IC(\mathcal{L}_{\O'_{xy}}) \oplus  \IC(\mathcal{L}_{\O'_{x}})\oplus\  \IC(\mathcal{L}_{\O'_{y}}) \oplus  \IC(\mathcal{L}_{\O'_{0}})   
\end{array}
\end{align*}
%\end{spacing}
\end{table}

\begin{table}
\caption{$\NEvs' : \Perv_{H_{\lambda'}}(V_{\lambda'}) \to  \Loc_{H_{\lambda'}}(T^*_{H_{\lambda'}}(V_{\lambda'})_\text{sreg})$ on simple objects, for $\lambda' : W_F \to \Lgroup{G}'$ given at the beginning of Section~\ref{sec:SO(5)singular}.}
\label{table:Evs-endoSO(5)singular}
%\begin{spacing}{1.0}
\begin{align*}
\begin{array}{c||cccc}
\mathcal{P'} & \NEvs_{\psi'_{00}}\mathcal{P'} & \NEvs_{\psi'_{10}}\mathcal{P'} &  \NEvs_{\psi'_{01}}\mathcal{P'} & \NEvs_{\psi'_{11}}\mathcal{P'} \\
\hline\hline
\IC(\1_{C'_{0}}) 		& ++ & 0 & 0 & 0 \\
\IC(\1_{C'_{x}}) 		& 0 & ++ & 0 & 0 \\
\IC(\1_{C'_{y}}) 		& 0 & 0 & ++ & 0 \\
\IC(\1_{C'_{xy}}) 		& 0 & 0 & 0 & ++ \\
\hline
\IC(\mathcal{L}_{C'_{x}}) 		& -+ & -+ & 0 & 0 \\
\IC(\mathcal{L}_{C'_{y}}) 		& +- & 0 & +- & 0 \\
\hline
\IC(\mathcal{L}_{C'_{xy}}) 			& -- & -- & -- & -- 
\end{array}
\end{align*}
%\end{spacing}
\end{table}

\subsubsection{Restriction}\label{sssec:res-SO(5)singular}

%\begin{spacing}{1.0}
\[
\begin{array}{ rcl }
\res: \Perv_{H_\lambda}(V_\lambda) &\mathop{\longrightarrow} &\mathsf{K}\Perv_{H_{\lambda'}}(V_{\lambda'}) \\
\IC(\1_{C_0}) &\mapsto&  \IC(\1_{C'_0}) \\
\IC(\1_{C_2}) &\mapsto&  \IC(\1_{C'_x})[1] \oplus \IC(\1_{C'_y})[1] \oplus \IC(\1_{C'_0})[1] \\
\IC(\1_{C_3}) &\mapsto& \IC(\1_{C'_{xy}})[1] \\
\IC(\mathcal{L}_{C_3}) &\mapsto&  \IC(\1_{C'_0})[1] \oplus \IC(\mathcal{L}_{C'_{xy}})[1] \\
\IC(\mathcal{F}_{C_2}) &\mapsto& \IC(\mathcal{L}_{C'_x})[1] \oplus \IC(\mathcal{L}_{C'_y})[1] 
\end{array}
\]
%\end{spacing}

\subsubsection{Restriction and vanishing cycles}\label{sssec:restriction-SO(5)singular}

In this example the inclusion $V_{\lambda'}\hookrightarrow V_{\lambda}$ induces a map of conormal bundles  $\epsilon : T^*_{H_{\lambda'}}(V_{\lambda'}) \hookrightarrow T^*_{H_\lambda}(V_\lambda)$; this is not true in general, as Section~\ref{sssec:restriction-SO(7)} shows.
Here we have
%\begin{spacing}{1.0}
\[
\begin{array}{rcl}
T^*_{C_0}(V_\lambda)_\textrm{reg}\ \cap\  T^*_{H_{\lambda'}}(V_{\lambda'})_\textrm{reg} &=&T^*_{C'_{0}}(V_{\lambda'})_\textrm{reg} \\
T^*_{C_2}(V_\lambda)_\textrm{reg}\ \cap\  T^*_{H_{\lambda'}}(V_{\lambda'})_\textrm{reg} &=& T^*_{C'_{x}}(V_{\lambda'})_\textrm{reg} \cup T^*_{C'_{y}}(V_{\lambda'})_\textrm{reg} \\
T^*_{C_3}(V_\lambda)_\textrm{reg}\ \cap\  T^*_{H_{\lambda'}}(V_{\lambda'})_\textrm{reg} &=& T^*_{C'_{xy}}(V_{\lambda'})_\textrm{reg}.
\end{array}
\]
%\end{spacing}

We now calculate both sides of \eqref{eqn:TrNEvRes} in three cases: when $\mathcal{P} = \IC(\1_{C_2})$, $\IC(\mathcal{L}_{C_3})$ and $\IC(\mathcal{F}_{C_2})$.

\paragraph*{{The case $\mathcal{P} = \IC(\1_{C_2})$}}
We now calculate both sides of \eqref{eqn:TrNEvRes} when $\mathcal{P} = \IC(\1_{C_2})$.
By Section~\ref{ssec:restriction-SO(5)singular}, 
\[
\IC(\1_{C_2})\vert_{V_{\lambda'}}
\equiv 
\IC(\1_{C'_x})[1] \oplus \IC(\1_{C'_y})[1] \oplus \IC(\1_{C'_0})[1] ,
\]
after passing to the Grothendieck group of $\Perv_{H_{\lambda'}}(V_{\lambda'})$.
So, by Section~\ref{sssec:VC},  
\[
\begin{array}{rcl}
&&\hskip-20pt\NEv' \left( \IC(\1_{C_2})\vert_{V_{\lambda'}}  \right)\\
&\equiv& \NEv' \left(\IC(\1_{C'_x})[1] \oplus \IC(\1_{C'_y})[1] \oplus \IC(\1_{C'_0})[1]  \right)\\
&=& \IC(\1_{\O'_x})[1] \oplus \IC(\mathcal{L}_{\O'_{y}})[1] \oplus \IC(\mathcal{L}_{\O'_{0}})[1] 
\end{array}
\]
in the Grothendieck group of $\Perv_{H_{\lambda'}}(T^*_{H_{\lambda'}}(V_{\lambda'})_\textrm{reg})$.
Thus, for each $(x',\xi')\in T^*_{C'}(V_{\lambda'})_\textrm{reg}$ with image $(x,\xi)\in T^*_{C}(V_\lambda)_\textrm{reg}$, the left-hand side of \eqref{eqn:TrNEvRes} is
\[
\begin{array}{rcl}
&&\hskip-20pt (-1)^{\dim C'} \trace_{a'_s} \left(\NEv' \IC(\1_{C_2})\vert_{V_{\lambda'}}\right)_{(x',\xi')}\\
&=& (-1)^{\dim C'} \trace_{a'_s}  \left(  \IC(\1_{\O'_x})[1] \oplus \IC(\mathcal{L}_{\O'_{y}})[1] \oplus \IC(\mathcal{L}_{\O'_{0}})[1] \right)_{(x',\xi')} \\
\end{array}
\]
while the right-hand side of \eqref{eqn:TrNEvRes} is 
\[
\begin{array}{rcl}
&&\hskip-20pt (-1)^{\dim C} \trace_{a_s} (\Ev\IC(\1_{C_2}))_{(x,\xi)}\\
&=& (-1)^{\dim C}\trace_{a_s} \left(\IC({\1}_{\O_2}) \oplus \IC(\mathcal{L}_{\O_0}) \right)_{(x,\xi)}.
\end{array}
\]
We now calculate both sides of \eqref{eqn:TrNEvRes} when $\mathcal{P} = \IC(\1_{C_2})$.
\begin{enumerate}[widest=($C'_{xy}$).,leftmargin=*]
\item[($C'_{0}$).]
If $(x',\xi') \in T^*_{C'_{0}}(V_{\lambda'})_\textrm{reg}$ then $C' = C'_{0}$ and $C = C_0$ and the left-hand side of \eqref{eqn:TrNEvRes} is 
\[
\begin{array}{rcl}
&& \hskip-20pt (-1)^{\dim C'_{0}} \trace_{(+1,-1)}  \IC(\1_{\O'_{0}})[1] \\
&=& - (-1)^0 \trace_{(+1,-1)} \IC(\1_{\O'_{0}})  \\ 
&=& -(++)(+1,-1) \\
&=& -1,
\end{array}
\]
while the right-hand side of \eqref{eqn:TrNEvRes} is 
\[
\begin{array}{rcl}
&&\hskip-20pt (-1)^{\dim C_0} \trace_{(+1,-1)}\left( \IC({\1}_{\O_2}) \oplus \IC(\mathcal{L}_{\O_0}) \right)\vert_{T^*_{C_0}(V_\lambda)_\textrm{reg}} \\
&=& \trace_{(+1,-1)}\IC(\mathcal{L}_{\O_0})\\
&=& (--)(+1,-1) \\
&=& -1.
\end{array}
\]
This confirms \eqref{eqn:TrNEvRes} on $T^*_{C'_{0}}(V_{\lambda'})_\textrm{reg}$.
\item[($C'_{x}$).]
If $(x',\xi') \in T^*_{C'_{x}}(V_{\lambda'})_\textrm{reg}$ then $C' = C'_{x}$ and $C = C_2$ and the left-hand side of \eqref{eqn:TrNEvRes} is 
\[
\begin{array}{rcl}
&& \hskip-20pt (-1)^{\dim C'_{x}} \trace_{(+1,-1)}  \IC(\mathcal{L}_{\O'_{x}})[1] \\
&=& - (-1)^1 \trace_{(+1,-1)} \IC(\mathcal{L}_{\O'_{x}}) \\ 
&=& (--)(+1,-1) \\
&=& -1
\end{array}
\]
while the right-hand side of \eqref{eqn:TrNEvRes} is
\[
\begin{array}{rcl}
&& \hskip-20pt (-1)^{\dim C_2} \trace_{(-1)}\left( \IC(\mathcal{L}_{\O_3}) \oplus \IC({\mathcal{L}}_{\O_2}) \right)\vert_{T^*_{C_2}(V_\lambda)_\textrm{reg}}\\
&=& \trace_{(-1)} \IC(\mathcal{L}_{\O_2})\\
&=& (-) (-1) \\
&=& -1.
\end{array}
\]
This confirms \eqref{eqn:TrNEvRes} on $T^*_{C'_{x}}(V_{\lambda'})_\textrm{reg}$.
\item[($C'_{y}$).]
If $(x',\xi') \in T^*_{C'_{y}}(V_{\lambda'})_\textrm{reg}$ then $C' = C'_{y}$ and $C = C_2$ and the left-hand side of \eqref{eqn:TrNEvRes} is 
\[
\begin{array}{rcl}
&& \hskip-20pt (-1)^{\dim C'_{y}} \trace_{(-1,+1)}  \IC(\mathcal{L}_{\O'_{y}})[1] \\
&=& - (-1)^1 \trace_{(-1,+1)} \IC(\mathcal{L}_{\O'_{y}}) \\ 
&=& (--)(-1,+1) \\
&=& -1,
\end{array}
\]
while the right-hand side of \eqref{eqn:TrNEvRes} is $-1$, as in the case above. 
This confirms \eqref{eqn:TrNEvRes} on $T^*_{C'_{y}}(V_{\lambda'})_\textrm{reg}$.
\item[($C'_{xy}$).]
If $(x',\xi') \in T^*_{C'_{xy}}(V_{\lambda'})_\textrm{reg}$ then $C' = C'_{xy}$ and $C = C_3$ and the left-hand side of \eqref{eqn:TrNEvRes} is 
\[
(-1)^{\dim C'_{xy}} \trace_{(+1,-1)} \left(\IC(\1_{\O'_x})[1] \oplus \IC(\mathcal{L}_{\O'_{y}})[1] \oplus \IC(\mathcal{L}_{\O'_{0}})[1] \right)_{(x',\xi')} 
\]
while the right-hand side of \eqref{eqn:TrNEvRes} is 
\[
 (-1)^{\dim C_3} \trace_{(-1)} \left(\IC({\1}_{\O_2}) \oplus \IC(\mathcal{L}_{\O_0}) \right)\vert_{T^*_{C_3}(V_\lambda)_\textrm{reg}},
\]
both of which are trivially $0$.
This confirms \eqref{eqn:TrNEvRes} on $T^*_{C'_{xy}}(V_{\lambda'})_\textrm{reg}$.
\end{enumerate}
This proves \eqref{eqn:TrNEvRes} when $\mathcal{P}= \IC(\1_{C_2})$.
 %%%%%%%%%%%%%%%%%%%%%%%%%%%%%%%%%%%%%%%%%%%%%%%%
 %%%%%%%%%%%%%%%%%%%%%%%%%%%%%%%%%%%%%%%%%%%%%%%%
 %%%%%%%%%%%%%%%%%%%%%%%%%%%%%%%%%%%%%%%%%%%%%%%%
\paragraph*{{The case $\mathcal{P} = \IC(\mathcal{L}_{C_3})$}}
We now calculate both sides of \eqref{eqn:TrNEvRes} when $\mathcal{P} = \IC(\mathcal{L}_{C_3})$.
By Section~\ref{ssec:restriction-SO(5)singular}, 
\[
\IC(\mathcal{L}_{C_3})\vert_{V_{\lambda'}}
\equiv 
\IC(\1_{C'_0})[1] \oplus \IC(\mathcal{L}_{C'_{xy}})[1],
\]
after passing to the Grothendieck group of $\Perv_{H_{\lambda'}}(V_{\lambda'})$.
So, by Section~\ref{sssec:VC},  
\[
\begin{array}{rcl}
&&\hskip-20pt\NEv' \left( \IC(\mathcal{L}_{C_3})\vert_{V_{\lambda'}}  \right)\\
&\equiv& \NEv' \left(\IC(\1_{C'_0})[1] \oplus \IC(\mathcal{L}_{C'_{xy}})[1] \right)\\
&=& \IC(\1_{\O'_0})[1] \oplus \IC(\mathcal{L}_{\O'_{xy}})[1] \oplus \IC(\mathcal{L}_{\O'_{y}})[1]  \oplus \IC(\mathcal{L}_{\O'_{x}})[1]  \oplus \IC(\mathcal{L}_{\O'_{0}})[1] 
\end{array}
\]
in the Grothendieck group of $\Perv_{H_{\lambda'}}(T^*_{H_{\lambda'}}(V_{\lambda'})_\textrm{reg})$.
Thus, for each $(x',\xi')\in T^*_{C'}(V_{\lambda'})_\textrm{reg}$ with image $(x,\xi)\in T^*_{C}(V_\lambda)_\textrm{reg}$, the left-hand side of \eqref{eqn:TrNEvRes} is
\begin{align*}
&(-1)^{\dim C'} \trace_{a'_s} \left(\NEv' \IC(\mathcal{L}_{C_3})\vert_{V_{\lambda'}}\right)_{(x',\xi')}\\
&\begin{aligned} =  (-1)^{\dim C'} \trace_{a'_s}    \Big( \IC(\1_{\O'_0} )[1]  \oplus&\IC(\mathcal{L}_{\O'_{xy}})[1] \oplus \IC(\mathcal{L}_{\O'_{y}})[1] \\
                                                                          &   \oplus \IC(\mathcal{L}_{\O'_{x}})[1]  \oplus \IC(\mathcal{L}_{\O'_{0}})[1] \Big)_{(x',\xi')} \end{aligned}
\end{align*}
while the right-hand side of \eqref{eqn:TrNEvRes} is 
\[
\begin{array}{rcl}
&&\hskip-20pt (-1)^{\dim C} \trace_{a_s} (\Ev\IC(\mathcal{L}_{C_3}))_{(x,\xi)}\\
&=& (-1)^{\dim C}\trace_{a_s} \left( \IC(\mathcal{L}_{\O_3}) \oplus \IC({\mathcal{L}}_{\O_2}) \right)_{(x,\xi)}.
\end{array}
\]
We now calculate both sides of \eqref{eqn:TrNEvRes} in every case.
\begin{enumerate}[widest=($C'_{xy}$).,leftmargin=*]
\item[($C'_{0}$).]
If $(x',\xi') \in T^*_{C'_{0}}(V_{\lambda'})_\textrm{reg}$ then $C' = C'_{0}$ and $C = C_0$ and the left-hand side of \eqref{eqn:TrNEvRes} is 
\[
\begin{array}{rcl}
&& \hskip-20pt (-1)^{\dim C'_{0}} \trace_{(+1,-1)}  \left( \IC(\1_{\O'_{0}})[1] \oplus \IC(\mathcal{L}_{\O'_{0}})[1] \right) \\
&=& (-1)^0\left( -\trace_{(+1,-1)} \IC(\1_{\O'_{0}}) - \trace_{(+1,-1)} \IC(\mathcal{L}_{\O'_{0}}) \right)\\ 
&=& -(++)(+1,-1) -  (--)(+1,-1)\\
&=& +1 -1\\
&=& 0,
\end{array}
\]
while the right-hand side of \eqref{eqn:TrNEvRes} is 
\[
\begin{array}{rcl}
(-1)^{\dim C_0} \trace_{(-1)}\left( \IC(\mathcal{L}_{\O_3}) \oplus \IC({\mathcal{L}}_{\O_2}) \right)\vert_{T^*_{C_0}(V_\lambda)_\textrm{reg}} = 0.
\end{array}
\]
This confirms \eqref{eqn:TrNEvRes} on $T^*_{C'_{0}}(V_{\lambda'})_\textrm{reg}$.
\item[($C'_{x}$).]
If $(x',\xi') \in T^*_{C'_{x}}(V_{\lambda'})_\textrm{reg}$ then $C' = C'_{x}$ and $C = C_2$ and the left-hand side of \eqref{eqn:TrNEvRes} is 
\[
\begin{array}{rcl}
&& \hskip-20pt (-1)^{\dim C'_{x}} \trace_{(+1,-1)}  \IC(\mathcal{L}_{\O'_{x}})[1] \\
&=& - (-1)^1 \trace_{(+1,-1)} \IC(\mathcal{L}_{\O'_{x}}) \\ 
&=& (--)(+1,-1) \\
&=& -1
\end{array}
\]
while the right-hand side of \eqref{eqn:TrNEvRes} is
\[
\begin{array}{rcl}
&& \hskip-20pt (-1)^{\dim C_2} \trace_{(-1)}\left( \IC(\mathcal{L}_{\O_3}) \oplus \IC({\mathcal{L}}_{\O_2}) \right)\vert_{T^*_{C_2}(V_\lambda)_\textrm{reg}}\\
&=& \trace_{(-1)} \IC(\mathcal{L}_{\O_2})\\
&=& (-) (-1) \\
&=& -1.
\end{array}
\]
This confirms \eqref{eqn:TrNEvRes} on $T^*_{C'_{x}}(V_{\lambda'})_\textrm{reg}$.
\item[($C'_{y}$).]
If $(x',\xi') \in T^*_{C'_{y}}(V_{\lambda'})_\textrm{reg}$ then $C' = C'_{y}$ and $C = C_2$ and the left-hand side of \eqref{eqn:TrNEvRes} is 
\[
\begin{array}{rcl}
&& \hskip-20pt (-1)^{\dim C'_{y}} \trace_{(-1,+1)}  \IC(\mathcal{L}_{\O'_{y}})[1] \\
&=& - (-1)^1 \trace_{(-1,+1)} \IC(\mathcal{L}_{\O'_{y}}) \\ 
&=& (--)(-1,+1) \\
&=& -1,
\end{array}
\]
while the right-hand side of \eqref{eqn:TrNEvRes} is $-1$, as in the case above. 
This confirms \eqref{eqn:TrNEvRes} on $T^*_{C'_{y}}(V_{\lambda'})_\textrm{reg}$.
\item[($C'_{xy}$).]
If $(x',\xi') \in T^*_{C'_{xy}}(V_{\lambda'})_\textrm{reg}$ then $C' = C'_{xy}$ and $C = C_3$ and the left-hand side of \eqref{eqn:TrNEvRes} is 
\[
\begin{array}{rcl}
&& \hskip-20pt (-1)^{\dim C'_{xy}} \trace_{(+1,-1)}  \IC(\mathcal{L}_{\O'_{xy}})[1] \\
&=& - (-1)^2 \trace_{(+1,-1)} \IC(\mathcal{L}_{\O'_{xy}}) \\ 
&=& -(--)(+1,-1) \\
&=& -(-1) \\
&=& +1
\end{array}
\]
while the right-hand side of \eqref{eqn:TrNEvRes} is 
\[
\begin{array}{rcl}
&& \hskip-20pt (-1)^{\dim C_3} \trace_{(-1)}\left( \IC(\mathcal{L}_{\O_3}) \oplus \IC({\mathcal{L}}_{\O_2}) \right)\vert_{T^*_{C_3}(V_\lambda)_\textrm{reg}}\\
&=& - \trace_{(-1)} \IC(\mathcal{L}_{\O_3})\\
&=& -(-) (-1) \\
&=& +1.
\end{array}
\]
This confirms \eqref{eqn:TrNEvRes} on $T^*_{C'_{xy}}(V_{\lambda'})_\textrm{reg}$.
\end{enumerate}
This proves \eqref{eqn:TrNEvRes} when $\mathcal{P}= \IC(\mathcal{L}_{C_3})$.
 %%%%%%%%%%%%%%%%%%%%%%%%%%%%%%%%%%%%%%%%%%%%%%%%
 %%%%%%%%%%%%%%%%%%%%%%%%%%%%%%%%%%%%%%%%%%%%%%%%
 %%%%%%%%%%%%%%%%%%%%%%%%%%%%%%%%%%%%%%%%%%%%%%%%
\paragraph*{{The case $\mathcal{P} =\IC(\mathcal{F}_{C_2})$}}
We now calculate both sides of \eqref{eqn:TrNEvRes} when $\mathcal{P} = \IC(\mathcal{F}_{C_2})$.
By Section~\ref{ssec:restriction-SO(5)singular}, 
\[
\IC(\mathcal{F}_{C_2})\vert_{V_{\lambda'}}
\equiv 
\IC(\1_{C'_x})[1] \oplus \IC(\mathcal{L}_{C'_{y}})[1],
\]
after passing to the Grothendieck group of $\Perv_{H_{\lambda'}}(V_{\lambda'})$.
So, by Section~\ref{sssec:VC},  
\[
\begin{array}{rcl}
&&\hskip-20pt\NEv' \left( \IC(\mathcal{F}_{C_2})\vert_{V_{\lambda'}}  \right)\\
&\equiv& \NEv' \left(\IC(\mathcal{L}_{C'_{x}})[1] \oplus \IC(\mathcal{L}_{C'_{y}})[1] \right)\\
&=& \IC(\mathcal{F}_{\O'_{x}})[1]  \oplus \IC(\mathcal{F}_{\O'_{0}})[1] \oplus
\IC(\mathcal{E}_{\O'_{y}})[1]  \oplus \IC(\mathcal{E}_{\O'_{0}})[1]
\end{array}
\]
in the Grothendieck group of $\Perv_{H_{\lambda'}}(T^*_{H_{\lambda'}}(V_{\lambda'})_\textrm{reg})$.
Thus, for each $(x',\xi')\in T^*_{C'}(V_{\lambda'})_\textrm{reg}$ with image $(x,\xi)\in T^*_{C}(V_\lambda)_\textrm{reg}$, the left-hand side of \eqref{eqn:TrNEvRes} is
\begin{align*}
&(-1)^{\dim C'} \trace_{a'_s} \left(\NEv' \IC(\mathcal{F}_{C_2})\vert_{V_{\lambda'}}\right)_{(x',\xi')}\\
&\begin{aligned} =  (-1)^{\dim C'} \trace_{a'_s}    \Big( \IC(&\mathcal{F}_{\O'_{x}})[1]  \oplus \IC(\mathcal{F}_{\O'_{0}})[1]  \\
                                                                          &   \oplus \IC(\mathcal{E}_{\O'_{y}})[1]  \oplus \IC(\mathcal{E}_{\O'_{0}})[1] \Big)_{(x',\xi')} \end{aligned}
\end{align*}
while the right-hand side of \eqref{eqn:TrNEvRes} is 
\[
\begin{array}{rcl}
&&\hskip-20pt (-1)^{\dim C} \trace_{a_s} (\Ev_{(x,\xi)}\IC(\mathcal{F}_{C_2}))\\
&=& (-1)^{\dim C}\trace_{a_s} \left( \Ev \IC(\mathcal{F}_{C_2}) \right)\vert_{T^*_{C}(V_\lambda)_\textrm{reg}}\\
&=& (-1)^{\dim C}\trace_{a_s} \left( \IC(\mathcal{F}_{\O_2}) \right)\vert_{T^*_{C}(V_\lambda)_\textrm{reg}}.
\end{array}
\]
We now calculate both sides of \eqref{eqn:TrNEvRes} in every case.
\begin{enumerate}[widest=($C'_{xy}$).,leftmargin=*]
\item[($C'_{0}$).]
If $(x',\xi') \in T^*_{C'_{0}}(V_{\lambda'})_\textrm{reg}$ then $C' = C'_{0}$ and $C = C_0$ and the left-hand side of \eqref{eqn:TrNEvRes} is 
\[
\begin{array}{rcl}
&& \hskip-20pt (-1)^{\dim C'_{0}}\trace_{(+1,-1)}   \left( \IC(\mathcal{F}_{\O'_{0}})[1] \oplus \IC(\mathcal{E}_{\O'_{0}})[1]  \right)\\
&=& (-1)^0 \left(-\trace_{(+1,-1)} \IC(\mathcal{F}_{\O'_{0}}) - \trace_{(+1,-1)} \IC(\mathcal{E}_{\O'_{0}}) \right) \\ 
&=& -(-+)(+1,-1) -  (+-)(+1,-1)\\
&=& +1 -1\\
&=& 0,
\end{array}
\]
while the right-hand side of \eqref{eqn:TrNEvRes} is 
\[
\begin{array}{rcl}
(-1)^{\dim C_0} \trace_{(-1)}\left( \IC(\mathcal{F}_{\O_2} \right)\vert_{T^*_{C_0}(V_\lambda)_\textrm{reg}} = 0.
\end{array}
\]
This confirms \eqref{eqn:TrNEvRes} on $T^*_{C'_{0}}(V_{\lambda'})_\textrm{reg}$.
\item[($C'_{x}$).]
If $(x',\xi') \in T^*_{C'_{x}}(V_{\lambda'})_\textrm{reg}$ then $C' = C'_{x}$ and $C = C_2$ and the left-hand side of \eqref{eqn:TrNEvRes} is 
\[
\begin{array}{rcl}
&& \hskip-20pt (-1)^{\dim C'_{x}} \trace_{(+1,-1)}  \IC(\mathcal{F}_{\O'_{x}})[1] \\
&=& - (-1)^1 \trace_{(+1,-1)} \IC(\mathcal{F}_{\O'_{x}}) \\ 
&=& (-+)(+1,-1) \\
&=& +1,
\end{array}
\]
while the right-hand side of \eqref{eqn:TrNEvRes} is
\[
\begin{array}{rcl}
&& \hskip-20pt (-1)^{\dim C_2} \trace_{(+1,-1)} \IC({\mathcal{F}}_{\O_2})\vert_{T^*_{C_2}(V_\lambda)_\textrm{reg}}\\
&=& \trace_{(+1,-1)} \IC(\mathcal{F}_{\O_2})\\
&=& (-+) (+1,-1) \\
&=& +1.
\end{array}
\]
This confirms \eqref{eqn:TrNEvRes} on $T^*_{C'_{x}}(V_{\lambda'})_\textrm{reg}$.
\item[($C'_{y}$).]
If $(x',\xi') \in T^*_{C'_{y}}(V_{\lambda'})_\textrm{reg}$ then $C' = C'_{y}$ and $C = C_2$ and the left-hand side of \eqref{eqn:TrNEvRes} is 
\[
\begin{array}{rcl}
&& \hskip-20pt (-1)^{\dim C'_{y}} \trace_{(-1,+1)}  \IC(\mathcal{E}_{\O'_{y}})[1] \\
&=& - (-1)^1 \trace_{(-1,+1)} \IC(\mathcal{E}_{\O'_{x}}) \\ 
&=& (+-)(-1,+1) \\
&=& +1,
\end{array}
\]
while the right-hand side of \eqref{eqn:TrNEvRes} is $+1$, as in the case above.
This confirms \eqref{eqn:TrNEvRes} on $T^*_{C'_{y}}(V_{\lambda'})_\textrm{reg}$.
\item[($C'_{xy}$).]
If $(x',\xi') \in T^*_{C'_{xy}}(V_{\lambda'})_\textrm{reg}$ then $C' = C'_{xy}$ and $C = C_3$ and the left-hand side of \eqref{eqn:TrNEvRes} is
\[
 (-1)^{\dim C'_{xy}} \trace_{(+1,-1)} \left( \IC(\mathcal{F}_{\O'_{x}})[1]  \oplus \IC(\mathcal{F}_{\O'_{0}})[1]  \right)_{(x',\xi')} 
\]
while the right-hand side of \eqref{eqn:TrNEvRes} is 
\[
\begin{array}{rcl}
(-1)^{\dim C}\trace_{a_s} \left( \IC(\mathcal{F}_{\O_2}) \right)\vert_{T^*_{C_3}(V_\lambda)_\textrm{reg}},
\end{array}
\]
both of which are trivially $0$.
This confirms \eqref{eqn:TrNEvRes} on $T^*_{C'_{xy}}(V_{\lambda'})_\textrm{reg}$.
\end{enumerate}
This proves \eqref{eqn:TrNEvRes} when $\mathcal{P}= \IC(\mathcal{F}_{C_2})$.

\section{SO(7) unipotent representations, singular parameter}\label{sec:SO(7)}

Let $G= \SO(7)$.  
The calculation of pure inner twists and inner twists and their forms for $G$ is the same as in Section~\ref{sec:SO(5)regular}.
Let $G_1$ be the non-quasisplit form of $G$, given by the  quadratic form
\[
\begin{pmatrix}
0 & 0 & 0 & 0 & 0 & 0 & 1\\
0 & 0 & 0 & 0 & 0 & 1 & 0\\
0 & 0 & -\varepsilon\varpi & 0 & 0 & 0 & 0\\
0 & 0 & 0 & \varepsilon & 0 & 0 & 0\\
0 & 0 & 0 & 0 & \varpi & 0 & 0\\
0 & 1 & 0 & 0 & 0 & 0 & 0\\
1 & 0 & 0 & 0 & 0 & 0 & 0
\end{pmatrix}.
\]
%\todo{Andrew added a $-1$ into the above matrix, so that it is more generically non split} where $\varepsilon$ is a non-square unit (more specifically $\varepsilon$ is chosen so that $F(\sqrt{\varepsilon})$ is unramified) and $\varpi$ is a uniformizer.
One readily verifies that the Hasse invariant of this form is $(\varpi,\varepsilon)=-1$  so that the form is not split.
Note that the choice $\varepsilon=1$ would give a split form.

Consider the infinitesimal parameter $\lambda : W_F \to \dualgroup{G}$ given by
\[
\lambda(w) \ceq
\begin{pmatrix} 
\abs{w}^{3/2} & 0 & 0 & 0 & 0 & 0\\ 
0 & \abs{w}^{1/2} & 0 & 0 & 0 & 0 \\
0 & 0 & \abs{w}^{1/2} & 0 & 0 & 0 \\
0 & 0 & 0 & \abs{w}^{-1/2} & 0 & 0 \\
0 & 0 & 0 & 0 & \abs{w}^{-1/2} & 0 \\
0 & 0 & 0 & 0 & 0 & \abs{w}^{-3/2}
\end{pmatrix}.
\]
Here, and below, we use the symplectic form $\langle x,y\rangle = \transpose{x} J  y$ with matrix $J$ given by $J_{ij} = (-1)^j \delta_{7-i,j}$
%\left( \begin{smallmatrix} 
%0 & 0 & 0 & 0 & 0 & 1 \\
%0 & 0 & 0 & 0 & -1 & 0 \\
%0 & 0 & 0 & 1 & 0 & 0 \\
%0 & 0 & -1 & 0 & 0 & 0 \\
%0 & 1 & 0 & 0 & 0 & 0 \\
%-1 & 0 & 0 & 0 & 0 & 0 
%\end{smallmatrix}\right)
to determine a representation of $\Sp(6)$.
Note that, in contrast to the unramified infinitesimal parameters in Sections~\ref{sec:SO(3)} and \ref{sec:SO(5)regular}, in this case the image of Frobenius is singular semisimple. 

\subsection{Arthur packets}

\subsubsection{Parameters}\label{sssec:P-SO(7)}

Up to $H_\lambda$-conjugation, there are eight Langlands parameters with infinitesimal parameter $\lambda$, of which six are of Arthur type.
The six Langlands parameters of Arthur  type are most easily described through their Arthur parameters: 
\[
\begin{array}{rcl cc rcl}
\psi_0(w,x,y) &=& \nu_4(y) \oplus \nu_2(y) ,
	&\hskip1cm& \psi_7(w,x,y) &=& \nu_4(x) \oplus \nu_2(x), \\
\psi_2(w,x,y) &=& \nu_4(y) \oplus \nu_2(x) ,
	&& \psi_6(w,x,y) &=& \nu_4(x) \oplus \nu_2(y), \\
\psi_4(w,x,y)  &=& \nu_2(x)\otimes \nu_3(y) ,
	&& \psi_5(w,x,y) &=& \nu_3(x)\otimes\nu_2(y) .
\end{array}
\]
Here $\nu_4 : \SL(2) \to \Sp(4)$ is a $4$-dimensional {\it symplectic} irreducible representation of $\SL(2)$, $\nu_3 : \SL(2) \to \SO(3)$ is a $3$-dimensional {\it orthogonal} irreducible representation of $\SL(2)$ and, as above, $\nu_2 : \SL(2) \to \SL(2)$ is the identity representation.
Note that $\psi_7 = {\hat \psi}_0$, $\psi_6 = {\hat \psi}_2$ and $\psi_5 = {\hat \psi}_4$, with reference to Section~\ref{sssec:Aubert-overview}.

These Arthur parameters define the following six Langlands parameters:
\[
\begin{array}{rcl cc rcl}
\phi_0(w,x) &=& \nu_4(d_w) \oplus \nu_2(d_w) ,
	&\hskip1cm& \phi_7(w,x) &=& \nu_4(x) \oplus \nu_2(x) ,\\
\phi_2(w,x) &=& \nu_4(d_w) \oplus \nu_2(x) ,
	&& \phi_6(w,x) &=& \nu_4(x) \oplus \nu_2(d_w), \\
\phi_4(w,x)  &=& \nu_2(x)\otimes \nu_3(d_w) ,
	&& \phi_5(w,x) &=& \nu_3(x)\otimes\nu_2(d_w) .
\end{array}
\]
The remaining two Langlands parameters in $P_\lambda(\Lgroup{G})/Z_{\dualgroup{G}}(\lambda)$ that are not of Arthur type are given here:
\[
\begin{array}{rcl}
{ \phi_1(w,x) }
%&=& (\abs{w}\otimes \nu_2(x)) \oplus ( \abs{w}^{-1}\otimes \nu_2(x)) \oplus  \abs{w}^{1/2} \oplus \abs{w}^{-1/2} 
&=&
\begin{pmatrix} 
\begin{array}{cc|cc|cc}
\abs{w} x_{11} & \abs{w} x_{12}  & 0 & 0 & 0 & 0\\ 
\abs{w} x_{21} & \abs{w} x_{22}  & 0 & 0 & 0 & 0\\ \hline
0 & 0 & \abs{w}^{1/2} & 0 & 0 & 0 \\
0 & 0 & 0 & \abs{w}^{-1/2} & 0 & 0 \\ \hline
0 & 0 & 0 & 0 & \abs{w}^{-1} x_{11} & \abs{w}^{-1} x_{12} \\
0 & 0 & 0 & 0 & \abs{w}^{-1} x_{21} & \abs{w}^{-1} x_{22} 
\end{array}
\end{pmatrix},
%= \nu_2(d_w) \oplus (\nu_2^2(d_w)\otimes \nu_2(x))  
\\
&& \\
{\phi_3(w,x) } 
%&=& %\nu_2(x) \oplus \nu_2(x)  \oplus \abs{w}^{3/2} \oplus \abs{w}^{-3/2}  
&=&
\begin{pmatrix} 
\begin{array}{c c | cc | cc}
\abs{w}^{3/2}  & 0 & 0 & 0 & 0 & 0 \\ 
0 & x_{11} & 0 & 0 & x_{12} & 0 \\ \hline
0 & 0  & x_{11} & x_{12} & 0 & 0\\ 
0 & 0 & x_{21} & x_{22} & 0 & 0 \\ \hline
0 & -x_{21} & 0 & 0 & -x_{22} & 0\\ 
0 & 0 & 0 & 0 & 0 & \abs{w}^{-3/2} \\ 
\end{array}
\end{pmatrix} .
%= \nu_2^3(d_w) \oplus \nu_2(x) \oplus \nu_2(x) 
\end{array}
\]

\subsubsection{L-packets}\label{sssec:L-SO(7)}

In total, there are 15 admissible representations with infinitesimal parameter $\lambda$, of which 10 are representations of $G(F)$ while 5 are representations of $G_1(F)$.
In order to list them, we must enumerate the irreducible representations $A_\phi$, for each $\phi \in P_\lambda(\Lgroup{G})$. 
In every case but one, the group $A_\phi$ is trivial or has order $2$; in the latter case, the irreducible representations of these groups are unambiguously labeled with $+$ or $-$; in the former case, we simply elide the trivial representation, such as in the list below.
\[
\begin{array}{rl  rl}
\Pi_{\phi_0}(G(F)) =& \{ \pi(\phi_0)\}		& \Pi_{\phi_0}(G_1(F)) =& \emptyset \\	
\Pi_{\phi_1}(G(F)) =& \{ \pi(\phi_1)\}		& \Pi_{\phi_1}(G_1(F)) =& \emptyset \\
\Pi_{\phi_2}(G(F)) =& \{ \pi(\phi_2,+)\}		& \Pi_{\phi_2}(G_1(F)) =& \{ \pi(\phi_2,-)\} \\
\Pi_{\phi_3}(G(F)) =& \{ \pi(\phi_3,+), \pi(\phi_3,-) \}	& \Pi_{\phi_3}(G_1(F)) =& \emptyset \\
\Pi_{\phi_4}(G(F)) =& \{ \pi(\phi_4,+)\}		& \Pi_{\phi_4}(G_1(F)) =& \{ \pi(\phi_4,-)\} \\		
\Pi_{\phi_5}(G(F)) =& \{ \pi(\phi_5)\}		& \Pi_{\phi_5}(G_1(F)) =& \emptyset \\
\Pi_{\phi_6}(G(F)) =& \{ \pi(\phi_6,+)\}		& \Pi_{\phi_6}(G_1(F)) =& \{ \pi(\phi_6,-)\} \\	
\Pi_{\phi_7}(G(F)) =& \{ \pi(\phi_7,++), \pi(\phi_7,--) \}	& \Pi_{\phi_7}(G_1(F)) =& \{ \pi(\phi_7,+-), \pi(\phi_7,-+)\} \\	
\end{array}
\]

The centralizer of $\phi_7$ is the following subgroup of $2$-torsion elements $\dualgroup{T}[2]$ in the diagonal dual torus $\dualgroup{T}$:
\[
Z_{\dualgroup{G}}(\phi_7) = 
\left\{
\begin{pmatrix} 
s_1 & 0 & 0 & 0 & 0 & 0\\ 
0 & s_2 & 0 & 0 & 0 & 0 \\
0 & 0 & s_3 & 0 & 0 & 0 \\
0 & 0 & 0 & s_3 & 0 & 0 \\
0 & 0 & 0 & 0 & s_2 & 0 \\
0 & 0 & 0 & 0 & 0 & s_1 
\end{pmatrix}  \in \dualgroup{T}[2]
\tq 
\begin{array}{c}
s_1=s_2
\end{array}
\right\}.
\]
We fix the isomorphism $Z_{\dualgroup{G}}(\phi_7) \iso \{\pm 1\}\times\{\pm 1\}$ so that the image of $Z(\dualgroup{G})$ in $Z_{\dualgroup{G}}(\phi_7)$ is $\{ (+1,+1),(-1,-1)\}$; using this isomorphism, we label irreducible representations of $A_{\phi_7} \iso Z_{\dualgroup{G}}(\phi_7)$ by the symbols $++$, $+-$, $-+$ and $--$. 
Note that the restriction of these representations to $Z(\dualgroup{G})$ is trivial for $++$ and $--$ only.

Of these 15 admissible representations, only the representation $\pi(\phi_7,{+-})$ of $G_1(F)$ is supercuspidal. In fact, $\pi(\phi_7,{+-})$ is a unipotent supercusidal depth-zero representation. 
In Lusztig's classification of unipotent representations, $\pi(\phi_7,+-)$ is  the case $n=3$, $a=1$, $b=1$ of \cite[7.55]{Lusztig:Classification}; it corresponds to the unique cuspidal unipotent local system for $\dualgroup{G}$, see $\tilde{C}_3/(C_3\times C_0)$ in \cite[7.55]{Lusztig:Classification}.
Lusztig's classification also shows how $\pi(\phi_7,+-)$ may be constructed by compact induction, as follows; see $\tilde{B}_3/(D_1\times B_2)$ in \cite[7.55]{Lusztig:Classification}.
Let $\underline{G_1}$ be the parahoric $\OF$-group scheme associated to an almost self-dual lattice chain and the quadratic form at the beginning of Section~\ref{sec:SO(7)}.
The generic fibre of $\underline{G_1}$ is the inner form $G_1$ of $G^*$, and $\underline{G_1}(\OF)$ is a maximal parahoric subgroup of the $F$-points on the generic fibre of $\underline{G_1}$.
The reductive quotient $\underline{G_1}^\text{red}_{\FF_q}$ of the special fibre of $\underline{G_1}$ is $\SO(5) \times \SO(2)$ over $\FF_q$, where $\SO(5)$ and $\SO(2)$ are determined, respectively, by
\[
\begin{pmatrix}
0 & 0 & 0 & 0 & 1\\
0 & 0 & 0 & 1 & 0\\
0 & 0 & \epsilon & 0 & 0\\
0 & 1 & 0 & 0 & 0 \\
1 & 0 & 0 & 0 & 0
\end{pmatrix}
\qquad\text{and}\qquad
\begin{pmatrix}
-\epsilon &  0\\
0 & 1 
\end{pmatrix} ,
\]
with $\epsilon = \varepsilon \mod \OF$.
Note that the parahoric $\underline{G}_1(\OK)$ is not hyperspecial.
The finite group $\SO(5,\FF_q) \times \SO(2,\FF_q)$ admits a unique cuspidal unipotent irreducible representation, $\,^o\sigma$.
Let $\operatorname{\inf}(\,^o\sigma)$ be the representation of $\underline{G_1}(\OF)$ obtained by inflation of $\,^o\sigma$ along $\underline{G_1}(\OF) \to (\underline{G}_1)_{\FF_q}^\text{red}(\FF_q)$.
Now extend $\operatorname{\inf}(\,^o\sigma)$ to the representation $\operatorname{\inf}(\,^o\sigma)^+$ of $N_{G_1(F)}(\underline{G_1}(\OF))$ by tensoring with an unramified character which has order $2$ on $N_{G_1(F)}(\underline{G_1}(\OF))/\underline{G_1}(\OF)$.
Then 
\[
\pi(\phi_7,+-) = \operatorname{cInd}_{N_{G_1(F)}(\underline{G_1}(\OF))}^{\underline{G_1}(F)}(\operatorname{\inf}(\sigma)^+).
\]
We remark that $N_{G_1(F)}(\underline{G_1}(\OF))$ also admits a smooth model over $\OF$, for which the reductive quotient of the special fibre is $\operatorname{S}(\operatorname{O}(5) \times \operatorname{O}(2)) \iso \SO(5) \times \operatorname{O}(2)$.

\subsubsection{Multiplicities in standard modules}\label{sssec:mrep-SO(7)}

In order to describe the other admissible representations appearing in this example, we give the multiplicity of $\pi(\phi,\rho)$ in the standard modules $M(\phi',\rho')$ for representations of the pure form $G(F)$ in Table~\ref{table:mrep-SO(7)}.
To save space there we write $\pi_{_i}$  for $\pi(\phi_i)$ and $\pi_{i}^{\epsilon}$ for $\pi(\phi_i,{\epsilon})$; a similar convention applies to the notation for the standard modules.
Let us see show how to calculate row 8 in Table~\ref{table:mrep-SO(7)}.
Consider the standard module
\[
M_{6}^{+} = M(\phi_6,+) = \rm{Ind}(||^{1/2} \otimes \pi(\nu_{4}, +))
\]
for $G(F)$.
It is clear that this will contain $\pi_{6}^{+} = \pi(\phi_6,+)$. Moreover, it has an irreducible submodule $\pi_{7}^{++} = \pi(\phi_7,++)$. To show there is nothing else, we can compute the Jacquet module of $M_{6}^{+}$ with respect to the standard parabolic subgroup $P$, whose Levi component is $GL(1) \times SO(5)$. By the geometric lemma, we get
\[
\text{s.s.} \, \rm{Jac}_{P} \, M_{6}^{+} = ||^{3/2} \otimes \rm{Ind}(||^{1/2} \otimes \pi(\nu_{2}, +)) \oplus ||^{1/2} \otimes \pi(\nu_{4}, +) \oplus ||^{-1/2} \otimes \pi(\nu_{4}, +)
\]
and
\[
\text{s.s.} \, \rm{Ind}(||^{1/2} \otimes \pi(\nu_{2}, +)) = \pi(\nu_{2} \oplus \nu_{2}, ++) \oplus \pi'
\]
where $\pi'$ is the unique irreducible quotient. 
Here, $\text{s.s.}$ denotes the semi-simplification of the module.
On the other hand, 
\[
\text{s.s.} \, \rm{Jac}_{P} \pi_{6}^{+} = ||^{-1/2} \otimes \pi(\nu_{4}, +) \oplus ||^{3/2} \otimes \pi'
\]
and 
\[
\text{s.s.} \, \rm{Jac}_{P} \pi_{7}^{++} = ||^{1/2} \otimes \pi(\nu_{4}, +) \oplus ||^{3/2} \otimes \pi(\nu_{2} \oplus \nu_{2}, ++).
\]
Therefore,
\[
\text{s.s.} \, M_{6}^{+} = \pi_{6}^{+} \oplus \pi_{7}^{++}.
\]
This explains row 8 in Table~\ref{table:mrep-SO(7)}.

The multiplicity of $\pi(\phi,\rho)$ in the standard modules $M(\phi',\rho')$, for representations of the form $G_1(F)$ are also displayed in Table~\ref{table:mrep-SO(7)}.

\subsubsection{Arthur packets}\label{sssec:A-SO(7)}

In order to describe the component groups $A_{\psi}$, consider the torus
\[ 
S \ceq
\left\{
\begin{pmatrix} 
\begin{array}{c|cc|cc|c}
s_1 & & & & & \\  \hline
 & s_2 & 0 & & &  \\
 & 0 & s_3 & & &  \\ \hline
 & & & s_3^{-1} & 0 &  \\
 & & & 0 & s_2^{-1} &  \\ \hline
 & & & & & s_1^{-1} 
\end{array}
\end{pmatrix} 
\tq 
\begin{array}{c}
s_1= s_2\\
\end{array}
\right\}
\subset 
\dualgroup{T} 
\subset 
\dualgroup{G}.
\]
Let $S[2]$ be the $2$-torsion subgroup of $S$; 
Note that $Z(\dualgroup{G})\subset S[2]$.
Let us the notation 
\[
s(s_2,s_3) \ceq
\begin{pmatrix} 
\begin{array}{c|cc|cc|c}
s_2 & & & & & \\  \hline
 & s_2 & 0 & & &  \\
 & 0 & s_3 & & &  \\ \hline
 & & & s_3 & 0 &  \\
 & & & 0 & s_2 &  \\ \hline
 & & & & & s_2 
\end{array}
\end{pmatrix} 
\in S[2]
\]
and let $S[2] \iso \{\pm 1\} \times \{\pm 1\}$ be the isomorphism determined by this notation.
Then $Z(\dualgroup{G})\iso \{\pm 1\}$ is the diagonal subgroup, for which we will use the notation
\[
s(s_1,s_1) \ceq
\begin{pmatrix} 
\begin{array}{c|cc|cc|c}
s_1 & & & & & \\  \hline
 & s_1 & 0 & & &  \\
 & 0 & s_1 & & &  \\ \hline
 & & & s_1 & 0 &  \\
 & & & 0 & s_1 &  \\ \hline
 & & & & & s_1 
\end{array}
\end{pmatrix} 
\in Z(\dualgroup{G}) \subset S[2].
\]
We can now give the component groups $A_{\psi}$:
\[
\begin{array}{rcl cccc rcl}
A_{\psi_0} &=& S[2],
	&\hskip1cm& A_{\psi_7} &=& S[2],\\
A_{\psi_2} &=& S[2],
	&& A_{\psi_6} &=& S[2],\\
A_{\psi_4} &=& Z(\dualgroup{G}),
	&& A_{\psi_5} &=& Z(\dualgroup{G}) .
\end{array}
\]

The Arthur packets for admissible representations of $G(F)$ with infinitesimal parameter $\lambda$ are 
\[
\begin{array}{rcl}
\Pi_{\psi_0}(G(F)) &=& \{ \pi(\phi_0), \pi(\phi_2,+) \},\\	
%\Pi_{\phi_1}(G(F)) &=& \{ \pi(\phi_1)\}	\\
\Pi_{\psi_2}(G(F)) &=& \{ \pi(\phi_2,+), \pi(\phi_3,-) \}, \\
%\Pi_{\phi_3}(G(F)) &=& \{ \pi(\phi_3,+), \pi(\phi_3,-) \}  \\
\Pi_{\psi_4}(G(F)) &=& \{ \pi(\phi_4,+)\},	 \\		
\Pi_{\psi_5}(G(F)) &=& \{ \pi(\phi_5)\}, \\
\Pi_{\psi_6}(G(F)) &=& \{ \pi(\phi_6,+), \pi(\phi_7,--) \}, \\	
\Pi_{\psi_7}(G(F)) &=& \{ \pi(\phi_7,++), \pi(\phi_7,--) \},
\end{array}
\]
and the Arthur packets for admissible representations of $G_1(F)$ with infinitesimal parameter $\lambda$ are 
\[
\begin{array}{rcl }
\Pi_{\psi_0}(G_1(F)) &=& \{ \pi(\phi_4,-), \pi(\phi_7(+-) \},  \\	
%% \Pi_{\phi_1}(G_1(F)) &=& \emptyset \\
 \Pi_{\psi_2}(G_1(F)) &=& \{ \pi(\phi_2,-), \pi(\phi_7,+-)  \}, \\
%% \Pi_{\phi_3}(G_1(F)) &=& \emptyset, \\
 \Pi_{\psi_4}(G_1(F)) &=& \{ \pi(\phi_4,-),  \pi(\phi_7,+-) \}, \\		
 \Pi_{\psi_5}(G_1(F)) &=& \{ \pi(\phi_7,-+), \pi(\phi_7,+-) \}, \\
 \Pi_{\psi_6}(G_1(F)) &=& \{ \pi(\phi_6,-), \pi(\phi_7,+-) \} ,\\	
 \Pi_{\psi_7}(G_1(F)) &=& \{ \pi(\phi_7,-+), \pi(\phi_7,+-)\}. \\	
\end{array}
\]
We arrange these representations into pure Arthur packets in Table~\ref{table:pureArthur-SO(7)}; see also Table~\ref{table:Arthur-SO(7)}.

\subsubsection{Aubert duality}\label{sssec:Aubert-SO(7)}

The following table gives Aubert duality for the admissible representations of $G(F)$ with infinitesimal parameter $\lambda$.
\[
\begin{array}{c || c }
\pi & {\hat \pi}   \\
\hline\hline
\pi(\phi_0) & \pi(\phi_7,++) \\
\pi(\phi_1,+) & \pi(\phi_3,+) \\
\pi(\phi_2,+) & \pi(\phi_7,--) \\
\pi(\phi_3,+) & \pi(\phi_1,+) \\
\pi(\phi_3,-) & \pi(\phi_6,+) \\
\pi(\phi_4,+) & \pi(\phi_5) \\
\pi(\phi_5) & \pi(\phi_4,+) \\
\pi(\phi_6,+) & \pi(\phi_3,-) \\
\pi(\phi_7,++) & \pi(\phi_0) \\
\pi(\phi_7,--) & \pi(\phi_2,+) \\
\end{array}
\]
Aubert duality for the admissible representations of $G_1(F)$ with infinitesimal parameter $\lambda$ is given by the following table.
\[
\begin{array}{c || c }
\pi & {\hat \pi}   \\
\hline\hline
\pi(\phi_2,-) & \pi(\phi_6,-)\\
\pi(\phi_4,-) & \pi(\phi_7,-+)\\
\pi(\phi_6,-) & \pi(\phi_2,-)\\
\hline
\pi(\phi_7,+-) & \pi(\phi_7,+-)\\
\end{array}
\]

The twisting characters $\chi_{\psi_0}$, $\chi_{\psi_4}$, $\chi_{\psi_5}$ and $\chi_{\psi_7}$ are trivial. 
The twisting characters $\chi_{\psi_2}$ and $\chi_{\psi_6}$ are nontrivial, both given $(--)$, using the respective isomorphisms $A_{\psi_2} = S[2] \iso \{\pm1\}\times\{\pm 1\}$ and $A_{\psi_6} = S[2] \iso \{\pm1\}\times\{\pm 1\}$ fixed in Section~\ref{sssec:A-SO(7)}.

\subsubsection{Stable distributions and endoscopy}\label{sssec:stable-SO(7)}

The stable distributions on $G(F)$ attached to these Arthur packets are: 
%\begin{spacing}{1.0}
%\noindent\resizebox{1\textwidth}{!}{%
\[
\begin{array}{l l}
\Theta^{G}_{\psi_0}  = 
\Theta_{\pi(\phi_0)}
+ \ \Theta_{\pi(\phi_2,+)},
&
\Theta^{G}_{\psi_7} = 
\Theta_{\pi(\phi_7,++)} 
+ \ \Theta_{\pi(\phi_7,--)} ,
\\
\Theta^{G}_{\psi_2} = 
\Theta_{\pi(\phi_2,+)}
- \ \Theta_{\pi(\phi_3,-)} ,
&
\Theta^{G}_{\psi_6} =
\Theta_{\pi(\phi_6,+)} 
- \ \Theta_{\pi(\phi_7,--)} ,
\\
\Theta^{G}_{\psi_4} =
\Theta_{\pi(\phi_4,+)} ,
&
\Theta^{G}_{\psi_5} =
\Theta_{\pi(\phi_5)} .
\end{array}
\]%
%}
%\end{spacing}

The characters ${\langle \, \cdot\, ,\pi\rangle}_{\psi}$ of $A_\psi$ are given in Table~\ref{table:transfer_coefficients-SO(7)}.
With this, we easily find the coefficients ${\langle s s_\psi,\pi\rangle}_{\psi}$ in $\Theta^{G}_{\psi,s}$.
First calculate $s_\psi \ceq \psi(1,-1)$:
\[
\begin{array}{rcl c rcl}
s_{\psi_0} &=& \nu_4(-1) \oplus \nu_2(-1) = s(-1,-1) ,
	&& s_{\psi_7} &=& \nu_4(1) \oplus \nu_2(1) = s(1,1), \\
s_{\psi_2} &=& \nu_4(-1) \oplus \nu_2(1) = s(-1,1) ,
	&& s_{\psi_6} &=& \nu_4(1) \oplus \nu_2(-1) = s(1,-1), \\
s_{\psi_4} &=& \nu_2(1)\otimes \nu_3(-1) = s(1,1) ,
	&& s_{\psi_5} &=& \nu_3(1)\otimes\nu_2(-1) = s(-1,-1) .
\end{array}
\]
%
\iffalse
Using the notation $s=s(s_2,s_3)$ from Section~\ref{sssec:A-SO(7)}, we have:
%\begin{spacing}{1.0}
\[
\begin{array}{c||cccccc}
\pi & {\langle s s_{\psi_0} , \pi\rangle}_{\psi_0} & {\langle s s_{\psi_2} , \pi\rangle}_{\psi_2}   & {\langle s s_{\psi_4}, \pi\rangle}_{\psi_4}  & {\langle s s_{\psi_5}, \pi\rangle}_{\psi_5}  & {\langle s s_{\psi_6}, \pi\rangle}_{\psi_6}  & {\langle s s_{\psi_7} , \pi\rangle}_{\psi_7} \\
\hline\hline
\pi(\phi_0) 		& 1 & 0 & 0 & 0 & 0 & 0 \\
\pi(\phi_2,+) 		& s_2s_3 & 1 & 0 & 0 & 0 & 0 \\
\pi(\phi_3,-) 		& 0 & -s_2s_3 & 0 & 0 & 0 & 0 \\
\pi(\phi_4,+) 		& 0 & 0 & 1 & 0 & 0 & 0 \\
\pi(\phi_5) 		& 0 & 0 & 0 & 1 & 0 & 0 \\
\pi(\phi_6,+) 		& 0 & 0 & 0 & 0 & 1 & 0 \\
\pi(\phi_7,++) 		& 0 & 0 & 0 & 0 & 0 & 1 \\
\pi(\phi_7,--) 		& 0 & 0 & 0 & 0 & -s_2s_3 & s_2s_3 
\end{array}
\]
%\end{spacing}
\noindent
In other words,
%
\fi
Then, using the notation $s=s(s_2,s_3)$ from Section~\ref{sssec:A-SO(7)}, we have:
\[
\begin{array}{rcl }
\Theta^{G}_{\psi_0,s}  &=& 
\Theta_{\pi(\phi_0)}
+ s_2s_3 \ \Theta_{\pi(\phi_2,+)},
\\
\Theta^{G}_{\psi_2,s} &=& 
\Theta_{\pi(\phi_2,+)}
- s_2s_3 \ \Theta_{\pi(\phi_3,-)} ,
\\
\Theta^{G}_{\psi_4,s} &=&
\Theta_{\pi(\phi_4,+)} ,
\end{array}
\]
and
\[
\begin{array}{rcl}
\Theta^{G}_{\psi_7,s} &=& 
\Theta_{\pi(\phi_7,++)} 
+ s_2s_3\ \Theta_{\pi(\phi_7,--)} ,
\\
\Theta^{G}_{\psi_6,s} &=&
\Theta_{\pi(\phi_6,+)} 
- s_2s_3\ \Theta_{\pi(\phi_7,--)} ,
\\
\Theta^{G}_{\psi_5,s} &=&
\Theta_{\pi(\phi_5)} .
\end{array}
\]

We now turn our attention to the distributions on  $G_1(F)$ attached to these Arthur packets: 
\[
\begin{array}{rcl }
\Theta^{G_1}_{\psi_0}  &=&  
- \ \Theta_{\pi(\phi_4,+)}
- \ \Theta_{\pi(\phi_7,+-)}  
\\
\Theta^{G_1}_{\psi_2} &=& 
+ \ \Theta_{\pi(\phi_2,-)}
- \ \Theta_{\pi(\phi_7,+-)} 
\\
\Theta^{G_1}_{\psi_4} &=& 
+ \ \Theta_{\pi(\phi_4,-)}
+ \ \Theta_{\pi(\phi_7,+-)} 
\end{array}
\]
and
\[
\begin{array}{rcl}
\Theta^{G_1}_{\psi_7} &=& 
+ \ \Theta_{\pi(\phi_7,-+)}
+ \ \Theta_{\pi(\phi_7,+-)} 
\\
\Theta^{G_1}_{\psi_6} &=& 
+ \ \Theta_{\pi(\phi_6,-)}
- \ \Theta_{\pi(\phi_7,+-)} 
\\
\Theta^{G_1}_{\psi_5} &=& 
- \ \Theta_{\pi(\phi_7,-+)}
- \ \Theta_{\pi(\phi_7,+-)} 
\end{array}
\]

The characters ${\langle \, \cdot\, ,\pi\rangle}_{\psi}$ of $A_\psi$ for these representations are also given in Table~\ref{table:transfer_coefficients-SO(7)}.
With this, we easily find the coefficients ${\langle s s_\psi,\pi\rangle}_{\psi}$ in $\Theta^{G_1}_{\psi,s}$, again using the notation $s=s(s_2,s_3)$ or $s=s(s_1,s_1)$ from Section~\ref{sssec:A-SO(7)}
%
\iffalse
%\begin{spacing}{1.0}
\[
\begin{array}{c||cccccc}
\pi & {\langle s s_{\psi_0} , \pi\rangle}_{\psi_0} & {\langle s s_{\psi_2} , \pi\rangle}_{\psi_2}   & {\langle s s_{\psi_4} , \pi\rangle}_{\psi_4}  & {\langle s s_{\psi_5} , \pi\rangle}_{\psi_5}  & {\langle s s_{\psi_6}, \pi\rangle}_{\psi_6}  & {\langle s s_{\psi_7} , \pi\rangle}_{\psi_7} \\
\hline\hline
\pi(\phi_2,-) 		& 0 & {s_3} & 0 & 0 & 0 & 0 \\
\pi(\phi_4,-) 		& -s_2 & 0 & s_1 & 0 & 0 & 0 \\
\pi(\phi_6,-) 		& 0 & 0 & 0 & 0 & s_2 & 0 \\
\pi(\phi_7,-+) 		& 0 & 0 & 0 & -s_1 & 0 & s_2 \\
\pi(\phi_7,+-) 		& -s_3 & -{s_2} & s_1 & -s_1 & -s_3 & s_3 
\end{array}
\]
%\end{spacing}
In other words,
%
\fi
from which we deduce
\[
\begin{array}{rcl}
\Theta^{G_1}_{\psi_0,s}  &=&  
- s_2 \Theta_{\pi(\phi_4,+)}
- s_3 \Theta_{\pi(\phi_7,+-)}  
\\
\Theta^{G_1}_{\psi_2,s} &=& 
+ s_3 \Theta_{\pi(\phi_2,-)}
- s_2 \Theta_{\pi(\phi_7,+-)} 
\\
\Theta^{G_1}_{\psi_4,s} &=& 
+ s_1 \Theta_{\pi(\phi_4,-)}
+ s_1 \Theta_{\pi(\phi_7,+-)} 
\end{array}
\]
and
\[
\begin{array}{rcl}
\Theta^{G_1}_{\psi_7,s} &=& 
+ s_2 \Theta_{\pi(\phi_7,-+)}
+ s_3 \Theta_{\pi(\phi_7,+-)} 
\\
\Theta^{G_1}_{\psi_6,s} &=& 
+ s_2 \Theta_{\pi(\phi_6,-)}
- s_3 \Theta_{\pi(\phi_7,+-)} 
\\
\Theta^{G_1}_{\psi_5,s} &=& 
- s_1 \Theta_{\pi(\phi_7,-+)}
- s_1 \Theta_{\pi(\phi_7,+-)} 
\end{array}
\]

The endoscopic group for $G$ attached to $s=s(1,-1)$ or $s=s(-1,1)$ is the group
$G' = \SO(5)\times \SO(3)$, in which case $\Theta^{G}_{\psi,s}$  is the endoscopic transfer of a stable distribution $\Theta^{G'}_{\psi'}$.
We write $\psi' = (\psi^{(2)},\psi^{(1)})$ where $\psi^{(1)}$ is an Arthur parameter for $\SO(3)$ and $\psi^{(2)}$ is an Arthur parameter for $\SO(5)$. 
The following table gives $\psi^{(1)}$ from Section~\ref{sssec:P-SO(3)} and $\psi^{(2)}$ from Section~\ref{sssec:P-SO(5)regular}, for each Arthur parameter $\psi$ appearing in Section~\ref{sssec:P-SO(7)} that factors through $\Lgroup{G}'$.
%\todo{Bin: I think parameter $\psi_4, \psi_5$ do not factor through through this endoscopic group} Thanks!
\[
\begin{array}{c || cc}
\text{\S\ref{sssec:P-SO(7)}} & \text{\S\ref{sssec:P-SO(5)regular}} & \text{\S\ref{sssec:P-SO(3)}}  \\
\psi & \psi^{(2)} & \psi^{(1)}  \\
\hline\hline
\psi_0 & \psi_0 & \psi_0 \\
\psi_2 & \psi_0 & \psi_1 \\
\psi_6 & \psi_3 & \psi_0 \\
\psi_7 & \psi_3 & \psi_1
\end{array}
\]

\subsection{Vanishing cycles of perverse sheaves}

\subsubsection{Vogan variety and its conormal bundle}\label{sssec:V-SO(7)}

The centralizer in $\dualgroup{G}$ of the infinitesimal parameter $\lambda : W_F \to \Lgroup{G}$ is
\[
H_\lambda \ceq %Z_{\dualgroup{G}}(\lambda)  =
\left\{
\begin{pmatrix} 
\begin{array}{c|cc|cc|c}
h_1 & & & & & \\  \hline
 & a_2 & b_2 & & &  \\
 & c_2 & d_2 & & &  \\ \hline
 & & & a_3 & b_3 &  \\
 & & & c_3 & d_3 &  \\ \hline
 & & & & & h_4 
\end{array}
\end{pmatrix}  \in \dualgroup{G}
\right\} 
\iso \GL(1) \times \GL(2)
\]
We will write $h_2 = (\begin{smallmatrix} a_2 & b_2 \\ c_2 & d_2 \end{smallmatrix})$ and $h_3 = (\begin{smallmatrix} a_3 & b_3 \\ c_3 & d_3 \end{smallmatrix})$.
Then $h_3  = h_2 \det h_2^{-1}$ and $h_4 = h_1^{-1}$, by the choice of symplectic form $J$ at the beginning of Section~\ref{sec:SO(7)}.
The Vogan varieties $V_\lambda$ and $V^*_\lambda$ are:
\[
\begin{array}{l l}
V_\lambda = 
\left\{
\begin{pmatrix} 
\begin{array}{c|cc|cc|c}
{} & u & v & & & \\ \hline
 & & & z & x &  \\
 & & & y & -z &  \\ \hline
 & & & & & -v \\
 & & & & & u \\ \hline
 & & & & & 
\end{array}
\end{pmatrix}
%\mid 
%u,v,x,y,z
\right\},
&
V^*_\lambda = 
\left\{
\begin{pmatrix} 
\begin{array}{c|cc|cc|c}
{} & & & & & \\ \hline
 u\tran & & & & &  \\
v\tran & & & & &  \\ \hline
 & z\tran & y\tran & & &  \\
 & x\tran  & -z\tran & & &  \\ \hline
 & & & -v\tran &  u\tran & 
\end{array}
\end{pmatrix} 
%\mid 
%u\tran,v\tran,x\tran,y\tran,z\tran
\right\}
\end{array}
\]
so
\[
T^*(V_\lambda) =
\left\{
\begin{pmatrix} 
\begin{array}{c|cc|cc|c}
{} & u & v & & & \\  \hline
 u\tran & & & z & x &  \\
v\tran & & & y & -z &  \\ \hline
 & z\tran & y\tran & & & -v \\
 & x\tran  & -z\tran & & & u \\ \hline
 & & & -v\tran &  u\tran & 
\end{array}
\end{pmatrix} 
\tq
\begin{array}{c}
{u, v, x, y, z }\\
{ u\tran, v\tran, x\tran, y\tran, z\tran }
\end{array}
\right\}
\subset \dualgroup{\g}
\]

The action of $H_\lambda$ on $V_\lambda$, $V_\lambda^*$ and $T^*(V_\lambda)$ is simply the restriction of the adjoint action of $H_\lambda\subset \dualgroup{G}$ on $T^*(V_\lambda)\subset \dualgroup{\g}$.
This action is given by
\[
\begin{array}{rcl}
h\cdot \begin{pmatrix} u & v \end{pmatrix}
&=&
h_1  \begin{pmatrix} u & v \end{pmatrix} h_2^{-1}\\
h \cdot \begin{pmatrix} z & x \\ y  & -z \end{pmatrix}
&=&
h_2 \begin{pmatrix} z & x \\ y  & -z \end{pmatrix} h_3^{-1}
\end{array}
\]
and
\[
\begin{array}{rcl}
h\cdot \begin{pmatrix} u\tran \\ v\tran \end{pmatrix}
&=&
h_2\ \begin{pmatrix} u\tran \\ v\tran \end{pmatrix} h_1^{-1},\\
h \cdot \begin{pmatrix} z\tran & y\tran \\ x\tran   & -z\tran \end{pmatrix}
&=&
h_3\begin{pmatrix} z\tran & y\tran \\ x\tran   & -z\tran \end{pmatrix} h_2^{-1}.
\end{array}
\]
We remark that for $\mu\in \CC$,
\[
\begin{pmatrix} u & v \end{pmatrix} \begin{pmatrix} z & x \\ y  & -z  \end{pmatrix}  = \mu \begin{pmatrix} u & v \end{pmatrix}
\]
if and only if
\[
h\cdot \begin{pmatrix} u & v \end{pmatrix} \ h\cdot\begin{pmatrix} z & x \\ y   & -z \end{pmatrix}  = (\mu \det h_2) \ h\cdot \begin{pmatrix} u & v \end{pmatrix} .
\] 

%
\iffalse
If we set
\[
\begin{array}{rcl c rcl}
w &\ceq& \begin{pmatrix} u & v \end{pmatrix}, &\qquad&
Z &\ceq& \begin{pmatrix} z & x \\ y  & -z \end{pmatrix}\\
w\tran   &\ceq& \begin{pmatrix}  u\tran \\ v\tran \end{pmatrix}, &\qquad&
Z\tran  &\ceq& \begin{pmatrix} z\tran & y\tran \\ x\tran   & -z\tran \end{pmatrix}
\end{array}
\]
then
\[ 
\begin{pmatrix} 
\begin{array}{c|c|c|c}
h_1 &  & & \\ \hline
 & h_2 & &  \\ \hline
 &  & h_3 &  \\ \hline
 & &  &  h_4
\end{array}
\end{pmatrix}
\ : \
\begin{pmatrix} 
\begin{array}{c|c|c|c}
 & w & & \\ \hline
w\tran   & & Z &  \\ \hline
 & Z\tran  & & \star \\ \hline
 & &  \star & 
\end{array}
\end{pmatrix}
\mapsto
\begin{pmatrix} 
\begin{array}{c|c|c|c}
 & w_0 & & \\ \hline
w\tran  _0 & &Z_0 &  \\ \hline
 & Z'_0 & & \star \\ \hline
 & &  \star & 
\end{array}
\end{pmatrix}
\]
where
\[
\begin{array}{rcl c rcl}
w_0 &=& h_1\ w\ h_2^{-1}, & \qquad &
Z_0 &=& h_2\ Z\ h_3^{-1}\\
w\tran_0 &=& h_2\ w\tran  \ h_1^{-1}, & \qquad &
Z\tran_0 &=& h_3\ Z\tran\ h_2^{-1}
\end{array}
\]
We remark that for $\mu\in \CC$,
\[
w Z  = \mu w
\qquad\iff\qquad w_0 Z_0 = \mu_0 w_0
\] 
with $\mu_0 \ceq \mu \det h_2$.
\fi
%

The $H_\lambda$-invariant function  $\KPair{\cdot}{\cdot} : T^*(V_{\lambda}) \to \mathbb{A}^1$
is the quadratic form
\[
\begin{pmatrix} 
\begin{array}{c|cc|cc|c}
{} & u & v & & & \\  \hline
 u\tran & & & z & x &  \\
v\tran & & & y & -z &  \\ \hline
 & z\tran & y\tran & & & -v \\
 & x\tran  & -z\tran & & & u \\ \hline
 & & & -v\tran &  u\tran & 
\end{array}
\end{pmatrix}
\mapsto
2 u  u\tran + 2 v v\tran + x x\tran  + y y\tran + 2z z\tran .
\]
The $H_\lambda$-invariant function $[{\cdot},{\cdot}] : T^*(V_{\lambda}) \to \mathfrak{h}_\lambda$ is given by
\[
\begin{array}{rcl}
\begin{pmatrix} 
\begin{array}{c|cc|cc|c}
{} & u & v & & & \\  \hline
 u\tran & & & z & x &  \\
v\tran & & & y & -z &  \\ \hline
 & z\tran & y\tran & & & -v \\
 & x\tran  & -z\tran & & & u \\ \hline
 & & & -v\tran &  u\tran & 
\end{array}
\end{pmatrix}
&\mapsto&
\begin{array}{l}
(u  u\tran + v v\tran) H_1
+ 
(x x\tran  + z z\tran) H_2\\
+
(y y\tran + z z\tran) H_3
 +
(z y\tran - x z\tran) E\\
 +
(y z\tran - z x\tran ) F
\end{array}
\end{array}
\]
where, $\{ H_1, H_2, H_3\}$ is the standard basis for the standard Cartan in $\dualgroup{\g}$ and, with reference to $H_\lambda \subset \dualgroup{G}$ and $\mathfrak{h}_\lambda \subset \dualgroup{\g}$, $\{H_1\}$, $\{H_2, H_3, E, F\}$ is the Chevalley basis for $gl(2)$ in $\sp(6)$.
\iffalse
\[
\begin{array}{ccc}
H_1 = 
\begin{pmatrix}
1 & 0 & 0 & 0 & 0 & 0 \\
0 & 0 & 0 & 0 & 0 & 0 \\
0 & 0 & 0 & 0 & 0 & 0 \\
0 & 0 & 0 & 0 & 0 & 0 \\
0 & 0 & 0 & 0 & 0 & 0 \\
0 & 0 & 0 & 0 & 0 & -1
\end{pmatrix}
&
H_2 = 
\begin{pmatrix}
0 & 0 & 0 & 0 & 0 & 0 \\
0 & 1 & 0 & 0 & 0 & 0 \\
0 & 0 & 0 & 0 & 0 & 0 \\
0 & 0 & 0 & 0 & 0 & 0 \\
0 & 0 & 0 & 0 & -1 & 0 \\
0 & 0 & 0 & 0 & 0 & 0
\end{pmatrix}
&
H_3 = 
\begin{pmatrix}
0 & 0 & 0 & 0 & 0 & 0 \\
0 & 0 & 0 & 0 & 0 & 0 \\
0 & 0 & 1 & 0 & 0 & 0 \\
0 & 0 & 0 & -1 & 0 & 0 \\
0 & 0 & 0 & 0 & 1 & 0 \\
0 & 0 & 0 & 0 & 0 & 0
\end{pmatrix}
\\
E= 
\begin{pmatrix}
0 & 0 & 0 & 0 & 0 & 0 \\
0 & 0 & 1 & 0 & 0 & 0 \\
0 & 0 & 0 & 0 & 0 & 0 \\
0 & 0 & 0 & 0 & 1 & 0 \\
0 & 0 & 0 & 0 & 0 & 0 \\
0 & 0 & 0 & 0 & 0 & 0
\end{pmatrix}
& 
F= 
\begin{pmatrix}
0 & 0 & 0 & 0 & 0 & 0 \\ 
0 & 0 & 0 & 0 & 0 & 0 \\
0 & 1 & 0 & 0 & 0 & 0 \\
0 & 0 & 0 & 0 & 0 & 0 \\
0 & 0 & 0 & 1 & 0 & 0 \\
0 & 0 & 0 & 0 & 0 & 0
\end{pmatrix} &
\end{array}
\]
\fi
Thus, the conormal bundle is
\[
T^*_{H_\lambda}(V_{\lambda})
= 
\left\{ 
\begin{pmatrix} 
\begin{array}{c|cc|cc|c}
{} & u & v & & & \\  \hline
 u\tran & & & z & x &  \\
v\tran & & & y & -z &  \\ \hline
 & z\tran & y\tran & & & -v \\
 & x\tran  & -z\tran & & & u \\ \hline
 & & & -v\tran &  u\tran & 
\end{array}
\end{pmatrix}
\tq \ 
\begin{array}{c}
{u  u\tran +  v v\tran = 0}\\
{x x\tran  + z z\tran = 0}\\
{y y\tran + z z\tran =0}\\
{z y\tran - x z\tran =0}\\
{y z\tran - z x\tran  =0}
\end{array} 
\right\} 
\]
Note that the fibre of $\KPair{\cdot}{\cdot} : V_{\lambda} \times V^*_\lambda \to \mathbb{A}^1$ above $0$ properly contains the conormal bundle $T^*_{H_\lambda}(V_{\lambda})$ as a codimension-$4$ subvariety.

Although it is possible to continue to work with $V_\lambda$ and $T^*(V_\lambda)$ as matrices in $\dualgroup{\g}$ and make all the following calculations, we now switch to the perspective on Vogan varieties discussed in Section~\ref{ssec:Vogan-overview}.
This new perspective has several advantages: it is notationally less awkward, it generalises to all classical groups after hyper-unramification in the sense of Theorem~\ref{theorem:unramification} and it helps clarify the proper covers which play a crucial role in the calculations of the vanishing cycles that we make later in this section.
Write $\dualgroup{\g} = \sp(E,J)$, so $E$ is a six-dimensional vector space equipped with the symplectic form described in Section~\ref{sssec:P-SO(7)}.
Let $E_1$ be the eigenspace of $\lambda(\Frob)$ with eigenvalue $q^{3/2}$; 
let $E_2$ be the eigenspace of $\lambda(\Frob)$ with eigenvalue $q^{1/2}$; 
let $E_3$ be the eigenspace of $\lambda(\Frob)$ with eigenvalue $q^{-1/2}$; 
let $E_4$ be the eigenspace of $\lambda(\Frob)$ with eigenvalue $q^{-3/2}$.
Then $\GL(E_4) \times \GL(E_3)\times \GL(E_2)\times \GL(E_1)$ acts naturally on the variety
$\Hom(E_3,E_4)\times \Hom(E_2,E_3)\times \Hom(E_1,E_2)$.
%by $(h_4,h_3,h_2,h_1) : (w_3,w_2,w_1) \mapsto (h_4 w_3 \,^th_3, h_3 w_2 \,^th_2, h_2 w_1 \,^th_1)$.
If we identify $E_3$ with the dual space $E_2^*$ and $E_4$ with $E_1^*$ then $V_{\lambda}$ may be identified with the subvariety of $(w_1,w_2,w_3)$ in $\Hom(E_1,E_2) \times \Hom(E_2,E_2^*)\times \Hom(E_2^*,E_1^*)$ such that $\,^tw_3 = w_1$ and $\,^tw_2 = w_2$, so
\[
\begin{array}{rcl}
V &\iso& \left\{ (w,X) \in \Hom(E_1,E_2) \times \Hom(E_2,E_2^*) \tq \begin{array}{c} \,^tX = X \end{array} \right\} \\
&\iso&   \Hom(E_1,E_2)\times {\rm Sym}^2(E_2^*) .
\end{array}
\]
The action of $H_\lambda$ on $V_{\lambda}$ now corresponds to the natural action of $\GL(E_1) \times \GL(E_2)$ on $\Hom(E_1,E_2) \times \Hom(E_2,E_2^*)$.
After choosing bases for $E_1$ and $E_2$, the conversion from the matrices in $\dualgroup{\g}$ to pairs $(w,X) \in \Hom(E_1,E_2) \times {\rm Sym}^2(E_2^*)$ is given by 
%\todo{Actually, $w$ should be just $\,^t(u\ v)$. I'm going to have to undo some changes .... DONE}
\[
w = \begin{pmatrix} u \\ v \end{pmatrix}
\qquad\text{and}\qquad
X \ceq \begin{pmatrix} -x & z \\ z  & y \end{pmatrix}
=  \begin{pmatrix} z & x \\ y  & -z \end{pmatrix} \begin{pmatrix} 0 & 1 \\ -1  & 0 \end{pmatrix}.
\]
We will use coordinates $(w,X)$ for $V_\lambda$ when convenient.
The same perspective gives coordinates $(w',X')$ for $V^*_\lambda$ where
\[
w\tran   = \begin{pmatrix}  u\tran & v\tran \end{pmatrix}
\quad\text{and}\quad
X\tran  \ceq \begin{pmatrix} -x\tran  & z\tran \\ z\tran  & y\tran \end{pmatrix} 
= \begin{pmatrix} 0 & -1 \\ 1  & 0 \end{pmatrix} \begin{pmatrix} z\tran & y\tran \\ x\tran  & -z\tran \end{pmatrix} .
\]
In these coordinates, the action of $H_\lambda$ on $V_{\lambda}$ is given by
\[
\begin{array}{rcl c rcl}
h\cdot w &=& \,^th_2^{-1} w \,^th_1  && h\cdot w\tran &=& h_1 w\tran h_2^{-1}\\
h\cdot X &=& h_2 X \,^th_2 && h\cdot X\tran &=& h_2 X\tran \,^th_2,
\end{array}
\]
the $H_\lambda$-invariant function $\KPair{\cdot}{\cdot} : T^*(V_{\lambda}) \to \mathbb{A}^1$ is given by
\[
((w,X)\, \vert\, (w\tran  ,X\tran  )) = w\tran   w + \trace X\tran  X,
\]
and the $H_\lambda$-invariant function $[{\cdot},{\cdot}] : T^*(V_{\lambda}) \to \mathfrak{h}_\lambda$ is given by
\[
[(w,X),(w\tran  ,X\tran  )] = (w\tran   w, X\tran  X).
\]
In particular, the conormal may be written as
\[
T^*(V_\lambda) \iso \{ ((w,X),(w\tran  ,X\tran  ))\in V\times V^* \tq w\tran   w =0, X\tran  X =0\}.
\]

\subsubsection{Equivariant local systems and orbit duality}

The variety $V_\lambda$ is stratified into $H_\lambda$-orbits according to the possible values of $\rank X$ (either $2$, $1$ or $0$), $\rank \,^tw$ (either $1$ or $0$) and $\rank \,^twX w$ (either $1$ or $0$).
There are eight compatible values for these ranks. 
We now describe these eight locally closed subvarieties $C\subset V_{\lambda}$, the singularities in the closure ${\bar C}\subset V_\lambda$ and the equivariant local systems on $C$.
For each $H_\lambda$-orbit $C\subset V_\lambda$ except the open orbit $C_7\subset V_{\lambda}$, the $H_\lambda$-equivariant fundamental group of $C$ is trivial or of order $2$. So in each of these cases we use the notation $\1_{C}$ for the constant local system and $\mathcal{L}_{C}$ or $\mathcal{F}_{C}$ for the non-constant irreducible equivariant local system on $C$. (The choice of $\mathcal{L}_{C}$ or $\mathcal{F}_{C}$ will be explained in Section~\ref{sssec:Ft-SO(7)}.)
\begin{enumerate}
\item[$C_0$:] 
Closed orbit: 
\[
C_0 = \{ 0 \}.
\] 
This corresponds to the minimal rank values
\[ 
\rank X=0,\qquad \rank \,^tw=0,\qquad \rank \,^twX w=0.
\]
This is the only closed orbit in $V_\lambda$.
\item[$C_1$:] Punctured plane:
\[
C_1 = \{ (w,X)\in {V_\lambda}\tq X=0, w\ne 0\}.
\]
This corresponds to the rank values
\[ 
\rank X=0,\qquad \rank \,^tw=1,\qquad \rank \,^twX w=0.
\]
While $C_1$ is not affine, its closure ${\bar C}_1 = \{  (w,X)\in {V_\lambda}\tq X=0 \}$ is $ \mathbb{A}^2$.
This orbit is not of Arthur type.
Since $A_{C_1}$ is trivial, $\1_{C_1}$ is the only simple equivariant local system on $C_1$.
\item[$C_2$:] 
Smooth cone:
\[
C_2 = \{ (w,X)\in {V_\lambda}\tq \rank X =1 , w= 0\}.
\]
This corresponds to the rank values
\[ 
\rank X=1,\qquad \rank \,^tw=0,\qquad \rank \,^twX w=0.
\]
Then $C_2$ is not an affine variety and the singular locus of 
its closure 
\[
{\bar C}_2 
%&=& \{  (w,X)\in {V_\lambda}\tq \rank X \leq 1, w=0 \} = C_2 \cup C_0\\
\iso \{ (x,y,z) \tq xy +z^2=0 \}
\]
is precisely $C_0$.
We remark that $xy+z^2$ is a semi-invariant of $V_\lambda$ with character $h\mapsto \det h_2^2$.
Now $A_{C_2} \iso \{\pm 1\}$; let $\mathcal{F}_{C_2}$ be the equivariant local system for the non-trivial character of $A_{C_2}$.
Then $\mathcal{F}_{C_2}$ coincides with the local system denoted by the same symbol in Section~\ref{sssec:EPS-SO(5)singular}.
\item[$C_3$:]
The rank values
\[ 
\rank X=2,\qquad \rank \,^tw=0,\qquad \rank \,^twX w=0.
\]
determine
\[
C_3 = \{ (w,X)\in {V_\lambda}\tq \rank X =2 , w= 0\} \iso \{ (x,y,z)\tq xy+z^2\ne 0\}.
\]
The closure of $C_3$ is smooth:
\[
{\bar C}_3 = \{  (w,X)\in {V_\lambda}\tq w=0 \} \iso \mathbb{A}^3.
\]
This orbit is not of Arthur type.
Since $A_{C_3}\iso \{ \pm 1\}$, there are two simple equivariant local systems on $C_3$, denoted by $\1_{C_3}$ and $\mathcal{L}_{C_3}$. 
Then $\mathcal{L}_{C_3}$ coincides with the local system denoted by the same symbol in Section~\ref{sssec:EPS-SO(5)singular}.
\item[$C_4$:] 
The rank values
\[ 
\rank X=1,\qquad \rank \,^tw=1,\qquad \rank \,^twX w=0
\]
determine
\[
C_4 = \{ (w,X)\in {V_\lambda}\tq \rank X =1,  w\ne 0, X w = 0\}.
\]
The singular locus of the closure 
\[
{\bar C}_4 
%&=& \{  (w,X)\in {V_\lambda}\tq \rank X \leq 1, X w=0 \}\\
\iso \{ (u,v,x,y,z) \tq xy+z^2=0,\  -xu + zv =0 = zu+yv\}
\]
is $C_0$.
Here, $A_{C_4} \iso \{ \pm 1\}$.
Let $\1_{C_4}$ and $\mathcal{F}_{C_4}$ be the local systems for the trivial and non-trivial characters, respectively, of $A_{C_4}$.
\item[$C_5$:] 
The rank values
\[ 
\rank X=2,\qquad \rank \,^tw=1,\qquad \rank \,^twX w=0
\]
determine
\[
C_5 = \{ (w,X)\in {V_\lambda}\tq \rank X =2,  w\ne 0,  \,^tw X w = 0\}.
\]
The closure of $C_5$,
\[
{\bar C}_5 
%&=& \{  (w,X)\in {V_\lambda}\tq \,^tw X w = 0 \}\\
\iso \{ (u,v,x,y,z) \tq -u^2x+2uvz+v^2y =0 \},
\]
has singular locus ${\bar C}_3$.
We remark that $-u^2x+2uvz+v^2y$ is a semi-invariant of $V_\lambda$ with character $h \mapsto h_1^2$.
The group $A_{C_5}$ is trivial.
\item[$C_6$:] 
The rank values
\[ 
\rank X=1,\qquad \rank \,^tw=1,\qquad \rank \,^twX w=1
\]
determine
\[
C_6 = \{ (w,X)\in {V_\lambda}\tq \rank X = 1 ,  w\ne 0,  \,^tw X w \ne 0\}.
\]
The singular locus of
\begin{eqnarray*}
{\bar C}_6 
%&=& \{  (w,X)\in {V_\lambda}\tq \rank X \leq 1  \}\\
&\iso& \{ (u,v,x,y,z) \tq xy+z^2=0 \}
\end{eqnarray*}
is ${\bar C}_1$. 
Then $A_{C_6} \iso \{ \pm 1\}$.
Let $\1_{C_6}$ and $\mathcal{F}_{C_6}$ be the local systems for the trivial and non-trivial characters, respectively, of $A_{C_6}$.
The local system $\mathcal{F}_{C_6}$ is associated to the double cover from adjoining $d^2=-u^2x+2uvz+v^2y$, which is isomorphic to the pullback of the double cover from $\mathcal{F}_{C_2}$.
\item[$C_7$:] 
Open dense orbit:
\[
C_7 = \{ (w,X)\in {V_\lambda}\tq \rank X = 2 ,  w\ne 0,  \,^tw X w \ne 0\}.
\]
This corresponds to the maximal rank values:
\[ 
\rank X=2,\qquad \rank \,^tw=1,\qquad \rank \,^twX w=1.
\]
Now, $A_{C_7} = S[2] \iso \{ \pm1\}\times\{\pm 1\}$.
Let $\1_{C_7}$ be the local system for the trivial character $(++)$ of $A_{C_7}$;
let $\mathcal{L}_{C_7}$ be the local system for the character $(--)$ of $A_{C_7}$;
let $\mathcal{F}_{C_7}$ be the local system for the character $(-+)$ of $A_{C_7}$;
let $\mathcal{E}_{C_7}$ be the local system for the character $(+-)$ of $A_{C_7}$.
Equivalently, $\mathcal{L}_{C_7}$ is the local system on $C_7$ associated to the double cover $d^2=xy+z^2$, $\mathcal{F}_{C_7}$ is the local system associated to the double cover $d^2=-u^2x+2uvz+v^2y$, and $\mathcal{E}_{C_7}$ is the local system associated to the double cover $d^2 = (xy+z^2)(-u^2x+2uvz+v^2y)$. 
%\todo{Andrew, is this correct?}
\end{enumerate}
Dimensions, closure relations for these eight orbits in $V_\lambda$, and their dual orbits in $V_\lambda^*$, are given as follows:
%\begin{table}[ht]
%\caption{Transposition, orbit duality and closure relations for strata in $V_{\lambda}$.
%Recall the notation $\widehat{C}_i \ceq \transpose{C}\orbdual_i$.}
%\label{table:closurerelations}
%\[
%\begin{tikzcd}[column sep=6]
%{} & C_7 = \widehat{C}_0 &  & \dim = 5 & & C^t_7 = C^*_0 & \\
%C_5 = \widehat{C}_4 \arrow{ur} && \arrow{ul} C_6 = \widehat{C}_2  & \dim = 4 & \arrow{ur} C^t_6 = C^*_2 && C^t_5 = C^*_4 \arrow{ul} \\
%{C_3 = \widehat{C}_1} \arrow{u} && \arrow{ull} \arrow{u} C_4 = \widehat{C}_5   & \dim = 3 & \arrow{u} C^t_4 = C^*_5 \arrow{urr} && { C^t_3 = C^*_1} \arrow{u} \\
%C_2 = \widehat{C}_6 \arrow{u} \arrow{urr} &&   { C_1 = \widehat{C}_3 }  \arrow{u}  & \dim = 2 & { C^t_1 = C^*_3} \arrow{u} && \arrow{ull} C^t_2 = C^*_6 \arrow{u} \\
%{} & \arrow{ul} C_{0} = \widehat{C}_7 \arrow{ur} & & \dim = 0 & & \arrow{ul} C^t_0 = C^*_7 \arrow{ur} & 
%\end{tikzcd}
%\]
%\end{table}
\[
\begin{tikzcd}[column sep=10]
{} & C_7 = \widehat{C}_0 &  & 5 \\
C_5 = \widehat{C}_4 \arrow{ur} && \arrow{ul} C_6 = \widehat{C}_2  & 4 \\
{C_3 = \widehat{C}_1} \arrow{u} && \arrow{ull} \arrow{u} C_4 = \widehat{C}_5   & 3 \\
C_2 = \widehat{C}_6 \arrow{u} \arrow{urr} &&   { C_1 = \widehat{C}_3 }  \arrow{u}  & 2 \\
{} & \arrow{ul} C_{0} = \widehat{C}_7 \arrow{ur} & & 0 
\end{tikzcd}
\]
From this table one can find the eccentricities, as defined in Section~\ref{ssec:rank1}, of these strata:
\[
\begin{array}{rcl}
e_{C_0} &=& \dim C_0 + \dim C_7 - \dim V_\lambda = 0 + 5 - 5 =0\\
e_{C_1} &=& \dim C_2 + \dim C_3 - \dim V_\lambda = 2 + 3 - 5 =0\\
e_{C_2} &=& \dim C_2 + \dim C_6 - \dim V_\lambda = 2 + 4 - 5 =1\\
e_{C_3} &=& \dim C_3 + \dim C_2 - \dim V_\lambda = 3 + 2 - 5 =0\\
e_{C_4} &=& \dim C_4 + \dim C_5 - \dim V_\lambda = 3 + 4 - 5 =2\\
e_{C_5} &=& \dim C_5 + \dim C_4 - \dim V_\lambda = 4 + 3 - 5 =2\\
e_{C_6} &=& \dim C_6 + \dim C_2 - \dim V_\lambda = 4 + 2 - 5 =1\\
e_{C_7} &=& \dim C_7 + \dim C_0 - \dim V_\lambda = 5 + 0 - 5 =0
\end{array}
\]

%
\iffalse
\[
\begin{array}{rcl c rcl c rcl }
\mathcal{S}(\1_{C_0}) &=& {j_{C_0}}_!\ \1_{C_0}[0] && && && &&\\
\mathcal{S}(\1_{C_1}) &=& {j_{C_1}}_!\ \1_{C_1}[2] && && && &&\\
\mathcal{S}(\1_{C_2}) &=& {j_{C_2}}_!\ \1_{C_2}[2] && 
			\mathcal{S}(\mathcal{F}_{C_2}) &=& {j_{C_2}}_!\ \mathcal{F}_{C_2}[2] && && \\
\mathcal{S}(\1_{C_3}) &=& {j_{C_3}}_!\ \1_{C_2}[3] && && && && \\
\mathcal{S}(\mathcal{L}_{C_3}) &=& {j_{C_3}}_!\ \mathcal{L}_{C_3}[2] && && && &&\\
\mathcal{S}(\1_{C_4}) &=& {j_{C_4}}_!\ \1_{C_2}[3] && 
			\mathcal{S}(\mathcal{F}_{C_4}) &=& {j_{C_4}}_!\ \mathcal{F}_{C_4}[3] && && \\
\mathcal{S}(\1_{C_5}) &=& {j_{C_5}}_!\ \1_{C_5}[4] && && && && \\
\mathcal{S}(\1_{C_6}) &=& {j_{C_6}}_!\ \1_{C_6}[4] && 
			\mathcal{S}(\mathcal{F}_{C_6}) &=& {j_{C_6}}_!\ \mathcal{F}_{C_6}[4] && && \\
\mathcal{S}(\1_{C_7}) &=& {j_{C_7}}_!\ \1_{C_7}[5] && \mathcal{S}(\mathcal{F}_{C_7}) &=& {j_{C_7}}_!\ \mathcal{F}_{C_7}[5] && \\
\mathcal{S}(\mathcal{L}_{C_7}) &=& {j_{C_7}}_!\ \mathcal{L}_{C_7}[5] && 
	 	\mathcal{S}(\mathcal{E}_{C_7}) &=& {j_{C_7}}_!\ \mathcal{E}_{C_7}[5] &&
\end{array}
\]
\fi
%

\subsubsection{Equivariant perverse sheaves}\label{sssec:EPS-SO(7)}

Table~\ref{table:EPS-SO(7)} shows the results of calculating $\mathcal{P}\vert_{C}$ for every simple equivariant perverse sheaf $\IC(C,\mathcal{L})$ and every stratum $C$ in $V_\lambda$. 
Using this, Table~\ref{table:mgeo-SO(7)} gives the normalized geometric multiplicity matrix, $m'_\text{geo}$.
Notice that $m'_\text{geo}$ decomposes into block matrices of size $10\times 10$, $4\times 4$ and $1\times 1$.

\begin{table}[htp]
\caption{Standard sheaves and perverse sheaves in $\Perv_{H_\lambda}(V_\lambda)$}
\label{table:EPS-SO(7)}
\begin{spacing}{1.3}
\resizebox{1\textwidth}{!}{%
$
\begin{array}{| c || c | c | c  | c | c | c | c | c |}
\hline
 \mathcal{P}    &  \mathcal{P}\vert_{C_0}   &   \mathcal{P}\vert_{C_1}   &   \mathcal{P}\vert_{C_2}   &   \mathcal{P}\vert_{C_3}   &   \mathcal{P}\vert_{C_4}  &  \mathcal{P}\vert_{C_5}   &  \mathcal{P}\vert_{C_6}  &  \mathcal{P}\vert_{C_7}  \\
 \hline\hline
%     \hline
      \IC(\1_{C_0})        
     &  \1_{C_0}[0]  &  0 &  0 & 0 & 0 & 0 & 0 &  0\\
%     \hline
      \IC(\1_{C_1})         
     &  \1_{C_0}[2]  &  \1_{C_1}[2]  & 0 & 0 & 0  &  0 & 0 & 0  \\
%     \hline
      \IC(\1_{C_2})     
     &  \1_{C_0}[2]  & 0 & { \1_{C_2}[2] }  & 0 & 0 & 0  & 0 & 0 \\
%     \hline
      \IC(\1_{C_3})       
     &  \1_{C_0}[3] & 0 &  \1_{C_2}[3]  &  { \1_{C_3}[3] }  & 0 & 0  & 0 &  0 \\
%     \hline
      \IC(\mathcal{L}_{C_3})      
     &  \1_{C_0}[1]  & 0 & 0 & { \mathcal{L}_{C_3}[3]} &  0 & 0  & 0 &  0 \\
%     \hline
      \IC(\1_{C_4})      
     &  \1_{C_0}[1] \oplus \1_{C_0}[3]  &  \1_{C_1}[3]    &  \1_{C_2}[3]  & 0 &   { \1_{C_4}[3] } & 0 & 0 & 0  \\
%     \hline
      \IC(\1_{C_5})        
     &  \1_{C_0}[2] \oplus \1_{C_0}[4]  &   \1_{C_1}[4]   &   \1_{C_2}[4]  &  \1_{C_3}[4]  \oplus \mathcal{L}_{C_3}[4]   &  \1_{C_4}[4]  &   \1_{C_5}[4]   & 0 & 0  \\ 
%     \hline
      \IC(\1_{C_6})          
     &  \1_{C_0}[4]  &   \1_{C_1}[4]  &  \1_{C_2}[4]  & 0 &   \1_{C_4}[4]  & 0 &  {\1_{C_6}[4] }  &  0 \\
%     \hline
      \IC(\1_{C_7})           
     &  \1_{C_0}[5]  &   \1_{C_1}[5]  &   \1_{C_2}[5]  &  { \1_{C_3}[5] }  &  \1_{C_4}[5]  &  \1_{C_5}[5]  &  \1_{C_6}[5]  &  { \1_{C_7}[5] }  \\ 
%     \hline
      \IC(\mathcal{L}_{C_7})          
     &  \1_{C_0}[3]  & \1_{C_1}[3] & 0  &  \mathcal{L}_{C_3}[5]  & 0 &  {\1_{C_5}[5]}  & 0 & {\mathcal{L}_{C_7}[5] }   \\
 \hline
   \IC(\mathcal{F}_{C_2})   
   & 0 &  0 & { \mathcal{F}_{C_2}[2]} & 0 & 0  & 0 & 0 & 0 \\
   \IC(\mathcal{F}_{C_4})   
   & 0 &  0 & { \mathcal{F}_{C_2}[3]} & 0 & { \mathcal{F}_{C_4}[3]} & 0 & 0 & 0  \\    
   \IC(\mathcal{F}_{C_6})     
   &  0 & 0 & \mathcal{F}_{C_2}[4]  & 0 & \mathcal{F}_{C_4}[4]  & 0 & {\mathcal{F}_{C_6}[4] } & 0   \\
    \IC(\mathcal{F}_{C_{7}})       
    & 0 & 0 &  \mathcal{F}_{C_2}[5]  & 0  & 0 & 0 &   \mathcal{F}_{C_6}[5] & {\mathcal{F}_{C_{7}}[5] }  \\
     \hline
     \IC(\mathcal{E}_{C_7})       
     & 0 & 0 & 0 & 0 & 0 & 0 & 0 &  {\mathcal{E}_{C_7}[5] }  \\
\hline
\end{array}
$
}
\end{spacing}
\end{table}

We now give a few explicit examples of the technique, sketched in Section~\ref{sssec:EPS-overview}, which we used to find the local systems appearing in Table~\ref{table:EPS-SO(7)}.
\begin{enumerate}
\labitem{(a)}{labitem:IC-SO(7)-a}
The calculations from Section~\ref{sssec:EPS-SO(5)singular} show how to find rows 1--5 and row 11 so here we begin with row 6.
\labitem{(b)}{labitem:IC-SO(7)-b}
To compute $\IC(\1_{C_4})\vert_{C}$ for every $H_\lambda$-orbit $C\subset V_{\lambda}$, observe that
\[ 
\overline{C}_4 = \left\{ (w,X) \in V_\lambda \;\mid\; ^twX = 0,\ \det(X) = 0 \right\}. 
\]
Note that $^twX = 0$ implies $\det(X) = 0$ provided $w\neq 0$.
This variety is singular precisely when $w$ and $X$ are both zero; in other words, $C_0$ is the singular locus of $\overline{C}_4$, as we remarked in Section~\ref{sssec:V-SO(7)}.
The blowup of $\overline{C}_4$ at the origin is:
\[ 
\widetilde{C}_4^{(1)}
\ceq 
\left\{ 
((w,X), [a:b]) \in V_\lambda \times \mathbb{P}^1 
\;\mid\; 
\begin{array}{cc}
\begin{pmatrix} -b & a \end{pmatrix} w = 0,\; & \begin{pmatrix} a & b \end{pmatrix}X = 0\\
 ^twX = 0,\; & \det X = 0 
\end{array}
\right\}.
\]
Let $\pi^{(1)} : \widetilde{C}_4^{(1)} \to \overline{C}_4$ be the obvious projection.
%\begin{remark}
In the definition of $\widetilde{C}_4^{(1)}$, the first two equations imply the second two; this observation greatly simplifies checking the following claims.
%\end{remark}
The cover $\pi^{(1)} : \widetilde{C}_4^{(1)} \to \overline{C}_4$ is proper and the variety $\widetilde{C_4}$ is smooth.
Moreover, the fibres of $\pi^{(1)}$ have the following structure:
\begin{itemize}
\item above $C_4$, $C_2$ and $C_1$, $\pi^{(1)}$ is an isomorphism;
\item the fibre of $\pi^{(1)}$ above $C_0$ is $\mathbb{P}^1$.
\end{itemize}
It follows that $\pi^{(1)}$ is semi-small.
By the decomposition theorem,
\[ 
\pi^{(1)}_{!}(\1_{\widetilde{C}_4^{(1)}}[3]) = \IC(\1_{{C}_4}). 
\]
By proper base change, 
\[ 
\begin{array}{rcl cc rcl}
 \IC(\1_{{C}_4})\vert_{C_4} &=& \1_{{C}_4}[3]
 	&& & \IC(\1_{{C}_4})\vert_{C_2} &=& \1_{{C}_2}[3]  \\
\IC(\1_{{C}_4})\vert_{C_1} &=& \1_{{C}_1}[3] 
	&& &  \IC(\1_{{C}_4})\vert_{C_0} &=& \1_{{C}_0}[1] \oplus \1_{{C}_0}[3], 
\end{array} 
\]
and $\IC(\1_{{C}_4})\vert_{C} =0$ for all other strata $C$.
\labitem{(c)}{labitem:IC-SO(7)-c}
Next, we show how to compute $\IC(\mathcal{F}_{C_4})$.
The singular variety $\overline{C}_4$ also admits a finite double cover:
\[ 
\widetilde{C}_4^{(2)}
\ceq 
\left\{  
((w,X),(\alpha,\beta)) \in V_\lambda \times \mathbb{A}^2 
\;\mid\; 
\begin{array}{cc} 
X = \begin{pmatrix} \alpha \\ \beta \end{pmatrix}\begin{pmatrix} \alpha & \beta \end{pmatrix}, 
	& \begin{pmatrix} \alpha & \beta \end{pmatrix} w = 0\\ 
 \,^twX = 0, 
 	& \det X = 0  
\end{array} 
\right\}.
\]
%\begin{remark}
Again, the first two equations imply the second two.
%\end{remark}
This variety is singular precisely when $w$, $X$, and $(\alpha,\beta)$ are all zero.
Consider the pullback:
\[
\begin{tikzcd}
{} & \arrow{dl} \widetilde{C}_4^{(3)} \arrow{dd}{\pi^{(3)}} \arrow{dr} & \\
\widetilde{C}_4^{(1)} \arrow{dr}{\pi^{(1)}} && \arrow{dl}{\pi^{(2)}} \widetilde{C}_4^{(2)}\\
& \overline{C}_4. & 
\end{tikzcd}
\]
%
\iffalse
Then
\[  
\widetilde{C}_4^{(3)} =
\left\{  
((w,X),(\alpha,\beta),[a:b]) \in V_\lambda \times \mathbb{A}^2 \times \mathbb{P}^1 \;\mid\; 
\begin{array}{cc} 
\begin{pmatrix} a & b \end{pmatrix}\begin{pmatrix} \alpha \\ \beta \end{pmatrix} = 0,
	& \begin{pmatrix} a & b \end{pmatrix}X = 0\\
\begin{pmatrix} -b & a \end{pmatrix}w = 0,
	& X = \begin{pmatrix} \alpha \\ \beta \end{pmatrix}\begin{pmatrix} \alpha & \beta \end{pmatrix}\\
\begin{pmatrix} -\beta & \alpha \end{pmatrix} w = 0,
	& \,^twX = 0,\ \det(X) = 0
\end{array}
\right\}.
\]
%\begin{remark}
The first three equations imply the others.
\fi
%
%\end{remark}
Then $\widetilde{C}_4^{(3)}$ is smooth and the projections onto $\widetilde{C}_4^{(2)}$,  $\widetilde{C}_4^{(1)}$ and $ \overline{C}_4$ are all proper.  
The fibres of $\pi^{(3)} : \widetilde{C}_4^{(3)} \to \overline{C}_4$ have the following structure:
\begin{itemize}
\item the fibre of $\pi^{(3)}$ over $C_4$ is the non-split double cover of $C_4$;
\item the fibre of $\pi^{(3)}$ over $C_2$ is the non-split double cover of $C_2$;
\item the fibre of $\pi^{(3)}$ over $C_1$ is isomorphic to $C_1$;
\item the fibre of $\pi^{(3)}$ over $C_0$ is $\mathbb{P}^1$.
\end{itemize}
It follows that $\pi^{(3)}$ is semi-small and, by the Decomposition Theorem, that:
\[ 
\pi^{(3)}_!(\1_{\widetilde{C}_4^{(3)}}[3]) = \IC(\1_{{C}_4})  \oplus \IC(\mathcal{F}_{C_4}). 
\]
It now follows that:
\[ 
\begin{array}{rcl cc rcl} 
       \IC(\mathcal{F}_{{C}_4})\vert_{C_4} &=& \mathcal{F}_{{C}_4}[3] 
		&&& \IC(\mathcal{F}_{{C}_4})\vert_{C_2} &=& \mathcal{F}_{{C}_2}[3]  \\
\IC(\mathcal{F}_{{C}_4})\vert_{C_1} &=& 0 
		&&& \IC(\mathcal{F}_{{C}_4})\vert_{C_0} &=& 0.  
\end{array} 
\]
\end{enumerate}

%\subsubsection{Other Covers For the $\SO(7)$ Example}

We simply list the other covers needed to calculate $\mathcal{P}\vert_{C}$ in all other cases except $\mathcal{P} = \IC(\mathcal{E}_7)$ following the procedure illustrated above in the cases $\mathcal{P} = \IC(\1_{C_4})$ and $\mathcal{P}= \IC(\mathcal{F}_{C_4})$.
\begin{align*}
\widetilde{C}_5 &= \left\{ ((w,X),[a:b]) \in {\bar C}_5 \times \mathbb{P}^1 \mid \begin{pmatrix} a & b \end{pmatrix}X \begin{pmatrix} a \\ b \end{pmatrix} = 0,\; \begin{pmatrix} -b & a \end{pmatrix}w = 0 \right\}\\
\\
\widetilde{C}_6^{(1)} &= \left\{ ((w,X),[a:b]) \in {\bar C}_6 \times \mathbb{P}^1 \mid \begin{pmatrix} a & b \end{pmatrix}X = 0 \right\}\\
\\
\widetilde{C}_6^{(2)} &= \left\{ ((w,X),(\alpha,\beta))\in {\bar C}_6 \times\mathbb{A}^2 \mid \ X = \begin{pmatrix} \alpha \\ \beta \end{pmatrix}\begin{pmatrix} \alpha & \beta \end{pmatrix}  \right\}\\
\\
\widetilde{C}_7^{(1)} &= \left\{ (w,X,[a:b]) \in V_\lambda\times \mathbb{P}^1 \mid \begin{pmatrix} a & b \end{pmatrix}X\begin{pmatrix} a \\ b \end{pmatrix}  = 0 \right\}\\
\\
\widetilde{C}_7^{(2)} &= \left\{ ((w,X),[a:b:r]) \in V_\lambda\times \mathbb{P}^2\mid \begin{pmatrix} a & b \end{pmatrix}X\begin{pmatrix} a \\ b \end{pmatrix}  = r^2,\; \begin{pmatrix} -b & a \end{pmatrix}w = 0 \right\}
\end{align*}
Finally there is the most complex example: the smooth cover $\widetilde{V}_\lambda$ of $\overline{C_7} = V_\lambda$ needed to understand $\IC(\mathcal{E}_7)$.
The construction of the smooth cover $\widetilde{V}_\lambda$ of $V_\lambda$ proceeds by first adjoining a square root of
\[ 
(-u^2x+2uvz+v^2y)(xy+z^2).
\]
This results in a variety which is singular on $\overline{C}_4$.
After blowing up along $\overline{C}_4$ the result will still be singular along $\overline{C_3}$,
so a further blow up along $\overline{C_3}$ is needed.
The following steps construct $\widetilde{V}$ in detail. 
%\todo{Should we cut out the next page?}
\begin{itemize}
\item[(i)] Let $\widetilde{V}_\lambda^{(1)}$ be the blow up $V_\lambda$ along $\overline{C}_4$
This equivalent to adding coordinates $[a:b] \in \mathbb{P}^1$ and the condition
\[  \begin{pmatrix} a & b \end{pmatrix}Xw = 0, \]
because the two equations $Xw=0$ define $\overline{C}_4$.
\item[(ii)] Let $\widetilde{V}_\lambda^{(2)}$ be the blow up of $\widetilde{C}_7^{(1)}$ along $\overline{C}_3$.
For this one must add coordinates $[c:d] \in \mathbb{P}^1$ with the condition
\[ \begin{pmatrix} -d & c \end{pmatrix}w = 0, \]
because the equation $w=0$ defines $\overline{C}_3$.
The additional equation necessary to define the blow up is
\[  \begin{pmatrix} a & b \end{pmatrix}X\begin{pmatrix} c \\ d \end{pmatrix} = 0 . \]
\item[(iii)] 
Next, we replace $[a:b]$ with $[a:b:r]$ and add the equation
\[  \begin{pmatrix} a & b \end{pmatrix}X \begin{pmatrix} a \\ b \end{pmatrix} = r^2. \]
The resulting variety, $\widetilde{V}_\lambda^{(3)}$ has coordinates:
\[ (w,X,[a:b:r],[c:d]) \]
together with all the above equations. 
Then $\widetilde{V}_\lambda^{(3)}$ is a double cover of $\widetilde{V}_\lambda^{(2)}$ and is singular precisely when
\[ X \begin{pmatrix} a \\ b \end{pmatrix} = 0 \qquad \text{and}\qquad [a:b] = [c:d]. \]
%(noting that this implies many other relations)
\item[(iv)] 
We now form the blowup $\widetilde{V}_\lambda$ of $\widetilde{V}_\lambda^{(3)}$ along the singular locus. 
In order to have homogeneous equations we write our relations in the form
\[ X \begin{pmatrix} a \\ b \end{pmatrix}  \begin{pmatrix} c & d \end{pmatrix} = 0 \qquad \begin{pmatrix} a & b \end{pmatrix}  \begin{pmatrix} d \\ -c \end{pmatrix} = 0.\]
Then $\widetilde{V}$ is formed by introducing coordinates $[Y:y]$, where $Y$ is a $2$ by $2$ matrix, and the conditions
\[  X \begin{pmatrix} a \\ b \end{pmatrix}  \begin{pmatrix} c & d \end{pmatrix} y = Y\begin{pmatrix} a & b \end{pmatrix}  \begin{pmatrix} d \\ -c \end{pmatrix}
\]
and
\[  \begin{pmatrix} c \\ d \end{pmatrix}Y = 0 \qquad \trace(Y) = 0 . \]
Note that $[c:d]$ determines $Y$ up to rescaling. 
\end{itemize}

%%%%%%%%%%%%%%%%%%%%%%%%%%%%%%%%%%%%%%%%%%%%%
%%%%%%%%%%%%%%%%%%%%%%%%%%%%%%%%%%%%%%%%%%%%%
%%%%%%%%%%%%%%%%%%%%%%%%%%%%%%%%%%%%%%%%%%%%%

\subsubsection{Cuspidal support decomposition and Fourier transform}\label{sssec:Ft-SO(7)}

Up to conjugation,  $\dualgroup{G} = \Sp(6)$ admits three cuspidal Levi subgroups:  $\dualgroup{G} = \Sp(6)$ itself, the group $\dualgroup{M} = \Sp(2)\times \GL(1)\times \GL(1)$ and the torus $\dualgroup{T} = \GL(1)\times \GL(1) \times \GL(1)$.
Simple objects in these three subcategories are listed below. 
This decomposition is responsible for the choice of symbols $\mathcal{L}$, $\mathcal{F}$ and $\mathcal{E}$ made in Section~\ref{sssec:EPS-SO(7)}.
%\begin{spacing}{1.0}
\[
\begin{array}{cc|| c|| c}
& \hskip-1cm \Perv_{H_\lambda}(V_\lambda)_{\dual{T}} & \Perv_{H_\lambda}(V_\lambda)_{\dual{M}} & \Perv_{H_\lambda}(V_\lambda)_{\dual{G}}  \\ 
\hline\hline
\IC(\1_{C_0}) && & \\
\IC(\1_{C_1}) && & \\
\IC(\1_{C_2}) &&\IC(\mathcal{F}_{C_2})  & \\
\IC(\1_{C_3}) & \IC(\mathcal{L}_{C_3}) & & \\
\IC(\1_{C_4}) && \IC(\mathcal{F}_{C_4}) & \\
\IC(\1_{C_5}) && & \\
\IC(\1_{C_6}) && \IC(\mathcal{F}_{C_6}) & \\
\IC(\1_{C_7}) & \IC(\mathcal{L}_{C_7}) & \IC(\mathcal{F}_{C_7}) & \IC(\mathcal{E}_{C_7}) 
\end{array} 
\]
%\end{spacing}

The Fourier transform respects the cuspidal support decomposition; see Table~\ref{table:fourier-SO(7)}.

\begin{table}[H]
\caption{Fourier transform}
\label{table:fourier-SO(7)}
\begin{spacing}{1.2}
\begin{center}
$
\begin{array}{|  ccc | }
\hline
\Perv_{H_\lambda}(V_{\lambda}) &\mathop{\longrightarrow}\limits^{\Ft} &  \Perv_{H_\lambda}(V^*_\lambda)   \\
\hline\hline
 \IC(\1_{C_0}) &\mapsto& 
	\IC(\1_{C\orbdual_{0}}) \\%&\mapsto& \IC(\1_{C_7})\\
 \IC(\1_{C_1}) &\mapsto&  
	\IC(\1_{C\orbdual_{1}}) \\%&\mapsto&  \IC(\1_{C_3}) \\
 \IC(\1_{C_2}) &\mapsto&  
	\IC(\mathcal{L}_{C\orbdual_{0}}) \\%&\mapsto& \IC(\mathcal{L}_{C_7}) \\
 \IC(\1_{C_3}) &\mapsto&  
	\IC(\1_{C\orbdual_{3}}) \\%&\mapsto& \IC(\1_{C_1})  \\
 \IC(\mathcal{L}_{C_3}) &\mapsto& 
	\IC(\1_{C\orbdual_{2}}) \\%&\mapsto& \IC(\1_{C_6})\\
 \IC(\1_{C_4}) &\mapsto& 
	\IC(\1_{C\orbdual_{4}}) \\%&\mapsto&  \IC(\1_{C_5})\\
 \IC(\1_{C_5}) &\mapsto&  
	\IC(\1_{C\orbdual_{5}}) \\%&\mapsto&  \IC(\1_{C_4}) \\
 \IC(\1_{C_6}) &\mapsto&  
	\IC(\mathcal{L}_{C\orbdual_{1}}) \\%&\mapsto& \IC(\mathcal{L}_{C_3}) \\
 \IC(\1_{C_7}) &\mapsto&  
	\IC(\1_{C\orbdual_{7}}) \\%&\mapsto& \IC(\1_{C_0})  \\
 \IC(\mathcal{L}_{C_7}) &\mapsto&  
	\IC(\1_{C\orbdual_{6}})  \\%&\mapsto&  \IC(\1_{C_2})  \\
\hline
 \IC(\mathcal{F}_{C_2}) &\mapsto& 
	\IC(\mathcal{F}_{C\orbdual_{2}}) \\%&\mapsto& \IC(\mathcal{F}_{C_6})\\
 \IC(\mathcal{F}_{C_4}) &\mapsto& 
	\IC(\mathcal{F}_{C\orbdual_{0}}) \\%&\mapsto& \IC(\mathcal{F}_{C_7})  \\
 \IC(\mathcal{F}_{C_6}) &\mapsto&  
	\IC(\mathcal{F}_{C\orbdual_{6}}) \\%&\mapsto& \IC(\mathcal{F}_{C_2}) \\
 \IC(\mathcal{F}_{C_{7}}) &\mapsto&  
	\IC(\mathcal{F}_{C\orbdual_{5}}) \\%&\mapsto& \IC(\mathcal{F}_{C_4})  \\
\hline
 \IC(\mathcal{E}_{C_7}) &\mapsto& 
	 \IC(\mathcal{E}_{C\orbdual_{0}}) \\%&\mapsto& \IC(\mathcal{E}_{C_7}) \\
\hline
\end{array} 
$
\end{center}
\end{spacing}
\end{table}

\subsubsection{Equivariant perverse sheaves on the regular conormal bundle}\label{sssec:LocO-SO(7)}

For each stratum $C$, we pick $(x,\xi)\in T^*_{C}(V_\lambda)_\textrm{reg}$ such that the $H_\lambda$-orbit $T^*_{C}(V_\lambda)_\text{sreg}$ of $(x,\xi)$ is open in $T^*_{C_i}(V_\lambda)_\textrm{reg}$. 
Then, we find all equivariant local systems on each $T^*_{C}(V_\lambda)_\text{sreg}$.
The perverse extensions of these local systems to the regular conormal bundle $T^*_{H_\lambda}(V_\lambda)_\textrm{reg}$ will be needed when we compute vanishing cycles of perverse sheave on $V_{\lambda}$ in Section~\ref{sssec:Ev-SO(7)}.
Here we revert to expressing $V_\lambda$ as a subvariety in $\dualgroup{\g}$, largely  for typographic reasons.
\begin{enumerate}
\item[$C_0$:]
Base point for $T^*_{C_0}(V_\lambda)_\text{sreg}$:
\[
(x_0,\xi_0)  = 
\begin{pmatrix} 
\begin{array}{c|cc|cc|c}
{} & 0 & 0 & & & \\ \hline 
1 & & & 0 & 0 &  \\
0 & & & 0 & 0 &  \\ \hline
 & 0 & 1 & & & 0 \\
 & 1 & 0 & & & 0 \\ \hline
 & & & 0 & 1 &  
\end{array}
\end{pmatrix}
\]
The equivariant fundamental group is $A_{(x_0,\xi_0)} = Z_{H_\lambda}(x_0,\xi_0) = S[2]$.
Thus, $T^*_{C_0}(V_\lambda)_\text{sreg}$ carries four local systems. 
The following table displays how we label equivariant local systems on $T^*_{C_0}(V_\lambda)_\text{sreg}$ by showing the matching representation of $A_{(x_0,\xi_0)}$:
%\begin{spacing}{1.0}
\[
\begin{array}{| r cccc|}
\hline
\Loc_{H_\lambda}(T^*_{C_0}(V_\lambda)_\text{sreg}) : & \1_{\O_0} & \mathcal{L}_{\O_0} & \mathcal{F}_{\O_0} & \mathcal{E}_{\O_0} \\ 
%\hline
\Rep(A_{(x_0,\xi_0)}) : & ++ & -- & -+ & +- \\ \hline
\end{array}
\]
%\end{spacing}

The map on equivariant fundamental groups $A_{(x_0,\xi_0)} \to A_{x_0}$ induced from the projection $T^*_{C_0}(V_\lambda)_\text{sreg} \to C_0$ is trivial; on the other hand, the map on equivariant fundamental groups $A_{(x_0,\xi_0)} \to A_{\xi_0}$ induced from the projection $T^*_{C_0}(V_\lambda)_\text{sreg} \to C^*_0 = C^t_7$ is the identity isomorphism.
\[
\begin{tikzcd}
{} & S[2] \arrow{d}{\id} & \\
1 = A_{x_0}  & \arrow{l}[swap]{} A_{(x_0,\xi_0)} \arrow{r}{\id}  & A_{\xi_0} 
%\{1\} \arrow[equal]{u} &  \{\pm 1\}\times\{\pm 1\}  \arrow[equal]{u} & \arrow[equal]{u}  \{\pm 1\}\times\{\pm 1\}
\end{tikzcd}
\]
Pull-back along the bundle map:
\[
\begin{array}{ccc}
\Perv_{H_\lambda}(C_0) & \rightarrow & \Perv_{H_\lambda}(T^*_{C_0}(V_\lambda)_\textrm{reg}) \\ 
%\hline
\IC(\1_{C_0} )&\mapsto& \IC(\1_{\O_0}) \\
&& \IC(\mathcal{L}_{\O_0}) \\
&& \IC(\mathcal{F}_{\O_0})  \\
&& \IC(\mathcal{E}_{\O_0}) \\
\end{array}
\]

\item[$C_1$:]
Base point for $T^*_{C_1}(V_\lambda)_\text{sreg}$:
\[
(x_1,\xi_1) 
= 
\begin{pmatrix} 
\begin{array}{c|cc|cc|c}
{} & 1 & 0 & & & \\  \hline
0 & & & 0 & 0 &  \\
0 & & & 0 & 0 &  \\ \hline
 & 0 & 1 & & & 0 \\
 & 1 & 0 & & & 1 \\ \hline
 & & & 0 & 0 & 
\end{array}
\end{pmatrix},
\]
The equivariant fundamental group is $A_{(x_1,\xi_1)} = Z_{H_\lambda}(x_1,\xi_1) = S[2]$.
Thus, $T^*_{C_1}(V_\lambda)_\textrm{reg}$ carries four local systems. 
The following table displays how we label equivariant local systems on $T^*_{C_1}(V_\lambda)_\text{sreg}$ by showing the matching representation of $A_{(x_1,\xi_1)}$:
%\begin{spacing}{1.0}
\[
\begin{array}{| r cccc|}
\hline
\Loc_{H_\lambda}(T^*_{C_1}(V_\lambda)_\text{sreg}) : & \1_{\O_1} & \mathcal{L}_{\O_1} & \mathcal{F}_{\O_1} & \mathcal{E}_{\O_1} \\ 
%\hline
\Rep(A_{(x_1,\xi_1)}) : & ++ & -- & -+ & +- \\ \hline
\end{array}
\]
%\end{spacing}
For use below, we remark that $\mathcal{L}_{\O_1}$ is the local system associated to the double cover arising from taking $\sqrt{\det X'}$.

The map on equivariant fundamental groups $A_{(x_1,\xi_1)} \to A_{x_1}$ induced from the projection $T^*_{C_1}(V_\lambda)_\text{sreg} \to C_1$ is trivial; on the other hand, the map on equivariant fundamental groups $A_{(x_1,\xi_1)} \to A_{\xi_1}$ induced from the projection $T^*_{C_1}(V_\lambda)_\text{sreg} \to C^*_1 = C^t_3$ is $(s_2,s_3) \mapsto s_2s_3$.
\[
\begin{tikzcd}
{} & S[2] \arrow{d}{\id} & \\
1= A_{x_1} & \arrow{l} A_{(x_1,\xi_1)} \arrow{rr}{(s_2,s_3) \mapsto s_2s_3}  && A_{\xi_1} = \{ \pm 1\} 
%1 \arrow[equal]{u} &  \{\pm 1\}\times\{\pm 1\}  \arrow[equal]{u} && \arrow[equal]{u}   \{\pm 1\}
\end{tikzcd}
\]
Pull-back along the bundle map:
\[
\begin{array}{ccc}
\Loc_{H_\lambda}(C_1) & \rightarrow & \Loc_{H_\lambda}(T^*_{C_1}(V_\lambda)_\text{sreg}) \\ 
\1_{C_1} &\mapsto& \1_{\O_1} \\
&& \mathcal{L}_{\O_1} \\
&& \mathcal{F}_{\O_1}  \\
&& \mathcal{E}_{\O_1} \\
\end{array}
\]

\item[$C_2$:]
Base point for $T^*_{C_2}(V_\lambda)_\text{sreg}$: 
\[
(x_2,\xi_2) 
= 
\begin{pmatrix} 
\begin{array}{c|cc|cc|c}
{} & 0 & 0 &  &  & \\  \hline
1 &  &  & 0 & 0 &  \\
0 &  &  & 1 & 0 &  \\ \hline
 & 0 & 0 &  &  & 0 \\
 & 1 & 0 &  &  & 0 \\ \hline
 &  &  & 0 & 1 & 
\end{array}
\end{pmatrix}
\]
The equivariant fundamental group is $A_{(x_2,\xi_2)} = Z_{H_\lambda}((x_2,\xi_2))=S[2]$. 
Thus, $T^*_{C_2}(V_\lambda)_\textrm{reg}$ carries four local systems. 
%\begin{spacing}{1.0}
\[
\begin{array}{| r cccc|}
\hline
\Loc_{H_\lambda}(T^*_{C_2}(V_\lambda)_\text{sreg}) : & \1_{\O_2} & \mathcal{L}_{\O_2} & \mathcal{F}_{\O_2} & \mathcal{E}_{\O_2} \\ 
%\hline
\Rep(A_{(x_2,\xi_2)}) : & ++ & -- & -+ & +- \\ \hline
\end{array}
\]
%\end{spacing}

The map on equivariant fundamental groups $A_{(x_2,\xi_2)} \to A_{x_2}$ induced from the projection $T^*_{C_2}(V_\lambda)_\text{sreg} \to C_2$ is given by projection to the second factor while the map on equivariant fundamental groups $A_{(x_2,\xi_2)} \to A_{\xi_2}$ induced from the projection $T^*_{C_2}(V_\lambda)_\text{sreg} \to C^*_2 = C^t_6$ is projection to the first factor:
{
\[
\begin{tikzcd}
{} && S[2] \arrow{d}{\id} && \\
\{ \pm 1\} = A_{x_2} && \arrow{ll}[swap]{s_3 \mapsfrom (s_2,s_3)} A_{(x_2,\xi_2)} \arrow{rr}{(s_2,s_3) \mapsto s_2}  &&  A_{\xi_2} = \{ \pm 1\} %\\
%\{\pm 1\}  \arrow[equal]{u} && \arrow{ll}[swap]{\text{proj}_2} \{\pm 1\}\times\{\pm 1\} \arrow{rr}{\text{proj}_1} \arrow[equal]{u} && \arrow[equal]{u} \{\pm 1\} 
\end{tikzcd}
\]
}
Pull-backalong the bundle map:
\[
\begin{array}{ccc}
\Loc_{H_\lambda}(C_2) & \rightarrow & \Loc_{H_\lambda}(T^*_{C_2}(V_\lambda)_\text{sreg}) \\ 
%\hline
\1_{C_2} &\mapsto& \1_{\O_2} \\
&& \mathcal{L}_{\O_2} \\
&& \mathcal{F}_{\O_2}  \\
\mathcal{F}_{C_2} &\mapsto& \mathcal{E}_{\O_2} \\
\end{array}
\]
\item[$C_3$:]
Base point for $T^*_{C_3}(V_\lambda)_\text{sreg}$:
\[
(x_3,\xi_3) 
= 
\begin{pmatrix} 
\begin{array}{c|cc|cc|c}
{} & 0 & 0 &  &  & \\  \hline
1 &  &  & 0 & 1 &  \\
0 &  &  & 1 & 0 &  \\ \hline
 & 0 & 0 &  &  & 0 \\
 & 0 & 0 &  &  & 0 \\ \hline
 &  &  & 0 & 1 & 
\end{array}
\end{pmatrix}
\]
The equivariant fundamental group is $A_{(x_3,\xi_3)} = Z_{H_\lambda}((x_3,\xi_3))=S[2]$.
Thus, $T^*_{C_3}(V_\lambda)_\textrm{reg}$ carries four local systems. 
%\begin{spacing}{1.0}
\[
\begin{array}{| r cccc|}
\hline
\Loc_{H_\lambda}(T^*_{C_3}(V_\lambda)_\text{sreg}) : & \1_{\O_3} & \mathcal{L}_{\O_3} & \mathcal{F}_{\O_3} & \mathcal{E}_{\O_3} \\ 
%\hline
\Rep(A_{(x_3,\xi_3)}) : & ++ & -- & -+ & +- \\ \hline
\end{array}
\]
%\end{spacing}
The map on equivariant fundamental groups $A_{(x_3,\xi_3)} \to A_{x_3}$ induced from the projection $T^*_{C_3}(V_\lambda)_\text{sreg} \to C_2$ has kernel $Z(H_\lambda)$, while $A_{(x_3,\xi_3)} \to A_{\xi_3}$ is trivial.
\[
\begin{tikzcd}
{} && S[2] \arrow{d}{\id} & \\
\{ \pm 1 \} = A_{x_3}  && \arrow{ll}[swap]{s_2s_3 \mapsfrom (s_2,s_3)} A_{(x_3,\xi_3)} \arrow{r}  &  A_{\xi_3} = 1 %\\
%\{\pm 1\} \arrow[equal]{u} && \arrow{ll}[swap]{} \{\pm 1\} \arrow{r}{} \arrow[equal]{u} & \arrow[equal]{u} \{1\} 
\end{tikzcd}
\]
Pull-back along the bundle map:
\[
\begin{array}{ccc}
\Loc_{H_\lambda}(C_3) & \rightarrow & \Loc_{H_\lambda}(T^*_{C_3}(V_\lambda)_\text{sreg}) \\ 
\1_{C_3} &\mapsto& \1_{\O_3} \\
\mathcal{L}_{C_3} &\mapsto& \mathcal{L}_{\O_0} \\
&& \mathcal{F}_{\O_0}  \\
&& \mathcal{E}_{\O_0} \\
\end{array}
\]

\item[$C_4$:]
Base point for $T^*_{C_4}(V_\lambda)_\text{sreg}$:
\[
(x_4,\xi_4) 
= 
\begin{pmatrix} 
\begin{array}{c|cc|cc|c}
{} & 1 & 0 &  &  & \\  \hline
0 &  &  & 0 & 0 &  \\
1 &  &  & 1 & 0 &  \\ \hline
 & 1 & 0 &  &  & 0 \\
 & 0 & -1 &  &  & 1 \\ \hline
 &  &  & -1 & 0 & 
\end{array}
\end{pmatrix}
\]
The equivariant fundamental group  is $A_{(x_4,\xi_4)} = Z_{H_\lambda}((x_4,\xi_4)) = Z(\dualgroup{G})$.
Thus, $T^*_{C_4}(V_\lambda)_\textrm{reg}$ carries two local systems. 
%\begin{spacing}{1.0}
\[
\begin{array}{| r cc|}
\hline
\Loc_{H_\lambda}(T^*_{C_4}(V_\lambda)_\text{sreg}) : & \1_{\O_4} & \mathcal{F}_{\O_4}  \\ 
%\hline
\Rep(A_{(x_4,\xi_4)}) : & + & - \\ \hline
\end{array}
\]
%\end{spacing}
The map on equivariant fundamental groups $A_{(x_4,\xi_4)} \to A_{x_4}$ induced from the projection $T^*_{C_4}(V_\lambda)_\text{sreg} \to C_4$ is the identity isomorphism, while $A_{(x_4,\xi_4)} \to A_{\xi_4}$ is trivial.
\[
\begin{tikzcd}
{} & Z(\dualgroup{G}) \arrow{d}{\id} & \\
\{ \pm 1\} = A_{x_4}  & \arrow{l}[swap]{\id} A_{(x_4,\xi_4)} \arrow{r}  &  A_{\xi_4} =1 %\\
%\{\pm 1\} \arrow[equal]{u} & \arrow{l}[swap]{} \{\pm 1\} \arrow{r}{\iso} \arrow[equal]{u} & \arrow[equal]{u}  \{1\}.
\end{tikzcd}
\]
Pull-back along the bundle map:
\[
\begin{array}{ccc}
\Loc_{H_\lambda}(C_4) & \rightarrow & \Loc_{H_\lambda}(T^*_{C_4}(V_\lambda)_\text{sreg}) \\ 
%\hline
\1_{C_4} &\mapsto& \1_{\O_4} \\
\mathcal{F}_{C_4} &\mapsto& \mathcal{F}_{\O_4} \\
\end{array}
\]

\item[$C_5$:]
Base point for $T^*_{C_5}(V_\lambda)_\text{sreg}$:
\[
(x_5,\xi_5) 
= 
\begin{pmatrix} 
\begin{array}{c|cc|cc|c}
{} & 0 & 1 &  &  & \\  \hline
1 &  &  & 1 & 0 &  \\
0 &  &  & 0 & -1 &  \\ \hline
 & 0 & 1 &  &  & -1 \\
 & 0 & 0 &  &  & 0 \\ \hline
 &  &  & 0 & 1 & 
\end{array}
\end{pmatrix}
\]
The equivariant fundamental group is $A_{(x_5,\xi_5)}= Z_{H_\lambda}((x_5,\xi_5)) = Z(\dualgroup{G})$.
Thus, $T^*_{C_5}(V_\lambda)_\textrm{reg}$ carries two local systems. 
%\begin{spacing}{1.0}
\[
\begin{array}{| r cc|}
\hline
\Loc_{H_\lambda}(T^*_{C_5}(V_\lambda)_\text{sreg}) : & \1_{\O_5} & \mathcal{F}_{\O_5}  \\ 
%\hline
\Rep(A_{(x_5,\xi_5)}) : & + & - \\ \hline
\end{array}
\]
%\end{spacing}
The map on equivariant fundamental groups $A_{(x_5,\xi_5)} \to A_{x_5}$ induced from the projection $T^*_{C_5}(V_\lambda)_\text{sreg} \to C_5$ is trivial, while $A_{(x_5,\xi_5)} \to A_{\xi_5}$ is the identity isomorphism.
\[
\begin{tikzcd}
{} & Z(\dualgroup{G}) \arrow{d}{\id} & \\
1= A_{x_5} & \arrow{l} A_{(x_5,\xi_5)} \arrow{r}{\id}  &  A_{\xi_5} = \{ \pm \} %\\
%\{1\} \arrow[equal]{u}  & \arrow{l}[swap]{} \{\pm 1\} \arrow{r}{\iso} \arrow[equal]{u} & \arrow[equal]{u}  \{\pm1\}.
\end{tikzcd}
\]
Pull-back along the bundle map:
\[
\begin{array}{ccc}
\Loc_{H_\lambda}(C_5) & \rightarrow & \Loc_{H_\lambda}(T^*_{C_5}(V_\lambda)_\text{sreg}) \\ 
%\hline
\1_{C_5} &\mapsto& \1_{\O_5} \\
 && \mathcal{F}_{\O_5} \\
\end{array}
\]

\item[$C_6$:]
Base point for $T^*_{C_6}(V_\lambda)_\text{sreg}$:
\[
(x_6,\xi_6) 
= 
\begin{pmatrix} 
\begin{array}{c|cc|cc|c}
{} & 1 & 0 &  &  & \\  \hline
0 &  &  & 0 & 1 &  \\
0 &  &  & 0 & 0 &  \\ \hline
 & 0 & 1 &  &  & 0 \\
 & 0 & 0 &  &  & 1 \\ \hline
 &  &  & 0 & 0 & 
\end{array}
\end{pmatrix}
\]
The equivariant fundamental group is $A_{(x_6,\xi_6)} = Z_{H_\lambda}((x_6,\xi_6))= S[2]$.
Thus, $T^*_{C_6}(V_\lambda)_\textrm{reg}$ carries four local systems. 
%\begin{spacing}{1.0}
\[
\begin{array}{| r cccc|}
\hline
\Loc_{H_\lambda}(T^*_{C_6}(V_\lambda)_\text{sreg}) : & \1_{\O_6} & \mathcal{L}_{\O_6} & \mathcal{F}_{\O_6} & \mathcal{E}_{\O_6} \\ 
%\hline
\Rep(A_{(x_6,\xi_6)}) : & ++ & -- & -+ & +- \\ \hline
\end{array}
\]
%\end{spacing}
The map on equivariant fundamental groups $A_{(x_6,\xi_6)} \to A_{x_6}$ induced from the projection $T^*_{C_6}(V_\lambda)_\text{sreg} \to C_6$ is given by projection to the first factor while the map on equivariant fundamental groups $A_{(x_6,\xi_6)} \to A_{\xi_6}$ induced from the projection $T^*_{C_6}(V_\lambda)_\text{sreg} \to C^*_6 = C^t_2$ is projection to the second factor:
{
\[
\begin{tikzcd}
{} && S[2] \arrow{d}{\id} && \\
\{ \pm 1\} = A_{x_6} && \arrow{ll}[swap]{s_2 \mapsfrom (s_2,s_3)} A_{(x_6,\xi_6)} \arrow{rr}{(s_2,s_3)\mapsto s_3}  && A_{\xi_6} = \{ \pm 1\} %\\
%\{\pm 1\} \arrow[equal]{u}  && \arrow{ll}[swap]{\text{proj}_1} \{\pm 1\}\times\{\pm 1\} \arrow{rr}{\text{proj}_2} \arrow[equal]{u} && \arrow[equal]{u}  \{\pm 1\} 
\end{tikzcd}
\]
}
Pull-back along the bundle map:
\[
\begin{array}{ccc}
\Loc_{H_\lambda}(C_6) & \rightarrow & \Loc_{H_\lambda}(T^*_{C_6}(V_\lambda)_\text{sreg}) \\ 
\1_{C_6} &\mapsto& \1_{\O_6} \\
&& \mathcal{L}_{\O_6} \\
\mathcal{F}_{C_6} &\mapsto & \mathcal{F}_{\O_6}  \\
 && \mathcal{E}_{\O_6} \\
\end{array}
\]

\item[$C_7$:]
Base point for $T^*_{C_7}(V_\lambda)_\text{sreg}$:
\[
(x_7,\xi_7) 
= 
\begin{pmatrix}
\begin{array}{c|cc|cc|c} 
{} & 1 & 0 &  &  & \\  \hline
0 &  &  & 0 & 1 &  \\
0 &  &  & 1 & 0 &  \\ \hline
 & 0 & 0 &  &  & 0 \\
 & 0 & 0 &  &  & 1 \\ \hline
 &  &  &  0 & 0 & 
\end{array}
\end{pmatrix}
\]
The equivariant fundamental group is $A_{(x_7,\xi_7)} = Z_{H_\lambda}((x_7,\xi_7)) = S[2]$.
Thus, $T^*_{C_7}(V_\lambda)_\textrm{reg}$ carries four local systems. 
%\begin{spacing}{1.0}
\[
\begin{array}{| r cccc|}
\hline
\Loc_{H_\lambda}(T^*_{C_7}(V_\lambda)_\text{sreg}) : & \1_{\O_7} & \mathcal{L}_{\O_7} & \mathcal{F}_{\O_7} & \mathcal{E}_{\O_7} \\ 
%\hline
\Rep(A_{(x_7,\xi_7)}) : & ++ & -- & -+ & +- \\ \hline
\end{array}
\]
%\end{spacing}
The map on equivariant fundamental groups $A_{(x_7,\xi_7)} \to A_{x_7}$ induced from the projection $T^*_{C_7}(V_\lambda)_\text{sreg} \to C_7$ is the identity, while the map on equivariant fundamental groups $A_{(x_7,\xi_7)} \to A_{\xi_7}$ induced from the projection $T^*_{C_7}(V_\lambda)_\text{sreg} \to C^*_7 = C^t_0$ is trivial.
\[
\begin{tikzcd}
{} & S[2] \arrow{d}{\id} & \\
A_{x_7} & \arrow{l}[swap]{\id} A_{(x_7,\xi_7)} \arrow{r}  &  A_{\xi_7} = 1 %\\
%\{\pm 1\}\times\{\pm 1\} \arrow[equal]{u}  & \arrow{l}[swap]{} \{\pm 1\}\times\{\pm 1\} \arrow{r} \arrow[equal]{u} & \arrow[equal]{u}  \{1\}.
\end{tikzcd}
\]
Pull-back along the bundle map:
\[
\begin{array}{ccc}
\Loc_{H_\lambda}(C_7) & \rightarrow & \Loc_{H_\lambda}(T^*_{C_7}(V_\lambda)_\text{sreg}) \\ 
%\hline
\1_{C_7} &\mapsto& \1_{\O_7} \\
\mathcal{L}_{C_7} &\mapsto & \mathcal{L}_{\O_7} \\
\mathcal{F}_{C_7} &\mapsto & \mathcal{F}_{\O_7}  \\
\mathcal{E}_{C_7}  &\mapsto & \mathcal{E}_{\O_7} \\
\end{array}
\]
\end{enumerate}

\subsubsection{Vanishing cycles of perverse sheaves}\label{sssec:Ev-SO(7)}

Tables~\ref{table:pEv-SO(7)} and \ref{table:Evs-SO(7)} record the functor $\pEv$ on simple objects, from two perspectives. 
In this section we explain some of the calculations.

Rows 1--5 and row 11 of Table~\ref{table:Evs-SO(7)} follow from Section~\ref{sssec:Ev-SO(5)singular}.

We show how to calculate row 6.
First note that it follows from Proposition~\ref{VC:support} that all of
$\pEv_{C_3}\IC(\1_{C_4})$, $\pEv_{C_5}\IC(\1_{C_4}) $, $\pEv_{C_6}\IC(\1_{C_4})$ and $\pEv_{C_7}\IC(\1_{C_4})$ vanish. 
We calculate $\pEv_{C_0}\IC(\1_{C_4})$, $\pEv_{C_1}\IC(\1_{C_4})$, $\pEv_{C_2}\IC(\1_{C_4})$ and $\pEv_{C_4}\IC(\1_{C_4})$, here.
\begin{enumerate}
\labitem{(a)}{labitem:Ev-SO(7)-a}
To calculate $\pEv_{C_0} \IC(\1_{C_4})$, recall the cover $\pi_4^{(1)}: \widetilde{C}_4^{(1)}\to \overline{C}_4$ from Section~\ref{sssec:EPS-SO(7)}.
For reasons explained in Section~\ref{sssec:Ev-overview}, we begin by finding the singularities of the composition $f\circ(\pi_4^{(1)}\times\id)$ on $\widetilde{C}_4^{(1)}\times C_0^*$.
The equations that define $\widetilde{C}_4^{(1)}\times C_0^*$ as a subvariety of $V_\lambda \times \mathbb{P}^1 \times V_\lambda^\ast$ with coordinates $(w,X, [a:b], w', X')$ are
\[
\begin{array}{cc}
\begin{pmatrix} -b & a \end{pmatrix} w = 0, & \begin{pmatrix} a & b \end{pmatrix}X = 0,\\
 ^twX = 0,& \det(X) = 0
\end{array}
 \]
together with the equations that define $C_0^\ast$ in terms of  $w'$ and $X'$.
The singularities of $f\circ(\pi_4^{(1)}\times\id)$ on $\widetilde{C}_4^{(1)} \times C_0^\ast$, are found by examining the Jacobian for the functions taking $(w,X, [a:b], w', X')$ to
\[
 \begin{pmatrix} -b & a \end{pmatrix} w, \quad
 \begin{pmatrix} a & b \end{pmatrix} X,\quad
 w' w + \trace(X'X);
\]
this Jacobian is given here:
\[
\begin{array}{cccccccccccc}
du & dv & dx & dy & dz & da  & db & du' & dv' & dx' & dy' & dz' \\
\hline\hline
%\begin{pmatrix} -b & a \end{pmatrix} w &
 -b & a & 0 & 0 & 0 & v & -u & 0 & 0 & 0 & 0 & 0\\
%\begin{pmatrix} a & b \end{pmatrix} X &
 0 & 0 & -a & 0 & b & -x & z & 0 & 0 & 0 & 0 & 0 \\
%  \begin{pmatrix} a & b \end{pmatrix} X & 
 0 & 0 & 0 & b & a  & z & y & 0 & 0 & 0 & 0 & 0 \\
%w w' + \trace(XX') & 
u' & v' & x' & y' & 2z' & 0 & 0 & u & v & x & y & 2z ,
\end{array}
\]
where the second and third rows correspond to the function with value $\begin{pmatrix} a & b \end{pmatrix} X$.
This system of equations forms an $H_\lambda$-bundle over $\mathbb{P}^1$, so we can specialize the $[a:b]$ coordinates to $[1:0]$ without loss of generality.
Now we can see that if the rank of this matrix is less than $4$ on $\widetilde{C}_4^{(1)}\times C_0^*$ then $u'=y'=0$, which implies $\,^t{w'} X' w' = 0$, which contradicts $(w', X')\in C_0^*$.
Therefore, ${f\circ(\pi_4^{(1)}\times\id)}$ is smooth on $\widetilde{C}_4^{(1)}\times C_0^*$.
Now, by Lemma~\ref{lemma:methodx},
\[
\RPhi_{f\circ(\pi_4^{(1)}\times\id)}(\1_{\widetilde{C}_4^{(1)}\times C_0^*}) =0.
\]
By smooth base change, this implies 
\[
\pEv_{C_0}\IC(\1_{C_4}) = 0.
\]
\labitem{(b)}{labitem:Ev-SO(7)-b}
The argument showing $\pEv_{C_1}\IC(\1_{C_4}) =0$ is similar to \ref{labitem:Ev-SO(7)-a} above.
To find the singularities of $f\circ(\pi_4^{(1)}\times\id)$ on $\widetilde{C}_4^{(1)} \times C_1^\ast$ we simply add the equation that defines $\overline{C_1^*}$ to the list of functions in the case above.
The Jacobian for the functions
\[
 \begin{pmatrix} -b & a \end{pmatrix} w, \quad
 \begin{pmatrix} a & b \end{pmatrix} X,\quad
 w' w + \trace(X'X),\qquad
 w'=0,
\]
is given here,
\[
\begin{array}{cccccccccccc}
du & dv & dx & dy & dz & da  & db & du' & dv' & dx' & dy' & dz' \\
\hline\hline
% \begin{pmatrix} -b & a \end{pmatrix} w & 
 -b & a & 0 & 0 & 0 & v & -u & 0 & 0 & 0 & 0 & 0\\
%  \begin{pmatrix} a & b \end{pmatrix} X & 
 0 & 0 & -a & 0 & b & -x & z & 0 & 0 & 0 & 0 & 0 \\
%  \begin{pmatrix} a & b \end{pmatrix} X & 
 0 & 0 & 0 & b & a  & z & y & 0 & 0 & 0 & 0 & 0 \\
%w w' + \trace(XX') & 
u' & v' & x' & y' & 2z' & 0 & 0 & u & v & x & y & 2z\\
%u' 							& 
0 & 0 & 0 & 0 & 0 & 0 & 0 & 1 & 0 & 0 & 0 & 0\\
%v'							& 
0 & 0 & 0 & 0 & 0 & 0 & 0 & 0 & 1 & 0 & 0 & 0,
\end{array}
\]
where, as above, rows two and three refer to  $\begin{pmatrix} a & b \end{pmatrix} X$.
Arguing as above, by setting $[a:b]=[1:0]$ we find $x=z=u=0$.
If the rank of this Jacobian were less than $6$ then $u'=y'=0$ so $\,^t{w} X' w = 0$, which would force the point to be non-regular in the conormal bundle.
It follows that $f\circ(\pi_4^{(1)}\times\id)$ is smooth on the regular part of $\widetilde{C}_4^{(1)}\times C_1^*$.
Therefore, 
\[
\pEv_{C_1}\IC(\1_{C_4}) = 0.
\]
\labitem{(c)}{labitem:Ev-SO(7)-c}
The closed equation that cuts out $\overline{C_2^*}$ is $\rank X'=1$.
Thus, to find the singular locus of $f\circ(\pi_4^{(1)}\times\id)$ on $\widetilde{C}_4^{(1)}\times C_2^*$ we consider the functions 
\[
 \begin{pmatrix} -b & a \end{pmatrix} w, \quad
 \begin{pmatrix} a & b \end{pmatrix} X,\quad
 w' w + \trace(X'X),\qquad
\det X',
\]
and the associated Jacobian, below.
\[
\begin{array}{cccccccccccc}
du & dv & dx & dy & dz & da  & db & du' & dv' & dx' & dy' & dz' \\
\hline\hline
 %\begin{pmatrix} -b & a \end{pmatrix} w & 
 -b & a & 0 & 0 & 0 & v & -u & 0 & 0 & 0 & 0 & 0\\
 % \begin{pmatrix} a & b \end{pmatrix} X & 
  0 & 0 & -a & 0 & b & -x & z & 0 & 0 & 0 & 0 & 0 \\
  %\begin{pmatrix} a & b \end{pmatrix} X & 
  0 & 0 & 0 & b & a  & z & y & 0 & 0 & 0 & 0 & 0 \\
  %w w' + \trace(XX') & 
  u' & v' & x' & y' & 2z' & 0 & 0 & u & v & x & y & 2z\\
  %\det X'= 0							& 
  0 & 0 & 0 & 0 & 0 & 0 & 0 & 0 & 0 & y' & x' & 2z'
\end{array}
\]
If the rank of this Jacobian is not maximal, then $u'=y'=0$, which implies $\,^t{w'} X' w' = 0$ which contradicts $(w',X') \in C_2^\ast$.
Thus, ${f\circ(\pi_4^{(1)}\times\id)}$ is smooth on $\widetilde{C}_4^{(1)}\times C_2^*$.
It follows that 
\[
\pEv_{C_2}\IC(\1_{C_4}) = 0.
\]
\labitem{(d)}{labitem:Ev-SO(7)-d}
The closure of $C_4\times C_4^\ast$ is cut out by the equations
\[ xu+zv = 0,, \quad zu + yv = 0, \quad {u'}^2 x' + 2u'v'z' + {v'}^2y'. \]
We wish to find the restriction of
\[ f = xx' + 2zz' + yy' + uu' + vv' \]
to $C_4\times C_4^\ast$ in local coordinates.
Localize away from $u=0$, and $v'=0$ and note that this implies that $y\neq 0$ on $C_4\times C_4^\ast$; note also that $(x_4,\xi_4)$ lies in this open subvariety.
Then
\[ z=\frac{-v}{u}y,\qquad x=\frac{-vz}{u} =\frac{v^2}{u^2}y , \qquad y' = -  \left(\frac{u'}{v'}\right)^2x' - 2\frac{v'}{u'}z' 
\]
so we may rewrite 
\begin{align*} f &=  xx' + 2zz' + yy' + uu' + vv' \\
     & = \left(\frac{v}{u}\right)^2yx' -2\frac{v}{u}yz' - y\left(\frac{u'}{v'}\right)^2x' + 2\frac{v'}{u'}z + uu' + vv'  \\
     &=  y\left( \left(\frac{v}{u}\right)^2 - \left(\frac{u'}{v'}\right)^2x' - 2\left(\frac{v}{u} + \frac{u'}{v'} \right)z'\right)   + uu' + vv' \\
     &=  \frac{1}{u^2{v'}^2}\left(uu' + vv' \right)\left( \left( vv'  - uu' \right)x'y  -2uv'z'y + u^2{v'}^2 \right)
\end{align*}
This gives us $f$ expressed in the form $cXY$ where $c$ is non-vanishing and non-singular, $X$ and $Y$ are both non-singular on $C_4\times C_4^\ast$
(for $X$ this is because the differential of $u'$ is non-zero for $Y$ it is because the differential of $z'$ is non-zero).
It follows from Corollary~\ref{corollary:method} that the vanishing cycles functor evaluates to a constant sheaf, so
\[
\pEv_{C_4} \IC(\1_{C_4}) = \IC(\1_{\O_4}).
\]
\end{enumerate}
This completes the calculations needed for row 6 of Table~\ref{table:Evs-SO(7)}.

We show how to compute row 12. 
As recalled in Section~\ref{sssec:Ev-overview}, $\pEv_{C} \IC(\mathcal{F}_{C_4})=0$ unless $C \subset \overline{C}_4$, and $\pEv_{C_4}\IC(\mathcal{F}_{C_4}) = \IC(\mathcal{F}_{\O_4})$; see Section~\ref{sssec:LocO-SO(7)}.
So here we determine $\pEv_{C_i} \IC(\mathcal{F}_{C_4})$ for $i=0, 1, 2$.
Recall the cover $\pi_4^{(3)}: \widetilde{C}_4^{(3)}\to \overline{C}_4$ from Section~\ref{sssec:EPS-SO(7)}.
As above, we begin by finding the singularities of the composition $f\circ(\pi_4^{(3)}\times\id)$ on $\widetilde{C}_4^{(3)}\times C_i^*$. 
The equations that define $\widetilde{C}_4^{(3)}\times C_i^*$ as a subvariety of $V \times \mathbb{A}^2 \times \mathbb{P}^1 \times V^\ast$ with coordinates $(w,X, A,B, [a:b], w', X')$ are 
\[
\begin{array}{ccc}
\begin{pmatrix} a & b \end{pmatrix}\begin{pmatrix} A \\ B \end{pmatrix} = 0,
	&&\begin{pmatrix} a & b \end{pmatrix}X = 0,\\
\begin{pmatrix} -b & a \end{pmatrix}w = 0,
	&& X = \begin{pmatrix} A & B \end{pmatrix}\begin{pmatrix} A \\ B \end{pmatrix},\\
\begin{pmatrix} -B & A \end{pmatrix} w = 0, & ^twX = 0, & \det(X) = 0,\\
\end{array}
\]
 together with the equations that define $C_i^\ast$ in terms of  $w'$ and $X'$.
The conormal bundle to this variety is generated by the differentials of the functions
\[ 
\begin{pmatrix} a & b \end{pmatrix}\begin{pmatrix} A \\ B \end{pmatrix}, 
\qquad
\begin{pmatrix} -b & a \end{pmatrix}w,
\]
together with the equations that define $\overline{C_i^\ast}$.
We find the singular locus of $f\circ(\pi_4^{(3)}\times\id)$ on $\widetilde{C}_4^{(3)}\times C_i^*$ by checking the rank of the Jacobian of these functions.
This will determine the support of the sheaf 
\begin{equation}\label{eqn:Ev-SO(7)-row12}
\RPhi_{f\circ(\pi_4^{(3)}\times\id)}(\1_{\widetilde{C}_4^{(3)}\times C_i^*}).
\end{equation}
If $f\circ(\pi_4^{(3)}\times\id)$ is smooth on ${\widetilde{C}_4^{(3)}\times C_i^*}$ or if the restriction of this sheaf to the preimage of $(\overline{C}_4\times C_i^*)_\textrm{reg}$ under $\pi_4^{(3)}\times\id$ is $0$, then $\Ev_{C_i}\mathcal{F}_{C_4}=0$.
However, if the restriction of \eqref{eqn:Ev-SO(7)-row12} to the preimage of $(\overline{C}_4\times C_i^*)_\textrm{reg}$ under $\pi_4^{(3)}\times\id$ is not $0$, 
then to determine $\pEv_{C_i}\IC(\mathcal{F}_{C_4})$ we must calculate the pushforward of this restriction along the proper morphism $\pi_4^{(3)}\times\id$ (and in principle eliminate any contribution from $\pEv_{C_i}(\1_{C_4})$, however in each of the following three cases there is none).
We now show the remaining calculations for row 12.
\begin{enumerate}
\labitem{(e)}{labitem:Ev-SO(7)-e}
To find the support of \eqref{eqn:Ev-SO(7)-row12} when $C_i = C_2$, we consider the differentials of the following functions.
\[
\begin{pmatrix} -b & a \end{pmatrix}w,
\quad   \begin{pmatrix} a & b \end{pmatrix}\begin{pmatrix} A \\ B \end{pmatrix},
\quad w' w + \trace(X'X),
\quad \det X'.
\]
This gives the following Jacobian, in which we hide $x$, $y$ and $z$ since we have $x=-A^2$,  $z=AB$ and $y=B^2$.
In this table the rows are the differentials of the above functions, in that order, and to save space, we set $A'\ceq -Ax'+Bz'$ and $B'\ceq Az'+By'$:
\[
\begin{array}{ccccccccccc}
du & dv & dA & dB  & da  & db & du' & dv' & dx' & dy' & dz' \\
\hline\hline
-b & a & 0 & 0 & v & -u & 0 & 0 & 0 & 0 & 0 \\
0 & 0 & a & b & A & B & 0 & 0 & 0 & 0 & 0\\
u' & v' & 2A' & 2B' & 0 & 0 & u & v & -A^2 & B^2 & 2AB\\
0 & 0 & 0 & 0 & 0 & 0 & 0 & 0 & y' & x' & 2z'
\end{array}
\]
%\todo{Where are the variables $x$, $y$ and $z$ in this Jacobian?
%Andrew: Because $x=A^2, y=AB,z=B^2$ the variables can be `deleted' from the definition of the variety}
%
Again we observe that this system of equations is an $H_\lambda$-bundle over $\mathbb{P}^1$ and therefore we can set $[a:b]=[1:0]$ without loss of generality. 
If we do this we find $v=x=z=A=0$.
Moreover, if we suppose that the rank is not maximal, then $u'=0$ by inspecting the first four columns and $y'=0$ by inspecting the fourth column.
This implies $\,^t{w'} X' w' = 0$ with contradicts $(w',X')\in C_2^\ast$.
Thus, the singular locus of $f\circ(\pi_4^{(3)}\times\id)$ on $\widetilde{C}_4^{(3)} \times C_2^\ast$ is empty.
It follows that 
\[
\pEv_{C_2} \IC(\mathcal{F}_4) =0.
\]
\labitem{(f)}{labitem:Ev-SO(7)-f} 
To find the support of \eqref{eqn:Ev-SO(7)-row12} when $C_i = C_1$, we consider the differentials of the following functions.
\[
\begin{pmatrix} -b & a \end{pmatrix}w,
\quad   \begin{pmatrix} a & b \end{pmatrix}\begin{pmatrix} A \\ B \end{pmatrix},
\quad  \trace(X'X).
\]
In this case we have $u'=v'=0$, so they may be omitted, and so the relevant Jacobian is:
\[
\begin{array}{ccccccccc}
 du & dv & dA & dB  & da  & db & dx' & dy' & dz' \\
\hline\hline
 -b & a & 0 & 0 & v & -u & 0  & 0 & 0 \\
 0 & 0 & a & b & A & B & 0  & 0 & 0\\
 0 & 0 & 2A' & 2B' & 0 & 0  & -A^2 & B^2 & 2AB,
\end{array}
\]
where, as above, we set $A'\ceq -Ax'+Bz'$ and $B'\ceq Az'+By'$.
On $\widetilde{C}_4^{(3)} \times C_1^\ast$ we find that the singular locus of $f\circ(\pi_4^{(3)}\times\id)$ is cut out by
\[ A=B=0,\; \begin{pmatrix} -b & a \end{pmatrix} w = 0. \]
This is already sufficient to conclude that $\pEv_{C_1}\IC(\mathcal{F}_{C_4}) \neq 0$.
Since we only need to compute the vanishing cycles \eqref{eqn:Ev-SO(7)-row12} over the regular part of the conormal bundle, we may assume $w\neq 0$. 
We claim that local coordinates for the regular part of the conormal bundle are given by $(X', w)$.
Indeed, the coordinate $[a:b]$ is determined by $w$ and all other coordinates are zero on the singular locus.
It follows from this that the map from the singular locus to $T_C^\ast(V_\lambda)_\textrm{reg}$ is one-to-one. 
Moreover, we are free to localize away from the exceptional divisor of the blowup and thus essentially ignore $[a:b]$ while computing the vanishing cycles.
Doing this, we can give new coordinates for our variety by setting
\[ \begin{pmatrix} A \\ B \end{pmatrix} = cw \]
for some new coordinate $c$.
That is, on this open we have local coordinates $u,v,c,x',y',z'$, with no relations, and we wish to compute
\[ R\Phi_{c^2(-u^2x' + 2uvz'+v^2z')}(\1). \]
The function $-u^2x' + 2uvz'+v^2z'$ is smooth and non-vanishing on the regular part of the conormal bundle, so by setting $h=-u^2x' + 2uvz'+v^2z'$, we may consider the smooth map on our open subvariety induced from the map $\mathbb{A}^6 \rightarrow \mathbb{A}^2$ given on coordinates by $(u,v,c,x',y',z') \mapsto (c,h)$.
By smooth base change 
 $R\Phi_{c^2(-u^2x' + 2uvz'+v^2z')}(\1) $ is the pullback of $R\Phi_{c^2h}(\1)$.
It follows from Lemma~\ref{lemma:methodx2u} that $R\Phi_{c^2h}(\1)$ is the skyscraper sheaf on $c=0$ associated to the cover arising from taking the square root of $h$.
Pulling this back, we have the same.
This is the cover associated to the sheaf ${\mathcal{F}}_{\mathcal{O}_1}$ in Section~\ref{sssec:LocO-SO(7)}, so 
\[
\pEv_{C_1}\IC(\mathcal{F}_{C_4}) = \IC({\mathcal{F}}_{\mathcal{O}_1}).
\]
\labitem{(g)}{labitem:Ev-SO(7)-g}
To find the support of \eqref{eqn:Ev-SO(7)-row12} when $C_i = C_0$, we consider the differentials of the following functions.
\[
\begin{pmatrix} -b & a \end{pmatrix}w,
\quad   \begin{pmatrix} a & b \end{pmatrix}\begin{pmatrix} A \\ B \end{pmatrix},
\quad  w' w+ \trace(X'X).
\]
This determines the following Jacobian, in which we again use the notation $A'\ceq -Ax'+Bz'$ and $B'\ceq Az'+By'$:
\[
\begin{array}{ccccccccccc}
du & dv & dA & dB  & da  & db & du' & dv' & dx' & dy' & dz' \\
\hline\hline
 -b & a & 0 & 0 & v & -u & 0 & 0 & 0 & 0 & 0 \\
 0 & 0 & a & b & A & B & 0 & 0 & 0 & 0 & 0\\
 u' & v' & 2A' & 2B' & 0 & 0 & u & v & -A^2 & B^2 & 2AB .
\end{array}
\]
The singular locus of $f\circ(\pi_4^{(3)}\times\id)$ on $\widetilde{C}_4^{(3)} \times C_0^\ast$ is
\[ u=v=A=B = 0, \; \begin{pmatrix} a & b \end{pmatrix} w' = 0. \]
Note, this is already sufficient to conclude that $\pEv_{C_1}\IC(\mathcal{F}_{C_4}) \neq 0$.
We may assume $w'\ne 0$, since we only need to compute \eqref{eqn:Ev-SO(7)-row12} the vanishing cycles over the regular part of the conormal bundle.
Local coordinates for the conormal bundle are now given by $(w',X')$.
Since $[a:b]$ is determined by $w'$, and all other coordinates are zero, it follows that the map from the singular locus to
$ T_C^\ast(V_\lambda)_\textrm{reg} $ is one-to-one.
In the following, wherever we write $(a,b)$ you should interpret this as either $(1,b)$ or $(a,1)$ as though we were working in one of the two charts for $\mathbb{P}^1$.

We pick new local coordinates in a neighbourhood of the singular locus: these will be $[a:b],c,d,X',w'$ with the change of coordinates given by $(A,B) = c(-b,a)$ and $(u,v) = d(a,b)$.
The function $w w' + \trace(XX')$ may now be re-written in the form
\[ d \begin{pmatrix} a & b \end{pmatrix} w' +  c^2\begin{pmatrix} -b & a \end{pmatrix}X'  \begin{pmatrix} -b \\ a \end{pmatrix}. \]
The functions $h = \begin{pmatrix} a & b \end{pmatrix} w'$ and $g= \begin{pmatrix} -b & a \end{pmatrix}X'  \begin{pmatrix} -b \\ a \end{pmatrix}$ are smooth (on the regular part of the conormal bundle).
We may thus consider the map to $\mathbb{A}^4$ induced by:
\[ ([a:b],c,d,X',w') \mapsto (c,d,h,g) \]
The map $w w' + \trace(XX')$ is simply the pullback of $dh+c^2g$.
Thus, if we can compute
$ R\Phi_{dh+c^2g}(\1) $
on $\mathbb{A}^4$, by smooth base change 
 this will give us $R\Phi_{w' w + \trace(X'X)}(\1)$ over the regular part of the conormal bundle.
By Proposition~\ref{proposition:method}, we see that $R\Phi_{df+c^2g}(\1) $ is the skyscraper sheaf over $d=h=c=0$ associated to the cover coming from adjoining the square root of $g$.
Pulling this back to $ \widetilde{C}_4^{(3)} \times C_0^\ast$ and identifying the singular locus with the regular part of the conormal, we conclude that 
\[
\pEv_{C_0}\IC(\mathcal{F}_{C_4}) = \IC(\mathcal{F}_{\O_0})
\]
by comparing the covers associated to the local systems in Section~\ref{sssec:LocO-SO(7)}.

\labitem{(h)}{labitem:Ev-SO(7)-h}
The computations for $\pEv_{C_4}\IC(\mathcal{F}_{C_4})$ are essentially the same as those for $\pEv_{C_4}\IC(\1_{C_4})$.
While working on the cover one has $x=A^2$, $y=B^2$, $z=AB$, the result is that one finds
\begin{align*} f &=  xx' + 2zz' + yy' + uu' + vv' \\
     &=  \frac{1}{u^2{v'}^2}\left(uu' + vv' \right)\left(  \left( vv'  - uu' \right)x'B^2  -2uv'z'B^2 + u^2{v'}^2 \right).
\end{align*}
%which is for the same reasons gives rise to the constant sheaf.
%
Taking the proper pushforward we obtain a direct sum of two sheaves, however as $\pEv_{C_4}\IC(\1_{C_4})$ was the constant sheaf, we
realize $\pEv_{C_4}\IC(\mathcal{F}_{C_4})$ will be the non-trivial factor.
\[
\pEv_{C_4}\IC(\mathcal{F}_{C_4})
= \IC(\mathcal{F}_{\O_4}).
\]
\end{enumerate}

\begin{table}[htp]
\caption{$\pEv : \Perv_{H_{\lambda}}(V_{\lambda}) \to  \Perv_{H_{\lambda}}(T^*_{H_{\lambda}}(V_{\lambda})_\textrm{reg})$ on simple objects, for $\lambda : W_F \to \Lgroup{G}$ given at the beginning of Section~\ref{sec:SO(7)}.
See also Table~\ref{table:Evs-SO(7)}.}
\label{table:pEv-SO(7)}
\begin{spacing}{1.3}
\begin{center}
$
\begin{array}{rcl}
\Perv_{H_\lambda}(V_\lambda) &\mathop{\longrightarrow}\limits^{\pEv} & \Perv_{H_\lambda}(T^*_{H_\lambda}(V_\lambda)_\textrm{reg})\\
\IC(\1_{C_{0}}) &\mapsto& \IC(\1_{\O_0})  \\
\IC(\1_{C_{1}}) &\mapsto& \IC(\1_{\O_1})  \\
\IC(\1_{C_{2}}) &\mapsto& \IC({\mathcal{L}}_{\O_2}) \oplus \IC(\mathcal{L}_{\O_0})   \\
\IC(\1_{C_{3}}) &\mapsto& \IC(\1_{\O_{3}})  \\
\IC(\mathcal{L}_{C_{3}}) &\mapsto&  \IC(\mathcal{L}_{\O_3})\oplus \IC({\1}_{\O_2})   \\
\IC(\1_{C_{4}}) &\mapsto& \IC(\1_{\O_4})  \\
\IC(\1_{C_{5}}) &\mapsto& \IC(\1_{\O_5})  \\
\IC(\1_{C_{6}}) &\mapsto& \IC({\mathcal{L}}_{\O_6}) \oplus \IC(\mathcal{L}_{\O_1})   \\
\IC(\1_{C_{7}}) &\mapsto& \IC(\1_{\O_7})   \\
\IC(\mathcal{L}_{C_{7}}) &\mapsto&  \IC(\mathcal{L}_{\O_7})  \oplus  \IC({\1}_{\O_6})  \\
%\hline
\IC(\mathcal{F}_{C_{2}}) &\mapsto& \IC({\mathcal{F}}_{\O_2}) \\
\IC(\mathcal{F}_{C_{4}}) &\mapsto& \IC(\mathcal{F}_{\O_4})\oplus \IC({\mathcal{F}}_{\O_1}) \oplus\IC(\mathcal{F}_{\O_0}) \\
\IC(\mathcal{F}_{C_{6}}) &\mapsto& \IC({\mathcal{E}}_{\O_6})\\
\IC(\mathcal{F}_{C_{7}}) &\mapsto& \IC(\mathcal{F}_{\O_7}) \oplus \IC(\mathcal{F}_{\O_5}) \oplus \IC(\mathcal{F}_{\O_3})  \\ 
%\hline
\IC(\mathcal{E}_{C_{7}}) &\mapsto&  \IC(\mathcal{E}_{\O_7}) \oplus \IC({\mathcal{F}}_{\O_6})  \oplus \IC(\mathcal{E}_{\O_5}) \oplus \IC(\mathcal{E}_{\O_4}) \\
			&& \oplus\ \IC(\mathcal{F}_{\O_3}) \oplus \IC({\mathcal{E}}_{\O_2})  \oplus \IC({\mathcal{F}}_{\O_1}) \oplus \IC(\mathcal{E}_{\O_0})  
\end{array}
$
\end{center}
\end{spacing}
\end{table}

\begin{table}[htp]
\caption{$\Evs : \Perv_{H_{\lambda}}(V_{\lambda}) \to  \Loc_{H_{\lambda}}(T^*_{H_{\lambda}}(V_{\lambda})_\text{sreg})$ on simple objects; see also Table~\ref{table:pEv-SO(7)}.
Here we use the notation $\Evs_{i}\ceq \Evs_{C_i}$.
}
\label{table:Evs-SO(7)}
\begin{spacing}{1.3}
\begin{smaller}
\begin{center}
$
\begin{array}{| c||cccccccc |}
\hline
\mathcal{P} & \Evs_{0}\mathcal{P} & \Evs_{1}\mathcal{P} & \Evs_{2}\mathcal{P} & \Evs_{3}\mathcal{P} & \Evs_{4}\mathcal{P} & \Evs_{5}\mathcal{P} & \Evs_{6}\mathcal{P} & \Evs_{7}\mathcal{P} \\
\hline\hline
\IC(\1_{C_0}) 			& ++ & 0 & 0   & 0 & 0 & 0 & 0   & 0 \\
\IC(\1_{C_1}) 			& 0   & ++ & 0   & 0 & 0 & 0 & 0   & 0 \\
\IC(\1_{C_2}) 			& --   & 0 & {--} & 0 & 0 & 0 & 0   & 0 \\
\IC(\1_{C_3}) 			& 0   & 0 & 0   & ++ & 0 & 0 & 0   & 0 \\
\IC(\mathcal{L}_{C_3}) 	& 0   & 0 & {++} & --  & 0 & 0 & 0  & 0 \\
\IC(\1_{C_4}) 			& 0  	& 0 & 0   & 0 & + & 0 & 0   & 0 \\
\IC(\1_{C_5}) 			& 0  	& 0 & 0   & 0 & 0 & + & 0   & 0 \\
\IC(\1_{C_6}) 			& 0   & --  & 0   & 0 & 0 & 0 & {--} & 0 \\
\IC(\1_{C_7}) 			& 0   & 0 & 0   & 0 & 0 & 0 & 0  & ++ \\
\IC(\mathcal{L}_{C_7}) 	& 0   & 0 & 0   & 0 & 0 & 0 & {++}  & -- \\
\hline
\IC(\mathcal{F}_{C_2}) 	& 0  & 0 & {-+} & 0  & 0 & 0 & 0  & 0 \\
\IC(\mathcal{F}_{C_4}) 	& -+ & {-+}  & 0  & 0  & -  & 0 & 0  & 0 \\
\IC(\mathcal{F}_{C_6}) 	& 0  & 0 & 0  & 0  & 0 & 0 & {+-} & 0 \\
\IC(\mathcal{F}_{C_{7}}) 	& 0  & 0 & 0  & -+   & 0 & - & 0   & -+ \\
\hline
\IC(\mathcal{E}_{C_7}) 	& +- & {-+}  & {+-}  & -+ & - & - & {-+}  & +- \\
\hline
\end{array}
$
\end{center}
\end{smaller}
\end{spacing}
\end{table}

\subsubsection{Normalization of Ev and the twisting local system}\label{sssec:NEv-SO(7)}

Using Table~\ref{table:Evs-SO(7)} we find our second case when the equivariant local system $\mathcal{T}$ is non-trivial:
\[
\mathcal{T} = \1_{\O_0}^\sharp \oplus \1_{\O_1}^\sharp \oplus \mathcal{L}_{\O_2}^\sharp \oplus \1_{\O_3}^\sharp \oplus \1_{\O_4}^\sharp \oplus \1_{\O_5}^\sharp \oplus \mathcal{L}_{\O_6}^\sharp \oplus \1_{\O_7}^\sharp.
\]
We use $\mathcal{T}$ in Table~\ref{table:NEvs-SO(7)} to calculate $\pNEv : \Perv_{H_\lambda}(V_\lambda) \to  \Perv_{H_\lambda}(T^*_{H_\lambda}(V_\lambda)_\textrm{reg})$; compare with Table~\ref{table:Evs-SO(7)} 

\begin{table}
\caption{$\pNEv : \Perv_{H_{\lambda}}(V_{\lambda}) \to  \Perv_{H_{\lambda}}(T^*_{H_{\lambda}}(V_{\lambda})_\textrm{reg})$ on simple objects, for $\lambda : W_F \to \Lgroup{G}$ given at the beginning of Section~\ref{sec:SO(7)}. 
See also Table~\ref{table:NEvs-SO(7)}}
\label{table:pNEv-SO(7)}
\begin{spacing}{1.3}
\begin{center}
$
\begin{array}{rcl}
\Perv_{H_\lambda}(V_\lambda) &\mathop{\longrightarrow}\limits^{\pNEv} & \Perv_{H_\lambda}(T^*_{H_\lambda}(V_\lambda)_\textrm{reg})\\
\IC(\1_{C_{0}}) &\mapsto& \IC(\1_{\O_0})  \\
\IC(\1_{C_{1}}) &\mapsto& \IC(\1_{\O_1})  \\
\IC(\1_{C_{2}}) &\mapsto& \IC({\1}_{\O_2}) \oplus \IC(\mathcal{L}_{\O_0})   \\
\IC(\1_{C_{3}}) &\mapsto& \IC(\1_{\O_{3}})  \\
\IC(\mathcal{L}_{C_{3}}) &\mapsto&  \IC(\mathcal{L}_{\O_3})\oplus \IC({\mathcal{L}}_{\O_2})   \\
\IC(\1_{C_{4}}) &\mapsto& \IC(\1_{\O_4})  \\
\IC(\1_{C_{5}}) &\mapsto& \IC(\1_{\O_5})  \\
\IC(\1_{C_{6}}) &\mapsto& \IC({\1}_{\O_6}) \oplus \IC(\mathcal{L}_{\O_1})   \\
\IC(\1_{C_{7}}) &\mapsto& \IC(\1_{\O_7})   \\
\IC(\mathcal{L}_{C_{7}}) &\mapsto&  \IC(\mathcal{L}_{\O_7})  \oplus  \IC({\mathcal{L}}_{\O_6})  \\
%\hline
\IC(\mathcal{F}_{C_{2}}) &\mapsto& \IC({\mathcal{E}}_{\O_2}) \\
\IC(\mathcal{F}_{C_{4}}) &\mapsto& \IC(\mathcal{F}_{\O_4})\oplus \IC({\mathcal{F}}_{\O_1}) \oplus\IC(\mathcal{F}_{\O_0}) \\
\IC(\mathcal{F}_{C_{6}}) &\mapsto& \IC({\mathcal{F}}_{\O_6})\\
\IC(\mathcal{F}_{C_{7}}) &\mapsto& \IC(\mathcal{F}_{\O_7}) \oplus \IC(\mathcal{F}_{\O_5}) \oplus \IC(\mathcal{F}_{\O_3})  \\ 
%\hline
\IC(\mathcal{E}_{C_{7}}) &\mapsto&  \IC(\mathcal{E}_{\O_7}) \oplus \IC({\mathcal{E}}_{\O_6})  \oplus \IC(\mathcal{E}_{\O_5}) \oplus \IC(\mathcal{E}_{\O_4}) \\
			&& \oplus\ \IC(\mathcal{F}_{\O_3}) \oplus \IC({\mathcal{F}}_{\O_2})  \oplus \IC({\mathcal{F}}_{\O_1}) \oplus \IC(\mathcal{E}_{\O_0})  
\end{array}
$
\end{center}
\end{spacing}
\end{table}

\subsubsection{Vanishing cycles and the Fourier transform}\label{sssec:EvFt-SO(7)}

Compare Table~\ref{table:NEvFt-SO(7)} with the Fourier transform from Section~\ref{sssec:Ft-SO(7)} to confirm \eqref{eqn:NEvFt-overview} in this example.

\begin{table}[htp]
\caption{ 
Comparing this table with Table~\ref{table:fourier-SO(7)} verifies \eqref{eqn:NEvFt-overview} in this example.
Recall the notation $\O_i\ceq T^*_{C_i}(V_\lambda)_\text{sreg}$ and that $\mathcal{L}_{\O_i}
^\natural$ denotes the extension by zero of $\mathcal{L}_{\O_i}
^\natural$ from $T^*_{C_i}(V_\lambda)_\text{sreg}$ to $T^*_{H_\lambda}(V_\lambda)_\text{sreg}$.}
\label{table:NEvFt-SO(7)}
\begin{smaller}
\begin{spacing}{1.3}
\begin{center}
$
\begin{array}{| c c c c c c c |}
\hline
\Perv_{H_\lambda}(V_\lambda) &\mathop{\longrightarrow}\limits^{\NEvs}& \Loc_{H_\lambda}(T^*_{H_\lambda}(V_\lambda)_\text{sreg}) &\mathop{\longrightarrow}\limits^{a_*} & \Loc_{H_\lambda}(T^*_{H_\lambda}(V_\lambda^*)_\text{sreg}) & \mathop{\longleftarrow}\limits^{\Evs^*} & \Perv_{H_\lambda}(V_\lambda^*) \\	
\hline\hline
\IC(\1_{C_{0}}) &\mapsto& \1_{\O_0}^\natural &\mapsto& \1_{\O^*_0}^\natural & \mapsfrom & \IC(\1_{C^*_0}) \\
\IC(\1_{C_{1}}) &\mapsto& \1_{\O_1}^\natural &\mapsto& \1_{\O^*_1}^\natural & \mapsfrom & \IC(\1_{C^*_1}) \\
\IC(\1_{C_{2}}) &\mapsto& {\1}_{\O_2}^\natural \oplus \mathcal{L}_{\O_0}^\natural  &\mapsto& {\1}_{\O^*_2}^\natural \oplus \mathcal{L}_{\O^*_0}^\natural & \mapsfrom & \IC(\mathcal{L}_{C^*_0}) \\
\IC(\1_{C_{3}}) &\mapsto& \1_{\O_{3}}^\natural &\mapsto& \1_{\O^*_3}^\natural & \mapsfrom & \IC(\1_{C^*_3}) \\
\IC(\mathcal{L}_{C_{3}}) &\mapsto&  \mathcal{L}_{\O_3}^\natural\oplus {\mathcal{L}}_{\O_2}^\natural  &\mapsto& \mathcal{L}_{\O^*_3}^\natural\oplus {\mathcal{L}}_{\O^*_2}^\natural & \mapsfrom & \IC(\1_{C^*_2}) \\
\IC(\1_{C_{4}}) &\mapsto& \1_{\O_4}^\natural &\mapsto& \1_{\O^*_4}^\natural & \mapsfrom & \IC(\1_{C^*_4}) \\
\IC(\1_{C_{5}}) &\mapsto& \1_{\O_5}^\natural &\mapsto& \1_{\O^*_5}^\natural & \mapsfrom & \IC(\1_{C^*_5}) \\
\IC(\1_{C_{6}}) &\mapsto& {\1}_{\O_6}^\natural \oplus \mathcal{L}_{\O_1}^\natural   &\mapsto& {\1}_{\O^*_6}^\natural \oplus ^\natural\mathcal{L}_{\O^*_1}^\natural & \mapsfrom & \IC(\mathcal{L}_{C^*_1}) \\
\IC(\1_{C_{7}}) &\mapsto& \1_{\O_7}^\natural   &\mapsto& \1_{\O^*_7}^\natural & \mapsfrom & \IC(\1_{C^*_7}) \\
\IC(\mathcal{L}_{C_{7}}) &\mapsto&   \mathcal{L}_{\O_7}^\natural  \oplus  {\mathcal{L}}_{\O_6}^\natural  &\mapsto& ^\natural\mathcal{L}_{\O^*_7}\oplus {\mathcal{L}}_{\O^*_6}^\natural & \mapsfrom & \IC(\1_{C^*_6}) \\
\hline
\IC(\mathcal{F}_{C_{2}}) &\mapsto& {\mathcal{E}}_{\O_2}^\natural &\mapsto& {\mathcal{E}}_{\O^*_2}^\natural & \mapsfrom & \IC(\mathcal{F}_{C^*_2}) \\
\IC(\mathcal{F}_{C_{4}}) &\mapsto& \mathcal{F}_{\O_4}^\natural\oplus {\mathcal{F}}_{\O_1}^\natural \oplus \mathcal{F}_{\O_0}^\natural&\mapsto& \mathcal{F}_{\O^*_4}^\natural\oplus {\mathcal{F}}_{\O^*_1}^\natural\oplus\mathcal{F}_{\O^*_0} & \mapsfrom & \IC(\mathcal{F}_{C^*_0}) \\
\IC(\mathcal{F}_{C_{6}}) &\mapsto& {\mathcal{F}}_{\O_6}^\natural   &\mapsto& {\mathcal{F}}_{\O^*_6}^\natural & \mapsfrom & \IC(\mathcal{F}_{C^*_6}) \\
\IC(\mathcal{F}_{C_{7}}) &\mapsto& \mathcal{F}_{\O_7}^\natural \oplus \mathcal{F}_{\O_5}^\natural \oplus  \mathcal{F}_{\O_3}^\natural&\mapsto& \mathcal{F}_{\O^*_7}^\natural \oplus \mathcal{F}_{\O^*_5}^\natural \oplus  \mathcal{F}_{\O^*_3}^\natural & \mapsfrom & \IC(\mathcal{F}_{C^*_5}) \\
\hline
\IC(\mathcal{E}_{C_{7}}) &\mapsto&  \mathcal{E}_{\O_7}^\natural \oplus {\mathcal{E}}_{\O_6}^\natural \oplus\mathcal{E}_{\O_5}^\natural &\mapsto& \mathcal{E}_{\O^*_7}^\natural \oplus {\mathcal{E}}_{\O^*_6}^\natural  \oplus \mathcal{E}_{\O^*_5}^\natural  & \mapsfrom & \IC(\mathcal{E}_{C^*_0}) \\
	& &   \oplus\ \mathcal{E}_{\O_4}^\natural\oplus\mathcal{F}_{\O_3}^\natural \oplus & &\oplus\ \mathcal{E}_{\O^*_4}^\natural \oplus \mathcal{F}_{\O^*_3}^\natural \oplus &  &  \\
	& &    \mathcal{F}_{\O_2}^\natural \oplus \mathcal{F}_{\O_1}^\natural  \oplus \mathcal{E}_{\O_0}^\natural  & & \mathcal{F}_{\O^*_2}^\natural \oplus \mathcal{F}_{\O^*_1}^\natural \oplus \mathcal{E}_{\O^*_0}^\natural &  &  \\	
\hline
\end{array}
$
\end{center}
\end{spacing}
\end{smaller}
\end{table}%

\subsubsection{Arthur sheaves}\label{sssec:AS-SO(7)}

Arthur perverse sheaves in $\Perv_{H_\lambda}(V_\lambda)$, decomposed into pure packet sheaves and coronal perverse sheaves, are displayed in Table~\ref{table:AS-SO(7)}.

\begin{table}[htp]
\caption{Arthur sheaves}
\label{table:AS-SO(7)}
\begin{spacing}{1.3}
\begin{smaller}
\begin{center}
$
\begin{array}{ c || l r  }
\text{Arthur}	 &  \text{pure packet}  &  \text{coronal} \\
\text{sheaves}	 &  \text{sheaves}  &  \text{sheaves} \\
\hline\hline 
\mathcal{A}_{C_{0}}  
	&  \IC(\1_{C_0})\ \oplus
	& \IC(\1_{C_2})  \oplus \IC(\mathcal{F}_{C_4})\oplus \IC(\mathcal{E}_{C_7}) \\
{\mathcal{A}_{C_1} }
	& { \IC(\1_{C_1})\ \oplus }
	&  { \IC(\1_{C_6}) \oplus \IC(\mathcal{F}_{C_4}) \oplus \IC(\mathcal{E}_{C_7}) } \\
\mathcal{A}_{C_2} 
	& \IC(\1_{C_2}) \oplus \IC(\mathcal{F}_{C_2})\ \oplus
	&   \IC(\mathcal{L}_{C_3}) \oplus \IC(\mathcal{E}_{C_7}) \\
{ \mathcal{A}_{C_3}  }
	& {\IC(\1_{C_3}) \oplus \IC(\mathcal{L}_{C_3})\ \oplus }
	& { \IC(\mathcal{F}_{C_{7}}) \oplus \IC(\mathcal{E}_{C_7}) } \\
\mathcal{A}_{C_4} 
	& \IC(\1_{C_4}) \oplus \IC(\mathcal{F}_{C_4})\ \oplus
	&  \IC(\mathcal{E}_{C_7}) \\
\mathcal{A}_{C_5} 
	& \IC(\1_{C_5})\ \oplus
	& \IC(\mathcal{F}_{C_{7}}) \oplus  \IC(\mathcal{E}_{C_7}) \\
\mathcal{A}_{C_6}
	& \IC(\1_{C_6}) \oplus \IC(\mathcal{F}_{C_6})\ \oplus
	& \IC(\mathcal{L}_{C_7}) \oplus \IC(\mathcal{E}_{C_7}) \\
\mathcal{A}_{C_7} 
	& \IC(\1_{C_7}) \oplus \IC(\mathcal{L}_{C_7}) \oplus \IC(\mathcal{F}_{C_{7}}) \oplus \IC(\mathcal{E}_{C_7})	
	&   
\end{array}
$
\end{center}
\end{smaller}
\end{spacing}
\end{table}

\subsection{ABV-packets}\label{ssec:ABV-SO(7)}

\subsubsection{Admissible representations versus perverse sheaves}\label{sssec:VC-SO(7)}

Using Vogan's bijection between $\Perv_{H_\lambda}(V_\lambda)_{/\text{iso}}^\text{simple}$ and $\Pi^\mathrm{pure}_{\lambda}(G/F)$ as discussed in Section~\ref{sssec:VC-overview}, we now match the 8 Langlands parameters from Section~\ref{sssec:P-SO(7)} with the 8 strata from Section~\ref{sssec:V-SO(7)} and the 15 admissible representations from Section~\ref{sssec:L-SO(7)} with the 15 perverse sheaves from Section~\ref{sssec:EPS-SO(7)}; see Table~\ref{table:Vogan-SO(7)}.

\begin{table}[htp]
\label{table:Vogan-SO(7)}
\caption{Bijection between equivalence classes of irreducible admissible representations of pure rational forms of $\SO(7)$ and isomorphism classes of simple perverse sheaves on $V_\lambda$}
\begin{center}
%In this table we elide the pure inner form determined by the pair $(\phi,\rho)$.
\begin{spacing}{1.3}
$
\begin{array}{ l || l }
\Perv_{H_\lambda}(V_\lambda)_{/\text{iso}}^\text{simple} & \Pi^\mathrm{pure}_{\lambda}(G/F)  \\
\hline\hline
\IC(\1_{C_0}) & (\pi(\phi_{0}), 0)  \\
\IC(\1_{C_1}) & (\pi(\phi_{1}),0)  \\
\IC(\1_{C_2}) & (\pi(\phi_2,{+}), 0)  \\
\IC(\1_{C_3}) & (\pi(\phi_3,{+}), 0)  \\
\IC(\mathcal{L}_{C_3}) & (\pi(\phi_3,{-}), 0)  \\
\IC(\1_{C_4}) & (\pi(\phi_4,{+}), 0)) \\
\IC(\1_{C_5}) & (\pi(\phi_5), 0)  \\
\IC(\1_{C_6}) & (\pi(\phi_6,{+}), 0)  \\
\IC(\1_{C_7}) & (\pi(\phi_7,{++}), 0) \\
\IC(\mathcal{L}_{C_7}) & (\pi(\phi_7,{--}), 0)   \\
\hline
\IC(\mathcal{F}_2) & (\pi(\phi_2,{-}), 1) \\
\IC(\mathcal{F}_4) & (\pi(\phi_4,{-}), 1) \\
\IC(\mathcal{F}_6) & (\pi(\phi_6,{-}), 1) \\
\IC(\mathcal{F}_7) & (\pi(\phi_7,{-+}), 1) \\
\hline
\IC(\mathcal{E}_7) & (\pi(\phi_7,{+-}), 1) 
\end{array}
$
\end{spacing}
\end{center}
\end{table}%

\iffalse
\begin{table}
\caption{Characteristic cycles of simple perverse sheaves on $V_\lambda$.
To save space, we use the notation $T^*_i \ceq T^*_{C_i}(V_\lambda)$ here}
\label{table:CC-SO(7)}
%\begin{spacing}{1.0}
\[
\begin{array}[arraycolsep -2pt]{| r | c c c c c c c c |}
\hline
\ChC(\IC(\1_{C_0}))  = & + [T^*_{0}] & & & & & & & \\%   \ChC(\IC(\1_{B_7})) \\
\ChC(\IC(\1_{C_1}))  = & & +[ T^*_1] & & & & & & \\%   \ChC(\IC(\1_{B_3}) \\
\ChC(\IC(\1_{C_2}))  =& +[ T^*_0]  & & +[ T^*_2] & & & & & \\%   \ChC(\IC(\mathcal{L}_{B_7})) \\
\ChC(\IC(\1_{C_3}))  =& & & & + [ T^*_3] & & & & \\%   \ChC(\IC(\1_{B_1})) \\
\ChC(\IC(\mathcal{L}_{C_3})) =   & & & + [ T^*_2] & + [ T^*_3] & & & & \\%   \ChC(\IC(\1_{B_6})) \\
\ChC(\IC(\1_{C_4})) =  & & & & & + [ T^*_4] & & & \\%   \ChC(\IC(\1_{B_5})) \\
\ChC(\IC(\1_{C_5})) = & & & & & & + [ T^*_5] & & \\%   \ChC(\IC(\1_{B_4})) \\
\ChC(\IC(\1_{C_6})) =  & & + [ T^*_1] & & & & & + [ T^*_6] & \\%   \ChC(\IC(\mathcal{L}_{B_3})) \\
\ChC(\IC(\1_{C_7})) =  & & & & & & & & + [ T^*_7] \\%   \ChC(\IC(\1_{B_0})) \\
\ChC(\IC(\mathcal{L}_{C_{7}})) =  & & & & & & & + [ T^*_6 ] & +  [ T^*_7] \\%   \ChC(\IC(\1_{B_2})) \\
\hline
\ChC(\IC(\mathcal{F}_{C_2})) =  & & & +[ T^*_2] & & & & & \\%   \ChC(\IC(\mathcal{F}_{B_6})) \\
\ChC(\IC(\mathcal{F}_{C_4})) =  & + [ T^*_0 ] & + [ T^*_1 ] & && + [ T^*_4] & & & \\%   \ChC(\IC(\mathcal{F}_{B_7})) \\
\ChC(\IC(\mathcal{F}_{C_6})) =  & & & & & & & + [ T^*_6] & \\%   \ChC(\IC(\mathcal{F}_{B_2})) \\
\ChC(\IC(\mathcal{F}_{C_7})) =  & & & & + [ T^*_3] & & + [ T^*_5] & & + [ T^*_7] \\%   \ChC(\IC(\mathcal{F}_{B_4})) \\
\hline
\ChC(\IC(\mathcal{E}_{C_7})) =  & + [ T^*_0] & + [ T^*_1]  & + [ T^*_2] & + [ T^*_3] & + [ T^*_4]  & + [ T^*_5] & + [ T^*_6] & + [ T^*_7] \\%   \ChC(\IC(\mathcal{E}_{B_7})) \\
\hline
\end{array}
\]
%\end{spacing}
\end{table}
%
\fi

\subsubsection{ABV-packets}\label{sssec:ABV-SO(7)}

Using the bijection from Section~\ref{sssec:VC-SO(7)} and the calculation of the functor $\Ev$ from Section~\ref{sssec:Ev-SO(7)}, we now easily find the ABV-packets $\Pi^\ABV_{\phi}(G/F)$ for Langlands parameters $\phi$ with infinitesimal parameter $\lambda : W_F \to\Lgroup{G}$, using Section~\ref{sssec:ABV-overview}.
We record the stable distributions $\eta^{\NEvs}_{\psi}$ arising from ABV-packets through our calculations in Table~\ref{table:etaNEv-SO(7)}.
We will examine the invariant distributions $\eta^{\NEvs}_{\psi,s}$, later.

\begin{table}[htp]
\label{table:etaNEv-SO(7)}
\caption{Stable virtual representations arising from ABV-packets. For typographic reasons, we use the abbreviated notation $\pi_i \ceq \pi(\phi_i)$, $\pi_i^\pm\ceq \pi(\phi_i,\pm)$ and $\pi_i^{\pm\pm} \ceq \pi(\phi_i,\pm\pm)$.}
\begin{smaller}
\begin{spacing}{1.3}
\begin{center}
$
\begin{array}{ c || l r }
\text{ABV-}  & \text{pure L-packet}  \hskip-6cm & \text{coronal}\\
\text{packets}  & \text{representations} \hskip-6cm & \text{representations}\\
\hline\hline
\eta^{\Evs}_{\phi_0} 
	&  [(\pi_0,0)] 
		& \hskip-6cm +[(\pi_2^{+},0)]   + [(\pi_4^{-},1)] + [(\pi_7^{+-},1)] \\
{ \eta^{\Evs}_{\phi_1}  }	
	&  { [(\pi_{1},0)]} 
		& \hskip-6.5cm { + [(\pi_4^{-},1)] + [(\pi_6^{+},0)] + [\pi_7^{+-},1)]} \\	
\eta^{\Evs}_{\phi_2} 
	& [(\pi_2^{+},0)]   - [(\pi_2^{-},1)]  
		& \hskip-6.5cm - [(\pi_3^{-},0)] + [(\pi_7^{+-},1)] \\
{\eta^{\Evs}_{\phi_3}  }
	&  {[(\pi_3^{+},0)] + [(\pi_3^{-},0)] } 
		& \hskip-6.5cm { - [(\pi_7^{-+},1)] - [(\pi_7^{+-},1)] }  \\ 	
\eta^{\Evs}_{\phi_4} 
	& [(\pi_4^{+},0)]  -  [(\pi_4^{-},1)]  
		& \hskip-6.5cm - [(\pi_7^{+-},1)] \\
\eta^{\Evs}_{\phi_5} 
	& [(\pi_5,0)]
		& \hskip-6.5cm + [(\pi_7^{-+},1)] +  [(\pi_7^{+-},1)] \\
\eta^{\Evs}_{\phi_6} 
	& [(\pi_6^{+},0)]   - [(\pi_6^{-},1)]  
		& \hskip-6.5cm - [(\pi_7^{--},0)] + [(\pi_7^{+-},1)] \\
\eta^{\Evs}_{\phi_7} 
	& [(\pi_7^{++},0)] + [(\pi_7^{--},0)]  - [(\pi_7^{-+},1)] - [(\pi_7^{+-},1)] & 
\end{array}
$
\end{center}
\end{spacing}
\end{smaller}
\end{table}

\subsubsection{Kazhdan-Lusztig conjecture}\label{sssec:KL-SO(7)}

Using the bijection of Section~\ref{sssec:A-SO(7)}, we compare the multiplicity matrix from Section~\ref{sssec:mrep-SO(7)}
with the normalized geometric multiplicity matrix from Section~\ref{sssec:EPS-SO(7)}.
Since $\,^tm_\text{rep} = m'_\text{geo}$, this confirms the Kazhdan-Lusztig conjecture \eqref{eqn:KL} in this example.
Recall that this allows us to confirm Conjecture~\ref{conjecture:2} as it applies to this example, as explained in Section~\ref{sssec:KL-overview}.

\subsubsection{Aubert duality and Fourier transform}\label{sssec:AubertFt-SO(7)}

To verify \eqref{eqn:AubertFt-overview}, use Vogan's bijection from Section~\ref{sssec:VC-SO(7)} to compare Aubert duality from Section~\ref{sssec:Aubert-SO(7)} with the Fourier transform from Section~\ref{sssec:Ft-SO(7)}.

\subsubsection{Normalization}\label{sssec:normalization-SO(7)}

To verify \eqref{eqn:twisting-overview}, compare the twisting characters $\chi_\psi$ of $A_\psi$ from Section~\ref{sssec:Aubert-SO(7)} with the restriction $\mathcal{T}_\psi$ to $T^*_{C_\psi}(V_\lambda)_\textrm{reg}$ of the $\mathcal{T}$ from Section~\ref{sssec:EvFt-SO(7)}.
In both cases the character is trivial except on $A_{\psi_2}$ and $A_{\psi_6}$, where it is the character $(--)$, using notation from Section~\ref{sssec:A-SO(7)}.
Using this notation, here is another perspective on $\mathcal{T}$, where for each $C\subseteq V_\lambda$ we display the corresponding character of $A^\text{mic}_C$.
As a provocation, we also display the parity of the eccentricities of the orbits $C$.
\[
\begin{array}{ c || c c c c c c c c }
{} & {C_0} & {C_1} & {C_2} & {C_3} & {C_4} & {C_5} & {C_6} & {C_7} \\
\hline\hline
\mathcal{T}_{C} & ++ & ++ & -- & ++ & + & + & -- & ++ \\ 
(-1)^{e_C} 	& 1	& 1	& -1	& 1	& 1 & 1 & -1 & 1   
\end{array}
\]

\subsubsection{ABV-packets that are not Arthur packets}\label{sssec:notArthur-SO(7)}

We conclude Section~\ref{ssec:ABV-SO(7)} by drawing attention to the two ABV-packets $\Pi^\ABV_{\phi_1}(G/F)$ and $\Pi^\ABV_{\phi_3}(G/F)$ that are not Arthur packets, as $\phi_1$ and $\phi_3$ are not of Arthur type. 
While the following two admissible homomorphisms $L_F\times \SL(2,\CC) \to \Lgroup{G}$ are not Arthur parameters because they are not bounded on $W_F$, 
\[
\begin{array}{rcl}
\psi_1(w,x,y) &\ceq& \nu_2(y) \oplus (\nu_2^2(d_w)\otimes \nu_2(x)), \\
\psi_3(w,x,y) &\ceq& \nu_2(x) \oplus (\nu_2^2(d_w)\otimes \nu_2(y)),
\end{array}
\]
they do behave like Arthur parameters in other regards, as we now explain.
First $\phi_{\psi_1} = \phi_1$ and $\phi_{\psi_3} = \phi_3$.
We note too that $\psi_3$ is the Aubert dual of $\psi_1$. 
Let us define
\[
\Pi^\mathrm{pure}_{\psi_1}(G/F)\ceq \Pi^\ABV_{\phi_1}(G/F)
\qquad\text{and}\qquad
\Pi^\mathrm{pure}_{\psi_3}(G/F)\ceq \Pi^\ABV_{\phi_3}(G/F).
\]
Then $\Pi^\mathrm{pure}_{\psi_1}(G/F)$ and $\Pi^\mathrm{pure}_{\psi_3}(G/F)$ define the following pseudo-Arthur packets for $G_1$ and $G$:
\[
\begin{array}{rcl}
{ \Pi_{\psi_1}(G(F)) } &{ \ceq }& { \{ \pi(\phi_1),\ \pi(\phi_6,+)\} }, \\
{ \Pi_{\psi_3}(G(F)) } &{ \ceq }& { \{ \pi(\phi_3,+),\ \pi(\phi_3,-) \} },
\end{array}
\]
and
\[
\begin{array}{rcl}
{ \Pi_{\psi_1}(G_1(F)) } &{ \ceq }& { \{ \pi(\phi_4,-),\ \pi(\phi_7,+-)\} },\\
{ \Pi_{\psi_3}(G_1(F)) } &{ \ceq }& { \{ \pi(\phi_7,-+), \pi(\phi_7,+-)\} }.
\end{array}
\]
Aubert duality defines a bijection between $\Pi_{\psi_3}(G(F))$ and $\Pi_{\psi_1}(G(F))$ and between $\Pi_{\psi_3}(G_1(F))$ and $\Pi_{\psi_1}(G_1(F))$.
Moreover, it follows from the Kazhdan-Lusztig conjecture, which we have already established for this example in Section~\ref{sssec:KL-SO(7)}, that the associated distributions
\[
\begin{array}{rcl }
\Theta^{G}_{\psi_1} & \ceq & \Theta_{\pi(\phi_1)} +  \Theta_{\pi(\phi_6,+)}  \\
\Theta^{G}_{\psi_3} & \ceq & \Theta_{\pi(\phi_3,+)} +  \Theta_{\pi(\phi_3,-)} 
\end{array}
\]
and
\[
\begin{array}{rcl}
 \Theta^{G_1}_{\psi_1} & \ceq & -\left(-\Theta_{\pi(\phi_4,-)} -  \Theta_{\pi(\phi_7,+-)} \right) \\
 \Theta^{G_1}_{\psi_3} & \ceq & -\left(+\Theta_{\pi(\phi_7,-+)} +  \Theta_{\pi(\phi_7,+-)}\right) 
\end{array}
\]
are stable.
Moreover, using the characters of microlocal fundamental groups arising from our calculation of the functor $\Ev_{C_1}$ and $\Ev_{C_3}$ we may define $\Theta^{G}_{\psi_1,s}$, $\Theta^{G_1}_{\psi_1,s}$, $\Theta^{G}_{\psi_1,s}$ and $\Theta^{G}_{\psi_1,s}$.
It follows from Section~\ref{sssec:stable-SO(7)} that these distributions coincide with the endoscopic transfer of stable distributions from an elliptic endoscopic group $G'$; those stable distributions on $G'(F)$ also arise from ABV-packets that are not Arthur packets.
In these regards, the pseudo-Arthur packets $\Pi_{\psi_1}(G(F))$, $\Pi_{\psi_1}(G_1(F))$, $\Pi_{\psi_3}(G(F))$, and $\Pi_{\psi_3}(G_1(F))$ behave like Arthur packets.

\subsection{Endoscopy and equivariant restriction of perverse sheaves}\label{ssec:restriction-SO(7)}

In this section we will calculate both sides of \eqref{eqn:TrNEvRes} for $G=\SO(7)$ and the elliptic endoscopic $G' = \SO(5)\times \SO(3)$, which already appeared in Section~\ref{sssec:stable-SO(7)}. 
This will illustrate how the Langlands-Shelstad lift of $\Theta_{\psi'}$ on $G'(F)$ to $\Theta_{\psi,s}$ on $G(F)$ is related to equivariant restriction of perverse sheaves from $V_{\lambda}$ to the Vogan variety $V_{\lambda'}$ for $G'$; see Section~\ref{sssec:stable-SO(7)} for $\psi'$.

The endoscopic datum for $G'$ includes $s\in H$ given by
\[
s := 
\begin{pmatrix}
1 & 0 & 0 & 0 & 0 & 0 \\
0 & 1 & 0 & 0 & 0 & 0 \\
0 & 0 & -1 & 0 & 0 & 0 \\
0 & 0 & 0 & -1 & 0 & 0 \\
0 & 0 & 0 & 0 & 1 & 0 \\
0 & 0 & 0 & 0 & 0 & 1
\end{pmatrix}.
%\in Z_{\dualgroup{G}}(\lambda) = H
\]
Note that
\[
Z_{\dualgroup{G}}(s) = 
\left\{
\begin{pmatrix}
\begin{array}{c|c|c}
A & 0 & B \\ \hline
0 & E & 0 \\ \hline
C & 0 & D
\end{array}
\end{pmatrix}
\mid
\begin{pmatrix}
\begin{array}{cc}
A & B \\ C & D 
\end{array}
\end{pmatrix}\in \Sp(4)
,
E\in \Sp(2)
\right\}
\iso \Sp(4)\times \Sp(4),
\]
so $\dualgroup{G}'= Z_{\dualgroup{G}}(s)$.

\subsubsection{Endoscopic Vogan variety}

The infinitesimal parameter $\lambda: W_F \to \Lgroup{G}$ factors through $\epsilon: \Lgroup{G}' \hookrightarrow \Lgroup{G}$ to define $\lambda' : W_F\to \Lgroup{G}'$ by
\[
\lambda'(w) =  
\left(
\begin{pmatrix} 
\abs{w}^{3/2} & 0 & 0 & 0 \\ 
0 & \abs{w}^{1/2} & 0 & 0 \\
0 & 0 & \abs{w}^{-1/2} & 0 \\
0 & 0 & 0 & \abs{w}^{-3/2} 
\end{pmatrix}
,
\begin{pmatrix} 
\abs{w}^{1/2} & 0 \\
0 & \abs{w}^{-1/2}  
\end{pmatrix}
\right).
\]
To simplify notation below, let us set $G^{(1)} \ceq \SO(3)$ and $G^{(2)} \ceq \SO(5)$ and define $\lambda^{(1)} : W_F\to \Lgroup{G}^{(1)}$ and $\lambda^{(2)} : W_F\to \Lgroup{G}^{(2)}$ accordingly.
Also set
\[
H^{(1)} \ceq Z_{\dualgroup{G}^{(1)}}(\lambda^{(1)})
\quad\text{and}\quad
H^{(2)} \ceq Z_{\dualgroup{G}^{(2)}}(\lambda^{(2)})
\]
and $V^{(1)} \ceq V_{\lambda^{(1)}}$ and $V^{(2)} \ceq V_{\lambda^{(2)}}$.
Then,
\[
H_{\lambda'}= H^{(1)}\times H^{(2)}
\quad\text{and}\quad
V_{\lambda'} = V^{(2)}\times V^{(2)},
\]
with the action of $H^{(1)}$ on $V^{(1)}$ given in Section~\ref{sec:SO(3)} and the action of $H^{(2)}$ on $V^{(2)}$ given in Section~\ref{sec:SO(5)regular}.
It follows that, with reference to Sections~\ref{sec:SO(3)} and \ref{sec:SO(5)regular}, $V_{\lambda'}$ is stratified into eight $H_{\lambda'}$-orbits:
\[
\begin{array}{cccccccc}
C_{ux}\times C_y &&  C_{x}\times C_y &&  C_{u}\times C_y &&  C_{0}\times C_y\\
C_{ux}\times C_0 &&  C_{x}\times C_0 &&  C_{u}\times C_0 &&  C_{0}\times C_0.
\end{array}
\]

For all $H_{\lambda'}$-orbits $C'\subset V_{\lambda'}$, the microlocal fundamental group $A^\text{mic}_{C'}$ is canonically isomorphic to the centre $Z(\dualgroup{G'}) = Z(\dualgroup{G}^{(2)})\times Z(\dualgroup{G}^{(1)})$, because we have chosen $G'$ so that the unramified infinitesimal parameter $\lambda'$ is regular semisimple at $\Frob$.
Consequently, the image of $Z(\dualgroup{G'})$ under $\epsilon : \dualgroup{G'} \hookrightarrow \dualgroup{G}$ is the group $S[2]$ introduced in Section~\ref{sssec:A-SO(7)}.

\subsubsection{Restriction}\label{sssec:res-SO(7)}

We now describe the restriction functor
$
\Deligne_{,H_\lambda}(V_\lambda) \to \Deligne_{,H_{\lambda'}}(V_{\lambda'})
$
on simple perverse sheaves, after passing to Grothendieck groups.
%\begin{spacing}{1.0}
\[
\begin{array}{ rcl }
\res: \Perv_{H_\lambda}(V_\lambda) &\mathop{\longrightarrow} &\mathsf{K}\Perv_{H_{\lambda'}}(V_{\lambda'}) \\
\IC(\1_{C_0}) &\mapsto& \IC(\1_{C_0}\boxtimes\1_{C_0})[0]\\
\IC(\1_{C_1}) &\mapsto& \IC(\1_{C_u}\boxtimes\1_{C_0})[1]\\
\IC(\1_{C_2}) &\mapsto& \IC(\1_{C_x}\boxtimes\1_{C_0})[1] 
			\oplus \IC(\1_{C_0}\boxtimes\1_{C_y})[1] 
			\oplus \IC(\1_{C_0}\boxtimes\1_{C_0})[1] \\
\IC(\1_{C_3}) &\mapsto& \IC(\1_{C_x}\boxtimes\1_{C_y})[1] \\
\IC(\mathcal{L}_{C_3}) &\mapsto& \IC(\mathcal{L}_{C_x}\boxtimes\mathcal{E}_{C_y})[1]
			\oplus \IC(\1_{C_0}\boxtimes\1_{C_0})[1] \\
\IC(\1_{C_4}) &\mapsto& \IC(\1_{C_x}\boxtimes\1_{C_0})[2] 
			\oplus \IC(\1_{C_u}\boxtimes\1_{C_y})[1]  \\
\IC(\1_{C_5}) &\mapsto& \IC(\1_{C_u}\boxtimes\1_{C_y})[2] 
			\oplus \IC(\1_{C_x}\boxtimes\1_{C_y})[2]
			\oplus \IC(\1_{C_0}\boxtimes\1_{C_y})[2]\\
			&&
			\oplus\ \IC(\mathcal{L}_{C_x}\boxtimes\mathcal{E}_{C_y})[2]
			\oplus \IC(\1_{C_0}\boxtimes\1_{C_0})[2] \\
\IC(\1_{C_6}) &\mapsto& \IC(\1_{C_{ux}}\boxtimes\1_{C_0})[2] 
			\oplus \IC(\1_{C_u}\boxtimes\1_{C_y})[2] 
			\oplus \IC(\1_{C_u}\boxtimes\1_{C_0})[2] \\
\IC(\1_{C_7}) &\mapsto& \IC(\1_{C_{ux}}\boxtimes\1_{C_y})[2] \\	
\IC(\mathcal{L}_{C_7}) &\mapsto& \IC(\mathcal{L}_{C_{ux}}\boxtimes\mathcal{E}_{C_y})[2]  \oplus \IC(\1_{C_u}\boxtimes\1_{C_0})[2] \\ 
\end{array}
\]
and
\[
\begin{array}{rcl}
\IC(\mathcal{F}_{C_2}) &\mapsto& \IC(\mathcal{L}_{C_x}\boxtimes\1_{C_0})[1] 
			\oplus \IC( \1_{C_0}\boxtimes\mathcal{E}_{C_y})[1] \\
\IC(\mathcal{F}_{C_4}) &\mapsto& \IC(\1_{C_u}\boxtimes\mathcal{E}_{C_y})[1]
			\oplus \IC(\mathcal{L}_{C_x}\boxtimes\1_{C_0})[2] \\
\IC(\mathcal{F}_{C_6}) &\mapsto& \IC(\mathcal{L}_{C_{ux}}\boxtimes\1_{C_0})[2] 
			\oplus \IC(\1_{C_u}\boxtimes\mathcal{E}_{C_y})[2]\\
\IC(\mathcal{F}_{C_7}) &\mapsto& \IC(\mathcal{L}_{C_{ux}}\boxtimes\1_{C_y})[2]  \oplus \IC(\mathcal{L}_{C_x}\boxtimes\1_{C_y})[2] \oplus \IC(\mathcal{L}_{C_{x}}\boxtimes\1_{C_0})[4] \\
			&& \oplus\ \IC(\1_{C_0}\boxtimes\mathcal{E}_{C_y})[4] \\
%\hline
\IC(\mathcal{E}_{C_7}) &\mapsto& \IC(\mathcal{L}_{C_{ux}}\boxtimes\mathcal{E}_{C_y})[2] \oplus \IC(\mathcal{L}_{C_{x}}\boxtimes\mathcal{E}_{C_y})[2]  
\end{array}
\]
%\end{spacing}

\subsubsection{Restriction and vanishing cycles}\label{sssec:restriction-SO(7)}

Although the inclusion $V_{\lambda'}\hookrightarrow V_{\lambda}$ induces a map of conormal bundles  $\epsilon : T^*_{H_{\lambda'}}(V_{\lambda'}) \hookrightarrow T^*_{H_\lambda}(V_\lambda)$, this does not restrict to a map of regular conormal bundles.
Instead, we have
%\begin{spacing}{1.0}
\[
\begin{array}{rcl}
T^*_{C_0}(V_\lambda)_\textrm{reg}\ \cap\  T^*_{H_{\lambda'}}(V_{\lambda'})_\textrm{reg} &=& T^*_{C_{0}\times C_0}(V_{\lambda'})_\textrm{reg}\\
T^*_{C_1}(V_\lambda)_\textrm{reg}\ \cap\  T^*_{H_{\lambda'}}(V_{\lambda'})_\textrm{reg} &=& T^*_{C_{u}\times C_0}(V_{\lambda'})_\textrm{reg}\\
T^*_{C_2}(V_\lambda)_\textrm{reg}\ \cap\  T^*_{H_{\lambda'}}(V_{\lambda'})_\textrm{reg} &=& T^*_{C_{0}\times C_y}(V_{\lambda'})_\textrm{reg} \\
T^*_{C_3}(V_\lambda)_\textrm{reg}\ \cap\  T^*_{H_{\lambda'}}(V_{\lambda'})_\textrm{reg} &=& T^*_{C_{x}\times C_y}(V_{\lambda'})_\textrm{reg}\\
T^*_{C_4}(V_\lambda)_\textrm{reg}\ \cap\  T^*_{H_{\lambda'}}(V_{\lambda'})_\textrm{reg} &=& \emptyset \\
T^*_{C_5}(V_\lambda)_\textrm{reg}\ \cap\  T^*_{H_{\lambda'}}(V_{\lambda'})_\textrm{reg} &=& \emptyset \\
T^*_{C_6}(V_\lambda)_\textrm{reg}\ \cap\  T^*_{H_{\lambda'}}(V_{\lambda'})_\textrm{reg} &=& T^*_{C_{ux}\times C_0}(V_{\lambda'})_\textrm{reg}\\
T^*_{C_7}(V_\lambda)_\textrm{reg}\ \cap\  T^*_{H_{\lambda'}}(V_{\lambda'})_\textrm{reg} &=& T^*_{C_{ux}\times C_y}(V_{\lambda'})_\textrm{reg}
\end{array}
\]
%\end{spacing}
Thus, the hypothesis for \eqref{eqn:TrNEvRes} is met only for $(x',\xi')\in T^*_{H_{\lambda'}}(V_{\lambda'})_\textrm{reg}$ from the list of regular conormal bundles appearing on the right-hand side of these equations.

We now prove an interesting instance of \eqref{eqn:TrNEvRes}: the case $\mathcal{P} = \IC(\mathcal{E}_{C_7})$.
%\paragraph*{{The case $\mathcal{P} = \IC(\mathcal{E}_{C_7})$}}
From Section~\ref{sssec:res-SO(7)} we see that, in the Grothendieck group of $\Perv_{H_{\lambda'}}(T^*_{H_{\lambda'}}(V_{\lambda'})_\textrm{reg})$, 
\[
\begin{array}{rcl}
&&\hskip-20pt \pEv' \left( \IC(\mathcal{E}_{C_7})\vert_{V_{\lambda'}} \right)  \\
&\equiv& \pEv' \left( \IC(\mathcal{L}_{C_{ux}}\boxtimes\mathcal{E}_{C_y}) \oplus \IC(\mathcal{L}_{C_{x}}\boxtimes\mathcal{E}_{C_y})  \right)\\
&=& \left(\pEv^{(2)}\IC(\mathcal{L}_{C_{ux}}) \boxtimes\pEv^{(1)}\IC(\mathcal{E}_{C_y})\right)  \oplus \left(\pEv^{(2)}\IC(\mathcal{L}_{C_x})\boxtimes\pEv^{(1)}\IC(\mathcal{E}_{C_y}) \right) \\
&=& \left( ( \IC(\mathcal{L}_{\O_{ux}})\oplus \IC(\mathcal{L}_{\O_u}) )\boxtimes (  \IC(\mathcal{E}_{\O_{y}}) \oplus \IC(\mathcal{E}_{\O_{0}})) \right) \\
&& \oplus\  \left( ( \IC(\mathcal{L}_{\O_x}) \oplus \IC(\mathcal{L}_{\O_0}) ) \boxtimes ( \IC(\mathcal{E}_{\O_{y}})\oplus \IC(\mathcal{E}_{\O_0})) \right)  \\
&=&  \IC(\mathcal{L}_{\O_{ux}}\boxtimes \mathcal{E}_{\O_{y}}) \oplus \IC(\mathcal{L}_{\O_{ux}}\boxtimes \mathcal{E}_{\O_{0}})  \oplus \IC(\mathcal{L}_{\O_{u}}\boxtimes \mathcal{E}_{\O_{y}}) \\
&& \oplus\  \IC(\mathcal{L}_{\O_{u}}\boxtimes \mathcal{E}_{\O_{0}})  \oplus  \IC(\mathcal{L}_{\O_{x}}\boxtimes \mathcal{E}_{\O_{y}}) \oplus \IC(\mathcal{L}_{\O_{x}}\boxtimes \mathcal{E}_{\O_{0}}) \\
&&  \oplus\ \IC(\mathcal{L}_{\O_{0}}\boxtimes \mathcal{E}_{\O_{y}})  \oplus  \IC(\mathcal{L}_{\O_{0}}\boxtimes \mathcal{E}_{\O_{0}}).
\end{array}
\]
On the other hand, recall from Section~\ref{sssec:NEv-SO(7)} that
\[
\begin{array}{rcl}
%&&\hskip-20pt
 \pEv\IC(\mathcal{E}_{C_7}) 
&=&  \IC(\mathcal{E}_{\O_7}) \oplus \IC({\mathcal{E}}_{\O_6}) \oplus  \IC(\mathcal{E}_{\O_5}) \oplus \IC(\mathcal{E}_{\O_4})  \\
&& \oplus \IC(\mathcal{F}_{\O_3}) \oplus \IC({\mathcal{F}}_{\O_2}) \oplus \IC({\mathcal{F}}_{\O_1}) \oplus \IC(\mathcal{E}_{\O_0}) .
\end{array}
\]
We can now easily calculate both sides of \eqref{eqn:TrNEvRes} on all six components of $T^*_{H_\lambda}(V_\lambda)_\textrm{reg} \cap T^*_{H_{\lambda'}}(V_{\lambda'})_\textrm{reg}$.
\begin{enumerate}[widest=($C_{0}\times C_0$).,leftmargin=*]
\item[($C_{0}\times C_0$).]
If $(x',\xi') \in T^*_{C_{0}\times C_{0}}(V_{\lambda'})_\textrm{reg}$ then $(x,\xi) \in T^*_{C_0}(V_\lambda)_\textrm{reg}$.
In this case the left-hand side of \eqref{eqn:TrNEvRes} is
\[
\begin{array}{rcl}
&&\hskip-20pt (-1)^{\dim (C_0\times C_0)} \trace_{a'_s} \left( \pEv' \IC(\mathcal{E}_{C_7})\vert_{V_{\lambda'}} \right)_{(x',\xi')} \\
&=& (-1)^{0} \trace_{(+1,-1)} \IC(\mathcal{L}_{\O_{0}}\boxtimes \mathcal{E}_{\O_{0}})\\
&=& (--)(+1,-1)\\
&=& -1,
\end{array}
\]
while the right-hand side of \eqref{eqn:TrNEvRes} is
\[
\begin{array}{rcl}
&&\hskip-20pt (-1)^{\dim C_0} \trace_{a_s} \left(\pEv \IC(\mathcal{E}_{C_7}) \right)_{(x,\xi)} \\
&=& (-1)^{\dim C_0} \trace_{a_s} \IC(\mathcal{E}_{\O_0})\\
&=& \trace_{(+1,-1)} \IC(\mathcal{E}_{\O_0})\\
&=& (+-)(+1,-1)\\
&=& -1.
\end{array}
\]
This confirms \eqref{eqn:TrNEvRes} on $T^*_{C_{0}\times C_{0}}(V_{\lambda'})_\textrm{reg}$.
\item[($C_{u}\times C_0$).]
If $(x',\xi') \in T^*_{C_{u}\times C_{0}}(V_{\lambda'})_\textrm{reg}$ then $(x,\xi) \in T^*_{C_1}(V_\lambda)_\textrm{reg}$.
In this case the left-hand side of \eqref{eqn:TrNEvRes} is
\[
\begin{array}{rcl}
&&\hskip-20pt (-1)^{\dim (C_u\times C_0)} \trace_{a'_s} \left( \pEv' \IC(\mathcal{E}_{C_7})\vert_{V_{\lambda'}} \right)_{(x',\xi')} \\
&=& (-1)^{1} \trace_{(+1,-1)} \IC(\mathcal{L}_{\O_{u}}\boxtimes \mathcal{E}_{\O_{0}}) \\
&=& -(--)(+1,-1)\\
&=& +1,
\end{array}
\]
while the right-hand side of \eqref{eqn:TrNEvRes} is
\[
\begin{array}{rcl}
&&\hskip-20pt (-1)^{\dim C_1} \trace_{a_s} \left(\pEv \IC(\mathcal{E}_{C_7}) \right)_{(x,\xi)} \\
&=& (-1)^{\dim C_1} \trace_{a_s} \IC(\mathcal{F}_{\O_1})\\
&=& (-1)^2 \trace_{(+1,-1)} \IC(\mathcal{F}_{\O_1})\\
&=& (-+)(+1,-1)\\
&=& +1.
\end{array}
\]
This confirms \eqref{eqn:TrNEvRes} on $T^*_{C_{u}\times C_{0}}(V_{\lambda'})_\textrm{reg}$.
\item[($C_{0}\times C_{y}$).]
If $(x',\xi') \in T^*_{C_{0}\times C_{y}}(V_{\lambda'})_\textrm{reg}$ then $(x,\xi) \in T^*_{C_2}(V_\lambda)_\textrm{reg}$.
In this case the left-hand side of \eqref{eqn:TrNEvRes} is
\[
\begin{array}{rcl}
&&\hskip-20pt (-1)^{\dim (C_0\times C_y)} \trace_{a'_s} \left( \pEv' \IC(\mathcal{E}_{C_7})\vert_{V_{\lambda'}} \right)_{(x',\xi')} \\
&=& (-1)^{1} \trace_{(+1,-1)} \IC(\mathcal{L}_{\O_{0}}\boxtimes \mathcal{E}_{\O_{y}})\\
&=& -(--)(+1,-1)\\
&=& +1,
\end{array}
\]
while the right-hand side of \eqref{eqn:TrNEvRes} is
\[
\begin{array}{rcl}
&&\hskip-20pt (-1)^{\dim C_2} \trace_{a_s} \left(\pEv \IC(\mathcal{E}_{C_7}) \right)_{(x,\xi)} \\
&=&(-1)^{\dim C_2} \trace_{a_s} \IC(\mathcal{F}_{\O_2})\\
&=& (-1)^2 \trace_{(+1,-1)} \IC(\mathcal{F}_{\O_2})\\
&=& (-+)(+1,-1)\\
&=& +1.
\end{array}
\]
This confirms \eqref{eqn:TrNEvRes} on $T^*_{C_{0}\times C_{y}}(V_{\lambda'})_\textrm{reg}$.
\item[($C_{x}\times C_{y}$).]
If $(x',\xi') \in T^*_{C_{x}\times C_{y}}(V_{\lambda'})_\textrm{reg}$ then $(x,\xi) \in T^*_{C_3}(V_\lambda)_\textrm{reg}$.
In this case the left-hand side of \eqref{eqn:TrNEvRes} is
\[
\begin{array}{rcl}
&&\hskip-20pt (-1)^{\dim (C_x\times C_y)} \trace_{a'_s} \left( \pEv' \IC(\mathcal{E}_{C_7})\vert_{V_{\lambda'}} \right)_{(x',\xi')} \\
&=& (-1)^{2} \trace_{(+1,-1)} \IC(\mathcal{L}_{\O_{x}}\boxtimes \mathcal{E}_{\O_{y}})\\
&=& (--)(+1,-1)\\
&=& -1,
\end{array}
\]
while the right-hand side of \eqref{eqn:TrNEvRes} is
\[
\begin{array}{rcl}
&&\hskip-20pt (-1)^{\dim C_3} \trace_{a_s} \left(\pEv \IC(\mathcal{E}_{C_7}) \right)_{(x,\xi)} \\
&=&(-1)^{\dim C_3} \trace_{a_s} \IC(\mathcal{F}_{\O_3})\\
&=& (-1)^3 \trace_{(+1,-1)} \IC(\mathcal{F}_{\O_3})\\
&=& -(-+)(+1,-1)\\
&=& -1.
\end{array}
\]
This confirms \eqref{eqn:TrNEvRes} on $T^*_{C_{x}\times C_{y}}(V_{\lambda'})_\textrm{reg}$.
\item[($C_{ux}\times C_{0}$).]
If $(x',\xi') \in T^*_{C_{ux}\times C_{0}}(V_{\lambda'})_\textrm{reg}$ then $(x,\xi) \in T^*_{C_6}(V_\lambda)_\textrm{reg}$.
In this case the left-hand side of \eqref{eqn:TrNEvRes} is
\[
\begin{array}{rcl}
&&\hskip-20pt (-1)^{\dim (C_{ux}\times C_{0})} \trace_{a'_s} \left( \pEv' \IC(\mathcal{E}_{C_7})\vert_{V_{\lambda'}} \right)_{(x',\xi')} \\
&=& (-1)^{2} \trace_{(+1,-1)} \IC(\mathcal{L}_{\O_{ux}}\boxtimes \mathcal{E}_{\O_{0}})\\
&=& (--)(+1,-1)\\
&=& -1,
\end{array}
\]
while the right-hand side of \eqref{eqn:TrNEvRes} is
\[
\begin{array}{rcl}
&&\hskip-20pt (-1)^{\dim C_6} \trace_{a_s} \left(\pEv \IC(\mathcal{E}_{C_7}) \right)_{(x,\xi)} \\
&=&(-1)^{\dim C_6} \trace_{a_s} \IC(\mathcal{E}_{\O_6})\\
&=& (-1)^4 \trace_{(+1,-1)} \IC(\mathcal{E}_{\O_6})\\
&=& (+-)(+1,-1)\\
&=& -1.
\end{array}
\]
This confirms \eqref{eqn:TrNEvRes} on $T^*_{C_{ux}\times C_{0}}(V_{\lambda'})_\textrm{reg}$.
\item[($C_{ux}\times C_{y}$).]
If $(x',\xi') \in T^*_{C_{ux}\times C_{y}}(V_{\lambda'})_\textrm{reg}$ then $(x,\xi) \in T^*_{C_7}(V_\lambda)_\textrm{reg}$.
In this case the left-hand side of \eqref{eqn:TrNEvRes} is
\[
\begin{array}{rcl}
&&\hskip-20pt (-1)^{\dim (C_{ux}\times C_{y})} \trace_{a'_s} \left( \pEv' \IC(\mathcal{E}_{C_7})\vert_{V_{\lambda'}} \right)_{(x',\xi')} \\
&=& (-1)^{3} \trace_{(+1,-1)} \IC(\mathcal{L}_{\O_{ux}}\boxtimes \mathcal{E}_{\O_{y}})\\
&=& -(--)(+1,-1)\\
&=& +1,
\end{array}
\]
while the right-hand side of \eqref{eqn:TrNEvRes} is
\[
\begin{array}{rcl}
&&\hskip-20pt (-1)^{\dim C_7} \trace_{a_s} \left(\pEv \IC(\mathcal{E}_{C_7}) \right)_{(x,\xi)} \\
&=&(-1)^{\dim C_7} \trace_{a_s} \IC(\mathcal{E}_{\O_7})\\
&=& (-1)^5 \trace_{(+1,-1)} \IC(\mathcal{E}_{\O_7})\\
&=& -(+-)(+1,-1)\\
&=& +1.
\end{array}
\]
This confirms \eqref{eqn:TrNEvRes} on $T^*_{C_{ux}\times C_{y}}(V_{\lambda'})_\textrm{reg}$.
\end{enumerate}
This confirms \eqref{eqn:TrNEvRes} for $\mathcal{P} = \IC(\mathcal{E}_{C_7})$.

We now prove another interesting instance of \eqref{eqn:TrNEvRes}: the case $\mathcal{P} = \IC(\mathcal{F}_{C_4})$.
From Section~\ref{sssec:res-SO(7)} we see that, in the Grothendieck group of $\Perv_{H_{\lambda'}}(T^*_{H_{\lambda'}}(V_{\lambda'})_\textrm{reg})$, 
\[
\begin{array}{rcl}
&&\hskip-20pt \pEv' \left( \IC(\mathcal{F}_{C_4})\vert_{V_{\lambda'}} \right)  \\
&\equiv& \pEv' \left( \IC(\1_{C_u}\boxtimes\mathcal{E}_{C_y})[1] \oplus \IC(\mathcal{L}_{C_x}\boxtimes\1_{C_0})  \right)\\
&=& \left(\pEv^{(2)}\IC(\1_{C_{u}}) \boxtimes\pEv^{(1)}\IC(\mathcal{E}_{C_y})\right)  \oplus \left(\pEv^{(2)}\IC(\mathcal{L}_{C_x})\boxtimes\pEv^{(1)}\IC(\1_{C_0}) \right) \\
&=& \left( \IC(\1_{\O_{u}})\boxtimes (  \IC(\mathcal{E}_{\O_{y}}) \oplus \IC(\mathcal{E}_{\O_{0}})) \right)  \oplus\left( ( \IC(\mathcal{L}_{\O_x}) \oplus \IC(\mathcal{L}_{\O_0}) ) \boxtimes  \IC(\1_{\O_{0}})\right)  \\
&=&  \IC(\1_{\O_{u}}\boxtimes \mathcal{E}_{\O_{y}}) \oplus \IC(\1_{\O_{u}}\boxtimes \mathcal{E}_{\O_{0}})  
\oplus \IC(\mathcal{L}_{\O_{x}}\boxtimes \mathcal{L}_{\O_{0}})  \oplus  \IC(\mathcal{L}_{\O_{0}}\boxtimes \1_{\O_{0}}).
\end{array}
\]
On the other hand, recall from Section~\ref{sssec:NEv-SO(7)} that
\[
\begin{array}{rcl}
\pEv\IC(\mathcal{F}_{C_4}) 
&=&  \IC(\mathcal{F}_{\O_4})\oplus \IC({\mathcal{F}}_{\O_1})\oplus \IC(\mathcal{F}_{\O_0}) .
\end{array}
\]
We can now easily calculate both sides of \eqref{eqn:TrNEvRes} on all six components of $T^*_{H_\lambda}(V_\lambda)_\textrm{reg} \cap T^*_{H_{\lambda'}}(V_{\lambda'})_\textrm{reg}$.
\begin{enumerate}[widest=($C_{0}\times C_0$).,leftmargin=*]
\item[($C_{0}\times C_0$).]
If $(x',\xi') \in T^*_{C_{0}\times C_{0}}(V_{\lambda'})_\textrm{reg}$ then the left-hand side of \eqref{eqn:TrNEvRes} is
\[
\begin{array}{rcl}
&&\hskip-20pt (-1)^{\dim (C_0\times C_0)} \trace_{a'_s} \left( \pEv' \IC(\mathcal{F}_{C_4})\vert_{V_{\lambda'}} \right)_{(x',\xi')} \\
&=& (-1)^{0} \trace_{(+1,-1)} \IC(\mathcal{L}_{\O_{0}}\boxtimes \1_{\O_{0}})\\
&=& (-+)(+1,-1)\\
&=& +1,
\end{array}
\]
while the right-hand side of \eqref{eqn:TrNEvRes} is
\[
\begin{array}{rcl}
&&\hskip-20pt (-1)^{\dim C_0} \trace_{a_s} \left(\pEv \IC(\mathcal{F}_{C_4}) \right)_{(x,\xi)} \\
&=& (-1)^{\dim C_0} \trace_{a_s} \IC(\mathcal{F}_{\O_0})\\
&=& \trace_{(+1,-1)} \IC(\mathcal{F}_{\O_0})\\
&=& (-+)(+1,-1)\\
&=& +1.
\end{array}
\]
This confirms \eqref{eqn:TrNEvRes} on $T^*_{C_{0}\times C_{0}}(V_{\lambda'})_\textrm{reg}$ for $\mathcal{P} = \IC(\mathcal{F}_{C_4})$.
\item[($C_{u}\times C_0$).]
If $(x',\xi') \in T^*_{C_{u}\times C_{0}}(V_{\lambda'})_\textrm{reg}$ then $(x,\xi) \in T^*_{C_1}(V_\lambda)_\textrm{reg}$.
In this case the left-hand side of \eqref{eqn:TrNEvRes} is
\[
\begin{array}{rcl}
&&\hskip-20pt (-1)^{\dim (C_u\times C_0)} \trace_{a'_s} \left( \pEv' \IC(\mathcal{F}_{C_4})\vert_{V_{\lambda'}} \right)_{(x',\xi')} \\
&=& (-1)^{1} \trace_{(+1,-1)} \IC(\1_{\O_{u}}\boxtimes \mathcal{E}_{\O_{0}}) \\
&=& -(+-)(+1,-1)\\
&=& +1,
\end{array}
\]
while the right-hand side of \eqref{eqn:TrNEvRes} is
\[
\begin{array}{rcl}
&&\hskip-20pt (-1)^{\dim C_1} \trace_{a_s} \left(\pEv \IC(\mathcal{F}_{C_4}) \right)_{(x,\xi)} \\
&=& (-1)^{\dim C_1} \trace_{a_s}  \IC({\mathcal{F}}_{\O_1})\\
&=& (-1)^2 \trace_{(+1,-1)}  \IC({\mathcal{F}}_{\O_1})\\
&=& (-+)(+1,-1)\\
&=& +1.
\end{array}
\]
This confirms \eqref{eqn:TrNEvRes} on $T^*_{C_{u}\times C_{0}}(V_{\lambda'})_\textrm{reg}$ for $\mathcal{P} = \IC(\mathcal{F}_{C_4})$.
\item[($C_{0}\times C_{y}$).]
If $(x',\xi') \in T^*_{C_{0}\times C_{y}}(V_{\lambda'})_\textrm{reg}$ then $(x,\xi) \in T^*_{C_2}(V_\lambda)_\textrm{reg}$.
In this case the left-hand side of \eqref{eqn:TrNEvRes} is
\[
\begin{array}{rcl}
&&\hskip-20pt (-1)^{\dim (C_0\times C_y)} \trace_{a'_s} \left( \pEv' \IC(\mathcal{F}_{C_4})\vert_{V_{\lambda'}} \right)_{(x',\xi')} \\
&=& (-1)^{1} \trace_{(+1,-1)} 0\\
&=& 0,
\end{array}
\]
while the right-hand side of \eqref{eqn:TrNEvRes} is
\[
\begin{array}{rcl}
&&\hskip-20pt (-1)^{\dim C_2} \trace_{a_s} \left(\pEv \IC(\mathcal{F}_{C_4}) \right)_{(x,\xi)} \\
&=&(-1)^{\dim C_2} \trace_{a_s} 0\\
&=& 0.
\end{array}
\]
This confirms \eqref{eqn:TrNEvRes} on $T^*_{C_{0}\times C_{y}}(V_{\lambda'})_\textrm{reg}$ for $\mathcal{P}= \IC(\mathcal{F}_{C_4})$.
\item[($C_{x}\times C_{y}$).]
If $(x',\xi') \in T^*_{C_{x}\times C_{y}}(V_{\lambda'})_\textrm{reg}$ then $(x,\xi) \in T^*_{C_3}(V_\lambda)_\textrm{reg}$.
In this case the left-hand side of \eqref{eqn:TrNEvRes} is
\[
\begin{array}{rcl}
&&\hskip-20pt (-1)^{\dim (C_x\times C_y)} \trace_{a'_s} \left( \pEv' \IC(\mathcal{F}_{C_4})\vert_{V_{\lambda'}} \right)_{(x',\xi')} \\
&=& (-1)^{2} \trace_{(+1,-1)} 0\\
&=& 0,
\end{array}
\]
while the right-hand side of \eqref{eqn:TrNEvRes} is
\[
\begin{array}{rcl}
&&\hskip-20pt (-1)^{\dim C_3} \trace_{a_s} \left(\pEv \IC(\mathcal{F}_{C_4}) \right)_{(x,\xi)} \\
&=&(-1)^{\dim C_3} \trace_{a_s} 0\\
&=& 0.
\end{array}
\]
This confirms \eqref{eqn:TrNEvRes} on $T^*_{C_{x}\times C_{y}}(V_{\lambda'})_\textrm{reg}$ for $\mathcal{P}= \IC(\mathcal{F}_{C_4})$.
\item[($C_{ux}\times C_{0}$).]
If $(x',\xi') \in T^*_{C_{ux}\times C_{0}}(V_{\lambda'})_\textrm{reg}$ then $(x,\xi) \in T^*_{C_6}(V_\lambda)_\textrm{reg}$.
In this case the left-hand side of \eqref{eqn:TrNEvRes} is
\[
\begin{array}{rcl}
&&\hskip-20pt (-1)^{\dim (C_{ux}\times C_{0})} \trace_{a'_s} \left( \pEv' \IC(\mathcal{F}_{C_4})\vert_{V_{\lambda'}} \right)_{(x',\xi')} \\
&=& (-1)^{2} \trace_{(+1,-1)} 0\\
&=& 0,
\end{array}
\]
while the right-hand side of \eqref{eqn:TrNEvRes} is
\[
\begin{array}{rcl}
&&\hskip-20pt (-1)^{\dim C_6} \trace_{a_s} \left(\pEv \IC(\mathcal{F}_{C_4}) \right)_{(x,\xi)} \\
&=&(-1)^{\dim C_6} \trace_{a_s}0\\
&=& 0.
\end{array}
\]
This confirms \eqref{eqn:TrNEvRes} on $T^*_{C_{ux}\times C_{0}}(V_{\lambda'})_\textrm{reg}$ for $\mathcal{P} = \IC(\mathcal{F}_{C_4})$.
\item[($C_{ux}\times C_{y}$).]
If $(x',\xi') \in T^*_{C_{ux}\times C_{y}}(V_{\lambda'})_\textrm{reg}$ then $(x,\xi) \in T^*_{C_7}(V_\lambda)_\textrm{reg}$.
In this case the left-hand side of \eqref{eqn:TrNEvRes} is
\[
\begin{array}{rcl}
&&\hskip-20pt (-1)^{\dim (C_{ux}\times C_{y})} \trace_{a'_s} \left( \pEv' \IC(\mathcal{E}_{C_7})\vert_{V_{\lambda'}} \right)_{(x',\xi')} \\
&=& (-1)^{3} \trace_{(+1,-1)}0 \\
&=& 0,
\end{array}
\]
while the right-hand side of \eqref{eqn:TrNEvRes} is
\[
\begin{array}{rcl}
&&\hskip-20pt (-1)^{\dim C_7} \trace_{a_s} \left(\pEv \IC(\mathcal{F}_{C_4}) \right)_{(x,\xi)} \\
&=&(-1)^{\dim C_7} \trace_{a_s} 0\\
&=& 0.
\end{array}
\]
This confirms \eqref{eqn:TrNEvRes} on $T^*_{C_{ux}\times C_{y}}(V_{\lambda'})_\textrm{reg}$ for $\mathcal{P} = \IC(\mathcal{F}_{C_4})$.
\end{enumerate}
This confirms \eqref{eqn:TrNEvRes} for $\mathcal{P} = \IC(\mathcal{F}_{C_4})$.

\vfill\eject

\subsection{Tables for the SO(7) example}\label{ssec:tables-SO(7)}

Here we gather together all the main results of the calculations performed in Section~\ref{sec:SO(7)}.

\begin{table}[H]
\label{table:Arthur-SO(7)}
\caption{Arthur packets for representations of $G(F)$ and $G_1(F)$ with infinitesimal parameter $\lambda$. For typographic reasons, here we use the abbreviated notation $\pi_i \ceq \pi(\phi_i)$, $\pi_i^\pm\ceq \pi(\phi_i,\pm)$ and $\pi_i^{\pm\pm} \ceq \pi(\phi_i,\pm\pm)$.}
\begin{spacing}{1.3}
\begin{center}
$
\begin{array}{| c || l | l || l | l |}
\hline
i & \Pi_{\phi_i}(G(F)) & \Pi_{\psi_i}(G(F)) & \Pi_{\phi_i}(G_1(F)) & \Pi_{\psi_i}(G_1(F)) \\
\hline\hline 
0 & \{ \pi_0 \} & \{ \pi_0, \pi_2^+\} & \emptyset & \{ \pi_4^-, \pi_7^{+-} \} \\
1 & \{ \pi_1 \} & \text{undefined} & \emptyset &  \text{undefined} \\
2 & \{ \pi_2^+ \} & \{ \pi_2^+, \pi_3^- \} & \{ \pi_2^- \} & \{ \pi_2^-,\pi_7^{+-}\} \\
3 & \{ \pi_3^+, \pi_3^- \} &  \text{undefined} & \emptyset & \text{undefined} \\
4 & \{ \pi_4^+ \} & \{ \pi_4^+ \}  & \{ \pi_4^- \} & \{ \pi_4^-,\pi_7^{+-}\} \\
5 & \{ \pi_5 \} & \{ \pi_5 \} & \emptyset & \{ \pi_7^{-+},\pi_7^{+-}\} \\
6 & \{ \pi_6^+ \} & \{ \pi_6^+, \pi_7^{--} \}  & \{ \pi_6^- \}& \{ \pi_7^{+-}\} \\
7 & \{ \pi_7^{++}, \pi_7^{--} \} & \{ \pi_7^{++}, \pi_7^{--} \}  & \{ \pi_7^{-+}, \pi_7^{+-} \}  &\{ \pi_7^{-+}, \pi_7^{+-} \}  \\
\hline
\end{array}
$
\end{center}
\end{spacing}
\end{table}

\begin{table}[H]
\label{table:ABV by form -SO(7)}
\caption{ABV-packets for representations of $G(F)$ and $G_1(F)$ with infinitesimal parameter $\lambda$. 
Comparing this table with %Table~\ref{table:Arthur-SO(7)} 
the table above shows that all Arthur packets for admissible representations with infinitesimal parameter $\lambda$ are recovered from ABV-packets. 
Again we use the abbreviated notation $\pi_i \ceq \pi(\phi_i)$, $\pi_i^\pm\ceq \pi(\phi_i,\pm)$ and $\pi_i^{\pm\pm} \ceq \pi(\phi_i,\pm\pm)$.
}
\begin{spacing}{1.3}
\begin{center}
$
\begin{array}{| c || l | l || l | l |}
\hline
i & \Pi_{\phi_i}(G(F)) & \Pi^\ABV_{\phi_i}(G(F)) & \Pi_{\phi_i}(G_1(F)) & \Pi^\ABV_{\phi_i}(G_1(F)) \\
\hline\hline 
0 & \{ \pi_0 \} & \{ \pi_0, \pi_2^+\} & \emptyset & \{ \pi_4^-, \pi_7^{+-}\} \\
1 & \{ \pi_1 \} & \{ \pi_1,\pi_6^+ \} & \emptyset &   \{  \pi_4^-, \pi_7^{+-}\} \\
2 & \{ \pi_2^+ \} & \{ \pi_2^+, \pi_3^- \} & \{ \pi_2^- \} & \{ \pi_2^-,\pi_7^{+-}\} \\
3 & \{ \pi_3^+, \pi_3^- \} &  \{ \pi_3^+, \pi_3^- \} & \emptyset & \{ \pi_7^{-+},\pi_7^{+-}\} \\
4 & \{ \pi_4^+ \} & \{ \pi_4^+ \}  & \{ \pi_4^- \} & \{ \pi_4^-,\pi_7^{+-}\} \\
5 & \{ \pi_5 \} & \{ \pi_5 \} & \emptyset & \{ \p_7^{-+},\pi_7^{+-}\} \\
6 & \{ \pi_6^+ \} & \{ \pi_6^+, \pi_7^{--} \}  & \{ \pi_6^- \}& \{ \p_7^{+-}\} \\
7 & \{ \pi_7^{++}, \pi_7^{--} \} & \{ \pi_7^{++}, \pi_7^{--} \}  & \{ \pi_7^{-+}, \pi_7^{+-} \}  &\{ \pi_7^{-+}, \pi_7^{+-} \}  \\
\hline
\end{array}
$
\end{center}
\end{spacing}
\end{table}

\vfill\eject

\begin{table}[H]
\label{table:pureArthur-SO(7)}
\caption{Pure Arthur packets, decomposed into pure L-packets $\Pi^\mathrm{pure}_{\phi}(G/F)$ and the coronal representations. The notation $(\pi,\delta)$ is explained in Section~\ref{ssec:LV} and recalled in Section~\ref{ssec:LV-template}. For typographic reasons, we use the abbreviated notation $\pi_i \ceq \pi(\phi_i)$, $\pi_i^\pm\ceq \pi(\phi_i,\pm)$ and $\pi_i^{\pm\pm} \ceq \pi(\phi_i,\pm\pm)$.}
\begin{smaller}
\begin{spacing}{1.3}
\begin{center}
$
\begin{array}{| l || l  r |}
\hline
%\text{pure Arthur}  & \text{pure L-packet} &    \text{coronal}\\
\text{pure Arthur packet}  & \text{pure L-packet}  & \text{coronal representations}\\
\hline\hline
\Pi^\mathrm{pure}_{\psi_0}(G/F)  & [\pi_{0},0], &  [\pi_2^{+},0],\ [\pi_4^-,1],\ [\pi_7^{+-},1] \\
\Pi^\mathrm{pure}_{ \psi_2}(G/F) & [\pi_2^{+},0],\ [\pi_2^-,1], &  [\pi_3^{-},0],\ [\pi_7^{+-},1] \\
\Pi^\mathrm{pure}_{ \psi_4}(G/F) & [\pi_4^{+},0],\ [\pi_4^{-},1], &  [\pi_7^{+-},1] \\
\Pi^\mathrm{pure}_{ \psi_5}(G/F) & [\pi_5,0], & [\pi_7^{-+},1],\ [\pi_7^{+-},1] \\
\Pi^\mathrm{pure}_{ \psi_6}(G/F) & [\pi_6^{+},0],\ [\pi_6^-,1], &  [\pi_7^{--},0],\ [\pi_7^{+-},1] \\
\Pi^\mathrm{pure}_{ \psi_7}(G/F) & [\pi_7^{++},0],\ [\pi_7^{--},0],\ [\pi_7^{-+},1],\ [\pi_7^{+-},1] & \hskip-4cm \\
\hline
\end{array}
$
\end{center}
\end{spacing}
\end{smaller}
\end{table}%

\begin{table}[H]
\label{table:ABV-SO(7)}
\caption{ABV-packets $\Pi^\ABV_{\phi}(G/F)$, decomposed into pure L-packets $\Pi^\mathrm{pure}_{\phi}(G/F)$ and the coronal representations.
Comparing this table with %Table~\ref{table:pureArthur-SO(7)}
the table above verifies Conjecture~\ref{conjecture:1}\ref{conjecture:a} in this case.
The same comparison shows that not all ABV-packets are pure Arthur packets.
%The notation $(\pi,\delta)$ is explained in Section~\ref{ssec:LV} and recalled in Section~\ref{ssec:LV-template}.
For typographic reasons, we use the abbreviated notation $\pi_i \ceq \pi(\phi_i)$, $\pi_i^\pm\ceq \pi(\phi_i,\pm)$ and $\pi_i^{\pm\pm} \ceq \pi(\phi_i,\pm\pm)$.
}
\begin{smaller}
\begin{spacing}{1.3}
\begin{center}
$
\begin{array}{| r || l r |}
\hline 
\text{ABV-packet}  & \text{pure L-packet}  &   \text{coronal representations}\\
%\text{packets}  & \text{representations} \hskip-5cm & \text{representations}\\
\hline\hline
\Pi^\ABV_{\phi_0}(G/F)  
	&  [\pi_{0},0]
		&  [\pi_2^{+},0] , [\pi_4^{-},1], [\pi_7^{+-},1] \\
{ \Pi^\ABV_{\phi_1}(G/F)  }	
	&  { [\pi(\phi_{1}),0]} 
		&  { [\pi_4^{-},1], [\pi_6^{+},0], [\pi_7^{+-},1] } \\	
\Pi^\ABV_{\phi_2}(G/F) 
	& [\pi_2^{+},0] ,   [\pi_2^{-},1] 
		&  [\pi_3^{-},0] , [\pi_7^{+-},1] \\
{ \Pi^\ABV_{\phi_3}(G/F)  }
	&  { [\pi_3^{+},0] , [\pi_3^{-},0] } 
		&  {  [\pi_7^{-+},1] , [\pi_7^{+-},1] }  \\ 	
\Pi^\ABV_{\phi_4}(G/F) 
	& [\pi_4^{+},0]  ,\  [\pi_4^{-},1]  
		&  [\pi_7^{+-},1] \\
\Pi^\ABV_{\phi_5}(G/F) 
	& [\pi_5,0]
		&   [\pi_7^{-+},1] ,\   [\pi_7^{+-},1] \\
\Pi^\ABV_{\phi_6}(G/F) 
	& [\pi_6^{+},0]  ,\ [\pi_6^{-},1]  
		&  [\pi_7^{--},0] ,\ [\pi_7^{+-},1] \\
\Pi^\ABV_{\phi_7}(G/F) 
	& [\pi_7^{++},0] ,\ [\pi_7^{--},0]  ,\ [\pi_7^{-+},1] ,\ [\pi_7^{+-},1] 
		&  \\
\hline
\end{array}
$
\end{center}
\end{spacing}
\end{smaller}
\end{table}%

\vfill\eject

\begin{table}[H]
\caption{$\NEvs : \Perv_{H_{\lambda}}(V_{\lambda}) \to  \Loc_{H_{\lambda}}(T^*_{H_{\lambda}}(V_{\lambda})_\text{sreg})$ on simple objects. See also Table~\ref{table:pNEv-SO(7)}.
Here we use the notation $\NEvs_{i}\ceq \NEvs_{C_i}$.}
\label{table:NEvs-SO(7)}
\begin{spacing}{1.3}
\begin{smaller}
\begin{smaller}
\begin{center}
$
\begin{array}{| c||cccccccc |}
\hline
\mathcal{P} & \NEvs_{0}\mathcal{P} & \NEvs_{1}\mathcal{P} & \NEvs_{2}\mathcal{P} & \NEvs_{3}\mathcal{P} & \NEvs_{4}\mathcal{P} & \NEvs_{5}\mathcal{P} & \NEvs_{6}\mathcal{P} & \NEvs_{7}\mathcal{P} \\
\hline\hline
\IC(\1_{C_0}) 			& ++ & 0 & 0   & 0 & 0 & 0 & 0   & 0 \\
\IC(\1_{C_1}) 			& 0   & ++ & 0   & 0 & 0 & 0 & 0   & 0 \\
\IC(\1_{C_2}) 			& --   & 0 & {++} & 0 & 0 & 0 & 0   & 0 \\
\IC(\1_{C_3}) 			& 0   & 0 & 0   & ++ & 0 & 0 & 0   & 0 \\
\IC(\mathcal{L}_{C_3}) 	& 0   & 0 & {--} & --  & 0 & 0 & 0  & 0 \\
\IC(\1_{C_4}) 			& 0  	& 0 & 0   & 0 & + & 0 & 0   & 0 \\
\IC(\1_{C_5}) 			& 0  	& 0 & 0   & 0 & 0 & + & 0   & 0 \\
\IC(\1_{C_6}) 			& 0   & --  & 0   & 0 & 0 & 0 & {++} & 0 \\
\IC(\1_{C_7}) 			& 0   & 0 & 0   & 0 & 0 & 0 & 0  & ++ \\
\IC(\mathcal{L}_{C_7}) 	& 0   & 0 & 0   & 0 & 0 & 0 & {--}  & -- \\
\hline
\IC(\mathcal{F}_{C_2}) 	& 0  & 0 & {+-} & 0  & 0 & 0 & 0  & 0 \\
\IC(\mathcal{F}_{C_4}) 	& -+ & {-+}  & 0  & 0  & -  & 0 & 0  & 0 \\
\IC(\mathcal{F}_{C_6}) 	& 0  & 0 & 0  & 0  & 0 & 0 & {-+} & 0 \\
\IC(\mathcal{F}_{C_{7}}) 	& 0  & 0 & 0  & -+   & 0 & - & 0   & -+ \\
\hline
\IC(\mathcal{E}_{C_7}) 	& +- & {-+}  & {-+}  & -+ & - & - & {+-}  & +-  \\
\hline
\end{array}
$
\end{center}
\end{smaller}
\end{smaller}
\end{spacing}
\end{table}
%\vskip-1cm %%%%%%%%%%%%%%%%%%%%%%%%%%%%
\begin{table}[H]
\label{table:transfer_coefficients-SO(7)}
\caption{The characters ${\langle \, \cdot\, ,\pi\rangle}_{\psi}$ of $A_\psi$.
Comparing this table with Table~\ref{table:NEvs-SO(7)} verifies Conjecture~\ref{conjecture:1} in this example.
}
\begin{spacing}{1.3}
\begin{center}
$
\begin{array}{| l||cccccc |}
\hline
\pi & {\langle \ \cdot\ , \pi\rangle}_{\psi_0} & {\langle \ \cdot\ , \pi\rangle}_{\psi_2}   & {\langle \ \cdot\ , \pi\rangle}_{\psi_4}  & {\langle \ \cdot\ , \pi\rangle}_{\psi_5}  & {\langle \ \cdot\ , \pi\rangle}_{\psi_6}  & {\langle \ \cdot\ , \pi\rangle}_{\psi_7} \\
\hline\hline
\pi(\phi_0) 		& ++ & 0 & 0 & 0 & 0 & 0 \\
\pi(\phi_2,+) 		& -- & ++ & 0 & 0 & 0 & 0 \\
\pi(\phi_3,-) 		& 0 & -- & 0 & 0 & 0 & 0 \\
\pi(\phi_4,+) 		& 0 & 0 & + & 0 & 0 & 0 \\
\pi(\phi_5) 		& 0 & 0 & 0 & + & 0 & 0 \\
\pi(\phi_6,+) 		& 0 & 0 & 0 & 0 & ++ & 0 \\
\pi(\phi_7,++) 		& 0 & 0 & 0 & 0 & 0 & ++ \\
\pi(\phi_7,--) 		& 0 & 0 & 0 & 0 & -- & --  \\ \hline
\pi(\phi_2,-) 		& 0 & {+-} & 0 & 0 & 0 & 0 \\
\pi(\phi_4,-) 		& -+ & 0 & - & 0 & 0 & 0 \\
\pi(\phi_6,-) 		& 0 & 0 & 0 & 0 & {-+} & 0 \\
\pi(\phi_7,-+) 		& 0 & 0 & 0 & - & 0 & -+ \\ \hline
\pi(\phi_7,+-) 		& +- & {-+} & - & - & {+-} & +- \\ \hline
\end{array}
$
\end{center}
\end{spacing}
\end{table}

\vfill\eject

\begin{table}[H]
\label{table:mrep-SO(7)}
\caption{Multiplicities of admissible representations in standard modules.
We use the abbreviated notation $\pi_i \ceq \pi(\phi_i)$, $\pi_i^\pm\ceq \pi(\phi_i,\pm)$ and $\pi_i^{\pm\pm} \ceq \pi(\phi_i,\pm\pm)$
and we also set $M_i \ceq M(\phi_i)$, $M_i^\pm\ceq M(\phi_i,\pm)$ and $M_i^{\pm\pm} \ceq \pi(\phi_i,\pm\pm)$.}
\begin{smaller}\begin{smaller}
\begin{spacing}{1.3}
\resizebox{1\textwidth}{!}{%
$
\begin{array}{| l || c  c  c  c  c  c  c  c  c  c | c c c c | c |}
\hline
G                   & \pi_{0}  &   \pi_{1}  &  \pi_{2}^{+}  &  \pi_{3}^{+} & \pi_{3}^{-} &  \pi_{4}^{+} & \pi_{5} & \pi_{6}^{+}  & \pi_{7}^{++} & \pi_{7}^{--}  & \pi_{2}^{-} & \pi_{4}^{-} & \pi_{6}^{-} & \pi_{7}^{-+} & \pi_{7}^{+-} \\
     \hline\hline
M_0 %     M(\phi_{0})        
     & 1 & 1 & 1 & 1 & 1 & 2 & 2 & 1 & 1 & 1 & 0 & 0 & 0 & 0 & 0 \\
     %\hline
M_1%     M(\phi_{1})        
     & 0 &1 & 0 & 0 & 0 & 1 & 1 & 1 & 1 & 1 & 0 & 0 & 0 & 0 & 0 \\
     %\hline
M_2^{+}%     M(\phi_{2},{+})    
     & 0 & 0 & 1 & 1 & 0 & 1 & 1 & 1 & 1 & 0 & 0 & 0 & 0 & 0 & 0 \\    
     %\hline
M_3^{+}%     M(\phi_{3},{++})     
     & 0 & 0 & 0 & 1 & 0 & 0 & 1 & 0 & 1 & 0 & 0 & 0 & 0 & 0 & 0 \\
     %\hline
M_3^{-}%     M(\phi_{3},{--})       
     & 0 & 0 & 0 & 0 & 1 & 0 & 1 & 0 & 0 & 1 & 0 & 0 & 0 & 0 & 0 \\  
     %\hline
 M_4^{+}%    M(\phi_{4},{+})      
     & 0 & 0 & 0 & 0 & 0 & 1 & 1 & 1 & 1 & 0 & 0 & 0 & 0 & 0 & 0 \\
     %\hline
M_5%     M(\phi_{5})         
     & 0 & 0 & 0 & 0 & 0 & 0 & 1 & 0 & 1  & 1 & 0 & 0 & 0 & 0 & 0 \\
      %\hline
M_6^{+}%    M(\phi_{6},{+})     
     & 0 & 0 & 0 & 0 & 0 & 0 & 0 & 1 & 1 & 0 & 0 & 0 & 0 & 0 & 0  \\
     %\hline
M_7^{++}%     M(\phi_{7},{++})          
     & 0 & 0 & 0 & 0 & 0 & 0 & 0 & 0 & 1 & 0 & 0 & 0 & 0 & 0 & 0 \\  
     %\hline
M_7^{--}%     M(\phi_{7},{--})         
     & 0 & 0 & 0 & 0 & 0 & 0 & 0 & 0 & 0 & 1 & 0 & 0 & 0 & 0 & 0\\ \hline
M_2^{-}%           
     & 0 & 0 & 0 & 0 & 0 & 0 & 0 & 0 & 0 & 0 & 1 & 1 & 1 & 1 & 0 \\ 
M_4^{-}%           
     & 0 & 0 & 0 & 0 & 0 & 0 & 0 & 0 & 0 & 0 & 0 & 1 & 1 & 0 & 0 \\ 
M_6^{-}%           
     & 0 & 0 & 0 & 0 & 0 & 0 & 0 & 0 & 0 & 0 & 0 & 0 & 1 & 1 & 0 \\ 
M_7^{-+}%           
     & 0 & 0 & 0 & 0 & 0 & 0 & 0 & 0 & 0 & 0 & 0 & 0 & 0 & 1 & 0 \\ \hline
M_7^{+-}%           
     & 0 & 0 & 0 & 0 & 0 & 0 & 0 & 0 & 0 & 0 & 0 & 0 & 0 & 0 & 1 \\ \hline
\end{array}
$%
}
\end{spacing}
\end{smaller}
\end{smaller}
\end{table}%

\begin{table}[H]
\caption{The normalized geometric multiplicity matrix.
The table records the multiplicity of $\mathcal{L}_{C}$ in $\mathcal{L'}_{C'}^\sharp\vert_{C}$;
recall the notation $\mathcal{L}^\sharp \ceq \IC(\mathcal{L}_C)[-\dim C]$.
Comparing this table with %Table~\ref{table:mrep-SO(7)}
the table above verifies the Kazhdan-Lusztig conjecture in this case; see also Table~\ref{table:EPS-SO(7)}.
This confirms Conjecture~\ref{conjecture:2} as it applies to this example, arguing as in Section~\ref{sssec:KL-overview}.
}
\label{table:mgeo-SO(7)}
\begin{smaller}
\begin{smaller}
\begin{spacing}{1.3}
\resizebox{1\textwidth}{!}{%
$
\begin{array}[rowsep=30]{| c ||cccccccccc | cccc | c | }
\hline
{} & \1_{C_0} & \1_{C_1} & \1_{C_2} & \1_{C_3} & \mathcal{L}_{C_3} & \1_{C_4} & \1_{C_5} & \1_{C_6} & \1_{C_7} & \mathcal{L}_{C_7} &\mathcal{F}_{C_2} & \mathcal{F}_{C_4} & \mathcal{F}_{C_6} & \mathcal{F}_{C_7} & \mathcal{E}_{C_7} \\ 
\hline\hline
\1^\sharp_{C_0} & 1 & 0 & 0 & 0 & 0 & 0 & 0 & 0 & 0 & 0 & 0 & 0 & 0 & 0 & 0 \\
\1^\sharp_{C_1} & 1 & 1 & 0 & 0 & 0 & 0 & 0 & 0 & 0 & 0 & 0 & 0 & 0 & 0 & 0 \\
\1^\sharp_{C_2} & 1 & 0 & 1 & 0 & 0 & 0 & 0 & 0 & 0 & 0 & 0 & 0 & 0 & 0 & 0 \\
\1^\sharp_{C_3} & 1 & 0 & 1 & 1 & 0 & 0 & 0 & 0 & 0 & 0 & 0 & 0 & 0 & 0 & 0 \\
\mathcal{L}^\sharp_{C_3} & 1 & 0 & 0 & 0 & 1 & 0 & 0 & 0 & 0 & 0 & 0 & 0 & 0 & 0 & 0 \\
\1^\sharp_{C_4} & 2 & 1 & 1 & 0 & 0 & 1 & 0 & 0 & 0 & 0 & 0 & 0 & 0 & 0 & 0 \\
\1^\sharp_{C_5} & 2 & 1 & 1 & 1 & 1 & 1 & 1 & 0 & 0 & 0 & 0 & 0 & 0 & 0 & 0 \\
\1^\sharp_{C_6} & 1 & 1 & 1 & 0 & 0 & 1 & 0 & 1 & 0 & 0 & 0 & 0 & 0 & 0 & 0 \\ 
\1^\sharp_{C_7} & 1 & 1 & 1 & 1 & 0 & 1 & 1 & 1 & 1 & 0 & 0 & 0 & 0 & 0 & 0 \\
\mathcal{L}^\sharp_{C_7} & 1 & 1 & 0 & 0 & 1 & 0 & 1 & 0 & 0 & 1 & 0 & 0 & 0 & 0 & 0 \\ \hline
\mathcal{F}^\sharp_{C_2} & 0 & 0 & 0 & 0 & 0 & 0 & 0 & 0 & 0 & 0 & 1 & 0 & 0 & 0 & 0 \\
\mathcal{F}^\sharp_{C_4} & 0 & 0 & 0 & 0 & 0 & 0 & 0 & 0 & 0 & 0 & 1 & 1 & 0 & 0 & 0 \\
\mathcal{F}^\sharp_{C_6} & 0 & 0 & 0 & 0 & 0 & 0 & 0 & 0 & 0 & 0 & 1 & 1 & 1 & 0 & 0 \\
\mathcal{F}^\sharp_{C_7} & 0 & 0 & 0 & 0 & 0 & 0 & 0 & 0 & 0 & 0 & 1 & 0 & 1 & 1 & 0 \\ \hline
\mathcal{E}^\sharp_{C_7} & 0 & 0 & 0 & 0 & 0 & 0 & 0 & 0 & 0 & 0 & 0 & 0 & 0 & 0 & 1 \\
\hline
\end{array}
$%
}
\end{spacing}
\end{smaller}
\end{smaller}
\end{table}

\backmatter
%    Bibliography styles amsplain or author-year (using natbib) are
%    also acceptable.
\bibliographystyle{amsalpha}
\bibliography{mainbibliography} 
%    See note above about multiple indexes.
\printindex

\end{document}